\renewcommand{\UrlFont}{\sffamily\small} 
\addcolon\nolinkurl{#1}}\iffieldundef{eprintclass}{}{\UrlFont{\mkbibbrackets{\thefield{eprintclass}}}}}
\addcolon\nolinkurl{#1}\iffieldundef{eprintclass}{}{\UrlFont{\mkbibbrackets{\thefield{eprintclass}}}}}}
\else \newtheorem{theorem}{Theorem}[chapter] \fi
\theoremstyle{plain}
\else \newtheorem{theorem}{Theorem} \fi
\else \newtheorem{lemma}[theorem]{Lemma} \fi
\else \newtheorem{lemmaschema}[theorem]{Lemma Schema} \fi
\else \newtheorem{proposition}[theorem]{Proposition} \fi
\else \newtheorem{corollary}[theorem]{Corollary} \fi
\else \newtheorem{fact}[theorem]{Fact} \fi
\else \newtheorem{question}[theorem]{Question} \fi
\else \newtheorem{observation}[theorem]{Observation} \fi
\newtheorem*{theorem*}{Theorem}
\newtheorem*{lemma*}{Lemma}
\newtheorem*{proposition*}{Proposition}
\newtheorem{sublemma}{Lemma}[theorem]
\theoremstyle{definition}
\else \newtheorem{definition}[theorem]{Definition} \fi
\theoremstyle{remark}
\else \newtheorem{remark}[theorem]{Remark} \fi
\else \newtheorem{claim}[theorem]{Claim} \fi
\newcounter{my_enumerate_counter}
\newcommand\comment[1]{}
\newcommand\Trm{\mathrm{T}}
\newcommand\Zsf{\mathsf{Z}}
\newcommand\Bfrak{\mathfrak{B}}
\newcommand\Cfrak{\mathfrak{C}}
\newcommand\Lfrak{\mathfrak{L}}
\newcommand\Mfrak{\mathfrak{M}}
\newcommand\Rfrak{\mathfrak{R}}
\newcommand\Ufrak{\mathfrak{U}}
\newcommand\Acal{\mathcal{A}}
\newcommand\Bcal{\mathcal{B}}
\newcommand\Dcal{\mathcal{D}}
\newcommand\Ecal{\mathcal{E}}
\newcommand\Lcal{\mathcal{L}}
\newcommand\Mcal{\mathcal{M}}
\newcommand\Pcal{\mathcal{P}}
\newcommand\Ucal{\mathcal{U}}
\newcommand\Wcal{\mathcal{W}}
\newcommand\Xcal{\mathcal{X}}
\newcommand\Ycal{\mathcal{Y}}
\newcommand\Zcal{\mathcal{Z}}
\renewcommand\Bbb{\mathbb{B}}
\newcommand\Pbb{\mathbb{P}}
\newcommand\Qbb{\mathbb{Q}}
\newcommand\abf{\mathbf{a}}
\newcommand\bbf{\mathbf{b}}
\newcommand\cbf{\mathbf{c}}
\newcommand\dbf{\mathbf{d}}
\newcommand\cof{\operatorname{cof}}
\newcommand{\dom}{\operatorname{dom}}
\newcommand{\ran}{\operatorname{ran}}
\newcommand{\Cov}{\operatorname{Cov}}
  \newcommand{\alt}{\operatorname{alt}}
\newcommand{\one}{\mathbf{1}}
\newcommand{\zero}{\mathbf{0}}
\newcommand{\forces}{\Vdash}
\newcommand{\compat}{\parallel}
\newcommand{\incompat}{\mathbin\bot}
\newcommand\cat{{}^\smallfrown}
\renewcommand{\diamond}{\diamondsuit}
\newcommand\axiom{\mathsf}
\newcommand\AC{\axiom{AC}}
\newcommand\KP{\axiom{KP}}
\newcommand\KPU{\axiom{KPU}}
\newcommand\ZF{\axiom{ZF}}
\newcommand\ZFC{\axiom{ZFC}}
\newcommand\ZFm{\axiom{ZF}^-}
\newcommand\ZFCm{\axiom{ZFC}^-}
\newcommand\GB{\axiom{GB}}
\newcommand\GBC{\axiom{GBC}}
\newcommand\GBc{\axiom{GBc}}
\newcommand\GBCm{\GBC^-}
\newcommand\GBcm{\GBc^-}
\newcommand\ETR{\axiom{ETR}}
\newcommand\PCA{\Pi_1^1\text{-}\axiom{CA}}
\newcommand\PnCA[1]{\Pi_{#1}^1\text{-}\axiom{CA}}
\newcommand\SnCC[1]{\Sigma_{#1}^1\text{-}\axiom{CC}}
\newcommand\ECC{\axiom{ECC}}
\newcommand\PnCAp[1]{\PnCA{#1} + \SnCC{#1}}
\newcommand\PnCAm[1]{\PnCA{#1}^-}
\newcommand\SkTR[1]{\Sigma_{#1}^1\text{-}\axiom{TR}}
\newcommand\KM{\axiom{KM}}
\newcommand\KMp{\KM^+} 
\newcommand\KMCC{\axiom{KMCC}}
\newcommand\KMm{\KM^-}
\newcommand\KMpm{(\KMp)^-} 
\newcommand\KMCCm{\KMCC^-}
\newcommand\ZFmi{\ZFm_{\mathrm I}}
\newcommand\ZFCmi{\ZFCm_{\mathrm I}}
\newcommand\ZFCmr{\ZFCm_{\mathrm R}}
\newcommand\AD{\axiom{AD}}
\newcommand\PA{\axiom{PA}}
\newcommand\TA{\axiom{TA}}
\newcommand\RCA{\axiom{RCA}}
\newcommand\WKL{\axiom{WKL}}
\newcommand\ACA{\axiom{ACA}}
\newcommand\ATR{\axiom{ATR}}
\newcommand\class{\mathrm}
\newcommand\HOD{\class{HOD}}
\newcommand\Ord{\class{Ord}}
\newcommand\Add{\class{Add}}
\newcommand\wfp{\operatorname{wfp}}
\newcommand{\seq}[1]{\left\langle #1 \right\rangle}
\renewcommand{\epsilon}{\varepsilon}
\newcommand\mand{\textrm{ and }}
\newcommand\mor{\textrm{ or }}
\newcommand\card[1]{\left\lvert #1 \right\rvert}
\newcommand\length{\operatorname{len}}
\newcommand\rest{\upharpoonright}
\newcommand\powerset{\Pcal}
\newcommand\rank{\operatorname{rank}}
\newcommand\tc{\operatorname{TC}}
\newcommand\omegaoneck{{\omega_1^{\mathrm{CK}}}}
\newcommand\Hyp{\operatorname{Hyp}}
\newcommand\supsetend{\supseteq_{\mathsf{end}}}
\newcommand\inv{^{-1}}
\newcommand\prtlfn{ \mathbin{{\vbox{\baselineskip=3pt\lineskiplimit=0pt\hbox{.}\hbox{.}\hbox{.}}}}}
\newcommand{\nats}{\mathbb N}
\newcommand{\reals}{\mathbb R}
\newcommand{\proves}{\vdash}
\newcommand\Th{\operatorname{Th}}
\newcommand\Con{\operatorname{Con}}
\newcommand\Def{\operatorname{Def}}
  \newcommand\Form{\axiom{Form}}
\newcommand{\impl}{\Rightarrow}
\renewcommand{\iff}{\Leftrightarrow}
\renewcommand{\phi}{\varphi}
\newcommand\rlzn[2]{#1\text{-}\mathfrak{Re}(#2)}
\newcommand\rlznc[2]{#1\text{-}\mathfrak{Re}_{<\omega_1}(#2)}
\newcommand\rlznp[2]{#1\text{-}\mathfrak{Re}_{\mathrm{Pr}}(#2)}
\newcommand\GC{\GBC\text{-}\mathfrak{Re}_{< \omega_1}}
\newcommand\GP{\GBC\text{-}\mathfrak{Re}_{\mathrm{Pr}}}
\newcommand\vin{\mathbin\varepsilon}
\newcommand\oin{\mathord\in}
\newcommand\lol{\multimap}
\newcommand\pen{\operatorname{elts}}
\newcommand\wZFC{\axiom{wZFC}}
\newcommand\wZFCmi{\wZFC^-_{\mathrm I}}
\newcommand\wZFCmr{\wZFC^-_{\mathrm R}}
\newcommand\Tr{\mathrm{Tr}}
\newcommand\ITR{\axiom{ITR}}
\newcommand\itr{\axiom{itr}}
\theoremstyle{plain}
\else \newtheorem{masterlemma}[theorem]{Master Lemma} \fi
\begin{document}

\frontmatter

\begin{titlepage}

\begin{center}

~\vspace{2in}

\textsc{The Structure of Models of Second-order Set Theories} \\[0.5in]
by \\[0.5in]
\textsc{Kameryn J Williams}

\vspace{\fill}
A dissertation submitted to the Graduate Faculty in Mathematics in partial fulfillment of the requirements for the degree of Doctor of Philosophy, The City University of New York. \\[0.25in]
2018

\end{center}

\end{titlepage}

\setcounter{page}{2}

\phantom{}\vspace{\fill}
\begin{center}
\copyright~2018\\
\textsc{Kameryn J Williams}\\
All Rights Reserved\\
\end{center}

\begin{center}
  
\textsc{The Structure of Models of Second-order Set Theories} \\[0.25in]
by \\[0.25in]
\textsc{Kameryn J Williams}

\vspace{0.75in}

This manuscript has been read and accepted by the Graduate Faculty in Mathematics in satisfaction of the dissertation requirement for the degree of Doctor of Philosophy.
\end{center}

\vspace{0.55in}

\begin{tabular}{p{1.75in}p{0.5in}p{3.5in}}
~                                   & & \textbf{Professor Joel David Hamkins}\\
~                                   & & \\
\hrulefill                          & &\hrulefill \\
Date                                & & Chair of Examining Committee\\
~                                   & & \\
~                                   & & \textbf{Professor Ara Basmajian}\\
~                                   & & \\
\hrulefill                          & &\hrulefill \\
Date                                & & Executive Officer\\
\end{tabular}

\vspace{0.5in}

\begin{tabular}{l}
\textbf{Professor Joel David Hamkins} \\
\textbf{Professor Arthur Apter} \\
\textbf{Professor Gunter Fuchs} \\
Supervisory Committee \\
\end{tabular}

\vspace{\fill}
\begin{center}
\textsc{The City University of New York}
\end{center}

\begin{center}
Abstract \\
\textsc{The Structure of Models of Second-order Set Theories} \\
by \\
\textsc{Kameryn J Williams} \\[0.25in]
\end{center}

\vspace{0.25in}

\noindent Advisor: Professor Joel David Hamkins

\vspace{0.25in}

\noindent
This dissertation is a contribution to the project of second-order set theory, which has seen a revival in recent years. The approach is to understand second-order set theory by studying the structure of models of second-order set theories. The main results are the following, organized by chapter. First, I investigate the poset of $T$-realizations of a fixed countable model of $\ZFC$, where $T$ is a reasonable second-order set theory such as $\GBC$ or $\KM$, showing that it has a rich structure. In particular, every countable partial order embeds into this structure. Moreover, we can arrange so that these embedding preserve the existence/nonexistence of upper bounds, at least for finite partial orders. Second I generalize some constructions of Marek and Mostowski from $\KM$ to weaker theories. They showed that every model of $\KM$ plus the Class Collection schema ``unrolls'' to a model of $\ZFCm$ with a largest cardinal. I calculate the theories of the unrolling for a variety of second-order set theories, going as weak as $\GBC + \ETR$. I also show that being $T$-realizable goes down to submodels for a broad selection of second-order set theories $T$. Third, I show that there is a hierarchy of transfinite recursion principles ranging in strength from $\GBC$ to $\KM$. This hierarchy is ordered first by the complexity of the properties allowed in the recursions and second by the allowed heights of the recursions. Fourth, I investigate the question of which second-order set theories have least models. I show that strong theories---such as $\KM$ or $\PCA$---do not have least transitive models while weaker theories---from $\GBC$ to $\GBC + \ETR_\Ord$---do have least transitive models.

\chapter*{Dedication}

\begin{center}

\vspace{2in}

For Margaret and Martin.

\end{center}

\chapter*{Acknowledgments}

First and foremost I must thank Joel. You have been a fantastic advisor throughout my studies and there is no doubt that my growth as a mathematician owes much to your support and guidance. Thank you for the many fruitful conversations, for thoroughly reading through this text, and for the myriad helpful suggestions along the way.

Thank you Arthur and Gunter for serving on my dissertation committee and for reading and giving me comments on this document. Thank you Roman for being on my committee in spirit and for organizing the models of arithmetic group at CUNY. Thank you Kaethe, Miha, Corey, Alex, Ryan, and Eoin. It was great to have other students interested in set theory and to have a student-run seminar in the subject for all of my years at CUNY. Thank you Vika for many conversations and for your hard work in making things run smoothly for the set theory group. And thank you to all the others who make CUNY's logic community so vibrant and such a productive place to be.

Thank you Herbie. You are my best friend and I love you. Thank you for moving across the country with me and for supporting and putting up with me through the joys and trials of graduate school. And finally, thank you mom, dad, Chris, Kalen, Christian, and Lexi.

\tableofcontents
\listoffigures

\mainmatter

\setcounter{chapter}{-1}
\chapter{Introduction}
\chaptermark{Introduction}

\epigraph{\singlespacing Eine Vielheit kann n\"amlich so beschaffen sein, da\ss{} die Annahme eines ``Zusammenseins'' {\em aller} iherer Elemente auf einen Widerspruch f\"uhrt, so da\ss{} es unm\"oglich ist, die Vielheit als eine Einheit, als ``ein fertiges Ding'' aufzufassen. Solche Vielheiten nenne ich {\em absolut unendliche} oder {\em inconsistente Vielheiten}.}{Georg Cantor}

The distinction between set and class can be traced back to Cantor. He distinguished sets from those multiplicities he termed {\em absolutely infinite} or {\em inconsistent}. As the name suggests, inconsistent multiplicities are those which lead to contradiction if taken as a set. For example, if we assume that the collection of ordinals is a set then we can derive the Burali--Forti paradox. 

From a more modern perspective based upon the iterative conception of set we have a clear distinction between sets and classes. A collection is a set if it appears at some stage $\alpha$ in the cumulative hierarchy, while a (proper) class consists of elements unbounded in rank. So sets can be elements of other collections, while classes can never be elements.

In ordinary set theoretic practice in the early twenty-first century classes are treated as mere syntactic sugar; for example, $x \in \Ord$ is an abbreviation for the formula expressing that $x$ is a transitive set linearly ordered by $\in$. But set theory can also be formalized with classes as actual objects, rather than relegating them to a metatheoretic role.

The first axiomatization of second-order set theory---set theory with both sets and classes, sometimes called class theory---is due to von Neumann \cite{neumann1925}. 
His system survives into the modern day as $\GBC$, allowing only predicative definitions of classes.\footnote{But note that the contemporary $\GBC$ is quite different from von Neumann's original axiomatization. Most strikingly, von Neumann's system did not use sets and classes but rather what he termed I-objects and II-objects. I-objects are the sets whereas II-objects are not classes but rather functions (possibly class-sized), with I-II-objects being the set-sized functions. The modern formulation in terms of sets and classes, originally due to Bernays, is much more convenient to work with.}
But $\GBC$ is not the only well-studied axiomatization of second-order set theory. The other major axiomatization $\KM$, which allows impredicative comprehension, was independently proposed by multiple logicians---among them Morse, Quine, and Tarski. See the appendix to \cite{kelley1955} for a popularization of this system. 

In the decades following von Neumann's axiomatization, second-order axiomatizations of set theory saw significant use among set theorists. Perhaps most notably, G\"odel's original presentation \cite{godel1938} of his relative consistency proof for the axiom of choice was in terms of von Neumann's system. But over time the use of second-order systems waned, with the first-order system $\ZFC$ becoming the de facto standard.

In recent years, however, second-order set theory has enjoyed a revived interest. Several mathematicians independently arrived at second-order set theory as the natural arena in which to pursue certain projects. It has seen use in work on the foundations of class forcing \cite{antos2015, HKLNS2016, HKS2016a, HKS2016b, GHHSW2017}, hyperclass forcing \cite{antos-friedman2015}, formalizing the inner model hypothesis \cite{antos-barton-friedman2018}, determinacy for class games \cite{gitman-hamkins2016, hachtman2016}, and in truth theoretic work \cite{fujimoto2012}.

One fact that has emerged is that $\GBC$ and $\KM$ are not the only interesting second-order set theories. For some applications, $\KM$ is not quite strong enough so we need to extend to a stronger system. And for other applications $\GBC$ is too weak while $\KM$ is overkill, so we want to study natural intermediate theories.

This dissertation is a contribution towards this project of second-order set theory. Rather than apply the tools of second-order set theory to some domain, the aim is to study second-order set theories themselves. My approach will be model theoretic, aiming to understand these theories by understanding their models. A better knowledge of the foundations of second-order set theory will then facilitate applications thereof.

The layout of this dissertation is as follows.

Chapter 1 begins with the main theories we will consider, as well as several important classes of models. This is followed by a discussion of how to check whether a collection of classes gives a model of such and such theory, and when the axioms are preserved by forcing. To conclude the chapter, I show that the partial order consisting of $\GBC$-realizations of a fixed countable model of $\ZFC$ has a rich structure. Much of the material is this chapter is already known, but I include it for the sake of giving a complete presentation.

Chapter 2 is dedicated to three constructions, which were originally studied in the context of models of $\KM$ by Marek and Mostowski \cite{marek1973,marek-mostowski1975}. The first of these constructions, which I call the unrolling construction, takes a model of $\KM$ (plus Class Collection) and gives a model of $\ZFCm$ with a largest cardinal, which is inaccessible.  The second construction, I call it the cutting-off construction, takes a model of $\ZFCm$ with a largest cardinal and gives a model of second-order set theory. Together, these two constructions show that $\KM$ (plus Class Collection) is bi-interpretable with a first-order set theory without powerset. The third construction is a version of G\"odel's constructible universe in the classes. Given a model of $\KM$ this gives a smaller model of $\KM$ plus Class Collection with the same ordinals.

I investigate these constructions over a weaker base theory than $\KM$ (plus Class Collection), generalizing Marek and Mostowski's results to weaker theories. In particular, this shows that for the second-order set theories $T$ in which we are interested that being $T$-realizable is closed under taking inner models. I close the chapter with an application of the constructions, showing that the least height of a transitive model of $\GBC + \PnCA k$ is less than the least height of a $\beta$-model of $\GBC + \PnCA k$. This generalizes an analogous result---due to Marek and Mostowski, as the reader may have guessed---about transitive and $\beta$-models of $\KM$.

Chapter 3 is about transfinite recursion principles in second-order set theory. The main result is that there is a hierarchy of theories, ranging in strength from $\GBC$ to $\KM$, given by transfinite recursion principles. This hierarchy is ordered first by the complexity of the properties we can do recursion for and second by the lengths of recursion that can be done.

Chapter 4 investigates the phenomenon of minimal models of second-order set theories. The main result is that strong second-order set theories---e.g. $\KM$ or $\GBC + \PCA$---do not have least transitive models whereas weaker second-order set theories---e.g. $\GBC$ or $\GBC + \ETR_\Ord$---do. Indeed, the results of that chapter show that no countable model of $\ZFC$ can have a least $\KM$-realization (and similarly for other strong theories). Left open is the question of whether $\GBC + \ETR$ has a least transitive model. I show that there is a basis of minimal $(\GBC+\ETR)$-realizations for any $(\GBC+\ETR)$-realizable model $M$ so that if $(M,\Xcal) \models \GBC + \ETR$ then $\Xcal$ sits above precisely one of these basis realizations. I also show that the second-order set theories considered in this dissertation have least $\beta$-models. 

Some of the material in this dissertation also appeared in a paper of mine \cite{williams-min-km} which, at time of writing, is under review. In particular, that paper contains most of the results of chapter 4, parts of chapter 3, and a little bit from the end of chapter 1.

\chapter{A first look at models of second-order set theories}
\chaptermark{A first look}

\epigraph{\singlespacing Wesentlich aber ist, daß auch "zu gro\ss{}e" Mengen Gegenstand dieser Mengenlehre sind, n\"amlich diejenigen II. Dinge, die keine I.II. Dinge sind. Anstatt sie g\"anzlich zu verbieten, werden sie nur f\"ur unf\"ahig erkl\"art Argumente zu sein (sie sind keine I. Dinge!). Zum Vermeiden der Antinomien reicht das aus und ihre Existenz ist f\"ur gewisse Schlußweisen notwendig.}{John von Neumann}

The purpose of this chapter is to introduce the important objects of study for this dissertation and lay out some basic properties thereof. 

I begin by introducing the major second-order set theories under study, followed by some important classes of models for those theories. Next comes a discussion of means for checking whether a structure satisfies the axioms of these theories. In particular, I look at when these axioms are preserved by class forcing. This transitions into some well-known constructions for producing models of weak second-order set theories.

I end the chapter with an in-depth look at the collection of $\GBC$-realizations for a fixed countable model of $\ZFC$. Some of those results can be generalized to stronger theories, and I discuss to what extent this can be done. But a fuller look at the topic for stronger theories is delayed until chapter 4, after we have built up more tools.

Most of the work in this chapter is not new. I have strived to indicate clearly where each theorem originates. At times, however, I have resorted to labeling a result as folklore when its origin is lost in the literature to me.

\section{Dramatis Personae}

In this section I present the main players in the drama. I introduce the main second-order set theories of interest and the important classes of models of these theories. First, let me set up some framework.

The reader may know that there are two main approaches to formalizing second-order set theory. The first is to use a one-sorted theory, where the only objects are classes and sets are those classes which are elements of another class. I will not take that approach. Instead, I will take the two-sorted approach, where there are two types of objects: sets and classes.
The domain of a model $(M,\Xcal)$ has two parts, where $M$ is the {\em first-order part of the model}, with elements of $M$ being the {\em sets}, and $\Xcal$ is the {\em second-order part of the model}, with elements of $\Xcal$ being the {\em classes}. I will suppress writing the membership relation for the model, referring simply to $(M,\Xcal)$. In case we need to refer to the membership relation I will write $\in^{(M,\Xcal)}$.

In this dissertation we will often be interested in different models which have the same sets but different classes. This easily fits to the two-sorted approach, where we can easily talk about models $(M,\Xcal)$ and $(M,\Ycal)$ with the same first-order part. In contrast, the one-sorted approach is awkward here.

I will use $\Lcal_\in$ to refer to the language of set theory, whether second-order or first-order. The only non-logical symbols in the language are for membership. Extensions of this language by adding new symbols will be denoted e.g.\ $\Lcal_\in(A)$.

It should be emphasized that the ``second-order'' in second-order set theory refers to the use of classes, not to the logic. The theories we consider here will all be formalized in first-order logic.\footnote{One could also think of them as formalized in second-order logic with Henkin semantics, but officially we will use first-order logic.} Consider the analogous situation of second-order arithmetic, where two-sorted theories of arithmetic---with numbers and sets of numbers as objects---are formulated in first-order logic.

In the formal language for these theories I will distinguish between variables for sets and variables for classes by using lowercase letters for the former and uppercase letters for the latter. For instance, the formula $\forall X \exists y\ y \in X$ gives the (false) proposition that every class has some set as a member. Say that a formula in the language of set theory is {\em first-order} if it has no class quantifiers (though classes may appear as variables). Those formulae which do have class quantifiers are called {\em second-order}. The first-order formulae are stratified according to the usual L\'evy hierarchy, which will be denoted by $\Sigma^0_n$ and $\Pi^0_n$. There is also a stratification of the second-order formulae. A formula is $\Sigma^1_1$ if it is of the form $\exists X\ \phi(X)$ or is $\Pi^1_1$ if it is of the form $\forall X\ \phi(X)$, where $\phi$ is first-order. This extends upward to $\Sigma^1_n$ and $\Pi^1_n$ in the obvious manner. It will sometimes be convenient to have a name for the first-order and second-order formulae. I will use $\Sigma^0_\omega$, $\Pi^0_\omega$, $\Sigma^1_0$, or $\Pi^1_0$ for the first-order formulae and $\Sigma^1_\omega$ or $\Pi^1_\omega$ for the second-order formulae.

All second-order set theories I consider will include Class Extensionality as an axiom. It will often be convenient to assume that our models are of the form $(M,\Xcal)$ where $\Xcal \subseteq \powerset(M)$ and the set-membership relation is the true $\in$. However, one must be a little careful here. It may be that some of the elements of $M$ are subsets of $M$. If $M$ is transitive and its membership relation is $\mathord{\in} \rest M$, then this is no problem as in this case the set-set and set-class membership relations will cohere for any $\Xcal \subseteq \powerset(M)$ we choose. But it could be that the membership relation of $M$ is some weird thing. Nevertheless, it is always true that our models $(\bar M,\bar \Xcal)$ are isomorphic to a model of the form $(M,\Xcal)$ where $\Xcal \subseteq \powerset(M)$ and the set-membership relation is the true $\in$. We first find $M \cong \bar M$ so that $M \cap \powerset(M) = \emptyset$. Next, using that all our models will satisfy Extensionality for classes, we can realize the classes of (the isomorphic copy of) our model as literal subsets of the model.\footnote{There is a small point here which needs to be addressed. Under this approach the sets and classes are disjoint. So, for example, if we are being really careful we must distinguish between the set of finite ordinals and the class of finite ordinals, as they are different objects. Nevertheless, it will follow from the axioms we use that every set has the same elements as some class. In practice I will not always be careful to distinguish a set from the class it is co-extensive with.}

\subsection{Second-order set theories}

The theories can be roughly grouped into three groups: weak, strong, and medium. I will present them in that order.

\begin{definition}
{\bf G}\"odel--{\bf B}ernays set theory with Global {\bf C}hoice $\GBC$ is axiomatized with the following.\footnote{In the literature one also sees this axiom system called $\mathsf{NBG}$.}
\begin{itemize}
\item $\ZFC$ for sets.
\item Extensionality for classes.
\item Class Replacement---if $F$ is a class function and $a$ is a set then $F'' a$ is a set.
\item Global Choice---there is a class bijection $\Ord \to V$.
\item Elementary Comprehension---if $\phi(x)$ is a first-order formula, possibly with set or class parameters, then $\{ x : \phi(x) \}$ is a class.
\end{itemize}
Dropping Global Choice from the axiomatization gives the theory $\GBc$. The $\mathsf{c}$ reminds one that while Global Choice is lacking, there are still {\bf c}hoice functions for sets. Though it will not be used in this dissertation, $\GB$ is used to refer to $\ZF$ plus Class Extensionality, Class Replacement, and Elementary Comprehension.
\end{definition}

This axiomatization is not parsimonious. In particular, it has infinitely many axioms whereas $\GBC$ is known to be finitely axiomatizable. An advantage of this axiomatization is that it makes immediately apparent the distinction between classes and sets and how $\GBC$ relates to $\ZFC$. It is obvious from this axiomatization that we can obtain a model of $\GBC$ by taking a model of $\ZFC$ (perhaps we require more from the model\footnote{Though we will see later in this chapter that we do not need to require more, at least for countable models. Any countable model of $\ZFC$ can be expanded to a model of $\GBC$. Consequently, $\GBC$ is conservative over $\ZFC$.})
and adding certain classes. Later in this chapter we will see how to verify whether a collection of classes for a model of $\ZFC$ gives a model of $\GBC$.

Note that by Elementary Comprehension for every set $x$ there is a class $X$ which has the same elements. But not all classes are co-extensive with sets. For instance, By Elementary Comprehension there is a class of all sets. But there can be no such set, by a well-known argument of Russell's. A class which is not co-extensive with a set is called a {\em proper class}.

Also note that $\GBc$ proves Separation for classes, i.e.\ that $A \cap b$ is a set for every class $A$ and every set $b$. To see this, let $F$ be the class function which is the identity on $A$ and sends every set not in $A$ to some designated element, say $\emptyset$. By Class Replacement $a = F''b$ is a set. Then either $a$ or $a \setminus \{\emptyset\}$ will be $A \cap b$, depending upon whether $b \subseteq A$ and $\emptyset \in A \cap b$.

Next we look at much stronger theories. The difference in axiomatization may appear slight---allowing impredicative definitions in Comprehension---but the effects are profound.

\begin{definition}
{\bf K}elley--{\bf M}orse set theory $\KM$ is axiomatized with the axioms of $\GBC$ plus the full Comprehension schema. Instances of this schema assert that $\{ x : \phi(x) \}$ is a set for any formula $\phi$, possibly with class quantifiers and set or class parameters.\footnote{In the literature $\KM$ has many other names---I have seen $\mathsf{MK}$ for Morse--Kelley (e.g.\ \cite{antos-friedman2015}), $\mathsf{MT}$ for Morse--Tarski (e.g.\ \cite{chuaqui1980}), $\mathsf{MKT}$ for Morse--Kelley--Tarski (e.g.\ \cite{chuaqui1981}), and $\mathsf{QM}$ for Quine--Morse (e.g.\ \cite{diener1983}). If Monty Python did sketches about set theory instead of breakfast \cite{python:spam} no doubt we would also have Morse--Kelley--Morse--Morse--Tarski--Morse, or $\mathsf{MKMMTM}$.}
\end{definition}

We can strengthen $\KM$ by adding the Class Collection schema. For some purposes, $\KM$ is not quite enough and we need the extra strength of this schema.

\begin{definition}
The theory $\KMCC$ is obtained from $\KM$ by adding the {\bf C}lass {\bf C}ollection axiom schema.\footnote{Continuing a theme of previous footnotes, both $\KMCC$ and Class Collection have different names in the literature. Antos and Friedman \cite{antos-friedman2015} call them $\mathsf{MK}^*$ and Class Bounding while Gitman and Hamkins \cite{gitman-hamkins2015} call them $\KMp$ and Class Choice. I myself previously have used $\KMp$ \cite{williams-min-km}, but in this dissertation I will consider second-order set theories formulated without the axiom of Powerset. Following the standard of referring to $\ZFC - \text{Powerset}$ as $\ZFCm$ I will call these theories $\GBCm$, $\KMCCm$, and so forth. Using $\KMp$ would lead to the infelicitous $\KMpm$. So $\KMCC$ it is.}
 Informally, this schema asserts that if for every set there is a class satisfying some property, then there is a coded hyperclass\footnote{A {\em hyperclass} is a collection of classes. A hyperclass $\Acal$ is {\em coded} if there is a class $C$ so that $\Acal = \{ (C)_x : x \in V \}$ where $(C)_x = \{ y : (x,y) \in C \}$ is the $x$-th slice of $C$. Officially, of course, hyperclasses are not objects in the models and any talk of such is a paraphrase, similar to the usage of classes in first-order set theory.}
consisting of witnesses for each set. Formally, let $\phi(x,Y)$ be a formula, possibly with parameters. The instance of Class Collection for $\phi$ asserts
\[
[\forall x \exists Y \ \phi(x,Y)] \impl [\exists C \forall x\ \exists y\ \phi(x,(C)_y)]
\]
where $(C)_y = \{ z : (y,z) \in C \}$ is the $y$-th slice of $C$.
\end{definition}

Observe that under Global Choice, Class Collection is equivalent to the schema with instances
\[
[\forall x \exists Y \ \phi(x,Y)] \impl [\exists C \forall x\ \phi(x,(C)_x)],
\]
that is where $x$ is the the index of the slice in $C$ witnessing the property for $x$. This version of the schema has the flavor of a choice principle, hence it sometimes being called Class Choice.

The set theorist who does not work with second-order set theories may wonder why we would want to work with something even stronger than $\KM$. To her I have two responses. First, $\KM$ behaves badly with some constructions. For instance, set theorists like to take ultrapowers of the universe using some measure. In order for \L{}o\'s's theorem to be satisfied for the full second-order language, we need Class Collection. Gitman and Hamkins showed that $\KM$ alone does not suffice \cite{gitman-hamkins2015}.
Second, the natural models of $\KM$ are actually models of $\KMCC$. If $\kappa$ is inaccessible then $(V_\kappa,V_{\kappa+1})$ is a model of $\KMCC$. This may not satisfy the skeptic who is worried about a jump in consistency strength, but we will see in chapter 2 that the skeptic need not worry, as $\KMCC$ does not exceed $\KM$ in consistency strength.

It is immediate that $\KM$ is stronger than $\GBC$. Indeed, $\KM$ proves the existence of $\Sigma^1_k$ truth predicates for every (standard) $k$. Therefore, $\KM$ proves $\Con(\ZFC)$ so once we see that $\GBC$ and $\ZFC$ are equiconsistent we will see that the separation is also in terms of consistency strength. However, there is a significant gap between the two theories. We can weaken Comprehension to get intermediate theories, though the following are still grouped among the strong theories.

\begin{definition}
Let $k$ be a (standard) natural number. The  $\Pi^1_k$-Comprehension Schema $\PnCA k$ is the restriction of the Comprehension schema to $\Pi^1_k$-formulae. Note that, over $\GBcm$, this is equivalent to restricting Comprehension to $\Sigma^1_k$-formulae.

Recall that $\Pi^1_\omega$ refers to the second-order formulae, of any complexity. It will sometimes be convenient to use $\PnCA{\omega}$ or $\Pi^1_\omega$-Comprehension as a synonym for the full second-order Comprehension schema.
\end{definition}

Observe that $\PnCA 0$ is Elementary Comprehension. So we are really only interested in the case where $k > 0$.

We get that $\GBC + \PCA$ proves $\Con(\GBC)$ and $\GBC + \PnCA{k}$ proves $\Con(\GBC + \PnCA n)$ for $n < k$, as the $\Sigma^1_n$ truth predicate can be defined via a $\Sigma^1_{n+1}$-formula. So there is a hierarchy of theories between $\GBC$ and $\KM$, increasing in consistency strength.

It is also useful to consider fragments of Class Collection.

\begin{definition}
Let $k$ be a (standard) natural number. The {\em $\Sigma^1_k$-Class Collection axiom schema}, denoted by $\SnCC k$, is the restriction of the Class Collection schema to $\Sigma^1_k$-formulae. Elementary Class Collection $\ECC$ is another name for $\SnCC 0$. 
\end{definition}

In chapter 2 we will see that $\GBC + \PnCAp k$ does not exceed $\GBC + \PnCA k$ in consistency strength. See corollary \ref{cor2:get-a-plus}.

\begin{observation}
Over $\GBCm$ we have that $\SnCC k$ implies $\Pi^1_k$-Comprehension.
\end{observation}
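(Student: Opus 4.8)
The plan is to reduce $\PnCA k$ to the extraction of a single ``coding'' class from $\SnCC k$, organized as an induction on $k$. By the remark just before the observation, over $\GBcm$ --- and hence over $\GBCm$ --- proving $\PnCA k$ is the same as proving closure under $\Sigma^1_k$-Comprehension, so fix a $\Sigma^1_k$ formula; contracting like quantifiers, we may take it in the normal form $\sigma(z) \equiv \exists X\,\theta(z,X)$ with $\theta$ a $\Pi^1_{k-1}$ formula (set and class parameters suppressed). We must show that $A := \{\,z : \sigma(z)\,\}$ is a class. For $k = 0$ there is nothing to prove, as $\PnCA 0$ is just Elementary Comprehension, which is an axiom of $\GBCm$. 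For $k \geq 1$, note that every $\Sigma^1_{k-1}$ formula is $\Sigma^1_k$ after padding with a dummy class quantifier, so $\SnCC k$ entails $\SnCC{k-1}$, and hence $\PnCA{k-1}$ is available by the induction hypothesis.

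Now, using $\PnCA{k-1}$, for each fixed class $X$ the class $B_X := \{\,z : \theta(z,X)\,\}$ exists, and plainly $A = \bigcup_X B_X$ (union over all classes $X$). So it suffices to produce a single class $C$ each of whose slices $(C)_a$ is contained in $A$ and whose slices together cover $A$: then $A = \{\,z : \exists a\; z \in (C)_a\,\}$ by Elementary Comprehension with parameter $C$. Such a $C$ should come from applying $\SnCC k$ to a formula $\phi(z,Y)$ saying, in effect, that $Y$ is a slice of such a cover that is relevant to $z$ --- that $Y = B_X$ for some class $X$ with $z \in B_X$, or else $z \notin A$ and $Y$ takes a fixed trivial value, tagged so it cannot be selected when $\sigma(z)$. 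If $\phi$ can be arranged appropriately, then $\forall z\,\exists Y\,\phi(z,Y)$ holds outright, $\SnCC k$ returns $C$ with $\forall z\,\exists a\,\phi(z,(C)_a)$, every slice of $C$ is a subclass of $A$, and the tagging forces the slice chosen for any $z \in A$ to be a genuine $B_X$ containing $z$; hence $\bigcup_a (C)_a = A$, and the preceding sentence finishes the proof.

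The crux --- and the step I expect to be the main obstacle --- is getting the formula handed to $\SnCC k$ to be genuinely $\Sigma^1_k$. The naive witnessing formula just described contains a $\Pi^1_k$ clause, namely the certificate $\neg\sigma(z)$ for the branch ``$z \notin A$''; that clause seems unavoidable in the naive approach, since without it $\SnCC k$ could return an uninformative $C$ assembled entirely from the trivial value, yet a $\Pi^1_k$ formula is out of reach of $\Sigma^1_k$-Class Collection. Overcoming this is the technical heart: the witnessing must be reshaped so that only $\Pi^1_{k-1}$-and-below information passes through the collection --- drawing on the inductively available $\PnCA{k-1}$, and on the equivalence (again the remark before the observation) of the $\Sigma^1_k$ and $\Pi^1_k$ comprehension schemata over $\GBcm$, to absorb the missing complexity into the elementary post-processing --- while still forcing $C$ to encode enough to pin down $A$. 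Once such a $\phi$ is in hand, the remainder is a routine unwinding of slice notation and of the definition of $A$.
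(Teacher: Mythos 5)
Your proposal stops exactly where the proof has to happen: you never exhibit the formula to which $\SnCC k$ is actually applied, and you say yourself that producing it is the unresolved ``technical heart.'' As written, this is a plan plus an identified obstacle rather than a proof --- the covering class $C$ is described only by the properties you would like its slices to have (``tagged so it cannot be selected when $\sigma(z)$ holds''), and no formula with those properties is ever constructed, let alone shown to fall under the schema. Moreover, the scaffolding you do build (the induction on $k$ to secure $\PnCA{k-1}$, the classes $B_X$, the union-of-slices presentation of $A$) is not where the difficulty lies and is not needed at all.

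For comparison, the paper's proof is a short direct argument with no induction and no covering: working over $\GBCm$, it applies the instance of Class Collection (in its Global Choice form) to the single formula $(\phi(x)\land Y=\{x\})\lor(\neg\phi(x)\land Y=\emptyset)$, where $\phi$ is the given $\Sigma^1_k$ formula; since the witness $Y$ is unique for each $x$, the resulting class $C$ satisfies $(C)_x=\{x\}$ when $\phi(x)$ holds and $(C)_x=\emptyset$ otherwise, so $\{x:\phi(x)\}=\{x:(C)_x\ne\emptyset\}$ exists by Elementary Comprehension. Note that the negative certificate you were trying to engineer away --- the clause $\neg\phi(x)$ guarding the ``trivial value'' branch --- appears verbatim in that instance; the paper does not reshape it into $\Pi^1_{k-1}$ form or absorb it into post-processing, it simply invokes the schema for this formula and exploits uniqueness of the witness to read off the class. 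So your instinct about where a subtlety could arise is understandable if one insists on a syntactically $\Sigma^1_k$ instance, but the ``reshaping'' you anticipated is not how the observation is proved, and your writeup does not supply any substitute for it; to complete the argument along the paper's lines you should apply the collection instance displayed above and finish with Elementary Comprehension.
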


\begin{proof}
Let $\phi(x)$ be a $\Sigma^1_k$-formula, possibly with (suppressed) parameters. Apply the instance of Class Collection to the formula
\[
(\phi(x) \land Y = \{x\}) \lor (\neg \phi(x) \land Y = \emptyset)
\]
to get a class $C$ so that $(C)_x = \{x\}$ if $\phi(x)$ and $(C)_x = \emptyset$ otherwise. Then, $\{ x : \phi(x) \}  = \{ x : (C)_x \ne \emptyset \} \in \Xcal$, as desired.
\end{proof}

On the other hand, Gitman and Hamkins \cite{gitman-hamkins2015} produced a model of $\KM$ which does not satisfy even $\Sigma^1_0$-Class Collection. 

Between the weak $\GBC$ and the strong $\PnCA k$ we have the medium theories.

\begin{definition}
We define the {\bf E}lementary {\bf T}ransfinite {\bf R}ecursion schema $\ETR$. This schema asserts that recursions of first-order properties along well-founded relations have solutions. Formally, let $\phi(x,Y,A)$ be a first-order formula, possibly with a class parameter $A$ and let $R$ be a well-founded class relation. Denote by $<_R$ the transitive closure of $R$. The instance of $\ETR$ for $\phi$ and $R$ asserts that there is a class $S \subseteq \dom R \times V$ which satisfies
\[
(S)_r = \{ x : \phi(x,S \rest r, A) \}
\]
for all $r \in \dom R$. Here, $(S)_r = \{ x : (r,x) \in S \}$ denotes the $r$-th slice of $S$ and
\[
S \rest r = S \cap \left(\{ r' \in \dom R : r' <_R r\} \times V\right)
\]
is the partial solution below $r$.
\end{definition}

One example of an elementary recursion is the Tarskian definition of a (first-order) truth predicate. Thus, $\GBC + \ETR$ proves $\Con(\ZFC)$ and thereby exceeds $\GBC$ in consistency strength. On the other hand, $\GBC + \PCA$ proves $\Con(\GBC + \ETR)$ (see \cite{sato2014}) so $\ETR$ sits below the strong second-order set theories.

It is equivalent, over $\GBC$, to formulate $\ETR$ for recursions over well-founded relations, well-founded partial orders, or well-founded tree orders. See \cite[lemma 7]{gitman-hamkins2016}.

We get fragments of $\ETR$ by restricting the length of recursions.

\begin{definition}
Let $\Gamma$ be a class well-order. Then $\ETR_\Gamma$ is the Elementary Transfinite Recursion schema restricted to well-orders of length $\le \Gamma$.
\end{definition}

There is a subtlety here. Namely, the issue is whether $\ETR_\Gamma$ can be expressed as a theory in the language $\Lcal_\in$ of set theory. If $\Gamma$ is a definable well-order, say $\Gamma = \omega$ or $\Gamma = \Ord$, then this can be done in the obvious way. Different models of set theory may disagree on what $\Ord$ is, but it is sensible to ask whether they satisfy $\ETR_\Ord$. 

But we will also be interested in the case where $\Gamma$ is a specific well-order, possibly undefinable. To be more precise, consider a model $(M,\Xcal)$ of second-order set theory with $\Gamma \in \Xcal$ a well-order. We can then ask whether $(M,\Xcal) \models \ETR_\Gamma$. This may not expressible as a theory in the language $\Lcal_\in$ of set theory, but because $\Gamma \in \Xcal$ we can use it as a parameter to define $\ETR_\Gamma$ in the expanded language $\Lcal_\in(\Gamma)$. 

It will be clear from context which of the two meanings is had in mind, so I will refer to both as simply $\ETR_\Gamma$. 

Let me give an example to illustrate where the distinction matters. Take countable $(M,\Xcal) \models \GBC + \ETR_\gamma$ where $\gamma = \omega_1^M$. Assume that $(M,\Xcal) \not \models \ETR_{\gamma \cdot \omega}$. We will see in chapter 3 that this assumption can be made without loss---if $(M,\Xcal)$ does satisfy $\ETR_{\gamma \cdot \omega}$ then we can throw out classes to get $\bar \Xcal$ so that $(M,\bar \Xcal)$ satisfies $\GBC + \ETR_\gamma$ but does not satisfy $\ETR_{\gamma \cdot \omega}$. Let $g \subseteq M$ be generic over $(M,\Xcal)$ for the forcing to collapse $\omega_1$ to be countable. It is not difficult to check that $(M,\Xcal)[g] \models \GBC + \ETR_\gamma$ but $(M,\Xcal)[g] \not \models \ETR_{\gamma \cdot \omega}$. So $(M,\Xcal)[g]$ will not be a model of the $\Lcal_\in$-theory $\ETR_{\omega_1}$, even though it is a (set) forcing extension of a model of $\ETR_{\omega_1}$. 

Another issue with expressing $\ETR_\Gamma$ as an $\Lcal_\in$-theory is that we can have definitions for a well-order $\Gamma$ which are highly non-absolute. For instance, suppose $\Gamma$ is defined as ``if $V = L$ then $\Gamma = \Ord$ and otherwise $\Gamma = \omega_1$''. Then there is a model of $\GBC + \ETR_\Gamma$ whose $L$ is not a model of $\GBC + \ETR_\Gamma$. On the other hand, as we will see in chapter 3, $\GBC + \ETR_\Gamma$ as an $\Lcal_\in(\Gamma)$-theory does go down to inner models.\footnote{There is a technical caveat here. Namely, $\Gamma$ must be sufficiently nice over the inner model to avoid pathologies such as $\Gamma \subseteq L$ which codes $0^\sharp$. See theorem \ref{thm3:etr-gamma-inner-model} for details.}

The reader who is familiar with reverse mathematics may see an analogy to second-order arithmetic. Namely, three of these theories line up with the strongest three of the ``big five'' subsystems of second-order arithmetic: $\GBC$ is analogous to $\ACA_0$, $\GBC + \ETR$ is analogous to $\ATR_0$ and $\GBC + \PCA$ is analogous to the subsystem of second-order arithmetic which is referred to as $\PCA_0$. At the highest level, $\KM$ is analogous to $\Zsf_2$, full second-order arithmetic. This analogy can be useful to keep in mind. However, the reader should beware that results from arithmetic do not always generalize to set theory. For example, Simpson proved that there is no smallest $\beta$-model of $\ATR_0$---see \cite{simpson:book} for a proof. But there is a smallest $\beta$-model of $\GBC + \ETR$, as we will see in chapter 4. (For the reader who does not know what a $\beta$-model is, we will get to that later in this section). 

We will briefly return to this analogy at the end of chapter 4, after we have seen enough theorems about models of second-order set theories to satisfactorily explore its limits.

\subsection{Doing without Powerset} \label{subsec1:sans-powerset}

All of the second-order set theories considered so far include that the sets satisfy $\ZFC$. But we can ask for less out of the first-order part. We get variants on all of the above theories by dropping the requirement that the first-order part satisfy Powerset.

\begin{definition}
The first-order set theory $\ZFCm$ is axiomatized by Extensionality, Pairing, Union, Infinity, Foundation, Choice, Separation, and Collection. 
\end{definition}

The reader should be warned that in the absence of Powerset that Collection is stronger than Replacement \cite{zarach1996} and thus we do not want to use Replacement to axiomatize $\ZFCm$, as the resulting theory is badly behaved; see \cite{gitman-hamkins-johnstone2016} for some discussion of how this theory misbehaves.

We get ``minus versions'' of all the above-defined second-order set theories by dropping the requirement that the first-order part satisfy Powerset. I will give one definition in full and leave it to the reader to fill in the pattern for the others. The way to think of them is that, for example, $\KMm$ is $\KM - \text{Powerset}$. (But one should keep in mind the Collection versus Replacement issue.)

\begin{definition}
The second-order set theory $\GBCm$ is axiomatized by the following.
\begin{itemize}
\item $\ZFCm$ for sets.
\item Extensionality for classes.
\item Class Replacement.
\item Global Choice, in the form ``there is a bijection $\Ord \to V$''.
\item Elementary Comprehension.
\end{itemize}
\end{definition}

In the absence of Powerset, the various equivalent forms of Global Choice are no longer equivalent. (See section \ref{sec1:preserving-axioms} for a proof.) The strongest is the assertion that there is a bijection from $\Ord$ to $V$, or equivalently, that there is a global well-order of ordertype $\Ord$. I adopt this strongest version as the official form of Global Choice for $\GBCm$, though at times we could get away with less. 

Models of, say $\KMCCm$ are not hard to come by. Indeed, $\KMCCm$ is much weaker than $\ZFC$ in consistency strength. In a model of $\ZFC$ if $\kappa$ is a regular uncountable cardinal then $(H_\kappa,\powerset(H_{\kappa})) \models \KMCCm$. A special case of particular interest is that of the hereditarily countable sets: $(H_{\omega_1},\powerset(H_{\omega_1})$ is a model of $\KMCCm$ $+$ every set is countable.

\subsection{Models of second-order set theory}

An important theme of this work is the following: given a fixed model $M$ of first-order set theory what can be said about possible second-order parts that can be put on $M$ to make a model of some second-order set theory? It will be convenient to have a name for these possible collections of classes.

\begin{definition}
Let $M$ be a model of first-order set theory and $T$ be some second-order set theory. A {\em $T$-realization for $M$} is a set $\Xcal \subseteq \powerset(M)$ so that $(M,\Xcal) \models T$. If $M$ has a $T$ realization then we say $M$ is {\em $T$-realizable}.
\end{definition}

Many properties of first-order models can also be had by second-order models, via the exact same definition. For instance, $(M,\Xcal)$ is {\em $\omega$-standard} (synonymously, is an {\em $\omega$-model}) if $\omega^M$ is well-founded. One important property is transitivity.

\begin{definition}
A model $(M,\Xcal)$ of second-order set theory is {\em transitive} if its membership relations are the true $\in$. This is equivalent to requiring that $M$ is transitive, due to our convention of only considering models so that $\Xcal \subseteq \powerset(M)$. 
\end{definition}

It is well-known that transitive models of $\ZFC$ are correct about well-foundedness: if transitive $M \models \ZFC$ thinks that $R \in M$ is a well-founded relation then $R$ really is well-founded. (Indeed, the same is true for much weaker theories, e.g.\ $\ZFCm$.) This does not hold for transitive models of second-order set theories. While they will be correct about whether set relations are well-founded they can be wrong about whether a class relation is well-founded.\footnote{For the reader who was previously unaware of this folklore result, in chapter 4 we will construct, as a by-product toward other goals, transitive models which are wrong about well-foundedness.}
Models of second-order set theory which are correct about which of their class relations are well-founded are of special interest.

\begin{definition}
A model $(M,\Xcal)$ of second-order set theory is a {\em $\beta$-model} if its membership relations are well-founded and it is correct about well-foundedness. That is, if $R \in \Xcal$ is a relation which $(M,\Xcal)$ thinks is well-founded then $R$ really is well-founded. (The inverse direction, that if $R$ is well-founded then $(M,\Xcal)$ thinks $R$ is well-founded is always true for well-founded models by downward absoluteness.)
\end{definition}

Observe that every $\beta$-model is isomorphic to a transitive model, so we can usually assume without loss that a $\beta$-model is transitive.

The following observation shows that the distinction between $\beta$-model and transitive model does not arise a certain class of models, which includes many natural models considered by set theorists.

\begin{observation} \label{obs1:unc-cof-beta}
Suppose $V_\alpha \models \ZFCm$ is transitive with $\cof \alpha > \omega$. Then $(V_\alpha,\Xcal)$, equipped with the true membership relation, is a $\beta$-model for any $\Xcal \subseteq \powerset(V_\alpha)$.
\end{observation}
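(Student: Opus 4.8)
The plan is to show that any class relation $R \in \Xcal$ which $(V_\alpha, \Xcal)$ believes is well-founded is genuinely well-founded, by exploiting the hypothesis $\cof \alpha > \omega$ to rule out the only possible obstruction, namely an externally witnessed infinite descending sequence. First I would recall that a well-founded model like $(V_\alpha, \Xcal)$ is automatically correct in one direction: if $R$ is genuinely well-founded then it has no infinite descending sequence at all, so in particular none inside the model, hence the model agrees. The content is thus the other direction. So suppose for contradiction that $R \in \Xcal$ is a relation that $(V_\alpha, \Xcal)$ thinks is well-founded, but that $R$ is not genuinely well-founded. Then there is an externally existing sequence $\langle x_n : n \in \omega \rangle$ with each $x_n \in V_\alpha$ and $x_{n+1} \mathrel{R} x_n$ for all $n$.

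The key step is to observe that such a descending sequence, as an object, lives low in the cumulative hierarchy relative to $\alpha$. Each $x_n$ lies in $V_\alpha$, so $\rank(x_n) < \alpha$; let $\beta_n = \rank(x_n)$. Since $\cof \alpha > \omega$, the countable set $\{\beta_n : n \in \omega\}$ is bounded below $\alpha$, say by some $\beta < \alpha$. Then every $x_n \in V_\beta$, so the sequence $\langle x_n : n \in \omega\rangle$, coded in the usual way as a set of ordered pairs, has rank below $\beta + \omega < \alpha$ (using that $\alpha$ is a limit ordinal, which follows from $\cof\alpha > \omega$). Hence this sequence is an element of $V_\alpha = M$. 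But then $(V_\alpha,\Xcal)$, being transitive, recognizes this set as a genuine function from $\omega$ witnessing that $R$ has an infinite descending chain, contradicting the assumption that $(V_\alpha,\Xcal)$ thinks $R$ is well-founded. (Here one uses that $V_\alpha \models \ZFCm$, so in particular it has $\omega$ and can form and reason about $\omega$-sequences correctly, and that transitivity makes membership and the relation $R$ absolute.)

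I would then conclude that no such $R$ exists, i.e.\ $(V_\alpha,\Xcal)$ is correct about well-foundedness, and since its membership relation is the true $\in$ and is well-founded, it is by definition a $\beta$-model; this holds for every choice of $\Xcal \subseteq \powerset(V_\alpha)$ since the argument never used any closure properties of $\Xcal$ beyond $R \in \Xcal$. The main (and really only) obstacle is making precise the rank bookkeeping: that a countable sequence of elements of $V_\alpha$ is itself an element of $V_\alpha$ whenever $\cof\alpha > \omega$. This is where the cofinality hypothesis is used essentially — without it, $\alpha$ could have countable cofinality and one could have a descending sequence whose terms are cofinal in $V_\alpha$, hence not captured as a set inside $V_\alpha$, which is precisely the folklore phenomenon of transitive non-$\beta$-models alluded to earlier in the text. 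Everything else is routine absoluteness for transitive models.
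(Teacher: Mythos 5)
Your proof is correct and is essentially the paper's argument: the paper also reduces everything to the fact that $V_\alpha$ is closed under countable sequences when $\cof\alpha > \omega$, with your rank bookkeeping just spelling out that closure explicitly. Nothing further is needed.
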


\begin{proof}
Suppose $R \in \Xcal$ is ill-founded (i.e. from the perspective of $V$). But witnesses to ill-foundedness are countable sequences and $V_\alpha$ is closed under countable sequences. So $(V_\alpha,\Xcal)$ thinks that $R$ is ill-founded. Since $R$ was arbitrary, $(V_\alpha,\Xcal)$ is correct about well-foundedness.
\end{proof}

\begin{figure}
\begin{center}
\begin{tikzpicture}

\draw[rounded corners] (-.5,-.5) rectangle (14,4.5)
                       (0,0) rectangle (11,4)
                       (.5,.5) rectangle (8,3.5)
                       (1,1) rectangle (5,3);
\draw (2.5,2) node {$\beta$-models}
      (6.25,2) node[text width = 1.8cm] {transitive models}
      (9.75,2) node {$\omega$-models}
      (12.5,2) node {all models};
\end{tikzpicture}
\end{center}

\caption{Some classes of models of second-order set theory.}
\label{fig1:hier-of-models}
\end{figure}
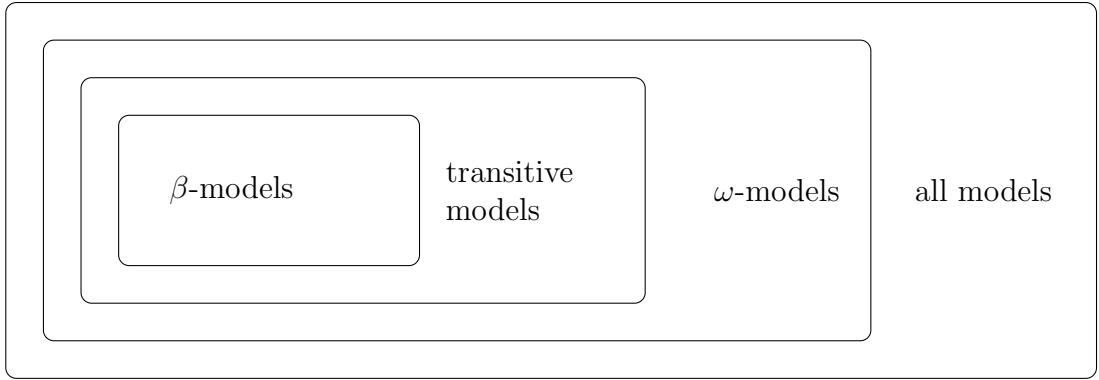

We will also be interested in various ways models may be contained within each other. The basic notion is that of a submodel. This is a familiar concept, but let me give a definition specialized to the context of second-order set theory.

\begin{definition}
Let $(M,\Xcal)$ and $(N,\Ycal)$ be models of second-order set theory. Say that $(M,\Xcal)$ is a {\em submodel} of $(N,\Ycal)$, written $(M,\Xcal) \subseteq (N,\Ycal)$, if $M \subseteq N$, $\Xcal \subseteq \Ycal$, and the membership relation for $(M,\Xcal)$ is the restriction of $\in^{(N,\Ycal)}$ to $(M,\Xcal)$. 
\end{definition}

This definition can be strengthened in various ways. 

\begin{definition}
Let $(M,\Xcal)$ and $(N,\Ycal)$ be models of second-order set theory.
\begin{itemize}
\item Say that $(M,\Xcal)$ is a {\em $V$-submodel} of $(N,\Ycal)$ if $M = N$ and $\Xcal \subseteq \Ycal$. The name is because in this case $V^{(M,\Xcal)} = V^{(N,\Ycal)}$.
\item Say that $(M,\Xcal)$ is an {\em $\Ord$-submodel} of $(N,\Ycal)$ if $(M,\Xcal) \subseteq (N,\Ycal)$ and $\Ord^M = \Ord^N$. The name is because in this case $\Ord^{(M,\Xcal)} = \Ord^{(N,\Ycal)}$. 
\item Say that $(M,\Xcal)$ is an {\em inner model} of $(N,\Ycal)$ if $(M,\Xcal)$ is an $\Ord$-submodel of $(N,\Ycal)$ and $\Xcal$ is definable over $(N,\Ycal)$, possibly via a second-order formula using parameters. Note that it is automatic that $M \in \Ycal$, since $M \in \Xcal \subseteq \Ycal$.
\item Let $(M,\Xcal)$ be a $V$-submodel of $(M,\Ycal)$. Say that $(M,\Xcal)$ is a {\em coded $V$-submodel} of $(M,\Ycal)$ if $\Xcal$ is coded in $\Ycal$. That is, there is a single class $C \in \Ycal$ so that $\Xcal = \{ (C)_x : x \in M \}$.
\end{itemize}
\end{definition}

The reader may find examples to be helpful. Suppose $\kappa$ is inaccessible and that $L_\kappa \ne V_\kappa$. Then $(V_\kappa, \Def(V_\kappa)) \models \GBc$ is a $V$-submodel of $(V_\kappa, \powerset(V_\kappa)) \models \KMCC$ and $(L_\kappa, \Def(L_\kappa)) \models \GBC$ is an $\Ord$-submodel of $(V_\kappa, \powerset(V_\kappa))$. In fact, they are both inner models because $\powerset(V_\kappa)$ contains truth predicates for all $A \subseteq V_\kappa$ and can thus uniformly pick out which classes are in $\Def(A)$.

\section{Verifying the axioms} \label{sec1:preserving-axioms}

Many times in this dissertation we will find ourselves in the following situation. We have some first-order model $M$ of set theory and some $\Xcal \subseteq \powerset(M)$ a collection of classes over $M$. We will want to be able to say something about the theory of $(M,\Xcal)$. In this section I present some basic tools one can use in this situation, focusing here on the axioms of $\GBC$.

Let us begin with the most obvious of observations.

\begin{observation}
A second-order model of set theory $(M,\Xcal)$ with $\Xcal \subseteq \powerset(M)$ and the true $\in$ for its set-class membership relation always satisfies Extensionality for classes. \qed
\end{observation}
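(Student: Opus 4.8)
The statement to prove is the trivial observation that a second-order model of set theory $(M,\Xcal)$ with $\Xcal \subseteq \powerset(M)$ and the true $\in$ for set-class membership always satisfies Extensionality for classes.

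\textbf{Proof plan.} The plan is essentially to unwind the definitions, since the content here is minimal. Class Extensionality asserts that any two classes with the same members are equal: $\forall X \forall Y\,[(\forall z\, (z \in X \leftrightarrow z \in Y)) \to X = Y]$. First I would take two classes $X, Y \in \Xcal$ and suppose that $(M,\Xcal) \models \forall z\, (z \in X \leftrightarrow z \in Y)$. Since the set-class membership relation is the literal $\in$, this says precisely that for every $z \in M$, we have $z \in X$ if and only if $z \in Y$ — where now $\in$ is genuine set membership in the ambient universe. But $X$ and $Y$ are actual sets (elements of $\powerset(M)$, hence subsets of $M$), so this common-membership condition says $X$ and $Y$ have exactly the same elements drawn from $M$; as both are subsets of $M$, they have exactly the same elements, period. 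By Extensionality in the metatheory (or simply by the definition of set equality), $X = Y$. Hence $(M,\Xcal) \models X = Y$, which is what Class Extensionality demands.

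The one point worth a sentence of care — though the surrounding text has already flagged it — is the earlier convention that elements of $M$ and elements of $\Xcal$ are taken to be disjoint, so that a class is never literally one of the sets. This does not interfere with the argument: the quantifier $\forall z$ in the Extensionality axiom ranges over the first-order part $M$, and $X, Y$ being literal subsets of $M$ is exactly what makes the hypothesis ``same members in $M$'' force genuine set-theoretic equality of $X$ and $Y$.

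\textbf{Main obstacle.} There is no real obstacle; the result is immediate from the setup. The only thing to be mindful of is not to conflate the two membership relations of the model, and to remember that the classes of the model are honest sets in the metatheory so that metatheoretic Extensionality applies to them directly.
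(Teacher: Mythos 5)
Your argument is correct and is exactly the reason the paper marks this observation with \qed{} and no proof: since classes are literal subsets of $M$ and the set-class membership is the true $\in$, two coextensive classes are identical sets by Extensionality in the metatheory. Nothing more is needed.
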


To see that $(M,\Xcal) \models \GBc$ satisfies Global Choice one just has to see that $\Xcal$ contains a bijection $\Ord \to V$. As with the ordinary axiom of choice there are several equivalent forms.

\begin{fact}
Let $(M,\Xcal) \models \GBc$ be a second-order model of set theory. The following are equivalent.
\begin{enumerate}
\item $\Xcal$ contains a bijection $\Ord \to V$.
\item $\Xcal$ contains a global choice function, that is a class function $F$ whose domain is the class of nonempty sets so that $F(x) \in x$ for all $x$.
\item $\Xcal$ contains a global well-order, that is a well-order whose domain is the entire universe of sets.
\item $\Xcal$ contains a global well-order of ordertype $\Ord$.
\end{enumerate}
\end{fact}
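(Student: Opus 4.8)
The plan is to prove the cycle of equivalences $(1) \impl (4) \impl (3) \impl (2) \impl (1)$, working throughout inside a model $(M,\Xcal) \models \GBc$, so that Class Replacement, Separation for classes, and $\ZFC$ for sets are all available.

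First I would observe that $(4) \impl (3)$ and $(1) \impl (2)$ are essentially immediate: a well-order of ordertype $\Ord$ is in particular a global well-order, and a bijection $b \colon \Ord \to V$ yields a global choice function by sending a nonempty set $x$ to $b(\alpha)$ where $\alpha$ is least such that $b(\alpha) \in x$ --- this least ordinal exists since $b$ is surjective and $x \ne \emptyset$, and the whole function is a class by Elementary Comprehension. For $(2) \impl (1)$, given a global choice function $F$, I would build a bijection $\Ord \to V$ by transfinite recursion: at stage $\alpha$, having enumerated some sets already, use $F$ to pick an element of $V_{\beta} \setminus \{\text{already enumerated}\}$ for the least $\beta$ for which this is nonempty (equivalently, enumerate $V_0$, then $V_1 \setminus V_0$, etc., using $F$ to well-order each difference $V_{\beta+1}\setminus V_\beta$, which is a set). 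The point is that this recursion is a set-recursion of length $\Ord$ on set-parameters --- at each stage $\alpha$ the initial segment is a set by Class Replacement --- so it can be carried out in $\GBc$ without needing $\ETR$; the resulting class function is total (every set lies in some $V_\beta$) and is a bijection $\Ord \to V$, again a class by Elementary Comprehension applied to the recursion.

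The main work is $(3) \impl (4)$: from a global well-order $<$ of arbitrary ordertype, produce one of ordertype exactly $\Ord$. The natural approach is to mimic the usual proof that a well-ordered proper class has ordertype $\Ord$. Given the global well-order $<$, consider the class function $g$ sending a set $x$ to the ordertype of the $<$-predecessors of $x$; this is a class by Elementary Comprehension (the ordertype of a set well-order is definable). Then $g$ is injective and order-preserving from $(V,<)$ into $\Ord$. I would argue $g$ is surjective: if not, let $\gamma$ be the least ordinal not in the range of $g$; then $g$ restricted to $g^{-1}(\gamma)$ is a bijection onto $\gamma$, but $g^{-1}(\gamma) = \{ x : g(x) < \gamma \}$ is a proper class (it is a $<$-initial segment of $V$ that, being in bijection with the set $\gamma$, would by Class Replacement make $V$ a set if it equalled $V$, so it is a proper initial segment --- hence of the form $\{ y : y < x_0 \}$ for the $<$-least $x_0$ outside it, which is a set, contradiction with it being a proper class). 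Hence $g$ is a bijection $V \to \Ord$, and its inverse is a bijection $\Ord \to V$; pulling $<$ back along $g^{-1}$, or simply noting $g$ itself is an order isomorphism $(V,<) \cong \Ord$, gives that $<$ has ordertype $\Ord$, establishing $(4)$.

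The step I expect to be the main obstacle is precisely this $(3) \impl (4)$ direction, specifically the care needed in the absence of Powerset: one must check that each $<$-proper-initial-segment which is a set has a $<$-least strict upper bound (so that initial segments are either of the form $\{y : y < x\}$ or all of $V$), and that "the ordertype of a set well-order" is genuinely available --- this uses that $\ZFCm$ proves every set well-order is isomorphic to an ordinal, which is standard but should be cited. Everything else is routine bookkeeping with Elementary Comprehension and Class Replacement.
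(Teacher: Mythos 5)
Your $(3) \impl (4)$ step has a genuine gap, and it is the step where the real content of the fact lives. You define $g(x)$ as the ordertype of the class of $<$-predecessors of $x$, but nothing guarantees that this class is a set: a global well-order need not have ordertype $\Ord$. For instance, starting from a global well-order of ordertype $\Ord$, order $V$ by putting all non-ordinals first (in that order) followed by the ordinals in their usual order; this is again a global well-order, now of ordertype $\Ord+\Ord$, and every ordinal has a proper class of $<$-predecessors, so $g$ is undefined there. Your argument, if it worked, would prove the false statement that \emph{every} global well-order has ordertype $\Ord$; what your surjectivity argument really shows is that the initial segment $D$ of points with set-many predecessors is isomorphic to $\Ord$, and $D$ may be a proper initial segment of $V$, which gives neither $(4)$ nor a bijection $\Ord \to V$. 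A sanity check you missed: your argument nowhere uses Powerset, yet the paper notes right after this fact that $(3)\impl(4)$ fails without it (e.g.\ $(H_{\omega_1},\powerset(H_{\omega_1}))$ with the continuum at least $\aleph_2$ has a global well-order but none of ordertype $\Ord$). The paper's proof supplies exactly the missing ingredient: from $<^*$ define $x <^\dagger y$ iff $\rank x < \rank y$, or $\rank x = \rank y$ and $x <^* y$. Every proper initial segment of $<^\dagger$ is contained in some $V_\alpha$ and hence is a set, and then an ordertype-of-predecessors argument like yours (now with a total $g$) shows $<^\dagger$ has ordertype $\Ord$. You flagged nearby worries (least strict upper bounds of set initial segments, Mostowski collapse for set well-orders) but not this central one.

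Two smaller points. First, bookkeeping: the implications you actually prove are $(1)\impl(2)$, $(2)\impl(1)$, $(3)\impl(4)$, $(4)\impl(3)$, which is not the cycle you announced and leaves the pairs $\{1,2\}$ and $\{3,4\}$ unconnected in one direction; you still need, say, $(1)\impl(4)$ (pull the ordinal order back along the bijection, which is trivial), or to observe that your parenthetical version of $(2)\impl(1)$ --- well-ordering each difference $V_{\beta+1}\setminus V_\beta$ with $F$ and concatenating --- already yields a global well-order of ordertype $\Ord$; that is essentially the paper's proof of $(2)\impl(3)$, done by choosing coherent well-orders of the $V_\alpha$'s via $F$. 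Second, your $(2)\impl(1)$ is fine as stated: the recursion is set-valued at each stage, so the usual $\GB$-style class-function recursion theorem suffices and no fragment of $\ETR$ is needed, as you correctly note.
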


\begin{proof}
Both $(1 \impl 2)$ and $(4 \impl 1)$ are obvious. That leaves only $(2 \impl 3)$ and $(3 \impl 4)$ to check.

$(2 \impl 3)$ Using the global choice function $F$ we can well-order the $V_\alpha$'s in a coherent fashion. Suppose we have already defined a well-order $<_\alpha$ of $V_\alpha$. Then define a well-order $<_{\alpha+1}$ of $V_{\alpha+1}$ which extends $<_\alpha$ by using the global choice function: $<_{\alpha+1}$ is simply $F(w)$, where $w$ is the set of well-orders of $V_{\alpha+1}$ which extend $<_\alpha$. Then $\bigcup_\alpha \mathord{<_\alpha}$ is a global well-order (of ordertype $\Ord$, in fact).

$(3 \impl 4)$ Let $<^*$ be a global well-order. We define a new global well-order $<^\dagger$ as:
\[
x <^\dagger y \quad \text{iff} \quad \rank x < \rank y \mor (\rank x = \rank y \mand x <^* y).
\]
Then $<^\dagger$ has ordertype $\Ord$.
\end{proof}

As was mentioned in subsection \ref{subsec1:sans-powerset}, these are not all equivalent in the absence of Powerset. We officially adopted the strongest form of Global Choice for the powerset-free context, whose equivalent forms are ``there is a bijection $\Ord \to V$'' and ``there is a global well-order of ordertype $\Ord$''. Let us quickly see that the other forms of Global Choice are weaker in this context.

\begin{fact}\ 
\begin{enumerate}
\item Over $\KMCCm \mathbin-$ Global Choice,\footnote{Note that this theory includes the well-ordering theorem for sets, which is necessary to have a global well-order at all.}
the existence of a global well-order does not imply the existence of a global well-order of ordertype $\Ord$.
\item (Shapiro \cite[theorem 5.4]{shapiro:book}) Assume $\AD_\reals$, asserting the determinacy of every two-player, perfect information game where the two players play reals, is consistent with $\ZF$.\footnote{See \cite{gassner1994} for a proof which does not need this consistency assumption.}
Then over $\KMCCm \mathbin-$ Global Choice, the existence of a global choice function does not imply the existence of a global well-order.\footnote{The context for Shapiro's result here is second-order logic, as is the context for Ga\ss{}ner's paper cited in the previous footnote. This is a reformulation of his result into the context of second-order set theory.}
\end{enumerate}
\end{fact}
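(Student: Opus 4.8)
The plan is to handle the two non-implications separately, since they are of quite different character: the first I would prove by a direct construction, the second I would obtain by reformulating Shapiro's theorem, which is where the real difficulty lies.

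For part (1) the obstruction is pure cardinality, and the witnessing model is the universe of hereditarily countable sets in a model where $\CH$ fails. I would start from a model of $\ZFC + \neg\CH$ (available from $\Con(\ZFC)$ via Cohen forcing) and set $(M,\Xcal) = (H_{\omega_1},\powerset(H_{\omega_1}))$. That $(M,\Xcal)$ satisfies $\KMCCm$ minus Global Choice is essentially the verification behind the quoted fact $(H_\kappa,\powerset(H_\kappa)) \models \KMCCm$, with the Global Choice clause deleted: $H_{\omega_1}$ is a model of $\ZFCm$ in which every set is countable; Class Extensionality and full Comprehension are immediate because $\Xcal$ is the genuine powerset; Class Replacement holds since $H_{\omega_1}$ is closed under images of class functions restricted to its sets; and Class Collection follows from $\AC$ in the ambient universe, since given a witness $Y_x$ for each $x \in H_{\omega_1}$ one chooses the $Y_x$ by $\AC$ and forms $C = \{(x,z) : z \in Y_x\}$, which lies in $\powerset(H_{\omega_1})$ because $H_{\omega_1} \times H_{\omega_1} \subseteq H_{\omega_1}$. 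Now $\Xcal$ contains a global well-order: a well-ordering of the set $H_{\omega_1}$ exists by $\AC$ and is a subset of $H_{\omega_1} \times H_{\omega_1} \subseteq H_{\omega_1}$. But $\Xcal$ contains no global well-order of ordertype $\Ord$: such a class would, inside $(M,\Xcal)$, be a bijection between $\Ord^M = \omega_1$ and $V^M = H_{\omega_1}$, hence a genuine member of $\powerset(H_{\omega_1})$ and so a genuine bijection $\omega_1 \to H_{\omega_1}$ in the ambient universe, which is impossible since $\neg\CH$ gives $|H_{\omega_1}| = 2^{\aleph_0} > \aleph_1$. That settles (1).

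For part (2) I would derive the statement from Shapiro's theorem by exhibiting a model of $\KMCCm$ minus Global Choice which has a global choice function but no global well-order, under the hypothesis that $\AD_\reals$ is consistent with $\ZF$; I would work in a universe of $\ZF + \AD_\reals$ with enough dependent choice present (so that $H_{\omega_1} \models \ZFCm$ and, by Schr\"oder--Bernstein, $H_{\omega_1}$ is in bijection with $\reals$ via the usual real-coding of hereditarily countable sets). As in (1) the first-order part is $H_{\omega_1}$, now computed in this determinacy universe. Since $H_{\omega_1}$ is closed under countable sequences, $(H_{\omega_1},\Xcal)$ is a $\beta$-model for any $\Xcal \subseteq \powerset(H_{\omega_1})$ (cf.\ Observation~\ref{obs1:unc-cof-beta}), so a class in $\Xcal$ that the model thinks is a global well-order is a genuine well-ordering of the set $H_{\omega_1}$ in $V$; but $\reals$ injects into $H_{\omega_1}$ and is not well-orderable under $\AD$, so no global well-order exists, no matter which $\Xcal$ we use. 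The positive half is that a global choice function for $H_{\omega_1}$ amounts, after transporting along the coding $H_{\omega_1} \leftrightarrow \reals$, to a uniformization of a suitable relation on $\reals \times \reals$ (choosing, say, an $\omega$-enumeration of $\tc(x)$ for each $x$), and $\AD_\reals$ — which by Solovay's theorem yields full uniformization on $\reals \times \reals$ — can be massaged to produce one. The delicate point, which I expect to be the main obstacle and which I would import from Shapiro \cite[theorem 5.4]{shapiro:book} (and Ga\ss{}ner \cite{gassner1994}, whose argument avoids the consistency hypothesis), is the choice of $\Xcal$: it must contain this global choice function while still satisfying full Comprehension and, above all, Class Collection, which in a choiceless setting need not hold for $\Xcal = \powerset(H_{\omega_1})$ and may require passing to a carefully designed, Henkin-style sub-collection of classes using further consequences of $\AD_\reals$. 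Presenting this translation of Shapiro's second-order-logic argument into the present two-sorted framework is what the proof comes down to.
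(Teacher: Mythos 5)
Your proposal is essentially the paper's own argument: for (1) the paper forces $2^{\aleph_0}=\aleph_2$ and takes $(H_{\omega_1},\powerset(H_{\omega_1}))$, which has a global well-order of external ordertype $\omega_2$ but none of ordertype $\Ord^{H_{\omega_1}}=\omega_1$, and for (2) it takes the same structure under $\ZF+\AD_\reals$, rules out a global well-order exactly as you do, and extracts the global choice function from a winning strategy for the game in which player I plays a real coding a nonempty hereditarily countable set and player II plays a real coding one of its members --- precisely the uniformization instance you invoke. The only divergence is your hedge about $\Xcal$: the paper does not pass to a Henkin-style subfamily of classes but keeps $\Xcal=\powerset(H_{\omega_1})$ in both parts and simply checks (in sketch form, with the attribution to Shapiro and Ga{\ss}ner carrying the detailed verification) that all axioms of $\KMCCm$ except Global Choice, Class Collection included, hold there; it also remarks afterwards that far less than full $\AD_\reals$ is needed, namely clopen determinacy for games played with reals together with the non-well-orderability of $\reals$.
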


\begin{proof}[Proof Sketch]
$(1)$ Force if necessary to get the continuum to have size $\aleph_2$. Then one can check that $(H_{\omega_1},\powerset(H_{\omega_1}))$ is a model of all the axioms of $\KMCCm$ except Global Choice. Easily, it has a global well-order (externally seen to have ordertype $\omega_2$) but has no global well-order of ordertype $\omega_1 = \Ord^{H_{\omega_1}}$. 

$(2)$ Assume $\ZF + \AD_\reals$. Consider the model $(M,\Xcal) = (H_{\omega_1}, \powerset(H_{\omega_1}))$. One can check it satisfies all the axioms of $\KMCCm \mathbin{-}$ except Global Choice. Observe that $\Xcal$ cannot contain a global well-order, as that would imply there is a well-order of $\reals$ in the ambient universe, contradicting $\AD_\reals$. But $\Xcal$ does have a global choice function, which arises from a winning strategy for the following game: Player I plays a real which codes a nonempty hereditarily countable set. Player II responds by playing a real, then the game ends. Player II wins if her real codes an element of the set coded by player I's real and otherwise player I wins. This game is determined by $\AD_\reals$ and it is clear that player I could not possibly have a winning strategy. So player II has a winning strategy from which we can extract a global choice function for $M$.
\end{proof}

Note that $(2)$ requires far from the full strength of $\AD_\reals$, only needing that one can have clopen determinacy for games played with reals while not having a well-order of the reals. The reader who wishes to know the exact strength needed for $(2)$ is welcome to do that work herself.

Next let us see how to check whether our classes satisfy Elementary Comprehension.

\begin{definition}
Let $M$ be a model of set theory with $A \subseteq M$. Then $\Def(M;A)$ is the collection of classes of $M$ definable from $A$, possibly with set parameters. Formally,
\[
X \in \Def(M;A) \iff X = \{ x \in M : (M,A) \models \phi(x,p) \} \text{ for some {\em first-order} } \phi \mand p \in M.
\]
More generally, let $\Xcal$ be a collection of classes from $M$ and $A_i$, for $i$ in some index set $I$, be classes of $M$. Then $\Def(M;\Xcal, A_i : i \in I)$ is the collection of classes of $M$ definable from (finitely many) classes from $\Xcal \cup \{A_i : i \in I\}$.\footnote{Two remarks are in order. First, this definition is ambiguous, as a set can be both a subset of $M$ and also a collection of subsets of $M$. This happens if, for instance, $M$ is transitive and $\Xcal \subseteq M$. But this will not arise in practice and we will sacrifice complete unambiguity in favor of readable notation.
Second, while we could generalize the definition further and allow multiple collections $\Xcal_i$ of classes of $M$, this is not needed for this work. We will be interested in $\Def(M;\Xcal,A)$ when $(M,\Xcal)$ is a model of second-order set theory.}

I will write $\Def(M)$ to refer to $\Def(M;\emptyset)$.
\end{definition}

\begin{observation}
Let $(M,\Xcal)$ be a second-order model of set theory. Then $(M,\Xcal)$ satisfies Elementary Comprehension if and only if $\Xcal$ is closed under first-order definability---i.e.\ for any $A_0, A_1, \ldots, A_n \in \Xcal$ we have $\Def(M;A_0, A_1, \ldots, A_n) \subseteq \Xcal$.
\end{observation}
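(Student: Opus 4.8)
The plan is simply to unwind the definitions of Elementary Comprehension and of $\Def$; once the two are lined up, both implications are immediate, so the only real work is bookkeeping about parameters.

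First I would handle the forward direction. Assume $(M,\Xcal)$ satisfies Elementary Comprehension, fix $A_0,\ldots,A_n \in \Xcal$, and let $X \in \Def(M;A_0,\ldots,A_n)$. By the definition of $\Def$ there are a first-order formula $\phi$ and a set parameter $p \in M$ with $X = \{ x \in M : (M,\Xcal) \models \phi(x,p,A_0,\ldots,A_n) \}$, where the $A_i$ occur as class parameters and, since $\phi$ is first-order, no class quantifiers appear. Applying the instance of Elementary Comprehension for $\phi$ with set parameter $p$ and class parameters $A_0,\ldots,A_n$ then gives $X \in \Xcal$. As $X$ was arbitrary, $\Def(M;A_0,\ldots,A_n) \subseteq \Xcal$, and as the $A_i$ were arbitrary we get closure under first-order definability.

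For the converse, assume $\Xcal$ is closed under first-order definability and let $\phi(x)$ be any first-order formula, with set parameters (say $p$) and class parameters $A_0,\ldots,A_n$, each $A_i \in \Xcal$. Then $\{ x : \phi(x) \}$ is, by definition, a member of $\Def(M;A_0,\ldots,A_n)$, which by hypothesis is contained in $\Xcal$; so the witnessing class lies in $\Xcal$, i.e.\ the instance of Elementary Comprehension for $\phi$ holds. Since $\phi$ was arbitrary, $(M,\Xcal)$ satisfies Elementary Comprehension.

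The only step that deserves a word of care—and the closest thing to an obstacle—is the identification of ``first-order formula with class parameters $A_i$'' with ``first-order formula in the expanded language $\Lcal_\in(A_0,\ldots,A_n)$ in which the $A_i$ appear as predicate symbols.'' These coincide precisely because first-order formulae quantify only over sets, so finitely many class parameters can be absorbed into the signature without affecting truth values; this is exactly the equivalence already used implicitly when $\Def(M;\Xcal,A_i : i \in I)$ was defined. Once that is noted, nothing further is required.
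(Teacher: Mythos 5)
Your proposal is correct and follows essentially the same route as the paper's own proof: both directions are just the observation that an instance of Elementary Comprehension and a member of $\Def(M;A_0,\ldots,A_n)$ are the same data, a first-order formula with set parameters and the $A_i$ as class parameters. Your closing remark about absorbing finitely many class parameters into the signature is a fair point of care, but it is exactly the identification the paper's definition of $\Def$ already makes, so nothing beyond the paper's two-line argument is needed.
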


\begin{proof}
$(\Rightarrow)$ If $B \in \Def(M;A_0, A_1, \ldots, A_n)$ then $B$ was defined from the $A_i$'s by some first-order formula. But then Elementary Comprehension yields that $B$ is a class.

$(\Leftarrow)$ Fix class $A_0, A_1, \ldots, A_n$ and a formula $\phi(x,A_0, A_1, \ldots A_n)$. Then
\[
\{ x : \phi(x,A_0, A_1, \ldots, A_n) \}
\]
is a class because it is definable from the $A_i$'s.
\end{proof}

In general Class Replacement can be tricky to check. Nevertheless, there are some circumstances where it is trivial.

\begin{observation}
Suppose $(M,\Xcal) \models \GBcm$. Let $(N,\Ycal)$ be an $\Ord$-submodel of $(M,\Xcal)$ which satisfies all the axioms of $\GBCm$ except possibly Class Replacement. Then in fact $(N,\Ycal)$ satisfies Class Replacement.
\end{observation}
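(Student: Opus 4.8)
The plan is to transfer the instance of Class Replacement up to the ambient model $(M,\Xcal)$, where it holds as an axiom, deduce that the image in question is a set of $M$, and then descend that conclusion back down to $(N,\Ycal)$, using crucially that the two models have the same ordinals.

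In detail: suppose $F \in \Ycal$ is something $(N,\Ycal)$ regards as a class function and $a \in N$; we must produce a set $b \in N$ with $b = \{ y : \exists x \in a\ \langle x,y \rangle \in F \}$. Since $N \subseteq M$, $\Ycal \subseteq \Xcal$, and the membership relation of $(N,\Ycal)$ is the restriction of that of $(M,\Xcal)$, the object $F$ is a class and $a$ a set of $(M,\Xcal)$; moreover $F$ is still a class function from the point of view of $(M,\Xcal)$, since the only pairs constraining functionality already lie in $N$ and are read identically by the two models (here one uses that $N \models \ZFCm$, so $N$ computes the ordered-pair structure of its elements correctly). Now Class Replacement in $(M,\Xcal) \models \GBcm$ gives that $b := F''a$ — the same collection of sets whether computed in $(N,\Ycal)$ or in $(M,\Xcal)$, and already a member of $\Ycal$ by Elementary Comprehension in $(N,\Ycal)$ — is a set of $M$. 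As $(M,\Xcal)$ models $\ZFC$, $b$ has rank below some $\gamma \in \Ord^M$, and because $\Ord^N = \Ord^M$ this $\gamma$ is an ordinal of $N$; since $N$'s membership is a restriction of $M$'s, $b$ in fact has $N$-rank below $\gamma$ as well.

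The heart of the matter, and what I expect to be the main obstacle, is the descent: passing from ``$b \in \Ycal$ is a set of $(M,\Xcal)$ of rank below $\Ord^N$'' to ``$b$ is a set of $(N,\Ycal)$''. This is exactly where the $\Ord$-submodel hypothesis does its work, preventing $b$ from lying cofinally in $N$. The route I would take is to reduce, via a global well-order of $N$ of ordertype $\Ord^N$ (available by Global Choice in $(N,\Ycal)$) together with a further appeal to Class Replacement in $(M,\Xcal)$ applied to the inverse of that well-order, to the case that $b$ is a bounded subclass of $\Ord^N$, and then to argue that such a bounded class must already be an element of $N$, whence $b$ itself is recovered as a set of $N$ by Elementary Comprehension together with the set axioms of $N$. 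Making this last point fully rigorous is the delicate part: one cannot simply invoke Class Separation inside $(N,\Ycal)$, since ``$\GBCm$ minus Class Replacement'' is genuinely weaker than $\GBCm$, so the argument must squeeze what it needs out of the ambient $\GBcm$-model and the coincidence $\Ord^N = \Ord^M$.
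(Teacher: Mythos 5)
Your first half is fine, and it is the same opening move the paper makes: since $\Ycal \subseteq \Xcal$, $a \in N \subseteq M$, and $\Ord^N = \Ord^M$, the class $F$ and the set $a$ can be read inside $(M,\Xcal)$ and Class Replacement there applies. (Minor slip: $(M,\Xcal)$ only satisfies $\GBcm$, so its sets satisfy $\ZFCm$, not $\ZFC$; ranks still exist, but the rank detour turns out to be unnecessary.) The real problem is the ``descent'' step you yourself flag. The claim that a bounded subclass of $\Ord^N$ lying in $\Ycal$ must already be an element of $N$ is precisely an instance of Class Separation for $(N,\Ycal)$, and Class Separation is a consequence of the very Class Replacement you are trying to establish; none of the remaining axioms of $\GBCm$ gives it to you. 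Nor can it be extracted from the ambient model: Replacement and Separation in $(M,\Xcal)$ only tell you that such a class is a set \emph{of $M$}, and nothing in the hypotheses forces a set of $M$ that happens to be a subclass of $N$ to belong to $N$ --- think of $N$ playing the role of an inner model of $M$, with a class in $\Ycal$ that is a bounded class of ordinals coding information $N$ does not have. There is also a second circularity: even granting $A = b^{-1}(F''a) \in N$, recovering $F''a = b''A$ inside $(N,\Ycal)$ is again an instance of Class Replacement (for the global bijection $b$ applied to the set $A$), or of the missing Class Separation. So the boundedness-plus-descent plan begs the question exactly at its crux.

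The paper's proof avoids any descent by arguing contrapositively, so that only the trivial upward transfer $\Ycal \subseteq \Xcal$ is ever used. Suppose $F''a \notin N$. Then $F''a$ is a class of $(N,\Ycal)$ not coextensive with any set, and Global Choice in $(N,\Ycal)$ is invoked (this limitation-of-size step is where Global Choice earns its keep) to get a bijection in $\Ycal$ between $F''a$ and $\Ord^N$; composing with $F$ yields a class map in $\Ycal$ from a set whose range is unbounded in $\Ord^N$. That very class lies in $\Xcal$, and since $\Ord^N = \Ord^M$ its existence contradicts Class Replacement in $(M,\Xcal)$: the image of a set under a class function would be a set of ordinals of $M$ unbounded in $\Ord^M$. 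In other words, the shared ordinals are used to export \emph{unboundedness} upward, not to import sets of $M$ back down into $N$. If you recast your argument in this form --- a putative failure of Class Replacement in $(N,\Ycal)$ yields, via the global well-order, a map in $\Ycal \subseteq \Xcal$ from a set cofinally into the common $\Ord$ --- you arrive at the paper's proof; as written, your final step cannot be completed from the stated hypotheses.
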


\begin{proof}
Suppose $F \in \Ycal$ is a class function and $a \in N$ is a set so that $F''a \not \in N$. By Global Choice in $(N,\Ycal)$, there is a bijection in $\Ycal$ between $F''a \in \Ycal$ and $\Ord^N$. So the failure of Class Replacement for $F$ and $a$ gives a map in $\Ycal$ from $\alpha \in \Ord^N$ to $\Ord^N$. But this same map must be in $\Xcal$, contradicting that $(M,\Xcal)$ satisfies Class Replacement.
\end{proof}

This argument does not need Global Choice. It is enough that $\Ycal$ contains a $\subseteq$-increasing sequence $\seq{n_\alpha : \alpha \in \Ord^N}$ of sets from $N$ so that $\bigcup_\alpha n_\alpha = N$. This allows $(N,\Ycal)$ to define a ranking function relative to this $n_\alpha$-hierarchy, and from that get a map from $\alpha$ to $\Ord^N$. In particular, the argument goes through if $N$ satisfies Powerset, since then it has the $V_\alpha$-hierarchy.

\begin{observation}
Suppose $(M,\Xcal) \models \GBcm$. Let $(N,\Ycal)$  be an $\Ord$-submodel of $(M,\Xcal)$ satisfying all the axioms of $\GBc$ except possibly Class Replacement. Then in fact $(N,\Ycal)$ satisfies Class Replacement. \qed
\end{observation}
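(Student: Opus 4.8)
Since $(N,\Ycal)$ satisfies the $\ZFC$ axioms for sets, it in particular satisfies Powerset, and so $N$ carries its full cumulative hierarchy $\seq{V^N_\alpha : \alpha \in \Ord^N}$. By Elementary Comprehension this sequence is a class of $(N,\Ycal)$, and hence so is the rank function $\rk^N \colon N \to \Ord^N$; in other words $\Ycal$ contains a $\subseteq$-increasing, cofinal, $\Ord^N$-indexed chain of sets of $N$, which is exactly the hypothesis the remark following the previous observation asked for. The plan is to re-run that argument with $\rk^N$ in the place Global Choice occupied there.

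So let $F \in \Ycal$ be a class function and $a \in N$ a set; I must show $F''a \in N$. Without loss $F$ is total, say by sending sets outside its domain to $\emptyset$. Form the class function $g = \rk^N \comp F$, which lies in $\Ycal$ and hence in $\Xcal$. Since $(M,\Xcal) \models \GBcm$ satisfies Class Replacement and $a$ is a set of $M$, the image $g''a$ is a set of $M$; being a set of ordinals, and using $\Ord^M = \Ord^N$ together with $M \models \ZFCm$, it is bounded, say by some $\beta \in \Ord^N$. Therefore $F''a \subseteq V^N_\beta$, so $F''a$ is included in the single set $V^N_\beta$ of $N$.

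It remains to promote this containment to membership, $F''a \in N$. Here I would note that $f := F \rest a$ is a subclass of the set $a \times V^N_\beta$ of $N$, so Class Separation in $(M,\Xcal)$ --- available since $(M,\Xcal) \models \GBcm$ --- already makes $f$ a set of $M$; one then argues that this set in fact lies in $N$, and concludes $F''a = \ran f \in N$. I expect this last transfer --- from ``$f$ is a set of $M$ included in an object of $N$'' to ``$f$ is a set of $N$'' --- to be the one genuinely nontrivial point, and it is precisely where the $\Ord$-submodel relationship and the fact that $(N,\Ycal)$ sits inside an honest model of $\GBcm$ must be used; everything before it is bookkeeping with the rank function plus one appeal to Class Replacement upstairs in $(M,\Xcal)$.
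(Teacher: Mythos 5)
Your first two paragraphs are exactly the paper's intended argument: since $N$ satisfies Powerset it carries its $V_\alpha$-hierarchy, the rank function is a class of $(N,\Ycal)$ by Elementary Comprehension, and applying Class Replacement in $(M,\Xcal)$ to $\rk^N \comp F \rest a$ bounds the ranks of the members of $F''a$, so that $F''a \subseteq V_\beta^N$ for some $\beta \in \Ord^N = \Ord^M$. (The paper packages the same computation contrapositively: a failure of Class Replacement in $(N,\Ycal)$ would hand $\Xcal$ a class function sending a set cofinally into $\Ord^M$, which Class Replacement in $(M,\Xcal)$ forbids.)

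The step you defer in your last paragraph is a genuine gap, and it is not bookkeeping: passing from the containment $F''a \subseteq V_\beta^N$ (even knowing $F''a$ is coextensive with a set of $M$, via Class Separation upstairs) to the membership $F''a \in N$ is precisely an instance of Class Separation for $(N,\Ycal)$, and the paper obtains Class Separation only as a consequence of Class Replacement---the very axiom being verified---so it is not available for free downstairs. Moreover being a set of $M$ does not transfer: a set of $M$ all of whose elements lie in $N$ need not belong to $N$. Concretely, let $M \models \ZFC$ be transitive with a real $r \in M \setminus L^M$, put $N = L^M$, $\Xcal = \Def(M)$, and $\Ycal = \Def(N;r)$, the classes of $N$ definable over $(N,\in,r)$; these all lie in $\Def(M)$ because $L$ is definable in $M$ and $r \in M$, so $(N,\Ycal)$ is an $\Ord$-submodel of $(M,\Xcal) \models \GBcm$ satisfying Extensionality, Elementary Comprehension, and $\ZFC$ for sets, yet the class $r \in \Ycal$ is a bounded subclass of $N$ coextensive with no set of $N$ (and the class function sending $n \in \omega$ to $n$ if $n \in r$ and to $0$ otherwise witnesses a failure of Class Replacement). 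So the bounded case is exactly where the transfer you hope for can break down, and no further bookkeeping with the hypotheses you have invoked will close it; this is also the point that the paper's one-line justification passes over when it reads $F''a \notin N$ as producing a map cofinal in $\Ord^N$. What is needed is the additional information that the classes in $\Ycal$ are amenable to $N$---equivalently, that $(N,\Ycal)$ already satisfies Class Separation---which is available in the paper's intended applications; with that in hand your boundedness computation finishes the proof in one line, since then $F''a = F''a \cap V_\beta^N$ is a set of $N$.
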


Together these observations give us the tools to check whether $\Xcal \subseteq \powerset(M)$ is a $\GBC$-realization (or $\GBCm$-realization) for $M$. We do not have such nice tools for stronger theories. Nevertheless, something can be said. For the medium theories, in chapter 3 we will see that Elementary Transfinite Recursion is equivalent to the existence of certain classes, namely iterated truth predicates. A similar result will hold for $\ETR_\Gamma$.

For stronger forms of Comprehension, it is true that $(M,\Xcal)$ satisfying $\Pi^1_k$-Comprehension is equivalent to $\Xcal$ being closed under $\Pi^1_k$-definability. But this means, of course, definability over $(M,\Xcal)$. So in practice this characterization is not useful and we want other tools. 

One specific situation of interest is when our model arises as a forcing extension of a structure already satisfying a strong form of Comprehension. In this case we can say something about whether our model also satisfies Comprehension. More generally, we can ask about the preservation of the axioms under class forcing, to which we now turn.

\section{Preserving the axioms}

There is more than one approach to formalize class forcing. I will take the following. We work over a model $(M,\Xcal)$. A {\em forcing notion} $\Pbb \in \Xcal$ is a separative partial order with a maximum element $\one$. If $p \le q$ we say that $p$ is {\em stronger than} $q$. Two conditions $p$ and $q$ are {\em compatible}, denoted $p \compat q$, if there is $r \le p,q$. Otherwise, $p$ and $q$ are {\em incompatible}, denoted $p \incompat q$.

Given $\Pbb$ we can define the collection of $\Pbb$-names. These are sets or classes whose elements are of the form $(\tau, p)$ where $\tau$ is a $\Pbb$-name and $p \in \Pbb$. This {\it prima facie} circular definition is actually a recursion on ranks. The convention here will be to use capital letters such as $\Sigma$ for proper class $\Pbb$-names and lowercase letters such as $\sigma$ for set $\Pbb$-names. In case I want to refer to either, I will use lowercase letters.

The forcing relation $\forces$ is defined recursively, via the following schema.

\begin{definition}
Let $p \in \Pbb$ and $\sigma,\tau,\ldots$ be $\Pbb$-names. Unless otherwise indicated, they may be either set names or class names. A forcing relation $\mathord\forces = \mathord\forces_\Pbb$ for $\Pbb$ is a relation between $p \in \Pbb$ and formulae in the forcing language which satisfies the following recursive schema on its domain.
\begin{itemize}
\item $p \forces \sigma \in \tau$ if and only if there are densely many $q \le p$ so that there is $(\rho,r) \in \tau$ with $q \le r$ and $q \forces \sigma = \rho$;
\item $p \forces \sigma \subseteq \tau$ if and only if for all $(\rho,r) \in \sigma$ and all $q \le p,r$ we have $q \forces p \in \tau$;
\item $p \forces \sigma = \tau$ if and only if $p \forces \sigma \subseteq \tau$ and $p \forces \tau \subseteq \sigma$;
\item $p \forces \phi \land \psi$ if and only if $p \forces \phi$ and $p \forces \psi$;
\item $p \forces \neg \phi$ if and only if no $q \le p$ forces $\phi$;
\item $p \forces \forall x \phi(x)$ if and only if $p \forces \phi(\sigma)$ for every set $\Pbb$-name $\sigma$; and
\item $p \forces \forall X \phi(X)$ if and only if $p \forces \phi(\dot A)$ for every class $\Pbb$-name $\dot A$.
\end{itemize}

If $\Phi$ is a collection of formulae say that {\em $\Pbb$ admits a forcing relation for $\Phi$} (or, synonymously, {\em $\forces_\Pbb$ exists for $\Phi$}) if there is a class $\forces$ which satisfies the above schema which covers all $\phi \in \Phi$.
For $\phi$ a formula, {\em $\Pbb$ admits a forcing relation for $\phi$} if $\Pbb$ admits a forcing relation for the collection of all instances of subformulae of $\phi$. Note that if $\forces_\Pbb$ exists for the atomic formulae then, by an induction in the metatheory, $\forces_\Pbb$ exists for all $\phi$.
\end{definition}

Observe that each step in this recursion, except the last, is done in a first-order way. As such, it is immediate that $\GBC + \ETR$ proves the forcing relation $\forces_\Pbb$ exists for first-order formulae. Then $\forces_\Pbb$ restricted to subformulae of a second-order formula is a definable hyperclass, via an induction in the metatheory.

Indeed, the existence of $\forces_\Pbb$ for every $\Pbb$ is equivalent, over $\GBC$, to a fragment of $\ETR$.

\begin{theorem}[\cite{GHHSW2017}] \label{thm1:str-class-forcing}
Over $\GBC$ the following are equivalent.
\begin{itemize}
\item $\ETR_\Ord$, Elementary Transfinite Recursion restricted to well-orders of length $\le \Ord$.
\item The class forcing theorem, asserting that for every class forcing $\Pbb$ and every formula $\phi$ in the forcing language for $\Pbb$ admits a forcing relation $\forces_\Pbb$ for subformulae of $\phi$.
\item The uniform first-order class forcing theorem, asserting that every class forcing $\Pbb$ admits a forcing relation $\forces_\Pbb$ for all first-order formulae in the forcing language.
\item The atomic class forcing theorem, asserting that every class forcing $\Pbb$ admits a forcing relation for atomic formulae.
\end{itemize}
\end{theorem}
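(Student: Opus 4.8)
The plan is to prove the cycle by isolating its genuine content, the equivalence $(1)\Leftrightarrow(4)$ of $\ETR_\Ord$ with the atomic class forcing theorem, and to fit $(2)$ and $(3)$ in with routine arguments. The implications $(2)\Rightarrow(4)$ and $(3)\Rightarrow(4)$ are immediate from the shape of the recursive schema: any class $\forces$ satisfying the schema for all subformulae of the atomic formula ``$x\in y$'' (respectively for all first-order formulae) must, via the clauses for $\in$, $\subseteq$, and $=$, already decide every atomic formula, hence restricts to an atomic forcing relation. The converse implications $(1)\Rightarrow(2)$ and $(1)\Rightarrow(3)$ will come out of the argument for $(1)\Rightarrow(4)$ below with a little extra bookkeeping. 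So, modulo these routine matters, the whole theorem reduces to $(1)\Rightarrow(4)$ and $(4)\Rightarrow(1)$.

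For $(1)\Rightarrow(4)$: fix a class forcing $\Pbb\in\Xcal$. The clauses of the schema that define $\forces_\Pbb$ on atomic formulae (those for $\sigma\in\tau$, $\sigma\subseteq\tau$, $\sigma=\tau$) form a transfinite recursion of a first-order property, with $\Pbb$ as a class parameter, along the well-founded class relation on pairs of $\Pbb$-names given by hereditary name-membership. The key point is that this relation has rank at most $\Ord$: a set name has name-rank below $\Ord$, and a class $\Pbb$-name is, in essence, a class of pairs $(\sigma,p)$ whose first coordinates are set names, so it has name-rank at most $\Ord$ (a little care is needed here; it may be convenient to run the recursion in finitely many passes so that its length is exactly $\Ord$). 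Thus the recursion is within the scope of $\ETR_\Ord$, which produces the atomic forcing relation for $\Pbb$; since $\Pbb$ was arbitrary, this is $(4)$. For $(1)\Rightarrow(2)$ one further extends, for a fixed $\phi$, to the finitely many remaining subformulae of $\phi$ by finitely many more applications of Elementary Comprehension; for $(1)\Rightarrow(3)$ one instead extends the atomic stage to all first-order formulae by an $\omega$-length recursion on formula complexity, a recursion of total length $\Ord+\omega$, which $\ETR_\Ord$ still handles (carry out the first $\Ord$ steps, then the remaining $\omega$ using that solution as a parameter).

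The main obstacle is $(4)\Rightarrow(1)$. I would reduce $\ETR_\Ord$ to a statement about atomic forcing relations by routing through iterated truth predicates: by the characterization of $\ETR_\Gamma$ in terms of iterated truth predicates developed in chapter 3, it is enough to show that for every class $A$ there is a class solving the recursion defining the truth predicate relative to $A$ iterated along $\Ord$. I would then build, for each class $A$, an auxiliary class forcing $\Pbb_A\in\Xcal$ together with a distinguished family of $\Pbb_A$-names indexed by pairs $(\alpha,\psi)$, with $\psi$ a first-order formula, whose nested name structure mirrors the $\Ord$-length iteration, arranged so that ``$\psi(a)$ holds at stage $\alpha$ of the iteration over $A$'' is faithfully recorded by whether a specific condition forces the atomic statement $\check a\in\dot\tau_{\alpha,\psi}$. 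Feeding $\Pbb_A$ into $(4)$ hands us $\forces_{\Pbb_A}$ as a class, and reading off these atomic instances extracts, uniformly in $A$, the desired iterated truth predicate, and hence $\ETR_\Ord$. The real work here — the step I expect to be the crux — is the design of $\Pbb_A$ and the verification that its atomic forcing relation, which a priori encodes only $=$, $\in$, and $\subseteq$ among names, genuinely computes the iterated truth predicate: one must arrange the name structure so that the forcing clauses reproduce the Tarskian recursion at successor stages and the correct amalgamation at limit stages, including at $\Ord$ itself. An alternative to the truth-predicate detour is to encode a given instance of $\ETR_\Ord$ directly into the names of a purpose-built forcing; either way it is this encoding, together with the verification that the atomic forcing relation faithfully carries the recursion, that is the heart of the matter.
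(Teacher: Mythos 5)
Your reduction of the theorem to $(1)\Rightarrow(4)$ and $(4)\Rightarrow(1)$ is sound, and the easy directions are handled correctly: from $\ETR_\Ord$ one gets the atomic forcing relation by a rank-by-rank recursion on pairs of set names (a well-founded class relation of rank $\Ord$, run level by level), the class-name atomic cases and the uniform first-order relation then follow by finitely many, respectively $\omega$ many, further first-order-definable steps (and $\ETR_\Ord$ is equivalent to $\ETR_{\Ord\cdot 2}$, so the total length is no obstacle), while $(2)\Rightarrow(4)$ and $(3)\Rightarrow(4)$ are immediate. Note that the dissertation itself gives no proof of this theorem---it is quoted from \cite{GHHSW2017}---so the comparison can only be with that paper, whose proof has broadly the architecture you describe.

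The genuine gap is $(4)\Rightarrow(1)$, which is the entire content of the theorem, and there your proposal states a desideratum rather than a proof: you posit a forcing $\Pbb_A$ with names $\dot\tau_{\alpha,\psi}$ ``arranged so that'' forcing $\check a\in\dot\tau_{\alpha,\psi}$ records stage-$\alpha$ truth, but constructing such names and verifying that the atomic forcing relation genuinely decides those facts is exactly the hard step, and no construction is given. Worse, as described the plan risks circularity: a family of names built stage by stage to ``mirror the $\Ord$-length iteration'' would itself be produced by an $\Ord$-length class recursion, i.e.\ by $\ETR_\Ord$, the very principle being derived; whatever names you use must be definable in $\GBC$ alone. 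The argument in \cite{GHHSW2017} threads this needle by working with the quantifier-free infinitary language $\Lcal_{\Ord,0}(\in,A)$: its set-sized formulas exist outright (no recursion needed), there is a uniform, first-order-definable encoding of such formulas as names for a suitably chosen class forcing, and a separate, nontrivial density/decision lemma shows that an atomic forcing relation for that forcing yields a truth predicate for this infinitary language; from that one extracts the $\Ord$-iterated first-order truth predicate and hence $\ETR_\Ord$ via the characterization you invoke from chapter 3. Until you supply an analogous concrete encoding together with its verification, the cycle is not closed.
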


Nevertheless, $\GBCm$ suffices to prove that $\forces_\Pbb$ exists for a nice collection of class forcings. 

\begin{definition}[S.\ Friedman \cite{friedman:book}]
Let $\Pbb$ be a forcing notion. 
\begin{itemize}
\item $D \subseteq \Pbb$ is {\em predense below $p$} (or {\em predense $\le p$}) if for every $q \le p$ is compatible with an element of $D$.
\item $\Pbb$ is {\em pretame} if given any set-indexed sequence $\seq{D_i : i \in a}$ of dense subclasses of $\Pbb$ and any $p \in \Pbb$ there is $q \le p$ and a sequence $\seq{d_i : i \in a}$ of predense $\le q$ sub{\em sets} of $\Pbb$ with $d_i \subseteq D_i$ for all $i \in a$.
\item $(D,D')$ is a {\em predense below $p$ partition} (or a {\em predense $\le p$ partition}) if $D \cup D'$ is predense $\le p$ and $p \in D$ and $p' \in D'$ implies $p \incompat p'$.
\item Two sequences $\Dcal = \seq{(D_i,D_i') : i \in a}$ and $\Ecal = \seq{(E_i,E_i') : i \in a}$ of predense $\le p$ partitions are {\em equivalent below $q$} (or {\em equivalent $\le q$}) if for each $i$ the collection of $r \in \Pbb$ so that $r$ meets $D_i$ if and only if $r$ meets $E_i$ is dense below $q$.\footnote{To clarify, $r$ {\em meets} predense $D$ means that $r \le q$ for some $q \in D$.}
\item $\Pbb$ is {\em tame} if it is pretame and additionally for every $p \in \Pbb$ and every set $a$ there is $q \le p$ and ordinal $\alpha$ so that if $\Dcal = \seq{(D_i, D_i') : i \in a}$ is a sequence of predense $\le q$ partitions then
\[
\{ r \in \Pbb : \Dcal \text{ is equivalent $\le r$ to some sequence $\Ecal \in V_\alpha$ of predense $\le q$ partitions} \}
\]
is dense below $q$.
\end{itemize}
\end{definition}

\begin{theorem}[S.\ Friedman, Stanley] \label{thm1:pretame-def-lemma}
$\GBCm$ proves that $\forces_\Pbb$ exists for $\phi$ for every pretame forcing $\Pbb$ and every formula $\phi$ in the forcing language.\footnote{Friedman \cite{friedman:book} proved the theorem with the assumption of Powerset, while Stanley \cite{stanley1984} independently gave a proof which did not need that assumption.}
\end{theorem}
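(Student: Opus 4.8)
The plan is to reduce everything to the atomic case and then use pretameness to tame the otherwise intractable transfinite recursion on names. By the remark following the definition of the forcing relation, once $\GBCm$ proves that $\forces_\Pbb$ exists for the atomic formulae, a metatheoretic induction handling the propositional and set-quantifier clauses of the schema --- each of which is first-order in the already-obtained pieces and so produces a class by Elementary Comprehension --- yields $\forces_\Pbb$ for an arbitrary $\phi$, with the sole remaining delicacy being the clause ``$p \forces \forall X\,\psi(X) \iff p \forces \psi(\dot A)$ for every class name $\dot A$'', whose naive reading quantifies over a proper class of names; I return to this at the end. So the heart of the matter is: for pretame $\Pbb$, the atomic forcing relation $\forces^*_\Pbb$ is a class.

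Recall why this is subtle. By definition $\forces^*_\Pbb$ is the solution of a transfinite recursion along the rank of $\Pbb$-names, and in general --- by Theorem~\ref{thm1:str-class-forcing}, the atomic class forcing theorem being equivalent over $\GBC$ to $\ETR_\Ord$ --- this recursion need not have a class solution. Compare the situation for a \emph{set} forcing $\Pbb$: each name has only a set of subname-predecessors (those appearing in $\tc(\tau)$) and $\Pbb$ itself is a set, so the clauses computing $p \forces^* \sigma \in \tau$, $p \forces^* \sigma \subseteq \tau$, and $p \forces^* \sigma = \tau$ refer only to a set's worth of data; consequently $p \forces^* \sigma = \tau$ admits the first-order definition ``there is a set function obeying the recursion on the relevant $\tc$-predecessors and conditions whose value at $(p,\sigma,\tau)$ is $1$'', such a function being guaranteed to exist by $\ZFCm$ for sets, and Elementary Comprehension then makes $\forces^*_\Pbb$ a class. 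For a proper class forcing this argument collapses precisely at the membership clause: ``densely many $q \le p$ with $\dots$'' ranges over the proper class $\Pbb$, so the local data is no longer set-sized.

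Pretameness is exactly the device that restores the localization. Given $p$ and names $\sigma,\tau$, the subnames occurring in $\tc(\{\sigma,\tau\})$ form a set, and for each relevant pair one has a dense subclass of $\Pbb$ of conditions deciding the corresponding atomic subformula; applying pretameness to this set-indexed family yields $q \le p$ and a set-indexed family $\seq{d_i : i \in a}$ of predense-below-$q$ \emph{sub}sets of $\Pbb$ refining them. This set-sized family carries all the information needed to evaluate the recursion below $q$, so below a dense set of conditions the forcing relation is again ``locally set-computable'' in the sense above. Using Class Replacement to collect, as $q$ varies, the witnessing data (in the powerset-free setting one replaces ``$\Ecal \in V_\alpha$'' in the definition of pretameness by a bound on $\rank \Ecal$, the rank function being available in $\GBCm$), one assembles a single first-order definition of $\forces^*_\Pbb$ with $\Pbb$ as a parameter; Elementary Comprehension makes it a class, and one verifies directly that this class satisfies the recursive schema for atomic formulae --- the two inclusions of the ``$=$'' clause and the density condition of the ``$\in$'' clause --- a truth-lemma-style check. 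The leftover class-quantifier clause is handled the same way: the dense classes ``$q$ decides $\psi(\dot A)$'', though not obviously set-indexed, can be treated by pretameness relative to the set-many class names needed to witness failures of $\psi$ below a given condition, so that $p \forces \forall X\,\psi(X)$ reduces to a first-order condition on $p$ rather than a genuine second-order quantification.

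The main obstacle is exactly this uniformity: pretameness supplies the localizing data only below a dense set of conditions and only in dependence on the finitely many names in the formula at hand, so the real work --- this is essentially S.\ Friedman's definability lemma for pretame forcing, recast by Stanley to avoid Powerset --- is to glue these local evaluations into one global class definition that is provably correct everywhere and provably a forcing relation, and to push the bookkeeping (Collection versus Replacement in $\ZFCm$, the absence of the $V_\alpha$-hierarchy, the case analyses in the atomic clauses, and the reduction of the class quantifier) through cleanly.
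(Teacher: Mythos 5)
There is a genuine gap here, and it is worth noting first that the paper itself offers no proof of this theorem at all: it is quoted as a black box from Friedman's book and Stanley's paper. So the question is whether your sketch could stand in for their argument, and it cannot, because the central step is missing. Everything turns on producing, in $\GBCm$ alone, a class satisfying the recursive clauses for the atomic formulae, and your proposal never actually constructs it. Worse, the way you invoke pretameness is circular: you apply it to ``the dense subclass of conditions deciding the corresponding atomic subformula,'' but that class is an instance of the very forcing relation whose existence is at issue; in $\GBCm$ you have no license to form it, and by theorem \ref{thm1:str-class-forcing} the naive rank recursion that would define it can genuinely fail in models of $\GBC$ without $\ETR_\Ord$. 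The Friedman--Stanley argument must work with objects definable \emph{without} reference to $\forces_\Pbb$ --- set-sized approximations to the forcing relation on the subnames of $\sigma,\tau$ --- and run a delicate simultaneous induction showing both that densely many conditions carry such approximations (this is where pretameness actually enters) and that the resulting first-order definition obeys the membership and equality clauses everywhere, not merely below the dense sets pretameness hands you. That bookkeeping, which you explicitly defer as ``the real work,'' \emph{is} the theorem; saying that pretameness makes the relation ``locally set-computable'' and that one then ``assembles a single first-order definition'' names the goal rather than proving it. In the powerset-free setting there is further content still (this is Stanley's contribution), since the $V_\alpha$-style bounds used to control approximations are unavailable and must be replaced.

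Second, your treatment of the clause $p \forces \forall X\,\psi(X)$ does not work as stated. Pretameness concerns set-indexed families of dense subclasses of $\Pbb$, not quantification over class names, and there is no reason that failures of $\psi$ below a condition are witnessed by set-many class names; so the claimed reduction of this clause to a first-order condition on $p$ is unsupported. What the cited results give is the forcing relation for first-order formulae of the forcing language (with class-name parameters); for genuinely second-order $\phi$ the relation is in general only definable at the corresponding second-order level from the first-order forcing relation, which is exactly how the paper later uses it when preserving $\Pi^1_k$-Comprehension, rather than via any pretameness argument.
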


Besides allowing for the forcing language to be definable in a weak theory, the other role of pretameness is in the preservation of the axioms. First, let us make precise how forcing extensions are built. The case for transitive models is well-known, but we can also work over non-transitive models. We work in $(M,\Xcal) \models \GBCm$ and assume that for our forcing notion $\Pbb \in \Xcal$ we have that for every first-order formula $\phi$ of the forcing language the forcing relation $\forces_\Pbb$ restricted to instances of subformulae of $\phi$ exists as a class in $\Xcal$. In particular, $\forces_\Pbb$ for atomic formulae is a single class in $\Xcal$. Given a generic $G \subseteq \Pbb$ and this class we can define the forcing extension as follows.

Work externally to $(M,\Xcal)$, as is necessary if we have a generic. Define the following relations on set $\Pbb$-names:
\begin{align*}
\sigma \in_G \tau \qquad&\iff\qquad \exists p \in G\ p \forces \sigma \in \tau \\
\sigma =_G \tau \qquad&\iff\qquad \exists p \in G\ p \forces \sigma = \tau
\end{align*}
and the similar relations $\in_G$ between set $\Pbb$-names and class $\Pbb$-names and $=_G$ between class $\Pbb$-names. It can be straightforwardly checked that $=_G$ is an equivalence relation\footnote{To be clear, both the set-set and class-class relations $=_G$ are equivalence relations on, respectively, the set $\Pbb$-names and the class $\Pbb$-names.}
and $\in_G$ is a congruence modulo $=_G$, meaning that if $\sigma \in_G \tau =_G \rho$ then $\sigma \in_G \rho$. Given a $\Pbb$-name $\sigma$ let $[\sigma]_G$ denote the equivalence class of $\sigma$ modulo $=_G$. Let $[M]_G$ denote the collection of equivalence classes of set $\Pbb$-names and $[\Xcal]_G$ denote the collection of equivalence classes of class $\Pbb$-names. 

\begin{definition}
Let $(M,\Xcal)$, $\Pbb$, and $G$ be as above. Then the {\em forcing extension of $(M,\Xcal)$}, denoted $(M,\Xcal)[G]$, is the structure $([M]_G, [\Xcal]_G)$ with membership relation $\in_G$. To refer to the sets of the extension I will use $M[G]$ and to refer to the classes of the extension I will use $\Xcal[G]$.
\end{definition}

\begin{proposition}[The truth lemma]
Consider a model $(M,\Xcal)$ of set theory with forcing notion $\Pbb \in \Xcal$ and generic $G \subseteq \Pbb$. Suppose that for each $\phi$ in the forcing language for $\Pbb$ that $\Pbb$ admits a forcing relation for $\phi$ in $\Xcal$. Let $\phi(x_0, \ldots, x_n, Y_0, \ldots, Y_m)$ be a formula, $\tau_0, \ldots \tau_n$ be set $\Pbb$-names, and $\Sigma_0, \ldots, \Sigma_m$ be class $\Pbb$-names. Then,
\begin{align*}
(M,\Xcal)[G] &\models \phi([\tau_0]_G, \ldots, [\tau_n]_G, [\Sigma_0]_G, \ldots, [\Sigma_m]_G) \\
&\text{ if and only if } \\
\exists p\in G\ p &\forces \phi(\tau_0, \ldots, \tau_n, \Sigma_0, \ldots, \Sigma_m).
\end{align*}
\end{proposition}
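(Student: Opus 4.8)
The plan is to prove the truth lemma by induction on the complexity of $\phi$, following the classical Cohen-style argument but being careful that we are working externally over a (possibly non-transitive) model $(M,\Xcal) \models \GBCm$ and that the forcing relation is given to us as a class satisfying the recursive schema. First I would handle the atomic cases $\sigma \in \tau$ and $\sigma = \tau$. For these, one direction is essentially the definition: if $p \in G$ with $p \forces \sigma = \tau$, then $[\sigma]_G =_G [\tau]_G$ by the very definition of $=_G$, and symmetrically for $\in_G$. The reverse direction, that satisfaction in the extension implies some condition in $G$ forces it, requires the usual density argument: one shows that the set of conditions $q$ which either force $\sigma = \tau$ or force $\sigma \ne \tau$ (i.e.\ no extension forces equality) is dense, invoking the clause ``$p \forces \neg\phi$ iff no $q \le p$ forces $\phi$'' together with genericity of $G$, and then rules out the case that a condition forcing $\sigma \ne \tau$ lies in $G$ by a sub-induction on the rank of the names. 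This atomic step is where I expect the real work to be; the congruence property of $\in_G$ modulo $=_G$ and the well-foundedness of the rank recursion on names are what make it go through, and one must check this using only the weak base theory $\GBCm$ — but that is exactly what Theorem~\ref{thm1:pretame-def-lemma} and the preceding discussion set up, since the forcing relation for atomic formulae is assumed to exist as a single class in $\Xcal$.

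Next I would run the induction through the propositional and quantifier cases. For $\phi \land \psi$ the claim follows immediately from the inductive hypothesis applied to $\phi$ and $\psi$ together with the schema clause for conjunction and the fact that $G$ is a filter (so two conditions in $G$ have a common extension in $G$). For $\neg\phi$, suppose first that $p \in G$ forces $\neg\phi$; if also $(M,\Xcal)[G] \models \phi(\vec{[\tau]}_G,\vec{[\Sigma]}_G)$, then by induction some $q \in G$ forces $\phi$, and a common extension $r \le p,q$ in $G$ would force both $\phi$ and $\neg\phi$, contradicting the clause ``$r \forces \neg\phi$ iff no $s \le r$ forces $\phi$''. Conversely, if no $p \in G$ forces $\neg\phi$, then by genericity (the set of conditions deciding $\phi$ is dense) some $p \in G$ forces $\phi$, whence $(M,\Xcal)[G] \models \phi$, so it does not satisfy $\neg\phi$.

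Finally, the quantifier cases. For $\forall x\, \phi(x)$: if $p \in G$ forces $\forall x\, \phi(x)$, then by the schema it forces $\phi(\sigma)$ for every set $\Pbb$-name $\sigma$, so by induction $(M,\Xcal)[G] \models \phi([\sigma]_G)$ for every such $\sigma$; since every element of $M[G]$ is of the form $[\sigma]_G$, the extension satisfies $\forall x\, \phi(x)$. Conversely, if no $p \in G$ forces $\forall x\, \phi(x)$, then (again by density of deciding and the negation clause) some $p \in G$ forces $\neg\forall x\, \phi(x)$, which — unwinding the schema via $\neg$ and the universal clause, i.e.\ there is a name $\sigma$ and densely many $q \le p$ forcing $\neg\phi(\sigma)$ — yields by the already-established case for $\neg$ a name $\sigma$ with $(M,\Xcal)[G] \models \neg\phi([\sigma]_G)$, so the extension fails $\forall x\,\phi(x)$. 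The class quantifier $\forall X\, \phi(X)$ is handled identically, using that every class of $\Xcal[G]$ is $[\dot A]_G$ for some class $\Pbb$-name $\dot A$ and the corresponding schema clause. The main obstacle, as noted, is the atomic base case and confirming throughout that only $\GBCm$ (which provides $\forces_\Pbb$ for pretame $\Pbb$, or is simply assumed here to provide it for the relevant subformulae) is used; the inductive superstructure is routine.
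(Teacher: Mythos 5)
Your proof takes the same route as the paper, which itself only says ``by induction on formulae'' and defers the details to the cited reference, and the inductive superstructure you lay out (density of deciding conditions from the negation clause, the filter property of $G$, every object of the extension being named) is the standard, correct one. One simplification worth noting: in this setup the atomic case is immediate rather than being ``the real work,'' because satisfaction of atomic formulae in $(M,\Xcal)[G]$ is \emph{defined} via the relations $\in_G$ and $=_G$, which are themselves defined as ``there is $p \in G$ with $p \forces \sigma \in \tau$'' (respectively $p \forces \sigma = \tau$), so no density argument or sub-induction on the rank of names is needed there.
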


\begin{proof}[Proof sketch]
By induction on formulae. See \cite{GHHSW2017} for more detail.
\end{proof}

In particular, if $(M,\Xcal)$ is countable then we can always find a generic $G \subseteq \Pbb$. Externally to $(M,\Xcal)$ line up the countably many dense subclasses of $\Pbb$ in $\Xcal$ in ordertype $\omega$ and then inductively meet each of them. As such, for countable models the only possible impediment to having forcing extensions is having forcing relations. But so long as we only look at pretame forcings this is no impediment.

Let us turn now to the preservation of the axioms under class forcings. First, let us see the importance of pretameness to this question.

\begin{theorem}[\cite{stanley1984} for $(1 \iff 2)$, \cite{HKS2016b} for the rest]
Consider $(M,\Xcal) \models \GBcm$ and let $\Pbb \in \Xcal$ be a forcing notion.\footnote{Holy, Krapf, and Schlicht formulate their result in terms of countable {\em transitive} models, but it is not hard to see that their result holds more generally.}
The following are equivalent.
\begin{enumerate}
\item $\Pbb$ is pretame.
\item $\Pbb$ preserves $\GBcm$.
\item $\Pbb$ preserves Collection.
\item $\Pbb$ preserves Replacement.
\item $\Pbb$ preserves Separation and $\Pbb$ satisfies the class forcing theorem.
\end{enumerate}
\end{theorem}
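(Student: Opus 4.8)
The plan is to establish the cycle of implications $(1) \Rightarrow (2) \Rightarrow (3) \Rightarrow (4)$ and $(2) \Rightarrow (5)$, together with $(3) \Rightarrow (1)$, $(4) \Rightarrow (1)$, and $(5) \Rightarrow (1)$, so that all five are equivalent. The implications $(2) \Rightarrow (3)$ and $(2) \Rightarrow (4)$ are trivial, since Collection and Replacement are among the axioms of $\GBcm$, and $(3) \Rightarrow (4)$ is the standard implication from Collection to Replacement (available over the rest of $\GBcm$). So the real content is in showing $(1) \Rightarrow (2)$, in showing that each of $(3)$, $(4)$, $(5)$ individually forces pretameness, and in handling $(5)$ correctly—note that $(5)$ must bundle Separation with the class forcing theorem, since Separation alone is too weak (this is exactly the phenomenon that Replacement and Collection diverge without Powerset, so we cannot expect Separation by itself to recover pretameness).

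For $(1) \Rightarrow (2)$: assume $\Pbb$ is pretame. First, by Theorem~\ref{thm1:pretame-def-lemma} (Friedman--Stanley), $\forces_\Pbb$ exists for every formula $\phi$ of the forcing language, so the forcing extension $(M,\Xcal)[G]$ and the truth lemma are available. Then one checks the axioms of $\GBcm$ in the extension one by one. Extensionality for classes is automatic. Global Choice (in the weak ``set choice functions'' form appropriate to $\GBc$) and the $\ZFCm$ axioms for sets that are not Collection—Pairing, Union, Infinity, Foundation, Separation—go through by the usual name-manipulation arguments, which do not need pretameness. Elementary Comprehension in the extension follows because $\forces_\Pbb$ exists for first-order formulae: given a first-order $\phi$ with name parameters, the class $\{ [\sigma]_G : \exists p \in G\ p \forces \phi(\sigma,\ldots) \}$ is definable over $(M,\Xcal)$ from the forcing relation, hence lies in $\Xcal$, and its image in the extension is the desired class by the truth lemma. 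Class Replacement in the extension likewise reduces to a ground-model definition using $\forces_\Pbb$. The one axiom that genuinely uses pretameness is Collection for sets: given $p \forces \forall x \in \tau\ \exists y\ \psi(x,y)$, one uses pretameness to find $q \le p$ and, for each name $\sigma$ in (a set indexing of names for members of) $\tau$, a predense-below-$q$ set of conditions each deciding a witness name, then collects all those witness names into a single set name $\rho$ so that $q \forces \forall x \in \tau\ \exists y \in \rho\ \psi(x,y)$. This is the classical pretameness argument for Collection, and it is the heart of the implication.

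For the reverse implications, each of $(3)$, $(4)$, and $(5)$ is used to recover pretameness by running the standard contrapositive: if $\Pbb$ is not pretame, witnessed by a set-indexed sequence $\seq{D_i : i \in a}$ of dense subclasses and a condition $p$ admitting no $q \le p$ with predense-below-$q$ subsets $d_i \subseteq D_i$, then one builds a name and a generic for which Collection (resp.\ Replacement) fails in the extension—essentially the sequence $\seq{D_i : i \in a}$ gives, in the extension, a function or cofinal map out of $a$ that cannot be captured by a set, contradicting the relevant axiom. For $(5) \Rightarrow (1)$ the point is that the class forcing theorem (existence of $\forces_\Pbb$) gives enough definability to even phrase the failure, and then Separation in the extension is violated by the same construction; this is where the extra hypothesis beyond bare Separation is essential, and I would cite \cite{HKS2016b} for the details, as the statement already does.

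The step I expect to be the main obstacle is $(5) \Rightarrow (1)$: getting the bookkeeping right so that failure of pretameness, together with the class forcing theorem, produces a concrete failure of Separation (rather than of Collection or Replacement) in a forcing extension. The construction of the offending instance is delicate because one must ensure the relevant subclass of a set is definable in the extension—using the forcing relation supplied by the class forcing theorem hypothesis—yet not coded by any set, and one must verify genericity can be arranged (which for countable $(M,\Xcal)$ is routine, but the argument should be phrased so it does not secretly assume countability). The other implications are either immediate or are faithful adaptations of the well-known $\ZFCm$-forcing arguments, with pretameness inserted exactly at the Collection step.
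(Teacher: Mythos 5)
Two things. First, there is nothing in the paper to compare your argument against: this theorem is not proved in the dissertation at all. It is quoted from the literature (Stanley for $(1 \iff 2)$, Holy--Krapf--Schlicht for the rest), and the surrounding text says explicitly that these results will not be proved in full; only the easier preservation facts for Comprehension and Class Collection are argued there. So your proposal has to be judged against the cited arguments, not against an in-paper proof.

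Second, there is a genuine gap in your $(1)\Rightarrow(2)$. You claim that Separation in the extension ``goes through by the usual name-manipulation arguments, which do not need pretameness,'' and that Class Replacement likewise reduces to a ground-model definition, so that Collection is the only axiom genuinely using pretameness. That is false, and the theorem itself signals this: item $(5)$ makes preservation of Separation (together with the class forcing theorem) \emph{equivalent} to pretameness, so there are non-pretame class forcings admitting forcing relations--- the collapse $\Coll(\omega,\Ord)$ of $\Ord$ to $\omega$ is the standard example---whose extensions violate Separation. The usual argument breaks because the natural separation name $\{(\rho,p) : \rho \in \dom\tau,\ p \forces \phi(\rho)\}$ has its conditions ranging over the proper class $\Pbb$, so it is a class name rather than a set name; to cut it down to a set name one must use pretameness to replace the dense classes of conditions deciding $\phi(\rho)$, $\rho \in \dom \tau$, by predense-below-$q$ \emph{sets}, exactly as in your Collection argument. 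The same defect affects your treatment of Replacement. Relatedly, your step $(3)\Rightarrow(4)$ presumes Separation holds in the extension, which is not part of hypothesis $(3)$; it is harmless only because it is redundant given $(2)\Rightarrow(4)$. Finally, the converse directions $(3),(4),(5)\Rightarrow(1)$, which are the real content of the Holy--Krapf--Schlicht theorem, are only gestured at (and $(5)\Rightarrow(1)$ explicitly deferred to the citation), so as written the proposal does not supply the hard half of the equivalence.
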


Tameness has a similar importance for preserving $\GBc$.

\begin{theorem}[Friedman \cite{friedman:book}]
Consider $(M,\Xcal) \models \GBc$ and let $\Pbb \in \Xcal$ be a forcing notion. The following are equivalent.
\begin{enumerate}
\item $\Pbb$ is tame;
\item $\Pbb$ is pretame and preserves Powerset; and
\item $\Pbb$ preserves $\GBc$.
\end{enumerate}
\end{theorem}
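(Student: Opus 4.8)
The plan is to leverage the two facts already in hand: the preceding theorem characterizing pretameness (pretame $\iff$ preserves $\GBcm$ $\iff$ preserves Collection/Replacement/Separation$+$class forcing theorem) and Theorem~\ref{thm1:pretame-def-lemma}, which guarantees the forcing relation for atomic formulas exists once $\Pbb$ is pretame. First I would record the bookkeeping observation that, over the common class-theoretic base, $\GBc$ is, as a theory, exactly $\GBcm$ together with Powerset for sets; here one must recall the Collection-versus-Replacement subtlety, but it causes no trouble since $\ZFCm$ already includes Collection and $\ZFCm + \text{Powerset}$ proves the same theorems as $\ZFC$. Hence a forcing notion preserves $\GBc$ if and only if it preserves $\GBcm$ \emph{and} preserves Powerset. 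Combined with the pretameness characterization this already gives $(3) \Rightarrow (2)$ --- preserving $\GBc$ entails preserving $\GBcm$, hence pretameness, and Powerset is literally an axiom of $\GBc$ --- and reduces the remaining implications to the single assertion
\[
\text{$\Pbb$ pretame} \implies \bigl(\text{$\Pbb$ satisfies the second clause of tameness} \iff \text{$\Pbb$ preserves Powerset}\bigr),
\]
since tameness is by definition pretameness plus that second clause.

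The core of the argument is a dictionary between sequences of predense partitions and names for subsets of a set. Fix $a \in M$ and a condition $q$. A sequence $\Dcal = \seq{(D_i, D_i') : i \in a}$ of predense-$\le q$ partitions of $\Pbb$ codes a $\Pbb$-name $\dot x_\Dcal$ for a subset of $\check a$ --- declare that $\check\imath \in \dot x_\Dcal$ is forced below $q$ exactly by the conditions meeting $D_i$ --- and, using that $\forces_\Pbb$ for atomic formulas exists (Theorem~\ref{thm1:pretame-def-lemma}), conversely every name $\tau$ with $q \forces \tau \subseteq \check a$ agrees below a dense set beneath $q$ with some such $\dot x_\Dcal$. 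Unwinding the definition of ``equivalent $\le r$,'' I would check that $\Dcal$ is equivalent $\le r$ to $\Ecal$ precisely when $r \forces \dot x_\Dcal = \dot x_\Ecal$. Under this dictionary the second clause of tameness, applied to $p$ and $a$, says exactly: there is $q \le p$ and an ordinal $\alpha$ such that $q$ forces that every subset of $\check a$ equals the interpretation of a name coded using only partitions from $V_\alpha$ --- i.e., $q$ forces that $\powerset(\check a)$ is covered by a set of $V_\alpha$-names.

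With the dictionary in place the boxed equivalence is close to a matter of inspection. If $\Pbb$ is tame then, given $a$, the clause produces $q$ and $\alpha$; pretameness gives preservation of Collection (preceding theorem), so in any extension $(M,\Xcal)[G]$ with $q \in G$ one may collect the set-many interpretations of the $V_\alpha$-names and conclude $\powerset(a)^{M[G]}$ is a set below $q$ --- and since the clause furnishes such a $q$ below every condition, a standard density argument together with the usual rank-by-rank reduction of arbitrary sets of $M[G]$ to subsets of ground-model sets upgrades this to full preservation of Powerset. Conversely, if $\Pbb$ is pretame and preserves Powerset but the second clause of tameness fails for some $p$ and $a$, then for every $q \le p$ and every ordinal $\alpha$ there is a sequence of predense-$\le q$ partitions whose coded name is forced, below some $q' \le q$, to differ from every $V_\alpha$-coded subset of $a$; assembling these failures --- directly into a generic filter through $p$ when $M$ is countable, and via the forcing relation in general --- yields an extension in which $\powerset(a)$ is unbounded in rank, hence a proper class, contradicting preservation of Powerset. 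This closes $(1) \Leftrightarrow (2)$ and, with the first paragraph, $(1) \Leftrightarrow (2) \Leftrightarrow (3)$.

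The step I expect to be the main obstacle is pinning down the dictionary itself: verifying that ``equivalent $\le r$'' for sequences of predense partitions corresponds exactly to being forced equal below $r$, and keeping the ``below a dense set beneath $q$'' qualifiers synchronized when passing between an arbitrary name for a subset of $a$ and its partition-code. The assembly step in the converse direction also requires some care about the compatibility of the witnessing conditions $q'$ obtained for different $\alpha$. Everything past that is an application of the pretameness characterization plus routine Collection- and rank-bounding arguments.
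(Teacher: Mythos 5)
The paper itself does not prove this theorem: it is quoted from Friedman's book, and the author explicitly says ``I will not prove these results in full,'' so your proposal has to stand on its own. Much of it does. The reduction of $(2) \iff (3)$ to the preceding pretameness theorem (since $\GBc$ is $\GBcm$ plus Powerset, and with Powerset the Collection-versus-Replacement issue disappears) is fine; your dictionary between sequences of predense-$\le q$ partitions and names for subsets of $\check a$ is correct, including the key point that ``$\Dcal$ equivalent $\le r$ to $\Ecal$'' matches ``$r \forces \dot x_\Dcal = \dot x_\Ecal$'' (the verification uses that ``meets $D_i$'' is open, so it cannot be dense below a condition while its complement is also dense there); and the forward direction --- tameness plus pretame-preserved Replacement/Collection gives Powerset in the extension, first for ground-model sets and then for arbitrary sets via domains of names --- is the standard argument and essentially complete.

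The genuine gap is the converse, which you dismiss as ``close to a matter of inspection.'' The failure of the tameness clause at $p,a$ gives, for each $q \le p$ and each $\alpha$, some $\Dcal$ and some $q' \le q$ below which $\dot x_\Dcal$ avoids all $V_\alpha$-coded subsets; but these witnessing conditions $q'$ for different $\alpha$ need not be pairwise compatible, so a single generic through $p$ need not realize the failure for even one $\alpha$, and ``assembling these failures'' is not a routine step. Worse, the inference ``$\powerset(a)$ is unbounded in rank, hence a proper class'' conflates the rank of the subsets of $a$ (which is always small in the extension) with the rank of the names or partition-codes needed to denote them. To get a contradiction you need precisely the substantive half of Friedman's lemma that your sketch omits: if $\powerset(a)^{M[G]}$ is a set, then --- using Collection/Replacement in the extension (available since pretameness preserves $\GBcm$ and Powerset is assumed), the definability of the ground model and of the evaluation map $\tau \mapsto \tau^G$ in the extension, and $\Ord^{M[G]} = \Ord^M$ --- every subset of $a$ in $M[G]$ has a name in some fixed $V_\alpha^M$, and by the truth lemma some $q \in G$ forces this; and even then one must still convert bounded-rank \emph{names} back into $V_\alpha$-\emph{sequences of predense-$\le q$ partitions}, which is not your dictionary run backwards: the canonical partitions attached to a name, such as $\{ r \le q : r \forces \check\imath \in \tau \}$, are proper classes, so producing set-sized equivalents inside $V_\alpha$ that are moreover predense below $q$ (not merely below the local condition where the equivalence is checked) requires a further use of pretameness and some real care. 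So the correct shape of the argument is to assume $\powerset(a)$ is a set in the extension and derive the clause, rather than to build a generic in which $\powerset(a)$ is ``unbounded''; as written, your converse direction does not go through.
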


I will not prove these results in full. But as a warmup towards showing that pretame forcings preserve strong forms of Comprehension, let us see that pretame forcings preserve Elementary Comprehension.

\begin{proposition}
Let $G \subseteq \Pbb$ be generic for $(M,\Xcal) \models \GBC$ with $\Pbb \in \Xcal$ admitting a forcing relation for each $\phi$ in the forcing language. Then $(M,\Xcal)[G]$ satisfies Elementary Comprehension.
\end{proposition}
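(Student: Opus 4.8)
The plan is to run the classical argument that a forcing extension inherits Elementary Comprehension, taking care only about where classes live. Fix a first-order formula $\phi(x,\vec y,\vec Y)$ together with parameters $\vec a \in M[G]$ and $\vec B \in \Xcal[G]$; the goal is to exhibit a class of $(M,\Xcal)[G]$ whose extension is $\{ b \in M[G] : (M,\Xcal)[G] \models \phi(b,\vec a,\vec B) \}$. Choose set $\Pbb$-names $\vec\tau \in M$ with $[\vec\tau]_G = \vec a$ and class $\Pbb$-names $\vec\Sigma \in \Xcal$ with $[\vec\Sigma]_G = \vec B$, and then set
\[
\dot B = \{ (\sigma,p) : \sigma \text{ a set } \Pbb\text{-name},\ p \in \Pbb,\ p \forces_\Pbb \phi(\sigma,\vec\tau,\vec\Sigma) \},
\]
which is a class $\Pbb$-name. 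The key observation is that $\dot B$ is defined over $(M,\Xcal)$ by a \emph{first-order} formula whose parameters are the classes $\Pbb$, $\vec\Sigma$, and the forcing relation $\forces_\Pbb$ for $\phi$ (which exists as a single class in $\Xcal$ by hypothesis, precisely because $\phi$ carries no class quantifiers), together with the sets $\vec\tau$. Hence $\dot B \in \Xcal$ by Elementary Comprehension in $(M,\Xcal)$, and so $[\dot B]_G \in \Xcal[G]$.

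It then remains to verify that $[\dot B]_G$ has the right extension, and here I would argue entirely through the truth lemma (the Proposition above), using two routine facts about the recursively defined forcing relation: that $q \forces \sigma = \sigma$ for every condition $q$ and set name $\sigma$, and that $G$ meets every dense-below-$p$ class in $\Xcal$ whenever $p \in G$. For a set $\Pbb$-name $\sigma$, the truth lemma applied to the atomic formula $x \in Y$ gives $[\sigma]_G \in_G [\dot B]_G$ iff some $p \in G$ forces $\sigma \in \dot B$. If some $p \in G$ forces $\phi(\sigma,\vec\tau,\vec\Sigma)$, then $(\sigma,p) \in \dot B$ and every $q \le p$ forces $\sigma = \sigma$, so $p$ itself forces $\sigma \in \dot B$. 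Conversely, if $p \in G$ forces $\sigma \in \dot B$, then unwinding the atomic clause and applying genericity to the dense-below-$p$ class $\{ q \le p : \exists(\rho,r)\in\dot B,\ q\le r,\ q\forces\sigma=\rho \}$ produces $q \in G$ and $(\rho,r) \in \dot B$ with $r \ge q$ (so $r \in G$) and $q \forces \sigma = \rho$; since $(\rho,r) \in \dot B$ we have $r \forces \phi(\rho,\vec\tau,\vec\Sigma)$, so by the truth lemma $[\sigma]_G = [\rho]_G$ and $(M,\Xcal)[G] \models \phi([\rho]_G,\vec a,\vec B)$. Combining the two directions with one more application of the truth lemma to $\phi$ yields
\[
[\sigma]_G \in_G [\dot B]_G \quad\Longleftrightarrow\quad (M,\Xcal)[G] \models \phi([\sigma]_G,\vec a,\vec B),
\]
which is exactly what we wanted.

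There is no genuine obstacle; the point requiring attention is the one flagged above — that because $\phi$ is first-order, the object ``$\forces_\Pbb$ for $\phi$'' supplied by the hypothesis is literally a class of $(M,\Xcal)$, so that forming $\dot B$ needs only Elementary Comprehension in the ground model rather than something stronger. This is precisely the feature that must be revisited when the same strategy is later pushed up to $\Pi^1_k$-Comprehension.
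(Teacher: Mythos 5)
Your proposal is correct and follows essentially the same route as the paper: define the class name $\dot B$ from the forcing relation (which is a class in $\Xcal$ by hypothesis), get $\dot B \in \Xcal$ by Elementary Comprehension in the ground model, and conclude via the truth lemma that its interpretation is the desired class. You simply spell out the truth-lemma verification of the extension in more detail than the paper, which asserts it directly.
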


In particular, pretame forcings always satisfy the forcing theorem so this proposition works for all pretame forcings over some model.

\begin{proof}
Consider an instance of Elementary Comprehension. That is, we have a first-order formula $\phi(x,P)$ with possible parameter $P$ and we want to see that the class $\{ x : (M,\Xcal)[G] \models \phi(x,P) \} \in \Xcal[G]$. Towards this end, let $\dot P \in \Xcal$ be a name for $P$. Now consider the name $\dot B = \{ (\sigma, p) : p \forces \phi(\sigma, \dot P) \}$. Then, because $\Pbb$ satisfies the forcing theorem, $\dot B \in \Xcal$. So
\[
\dot B^G = \{ \sigma^G : \sigma \in M \land \exists p \in G\ p \forces \phi(\sigma,\dot P) \} = \{ x \in M[G] : (M,\Xcal)[G] \models \phi(x,P) \}
\]
is in $\Xcal$.
\end{proof}

In fact, the same argument applies higher up.

\begin{corollary}
Class forcing preserves $\Pi^1_k$-Comprehension, for $1 \le k \le \omega$.\footnote{Recall that $\Pi^1_\omega$-Comprehension is another name for the full Comprehension schema.} That is, if $(M,\Xcal) \models \GBC + \PnCA k$ and $G \subseteq \Pbb \in \Xcal$ is generic over $(M,\Xcal)$ then $(M,\Xcal)[G] \models \PnCA k$.
\end{corollary}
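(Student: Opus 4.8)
The plan is to reuse the proof of the preceding proposition essentially word for word; the only genuinely new ingredient is a complexity estimate for the forcing relation. Fix an instance of $\PnCA k$ to be checked in $(M,\Xcal)[G]$: a $\Pi^1_k$ formula $\phi(x)$ (possibly with set and class parameters, where the class parameters are $\dot P^G$ for some class $\Pbb$-names $\dot P \in \Xcal$). Exactly as in the proposition, set $\dot B = \{ (\sigma, p) : p \forces \phi(\sigma, \dot P) \}$; once we know $\dot B \in \Xcal$ it is a class $\Pbb$-name, so $\dot B^G \in \Xcal[G]$, and by the truth lemma $\dot B^G = \{ x \in M[G] : (M,\Xcal)[G] \models \phi(x, \dot P^G) \}$, which is the Comprehension class we want. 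So everything reduces to showing $\dot B \in \Xcal$, and this is the only place where the hypothesis $\PnCA k$ on $(M,\Xcal)$ --- rather than merely $\GBC$ --- is used.

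To obtain $\dot B \in \Xcal$ it suffices, by Elementary Comprehension, to see that the forcing relation $\forces_\Pbb$ restricted to $\phi$ and its subformulas is a class in $\Xcal$. By the standing assumption on class forcings (automatic when $\Pbb$ is pretame, by Theorem~\ref{thm1:pretame-def-lemma}) the forcing relation for atomic, hence for all first-order, formulas is already a class in $\Xcal$ over $\GBC$. I would then argue, by induction on the class-quantifier structure of $\phi$, that the relation ``$p \forces \psi(\vec\sigma, \vec{\dot A})$'' --- for $\psi$ a subformula of $\phi$, with $p$ a condition, $\vec\sigma$ set $\Pbb$-names, and $\vec{\dot A}$ class $\Pbb$-names --- is definable over $(M,\Xcal)$, using the first-order forcing relation as a parameter, by a formula that is $\Pi^1_k$ when $\psi$ is $\Pi^1_k$. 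The base case is the first-order forcing relation just mentioned. For the inductive step, the propositional clauses and the clause ``$p \forces \forall x\, \psi(x)$ iff $p \forces \psi(\tau)$ for every set $\Pbb$-name $\tau$'' do not change the class-quantifier complexity, using that $\Sigma^1_j$ and $\Pi^1_j$ are closed over $\GBC$ under propositional operations and first-order quantification; and the clause ``$p \forces \forall X\, \psi(X)$ iff $p \forces \psi(\dot A)$ for every class $\Pbb$-name $\dot A$'' prepends a single universal class quantifier, which is paid for by the corresponding universal class quantifier of $\phi$. Since $(M,\Xcal) \models \PnCA k$ is exactly the statement that $\Xcal$ is closed under $\Pi^1_k$-definability over $(M,\Xcal)$, it follows that $\forces_\Pbb \rest \phi \in \Xcal$. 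The case $k = \omega$ is the same, with $\Pi^1_m$ in place of $\Pi^1_k$, where $m$ bounds the number of class-quantifier alternations of $\phi$, using full Comprehension in place of $\PnCA k$.

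Granting $\dot B \in \Xcal$, the rest of the argument is identical to the Elementary Comprehension case, and since $\phi$ and $\dot P$ were arbitrary we conclude $(M,\Xcal)[G] \models \PnCA k$. The step I expect to be fiddly is the complexity bookkeeping in the induction: the $\neg$-clause (and hence the derived $\exists$-clause) prepends a bounded first-order quantifier over $\Pbb$, which transiently raises the class-quantifier level, so one must check that these increments do not accumulate --- the point being that each $\exists$-block of $\phi$ sits immediately below a $\forall$-block (or is the innermost block, directly above the first-order matrix), so the level is restored at the next universal class-quantifier step, and in particular the outermost quantifier of a $\Pi^1_k$ formula returns the definition to $\Pi^1_k$. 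This is also why only $\Pi^1_k$ formulas need to be treated directly: a $\Sigma^1_k$ formula would cost an extra quantifier, but $\PnCA k$ as Comprehension for $\Pi^1_k$ formulas does not demand it. The ancillary fact used throughout --- closure of the second-order L\'evy hierarchy under first-order quantification over $\GBC$, by coding a set witness into a class witness --- together with the two borrowed ingredients (forming $\dot B$ by Elementary Comprehension once its defining relation lies in $\Xcal$, and the truth lemma) are all routine.
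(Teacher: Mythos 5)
Your argument is, at its core, the paper's: reduce to the proof of the preceding proposition by forming $\dot B = \{ (\sigma,p) : p \forces \phi(\sigma,\dot P) \}$, and justify $\dot B \in \Xcal$ by showing that $\forces_\Pbb$ restricted to subformulae of $\phi$ is definable from the first-order forcing relation with the same class-quantifier complexity as $\phi$, then apply Comprehension in $(M,\Xcal)$. The paper asserts the complexity fact in one line (for $\Sigma^1_n$-formulae); you supply the bookkeeping, and your identification of the set quantifier over conditions introduced by the negation clause, to be absorbed into the adjacent class-quantifier block, is exactly the right subtlety.

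The one step not justified from the corollary's hypotheses is the availability of the first-order forcing relation itself. You invoke ``the standing assumption on class forcings (automatic when $\Pbb$ is pretame)'' and assert that the first-order forcing relation ``is already a class in $\Xcal$ over $\GBC$.'' But the corollary is stated for an arbitrary class forcing $\Pbb \in \Xcal$, with no pretameness hypothesis, and over $\GBC$ alone the assertion is false in general: by theorem \ref{thm1:str-class-forcing}, having forcing relations for every class forcing (even just for atomic formulae) is equivalent to $\ETR_\Ord$. The paper discharges this from the hypothesis itself: since $k \ge 1$, $(M,\Xcal) \models \PnCA k$ implies $\ETR_\Ord$, hence the uniform first-order class forcing theorem, so $\forces_\Pbb$ for first-order formulae lies in $\Xcal$ for every class forcing $\Pbb$, pretame or not. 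Replace your appeal to pretameness with that observation and your proof coincides with the paper's.
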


\begin{proof}
Because $(M,\Xcal) \models \PnCA k$ it in particular satisfies $\ETR_\Ord$, so it satisfies the uniform first-order class forcing theorem. Now run the same argument as before, but use that $\forces_\Pbb$ restricted to subformulae of a $\Sigma^1_n$-formula $\phi$, for $1 \le n < \omega$, is $\Sigma^1_n$-definable from $\forces_\Pbb$ restricted to first-order formulae.
\end{proof}

We are also interested in the preservation of (fragments) of Class Collection.

\begin{theorem}
Let $G \subseteq \Pbb$ be generic for $(M,\Xcal) \models \KMCC$ with $\Pbb \in \Xcal$ a pretame forcing. Then $(M,\Xcal)[G]$ satisfies Class Collection.
\end{theorem}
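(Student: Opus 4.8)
Fix a single instance of Class Collection: suppose $(M,\Xcal)[G] \models \forall x\,\exists Y\,\phi(x,Y,P)$, where $P = \dot P^G$ for a class $\Pbb$-name $\dot P \in \Xcal$ (set parameters, and finitely many class parameters, are handled in exactly the same way). Since $\Pbb$ is pretame, Theorem~\ref{thm1:pretame-def-lemma} gives us that $\forces_\Pbb$ exists as a class in $\Xcal$ for each formula of the forcing language, including the second-order ones, so the truth lemma applies; choose $p_0 \in G$ with $p_0 \forces \forall x\,\exists Y\,\phi(x,Y,\dot P)$. Unwinding the forcing clauses for $\exists Y$ (read as $\neg\forall Y\,\neg$) one extracts the following ground-model statement: for every set $\Pbb$-name $\sigma$ and every $p \le p_0$ there are $r \le p$ and a class $\Pbb$-name $\dot A$ with $r \forces \phi(\sigma,\dot A,\dot P)$.

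\textbf{Applying Class Collection in $(M,\Xcal)$.} Let $\theta(w,Y)$ be the formula asserting: either $w$ is an ordered pair $(\sigma,p)$ with $\sigma$ a set $\Pbb$-name and $p \le p_0$, and $Y$ codes a pair $(\dot A, r)$ with $\dot A$ a class $\Pbb$-name, $r \in \Pbb$, $r \le p$, and $r \forces \phi(\sigma,\dot A,\dot P)$; or else $w$ is not of this form and $Y = \emptyset$. The only class parameters appearing in $\theta$ are $\dot P$ and (the relevant restriction of) $\forces_\Pbb$, so $\theta$ is a legitimate formula over $(M,\Xcal)$, and by the previous paragraph $(M,\Xcal) \models \forall w\,\exists Y\,\theta(w,Y)$. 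Since $(M,\Xcal) \models \KMCC$ and $\KMCC$ contains Global Choice, we may apply the Global Choice form of Class Collection to obtain a class $C \in \Xcal$ with $\theta(w,(C)_w)$ for every set $w$. Thus for every $w = (\sigma,p)$ with $p \le p_0$, the slice $(C)_w$ codes a class $\Pbb$-name $\dot A_w$ and a condition $r_w \le p$ with $r_w \forces \phi(\sigma,\dot A_w,\dot P)$; both $\dot A_w$ and $r_w$ are recovered from $(C)_w$ by an elementary formula.

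\textbf{Building and evaluating the collecting name.} Writing $\op(\mu,\nu)$ for the standard $\Pbb$-name for the ordered pair of the values of $\mu,\nu$, set
\[
\dot C = \bigl\{\, \bigl(\op(\check w,\tau),\, q\bigr) : w = (\sigma,p),\ \sigma \text{ a set } \Pbb\text{-name},\ p \le p_0,\ (\tau,s) \in \dot A_w,\ q \le r_w,\ q \le s \,\bigr\}.
\]
This is elementarily definable from $C$ and $p_0$, so $\dot C \in \Xcal$ by Elementary Comprehension and hence $C' := \dot C^G \in \Xcal[G]$. A direct computation with $\in_G$ shows that $(C')_w = \dot A_w^{\,G}$ whenever $r_w \in G$ and $(C')_w = \emptyset$ otherwise: for $r_w \in G$ and $G$ a filter, there is $q \in G$ with $q \le r_w, s$ precisely when $s \in G$, so $(C')_w = \{\tau^G : (\tau,s) \in \dot A_w,\ s \in G\} = \dot A_w^{\,G}$. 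Now fix $x \in M[G]$ and pick a set $\Pbb$-name $\sigma$ with $\sigma^G = x$. The class $\{\, r_{(\sigma,p)} : p \le p_0 \,\} \in \Xcal$ is dense below $p_0$ since $r_{(\sigma,p)} \le p$ for each $p \le p_0$; as $p_0 \in G$ and $G$ is generic, there is $p^* \le p_0$ with $r_{(\sigma,p^*)} \in G$. Let $z = (\sigma,p^*) \in M \subseteq M[G]$. Then $(C')_z = \dot A_z^{\,G}$, and since $r_z \in G$ forces $\phi(\sigma,\dot A_z,\dot P)$, the truth lemma yields $(M,\Xcal)[G] \models \phi(x,(C')_z,P)$. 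As $x$ was arbitrary, $(M,\Xcal)[G] \models \exists C\,\forall x\,\exists y\,\phi(x,(C)_y,P)$, which is the desired instance of Class Collection.

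\textbf{Main obstacle.} The delicate point is the first two steps, and in particular checking that $\theta$ is genuinely a formula over $(M,\Xcal)$: its clause ``$r \forces \phi(\sigma,\dot A,\dot P)$'' must evaluate the forcing relation at a formula carrying the proper-class-name parameter $\dot A$ (here a class variable, decoded from $Y$), so one must use the way the forcing machinery of the chapter---Theorem~\ref{thm1:pretame-def-lemma} together with the fact that a class name is itself a class and the atomic forcing clauses refer only to the \emph{elements} of names---makes ``$r \forces \phi(\sigma,\dot A,\dot P)$'' expressible with $\dot A$, $\dot P$, and the atomic forcing relation as class parameters. Once that is granted, the name computation in the third step and the genericity argument are routine, and the essential uses of the hypothesis are pretameness (for the forcing relation to exist) and Class Collection in $(M,\Xcal)$ itself.
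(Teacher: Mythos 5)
Your proof is correct, and it reaches the stated ($\exists y$-indexed) form of Class Collection by a route that differs from the paper's at the key technical step. The paper likewise reflects to the ground model and uses ground-model Class Collection to choose witnessing class names, but it then insists on producing, for each set $x$, a single class name $\dot Y_x$ forced by the one condition $p \in G$ to satisfy $\phi$; this requires constructing a maximal antichain below $p$ (by recursion along a bijection $\Ord \to D$, an extra use of Global Choice) and then mixing the chosen names along that antichain, after which the collecting name can have its slices indexed by the sets themselves, i.e.\ the Global Choice form of Class Collection. You instead attach to each pair $w=(\sigma,p)$ a single condition $r_w \le p$ and name $\dot A_w$, fold the condition into $\dot C$ so that the slice evaluates to $\dot A_w^G$ exactly when $r_w \in G$, and recover a witness for each $x$ by a density/genericity argument below $p_0$. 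This dispenses with the antichain-plus-mixing construction entirely, at the cost of obtaining only the weak indexing $\exists y$ --- which is the official formulation of the schema anyway, and is equivalent to the strong form since Global Choice persists to $(M,\Xcal)[G]$. Both arguments rest on the same two points you flag as the main obstacle: applying ground-model Class Collection to a formula in which ``$r \forces \phi(\sigma,\dot A,\dot P)$'' occurs with the class-name variable $\dot A$ free, and the fact that for the fixed formula $\phi$ this forcing relation is second-order expressible from the atomic/first-order forcing relation; the paper needs exactly the same expressibility when it forms its dense class $D$ by an instance of Comprehension, so your treatment of that point is in line with the chapter's forcing machinery.
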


\begin{proof}
Suppose that $(M,\Xcal)[G] \models \forall \alpha \exists Y\ \phi(\alpha,Y,A)$, for some class $A$. We want to find a class $C \in \Xcal[G]$ so that $(M,\Xcal)[G] \models \forall \alpha\ \phi(x,(C)_x,A)$. Take $p \in G$ forcing $\forall \alpha \exists Y\ \phi(x,Y,\dot A)$, where $\dot A$ is a $\Pbb$-name for $A$. Fix an ordinal $\alpha$ Then $p \forces \exists Y\ \phi(\check \alpha,Y,\dot A)$. 

I claim there is a class name $\dot Y_\alpha$ so that $p \forces \phi(\alpha, \dot Y_\alpha, \dot A)$. To see this: let $D$ be the dense below $p$ class of conditions $q \le p$ so that $q \forces \phi(\alpha, \dot Y_q, \dot A)$ for some class name $\dot Y_q$. This $D$ is in $\Xcal$ by an instance of Comprehension. Now construct a maximal antichain $N \subseteq D$ by recursion using a bijection $b : \Ord \to D$. First, put $b(0)$ into $N$. Continuing upward, we include $b(\xi)$ in $N$ if and only if $b(\xi)$ is incompatible with all the $b(\zeta)$ for $\zeta < \xi$ we have already committed to being in $N$. We can now use this antichain $N$ to build the desired $\dot Y_\alpha$ via a mixing argument. Namely,
\[
\dot Y_\alpha = \{ (\sigma,r) : \exists q,q' \in N\ \exists p\ (\sigma,p) \in \dot Y_q \mand r \le q',p \}).
\]
Note that this definition uses Class Collection to pick the $\dot Y_q$ corresponding to each $q \in N$. Now given these names $\dot Y_\alpha$ for each $\alpha$ we can put them together to get a name for a class $C$ so that the $\alpha$th slice of $C$ is the interpretation of $\dot Y_\alpha$. 
\end{proof}

\begin{corollary}
Let $G \subseteq \Pbb$ be generic for $(M,\Xcal) \models \GBC + \PnCAp k$, for $k \ge 1$, with $\Pbb \in \Xcal$ a pretame forcing. Then $(M,\Xcal)[G]$ satisfies $\Sigma^1_k$-Class Collection.
\end{corollary}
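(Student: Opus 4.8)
The plan is to run the proof of the preceding theorem --- preservation of Class Collection under pretame forcing over $\KMCC$ --- essentially verbatim, while bookkeeping the complexity of every instance of Comprehension and Class Collection that gets invoked and checking that $\Pi^1_k$-Comprehension and $\Sigma^1_k$-Class Collection always suffice when the formula $\phi$ driving the Class Collection instance is $\Sigma^1_k$. First I would record the setup. Since $\GBC + \PnCA k$ proves $\ETR_\Ord$, the uniform first-order class forcing theorem holds, so the restriction of $\forces_\Pbb$ to first-order formulae is a genuine class in $\Xcal$; and since $\Pbb$ is pretame, by the Friedman--Stanley theorem $\Pbb$ admits a forcing relation for every formula, so $(M,\Xcal)[G]$ is well-defined and the truth lemma applies. (The extension already satisfies $\GBcm$ by pretameness and $\PnCA k$ by the preceding corollary, so genuinely only $\Sigma^1_k$-Class Collection is at issue.) The key complexity fact, already used in the proof that class forcing preserves $\Pi^1_k$-Comprehension, is that for a $\Sigma^1_k$ formula $\phi$ with $k \ge 1$ the relation ``$q \forces \phi(\vec\sigma, \vec{\dot A})$'' is $\Sigma^1_k$-definable from the first-order forcing relation.

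Next I would carry out the core argument. Suppose $\phi$ is $\Sigma^1_k$ and $(M,\Xcal)[G] \models \forall \alpha \exists Y\ \phi(\alpha, Y, A)$; pick $\dot A$ a name for $A$ and $p \in G$ forcing $\forall \alpha \exists Y\ \phi(\alpha, Y, \dot A)$. For each fixed ordinal $\alpha$ I would first manufacture a single class name $\dot Y_\alpha$ with $p \forces \phi(\alpha, \dot Y_\alpha, \dot A)$, exactly as in the $\KMCC$ proof. The dense-below-$p$ class $D_\alpha = \{ q \le p : \exists\ \text{class name}\ \dot Y\ (q \forces \phi(\alpha, \dot Y, \dot A)) \}$ is obtained by prefixing a second-order existential quantifier to the $\Sigma^1_k$ condition ``$q \forces \phi(\alpha, \dot Y, \dot A)$'', hence is $\Sigma^1_k$-definable and lies in $\Xcal$ by $\PnCA k$; this is the one point where $k \ge 1$ is essential, since for $k = 0$ the extra $\exists X$ would push the complexity up. Then I would build a maximal antichain $N_\alpha \subseteq D_\alpha$ by the usual $\Ord$-length elementary recursion along a bijection $\Ord \to D_\alpha$ from Global Choice, and apply $\SnCC k$ to the formula asserting ``$Z$ is a class name forcing $\phi(\alpha, Z, \dot A)$ below $q$, and $Z = \emptyset$ if $q \notin N_\alpha$'' (matrix $\Sigma^1_k$) to get a single class coding witnesses $\dot Y_q$ for all $q \in N_\alpha$. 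The mixing name
\[
\dot Y_\alpha = \{ (\sigma, r) : \exists q, q' \in N_\alpha\ \exists s\ ((\sigma, s) \in \dot Y_q \mand r \le q', s) \}
\]
satisfies $q \forces \dot Y_\alpha = \dot Y_q$ for each $q \in N_\alpha$, so since $N_\alpha$ is predense below $p$ we get $p \forces \phi(\alpha, \dot Y_\alpha, \dot A)$.

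Finally, having shown that for every ordinal $\alpha$ there is a class name $\dot Y$ with $p \forces \phi(\alpha, \dot Y, \dot A)$ --- a statement whose matrix is again $\Sigma^1_k$ in $\alpha$ with parameters $p$, $\dot A$, and the first-order forcing relation --- I would invoke $\SnCC k$ one more time, now indexed by $\alpha \in \Ord$ (using Global Choice to get the clean, choice-flavored form of Class Collection), to assemble a single class $\dot C \in \Xcal$ whose $\alpha$th slice names $\dot Y_\alpha$. Then $p$ forces $\forall \alpha\ \phi(\alpha, (\dot C)_\alpha, \dot A)$, and the truth lemma yields $(M,\Xcal)[G] \models \forall \alpha\ \phi(\alpha, (C)_\alpha, A)$, which is the desired instance of $\Sigma^1_k$-Class Collection. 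I expect the main obstacle to be purely the complexity bookkeeping of the second paragraph: one must check that forcing a $\Sigma^1_k$ formula stays $\Sigma^1_k$ and that the auxiliary quantifier $\exists\ \text{class name}\ \dot Y$ does not escape $\Sigma^1_k$ --- both of which hold precisely because $k \ge 1$ and because $\ETR_\Ord$, a consequence of $\PnCA k$, makes the first-order forcing relation an honest class to define from.
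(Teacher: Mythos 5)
Your proposal is correct and follows essentially the same route as the paper: re-run the $\KMCC$ preservation argument and check that the one instance of Comprehension (forming the dense class via ``$q \forces \phi$'', which is $\Sigma^1_k$-definable from the first-order forcing relation since $k \ge 1$) and the instances of Class Collection (choosing the $\dot Y_q$, and assembling the $\dot Y_\alpha$) are all $\Sigma^1_k$, so $\PnCA k$ and $\SnCC k$ suffice. Your explicit second application of $\SnCC k$ to collect the names $\dot Y_\alpha$ across ordinals is just a more careful rendering of the paper's ``put them together'' step.
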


\begin{proof}
In the above argument we used Comprehension to get $D$ and Class Collection to choose the $\dot Y_q$. This instance of Comprehension used $p \forces \phi$, which is $\Sigma^1_k$-definable for a $\Sigma^1_k$ formula $\phi$. Similarly, the instance of Class Collection is also $\Sigma^1_k$. So both go through in this context.
\end{proof}

I end this section with an open question. We have seen that strong second-order set theories are preserved by tame forcing, as is $\GBC$. What about intermediate theories?

\begin{question}
Is $\ETR$ preserved by tame forcing?
\end{question}

\section{Some basic constructions}

In this section I survey some basic constructions for models of second-order set theories. I will focus on models of the weak theories, as the constructions for stronger theories are not so basic.

\begin{observation}
Let $M \models \ZFC$ and $\Def(M)$ consist of the definable, possibly with parameters, subsets of $M$. Then $(M,\Def(M)) \models \GBc$. If additionally, $\Def(M)$ contains a global well-order of $M$ then $(M,\Def(M)) \models \GBC$. Similarly, if $M \models \ZFCm$ then $(M,\Def(M)) \models \GBcm$, or $\GBCm$ in case $M$ has a definable global well-order of ordertype $\Ord$.
\end{observation}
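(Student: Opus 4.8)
The plan is to verify the axioms of $\GBc$ for $(M,\Def(M))$ one at a time, using the observations of the previous section to reduce the work. Class Extensionality is immediate: since $\Def(M)\subseteq\powerset(M)$ and the set-class membership is the true $\in$, the earlier observation applies. And $\ZFC$ for sets holds by hypothesis on $M$; this already supplies choice functions for sets, which is all the ``$\mathsf c$'' in $\GBc$ asks for. So the only real content is Elementary Comprehension and Class Replacement.

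For Elementary Comprehension I would invoke the observation that $(M,\Xcal)$ satisfies it precisely when $\Xcal$ is closed under first-order definability, and then check this closure directly for $\Xcal=\Def(M)$. Suppose $A_0,\dots,A_n\in\Def(M)$, where $A_i$ is defined over $M$ by a first-order formula $\phi_i(y,p_i)$ with parameter $p_i\in M$, and suppose $B=\{x:(M,\Def(M))\models\psi(x,A_0,\dots,A_n,q)\}$ for a first-order $\psi$ and $q\in M$. Since $\psi$ has no class quantifiers, each class variable $A_i$ occurs in $\psi$ only in atomic subformulae of the form $t\in A_i$ with $t$ a set term; replacing each such subformula by $\phi_i(t,p_i)$ yields a first-order formula $\psi^*(x,q,p_0,\dots,p_n)$ over $M$ alone with $B=\{x\in M:M\models\psi^*(x,q,p_0,\dots,p_n)\}$. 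Hence $B\in\Def(M)$. The only thing demanding care is the bookkeeping of this substitution, but it is routine.

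For Class Replacement, let $F\in\Def(M)$ be a class function, say $F=\{z\in M:M\models\phi_F(z,p)\}$, and let $a\in M$. Then $F''a=\{y\in M:\exists x\in a\ M\models\phi_F(\langle x,y\rangle,p)\}$, which is an element of $M$ by Replacement in $M$. This completes $(M,\Def(M))\models\GBc$. For the upgrade to $\GBC$, assume $\Def(M)$ contains a global well-order $<^*$ of $M$. Having just established $(M,\Def(M))\models\GBc$, I would apply the Fact asserting the equivalence of the four forms of Global Choice over $\GBc$: starting from $<^*$ one produces a class bijection $\Ord\to V$, and the classes built along the way are first-order definable from $<^*$, hence lie in $\Def(M)$ by the Elementary Comprehension just verified. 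So $(M,\Def(M))\models\GBC$.

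The powerset-free cases are entirely parallel. If $M\models\ZFCm$, the same arguments give $(M,\Def(M))\models\GBcm$; here one uses that $\ZFCm$ includes Collection, which yields Replacement, so the Class Replacement step is unchanged. Since without Powerset the forms of Global Choice need not coincide, $\GBCm$ takes as its official Global Choice the strongest form---a global well-order of ordertype $\Ord$---and this is exactly why the hypothesis there is that $M$ carries a \emph{definable} such well-order: it is then directly a member of $\Def(M)$ and witnesses Global Choice for $\GBCm$. There is no genuinely hard step in any of this; the only place requiring attention is the syntactic substitution in the Elementary Comprehension verification, and, in the powerset-free setting, keeping straight which form of Global Choice is available.
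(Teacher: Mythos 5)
Your proposal is correct, and since the paper states this observation without proof (treating it as routine), your axiom-by-axiom verification---substituting the defining formulae of class parameters to check closure under first-order definability for Elementary Comprehension, using Replacement in $M$ for Class Replacement, and invoking the equivalence of the forms of Global Choice over $\GBc$ for the upgrade to $\GBC$---is exactly the intended routine argument. You also handle the one genuine subtlety correctly: in the Powerset-free case the forms of Global Choice need not coincide, so you use the hypothesis of a definable well-order of ordertype $\Ord$ directly to witness the official (strongest) form of Global Choice for $\GBCm$.
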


Of course, $M$ may not have a definable global well-order. So in general it requires more to get a $\GBC$-realization (or $\GBCm$-realization) for $M$.

\begin{definition}
Let $M$ be a model of first-order set theory. Say that $A \subseteq M$ is {\em amenable to $M$} if $A \cap x \in M$ for all $x \in M$. 
\end{definition}

\begin{definition} \label{def1:t-amenable}
Let $T$ be a second-order set theory and $M$ be a model of first-order set theory. Say that $A \subseteq M$ is {\em $T$-amenable to $M$} if there is a $T$-realization $\Xcal$ for $M$ with $A \in \Xcal$.
\end{definition}

A special case of interest is when $T$ is $\GBc$ or $\GBcm$. 

\begin{observation}
$A$ is $\GBcm$-amenable to $M$ if and only if $(M,A)$ satisfies the Separation and Replacement schemata for formulae in the expanded language.
\end{observation}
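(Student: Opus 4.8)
The plan is to prove both directions directly, taking $\Def(M;A)$ as the canonical witnessing realization for the backward implication. Throughout I use the standing assumption of this section that $M \models \ZFCm$; this is in any case necessary, since $M$ can only be $\GBcm$-realizable if $M \models \ZFCm$.

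For the left-to-right direction, suppose $A$ is $\GBcm$-amenable, witnessed by some $\Xcal \ni A$ with $(M,\Xcal) \models \GBcm$. Given an instance of Separation or Replacement for an $\Lcal_\in(A)$-formula $\phi$, note that $\phi$ is just an $\Lcal_\in$-formula in which $A$ appears among the class parameters, so by Elementary Comprehension the class $B = \{ x : (M,\Xcal) \models \phi(x,\bar p,A) \}$ lies in $\Xcal$ for any set parameters $\bar p$ from $M$. For the Separation instance, invoke the fact recalled earlier in the text that $\GBc$ (hence $\GBcm$, since the argument uses only Elementary Comprehension and Class Replacement and not Powerset) proves Class Separation, which gives $B \cap b \in M$ for every $b \in M$. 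For the Replacement instance, if $\phi$ with its parameters defines on a set $a$ the graph of a function, then that graph is a class function $F \in \Xcal$, and Class Replacement yields $F''a \in M$. Hence $(M,A)$ satisfies both schemata in the expanded language.

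For the right-to-left direction, suppose $(M,A)$ satisfies Separation and Replacement for $\Lcal_\in(A)$-formulae, and put $\Xcal = \Def(M;A)$, so that $A \in \Xcal$. I claim $(M,\Xcal) \models \GBcm$. The first-order axioms of $\ZFCm$ hold in $M$ by hypothesis, and set choice functions come from the Choice axiom of $\ZFCm$; Extensionality for classes is automatic since $\Xcal \subseteq \powerset(M)$ with the true membership relation. Elementary Comprehension holds because $\Def(M;A)$ is closed under first-order definability: any first-order formula over $(M,\Xcal)$ whose parameters lie in $\Def(M;A)$ becomes, upon replacing each parameter by its defining formula, a first-order formula over $(M,A)$ with set parameters. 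The only remaining axiom is Class Replacement: if $F \in \Def(M;A)$ is a class function, say defined by $\psi(\cdot,p,A)$, and $a$ is a set, then $F''a = \{ y : \exists x \in a\ \psi((x,y),p,A) \}$ is a set by the instance of Replacement for $(M,A)$ applied to $\psi$ (a formula with set parameter $p$ in the expanded language). Thus $(M,\Xcal) \models \GBcm$ and $A$ is $\GBcm$-amenable. As a side remark, this shows that Separation is redundant in the hypothesis: once $\Def(M;A)$ is known to be a $\GBcm$-realization, the forward direction already supplies Separation for $(M,A)$.

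I do not anticipate a genuine obstacle. The one point that must be handled carefully is that Class Separation really is available in $\GBcm$ — it is a derived consequence of Class Replacement rather than an axiom, and the derivation recalled in the text is powerset-free — and, in parallel, that the backward direction needs only Replacement (not Collection) for $(M,A)$, which matches the fact that $\GBcm$ postulates Class Replacement and not a class-parameter form of Collection.
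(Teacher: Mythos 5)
Your proof is correct and follows essentially the same route as the paper: the forward direction is immediate from Elementary Comprehension together with Class Replacement (and the powerset-free derivation of Class Separation), and the backward direction takes $\Def(M;A)$ as the witnessing $\GBcm$-realization. The extra detail you supply—checking that the Class Separation argument avoids Powerset and that only Replacement (not Collection) in the expanded language is needed—is exactly the fine print the paper's one-line proof leaves implicit.
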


\begin{proof}
$(\Rightarrow)$ Because $\GBcm$ includes Elementary Comprehension and Class Replacement.

$(\Leftarrow)$ Then $\Def(M;A)$ is a $\GBcm$-realization for $M$.
\end{proof}

It is obvious that if $G$ is a $\GBc$-amenable global well-order of $M$ then $(M,\Def(M;G)) \models \GBC$. Accordingly, to show that $M$ is $\GBC$-realizable we want to find such a global well-order.

\begin{theorem}[Folklore] \label{thm1:gbc-cons-zfc}
Suppose $M \models \ZFC$ is countable. Then $M$ is $\GBC$-realizable. In general, if $(M,\Xcal) \models \GBc$ is countable then there is $\Ycal \supseteq \Xcal$ a $\GBC$-realization for $M$.
\end{theorem}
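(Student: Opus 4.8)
The plan is to build a global well-order of $M$ of ordertype $\Ord^M$ by a forcing argument over the countable model $(M,\Xcal)$, using the standard class forcing $\mathrm{Add}(\Ord,1)$-style poset that shoots a well-order through the universe. Concretely, work with the forcing $\Pbb \in \Xcal$ whose conditions are set-sized injections $p \colon \alpha \to M$ with $\alpha \in \Ord^M$, ordered by extension. This $\Pbb$ is $\prec_{\mathsf{top}}$-directed enough: given any condition and any set $x \in M$, one can extend the condition to have $x$ in its range (appending $x$ at the next coordinate if it is not already enumerated), so the classes $D_x = \{ p : x \in \ran p \}$ are dense. Since $(M,\Xcal)$ is countable, $\Xcal$ has only countably many dense subclasses of $\Pbb$, so externally we may enumerate them in ordertype $\omega$ and meet each in turn, producing a generic filter $G \subseteq \Pbb$. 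The union $\dot G = \bigcup G$ is then a function $\Ord^M \to M$ which is injective and, by genericity through the $D_x$'s, surjective onto $M$; hence it is a global well-order of $M$ of ordertype $\Ord^M$.

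The next step is to check that $\Ycal = \Def(M; \Xcal, \dot G)$ — the classes first-order definable over $M$ from $\dot G$, finitely many classes of $\Xcal$, and set parameters — is a $\GBC$-realization for $M$ extending $\Xcal$. It contains $\Xcal$ (take no parameters beyond a single class of $\Xcal$) and contains the global well-order $\dot G$, so Global Choice holds; Elementary Comprehension holds by construction since $\Ycal$ is closed under first-order definability, using the observation in the excerpt characterizing Elementary Comprehension as exactly this closure. Class Extensionality is automatic as all our classes are genuine subsets of $M$. The remaining axiom is Class Replacement. For this one wants $\Pbb$ to be tame (equivalently here, pretame and Powerset-preserving), so that passing to the generic extension $(M,\Xcal)[G]$ preserves $\GBc$, hence Class Replacement; and then note $\Ycal \subseteq \Xcal[G]$ after identifying $\dot G$ with its canonical name. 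Alternatively, and perhaps more cleanly: $\Pbb$ is ${<}\kappa$-distributive for every set cardinal $\kappa$ in the sense that adds no new sets (its conditions live in $M$ and any set-many dense classes can be met within $M$ by a density argument using Replacement in $M$), so in fact $M[G] = M$ and the extension adds only the class $\dot G$; then Class Replacement for $(M,\Ycal)$ reduces to: whenever $F \in \Ycal$ is a class function and $a \in M$, $F''a \in M$. Since $F$ is definable from $\dot G$ and finitely many classes of $\Xcal$, and $\dot G$ is amenable to $M$ (each initial segment $\dot G \restriction \alpha$ is a set in $M$ by a density/genericity argument), $F$ restricted to any set is coded by a set, giving $F''a \in M$.

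For the general statement, when we start from $(M,\Xcal) \models \GBc$ rather than merely $M \models \ZFC$, the same argument applies verbatim: the forcing $\Pbb$ and its dense classes $D_x$ are definable in $\Xcal$, countability of $(M,\Xcal)$ yields the generic $G$, and $\Ycal = \Def(M;\Xcal,\dot G) \supseteq \Xcal$ is the desired $\GBC$-realization; the only change is that the verification of Class Replacement now leans on Class Replacement already holding in $(M,\Xcal)$ together with amenability of $\dot G$, exactly as in the first $\ZFC$-only case (where one first passes through $(M,\Def(M))\models \GBc$).

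The main obstacle I expect is the Class Replacement verification — more precisely, arguing carefully that adjoining $\dot G$ does not destroy it. The clean way is to establish that $\dot G$ is $\GBc$-amenable to $M$ (so that $\Def(M;\Xcal,\dot G)$ models Class Replacement), and the cleanest route to amenability is showing the forcing $\Pbb$ adds no new sets, i.e. $M[G] = M$, which in turn requires the density argument that set-many dense subclasses of $\Pbb$ can be met by a condition already in $M$ — this uses that $M \models \ZFC$ (in particular Replacement, to see that for set-many dense classes there is a single set-sized condition deciding membership in all of them). Everything else is routine bookkeeping about definability closure.
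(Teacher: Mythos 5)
Your proposal is correct and follows essentially the paper's own proof: force over the countable $(M,\Xcal)$ with a tame, ${<}\kappa$-closed class forcing that adds a global well-order while adding no new sets (the paper uses $\Add(\Ord,1)$ and then decodes a well-order from the generic Cohen class by a density/coding argument, whereas you force a bijection $\Ord \to V$ directly), and then pass to $\Xcal[G]$ or its definable closure to obtain the $\GBC$-realization extending $\Xcal$. One caution: your ``alternatively, more cleanly'' justification of Class Replacement from mere amenability of $\dot G$ is not sound as stated, since amenability of a class does not imply Replacement in the expanded language (the paper later exhibits amenable classes that are not $\GBc$-amenable), but your primary route via tameness of the forcing and $\Ycal \subseteq \Xcal[G]$ is fine.
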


The tool used in this proof will be Cohen-forcing to add a generic subclass of $\Ord$.

\begin{definition}
The forcing to add a Cohen-generic subclass of $\Ord$, denoted $\Add(\Ord,1)$, consists of all set-sized partial functions from $\Ord$ to $2$, ordered by reverse inclusion. 
\end{definition}

\begin{lemma}
Over $\GBc$, the forcing $\Add(\Ord,1)$ is tame. 
\end{lemma}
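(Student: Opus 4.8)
The plan is to verify the two conditions in the definition of tameness for $\Add(\Ord,1)$: pretameness, and the additional clause about predense partitions being captured in some $V_\alpha$. Throughout we work in $(M,\Xcal) \models \GBc$ and write $\Pbb = \Add(\Ord,1)$.

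First I would establish pretameness. Suppose we are given $p \in \Pbb$ and a set-indexed sequence $\seq{D_i : i \in a}$ of dense subclasses of $\Pbb$. A condition in $\Pbb$ is a set-sized partial function $\Ord \to 2$; the key structural fact is that for any ordinal $\beta$, the conditions with domain contained in $\beta$ form a \emph{set} (it is a subset of ${}^{<\beta}2$-ish, certainly an element of $M$ by Powerset and Replacement). The strategy is to build, by recursion of length $a$ (using a well-ordering of $a$), a descending sequence of conditions $p \ge p_0 \ge p_1 \ge \cdots$ together with bounding ordinals: having reached $p_i$, use density of $D_i$ and Replacement/Collection in $M$ to find a \emph{set} $d_i \subseteq D_i$ that is predense below $p_i$ — concretely, for each possible way of extending $p_i$ by deciding finitely/boundedly many more coordinates below some ordinal, pick (using a set choice function, available in $\GBc$) an element of $D_i$ compatible with it, and let $\alpha_i$ bound the domains of everything chosen so far; then let $p_{i+1}$ agree with $p_i$ and be undefined above $\alpha_i$ (or just take $p_{i+1} = p_i$ and keep track of $\alpha_i$). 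Taking unions at limits and setting $q = \bigcup_i p_i$ — which is a legitimate condition because the $\alpha_i$ are bounded by Replacement applied to the function $i \mapsto \alpha_i$ in $M$ — gives $q \le p$ below which each $d_i$ remains predense. This is the standard "bounded support" argument and the main point is simply that $\Pbb$'s conditions are small, so all the searches stay inside $M$.

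Next I would handle the tameness clause. Given $p$ and a set $a$, I need an ordinal $\alpha$ and a condition $q \le p$ so that every sequence $\Dcal = \seq{(D_i,D_i') : i \in a}$ of predense $\le q$ partitions is equivalent $\le r$, on a dense set of $r$, to some sequence $\Ecal \in V_\alpha$ of predense $\le q$ partitions. The idea is that a predense partition $(D_i, D_i')$ below a condition $q$ is, up to equivalence, determined by a \emph{set} of data: namely, which conditions extending $q$ "meet $D_i$". Since we may as well refine $q$ so that below it the forcing is essentially the Cohen forcing ${}^{<\alpha}2$ for deciding coordinates below some $\alpha$ — and here is where I would choose $\alpha$ large enough relative to $a$ — every predense-partition-below-$q$ question is decided by a maximal antichain of such bounded conditions, which lives in $V_\alpha$. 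So the equivalence class of any $\Dcal$ has a representative $\Ecal \in V_\alpha$. I expect to invoke, or re-derive, exactly the bookkeeping from S. Friedman's treatment of $\Add(\Ord,1)$; the content is that Cohen forcing over $\Ord$ is "locally set forcing" in a strong enough sense.

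The main obstacle, and the step I would spend the most care on, is the tameness clause rather than pretameness: pinning down precisely how large $\alpha$ must be as a function of $a$ and $p$, and checking that the "equivalent $\le r$ on a dense set" quantifier genuinely ranges only over data coded in $V_\alpha$. Pretameness for $\Add(\Ord,1)$ is essentially immediate from boundedness of conditions plus Replacement in $M$; the tameness refinement requires the observation that predense partitions below a suitably homogeneous condition reduce to antichains of bounded conditions, and that this reduction is uniform. I would model the argument on the corresponding verification in Friedman's book, adapting it to the $\GBc$ (rather than $\GBC$) hypothesis — but since $\GBc$ already proves Separation for classes and has set choice functions, nothing essential is lost.
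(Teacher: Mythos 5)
Your overall strategy differs from the paper's: the paper does not verify the second clause of tameness at all. It invokes the characterization already stated in the preceding section (a forcing is tame if and only if it is pretame and preserves Powerset), proves pretameness in one line---by $<\kappa$-distributivity one may reduce a set-indexed family of dense classes to a single dense class $D$, take $q \le p$ meeting $D$, and use the singleton $d = \{q\}$, which is trivially predense below $q$---and then gets Powerset preservation from the fact that $\Add(\Ord,1)$ is $<\kappa$-closed for every $\kappa$ and hence adds no new sets. Your head-on verification of the predense-partition clause is where the genuine gap lies. You assert that we may refine $q$ so that below it the forcing is ``essentially ${}^{<\alpha}2$'' and that every predense-partition question below $q$ is decided by a maximal antichain of bounded conditions living in $V_\alpha$. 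This is false: below any condition, $\Add(\Ord,1)$ is still a proper-class forcing, it has proper-class antichains (e.g.\ the conditions $0^\beta \cat \seq{1}$ for $\beta \in \Ord$), and a predense $\le q$ partition can concentrate entirely on coordinates above any $\alpha$ fixed in advance, so no such $V_\alpha$ contains representatives of all partitions ``on the nose''. What rescues the clause is the quantifier you never exploit---equivalence below densely many $r$: given $\Dcal = \seq{(D_i,D_i') : i \in a}$, the classes $\{s \le q : s$ meets $D_i$ or meets $D_i'\}$ are open dense below $q$, so by $<\kappa$-distributivity densely many $r \le q$ meet one side of every partition; below such an $r$ each $(D_i,D_i')$ is equivalent to the trivial partition $(\{q\},\emptyset)$ or $(\emptyset,\{q\})$, and the resulting sequence $\Ecal$ lies in $V_\alpha$ for any $\alpha$ above the ranks of $q$ and $a$. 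Without some such move your argument for the second clause does not go through.

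There is also a smaller flaw in your pretameness step: the set $d_i$ you build by choosing, for each extension of $p_i$ with domain bounded by some $\beta$, an element of $D_i$ below it, need not be predense below $p_i$ (nor below your eventual $q$). If $r \le p_i$ has unbounded commitments, the chosen element of $D_i$ lying below $r \rest \beta$ may conflict with $r$ above $\beta$, so $r$ need not be compatible with anything in $d_i$. The repair is easy and in the spirit of what you wrote: either use distributivity as the paper does, or run your descending recursion so that $p_{i+1}$ actually extends into $D_i$ (unions at limits are fine by $<\kappa$-closure and Replacement) and take $d_i$ to be the singleton of that element of $D_i$, which is then predense below the final $q$. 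With that fix, and with the second clause either handled by the trivialization argument above or bypassed entirely via the pretame-plus-Powerset characterization, the lemma follows; but as written the proposal's treatment of the tameness clause rests on an incorrect structural claim about $\Add(\Ord,1)$.
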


\begin{proof}
First let us see that $\Add(\Ord,1)$ is pretame. Because $\Add(\Ord,1)$ is $<\kappa$-distributive for every $\kappa$,\footnote{$\Add(\Ord,1)$ is $<\kappa$-distributive because it is $<\kappa$-closed. Checking this is an easy exercise.}
below any set-sized collection of open dense subclasses of $\Add(\Ord,1)$ we can find a single dense subclass. That is, if $\seq{D_i : i \in a}$ is a set-indexed collection of open dense subclasses then we can find a dense subclass $D$ so that any generic which meets $D$ must meet all the $D_i$. As such, we may assume without loss that that we are dealing with a single dense subclass. That is, the setup is that we have a dense class $D$ and a condition $p$. We want to find a condition $q \le p$ and a set $d \subseteq D$ which is predense $\le q$. But this is trivial: take $q \le p$ which meets $D$ and let $d = \{q\}$.

Finally, $\Add(\Ord,1)$ preserves Powerset because forcing with it does not add any new sets, which in turn is because it is $<\kappa$-closed for every cardinal $\kappa$.
\end{proof}

This appeared in the proof of the above lemma, but it is important enough to be stated on its own.

\begin{observation}
Over $\GBc$, forcing with $\Add(\Ord,1)$ does not add new sets. \qed
\end{observation}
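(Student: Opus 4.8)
The plan is to squeeze this out of the extreme closure of $\Add(\Ord,1)$, which was already isolated in the proof of the previous lemma. First I would record precisely: for every cardinal $\kappa$ the forcing $\Add(\Ord,1)$ is $<\kappa$-closed, since if $\langle p_\xi : \xi < \lambda\rangle$ is a descending chain of conditions with $\lambda < \kappa$ then $\bigcup_{\xi < \lambda} p_\xi$ is again a set-sized partial function from $\Ord$ to $2$ --- set-sized by Replacement and Union in $M$ --- hence a condition below every $p_\xi$.

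Next I would run the standard ``closed forcing decides its names'' argument in this setting. Fix a condition $p$ and a set name $\dot\sigma$ for $\Pbb = \Add(\Ord,1)$. Working in $M$, collect the set of all $\Pbb$-names occurring hereditarily in $\dot\sigma$ (a set of $M$, with $\dot\sigma$ among them), enumerate it as $\langle \tau_\zeta : \zeta < \gamma\rangle$ for some $\gamma \in \Ord^M$, and pick a cardinal $\kappa > \card{\gamma \times \gamma}$. Using a global well-order of $M$ to make choices, build in $M$ a descending chain of conditions below $p$ which decides, as one runs through all pairs $(\xi,\eta) \in \gamma\times\gamma$, each atomic statement ``$\tau_\xi \in \tau_\eta$'' and ``$\tau_\xi = \tau_\eta$'', taking greatest lower bounds at limit stages; since the chain has length $<\kappa$, it has a lower bound $q \le p$. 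Then $q$ decides every atomic membership and equality fact among the $\tau_\zeta$, so for any generic $G \ni q$ the value $\tau_\eta^G$ is recursively determined, along the well-founded ``occurs in'' relation on names, by the decisions made by $q$; running that recursion inside $M$ produces a set $x \in M$ with $q \forces \dot\sigma = \check x$. Since every set of $M[G]$ is the interpretation of some set name and $q$ was found below the arbitrary $p$, density plus genericity give $M[G] = M$.

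The main point requiring care is simply that the two recursions --- building the descending chain, and then reading off $x$ --- are carried out inside $M$; this is unproblematic because $\GBc$ supplies $\ZFC$ for sets, hence a global well-order of $M$ and the recursion theorem, and because the hereditary-subname relation is set-like and well-founded in $M$. An alternative packaging would be a rank induction showing $V_\alpha^{M[G]} = V_\alpha^M$, coding each new subset of $V_\alpha^M$ via an $M$-bijection from $\card{V_\alpha^M}$ onto $V_\alpha^M$ as a function from an ordinal into $2$ and then quoting the closure fact; that route additionally needs the remark --- immediate from the tameness established in the previous lemma --- that $M[G]$ adds no new ordinals, so that the induction really does exhaust every set. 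Beyond this bookkeeping there is nothing deep: the whole content is the $<\kappa$-closure for every cardinal $\kappa$.
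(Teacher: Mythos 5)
Your overall strategy is the paper's own: use the fact that $\Add(\Ord,1)$ is $<\kappa$-closed for every cardinal $\kappa$ to find, densely below any condition, a single condition deciding enough about a set name that the name's value is computable in the ground model. The verification that the decisions of $q$ determine $\tau_\eta^G$ by recursion on the subname relation is fine, and the density-plus-genericity conclusion is fine.

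There is, however, a genuine gap in how you make the choices in the descending chain, and it matters precisely because the statement is over $\GBc$, not $\GBC$. At each successor step you must pick an extension of the current condition deciding one atomic statement, and the candidates form a proper class; you justify the selection by ``a global well-order of $M$,'' claiming this follows because $\GBc$ supplies $\ZFC$ for sets. That inference is false: $\ZFC$ gives well-orders of each set but no global well-order, and $\GBc$ is by definition $\GBC$ \emph{without} Global Choice. Passing to extensions of minimal rank does not repair this, since one still has a set of candidates with no uniform way to select across the (set-length but class-valued) recursion. Two standard repairs: (i) as in the paper's own sketch, make the choices along the generic itself --- take $p_{\alpha+1}$ to be $\bigcup G \rest \beta$ for the least $\beta$ such that this restriction decides the relevant statement; these restrictions are canonically linearly ordered by inclusion, so no choice principle is needed, and the resulting lower bound, while perhaps not in $G$, decides everything the way $G$ realizes it; or (ii) observe by a homogeneity/automorphism argument that whether a condition decides a statement about $\dot\sigma$ depends only on its restriction to the supremum $\beta_0$ of the domains of conditions occurring hereditarily in $\dot\sigma$, so one may run your chain entirely inside the set of conditions with domain contained in $\beta_0$, which $M$ can well-order using ordinary Choice. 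With either fix your argument goes through; as written, the appeal to Global Choice is exactly the hypothesis the statement deliberately omits.
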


We can now prove theorem \ref{thm1:gbc-cons-zfc}.

\begin{proof}[Proof of theorem \ref{thm1:gbc-cons-zfc}]
It suffices to prove the more general result, since we get the other result by considering $\Xcal = \Def(M)$ to consist of the (first-order) definable classes.

We obtain $\Ycal$ by forcing over $(M,\Xcal)$ with $\Add(\Ord,1)$. That is, let $C \subseteq \Ord^M$ be generic over $(M,\Xcal)$ for $\Add(\Ord,1)$. Then $\Ycal = \Xcal[C]$. So $(M,\Ycal) \models \GBc$ and in particular the class $C$ is $\GBc$-amenable to $M$. We want to see that we can define a global well-order from $C$. To do this we use that  every set of ordinals in $M$ is coded into $C$. This is because sets can be coded as sets of ordinals---to code $x$ take an isomorphic copy of $\mathord\in \rest \tc(\{x\})$ as a set of pairs of ordinals, which can be coded as a set of ordinals via a pairing function---and so by density every set is coded into $C$. Thus, we can define a global well-order $<_C$ as $x <_C y$ if the first place $x$ is coded into $C$ comes before the first place $y$ is coded into $C$.
\end{proof}

\begin{corollary}[Folklore] \label{cor1:gbc-cons-zfc}
$\GBC$ is conservative over $\ZFC$. That is, if $\phi$ is a first-order sentence in the language of set theory then $\GBC \proves \phi$ if and only if $\ZFC \proves \phi$.
\end{corollary}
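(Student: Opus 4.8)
The plan is to derive this corollary immediately from Theorem \ref{thm1:gbc-cons-zfc} together with the Gödel completeness and Löwenheim--Skolem theorems. One direction is trivial: since the axiomatization of $\GBC$ includes $\ZFC$ for sets, any first-order sentence $\phi$ with $\ZFC \proves \phi$ also has $\GBC \proves \phi$.

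For the converse I would argue by contraposition. Suppose $\ZFC \not\proves \phi$ for a first-order sentence $\phi$. By the completeness theorem, $\ZFC + \neg\phi$ is consistent, hence has a model, and by Löwenheim--Skolem we may take a countable model $M \models \ZFC + \neg\phi$. Now apply Theorem \ref{thm1:gbc-cons-zfc} to get a $\GBC$-realization $\Xcal$ for $M$, so $(M,\Xcal) \models \GBC$. Since $\phi$ is first-order and the first-order part of $(M,\Xcal)$ is exactly $M \models \neg\phi$, we have $(M,\Xcal) \models \GBC + \neg\phi$. So $\GBC + \neg\phi$ is consistent, i.e.\ $\GBC \not\proves \phi$. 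Contraposition then gives $\GBC \proves \phi \implies \ZFC \proves \phi$, completing the proof.

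There is essentially no obstacle here: all the content lies in Theorem \ref{thm1:gbc-cons-zfc}, the fact that every countable model of $\ZFC$ expands to a model of $\GBC$ with the same sets. The only point worth flagging is the passage between the purely syntactic statement of conservativity and the model-theoretic realizability result, which is exactly what the completeness theorem provides; no appeal to anything beyond first-order logic is needed. (One could also phrase the whole argument semantically and invoke completeness only once at the end.)
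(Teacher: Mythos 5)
Your proof is correct and follows essentially the same route as the paper: the nontrivial direction is obtained by taking a countable model of $\ZFC + \neg\phi$ (via completeness and L\"owenheim--Skolem, which the paper leaves implicit) and expanding it to a model of $\GBC$ using theorem \ref{thm1:gbc-cons-zfc}. The only cosmetic difference is that you argue by contraposition while the paper phrases it as a proof by contradiction.
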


\begin{proof}
Suppose otherwise. Then there is a countable $M \models \ZFC$ which satisfies $\neg \phi$ while every model of $\GBC$ satisfies $\phi$. But $M$ has a $\GBC$-realization $\Xcal$ and $(M,\Xcal) \models \GBC + \neg \phi$, a contradiction.
\end{proof}

We can also get a version of theorem \ref{thm1:gbc-cons-zfc} that applies to models of $\ZFCm$, but we need a little more from our model. In the $\ZFC$ context we knew that $\Add(\Ord,1)$ did not add sets because it was $<\kappa$-closed for every $\kappa$. Let me quickly sketch the argument so we know what we would like to generalize.

Fix a set name $\dot a$. Without loss of generality we may assume that $\dot a$ gives a set of ordinals. That is, suppose $C \subseteq \Add(\Ord,1)$ is generic and consider $p \in C$ so that $p \forces \dot a \subseteq \check \kappa$. Let us now see that the ground model can interpret $\dot a$. Start with $p_0 = p$. Given $p_\alpha$ for $\alpha < \kappa$ extend $p_\alpha$ to $p_{\alpha+1}$ which decides whether $\check \alpha \in \dot a$. And at limits use $<\kappa$-closure to continue the construction. And because $\Add(\Ord,1)$ is $<\kappa^+$-closed we get $p_\kappa$ below all the $p_\alpha$'s for $\alpha < \kappa$. Moreover, we may make the choices so that $p_\alpha$ is always in $C$, since the classes we are meeting are dense. So $p_\kappa$, which is in the ground model, contains all the information that the generic uses to interpret $\dot a$. So $\dot a^C$ is in the ground model. 

The same argument will work for a model of $\ZFCm$ without a largest cardinal. But if the model does have a largest cardinal, that will not work. For concreteness, suppose we are working over a model of $\ZFCm$ plus ``every set is countable''. To show that $\Add(\Ord,1)$ we want to be able to make countably many choices according to some definable procedure and have that those choices cohere. This would give us the desired $p_\omega$ which has all the information needed to interpret the name $\dot a$. That is, we want $\omega$-Dependent Choice for definable procedures. In general, if our model has larger cardinals, then we want this but with $\omega$ replaced with the largest cardinal in the model.

\begin{definition}
Work in the context of $\ZFCm$ and let $\kappa$ be a cardinal. The principle of {\em Definable $\kappa$-Dependent Choice} asserts the following: if $T$ is a definable tree of sequences of length $< \kappa$ so that for all $\alpha < \kappa$ each node in $T$ of length $\alpha$ has a successor in $T$, then $T$ has a branch.
\end{definition}

\begin{remark}
Observe that the branch is a set. If it were a definable class, then because $\kappa$ is a set and the branch has length $\kappa$, Replacement would imply that the branch is a set.
\end{remark}

Definable Dependent Choice is not a theorem of $\ZFCm$. 
S.\ Friedman and Gitman \cite{friedman-gitman2017} produced a model of $\ZFCm$ $+$ ``every set is countable'' where Definable $\omega$-Dependent Choice fails. 

If our model of $\ZFCm$ with a largest cardinal $\kappa$ satisfies definable $\kappa$-dependent choice then $\Add(\Ord,1)$ over that model will be pretame and not add any new sets. So in that case we can force with $\Add(\Ord,1)$ to add a global well-order without adding any new sets. Indeed, the two are equivalent.

\begin{proposition}
Let $M \models \ZFCmi$ have a largest cardinal $\kappa$. Then the following are equivalent:
\begin{enumerate}
\item $M$ satisfies Definable $\kappa$-Dependent Choice; and
\item $M$ admits a $\GBCm$-amenable global well-order.
\end{enumerate}
\end{proposition}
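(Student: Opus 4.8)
The plan is to obtain the nontrivial direction $(2) \Rightarrow (1)$ by using the global well-order to drive the dependent-choice recursion, and to read off $(1) \Rightarrow (2)$ from the forcing machinery already developed; this latter direction essentially reprises the discussion preceding the proposition.

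For $(1) \Rightarrow (2)$: force over $(M, \Def(M)) \models \GBcm$ with $\Add(\Ord,1)$ to obtain a generic class $C \subseteq \Ord^M$, so that $\Def(M;C)$ is the class part of the extension. The one nontrivial point is that $\Add(\Ord,1)$ is pretame over $M$ and adds no new sets, and this is exactly where Definable $\kappa$-Dependent Choice is consumed: the fusion argument sketched before the proposition --- build a descending $\kappa$-sequence of conditions, the $(\alpha+1)$st deciding one more value of a fixed name for a subset of $\kappa$, taking unions at limits (which are conditions by $<\kappa$-closure) --- is precisely an application of Definable $\kappa$-DC to the definable tree of such partial descending sequences, and it produces a condition deciding the name outright, hence interpreting it in the ground model. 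Since no sets are added, $(M, \Def(M;C)) \models \GBcm$; and since by density every set of ordinals, hence every set, is coded into $C$ --- and since each initial segment of the resulting ``first place coded'' enumeration is a set, by Class Replacement --- the relation ``$x$ is coded into $C$ before $y$ is'' is a global well-order of ordertype $\Ord$ definable from $C$, just as in the proof of Theorem~\ref{thm1:gbc-cons-zfc}. So $(M, \Def(M;C)) \models \GBCm$ and this well-order is $\GBCm$-amenable.

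For $(2) \Rightarrow (1)$: fix a $\GBCm$-realization $\Xcal$ for $M$; by the axioms of $\GBCm$ we may take $W \in \Xcal$ to be a global well-order of ordertype $\Ord$. The key observation is that, since $\GBcm$ proves Separation for classes, every nonempty $A \in \Xcal$ has a $W$-least element: for any $a \in A$ the $W$-predecessors of $a$ form a set (a proper initial segment of a well-order of type $\Ord$), its intersection with $A$ is a set, and that set has a $W$-least element, which is $W$-least in all of $A$. Now let $T \in \Def(M)$ be a tree of sequences of length $<\kappa$ in which every node of length $\alpha < \kappa$ has a successor. Define $s = \seq{b_\alpha : \alpha < \kappa}$ by recursion in $(M,\Xcal)$: let $b_0$ be the root of $T$; given $b_\alpha \in T$, let $b_{\alpha+1} = b_\alpha \cat \seq{y}$, where $y$ is the $W$-least element with $b_\alpha \cat \seq{y} \in T$ --- such $y$ exists by the hypothesis on $T$, and the choice is well-defined even when there are proper-class-many candidates, since the class of candidates lies in $\Xcal$ by Elementary Comprehension and the previous observation applies; at a limit $\lambda < \kappa$, let $b_\lambda = \bigcup_{\alpha<\lambda} b_\alpha$, which lies in $T$ because all its proper initial segments do and $T$, being a tree of sequences of length $<\kappa$, is closed under unions of chains. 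Then $s$ is definable in $(M,\Xcal)$ from $W$, $T$, $\kappa$, and its domain $\kappa$ is a set, so by Class Replacement $s$ is a set of $M$; and by construction $s$ is a branch through $T$. Hence $M$ satisfies this instance of Definable $\kappa$-Dependent Choice.

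I expect the substantive difficulties to sit in $(2) \Rightarrow (1)$, at two places. First, the recursion must not get stuck at a successor stage; this is why it matters that $W$ has ordertype $\Ord$, and not merely that it is a global well-order, since only then does every (possibly proper-class) set of candidates have a $W$-least member --- and this step uses Separation for classes. Second, the recursion must stay inside $T$ through limit stages, which commits us to reading ``tree of sequences of length $<\kappa$'' as closed under unions of chains of length $<\kappa$; this is the intended reading, since without such closure the principle is vacuously false (witness the tree of all sequences of length $<\omega$ when $\kappa = \omega_1$). In $(1) \Rightarrow (2)$ the only real work is the fusion argument for pretameness and no-new-sets, which is exactly where Definable $\kappa$-DC is used; after that one simply runs the argument of Theorem~\ref{thm1:gbc-cons-zfc}.
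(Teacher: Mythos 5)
Your proof is correct and follows essentially the same route as the paper's (which is only a sketch): $(1)\Rightarrow(2)$ by showing $\Add(\Ord,1)$ is pretame and adds no sets via the fusion argument, where Definable $\kappa$-Dependent Choice is consumed, and then running the argument of theorem \ref{thm1:gbc-cons-zfc}; $(2)\Rightarrow(1)$ by using a $\GBCm$-amenable global well-order of ordertype $\Ord$ to make the choices along the $\kappa$-tree. Your added observations (that ordertype $\Ord$ plus class Separation guarantees $W$-least elements of proper-class candidate sets, and that limit stages require the intended closure of the tree under unions of short chains) are accurate refinements of the paper's sketch rather than deviations from it.
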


\begin{proof}[Proof sketch]
We sketched $(1 \impl 2)$ above. For $(2 \impl 1)$, suppose $G : \Ord^M \to V^M$ is $\GBCm$-amenable to $M$. Work in $(M,\Xcal) \models \GBCm$ with $G \in \Xcal$. We can use $G$ to make choices along $\kappa$-trees. Since $\Xcal$ contains every definable class this yields that $M$ satisfies Definable $\kappa$-Dependent Choice.
\end{proof}

For a related result, Gitman, Hamkins, and Johnstone \cite{gitman-hamkins-johnstone2016} showed that, over $\ZFCm$, Definable $\omega$-Dependent Choice is equivalent to the Reflection schema, i.e.\ the schema asserting that for every formula $\phi(x,a)$ and set $a$ that there is a transitive set $t \ni a$ so that $\phi(x,a)$ reflects to $t$.

Finally, let me observe that we cannot get a version of corollary \ref{cor1:gbc-cons-zfc} that works for $\ZFCm$.

\begin{corollary}
The theory $\GBCm$ is not conservative over $\ZFCm$.
\end{corollary}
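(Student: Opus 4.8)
The plan is to exhibit a single first-order sentence $\phi$ in the language of set theory with $\GBCm \proves \phi$ but $\ZFCm \not\proves \phi$; a suitable instance of the Definable $\omega$-Dependent Choice schema will serve. The first step is to verify that $\GBCm$ proves every instance of Definable $\omega$-Dependent Choice. Fix $(N,\Ycal) \models \GBCm$; by Global Choice, $\Ycal$ contains a global well-order $G$, which is in particular $\GBCm$-amenable to $N$. This puts us in the situation of the direction $(2) \impl (1)$ of the preceding proposition, and in the case $\kappa = \omega$ that argument does not use the hypothesis of a largest cardinal: given a definable tree $T$ of finite sequences from $N$ such that every node has a successor in $T$, one walks down $T$ taking at each node the $G$-least immediate successor. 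Since every $n < \omega$ is $0$ or a successor, the recursion meets no limit stage, so an induction over $\omega$---legitimate for the relevant formula even though it mentions the class $G$, using the Class Separation that $\GBcm$ proves---shows there is a unique such ``left-canonical'' node $s_n \in T$ of each finite length $n$. Then $n \mapsto s_n$ is a class function with domain the set $\omega$, so by Class Replacement and Union $b = \bigcup_n s_n$ is a set, and it is a branch through $T$.

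For the second step I would invoke the theorem of S.\ Friedman and Gitman quoted above: there is a model $M \models \ZFCm$---indeed one in which every set is countable---in which Definable $\omega$-Dependent Choice fails. Pick an instance $\phi$ of that schema with $M \models \neg\phi$. By the first step $\GBCm \proves \phi$, while $\ZFCm \not\proves \phi$ because $M \models \ZFCm + \neg\phi$; hence $\GBCm$ is not conservative over $\ZFCm$.

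The one point that needs care is the verification inside $\GBCm$ that the leftmost branch is genuinely a total function on $\omega$, so that Class Replacement can be applied to conclude it is a set rather than a proper class---this is precisely the subtlety flagged by the remark following the preceding proposition. One might hope to skirt it by quoting that proposition directly, observing that a $\GBCm$-realization of $M$ would hand us a $\GBCm$-amenable global well-order of ordertype $\Ord$ and hence, via the proposition with $\kappa = \omega$, force $M$ to satisfy Definable $\omega$-Dependent Choice, so that $M$ is simply not $\GBCm$-realizable. But non-realizability of a single model does not by itself contradict conservativity; to get non-conservativity one still needs the sentence $\phi$ together with the fact that $\GBCm$ proves it, so the first step above is unavoidable.
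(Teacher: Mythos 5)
Your proof is correct and takes essentially the same route as the paper: the paper likewise deduces non-conservativity from the fact that $\GBCm$ proves the Definable $\omega$-Dependent Choice schema (via the $\GBCm$-amenable global well-order, as in the preceding proposition) combined with the Friedman--Gitman model of $\ZFCm$ in which an instance of that schema fails. Your extra care---that the $(2 \impl 1)$ argument for $\kappa=\omega$ needs no largest cardinal, and that Class Replacement is what makes the $G$-leftmost branch a set---merely fills in details the paper delegates to its earlier proof sketch, and your closing observation that mere non-realizability of one model would not suffice is also accurate.
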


\begin{proof}
As we just saw, $\GBCm$ proves the Definable $\kappa$-Dependent Choice schema. But Friedman and Gitman showed that this schema is not a theorem of $\ZFCm$.
\end{proof}

Having investigated what happens without powerset, let us now turn to the uncountable. We saw in theorem \ref{thm1:gbc-cons-zfc} that every countable model of $\ZFC$ is $\GBC$-realizable. This does not generalize to the uncountable. Let us see why.

\begin{definition} \label{def1:rather-classless}
A model $M \models \ZFC$ is called {\em rather classless} if every amenable $X \subseteq M$ is definable. 
\end{definition}

Necessarily, every rather classless model has uncountable cofinality. If $X \subseteq \Ord^M$ has ordertype $\omega$ and is cofinal then it is amenable, because its intersection with an initial segment of $\Ord^M$ is finite, but not definable or danger of contradicting Replacement.

\begin{theorem}[Keisler \cite{keisler1974}, Shelah \cite{shelah1978}]
Any countable model of $\ZFC$ has an elementary rank extension\footnote{If $M \subseteq N$ are models of $\ZFC$ then say $N$ is a {\em rank extension} of $M$ if every new set has a higher rank. For example, if $\kappa < \lambda$ are inaccessible, then $V_\lambda$ is a rank extension of $V_\kappa$.} 
to a rather classless model.\footnote{Keisler showed this theorem under the assumption of $\diamond$ and the assumption of $\diamond$ was later eliminated by Shelah.}
\end{theorem}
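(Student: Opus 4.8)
\textit{Proof proposal.} The plan is to produce $N$ as the union of a continuous elementary chain $\langle M_\alpha : \alpha < \omega_1 \rangle$ of proper rank extensions with $M_0 = M$. Since a rather classless model must have uncountable cofinality (the remark following Definition~\ref{def1:rather-classless}), an $\omega_1$-length chain is the natural target. At successor steps I would take $M_{\alpha+1}$ not to be an arbitrary proper elementary rank extension of $M_\alpha$ but a \emph{conservative} one: every subclass of $M_\alpha$ definable with parameters over $M_{\alpha+1}$ should already be definable over $M_\alpha$. Such extensions exist for any countable model of $\ZFC$ (realize a suitable minimal type, so that $M_{\alpha+1}$ is generated over $M_\alpha$ by a single new element). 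Each $M_\alpha$ remains countable, and $N := \bigcup_{\alpha < \omega_1} M_\alpha$ is a proper elementary rank extension of $M$: elementarity is the Tarski--Vaught chain lemma, and any $a \in N \setminus M$ first enters at some successor stage $M_{\beta + 1}$, so its rank in $N$ exceeds that of every element of $M_\beta \supseteq M$. The ordinal heights $\Ord^{M_\alpha}$ increase strictly and are cofinal in $\Ord^N$, so $\cf(\Ord^N) = \omega_1$ and $N$ is $\omega_1$-like.

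The remaining task is to arrange that $N$ is rather classless, which is where the real work lies. The governing structural fact is that amenable classes restrict to rank-cuts: if $A \subseteq N$ is amenable to $N$ and $b \in M_\alpha$, then $A \cap b \in N$ by amenability, $A \cap b \subseteq b$ has rank at most that of $b$, and $M_\alpha$ is a rank-cut of $N$, so $A \cap b \in M_\alpha$; as $b \subseteq M_\alpha$ we get $(A \cap M_\alpha) \cap b = A \cap b \in M_\alpha$. Hence $A \cap M_\alpha$ is amenable to $M_\alpha$ for each $\alpha < \omega_1$, and $A$ is the increasing union of these countable approximations. The construction should be set up so that for every amenable $A \subseteq N$ the approximation $A \cap M_\alpha$ is \emph{definable} over $M_\alpha$ for club-many $\alpha$, say $A \cap M_\alpha = \{ x \in M_\alpha : M_\alpha \models \varphi_\alpha(x, p_\alpha) \}$ with $p_\alpha \in M_\alpha$. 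Granting this, the map taking $\alpha$ to the least stage at which $p_\alpha$ appears in the chain is regressive on a club of limit ordinals, so by Fodor it is constant with some value $\gamma^*$ on a stationary set $S$; then every $p_\alpha$ with $\alpha \in S$ lies in the countable model $M_{\gamma^*}$, and since there are only countably many formulas, the pair $(\varphi_\alpha, p_\alpha)$ is constant, say $(\varphi^*, p^*)$, on a stationary $S' \subseteq S$. As $S'$ is unbounded and each $M_\alpha \prec N$, it follows that $A = \{ x \in N : N \models \varphi^*(x, p^*) \}$, so $A$ is definable. This reduces the theorem to engineering the quoted property of the chain.

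That engineering is the main obstacle. Conservativity of each successor step controls the \emph{definable} classes as one ascends the chain, but an amenable class is only piecewise coded, a single countable $M_\alpha$ can carry continuum-many amenable non-definable subsets, and the final $N$ can have $2^{\aleph_1}$ subsets against only $\omega_1$ construction stages --- so a naive ``destroy one bad class per step'' bookkeeping cannot work; rather, the $M_{\alpha+1}$ must be chosen so that \emph{any} amenable class of $N$ is forced to become definable at club-many levels. Keisler's original proof does this using $\diamond_{\omega_1}$ to guess in advance the relevant initial segments of a prospective amenable class and to diagonalize against it at the right stage; Shelah later removed $\diamond$ by a more delicate recursive amalgamation of the successor extensions, obtaining the theorem in $\ZFC$ alone. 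Since the statement is cited here rather than reproved, in the text I would simply invoke Keisler~\cite{keisler1974} and Shelah~\cite{shelah1978} for this combinatorial core.
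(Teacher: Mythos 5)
The paper never proves this theorem: it is quoted as a black box from the literature, so there is no in-text argument to measure yours against, and your decision to invoke \cite{keisler1974} and \cite{shelah1978} for the combinatorial core is exactly the paper's own treatment. Your surrounding skeleton is essentially the right frame for Keisler's argument: an $\omega_1$-chain of countable rank extensions, the (correct) rank-cut computation showing an amenable class of $N$ restricts to an amenable class of each $M_\alpha$, and the pressing-down/counting argument showing that if $A \cap M_\alpha$ is definable over $M_\alpha$ on a stationary set then $A$ is definable over $N$.

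Two cautions on the parts you did commit to. First, your parenthetical claim that every countable model of $\ZFC$ has a conservative elementary rank extension obtained by realizing a minimal type should not be taken for granted: the MacDowell--Specker/Gaifman minimal-type machinery needs parametrically definable Skolem functions, which a model of $\ZFC$ need not have (this is tied to $\exists x\ V = \HOD(\{x\})$), and the existence of conservative elementary end extensions of models of set theory is a delicate question in its own right (studied by Enayat). Fortunately conservativity plays no role in Keisler's or Shelah's constructions, so you can simply drop that ingredient. Second, the actual mechanism is the contrapositive of your ``definable at club-many levels'' condition: at limit stages $\diamond$ guesses a candidate trace $S_\alpha \subseteq M_\alpha$, and when the guess is amenable but \emph{not} definable over $M_\alpha$, the successor model $M_{\alpha+1}$ is chosen (by an omitting-types argument) so that $S_\alpha$ is not of the form $c \cap M_\alpha$ for any $c \in M_{\alpha+1}$; on the other hand, if $A$ is amenable to $N$ then $A \cap M_\alpha$ is automatically so coded, since $M_\alpha \subseteq V_{\rank(b)}^{M_{\alpha+1}}$ for any new $b$ and hence $A \cap V_{\rank(b)}^{M_{\alpha+1}} \in M_{\alpha+1}$ traces to $A \cap M_\alpha$. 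This is the step Shelah later achieved without $\diamond$, and it, rather than conservative successor steps, is what forces every amenable class of $N$ to be definable.
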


As a consequence if $M \models \ZFC$ does not have a definable global well-order then any rather classless elementary rank extension of $M$ will not have a $\GBC$-realization. Keisler's theorem also applies to $\ZFCm$, so there are models of $\ZFCm$ which fail to have a $\GBCm$ realization.

One might hope that theorem \ref{thm1:gbc-cons-zfc} could be generalized to stronger second-order set theories. Of course, this could not work for all countable models of $\ZFC$, as these stronger theories are not conservative over $\ZFC$. But one might hope that any countable model of a strong enough theory is, say, $\KM$-realizable. 

One's hopes are in vain.

\begin{proposition}[Folklore] \label{prop1:stram-tr-impl-tower}
Consider $M \models \ZFC$ so that the truth predicate for $M$ is $\GBc$-amenable. Then there is a club of ordinals $\alpha \in \Ord^M$ so that $V_\alpha^M \prec M$.
\end{proposition}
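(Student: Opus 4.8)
The plan is to use the truth predicate to turn ``$V_\alpha^M \prec M$'' into a first-order property of $\alpha$ (with $\Tr$ as a parameter), gather the witnessing ordinals into a class, and show that class is club. First I would fix a $\GBc$-realization $\Xcal$ for $M$ with $\Tr \in \Xcal$, so that $(M,\Xcal) \models \GBc$ and $\Tr$ is available as a class parameter; in particular Elementary Comprehension, Class Separation, and Class Replacement all apply with $\Tr$ allowed among the parameters. By the Tarski--Vaught criterion, $V_\alpha^M \prec M$ holds exactly when for every code $\godel{\varphi(x,\bar y)}$ and every tuple $\bar a \in V_\alpha^M$ with $\Tr(\godel{\exists x\, \varphi},\bar a)$ there is some $b \in V_\alpha^M$ with $\Tr(\godel{\varphi},b,\bar a)$ --- a first-order condition on $\alpha$ using $\Tr$ --- so Elementary Comprehension puts $C = \{\alpha \in \Ord^M : V_\alpha^M \prec M\}$ into $\Xcal$. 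It then suffices to see that $C$ is closed and unbounded.

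Closure I expect to be routine: if $\lambda$ is a limit point of $C$, then any finite tuple from $V_\lambda^M$ already lies in $V_\delta^M$ for some $\delta \in C$ with $\delta < \lambda$, and $V_\delta^M \prec M$ supplies existential witnesses inside $V_\delta^M \subseteq V_\lambda^M$, so $V_\lambda^M$ passes the Tarski--Vaught test; this is the usual union-of-elementary-chains argument.

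For unboundedness, given $\alpha_0 \in \Ord^M$ I would work inside $(M,\Xcal)$ and recursively build an increasing $\omega$-sequence $\seq{\alpha_n : n \in \omega}$ of ordinals above $\alpha_0$, where $\alpha_{n+1}$ absorbs witnesses for every existential fact with parameters in $V_{\alpha_n}^M$. Concretely, let $g_n$ be the function, definable from $\Tr$, sending each pair $(\godel{\varphi},\bar a)$ with $\bar a \in V_{\alpha_n}^M$ and $\Tr(\godel{\exists x\, \varphi},\bar a)$ to the least rank of a witness $b$ for $\Tr(\godel{\varphi},b,\bar a)$; its domain is a set, being carved by Class Separation out of $\omega \times (V_{\alpha_n}^M)^{<\omega}$, so by Class Replacement $\ran g_n$ is a set and I may set $\alpha_{n+1} = \max(\alpha_n + 1,\ \sup\ran g_n)$. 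The recursion has length $\omega$ with a rule definable from $\Tr$, so it has a set solution, and one more application of Class Replacement makes $\alpha = \sup_n \alpha_n$ an ordinal of $M$ above $\alpha_0$. Finally $\alpha \in C$ by Tarski--Vaught: any $\bar a \in V_\alpha^M$ lies in some $V_{\alpha_n}^M$, so any existential fact about it is witnessed at rank $\le \alpha_{n+1} \le \alpha$. Hence $C$ is club.

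The only substantive step, and the only place the hypothesis is really used, is passing from $g_n$ to $\sup\ran g_n$ (and similarly forming $\seq{\alpha_n}$ and $\sup_n \alpha_n$): this needs the range of a $\Tr$-definable function on a set to again be a set, which is exactly the force of $\Tr$ being $\GBc$-amenable rather than a wild subclass of $M$. Without amenability the reflection theorem still gives, for each fixed standard $n$, a club of $\alpha$ with $V_\alpha^M \prec_n M$, but these clubs shrink with $n$ and need not meet in a club. Since $C \in \Xcal$, the whole verification can be run internally, so in fact $(M,\Xcal)$ proves ``$C$ is club''.
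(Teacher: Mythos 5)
Your proof is correct and is essentially the paper's argument: both exploit the $\GBc$-amenability of $\Tr$ to treat it as a legitimate class parameter and run a reflection-style argument for $\Lcal_\in(\Tr)$-formulae, with Class Replacement/Separation relative to $\Tr$ doing the real work. The paper simply cites the Montague reflection principle applied to $(M;\Tr)$ and extracts full elementarity from $\Sigma_1$-elementarity with respect to the truth predicate, whereas you inline that reflection proof by expressing the Tarski--Vaught criterion via $\Tr$ and carrying out the witness-collecting closure construction by hand.
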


Before moving to the proof, let us recall the Tarskian definition of a truth predicate.

\begin{definition} \label{def1:tarski-truth-pred}
Let $M$ be a model of first-order set theory. A {\em truth predicate} or {\em satisfaction class} for $M$ is a class $\Tr \subseteq M$ of pairs $(\phi,\bar a)$ satisfying the following recursive requirements.
\begin{enumerate}
\item If $\phi$ is atomic then $(\phi, \bar a) \in \Tr$ if and only if $\phi(\bar a)$ gives a true fact about $M$. That is, $\Tr$ should declare $a \in b$ to be true if and only if $M \models a \in b$ and declare $a = b$ to be true if and only if $M \models a = b$.
\item $(\phi \lor \psi,\bar a)$ is in $\Tr$ if and only if $(\phi,\bar a)$ or $(\psi, \bar a)$ are in $\Tr$.
\item $(\neg \phi, \bar a)$ is in $\Tr$ if and only if $(\phi, \bar a)$ is not in $\Tr$.
\item $(\exists x\ \phi(x), \bar a)$ is in $\Tr$ if and only if there is $b \in M$ so that $(\phi, b \cat \bar a)$ is in $\Tr$.
\end{enumerate}
We are interested in adding $\Tr$ as a class to $M$, so in the case that $M$ is $\omega$-nonstandard let us explicitly require that $\Tr$ measures the `truth' of every nonstandard formula.\footnote{In the literature such a class $\Tr$ is called a {\em full satisfaction class}. See also chapter 3 for a discussion of truth predicates over $\omega$-nonstandard models.}
\end{definition}

Observe that it is a first-order property of a class whether it is a truth predicate, so it does not depend upon what classes are in the model. Also observe that $\GBc$ proves the truth predicate is unique. If two different classes both satisfy the definition then there must be a minimal place where they disagree on the truth of $(\phi,\bar a)$. But they agree on every previous stage so they must agree on the truth of $(\phi,\bar a)$, a contradiction.

\begin{proof}[Proof of proposition \ref{prop1:stram-tr-impl-tower}]
This follows from an instance of the Montague reflection principle. Let $\Tr$ be the truth predicate for $M$. We can use $\Tr$ as a parameter in the formula we are reflecting precisely because it is $\GBc$-amenable. Namely, reflect to find a club of $\alpha$ so that $(V_\alpha^M; \Tr \cap V_\alpha^M) \prec_{\Sigma_1} (M; \Tr)$. Then $V_\alpha^M \prec M$ because by elementarity $(V_\alpha^M; \Tr \cap V_\alpha^M) \models (\phi,\bar a) \in \Tr \cap V_\alpha^M$ if and only if $(M; \Tr) \models (\phi,\bar a) \in \Tr$ if and only if $M \models \phi(\bar a)$.\footnote{There is a minor subtlety. Namely, what happens if $M$ is $\omega$-nonstandard? Then $\Tr$ makes assertions about the `truth' of nonstandard formulae, and for such formulae $\phi$ it does not make sense externally to ask whether $M \models \phi$. But this is not an issue because $\Tr$ must be correct about the truth of standard formulae, as can be checked by an easy induction external to the model.}
\end{proof}

\begin{corollary} \label{cor1:truth-non-rlzbl}
Let $T \supseteq \GBcm$ be a second-order set theory which proves the existence of the truth predicate for the first-order part. Then no first-order theory characterizes which countable models are $T$-realizable.
\end{corollary}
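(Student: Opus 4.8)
The plan is to produce two countable models of set theory with the same first‑order theory, one $T$‑realizable and one not; a first‑order theory cannot separate them, so none can characterize $T$‑realizability. (I take $T$ to be consistent, as the corollary must intend: were $T$ inconsistent, no model would be $T$‑realizable and the inconsistent theory would vacuously ``characterize'' the empty class.) Suppose toward a contradiction that $S$ is a first‑order $\Lcal_\in$‑theory whose countable models are exactly the countable $T$‑realizable ones, and fix a countable $(N,\Ncal)\models T$; then $N\models S$, and $N\models\ZFCm$ since $T\supseteq\GBcm$ demands $\ZFCm$ for sets. The structural fact I will use is that every $T$‑realizable model carries a $\GBcm$‑amenable full satisfaction class: any $T$‑realization $\Xcal$ satisfies $\GBcm$, and, as $T$ proves that the first‑order part has a truth predicate, $\Xcal$ must contain one.

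First I would pass to a countable \emph{pointwise definable} model $M$ of $\Th(N)$ --- such a model exists by the theorem of Hamkins, Linetsky, and Reitz on pointwise definable models of set theory, applied to the complete consistent theory $\Th(N)\supseteq\ZFCm$ (one needs the routine adaptation of their argument to theories extending $\ZFCm$ rather than $\ZFC$). Since $M\equiv N$ we have $M\models S$, so by hypothesis $M$ is $T$‑realizable, say $(M,\Xcal)\models T$, and we may fix a full satisfaction class $\Tr_M\in\Xcal$. Fix also a primitive recursive enumeration $\phi_0,\phi_1,\dots$ of codes for $\Lcal_\in$‑formulas in one free variable. Elementary Comprehension in $(M,\Xcal)$, with $\Tr_M$ as a parameter, yields the class
\[
F=\{\,(n,y):n\in\omega^M\mand\phi_n\text{ defines }y\text{ over }M\,\},
\]
a partial class function with domain contained in $\omega^M$. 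By Class Replacement $F''\omega^M$ is a set of $M$; but every element of $M$ is $\phi_n$‑definable for some standard $n$, so $F''\omega^M=M$, and $M$ then contains a universal set, contradicting $\ZFCm$. Hence $M$ is a countable model of $S$ that is not $T$‑realizable, the desired contradiction.

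A second approach, invoking only Proposition~\ref{prop1:stram-tr-impl-tower} as stated (and so, strictly, only when $T$ extends $\GBc$ so that realizable models model $\ZFC$), is a minimality argument. Inside $(N,\Ncal)$, the reflection in the proof of Proposition~\ref{prop1:stram-tr-impl-tower} gives a club class $D\in\Ncal$ of ordinals $\alpha$ with $V_\alpha^N\prec N$, and since $\GBcm$ proves --- via Class Separation --- that every nonempty class of ordinals has a least element, put $\alpha_0=\min D$. Then $V_{\alpha_0}^N\prec N\models S$, so $V_{\alpha_0}^N$ is a countable model of $S$; if it were $T$‑realizable it would carry a $\GBc$‑amenable full satisfaction class, and Proposition~\ref{prop1:stram-tr-impl-tower} applied inside it would produce $\beta<^N\alpha_0$ with $V_\beta^N\prec V_{\alpha_0}^N\prec N$, an element of $D$ below $\alpha_0$ --- contradicting minimality. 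The hard part in this second argument, and the reason I would use the first, is the $\omega$‑nonstandard case: over $\omega$‑nonstandard models full satisfaction classes need not be unique, so one must work to see that the smaller elementary rank‑cut really lands in the specific reflection club $D$ (for $\omega$‑standard $N$ this is automatic by uniqueness of the truth predicate). The pointwise definable argument avoids this, and since it uses only Elementary Comprehension and Class Replacement it goes through verbatim for any $T\supseteq\GBcm$; the one external point to confirm there is the Hamkins--Linetsky--Reitz construction for theories extending $\ZFCm$.
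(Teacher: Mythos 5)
There is a genuine gap, and it is in the very first step of your main argument: the existence of a countable pointwise definable $M\equiv N$. The Hamkins--Linetsky--Reitz theorem does not assert that every complete consistent theory extending $\ZFC$ (let alone $\ZFCm$) has a pointwise definable model; it produces pointwise definable \emph{class-forcing extensions} of countable models, and forcing changes the theory. Indeed a pointwise definable model of $\ZFC$ satisfies $V=\HOD$, since every element is definable without parameters. So if $T$ is, say, $\GBC+\ETR_\omega+(V\neq\HOD)$ --- a perfectly admissible instance of the corollary --- then every $T$-realizable $N$ has $\Th(N)\vdash V\neq\HOD$ and your $M$ simply does not exist; the contradiction you reach is with the existence of $M$, not with the assumed characterization $S$. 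Nor can you patch this by picking a different model of $S$: if $S$ characterized realizability for such a $T$, then every countable model of $S$ would satisfy $V\neq\HOD$, hence $S\vdash V\neq\HOD$ and $S$ has no pointwise definable models at all. The internal part of your argument is fine --- a pointwise definable model cannot carry a truth predicate inside a $\GBcm$-realization, by exactly the Class Replacement computation you give --- but the models you need are not always available.

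Your fallback is essentially the paper's proof of the \emph{next} corollary (about $\omega$-standard and transitive models), and the difficulty you flag there is not merely technical but fatal: as the paper's remark explains, when $N$ is countable and $\omega$-nonstandard, the cut $V_{\alpha_0}^N$ has the same theory and standard system as $N$ and is recursively saturated, hence is isomorphic to $N$ by a back-and-forth argument, so it \emph{is} $T$-realizable and no contradiction arises. Moreover you cannot always retreat to an $\omega$-standard $N$: the theory $T$ could be, e.g., $\GBC+\ETR_\omega+\neg\Con(\GBC+\ETR_\omega)$, which is consistent but has no $\omega$-models. The paper's actual proof goes the other way around: for any consistent first-order $S$ there is (by standard nonstandard-model theory) a countable $\omega$-nonstandard model of $S$ that is \emph{not} recursively saturated, and by the Lachlan-style theorem an $\omega$-nonstandard model admitting an amenable full satisfaction class must be recursively saturated; such a model of $S$ therefore cannot be $T$-realizable, so $S$ fails to characterize. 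If you want to salvage your write-up, replacing the pointwise definable step by this recursive-saturation step is the repair.
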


In particular this works for $T = \KM$, $T = \GBC + \ETR$, or even $T = \GBC + \ETR_\omega$.

\begin{proof}[Proof sketch]
Let $S$ be any consistent first-order set theory. Then, by standard results about nonstandard models, there is $M \models S$ which is $\omega$-nonstandard but not recursively saturated.\footnote{For a definition of recursive saturation, see chapter 3.}
It follows that $M$ does not admit an amenable truth predicate, so it cannot be $T$-realizable.
\end{proof}

The reader may find this nonstandard trick to be unsatisfactory. But we get a version of corollary \ref{cor1:truth-non-rlzbl} for $\omega$-models or even transitive models.

\begin{corollary}
Let $T \supseteq \GBcm$ be a second-order set theory which proves the existence of the truth predicate for the first-order part. Suppose that $T$ has an $\omega$-model. Then no first-order set theory characterizes which $\omega$-standard models are $T$-realizable. Moreover, suppose $T$ has a transitive model. Then no first-order set theory characters which transitive models are $T$-realizable.
\end{corollary}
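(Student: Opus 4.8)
The plan is to run the argument of Corollary~\ref{cor1:truth-non-rlzbl}, but with Proposition~\ref{prop1:stram-tr-impl-tower} in place of the nonstandardness trick, which is unavailable for $\omega$-models. Recall the underlying mechanism: if $(M,\Xcal) \models T$ then, since $T$ proves the first-order truth predicate exists, $\Xcal$ contains the truth predicate $\Tr$ for $M$, and $\Tr$ is then $\GBcm$-amenable to $M$ because $\Xcal$ is a $\GBcm$-realization; hence a model carrying no amenable truth predicate cannot be $T$-realizable. So it is enough to produce, for an arbitrary candidate first-order theory $S$, an $\omega$-model (respectively a transitive model) of $S$ that admits no amenable truth predicate.

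Suppose toward a contradiction that a first-order set theory $S$ characterizes $T$-realizability among $\omega$-standard models. Since $T$ has an $\omega$-model, its first-order part is an $\omega$-standard $T$-realizable model, hence satisfies $S$; in particular $S$ is consistent and has an $\omega$-model, and every $\omega$-standard model of $S$ is $T$-realizable and so (in the Powerset case) satisfies $\ZFC$. Now fix any $\omega$-model $M \models S$. As $M$ is $T$-realizable it carries an amenable truth predicate, so by Proposition~\ref{prop1:stram-tr-impl-tower} the class $C = \{\alpha \in \Ord^M : V_\alpha^M \prec M\}$ is nonempty (indeed club in $\Ord^M$) and hence has a least element $\alpha_0$. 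Then $V_{\alpha_0}^M \prec M$ is itself an $\omega$-model of $S$: it is $\omega$-standard because $\omega^{V_{\alpha_0}^M} = \omega^M$, and it satisfies $S$ by elementarity since $S$ is first-order. So $V_{\alpha_0}^M$ is $T$-realizable, hence carries an amenable truth predicate, so Proposition~\ref{prop1:stram-tr-impl-tower} applies to it as well and yields $\beta < \alpha_0$ with $V_\beta^{V_{\alpha_0}^M} \prec V_{\alpha_0}^M$. Since $V_\beta^{V_{\alpha_0}^M} = V_\beta^M$ (the cumulative hierarchy below $\alpha_0$ is absolute) and elementary embeddings compose, $V_\beta^M \prec M$, i.e.\ $\beta \in C$ with $\beta < \alpha_0 = \min C$, a contradiction. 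The transitive case is verbatim the same, beginning from a transitive model $M \models S$---so all the relevant $V_\alpha^M$ are genuine transitive sets---and using that $T$ has a transitive model; and if $S$ has no $\omega$-model (respectively no transitive model) at all, it already fails to characterize, since $T$ does have one.

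The crux is this ``least reflector reflects'' step, which plays the role that ``there is a non-recursively-saturated model'' plays in Corollary~\ref{cor1:truth-non-rlzbl}: one cannot simply exhibit a single model of $S$ lacking an amenable truth predicate, so instead one derives a contradiction from the fact that the minimal elementary reflector $V_{\alpha_0}^M$ would itself have to reflect. The one technical caveat is that Proposition~\ref{prop1:stram-tr-impl-tower} is stated for models of $\ZFC$, whereas the hypothesis here is only $T \supseteq \GBcm$; when $T$ does not prove Powerset, $V_\alpha^M$ need not be a set, and one must instead reflect to transitive \emph{sets} $t \prec M$ and verify the analogous absoluteness. This issue does not arise for $\KM$, $\GBC + \ETR$, $\GBC + \ETR_\omega$, and the other theories of primary interest, all of which include Powerset.
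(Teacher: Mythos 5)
Your proof is correct and takes essentially the same approach as the paper: both arguments turn on taking the least $\alpha$ with $V_\alpha^M \prec M$ (which exists because a $T$-realization must contain the amenable truth predicate) and observing that if $V_\alpha^M$ were itself $T$-realizable, proposition \ref{prop1:stram-tr-impl-tower} would give some $\beta < \alpha$ with $V_\beta^{V_\alpha^M} = V_\beta^M \prec V_\alpha^M \prec M$, contradicting minimality. The only differences are presentational---you run it as a contradiction with an assumed characterization while the paper directly exhibits $V_\alpha^M$ as a model of $S$ that is not $T$-realizable---and your caveat about the Powerset-free case is a fair point that the paper's own proof also glosses over.
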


For trivial reasons, we need the assumption that $T$ has an $\omega$-model (or transitive model for the moreover). If $T$ has no $\omega$-models then it is easy to get a first-order set theory characterizing which $\omega$-models are $T$-realizable---take your favorite inconsistent theory.

\begin{proof}
Suppose $S$ is some first-order set theory. If $T + S$ has no $\omega$-model (or no transitive model, for that case) then $S$ cannot characterize which $\omega$-models (or transitive models) are $T$-realizable. So assume that $T + S$ has an $\omega$-model $(M,\Xcal)$. (Or, for the transitive case, assume $(M,\Xcal)$ is transitive. We will find an elementary model of $M$ which is not $T$-realizable, establishing that satisfying $S$ cannot ensure a model is $T$-realizable.

The truth predicate for $M$ is in $\Xcal$ so it must be $\GBc$-amenable. Now take the least $\alpha \in \Ord^M$ so that $V_\alpha^M \prec M$. Then $N \models S$ and is $\omega$-standard (or transitive, if $M$ is transitive). I claim that $N = V_\alpha^M$ is not $T$-realizable.
Otherwise, by proposition \ref{prop1:stram-tr-impl-tower} there is a club of ordinals $\beta \in \Ord^N$ so that $V_\beta^N \prec N$. But $V_\beta^N = V_\beta^M$, because $M$ is a rank extension of $N$. So then $V_\beta^M \prec V_\alpha^M \prec M$, contradicting the leastness of $\alpha$. 
\end{proof}

\begin{remark}
This argument uses essentially that $M$ (and hence also $N$) is an $\omega$-model. Suppose $(M,\Xcal) \models T$, where $T$ is as in the corollary, is countable and $\omega$-nonstandard. Let $N = V_\alpha^M$, where $\alpha \in M$ is least in the club of ordinals $\alpha_0$ so that $V_{\alpha_0}^M \prec M$, where elementarity here is according to the truth predicate in $M$. In particular, $N$ is also countable and $\omega$-nonstandard and has the same theory and standard system\footnote{The {\em standard system} of an $\omega$-nonstandard model $M$ is the collection of all reals coded in $M$, i.e.\ the set of all $x \subseteq \omega$ so that there is $y \in \omega^M$ so that $y \cap \omega = x$.}
as $M$ does. So by a back-and-forth argument we can show that in fact $N$ and $M$ are isomorphic. So because $M$ is $T$-realizable, so must $N$ be $T$-realizable. 
\end{remark}

To finish off this section, let us see that see that $T$-amenability for different choices of $T$ can give different notions. We saw that, for countable models, being $\GBc$-amenable is equivalent to being $\GBC$-amenable. But for some $T$, being $T$-amenable to $M$ is stronger than being $\GBC$-amenable.

\begin{theorem}
Let $M \models \ZFC$ be a countable $\omega$-model. Then there is $A \subseteq M$ which is $\GBC$-amenable to $M$ but is not $(\GBC + \ETR_\omega)$-amenable to $M$. That is, there is a $\GBC$-realization $\Xcal$ for $M$ with $A \in \Xcal$ but no $(\GBC + \ETR_\omega)$-realization $\Ycal$ for $M$ can have $A \in \Ycal$.
\end{theorem}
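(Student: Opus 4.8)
The plan is to split on whether $M$ is $(\GBC+\ETR_\omega)$-realizable at all. If it is not, then any $\GBC$-amenable $A$ does the job, and $A=\emptyset$ works since $M$, being countable, is $\GBC$-realizable by theorem \ref{thm1:gbc-cons-zfc}. So assume henceforth $(M,\Zcal)\models\GBC+\ETR_\omega$ for some $\Zcal$. The reduction I would use is this: because $M$ is an $\omega$-model, $\GBC+\ETR_\omega$ proves that the genuine first-order truth predicate $\Tr$ for $M$ exists, since the Tarskian recursion of definition \ref{def1:tarski-truth-pred} has length $\omega$ (it is a recursion along the complexity of formulas) and, $M$ being $\omega$-standard, the $\ETR_\omega$-solution is correct about all formulas. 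Hence $\Tr\in\Zcal$, so $\Tr$ is $\GBc$-amenable, and by proposition \ref{prop1:stram-tr-impl-tower} the class $C:=\{\alpha\in\Ord^M:V_\alpha^M\prec M\}$ is unbounded in $\Ord^M$ (indeed a club, by the usual elementary-chain argument at limits).

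Next I would fix the target of the construction. Since $M$ is countable, $\Ord^M$ has external cofinality $\omega$, so I may pick an externally increasing sequence $\seq{\gamma_k:k\in\omega}$ with each $\gamma_k\in C$ and $\sup_k\gamma_k=\Ord^M$. Here $\omega=\omega^M$, so $\seq{\gamma_k:k\in\omega^M}$ is a genuinely $\omega^M$-indexed cofinal sequence in $\Ord^M$: were it (or rather its enumeration $k\mapsto\gamma_k$) a class in any $\GBc$-realization of $M$, Class Replacement would fail, as it would be a class function from the \emph{set} $\omega^M$ onto a cofinal, hence proper, class. This is the contradiction I want to provoke. So I aim to build $A\subseteq\Ord^M$ with $A\cap C=\{\gamma_k:k\in\omega^M\}$ which is nonetheless $\GBC$-amenable. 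Granting such an $A$: if $\Ycal\ni A$ were a $(\GBC+\ETR_\omega)$-realization of $M$, then $\Tr\in\Ycal$ by the first paragraph, hence $C\in\Ycal$ (being first-order definable from $\Tr$), hence $A\cap C=\{\gamma_k:k\in\omega^M\}\in\Ycal$ by Elementary Comprehension, and then $k\mapsto\gamma_k$ refutes Class Replacement, contradicting $(M,\Ycal)\models\GBC$.

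For the construction I would take $A:=\{\gamma_k:k\in\omega^M\}\cup B$, where $B\subseteq\Ord^M\setminus C$ is Cohen-generic over a $\GBc$-realization of $M$ that contains $\Tr$ (so that $C$, and hence the forcing $\Add(\Ord^M\setminus C,1)$, belongs to that realization). Amenability of $A$ is then immediate: for each $\beta\in\Ord^M$ only finitely many $\gamma_k$ lie below $\beta$, so $A\cap\beta$ differs from $B\cap\beta$ by a finite set, and $B\cap\beta\in M$ because $\Add(\Ord^M\setminus C,1)$, being ${<}\kappa$-closed for every $\kappa$, adds no new sets. Granting that $(M,A)$ satisfies Separation and Replacement for formulas in the expanded language, $\Def(M;A)$ is a $\GBc$-realization of $M$ containing $A$, and one adjoins a generic global well-order as in theorem \ref{thm1:gbc-cons-zfc} to upgrade it to a $\GBC$-realization.

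The hard part, and what I expect to be the main obstacle, is exactly that last verification: that $(M,A)$ still satisfies the Separation and Replacement schemata with $A$ as a predicate. The danger is that $(M,A)$ might define $\seq{\gamma_k}$ from $A$ alone — without $C$ — which would destroy $\GBC$-amenability. The role of the generic $B$ off $C$ is to drown out the $\gamma_k$'s: since $(M,A)$ has access neither to $\Tr$ nor to $C$, it cannot distinguish the "special" points $\gamma_k$ from the generic points of $B$, and because $B$ is generic over a model already containing $C$, no $(M,A)$-definable class function from a set should have cofinal range. Making this precise is the crux, and I would carry it out by a forcing argument of the kind used to add a generic subclass relative to an amenable predicate, tracking how $\Add(\Ord^M\setminus C,1)$-names interact with the fixed parameter $\{\gamma_k:k\in\omega^M\}$.
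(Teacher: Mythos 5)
Your reduction is sound and is in the same spirit as the paper's: the paper also shows the stronger fact that one can produce a $\GBC$-amenable class $G$ such that no $\GBc$-realization of $M$ contains both $G$ and the truth predicate $\Tr$, and the mechanism is the same as yours --- hide an external cofinal $\omega$-sequence (whose presence as a class would refute Class Replacement) inside a class in such a way that only $\Tr$ can decode it. Your use of $\ETR_\omega \Rightarrow \Tr$ exists, of the club $C=\{\alpha: V_\alpha^M\prec M\}$, and of the observation that the increasing enumeration of $A\cap C$ would be a definable cofinal map $\omega\to\Ord^M$, are all correct.

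The gap is exactly where you locate it, and it is not a routine verification: for $A=B\cup\{\gamma_k\}$ with $B$ Cohen-generic on $\Ord^M\setminus C$, the $\GBc$-amenability of $A$ does not follow from the genericity of $B$, because $A$ itself is not generic over any ground realization --- the parameter $\{\gamma_k:k\in\omega\}$ is not a class of $(M,\Xcal_0)$ (it cannot be, since it refutes Replacement), so it has no $\Pbb$-name, and the forcing theorem/truth lemma give you no control over $\Lcal_\in(A)$-formulas; ``tracking how names interact with the fixed parameter'' is precisely what the machinery cannot do. Whether pasting an arbitrary external cofinal subset of $C$ onto an independently chosen generic preserves the Replacement schema is left unproved, and there is no obvious homogeneity or density argument that supplies it. The paper's construction is engineered to close exactly this hole: instead of adjoining the bad sequence after the fact, it interleaves its bits into the construction of a single class $G$ that \emph{is} Cohen-generic over $(M,\Def(M))$ --- at stage $i$ one appends the bit $b_i$ of the bad sequence and then extends by the \emph{minimal} length needed to meet the $i$th dense class $D_i$, where $\seq{D_i:i\in\Ord}$ is an enumeration of the definable dense subclasses of $\Add(\Ord,1)$ definable only from $\Tr$. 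Since each approximation $g_i$ lies in $M$ (the bad sequence is amenable) and $G$ meets every definable dense class, $\Def(M;G)$ is a $\GBC$-realization outright, with no separate amenability argument needed; and decoding the $b_i$'s from $G$ requires knowing $\seq{D_i}$ and the minimal-extension bookkeeping, i.e.\ requires $\Tr$. To repair your proof you would have to either prove your amenability claim directly (which I do not see how to do) or fold the choice of the $\gamma_k$'s into the recursive construction of the generic, at which point you have essentially reproduced the paper's argument.
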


\begin{proof}
I will show a stronger fact, from which this theorem will immediately follow. Namely, I will show that there is $G \subseteq M$ which is $\GBC$-amenable but no $\GBC$-realization for $M$ can contain both $G$ and the truth predicate for $M$.\footnote{This is why I require $M$ to be an $\omega$-model. If $M$ is $\omega$-nonstandard then there will be many different classes which satisfy the definition of being a truth predicate. See chapter 3 for more detail.}
This $G$ will be a carefully constructed Cohen-generic subclass of $\Ord$. I claim that there is a sequence $\seq{D_i : i \in \Ord}$ of definable dense subclasses of $\Add(\Ord,1)$ so that (1) the sequence is definable from the truth predicate and (2) meeting every $D_i$ is sufficient to guarantee that a filter is generic over $(M,\Def(M))$. It is obvious that the truth predicate can define a sequence of all the definable subclasses of $\Add(\Ord,1)$. Namely, from the truth predicate can be defined the sequence indexed by $(\phi,a)$ where the $(\phi,a)$-th dense subclass is the one consisting of all conditions $p$ so that $\phi(p,a)$ holds (or trivial if $\phi(x,a)$ does not define a dense subclass). But this sequence is not of the correct ordertype. However, we can use that $\Add(\Ord,1)$ is $<\kappa$-distributive for every cardinal $\kappa$ to get the ordertype to be $\Ord$. Namely let $D_i$ be below all the open dense subclasses which are definable from parameters in $V_i$. Then $\seq{D_i : i \in \Ord}$ is as desired.

We will now use this sequence to define our $G$. Externally to the model fix an $\omega$-sequence cofinal in $\Ord^M$. Think of this sequence as an $\Ord^M$-length binary sequence $\seq{b_i : i \in \Ord^M}$, consisting mostly of zeros with ones showing up rarely. This sequence is amenable to $M$, since its initial segments have only finitely many ones. On the other hand, it is not $\GBc$-amenable since from this sequence we can define a cofinal map $\omega \to \Ord$, contradicting an instance of Replacement. 

Build $G$ in $\Ord^M$ many steps, with partial piece $g_i$ at the $i$th step. We start with $g_0 = \emptyset$. Given $g_i$, let $g_i' = g_i \cat \seq{b_i}$. Then get $g_{i+1}$ by extending $g_i'$ to meet $D_i$, where we do this in the minimal possible length. (If there is more than one way to meet $D_i$ with minimal length, then pick arbitrarily.) And if $i$ is limit then $g_i = \bigcup_{j < i} g_j$. For each $i \in \Ord$, we have that $g_i \in M$ because $\seq{b_i : i \in \Ord}$ is amenable to $M$ and $g_i$ can be defined from an initial segment of this sequence. Finally, set $G = \bigcup_{i \in \Ord} g_i$. Then $G$ is generic over $(M,\Def(M))$, since it meets every $D_i$. But hidden within $G$ is this bad sequence $\seq{b_i}$. It is well hidden, but with the truth predicate we can snoop it out.

Suppose towards a contradiction that $(M,\Xcal) \models \GBc$ contains both $G$ and the truth predicate for $M$. Then $\seq{D_i : i \in \Ord} \in \Xcal$. We will see that $\seq{b_i} \in \Xcal$ by inductively determining each $b_i$. First, $b_0$ is the first bit of $G$. We then know the minimal length we have to extend $\seq{b_0}$ to meet $D_0$. We can use this to recover $g_1$ and then discover $b_1$ as the first bit in $G$ after $g_1$. We then repeat this process, using $\seq{D_i}$ to recover $g_2$ then get $b_2$, and so on. So we can define $\seq{b_i}$ from $G$ and the truth predicate, so $\seq{b_i} \in \Xcal$. But then $(M,\Xcal)$ cannot satisfy Replacement, a contradiction.
\end{proof}

\section{\texorpdfstring{$\GBC$-realizations of a countable model}{GBC-realizations of a countable model}}

In this section we look at the structure of the $\GBC$-realizations for a fixed model. At the end of the section I will discuss the extent to which the results generalize for theories stronger then $\GBC$.

\begin{definition}
Let $M$ be a model of first-order set theory and let $T$ be a second-order set theory. Set $\rlzn T M = \{ \Xcal \subseteq \powerset(M) : \Xcal$ is a $T$-realization for $M \}$. Then $\rlzn T M$ is a partial order under $\subseteq$.
\end{definition}

Of course, $M$ may fail to be $T$-realizable and thus $\rlzn T M$ may be empty. But theorem \ref{thm1:gbc-cons-zfc} implies that if $M$ is countable then $\rlzn \GBC M$ is not empty. If we move to the uncountable, however, then $\rlzn \GBC M$ may be anemic. If $M$ is rather classless then $\card{\rlzn \GBC M} \le 1$; if such $M$ has a definable global well-order then it has a single $\GBC$-realization---namely its definable classes---otherwise it will have no $\GBC$-realization at all. 

But if $M \models \ZFC$ is countable then it will have continuum many $\GBC$-realizations. To prove in theorem \ref{thm1:gbc-cons-zfc} that countable models of $\ZFC$ have $\GBC$ realizations we added a Cohen-generic subclass of $\Ord$. But there are continuum many different generic Cohen subclasses of $\Ord^M$ for countable $M$. This gives continuum many different $\GBC$-realizations for $M$. 

\begin{proposition}
Let $M \models \ZFC$ be countable. Then there are continuum many different subclasses of $\Ord^M$ which are Cohen-generic over $(M,\Def(M))$.
\end{proposition}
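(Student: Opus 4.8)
The plan is a standard tree-of-conditions argument. Since $M$ is countable, $\Def(M)$ is countable, so fix an enumeration $\seq{D_n : n \in \omega}$ of all the dense subclasses of $\Add(\Ord,1)$ that lie in $\Def(M)$. Working externally in $V$, I would construct a family $\seq{p_s : s \in 2^{<\omega}}$ of conditions in $\Add(\Ord,1)^M$ together with ordinals $\seq{\alpha_s : s \in 2^{<\omega}}$ from $\Ord^M$ such that, for every $s$ and every $i \in \{0,1\}$: (i) $p_{s \cat \seq i} \le p_s$; (ii) $\alpha_s \in \dom p_{s \cat \seq i}$ and $p_{s \cat \seq i}(\alpha_s) = i$; and (iii) $p_{s \cat \seq i} \in D_{|s|}$. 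Granting this, each $x \in 2^\omega$ gives a $\le$-decreasing chain $\seq{p_{x \rest n} : n \in \omega}$ whose upward closure $G_x$ is a filter on $\Add(\Ord,1)^M$; by (iii), $G_x$ meets every $D_n$, so $G_x$ is generic over $(M,\Def(M))$, and $C_x := \{\beta \in \Ord^M : (\bigcup G_x)(\beta) = 1\}$ is a Cohen-generic subclass of $\Ord^M$. If $x \ne y$ and $n$ is least with $x(n) \ne y(n)$, then setting $s = x\rest n = y\rest n$, property (ii) gives $(\bigcup G_x)(\alpha_s) = x(n) \ne y(n) = (\bigcup G_y)(\alpha_s)$, so $C_x \ne C_y$. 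Hence $x \mapsto C_x$ is an injection of $2^\omega$ into the collection of Cohen-generic subclasses of $\Ord^M$, so there are continuum many of them (and no more, as $M$ is countable).

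To carry out the construction, put $p_\emptyset = \one$ and recurse on $|s|$. Given $p_s$ with $|s| = n$, note that $\Add(\Ord,1)^M$ is atomless: $p_s$ is a set-sized partial function, so $\dom p_s$ is bounded in $\Ord^M$; choosing any $\alpha_s \in \Ord^M \setminus \dom p_s$, the conditions $q_i := p_s \cup \{(\alpha_s, i)\}$ for $i \in \{0,1\}$ both extend $p_s$ and satisfy $q_i(\alpha_s) = i$. Since $D_n$ is dense, pick $p_{s \cat \seq i} \le q_i$ with $p_{s \cat \seq i} \in D_n$; this makes only countably many choices in all (one per string $s$ and bit $i$), so the whole recursion goes through in $V$. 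Each $p_{s \cat \seq i}$ is built from elements of $M$ by finitely many operations available inside $M$, hence lies in $M$, and $p_{s \cat \seq i} \le q_i$ forces $p_{s \cat \seq i}(\alpha_s) = i$. Properties (i)--(iii) follow. For the claim that $\bigcup G_x$ is a total function $\Ord^M \to 2$ (needed so that $C_x$ is well defined), use that for each $\beta \in \Ord^M$ the dense class $\{p : \beta \in \dom p\}$ is definable, hence among the $D_n$, hence met by $G_x$.

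There is no real obstacle here; this is a routine branching construction along an $\omega$-length recursion. The only points requiring any care are that $\Add(\Ord,1)^M$ genuinely has room to branch at every stage --- handled above by observing that conditions are set-sized, so their domains are bounded below $\Ord^M$ --- and that the enumeration $\seq{D_n}$ really exhausts the dense subclasses of $\Add(\Ord,1)$ in $\Def(M)$, which is exactly where countability of $M$ is used.
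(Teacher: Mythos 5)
Your proof is correct and takes essentially the same route as the paper: a branching construction along the full binary tree ${}^{<\omega}2$, extending conditions to meet the countably many definable dense subclasses of $\Add(\Ord,1)$ at each level, with distinct branches forced apart by incompatible extensions at the splitting node. Your version is just slightly more explicit about where the distinguishing bit is placed (at a chosen ordinal outside the domain of the current condition) and about the union along a branch being a total function, but the idea is identical.
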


\begin{proof}
Consider the full binary tree $T = {}^{<\omega}2$ of finite binary sequences. We will construct a family of Cohen-generic subclasses of $\Ord^M$ for each branch through $T$. This construction is done along $T$. Order, in ordertype $\omega$, the dense subclasses $D_i$ of $\Add(\Ord,1)^M$ which are in $\Def(M)$. Start with $p_\emptyset = \emptyset$. Assume that we have already defined $p_s$ for $s$ a node in $T$. Then, to get $p_{s \cat i}$ extend $p_s \cat i$ to a condition in $D_\ell$, where $\ell$ is the length of $s$. Then if $B$ is a branch through $T$ we have that $C_B = \bigcup_{s \in B} p_s$ is Cohen-generic over $(M,\Def(M))$, as it met every dense class. And if $B \ne B'$ are distinct branches this is because there is a node $s \in T$ so that $s \cat 0 \in B$ and $s \cat 1 \in B'$. So $C_B \ne C_{B'}$ because they extend $p_s$ in incompatible ways.
\end{proof}

An intriguing question is whether there is anything between these extremes of continuum many $\GBC$-realizations and $\le 1$ $\GBC$-realization.

\begin{question}
Is there (necessarily uncountable) $M \models \ZFC$ so that $\card{\rlzn \GBC M} = 2$? What about $\card{\rlzn \GBC M} = n$ for finite $n$? What about $\card{\rlzn \GBC M} = \omega$? In general, what cardinals $\kappa$ are the cardinality of $\rlzn \GBC M$ for some $M$?
\end{question}

The rest of this section will be confined to looking at countable models of $\ZFC$ as there it can be shown that $\rlzn \GBC M$ has a rich structure.

Let us begin with some basic properties.

\begin{theorem} \label{thm1:gm-omnibus}
Let $M \models \ZFC$ be countable. Then $\rlzn \GBC M$ satisfies the following properties.
\begin{enumerate}
\item If $\{ \Xcal_i : i \in I \} \subseteq \rlzn \GBC M$ has a lower bound in $\rlzn \GBC M$ it has a greatest lower bound.
\item If $\{ \Xcal_i : i \in I \} \subseteq \rlzn \GBC M$ has an upper bound in $\rlzn \GBC M$ it has a least upper bound. 
\item If $M$ does not have a definable global well-order, then there are pairs of elements of $\rlzn \GBC M$ without a lower bound. On the other hand, if $M$ does have a definable global well-order then any $\{ \Xcal_i : i \in I \} \subseteq \rlzn \GBC M$ has a lower bound.
\item There are pairs of elements of $\rlzn \GBC M$ without an upper bound.
\item If $\seq{\Xcal_i : i \in I}$ is an increasing chain then it has a supremum.
\item There are maximal elements of $\rlzn \GBC M$. That is, there is $\Xcal \in \rlzn \GBC M$ so that there is no $\Ycal \in \rlzn \GBC M$ with $\Ycal \supsetneq \Xcal$.
\end{enumerate}
\end{theorem}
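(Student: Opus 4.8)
I would split the six items into three groups. Items (1), (2), (5) and (6) are lattice-theoretic and rest on the observation that the only $\GBC$-axioms which can fail when one passes to a sub- or super-collection of classes over $M$ are Global Choice and Class Replacement, both of which behave well here. For (1), given $\{\Xcal_i : i \in I\} \subseteq \rlzn\GBC M$ with a lower bound $\Zcal$, I would check that $\bigcap_i \Xcal_i$ is a $\GBC$-realization: it is closed under first-order definability because each $\Xcal_i$ is; it satisfies Class Replacement because it sits inside each $\Xcal_i$; and it contains a global well-order because $\Zcal$ does and $\Zcal \subseteq \bigcap_i \Xcal_i$. Being the largest collection below every $\Xcal_i$, it is the greatest lower bound. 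Dually, for (2), with an upper bound present, $\bigcap\{\Wcal \in \rlzn\GBC M : \Xcal_i \subseteq \Wcal \text{ for all } i\}$ is a nonempty intersection, again closed under first-order definability, with Class Replacement automatic and a global well-order inherited from any fixed $\Xcal_{i_0}$; by construction it is the least upper bound. For (5), the union $\bigcup_i \Xcal_i$ of a chain is closed under first-order definability (finitely many of its members lie in a common $\Xcal_i$) and satisfies the remaining axioms as above, so it is the supremum. Item (6) then follows from (5) by Zorn's Lemma, since $\rlzn\GBC M$ is a nonempty set — nonempty by Theorem \ref{thm1:gbc-cons-zfc}, as $M$ is countable — in which every chain has an upper bound.

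For item (3), if $M$ has a definable global well-order then $(M,\Def(M)) \models \GBC$ by the earlier observation, and $\Def(M)$ is contained in every $\GBC$-realization, so it is the least element of $\rlzn\GBC M$ and a fortiori a lower bound for every family. If $M$ has no definable global well-order, I would force over $(M,\Def(M))$ with $\Add(\Ord,1) \times \Add(\Ord,1)$ — still tame, being $<\kappa$-closed for every $\kappa$ — to obtain a mutually generic pair $(G_0,G_1)$ of subclasses of $\Ord^M$, and set $\Xcal = \Def(M;G_0)$ and $\Ycal = \Def(M;G_1)$, each a $\GBC$-realization with a global well-order read off from its generic as in Theorem \ref{thm1:gbc-cons-zfc}. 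The standard mutual-genericity argument for products gives $\Xcal \cap \Ycal = \Def(M)$; since any common lower bound would be a $\GBC$-realization contained in $\Def(M)$ and would therefore contain a global well-order definable over $M$, no such lower bound exists.

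Item (4) is where the real work lies. The plan is to build $G_0, G_1 \subseteq \Ord^M$, each Cohen-generic over $(M,\Def(M))$ for $\Add(\Ord,1)$, such that $C := \{\alpha \in \Ord^M : G_0(\alpha) \ne G_1(\alpha)\}$ is a cofinal subclass of $\Ord^M$ of ordertype $\omega$. Granting this, $\Xcal = \Def(M;G_0)$ and $\Ycal = \Def(M;G_1)$ are $\GBC$-realizations as before, but any $\GBC$-realization $\Zcal$ containing both $G_0$ and $G_1$ would, by Elementary Comprehension, contain $C$ — and no model of $\GBc$ can contain such a class, since a proper class of ordinals is order-isomorphic to $\Ord$ via a class function, which would then yield an element of $C$ above every one of its (externally only $\omega$-many) members. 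So $\Xcal$ and $\Ycal$ have no common upper bound. The construction of $G_0, G_1$ is an $\omega$-stage bookkeeping, available because $M$ is countable: at each stage I would either extend $G_0$ to meet the next dense subclass of $\Add(\Ord,1)$ in $\Def(M)$, or do the same for $G_1$, or declare a new, sufficiently large ordinal to belong to $C$; when the two classes are later extended past such an ordinal they are forced to disagree there and to agree off $C$. The main obstacle — and the reason this is delicate rather than routine — is that $G_0$ and $G_1$ must \emph{not} be mutually generic (if they were, $C$ would be definable from a product generic and hence harmless), so the interleaving must keep each of $G_0$ and $G_1$ individually generic over $(M,\Def(M))$ while forcing their symmetric difference to be the bad class $C$; this is possible because beyond any stage there is still room to meet all the remaining dense subclasses of $\Add(\Ord,1)$ in $\Def(M)$.
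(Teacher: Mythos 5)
Your proposal is correct and follows essentially the same route as the paper: intersections/unions together with inherited Global Choice and Class Replacement for (1), (2), (5), (6), mutually generic Cohen subclasses of $\Ord$ for (3), and for (4) a pair of individually generic classes whose join defines a cofinal class of ordinals of external ordertype $\omega$, contradicting Class Replacement in any common $\GBC$-realization. The only divergence is cosmetic: in (4) the paper interleaves the two generics so that together they code a fixed bad class $B$ via padded blocks and markers, whereas you arrange that the symmetric difference of the two generics is itself the bad cofinal $\omega$-sequence; both implementations carry through.
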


\begin{proof}
$(1)$ The infimum is $\Ycal = \bigcap_{i \in I} \Xcal_i$. We want to see that $(M,\Ycal) \models \GBC$. Extensionality is free, as is Replacement since $\Ycal$ is contained inside a $\GBC$-realization. To see that $\Ycal$ satisfies Global Choice, fix any $\Zcal$ a lower bound for the $\Xcal_i$. Then $\Ycal \supseteq \Zcal$ so the global well-order in $\Zcal$ is in $\Ycal$. Finally, we want to see that $\Ycal$ satisfies Elementary Comprehension. But each $\Xcal_i$ is closed under first-order definability, so their intersection must also be closed under first-order definability.

$(2)$ The supremum is the intersection of all the upper bounds. It follows from $(1)$ that this gives an element of $\rlzn \GBC M$.

$(3)$ Suppose $M$ does not have a definable global well-order. Take $C,D$ subclasses of $\Ord^M$ which are mutually Cohen-generic over $(M,\Def(M))$. Then $\Def(M;C)$ and $\Def(M;D)$ are $\GBC$-realizations for $M$. But their intersection is $\Def(M)$, by mutual genericity, which does not have a global well-order. 

For the other case, suppose $M$ does have a definable global well-order. Then $\Def(M)$ is a lower bound for a subset of $\rlzn \GBC M$.

$(4)$ We will construct $C,D$ Cohen-generic over $(M,\Def(M))$ so that no $\GBC$-realization for $M$ can contain both $C$ and $D$. This will establish that $\Def(M;C)$ and $\Def(M;D)$ are elements of $\rlzn \GBC M$ without an upper bound.

Fix $B: \Ord^M \to 2$ so that $\{i : B(i) = 1\}$ has ordertype $\omega$ and is cofinal in $\Ord^M$. Such $B$ exists because $M$ is countable. But no $\GBCm$-realization for $M$ can contain this bad class $B$ because $B$ reveals that $\Ord^M$ has countable cofinality. We will construct $C$ and $D$ so that together they code $B$. This is a construction in $\Ord^M$ many steps, defining $c_i$ and $d_i$ for $i \in \Ord^M$. Order the dense subclasses of $\Add(\Ord,1)$ as $\langle D_i : i \in \Ord^M \rangle$. 

\begin{itemize}
\item Set $c_0 = d_0 = \emptyset$.
\item Given $c_i$ and $d_i$ set $c_{i+1}' = c_i \cat \seq{0\ldots0} \cat \seq{1,B(i)}$, where the sequence of $0$'s has length chosen so that $c_i \cat \seq{0\ldots0}$ has the same length as $d_i$. Then get $c_{i+1}$ by extending $c_{i+1}'$ to meet $D_i$. Next, let $d_{i+1}' = d_i \cat \seq{0\ldots0} \cat \seq{1}$, where the sequence of $0$'s has length chosen so that $d_i \cat \seq{0\ldots0}$ has the same length as $c_{i+1}'$. Then get $d_{i+1}$ by extending $d_{i+1}'$ to meet $D_i$. 
\item If $i$ is limit then $c_i = \bigcup_{j < i} c_j$ and $d_i = \bigcup_{j < i} d_j$.
\end{itemize}

Finally, set $C = \bigcup_i c_i$ and $D = \bigcup_i d_i$. By construction $C$ and $D$ are Cohen-generic over $M$. Suppose towards a contradiction that $\Xcal$ is a $\GBCm$-realization for $M$ with $C,D \in \Xcal$. Let us see that $B \in \Xcal$, a contradiction. Namely, $\Xcal$ can inductively recover $B(i)$, $c_i$, and $d_i$ from $C$ and $D$. First, $c_0 = d_0 = \emptyset$. Now given $c_i$ we find $B(i)$ by looking at the bit in $C$ after the first 1 after the block of $0$s in $C$ starting after the end of $c_i$. We also get $d_i$ by using that block of $0$s to tell us how far in $D$ we need to go to get $d_i$. Next, looking at the block of $0$s in $D$ starting after the end of $d_i$ tells us how long $c_i$ was extended in $C$ to get $c_{i+1}$. We then continue this process, getting $B(i+1)$, $d_{i+1}$, $c_{i+2}$, and so on.

$(5)$ The supremum is $\Ycal = \bigcup_{i \in I} \Xcal_i$. We need to see that $\Ycal$ is a $\GBC$-realization for $M$. We know for free that $(M,\Ycal)$ satisfies Extensionality. It satisfies Global Choice because each $\Xcal_i$ contains a global well-order. To see that it satisfies Replacement, suppose $F \in \Ycal$ witnesses a failure of Class Replacement. But then $F \in \Xcal_i$ for some $i$ so $(M,\Xcal_i)$ fails to satisfy Class Replacement, a contradiction. To check Elementary Comprehension we want to see that $\Ycal$ is closed under first-order definability. Towards this fix $\bar A \in \Ycal$. Then $\bar A \in \Xcal_i$ for some $i$. This uses that the $\Xcal_i$ are linearly ordered by $\subseteq$. So if $B$ is definable from $\bar A$ then $B \in \Xcal_i \subseteq \Ycal$, as desired.

$(6)$ Combine $(5)$ and Zorn's lemma.
\end{proof}

Let me remark that $(3)$ of theorem \ref{thm1:gm-omnibus} gives us a criterion in terms of the theory of $M$ for when $\rlzn \GBC M$ has a least element.

\begin{corollary}
Let $M \models \ZFC$ be countable. Then, whether $\rlzn \GBC M$ has a least element is recognizable from the theory of $M$.
\end{corollary}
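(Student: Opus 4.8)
The plan is to reduce the question to whether $M$ carries a parametrically definable global well-order, and then to observe that this latter property is already visible in the first-order theory of $M$.

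First I would record the dichotomy: for countable $M \models \ZFC$, the poset $\rlzn \GBC M$ has a least element if and only if $M$ has a first-order definable (with set parameters) global well-order. For the backward direction, if such a well-order $<^*$ exists then, running the argument behind the Fact on equivalent forms of Global Choice with a definable witness, we may pass to a definable global well-order of ordertype $\Ord$; hence $(M,\Def(M)) \models \GBC$, so $\Def(M) \in \rlzn \GBC M$. Since every $\GBC$-realization is closed under first-order definability (by the Observation characterizing Elementary Comprehension), every $\GBC$-realization contains $\Def(M)$, so $\Def(M)$ is the least element. For the forward direction I invoke $(3)$ of theorem \ref{thm1:gm-omnibus}: if $M$ has no definable global well-order then there are $\Xcal, \Ycal \in \rlzn \GBC M$ with no common lower bound, and a least element would be such a lower bound, so none can exist.

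Next I would argue that ``$M$ has a definable global well-order'' is determined by $\Th(M)$. For each $\Lcal_\in$-formula $\phi(x,y,z)$, let $\theta_\phi$ be the first-order sentence asserting that there is a parameter $p$ for which $\{(x,y) : \phi(x,y,p)\}$ is a linear order of the whole universe in which every nonempty set has a least element. In $\ZFC$ such a relation is automatically a genuine global well-order: the range of any putative infinite descending chain is a set by Replacement and hence has a $\phi(\cdot,\cdot,p)$-least element, a contradiction. Conversely any definable global well-order is witnessed by some $\theta_\phi$. Thus $M$ has a definable global well-order iff $M \models \theta_\phi$ for some $\phi$, which depends only on $\Th(M)$.

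Combining the two parts, $\rlzn \GBC M$ has a least element if and only if $\Th(M)$ contains some $\theta_\phi$; in particular elementarily equivalent countable models of $\ZFC$ agree on whether their posets of $\GBC$-realizations have least elements. The only point requiring care is the verification that a definable linear order with set-wellfoundedness really is a global well-order and can be improved, definably, to one of ordertype $\Ord$ — but this is precisely the content of the Fact on forms of Global Choice applied to a definable witness, so I do not expect any real obstacle, and the argument should be short.
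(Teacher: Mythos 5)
Your argument is correct, and its first half is exactly the paper's: the reduction ``least element of $\rlzn \GBC M$ exists iff $M$ has a definable global well-order'' is the same dichotomy the paper invokes (via theorem \ref{thm1:gm-omnibus}.$(3)$ in one direction and $\Def(M) \subseteq \Xcal$ for every realization $\Xcal$ in the other). Where you diverge is the second half. The paper disposes of theory-recognizability in one line by quoting the well-known equivalence of ``$M$ has a definable global well-order'' with the \emph{single} sentence $\exists x\ V = \HOD(\{x\})$, whereas you detect the property with the schema $\{\theta_\phi\}$, one sentence per candidate defining formula. Both criteria are determined by $\Th(M)$, so both prove the corollary; the paper's version buys a finite (one-sentence) criterion at the price of importing the $\HOD$ fact, while yours is self-contained and makes visible the useful point that a definable linear order that merely well-orders every \emph{set} already suffices, since the rank-reordering from the Fact on forms of Global Choice upgrades it to a definable global well-order of ordertype $\Ord$. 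One small remark: your appeal to ``the range of a descending chain is a set by Replacement'' is not really needed (and by itself only rules out class $\omega$-descending sequences, not ill-foundedness on non-set-like proper subclasses); the rank trick you then invoke is what actually does the work, since minimizing rank first and then using the set-well-ordering within a rank level gives genuine class well-foundedness directly.
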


\begin{proof}
We saw that $\rlzn \GBC M$ has a least element if and only if $M$ has a definable global well order. This happens if and only if $M \models \exists x\ V = \HOD(\{x\})$.
\end{proof}

Theorem \ref{thm1:gm-omnibus}.$(4)$ previously appeared as (a special case of) lemma 3.1 of \cite{mostowski1976}. Mostowski moreover embeds the full binary tree of height $\omega_1$ into $\rlzn \GBC M$ for countable $M$, thereby concluding that $\card{\rlzn \GBC M} \ge 2^{\omega_1}$. 

What other orders can be embedded into $\rlzn \GBC M$? We will get to this question in time---see theorems \ref{thm1:embedding-into-gm} and \ref{thm1:star-embedding}. But first let us consider some local properties of $\rlzn \GBC M$. It will be useful to single out those $\Xcal \in \rlzn \GBC M$ which are countable. 

\begin{definition}
Let $M \models \ZFC$ be a countable model of set theory. Set 
\[
\GC(M) = \{ \Xcal \in \rlzn \GBC M : \Xcal \text{ is countable}\}.
\]
\end{definition}

With this definition in hand, we can now see that what was proved in $(4)$ of the previous theorem was really the following.

\begin{corollary} \label{cor1:non-amalg}
For any $\Xcal \in \GC(M)$ there are $\Ycal,\Zcal \supseteq \Xcal$ in $\GC(M)$ so that $\Ycal$ and $\Zcal$ lack an upper bound.
\end{corollary}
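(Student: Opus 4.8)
The plan is to relativize the proof of Theorem~\ref{thm1:gm-omnibus}.$(4)$ from $\Def(M)$ to an arbitrary countable realization $\Xcal \in \GC(M)$. Given such $\Xcal$, I would force over $(M,\Xcal)$ with $\Add(\Ord,1)$ to produce two generic subclasses $C,D \subseteq \Ord^M$ which, taken together, code a ``bad'' class. Concretely, since $M$ is countable, fix externally a function $B \colon \Ord^M \to 2$ whose support $\{ i : B(i) = 1 \}$ is cofinal in $\Ord^M$ of ordertype $\omega$; as in the earlier proof, no $\GBC$-realization for $M$ can contain $B$, since from $B$ one reads off a cofinal map $\omega \to \Ord^M$, violating Replacement. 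Because $\Xcal$ is countable there are only countably many dense subclasses of $\Add(\Ord,1)^M$ lying in $\Xcal$; enumerate them (with repetitions allowed) as $\langle D_i : i \in \Ord^M\rangle$ and run exactly the interleaved construction from Theorem~\ref{thm1:gm-omnibus}.$(4)$: at stage $i$, extend $c_i$ by a block of $0$s padding to the length of $d_i$, then $\langle 1, B(i)\rangle$, then into $D_i$ to get $c_{i+1}$; and extend $d_i$ by a block of $0$s padding to the current length of the $c$-side, then $\langle 1\rangle$, then into $D_i$ to get $d_{i+1}$; take unions at limits. Set $C = \bigcup_i c_i$ and $D = \bigcup_i d_i$.

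Then $C$ and $D$ are each generic over $(M,\Xcal)$. Exactly as in the proof of Theorem~\ref{thm1:gbc-cons-zfc} (where one forces over a countable $\GBc$-model with the tame forcing $\Add(\Ord,1)$), the extensions $\Ycal = \Xcal[C]$ and $\Zcal = \Xcal[D]$ are $\GBC$-realizations for $M$; they are countable because $M$ and $\Xcal$ are, and each contains $\Xcal$, so $\Ycal,\Zcal \in \GC(M)$ with $\Ycal,\Zcal \supseteq \Xcal$. It then remains to see that $\Ycal$ and $\Zcal$ have no common upper bound in $\rlzn \GBC M$. Suppose $\Wcal \supseteq \Ycal,\Zcal$ were such an upper bound. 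Then $C \in \Ycal \subseteq \Wcal$ and $D \in \Zcal \subseteq \Wcal$, and, just as in the decoding step of Theorem~\ref{thm1:gm-omnibus}.$(4)$, inside $(M,\Wcal)$ one recovers $c_i$, $d_i$ and $B(i)$ by induction on $i$ (reading $B(i)$ off the bit of $C$ following the first $1$ after the block of $0$s beginning at the end of $c_i$, and using the $0$-blocks to locate the boundaries in the other class). Hence $B \in \Wcal$ by Elementary Comprehension, contradicting that $(M,\Wcal) \models$ Replacement. So no such $\Wcal$ exists.

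The construction and the recovery argument are essentially those already carried out for Theorem~\ref{thm1:gm-omnibus}.$(4)$; the only genuinely new point is that we now force over $(M,\Xcal)$ rather than over $(M,\Def(M))$, so the two things worth checking are (i) that only countably many dense classes must be met — immediate from countability of $\Xcal$ — and (ii) that forming $\Xcal[C]$ yields a $\GBC$-realization for $M$ containing $\Xcal$, which follows since $\Add(\Ord,1)$ is tame over $\GBc$ (so it preserves $\GBc$ and admits a forcing relation over $(M,\Xcal)$) and any global well-order of $\Xcal$ persists into $\Xcal[C]$ (and in any case $C$ itself codes one, by the density argument in the proof of Theorem~\ref{thm1:gbc-cons-zfc}). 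I do not expect any real obstacle beyond this bookkeeping: the corollary is a straightforward relativization of the non-amalgamation already established.
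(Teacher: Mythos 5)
Your proposal is correct and is essentially the paper's own argument: the paper proves the corollary by simply noting that the non-amalgamability construction of Theorem \ref{thm1:gm-omnibus}.$(4)$ can be carried out over $(M,\Xcal)$ instead of $(M,\Def(M))$, with the generics existing because $\Xcal$ is countable. Your additional bookkeeping (tameness of $\Add(\Ord,1)$, decoding $B$ inside a putative upper bound) just spells out the same relativization in more detail.
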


\begin{proof}
Carry out the same argument, but over $(M,\Xcal)$ instead of $(M,\Def(M))$. Generics can be found because $\Xcal$ is countable.
\end{proof}

Cohen forcing holds the key to establishing other local properties of $\rlzn \GBC M$.

\begin{theorem} \label{thm1:dense-extensions}
Every $\Xcal \in \GC(M)$ has a dense extension. That is, there is $\Ycal \in \GC(M)$ so that there is a dense linear order in $\GC(M)$ between $\Xcal$ and $\Ycal$. Consequently, the real line with its usual order embeds into $\rlzn \GBC M$ between $\Xcal$ and $\Ycal$.
\end{theorem}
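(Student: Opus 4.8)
The plan is to add a single Cohen-generic subclass of $\Ord$ and then cut it down along a dense chain of proper classes already present in $\Xcal$. Since $(M,\Xcal)$ is countable, fix $G\subseteq\Ord^M$ generic over $(M,\Xcal)$ for $\Add(\Ord,1)$ and let $\Ycal=\Xcal[G]$. Because $\Add(\Ord,1)$ is tame over $\GBc$ (the lemma above) and adds no new sets, and because the global well-order of $\Xcal$ survives into $\Xcal[G]$, we get $(M,\Ycal)\models\GBC$ with $\Ycal$ countable, so $\Ycal\in\GC(M)$. This will be the top of the interval.

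Next I would build the scaffolding inside $\Xcal$. Using the global well-order, fix in $\Xcal$ a class bijection $\pi\colon\Ord^M\to\Qbb^M\times\Ord^M$ (legitimate since $M$ thinks $\Qbb^M$ is countable). Externally $(\Qbb^M,<^M)$ is a countable dense linear order without endpoints, since these are first-order properties that $M$ satisfies; hence it contains an order-copy of $\Qbb$, and I fix such an embedding $q\mapsto\hat q$. For $q\in\Qbb$ put $A_q=\pi^{-1}\{(s,\beta): s<^M\hat q\}\in\Xcal$. Then $q<q'$ implies $A_q\subsetneq A_{q'}$, and each of $A_q$, $A_{q'}\setminus A_q$ (for $q<q'$), and $\Ord^M\setminus A_q$ is a proper class, since below $\hat q$, strictly between $\hat q$ and $\hat q'$, and at-or-above $\hat q$ there is respectively an $M$-rational whose entire $\Ord^M$-fibre under $\pi$ lies in the class in question.

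Now set $\Xcal_q=\Def(M;\Xcal,G\cap A_q)$. Each $\Xcal_q$ is a countable $\GBC$-realization lying between $\Xcal$ and $\Ycal$: it is closed under first-order definability by construction, hence satisfies Elementary Comprehension; it contains the global well-order, hence Global Choice; it inherits Class Replacement from $\Ycal$ because whether $F'' a$ is a set is independent of the realization; and plainly $\Xcal\subseteq\Xcal_q\subseteq\Ycal$ (note $G\cap A_q$ is a class of $\Ycal$). For $q<q'$ we have $G\cap A_q=(G\cap A_{q'})\cap A_q$ with $A_q\in\Xcal$, so $\Xcal_q\subseteq\Xcal_{q'}$. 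The one step with genuine forcing content — and the main obstacle — is strictness. Here I would use that, because $A_q$ is a class of $(M,\Xcal)$, the forcing $\Add(\Ord^M,1)$ factors as the product of the Cohen forcing to add a subclass of $A_q$ with that to add a subclass of $\Ord^M\setminus A_q$, and similarly $\Ord^M\setminus A_q$ splits off the factor corresponding to $A_{q'}\setminus A_q$; mutual genericity of product generics then makes $G\cap(A_{q'}\setminus A_q)$ Cohen-generic over $(M,\Xcal[G\cap A_q])\supseteq(M,\Xcal_q)$, so it lies outside $\Xcal_q$. But $A_{q'}\setminus A_q\in\Xcal$, whence $G\cap(A_{q'}\setminus A_q)=(G\cap A_{q'})\cap(A_{q'}\setminus A_q)\in\Xcal_{q'}$. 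Therefore $\Xcal_q\subsetneq\Xcal_{q'}$, and $\{\Xcal_q:q\in\Qbb\}$ is a dense linear order in $\GC(M)$ lying between $\Xcal$ and $\Ycal$.

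Finally, for the embedding of $\Rbb$ I would close the chain under suprema. For $r\in\Rbb$ set $\Xcal_r^*=\bigcup_{q<r}\Xcal_q$, which by theorem~\ref{thm1:gm-omnibus}(5) is a $\GBC$-realization, is countable (a countable union of countable sets), and lies between $\Xcal$ and $\Ycal$. Given $r<r'$, pick rationals $q_0<q_1$ with $r<q_0<q_1<r'$; then $\Xcal_r^*\subseteq\Xcal_{q_0}\subsetneq\Xcal_{q_1}\subseteq\Xcal_{r'}^*$, so $r\mapsto\Xcal_r^*$ is an order-embedding of $\Rbb$ into $\rlzn\GBC M$ whose image lies between $\Xcal$ and $\Ycal$, which is the desired conclusion.
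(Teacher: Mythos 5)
Your proposal is correct and follows essentially the same route as the paper: force once with $\Add(\Ord,1)$ to get $\Ycal=\Xcal[G]$, slice the single generic along a $\Qbb$-indexed increasing family of classes in $\Xcal$ whose successive differences are unbounded, and obtain the real line by taking unions over rationals, citing theorem \ref{thm1:gm-omnibus}.(5). The only difference is cosmetic: the paper builds its family $X_q$ from residues modulo $\omega$ and justifies that the restricted generic is again generic (and that the full generic is not definable from it) via homogeneity of Cohen forcing, whereas you build the $A_q$ from $M$'s rationals and argue strictness via the product factorization of $\Add(\Ord,1)$ and mutual genericity, which is a standard equivalent justification.
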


\begin{proof}
Let $C$ be generic over $(M,\Xcal)$ for the forcing to add a Cohen-generic subclass of $\Ord$. Then $\Ycal = \Xcal[C]$ is a $\GBC$-realization for $M$ containing both $\Xcal$ and $C$.
Now take $X \subseteq \Ord^M$ in $\Xcal$ which is unbounded. It follows from the homogeneity of Cohen-forcing that
\[
C_X = \{ \alpha : \text{the $\alpha$th element of $X$ is in $C$} \}
\]
is Cohen-generic. (If you think of $C$ as an $\Ord$-length binary sequence then $C_X$ is the bits which appear in $X$, in order.) Clearly, $C_X \in \Ycal$. On the other hand, if $\Ord^M \setminus X$ is unbounded then $C \not \in \Xcal[C_X]$. The theorem now follows from the fact that there is a dense linear order of subclasses of $\Ord^M$ so that both they and their complements are unbounded. Namely, fix your favorite bijection $b$ between $\omega$ and the rationals. For a rational $q$, put $\alpha = \omega \cdot \alpha_0 + n$ into $X_q$ if and only if $b(n) < q$. Then each $X_q$ and its complement is unbounded and $X_q \subseteq X_{q'}$ if and only if $q < q'$. 

 Namely, let $k$ be a positive integer and $0 \le n < 2^k$. Set $X(k,n)$ to be those ordinals which are equivalent to $n$ modulo $2^k$. For example, $X(0,1)$ is the class of even ordinals and $X(1,1)$ is the class of odd ordinals. 

For the consequently, let $e$ be an embedding of the rationals into $\GC(M)$ between $\Xcal$ and $\Ycal$. For $r$ a real set $\Zcal_r = \bigcup_{q < r} e(q)$, which is a $\GBC$-realization for $M$ by theorem \ref{thm1:gm-omnibus}.$(5)$. Then $r \mapsto \Zcal_r$ gives an embedding of the real line into $\rlzn \GBC M$ between $\Xcal$ and $\Ycal$. 
\end{proof}

We saw in theorem \ref{thm1:gm-omnibus} that $\rlzn \GBC M$ has a least element if and only if $M$ has a definable well-order. This result can be improved.

\begin{theorem} \label{thm1:least-gbc}
Let $M$ be a countable model of $\ZFC$ and consider $\Xcal \in \GC(M)$. Then there is $\Ycal \in \GC(M)$ so that: 
\begin{itemize}
\item If $M$ has a definable global well-order then the only lower bound of $\Xcal$ and $\Ycal$ in $\rlzn \GBC M$ is $\Def(M)$.
\item If $M$ does not have a definable global well-order then $\Xcal$ and $\Ycal$ have no lower bound in $\rlzn \GBC M$.
\end{itemize}
\end{theorem}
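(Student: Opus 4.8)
The plan is to take $\Ycal$ to be $\Def(M;C)$ for a subclass $C\subseteq\Ord^M$ that is Cohen-generic over $(M,\Xcal)$, and to show that any common lower bound of $\Xcal$ and $\Def(M;C)$ is forced down to $\Def(M)$.

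First I would fix such a $C$: since $(M,\Xcal)$ is countable and models $\GBC$, it admits a forcing relation for $\Add(\Ord,1)$ (by theorem \ref{thm1:pretame-def-lemma}, $\Add(\Ord,1)$ being tame hence pretame over $\GBc$), so a generic $C$ exists by the usual argument over a countable model. Set $\Ycal=\Def(M;C)$. I then need $\Ycal\in\GC(M)$. As in the proof of theorem \ref{thm1:gbc-cons-zfc}, every set of $M$ is coded into $C$, so $C$ yields a global well-order and $\Ycal$ contains one; $\Ycal$ is closed under first-order definability by construction, hence satisfies Elementary Comprehension; Extensionality and $\ZFC$ for sets are automatic. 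Since $\Add(\Ord,1)$ is tame we have $(M,\Xcal)[C]=(M,\Xcal[C])\models\GBC$ (the forcing adds no sets, so $M[C]=M$), and $(M,\Ycal)$ is a $V$-submodel of it, so Class Replacement follows from the observation on $\Ord$-submodels. Thus $(M,\Ycal)\models\GBC$, and $\Ycal$ is countable because $M$ is, so $\Ycal\in\GC(M)$.

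The heart of the argument is the claim that $\Xcal\cap\Def(M;C)=\Def(M)$. The inclusion $\supseteq$ is immediate. For $\subseteq$, suppose $A\in\Xcal$ and $A=\{x:(M,C)\models\phi(x,p)\}$ for a first-order $\phi$ and $p\in M$. Because $A\in\Xcal$ it has a check-name $\check A$, and $A=\check A^{C}$, so by the truth lemma there is a single condition $r\in C$ forcing $\forall x\,(x\in\check A\leftrightarrow\phi(x,\check p,\dot C))$. Since $r$ decides $\check x\in\check A$ for every $x\in M$ (forcing or refuting it according to whether $x\in A$), it follows that $r$ decides $\phi(\check x,\check p,\dot C)$ for every $x$, and $x\in A$ iff $r\forces\phi(\check x,\check p,\dot C)$. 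The key point is now that $r$ is a \emph{set} condition and that $\Add(\Ord,1)$ adds no new sets over $\GBc$, so the forcing relation restricted to first-order formulae of the forcing language (formulae with no class quantifiers, though the name $\dot C$ may occur) is definable over $M$ in $\Lcal_\in$: the recursion defining it only ever quantifies over sets of $M$, as every new set reduces to a check-name. Hence $A=\{x:r\forces\phi(\check x,\check p,\dot C)\}\in\Def(M;r,p)=\Def(M)$, proving the claim.

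Finally, let $\Zcal\in\rlzn\GBC M$ be any common lower bound of $\Xcal$ and $\Ycal$. Then $\Zcal\subseteq\Xcal\cap\Def(M;C)=\Def(M)$, while $\Def(M)\subseteq\Zcal$ by Elementary Comprehension in $(M,\Zcal)$. If $M$ has a definable global well-order then $\Def(M)\models\GBC$, so $\Zcal=\Def(M)$ is the unique lower bound; if $M$ has no definable global well-order then $\Def(M)$ contains no global well-order, so no such $\Zcal$ can satisfy Global Choice, i.e.\ $\Xcal$ and $\Ycal$ have no lower bound in $\rlzn\GBC M$. I expect the main obstacle to be the claim, and in particular making precise the observation that membership of $A$ in $\Xcal$ collapses its apparent dependence on the generic to a single set condition, together with the attendant fact that the first-order forcing relation of $\Add(\Ord,1)$ is $\Lcal_\in$-definable over $M$; the verification that $\Ycal\in\GC(M)$ and the case analysis at the end are routine.
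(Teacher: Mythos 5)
Your proof is correct and follows essentially the same route as the paper's: take $\Ycal=\Def(M;C)$ for $C$ Cohen-generic over $(M,\Xcal)$, use the forcing theorem to reduce any $A\in\Xcal\cap\Ycal$ to $\{x : r\forces\phi(\check x,\check p,\dot C)\}$ for a single condition $r$, and invoke the $\Lcal_\in$-definability over $M$ of the first-order forcing relation to conclude $A\in\Def(M)$. The only cosmetic difference is your slightly more elaborate verification that $\Ycal\in\GC(M)$, which the paper dispatches by noting that genericity over $(M,\Xcal)$ implies genericity over $(M,\Def(M))$.
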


\begin{proof}
Let $H$ be $(M,\Xcal)$-generic for $\Add(\Ord,1)$. Then $\Def(M;H) \in \GC(M)$, because meeting every dense class in $\Xcal$ implies meeting every dense class in $\Def(M)$. Set $\Ycal = \Def(M;H)$. Let us see that $\Ycal$ is as desired.

Consider $A \in \Xcal$ and assume that $A \in \Ycal = \Def(M;H)$. Then, there is some first-order formula $\phi$, possibly with set parameters and $H$ as a class parameter but with no other class parameters so that $(M,\Xcal[H]) \models \forall x\ x \in A \iff \phi(x,H)$. By the forcing theorem there is some $p \in H$ so that, in $(M,\Xcal)$, we have $p \forces \forall x\ x \in \check A \iff \phi(x,\dot H)$. So for all $x \in M$, we have $x \in A$ if and only if $(M;\Xcal) \models \text{``}p \forces \phi(\check x, \dot H)$''. But this formula does not depend upon $G$, so the same is true in $(M;\Def(M))$. Therefore, $A \in \Def(M)$. So we have seen that if $A \in \Xcal \cap \Ycal$ then $A \in \Def(M)$, from which the conclusion of the theorem immediately follows.
\end{proof}

Next we look at the opposite phenomenon from theorem \ref{thm1:dense-extensions}. Namely, every $\Xcal \in \GC(M)$ extends to some $\Ycal \in \GC(M)$ with nothing in between them. To do so, I will make use of a variant of Sacks forcing for adding new classes of ordinals.

Generalizing Sacks forcing to add generic classes of ordinals has been considered before. Kossak and Schmerl \cite{kossak-schmerl:book} considered what they call perfect generics, an adaptation of Sacks forcing to models of arithmetic. A set theoretic variant of their idea to add a subclass of $\Ord$ was considered by Hamkins, Linetsky, and Reitz \cite{hamkins:math-tea}. In the arithmetic case, perfect generics are used to produce minimally undefinable inductive sets over a model $M$ of arithmetic, i.e.\ inductive $G \subseteq M$ so that for $A \in \Def(M;G)$ either $A \in \Def(M)$ or $G \in \Def(M;A)$. A similar construction works in set theory to produce minimal extensions of countable models of $\GBC$.

\begin{theorem} \label{thm1:min-above}
Take $\Xcal \in \GC(M)$. Then there is a $\GBC$-realization $\Ycal \supsetneq \Xcal$ for $M$ which is minimal above $\Xcal$, in the sense that if $\Zcal \in \GC(M)$ with $\Xcal \subseteq \Zcal \subseteq \Ycal$ then either $\Zcal = \Xcal$ or $\Zcal = \Ycal$.
\end{theorem}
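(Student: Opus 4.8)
The plan is to adapt Sacks forcing to the class setting, as in the arithmetic perfect-set forcing of Kossak and Schmerl \cite{kossak-schmerl:book} and its set-theoretic variant of Hamkins, Linetsky, and Reitz \cite{hamkins:math-tea}. Work over $(M,\Xcal)$, and let $\Pbb$ be the forcing whose conditions are \emph{perfect class trees}: classes $T \in \Xcal$ with $T \subseteq {}^{<\Ord^M}2$ closed under initial segments such that every node of $T$ extends to a branch through $T$ of full length $\Ord^M$ and the splitting nodes of $T$ are cofinal along every such branch. Order $\Pbb$ by inclusion of trees, so that a subtree is a stronger condition; the top element is $\one = {}^{<\Ord^M}2$. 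Since $(M,\Xcal)$ is countable there is a generic $G \subseteq \Pbb$, and a density argument (the conditions whose stem has length $\geq \alpha$ are dense) shows $G$ determines a single full-length branch $b_G \in {}^{\Ord^M}2$. Put $g_G = \{\alpha \in \Ord^M : b_G(\alpha) = 1\}$ and let $\Ycal = \Def(M;\Xcal, g_G)$ be the closure of $\Xcal \cup \{g_G\}$ under first-order definability.

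First I would verify $\Ycal \in \GC(M)$ and $\Ycal \supsetneq \Xcal$. Strictness is immediate: given a condition $T$ and a class $B \in \Xcal$, some splitting node of $T$ can be extended on the side disagreeing with $B$, so by genericity $g_G$ agrees with no class of $\Xcal$. Now $\Ycal$ is countable, satisfies Elementary Comprehension by construction, contains the global well-order coming from $\Xcal$, and has first-order part literally $M \models \ZFC$; it remains to check Class Replacement, and for this one shows that $\Pbb$ is tame over $(M,\Xcal)$, so that by the preservation results above $\Xcal[G] \models \GBC$, whence $\Ycal \subseteq \Xcal[G]$ inherits Class Replacement and Global Choice just as in the proof of theorem~\ref{thm1:gm-omnibus}. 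Tameness --- pretameness together with the fact that $\Pbb$ adds no new sets --- rests on the \emph{Sacks property}: every $\Pbb$-name for a set is decided below the fusion of a suitably chosen $\Ord^M$-indexed descending sequence of conditions.

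The substance of the theorem is minimality. Suppose $\Zcal \in \GC(M)$ with $\Xcal \subseteq \Zcal \subseteq \Ycal$ and $\Zcal \neq \Xcal$, and fix $A \in \Zcal \setminus \Xcal$. Since $A \in \Ycal$, it is defined from $g_G$, finitely many classes of $\Xcal$, and set parameters by some first-order formula, so it has a $\Pbb$-name $\dot A$ of this restricted form with some $p \in G$ forcing $\dot A$ to name $A$. The key lemma is the \emph{Sacks fusion dichotomy}: for every condition $p$ and $\Pbb$-name $\dot A$ for a subclass of $V$ there is $q \leq p$ such that either (a) $q$ decides $\dot A$, forcing $\dot A = \check B$ for a single $B \in \Xcal$, or (b) $q$ codes the generic into $\dot A$ \emph{continuously and injectively}, meaning there is a map $\Phi$, definable over $(M,\Xcal)$ with parameter $q$, sending each full-length branch of $q$ to a subclass of $V$, injective on such branches, with $q \forces \dot A = \Phi(b_G)$. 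One proves this by an $\Ord^M$-length fusion carried out externally (legitimate since $M$ is countable) keeping every approximation a condition in $\Xcal$: at stage $\alpha$ one either prunes so as to decide $\dot A$ past the $\alpha$-th splitting level, or, when that is impossible, records an essential splitting; these essential splittings assemble in the fusion into the coding $\Phi$, and their absence yields alternative (a). Using the countability of $(M,\Xcal)$, build $G$ so that for each of the countably many $\Pbb$-names that can arise in the restricted way above, some condition in $G$ witnesses the dichotomy. Then, for our $A$, alternative (a) is impossible since $A \notin \Xcal$, so some $q \in G$ witnesses (b); hence $A = \Phi(b_G)$ with $\Phi$ injective and definable over $(M,\Xcal)$ from $q \in \Xcal$, so $b_G = \Phi^{-1}(A)$, giving $g_G \in \Def(M;\Xcal,A) \subseteq \Zcal$, so $\Ycal = \Def(M;\Xcal,g_G) \subseteq \Zcal$ and therefore $\Zcal = \Ycal$. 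Together with the trivial case in which no such $A$ exists, this shows $\Ycal$ is minimal above $\Xcal$.

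I expect the main obstacle to be pinning down the precise notion of perfect class tree and the bookkeeping of splitting levels so that (i) fusion sequences have lower bounds that are again conditions lying in $\Xcal$, and (ii) the dichotomy genuinely holds, the failure of the pruning alternative really producing a branch-injective continuous coding --- both complicated by the possibility that $(M,\Xcal)$ is $\omega$-nonstandard or that $\Ord^M$ has external cofinality $\omega$. Once $\Pbb$ is set up correctly, the tameness verification and the recovery of $b_G$ from $A$ should follow the classical Sacks lines.
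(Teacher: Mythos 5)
Your overall shape is the right one and matches the paper's in spirit: a Sacks-style perfect-tree construction producing a branch $G$ with $\Ycal = \Def(M;\Xcal,G)$, plus a dichotomy lemma (either the name is decided, so the new class is already in $\Xcal$, or essential splittings let you recover the branch from the new class). But the implementation has a genuine gap. Your forcing $\Pbb$ has as conditions whole perfect class trees, i.e.\ proper classes, so $\Pbb$ is a hyperclass, not a class forcing $\Pbb \in \Xcal$ with set conditions. The machinery you invoke --- pretameness/tameness, the forcing theorem, the definition of $\Xcal[G]$, and the preservation results of chapter 1 --- is stated and proved only for class forcings whose conditions are sets, so the step ``$\Pbb$ is tame, hence $\Xcal[G]\models\GBC$, hence Class Replacement for $\Ycal$'' is unsupported; handling hyperclass forcing requires a different (and stronger) framework. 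The paper avoids this entirely: it never takes a generic filter on the poset of trees. Instead each individual tree is itself regarded as a (tame) class forcing whose conditions are its nodes, and the branch is built externally from a countable descending sequence ${}^{<\Ord}2=\Qbb_0\supseteq\Pbb_0\supseteq\Qbb_1\supseteq\Pbb_1\supseteq\cdots$ of trees, each lying in $\Xcal$, where $\Pbb_n$ has only the $\Sigma_n$-branch genericity property over $\Qbb_n$ and does not split below $\alpha_n$ (a fixed cofinal $\omega$-sequence, which is what makes $\bigcup\bigcap_n\Pbb_n$ a single branch); $\GBC$ for $\Def(M;\Xcal,G)$ is then proved by the partial-genericity argument (a Replacement failure forced over $\Pbb_j$ would reflect to a sufficiently generic $H\in\Xcal$, contradicting $(M,\Xcal)\models\GBC$), not by a no-new-sets/Sacks-property claim.

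The second problem is your fusion. You propose an $\Ord^M$-length fusion ``carried out externally, keeping every approximation a condition in $\Xcal$.'' At limit stages the intersection of the trees chosen so far is defined from an external sequence of classes that $(M,\Xcal)$ cannot see, so there is no reason it lies in $\Xcal$ (or even remains perfect), and if $M$ is $\omega$-nonstandard or $\Ord^M$ has external cofinality $\omega$ the bookkeeping of ``the $\alpha$-th splitting level'' breaks down exactly as you feared. The paper's fix is to make the entire dichotomy for a single formula $\phi_n$ one internal definable recursion inside $(M,\Xcal)$ (the minimality lemma), producing a single tree $\Qbb_{n+1}\in\Xcal$ in one step --- with Global Choice used essentially to make the successor-stage choices canonical --- and then to compose only countably many such internal steps externally, interleaving the $\Sigma_n$-genericity thinning. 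So your proof needs to be restructured along those lines: replace the hyperclass forcing and full genericity by trees-as-forcings with increasing partial genericity, and replace the external transfinite fusion by an internal, Global-Choice-guided construction of each dichotomy tree.
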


Let me sketch the main idea before giving a proof. The desired $\Ycal$ will be $\Def(M;\Xcal,G)$ where $G$ is a specially chosen subclass of $\Ord^M$. We need to ensure two things. First, we need that $\Def(M;\Xcal,G)$ is a $\GBC$-realization for $M$, so we need to ensure that we satisfy the Separation and Replacement schemata with $G$ and parameters from $\Xcal$. Second, we need to ensure that every new class codes $G$, so that $\Ycal$ is minimal above $\Xcal$. 

To achieve the first of these we will define $G$ to be the intersection of a certain $\subseteq$-descending sequence $\seq{\Pbb_n : n \in \omega}$ of $\Ord$-height perfect binary trees. To ensure that $G$ is a branch we will have that $\Pbb_n$ only splits above $\alpha_n$, where $\seq{\alpha_n : n \in \omega}$ is a fixed sequence cofinal in $\Ord^M$. That adjoining $G$ gives a $\GBC$-realization will be due to genericity properties of $G$. We can think of each of the $\Pbb_n$ as a forcing notion. While $G$ will not be fully generic over any $\Pbb_n$, it will be increasingly generic over each one. This will suffice.

\begin{definition}
Let $\Qbb \subseteq \Pbb$ be $\Ord$-height perfect binary trees. Say that {\em $\Qbb$ has the $\Sigma_n$-branch genericity property over $\Pbb$} if every branch through $\Qbb$ (not necessarily in the model) is $\Sigma_n$-generic over $\Pbb$, meaning that the branch meets every $\Sigma_n$-definable dense subclass of $\Pbb$.
\end{definition}

We will need a refinement of the forcing theorem for partial generics. Let me state what we need here, sans proof. Any $\Ord$-height perfect binary tree $\Pbb$, considered as a forcing notion, is tame. So we have that the forcing relation is definable. The refinement we need is that $\forces_\Pbb$ restricted to the $\Sigma_k$-formulae is $\Sigma_j$ definable for some $j > k$. This is proved via the usual proof of the definability lemma, but with careful bookkeeping of quantifiers. More generally, if we want $\forces_\Pbb$ restricted to the $\Sigma^0_k$-formulae in some class parameter $P$, then this is $\Sigma^0_j$ definable from that same parameter. 

The other half of the forcing theorem is the truth lemma, which asserts that if $G \subseteq \Pbb$ is generic then $(M,\Xcal)[G] \models \phi$ if and only if there is $p \in G$ so that $p \forces \phi$. The refinement is that this works in a restricted fashion for partial generics. That is, for each $k$ there is $j > k$ so that if $G$ is $\Sigma_k$-generic then for any $\Sigma_k$-formula $\phi$ we have $(M,\Xcal)[G] \models \phi$ if and only if there is $p \in G$ so that $p \forces \phi$. Again, this refinement comes from considering the usual proof of the truth lemma, but paying close attention to the quantifiers involved.

We may assume that the $j > k$ for both lemmata is the same, simply by taking the maximum of the two. To have a name for this $j$, call it {\em the $\Sigma_k$-genericity witness}.

\begin{lemma}
Work with $(M,\Xcal) \models \GBC$. Suppose $\seq{\Pbb_n : n \in \omega}$ is a descending sequence of $\Ord$-height perfect binary trees so that $\Pbb_{n+1}$ has the $\Sigma_n$-branch genericity property over $\Pbb_n$. Let $G = \bigcup \bigcap_n \Pbb_n$. Then $(M,\Def(M;\Xcal,G)) \models \GBC$.
\end{lemma}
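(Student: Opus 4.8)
The plan is to check the axioms of $\GBC$ for $(M,\Ycal)$, where $\Ycal = \Def(M;\Xcal,G)$. Class Extensionality is automatic; Elementary Comprehension holds because $\Ycal$ is, by definition, closed under first-order definability; and Global Choice holds because $\Xcal \subseteq \Ycal$ already contains a global well-order. So the whole burden is Class Replacement, and I claim this follows once we know that $(M,\Xcal,G)$ satisfies the first-order Separation and Collection schemata in the language with a predicate for $G$ and for each class of $\Xcal$: given a class function $F \in \Ycal$ defined by some $\phi(x,y,G,\vec A)$ with $\vec A \in \Xcal$, Separation gives $b' = b \cap \dom F \in M$ for any set $b$, Collection applied to $\forall x \in b'\,\exists y\,\phi(x,y,G,\vec A)$ gives a set $c \in M$ with $F''b' \subseteq c$, and Separation again gives $F''b = \{y \in c : \exists x \in b'\,\phi(x,y,G,\vec A)\} \in M$. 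Collection (not merely Separation) is genuinely needed here, in order to rule out an amenable-but-non-set class function collapsing the cofinality of $\Ord^M$.

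To verify the Separation schema, fix a $\Sigma^0_k$ formula $\phi(x,\dot G,\vec A)$ with $\vec A \in \Xcal$ (absorbing any set parameters) and a set $b \in M$; the goal is $B \cap b \in M$ where $B = \{x : (M,\Xcal,G)\models\phi(x,G,\vec A)\}$. The idea is that a single condition on the branch $G$ decides $\phi(\check x,\dot G,\vec A)$ for all $x \in b$ simultaneously. Let $j$ be the $\Sigma_{k+1}$-genericity witness, and fix a natural number $m = m(k)$ large enough that, for every $n$, the class
\[
D_n = \{ p \in \Pbb_n : \forall x \in b\ (p \forces_{\Pbb_n}\phi(\check x,\dot G,\vec A) \text{ or } p \forces_{\Pbb_n}\neg\phi(\check x,\dot G,\vec A)) \}
\]
is $\Sigma^0_m$-definable over $(M,\Xcal)$ from the parameters $b$, $\vec A$, and the classes of $\Xcal$ defining $\Pbb_n$ and $\forces_{\Pbb_n}$; this uses the refined definability lemma, since $\forces_{\Pbb_n}$ restricted to $\Sigma^0_{k+1}$ formulas in $\dot G,\vec A$ is $\Sigma^0_j$-definable, and $m$ depends only on $k$ because $\Pbb_n$, $\forces_{\Pbb_n}$, $\vec A$ enter only as parameters. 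Now take $n \geq \max(k,m)$. Since $\Pbb_n$ is $<\kappa$-closed for every $\kappa$, $D_n$ is dense in $\Pbb_n$ (enumerate $b$ in $M$, build a descending chain of conditions deciding $\phi(\check x,\dot G,\vec A)$ for each $x \in b$ in turn, take lower bounds at limits); by Elementary Comprehension in $(M,\Xcal)$ we get $D_n \in \Xcal$, and it is a $\Sigma_n$-definable dense subclass of $\Pbb_n$. As $G$ is a branch through $\Pbb_{n+1}$, it is $\Sigma_n$-generic over $\Pbb_n$ by the branch genericity property, so some $p_0 \subseteq G$ lies in $D_n$. By the refined truth lemma (applicable since $G$ is $\Sigma_k$-generic over $\Pbb_n$ and $\phi \in \Sigma^0_k$) together with a standard filter argument, for each $x \in b$ we have $(M,\Xcal,G)\models\phi(x,G,\vec A)$ iff $p_0 \forces_{\Pbb_n}\phi(\check x,\dot G,\vec A)$. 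Hence $B \cap b = \{x \in b : p_0 \forces_{\Pbb_n}\phi(\check x,\dot G,\vec A)\}$, which is a subclass of $b$ belonging to $\Xcal$ (the forcing relation is a class in $\Xcal$, $p_0 \in M$, $\vec A \in \Xcal$), so by Class Separation in $(M,\Xcal)\models\GBC$ it is an element of $M$.

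The Collection schema is handled the same way, with $D_n$ replaced by the dense class of conditions $p$ such that for every $x \in a$ either $p \forces_{\Pbb_n}\neg\exists y\,\phi(\check x,\check y,\dot G,\vec A)$ or $p \forces_{\Pbb_n}\phi(\check x,\check y_x,\dot G,\vec A)$ for some designated $y_x$, with $\{y_x : x \in a\}$ a set; density again uses closure of $\Pbb_n$, now together with Class Replacement in $(M,\Xcal)$ to see that the designated witnesses form a set, and a condition $p_0 \subseteq G$ in this class, combined with the hypothesis $\forall x \in a\,\exists y\,\phi$ and the truth lemma, produces the required bounding set. The main obstacle is the quantifier bookkeeping behind the claim that "$m$ depends only on $k$": one must run the proofs of the refined definability and truth lemmas while tracking the increase in logical complexity, in order to be sure that for each fixed formula the auxiliary dense classes sit at a bounded level of the $\Sigma_n$-hierarchy, so that a sufficiently large $n$ makes $G$ generic enough over $\Pbb_n$ to meet them; a secondary point is confirming that the tree forcings $\Pbb_n$ are closed enough for the simultaneous-decision density arguments.
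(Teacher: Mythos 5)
Your proposal is correct in spirit but takes a genuinely different route from the paper. The paper argues by contradiction and transfer: if some $F \in \Def(M;\Xcal,G)$, defined from $G$ and parameters in $\Xcal$ by a $\Sigma_n$ formula, witnessed a failure of Class Replacement, then by the refined truth lemma some condition along $G$ would force the single sentence ``$F$ witnesses a failure of Replacement'' over $\Pbb_j$ ($j$ the genericity witness); one then takes $H \in \Xcal$ which is $\Sigma_j$-generic over $\Pbb_j$ (available because $\Sigma_j$-truth is definable, using Global Choice) and runs the same definition with $H$ in place of $G$. Since $\Def(M;\Xcal,H) = \Xcal$, this would make $(M,\Xcal)$ itself violate Replacement, a contradiction. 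You instead verify Replacement directly: reduce Class Replacement to the first-order Separation and Collection schemata over $(M,G,\vec A)$, and show that a single condition along $G$ decides the defining formula on any fixed set $b$, so the relevant trace lies in $\Xcal$ and is a set by Class Separation/Replacement in $(M,\Xcal)$. Your route proves more (full Separation and Collection in the expanded language, with explicit control by conditions), while the paper's route needs only one application of the truth lemma to one sentence, at the price of having to produce a sufficiently generic class $H$ inside $\Xcal$. Both arguments rest on the same refined definability and truth lemmas for partial generics that the paper states sans proof, so on that front you match the paper's level of rigor.

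Two points in your write-up need care. First, your density argument for $D_n$ (and the replacement of names by checks in the Collection step) invokes $<\kappa$-closure of $\Pbb_n$ for every $\kappa$. This holds for ${}^{<\Ord}2 = \Add(\Ord,1)$ and for the trees actually built in the accompanying lemma---downward closures of limit-continuous embeddings of the full binary tree, which are closed under unions of chains---but it is not a consequence of the lemma's stated hypotheses: an arbitrary $\Ord$-height perfect binary tree need not contain the unions of its chains (remove from ${}^{<\Ord}2$ a single node of limit length together with everything above it). So you should either add closure (or enough distributivity to make the ``decide $\phi(\check x)$ for all $x \in b$'' classes dense) as a hypothesis, or note that you are proving the lemma for the trees arising in the construction; to be fair, the paper's own proof has a parallel unstated reliance, since building its internal $\Sigma_j$-generic $H$ also requires getting past limit stages. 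Second, a small bookkeeping slip: to apply the refined truth lemma to a $\Sigma^0_k$ formula and its negation you need $G$ to be $\Sigma_j$-generic over $\Pbb_n$ for the witness $j$ associated with $k$, not merely $\Sigma_k$-generic; since $G$ is $\Sigma_n$-generic over $\Pbb_n$ for every $n$, this is repaired by choosing $n \ge \max(j,m)$ rather than $\max(k,m)$.
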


\begin{proof}
Suppose towards a contradiction that $F \in \Def(M;\Xcal,G)$ witnesses a failure of Replacement. Then $F$ is defined from $G$ and parameters from $\Xcal$ via a $\Sigma_n$ formula. Now let $j > n$ be the $\Sigma_n$-genericity witness. So there is some $p \in G$ so that $p \forces_{\Pbb_j} \text{``}F$ witnesses a failure of Replacement''. Now take $H \in \Xcal$ which is $\Sigma_j$-generic over $\Pbb_j$; such exists because $\Sigma_j$-truth is definable. Carry out the definition of $F$ but use $H$ instead of $G$. We get that $F \in \Def(M;\Xcal,H)$ witnesses a failure of Replacement, because $H$ is sufficiently generic. But $\Def(M;\Xcal,H) = \Xcal$ because $H \in \Xcal$. So $(M,\Xcal)$ is not a model of $\GBC$, a contradiction.
\end{proof}

Before seeing how to ensure that $\Ycal$ is minimal above $\Xcal$, let us see that we can always arrange such a sequence $\seq{\Pbb_n}$ of perfect binary trees.

\begin{lemma}
Let $(M,\Xcal) \models \GBC$ be countable. Then there is $\seq{\Pbb_n}$ of $\Ord$-height perfect binary trees from $\Xcal$ so that $\bigcup \bigcap_n \Pbb_n$ is a class of ordinals and $\Pbb_{n+1}$ has the $\Sigma_n$-branch genericity property over $\Pbb_n$.
\end{lemma}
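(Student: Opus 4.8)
The plan is a fusion construction, building $\seq{\Pbb_n : n \in \omega}$ externally one tree at a time, each $\Pbb_{n+1}$ obtained from $\Pbb_n$ by an $\Ord$-length fusion that seals the relevant dense subclasses while growing a long stem. Two facts about $(M,\Xcal) \models \GBC$ will do the heavy lifting. First, for each standard $n$ and each finite tuple $\vec A$ of classes of $\Xcal$, the partial satisfaction predicate for $\Sigma^0_n$-formulas relativized to $\vec A$ is a class of $\Xcal$, definable from $\vec A$; combined with a global well-order $W \in \Xcal$ this lets one write down, for any $\Pbb \in \Xcal$, a single class of $\Xcal$ whose $\xi$-th slice is the dense subclass of $\Pbb$ coded by the $W$-$\xi$-th pair $(\varphi, a)$ — $\varphi$ a $\Sigma^0_n$-formula over $\vec A$, $a$ a set — and is $\Pbb$ itself if that pair does not code a dense subclass. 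Second, $\GBC$ proves that first-order recursions of length $\Ord$ with set-sized values have class-sized solutions: if each proper initial segment $\seq{L_\eta : \eta < \xi}$ of the recursion is a set — which holds by the ordinary set recursion theorem applied inside $M$, valid in $\GBC$ since Separation and Replacement are available with class parameters — then the graph $\{(\xi,x) : x \in L_\xi\}$ is first-order definable over $(M,\Xcal)$ and so lies in $\Xcal$ by Elementary Comprehension. This second fact is what keeps the fused trees inside $\Xcal$ even though the fusion climbs all the way up to $\Ord$.

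Here is the construction. Fix externally an $\omega$-sequence $\seq{\alpha_n : n \in \omega}$ cofinal in $\Ord^M$ (possible as $M$ is countable) and an enumeration $\Xcal = \{A^{(0)}, A^{(1)}, \dots\}$; put $\Pbb_0 = {}^{<\Ord}2 \in \Xcal$. Given $\Pbb_n \in \Xcal$, let $D^n \in \Xcal$ be the class enumerating, as in the first fact applied with $\vec A = (A^{(0)},\dots,A^{(n)})$, all $\Sigma^0_n$-definable dense subclasses of $\Pbb_n$ with set parameters and class parameters among $A^{(0)},\dots,A^{(n)}$. Take for the stem of $\Pbb_{n+1}$ the $W$-least branch of $\Pbb_n$ cut off at length $\alpha_{n+1}$ (a node of $\Pbb_n$ by perfectness), and above it run an $\Ord$-length fusion: having built the set-sized perfect subtree $L_\xi \subseteq \Pbb_n$, for each current leaf $t$ of $L_\xi$ (set-many, by Replacement) extend $t \cat 0$ and $t \cat 1$ to the $W$-least nodes of $\Pbb_n$ in the slice $(D^n)_\xi$, adjoin their initial segments, then extend each to its next splitting node in $\Pbb_n$; take unions at limits. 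Density supplies the extensions and Replacement bounds their lengths, so the recursion is set-valued; by the second fact $\Pbb_{n+1} := \bigcup_\xi L_\xi \in \Xcal$, and it is an $\Ord$-height perfect binary subtree of $\Pbb_n$ with stem of length $\ge \alpha_{n+1}$ every branch of which meets every slice of $D^n$. Finally, the stems $\stem(\Pbb_n)$ are nested and of length $\ge \alpha_n \to \Ord^M$, so $\bigcap_n \Pbb_n = \bigcup_n \stem(\Pbb_n)$ is a single cofinal branch, which read as a subclass of $\Ord^M$ is the promised class of ordinals.

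The step needing real care — and the reason the trees are built in chunks indexed by $A^{(0)},\dots,A^{(n)}$ rather than against all of $\Xcal$ — is reconciling "$\Pbb_{n+1} \in \Xcal$" with having enough genericity. Since $\Xcal$ is only externally countable and need not be coded by any single class, no class of $\Xcal$ enumerates all $\Sigma^0_n$-definable dense subclasses of $\Pbb_n$, so the fusion cannot be run against all of them internally; restricting to parameters from $A^{(0)},\dots,A^{(n)}$ keeps the list a genuine class — hence the fusion internal and $\Pbb_{n+1} \in \Xcal$ — while still sufficing, because in the intended application (the branch genericity lemma above) one is free to take the genericity index $j$ as large as one likes, in particular large enough that all of the finitely many class parameters involved occur among $A^{(0)},\dots,A^{(j)}$, so that the branch $G = \bigcup \bigcap_n \Pbb_n$ through $\Pbb_{j+1}$ is $\Sigma^0_j$-generic over $\Pbb_j$ for those parameters. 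Making the chunked bookkeeping dovetail cleanly with the "seal above the current leaves, then raise the splitting height" organization of the fusion — so that perfectness, closure under initial segments, and the nested-stem property survive every limit stage — is the genuine bookkeeping work; everything else is routine given the two facts above.
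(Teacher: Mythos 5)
Your construction is, at its core, the same as the paper's: both pass from $\Pbb_n$ to $\Pbb_{n+1}$ by an internal, set-like recursion of length $\Ord$ guided by the global well-order, starting above a fixed node of length at least $\alpha_n$ (so the stems are nested and cofinal and $\bigcap_n \Pbb_n$ is a single branch), and sealing/deciding the $\Sigma_n$ data along the way; the paper phrases the recursion as an embedding of ${}^{<\Ord}2$ onto a subtree of $\Pbb_n$ whose splitting nodes decide the instances of the universal $\Sigma_n$-formula, while you phrase it as a fusion extending the current leaves into an internally enumerated list of dense subclasses, and both rest on exactly your two background facts (relativized $\Sigma_n$-satisfaction predicates, and solvability in $\GBC$ of first-order $\Ord$-length recursions with set-sized stages). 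The one genuine divergence is your handling of class parameters, and there your diagnosis of the obstacle is off. The reading of ``$\Sigma_n$-definable dense subclass of $\Pbb_n$'' under which no single class enumerates them all---allowing arbitrary class parameters from $\Xcal$---is not the intended one and is not in general attainable (for instance, whenever $\Pbb_n \setminus \Pbb_{n+1}$ is dense in $\Pbb_n$ it is a $\Sigma_0$-in-class-parameters dense subclass avoided by every branch of $\Pbb_{n+1}$); the intended reading is with set parameters over the fixed trees, as is visible both from the paper's construction and from the later step ``take $H \in \Xcal$ which is $\Sigma_j$-generic over $\Pbb_j$; such exists because $\Sigma_j$-truth is definable,'' which only makes sense on that reading. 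Under that reading the universal $\Sigma_n$-formula already enumerates the relevant dense classes in a single class of $\Xcal$, so your chunking over an external enumeration $A^{(0)},\dots,A^{(n)}$ of $\Xcal$ is not needed to prove the lemma as stated; what it does buy is genericity with respect to dense classes carrying class parameters from $\Xcal$, which is arguably what the downstream application (where elements of $\Def(M;\Xcal,G)$ are defined using class parameters) quietly needs and which the paper glosses---but note that the property you actually establish then depends on the external enumeration and is not literally ``the $\Sigma_n$-branch genericity property,'' so you should state it as a variant and redo the ``choose $j$ large enough to absorb the parameters'' step when applying it. Two small repairs: ``the $W$-least branch of $\Pbb_n$ cut off at length $\alpha_{n+1}$'' should be ``the $W$-least node of $\Pbb_n$ of length at least $\alpha_{n+1}$'' (branches are classes, not objects you can minimize over), and the limit-stage bookkeeping you flag (the unions of the chosen chains must again be nodes of $\Pbb_n$) is a real but routine point that the paper's ``at limit stages, take unions'' glosses in exactly the same way.
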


This sequence will not be (coded) in $\Xcal$, though each tree in the sequence will be in $\Xcal$.

\begin{proof}
Fix $\seq{\alpha_n : n \in \omega}$ cofinal in $\Ord^M$. Start with $\Pbb_0 = {}^{<\Ord}2$ the full binary tree. Assume we have already found $\Pbb_n \in \Xcal$. We define $\Pbb_{n+1}$ by defining a certain embedding $g : {}^{<\Ord}2 \to \Pbb_n$. Closing $\ran g$ downward in $\Pbb_n$ will give $\Pbb_{n+1}$. We define $g$ by a set-like recursion of height $\Ord$. Fix in advance a global well-order. Let $g(0)$ be the first node according to this well-order which has length $\ge \alpha_n$. This will ensure that the intersection of the $\Pbb_n$ gives a branch. At limit stages, take unions. If we have already defined $g(s)$, then let $t$ be the least, according to the global well-order, splitting node in $\Pbb_n$ which extends $g(s)$ and decides the $\length s$-th instance (according to the global well-order) of the universal $\Sigma_n$-formula. Then set $g(s \cat i) = t \cat i$. 

It is clear that $\Pbb_{n+1} \in \Xcal$, because we defined it from parameters from $\Xcal$. It is also clear that $\Pbb_{n+1}$ is a perfect tree. It has the $\Sigma_n$-branch genericity property over $\Pbb_n$ because any $\Sigma_n$-formula is decided by a long enough node in $\Pbb_{n+1}$.
\end{proof}

It remains to see how to ensure the minimality of the extension. This is encapsulated by the following lemma, which is a set theoretic counterpart to a result from section 6.5 of \cite{kossak-schmerl:book}.

\begin{lemma}[Minimality lemma] \label{lem:per-gen-min}
Let $\phi(x)$ be a formula in the forcing language and $\Pbb \in \Xcal$ be a perfect subtree of the full binary tree. Then there is $\Qbb \subseteq \Pbb$ in $\Xcal$ so that one of the two cases holds:
\begin{enumerate}
\item There is an ordinal $\alpha$ so that for all ordinals $\xi$ we have that all $p \in \Qbb$ of length greater than $\alpha$ decide $\phi(\check \xi)$ (in $\Pbb$) the same.

\item For every ordinal $\alpha$ there is $\beta > \alpha$ so that if $p,q \in \Qbb$ both have length $\beta$ and $p \rest \alpha = q \rest \alpha$ then there is an ordinal $\xi$ so that $p$ and $q$ decide $\phi(\xi)$ differently (in $\Pbb$).
\end{enumerate}
\end{lemma}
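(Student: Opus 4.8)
The plan is to run a fusion construction modeled on the classical Sacks minimality argument, applied to $\Pbb$ used as a forcing notion whose conditions are its nodes ordered so that longer strings are stronger (a generic object being a branch through $\Pbb$). Since $\Pbb$ is tame, the restriction of $\forces_\Pbb$ to instances of $\phi$ is a class in $\Xcal$, and I would use a global well-order, available from $\GBC$, to make every choice in the construction canonical. The construction builds a decreasing tree, and it will either \emph{abort}, handing us a subtree witnessing case~(1), or run all the way through $\Ord$, handing us a subtree witnessing case~(2); the recursion has the same shape as the one in the preceding lemma, so $\GBC$ suffices to carry it out.

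The conceptual crux is a dichotomy for nodes. Call a node $u\in\Pbb$ \emph{$\phi$-complete} if $u$ decides $\phi(\check\xi)$ (in $\Pbb$) for every ordinal $\xi$, and \emph{$\phi$-splittable} otherwise. First I would record two facts. If $u$ fails to decide some $\phi(\check\xi)$, then, as $u$ forces neither $\phi(\check\xi)$ nor its negation, there are nodes $q_0,q_1\in\Pbb$ extending $u$ with $q_0\forces\phi(\check\xi)$ and $q_1\forces\neg\phi(\check\xi)$, and $q_0,q_1$ are necessarily incomparable (a common extension would force both); so a $\phi$-splittable node admits an honest split inside the tree at which some $\phi(\check\xi)$ is decided oppositely on the two sides. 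On the other hand, if $u$ is $\phi$-complete then every node of $\Pbb$ extending $u$ is again $\phi$-complete and decides each $\phi(\check\xi)$ exactly as $u$ does; hence the subtree $\Pbb\rest u$ consisting of all nodes of $\Pbb$ comparable to $u$ has all of its nodes of length $>\length u$ deciding each $\phi(\check\xi)$ the same way, i.e.\ $\Qbb=\Pbb\rest u$ with $\alpha=\length u$ witnesses case~(1).

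With this in hand I would build a monotone embedding $g\colon{}^{<\Ord}2\to\Pbb$ by recursion on the length of the string, arranging that all images of strings of a fixed length $\delta$ share a common length $\mu_\delta$, with $\seq{\mu_\delta:\delta\in\Ord}$ strictly increasing (hence cofinal in $\Ord$), and taking unions at limit lengths. At each stage, having produced $g(s)$, test whether $g(s)$ is $\phi$-complete; if it ever is, abort and output $\Qbb=\Pbb\rest g(s)$, landing in case~(1) by the previous paragraph. Otherwise $g(s)$ is $\phi$-splittable: pick (least under the global well-order) an ordinal $\xi_s$ not decided by $g(s)$ together with incomparable nodes $q_0,q_1\supseteq g(s)$ deciding $\phi(\check\xi_s)$ oppositely, and after handling all strings of length $|s|$, extend every such $q_i$ to a common length $\mu_{|s|+1}$ (chosen above all lengths produced so far and above $\mu_{|s|}$) to serve as $g(s\cat 0)$ and $g(s\cat 1)$. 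If the construction never aborts, let $\Qbb$ be the downward closure of $\ran g$; it is a perfect $\Ord$-height subtree of $\Pbb$ lying in $\Xcal$.

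Finally I would check that this $\Qbb$ witnesses case~(2). By monotonicity every node of $\Qbb$ of length $\mu_{\delta+1}$ is $g(t)$ for a unique string $t$ of length $\delta+1$, so two distinct such nodes are $g(t_p),g(t_q)$ for distinct $t_p,t_q$ of that length; letting $r$ be their longest common initial segment, the nodes $g(r\cat 0),g(r\cat 1)$ decide $\phi(\check\xi_r)$ oppositely and are initial segments of $g(t_p)$ and $g(t_q)$ respectively, whence $g(t_p)$ and $g(t_q)$ decide $\phi(\check\xi_r)$ oppositely. Thus, given any ordinal $\alpha$, choosing $\delta$ with $\mu_\delta>\alpha$ and putting $\beta=\mu_{\delta+1}>\alpha$ works: any two distinct nodes of $\Qbb$ of length $\beta$, in particular any two agreeing below $\alpha$, decide some $\phi(\xi)$ differently. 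The main obstacle is exactly the first fact of the second paragraph together with the node dichotomy: one must be sure that undecidedness of $\phi(\check\xi)$ at a node always translates into an honest split \emph{within} a subtree of $\Pbb$, and recognize that its only failure mode, $\phi$-completeness, immediately delivers case~(1); the remainder — arranging cofinal, level-uniform splitting and taking limits — is routine fusion bookkeeping of the kind already used in the preceding lemma.
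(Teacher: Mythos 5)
Your proof is correct, but it takes a somewhat different route from the paper's. The paper first thins $\Pbb$ to a $k$-deciding subtree $\Pbb'$, arranged so that its splitting nodes decide ever more instances $\phi(\check \xi)$ as they lengthen, and then runs the dichotomy on $\Pbb'$: either above some node all equal-length nodes agree on every instance they decide (case (1) is then the cone above that node in $\Pbb'$), or one can fuse by repeatedly choosing equal-length pairs in $\Pbb'$ that already disagree on some instance (case (2)). You instead dichotomize directly on $\Pbb$ via the notion of a $\phi$-complete node, and in the splittable case you manufacture the disagreement from scratch, using the standard density fact that an undecided $\phi(\check\xi)$ yields incomparable extensions deciding it oppositely. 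Both arguments are fusions of the same shape, carried out by a set-like $\Ord$-length recursion with choices made canonical by the global well-order, so the points about $\GBC$-provability and $\Qbb \in \Xcal$ are identical; your version is slightly more economical (no preliminary deciding subtree) and actually yields a stronger case (1), since every sufficiently long node of $\Pbb \rest u$ decides every instance, and identically. What the paper's detour through $\Pbb'$ buys is that its case-(2) tree has the extra feature that, for each fixed $\xi$, all sufficiently long nodes decide $\phi(\check\xi)$---a property not demanded by the statement of the lemma but convenient in the application in theorem \ref{thm1:min-above}, where one reads off decisions about $\phi_n(\xi)$ from long nodes of $\Qbb_{n+1}$; if your version is to be slotted in there, you should either interleave that deciding requirement into your fusion (which is compatible with your dichotomy) or verify the application goes through without it. Finally, your construction, like the paper's own in this lemma and in the branch-genericity lemma, implicitly assumes the trees are closed under unions along branches at limit levels; since that is the paper's standing convention for $\Ord$-height perfect trees, it is not a gap.
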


\begin{proof}
Fix $k$ so that $\phi$ is a $\Sigma_k$ formula. Take $\Pbb' \subseteq \Pbb$ a $k$-deciding subtree for $\Pbb$. We may assume that there is a function $f : \Bbb \to \Pbb'$ which embeds the full binary tree onto the splitting nodes of $\Pbb'$ and that $f(s)$ decides $\phi(\length s)$. There are two cases. The first is that there is some $s \in \Bbb$ so that for every $t, t' >_\Bbb s$ if $\length t = \length t'$ then $f(t)$ and $f(t')$ decide $\phi(\xi)$ the same for all ordinals $\xi$. In this case, set $Q = \Pbb' \rest f(s)$ and get the first conclusion in the lemma.

The second case is that this does not happen for any $s \in \Bbb$. In this case, we can inductively define a $g : \Bbb \to \Pbb'$ as follows:
\begin{itemize}
\item Set $g(0) = f(0)$.
\item Set $g(s \cat 0) = p_0$ and $g(s \cat 1) = p_1$, where $p_0, p_1$ are least (according to a fixed global well-order) so that $\length p_0 = \length p_1$ and there is an ordinal $\xi$ so that $p_0$ and $p_1$ decide $\phi(\xi)$ differently. Such $p_0$ and $p_1$ always exist, as otherwise we would be in the previous case.
\item At limit stages take unions.
\end{itemize}
Set $\Qbb = \{ p \in \Pbb' : \exists s \in \Bbb\ p \le_{\Pbb'} g(s) \}$. This yields the second conclusion in the lemma.
\end{proof}

Observe that we used global choice in an essential manner here. There are possibly many choices for $p_0$ and $p_1$ in the successor stage of the construction of $g$. In order to guarantee that $g \in \Xcal$ and hence that $\Qbb \in \Xcal$, we need to uniquely specify a choice.

\begin{proof}[Proof of theorem \ref{thm1:min-above}]
Work with countable $(M,\Xcal) \models \GBC$. Fix a cofinal sequence $\seq{\alpha_n}$ of ordinals and an enumeration $\seq{\phi_n(x)}$ of formulae in the forcing language. We construct a descending sequence of perfect trees
\[
{}^{<\Ord}2 = \Qbb_0 \supseteq \Pbb_0 \supseteq \Qbb_1 \supseteq \Pbb_1 \supseteq \cdots \supseteq \Qbb_n \supseteq \Pbb_n \supseteq \cdots
\]
so that $\Pbb_n$ has the $\Sigma_n$-branch genericity property over $\Qbb_n$ and does not split below $\alpha_n$ and $\Qbb_{n+1} \subseteq \Pbb_n$ is as in the previous lemma for $\phi_n$. Set $G = \bigcup (\bigcap_n \Pbb_n)$ and $\Ycal = \Def(M;\Xcal,G)$. Then $\Ycal \supseteq \Xcal$ is a $\GBC$-realization for $M$. 

Now suppose $\Zcal$ is a $\GBC$-realization for $M$ with $\Xcal \subseteq \Zcal \subseteq \Ycal$. We want to see that either $\Zcal = \Xcal$ or $\Zcal = \Ycal$. It is enough to see that if $A \in \Ycal$ then either $G$ is definable from $A$ and parameters in $\Xcal$ or else $A \in \Xcal$. Without loss of generality we may assume that $A$ is a class of ordinals. Then it was defined by some formula $\phi_n$ in our enumeration. 

Consider $\Qbb_{n+1} \subseteq \Pbb_n$. If the first case from the minimality lemma holds, then $A \in \Xcal$ because $\xi \in A$ if and only if for every $p \in \Qbb_{n+1}$ the length of $p$ being sufficiently long implies that $p \forces_{\Pbb_n} \phi_n(\xi)$. If the second case of the previous lemma holds, then we can define $G$ from $A$. In this case, $p \in \bigcap_n \Pbb_n$ if and only if for every ordinal $\alpha$ there is $q >_{\Qbb_{n+1}} p$ of length greater than $\alpha$ so that $q \forces_{\Pbb_n} \phi_n(\xi) \iff \xi \in A$ for all ordinals $\xi$. From a definition of $\bigcap_n \Pbb_n$ can easily be produced a definition for $G$.
\end{proof}

As remarked earlier, Global Choice was used essentially in the proof of lemma \ref{lem:per-gen-min}. Proving this lemma without Global Choice would yield a construction for minimal but not least $\GBC$-realizations. Namely, start with a countable $M \models \ZFC$ with no definable global well-order. Let $\Xcal = \Def(M)$. Then $(M,\Xcal)$ is a model of $\GBC$ minus Global Choice. Applying the theorem to $(M,\Xcal)$ would yield a $\GBC$-realization $\Ycal$ for $M$ which is minimal above $\Xcal$. But since any $\GBC$-realization must contain $\Xcal$, this would give that $\Ycal$ is a minimal $\GBC$-realization for $M$. 

Thus, the problem of constructing a minimal but not least $\GBC$-realization can be reduced down to the problem of proving the minimality lemma without using choice. A similar question can be asked for ordinary Sacks forcing.

\begin{question}
Is choice needed to prove the minimality lemma for Sacks forcing? That is, is it consistent that there are $M \models \ZF + \neg \AC$, $s \subseteq \omega^M$ Sacks-generic over $M$, and $A \in M[s]$ so that $M \subsetneq M[A] \subsetneq M[s]$?
\end{question}

This technique can also be applied to study principal models of $\GBC$.

\begin{definition}
Say that $(M,\Xcal)$ is a {\em principal} model if there is $A \in \Xcal$ so that $\Xcal = \Def(M;A)$. Let $\GP(M)$ denote the collection of principal $\GBC$-realizations for $M$. Note that $\GP(M) \subseteq \rlzn \GBC M$ and if $M$ is countable then $\GP(M) \subseteq \GC(M)$. Like $\rlzn \GBC M$ and $\GC(M)$, $\GP(M)$ is ordered by $\subseteq$.
\end{definition}

\begin{theorem}[S. Friedman, Kossak--Schmerl] \label{thm1:principal-extension}
For $M$ countable, $\GP(M)$ is dense in $\GC(M)$. That is, given any countable $\GBC$-realization $\Xcal$ for $M$ there in $\Ycal \supseteq \Xcal$ which is a principal $\GBC$-realization for $M$.
\end{theorem}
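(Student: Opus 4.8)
The goal is to produce a single class $A \subseteq \Ord^M$ with $\Def(M;A) \supseteq \Xcal$ and $(M,\Def(M;A)) \models \GBC$; then $\Ycal := \Def(M;A)$ is the desired principal $\GBC$-realization. Using countability, enumerate $\Xcal = \{X_n : n\in\omega\}$, and by coding sets as sets of ordinals assume each $X_n \subseteq \Ord^M$, with $X_0$ a global well-order of $M$. Once $A$ codes $X_0$, Global Choice holds in $\Def(M;A)$, Elementary Comprehension is automatic, and the entire content of the theorem is Class Replacement for $\Def(M;A)$ — equivalently, amenability-to-$M$ of every class definable from $A$. The guiding difficulty is a tension: to code a \emph{proper} class $X_n$ into $A$ one must use an unbounded set of coordinates of $A$, and for $X_n$ to be recoverable from $A$ those coordinates must be located by a \emph{rule}, since no proper-class parameter is available to $\Def(M;A)$. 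If this rule were uniform in $n$, then the sequence $\langle X_n : n\in\omega\rangle$ would itself be a class of $\Def(M;A)$, and this sequence is in general non-amenable (e.g.\ when the $X_n$ are mutually Cohen-generic over $M$), contradicting Replacement. So $X_n$ must be coded at coordinates marked by an ordinal $\gamma_n$ that is available individually, while the sequence $\langle\gamma_n\rangle$ — a cofinal $\omega$-sequence through $\Ord^M$ — is \emph{not} recoverable from $A$. This is exactly the ``hide a cofinal $\omega$-sequence inside a generic'' phenomenon exploited, in the opposite direction, in the proof of Theorem~\ref{thm1:gm-omnibus}(4) and in the $\ETR_\omega$-amenability construction earlier in this chapter.

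Accordingly I would build $A$ by a recursion of length $\Ord^M$, carried out externally (possible since $M$ is countable), interleaving three tasks. First, \emph{meeting dense classes}: enumerate the dense subclasses of $\Add(\Ord,1)^M$ lying in $\Def(M)$ and diagonalize against them on the ``free'' coordinates of $A$, so that $A$ is generic over $(M,\Def(M))$ in the relevant sense. Second, \emph{activating codes}: at cofinally many stages designate the current write-head position as a fresh marker $\gamma_n$, starting the coding of $X_n$; since only $\omega$ markers are ever chosen and the recursion has length $\Ord^M$, the $\gamma_n$ can be taken increasing and cofinal. Third, \emph{writing codes}: on a region $R_n$ definable from $\gamma_n$ and $n$ — with the $R_n$ pairwise disjoint and each cofinal in $\Ord^M$ above $\gamma_n$, laid out sparsely enough that every $A\cap V_\beta^M$ involves only the finitely many codes already activated below $\beta$ — write the characteristic function of $X_n$ combined bitwise (by exclusive-or) with a fresh Cohen-generic ``key'' $K_n$ stored on a second region $R_n'$ also definable from $\gamma_n$. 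The effect is that to an observer who does not know $\gamma_n$, the $R_n\cup R_n'$-coordinates of $A$ look like Cohen noise. Let $A$ be the resulting class and $\Ycal = \Def(M;A)$.

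That $\Xcal\subseteq\Ycal$ is then routine: given $n$, from the ordinal parameter $\gamma_n$ one computes $R_n$ and $R_n'$, reads off the key $K_n$ and the encrypted data, and recovers $X_n$, so $X_n\in\Def(M;A,\gamma_n)=\Def(M;A)$. The whole theorem thus reduces to verifying $(M,\Ycal)\models$ Class Replacement, and this is the step I expect to be the main obstacle. One must show that no $F=\{x:\phi(x,A,p)\}$ in $\Ycal$ witnesses a failure of Replacement: here $\phi$ has some \emph{fixed} quantifier complexity, so this is a bounded-complexity fact about $A$ that was sealed off at a corresponding stage of the generic construction, whereas the genuinely dangerous classes ($\langle\gamma_n\rangle$, $\langle X_n\rangle$, or a cofinal map with domain a set of $M$) can be named only via the full non-amenable directory $\langle\gamma_n\rangle$, which a mutual-genericity argument — the keys $K_n$ are generic while the $X_n$ lie in the ground realization — should show is absent from $\Ycal$. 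Making this precise, in particular checking that interleaving the encrypted coding with the Cohen diagonalization preserves exactly the genericity needed for Replacement and that $A\cap V_\beta^M\in M$ for every $\beta$, is the real work; it is the set-theoretic counterpart of the minimal-generic and coding arguments of Kossak and Schmerl (\S6.5 of \cite{kossak-schmerl:book}) and of S.\ Friedman.
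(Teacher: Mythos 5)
There is a genuine gap, and it sits exactly where you flag it: the verification of Class Replacement for $(M,\Def(M;A))$ is not a routine check to be deferred but the entire mathematical content of the theorem, and your sketch supplies no mechanism by which it could be carried out. No off-the-shelf preservation theorem applies to your $A$. It cannot be generic over $(M,\Xcal)$ for $\Add(\Ord,1)$: by the argument of theorem \ref{thm1:least-gbc}, any class of $\Xcal$ definable from such a generic already lies in $\Def(M)$, whereas your $A$ must define every member of $\Xcal$. Nor is it, in general, generic over $(M,\Def(M))$: dense subclasses of $\Add(\Ord,1)$ are allowed set parameters, in particular $\gamma_n$ itself, so they can impose demands on the exclusive-or of the paired coordinates in $R_n\cup R_n'$ --- that is, on the characteristic function of $X_n$ itself --- and an arbitrary $\GBc$-amenable class $X_n$ (say one containing all limit ordinals, confronted with the dense class of conditions whose decoded string vanishes on a long terminal block) simply fails such demands. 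So the one-time pad does not make the committed regions ``look like Cohen noise'' to $M$-definable dense classes, your plan to meet dense classes ``on the free coordinates'' does not survive the commitments, and the claim that the cofinal sequence $\seq{\gamma_n}$ is invisible to every formula $\phi(x,A,p)$ is precisely the kind of all-formulas preservation statement that needs a forcing-style framework you never set up. (A small side error: Replacement for $\Def(M;A)$ is not equivalent to amenability of every class definable from $A$; an amenable cofinal $\omega$-sequence through $\Ord^M$ has amenable definable consequences yet kills Replacement.)

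What the missing mechanism looks like is shown by the paper's proof, which runs a Sacks-style fusion \emph{inside} $\Xcal$ rather than a Cohen-style coding outside it: one builds a descending sequence ${}^{<\Ord}2=\Qbb_0\supseteq\Pbb_0\supseteq\Qbb_1\supseteq\Pbb_1\supseteq\cdots$ of $\Ord$-height perfect trees, each an element of $\Xcal$, where $\Pbb_n$ is obtained from $\Qbb_n$ by coding $X_n$ into the left/right turns at every other splitting node, and $\Qbb_{n+1}$ is thinned so that every branch through it is $\Sigma_n$-generic over $\Pbb_n$ (as in theorem \ref{thm1:min-above}). The fused branch $G$ then defines each $X_n$, so $\Def(M;\Xcal,G)=\Def(M;G)$ is principal, and Replacement holds by the lemma on increasingly generic branches: a $\Sigma_n$ definition from $G$ of a Replacement failure would be forced by a condition of the relevant tree, and a sufficiently generic branch through that tree already exists in $\Xcal$ (since $\Sigma_j$-truth relative to a class is definable there), transferring the failure into $(M,\Xcal)$ and contradicting $(M,\Xcal)\models\GBC$. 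It is this reflection-into-$\Xcal$ step, made possible because the coding apparatus consists of forcing notions lying in $\Xcal$ and because genericity is demanded only of a bounded complexity level at each finite stage, that your encrypted-Cohen layout lacks; without an analogue of it, the heart of the proof is missing.
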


This is theorem 15 of \cite{hamkins:math-tea}. Hamkins, Linetsky, and Reitz attribute the result independently to Friedman, via private communication, and Kossak and Schmerl for the finite set theory (equivalently, arithmetic) case. 
I will not give a full argument here, but let me sketch the proof.

\begin{proof}[Proof sketch]
The argument is similar to the proof of theorem \ref{thm1:min-above}. We want to get a sequence
\[
{}^{<\Ord}2 = \Qbb_0 \supseteq \Pbb_0 \supseteq \Qbb_1 \supseteq \Pbb_1 \supseteq \cdots \supseteq \Qbb_n \supseteq \Pbb_n \supseteq \cdots
\]
so that $\Qbb_{n+1}$ has the $\Sigma_n$-branch genericity property over $\Pbb_n$. Before, we used the minimality lemma to produce $\Pbb_n$ from $\Qbb_n$. Here we need a different lemma.

\begin{lemma}
Work over $(M,\Xcal) \models \GBC$. Let $\Qbb$ be an $\Ord$-height perfect binary tree and $A \in \Xcal$ be a class of ordinals. Then, there is $\Pbb \subseteq \Qbb$ in $\Xcal$ so that from $\Qbb$ and any branch through $\Pbb$ we can define $A$.
\end{lemma}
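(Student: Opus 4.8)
The plan is to thin $\Qbb$ to a perfect subtree $\Pbb$ into which the bits of $A$ have been written ``horizontally,'' at positions that every branch of $\Pbb$ must pass through, so that from $\Qbb$---which lets one translate between $\Pbb$ and the full binary tree---and any branch $b$ of $\Pbb$ one simply reads $A$ off of $b$. First I would collapse $\Qbb$ to the full binary tree. Since $\Qbb$ is an $\Ord$-height perfect binary tree, each node of $\Qbb$ has a unique least splitting node of $\Qbb$ above it, so the splitting nodes of $\Qbb$ form a class tree of height $\Ord$ in which every node has exactly two immediate splitting successors; using the global well-order there is a canonical order-isomorphism $e$ of this tree $\mathrm{Split}(\Qbb)$ onto ${}^{<\Ord}2$, and $e$ is definable over $(M,\Xcal)$ from $\Qbb$. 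For a branch $b$ through any perfect subtree of $\Qbb$ the splitting nodes of $\Qbb$ below $b$ form a branch through $\mathrm{Split}(\Qbb)$, whose $e$-image I call $\pi(b) \in {}^{\Ord}2$; the map $b \mapsto \pi(b)$ is again definable from $\Qbb$.

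What I want is a perfect subtree $\Pbb \subseteq \Qbb$, lying in $\Xcal$, such that $\pi(b)(2\xi) = \chi_A(\xi)$ for every branch $b$ of $\Pbb$ and every ordinal $\xi$; that is, every branch of $\Pbb$, read through $\pi$, displays $\chi_A$ on its even coordinates. Granting this, $A = \{\xi : \pi(b)(2\xi) = 1\}$ is definable from $\Qbb$ and $b$, which is the conclusion---and note the position $2\xi$ at which $\chi_A(\xi)$ sits depends only on $\xi$, never on $b$. To build $\Pbb$ I would run a set-like recursion of length $\Ord$, in the style of the constructions preceding this lemma, working throughout in the $e$-collapsed picture. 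Entering stage $\xi$ I will have designated, for each sequence of ``branching choices'' made so far, a splitting node of $\Qbb$ sitting at collapsed coordinate $2\xi$. Stage $\xi$ has a coding substep: each such node $u$ is extended within $\Qbb$ by the single bit $\chi_A(\xi)$ and then continued up to the least splitting node $u'$ of $\Qbb$ above it, which then sits at collapsed coordinate $2\xi+1$, so that $\pi$ of any branch through $u'$ takes the value $\chi_A(\xi)$ at coordinate $2\xi$; and a branching substep: $\Pbb$ is declared to split at $u'$, the two new designated nodes being the least splitting nodes of $\Qbb$ above $u' \cat 0$ and $u' \cat 1$, which sit at collapsed coordinate $2(\xi+1)$. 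At limit stages one takes unions and passes to the least splitting node above. Let $\Pbb$ be the downward closure in $\Qbb$ of all designated nodes.

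Then $\Pbb$ is a subtree of $\Qbb$; it is perfect and of $\Ord$-height, since the branching substeps place splitting nodes of $\Pbb$ at collapsed coordinates cofinal in $\Ord$; it lies in $\Xcal$, since it was defined from $\Qbb$, $A$ and the global well-order by a recursion of the sort already used in this section; and $\pi(b)(2\xi) = \chi_A(\xi)$ for every branch $b$ by the coding substeps. So $A \in \Def(M;\Qbb,b)$ for any branch $b$ of $\Pbb$, as required, and in fact the defining formula is uniform in $b$.

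I expect no single deep obstacle---the construction is short once the interleaving idea is in hand---but two points want care. The main one is that $\chi_A(\xi)$ must be recoverable from $\Qbb$ and $b$ with no reference to $\Pbb$ or to which branch one is on; this is exactly why coding and branching substeps alternate in a fixed, branch-independent rhythm (one forced bit, then one free split), so that $\chi_A(\xi)$ always lands at collapsed coordinate $2\xi$, and why one works in the $e$-collapsed picture rather than with raw lengths in $\Qbb$, which vary unpredictably from branch to branch. The second, routine given the lemmas above, is that every step of the recursion be unambiguous---here it already is, as ``the least splitting node of $\Qbb$ above a given node'' is unique and definable from $\Qbb$---and that the $\Ord$-length recursion be arranged so that $\Pbb$ is a genuine class in $\Xcal$, for which Global Choice is available as in the minimality lemma.
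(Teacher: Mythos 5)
Your proposal is correct and is essentially the paper's own argument: you interleave forced bits (the direction at every even-indexed splitting node of $\Qbb$ along a branch codes $\chi_A$) with free splits at the odd-indexed splitting nodes to keep the subtree perfect, and decode by using $\Qbb$ to locate the $2\cdot\xi$-th splitting node along the branch and reading the direction taken there; the explicit collapse $e$ to ${}^{<\Ord}2$ is just bookkeeping on top of the same idea. The only point to watch is the limit stages (take the least splitting node \emph{at or above} the union, so the position count stays at $2\cdot\lambda$), a detail at finer granularity than the paper's own sketch.
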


\begin{proof}[Proof sketch]
Using $A$ we thin out $\Pbb$, keeping every other splitting node. This ensures that the tree we get at the end is still perfect. Reaching the $2i$-th splitting node along a branch, we either go left or right. We go left if $i \in A$ and go right if $i \not \in A$. This gives $\Qbb$. From $\Pbb$ we know where the $2i$-th splitting nodes along a branch are. From a branch through $\Qbb$ we know whether we went left or right to define $\Qbb$ and thus whether $i \in A$. So we can define $A$.
\end{proof}

To get theorem \ref{thm1:principal-extension} we line up the classes in $\Xcal$ in ordertype $\omega$---externally to the model. Then, to define $\Pbb_n$ from $\Qbb_n$ we apply the lemma to code the $n$-th class. At the end, the $G$ we get will allow us to define every set in $\Xcal$, and thus $\Def(M;\Xcal,G) = \Def(M;G) \in \GP(M)$.
\end{proof}

We saw above that the rationals embed into $\rlzn \GBC M$. Indeed, given any $\Xcal \in \GC(M)$ we can embed rationals into $\rlzn \GBC M$ above $\Xcal$. This can be generalized to any countable partial order.

\begin{theorem} \label{thm1:embedding-into-gm}
Every countable partial order embeds into $\rlzn \GBC M$, for countable $M \models \ZFC$. More generally, for every finite partial order $P$ and every $\Xcal \in \GC(M)$ there is an embedding $e : P \to \rlzn \GBC M$ which maps $P$ above $\Xcal$---that is, $e(p) \supseteq \Xcal$ for all $p \in P$.
\end{theorem}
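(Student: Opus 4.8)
The plan is to realize each $p\in P$ by the $\GBC$-realization generated over a fixed base together with one coordinate of a mutually Cohen-generic family of subclasses of $\Ord^M$. I will treat both assertions uniformly: let $P$ be a countable partial order, and let $\Xcal\in\GC(M)$ be the given realization when $P$ is finite, and otherwise any countable $\GBC$-realization for $M$, which exists by theorem \ref{thm1:gbc-cons-zfc}. (The ``more general'' claim then follows by taking $\Xcal$ to be the prescribed realization, and the first claim by taking $P$ arbitrary countable and $\Xcal$ whatever.)

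First I would force. Let $\Pbb=\Add(\Ord,P)^M$ be the forcing whose conditions are the set-sized partial functions from $\Ord^M\times P$ to $2$, ordered by reverse inclusion (so $P$ is used here merely as an index set). As in the lemma showing $\Add(\Ord,1)$ is tame, $\Pbb$ is $<\lambda$-closed for every cardinal $\lambda$ and adds no new sets, hence is tame; the same holds for the subforcing $\Add(\Ord,S)$ on the coordinates in any $S\subseteq P$, and for finite products of such, which are again of this shape. Thus $\GBcm$ proves the forcing theorem for all of these (theorem \ref{thm1:pretame-def-lemma}), and since $\Xcal$ is countable there is a generic $G\subseteq\Pbb$ over $(M,\Xcal)$; reading off coordinates yields a family $\langle C_p:p\in P\rangle$ of mutually Cohen-generic subclasses of $\Ord^M$ with $(M,\Xcal[\langle C_p:p\in P\rangle])\models\GBC$. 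Define
\[
e(p)=\Def(M;\Xcal,\{C_q:q\le_P p\}),
\]
the classes first-order definable over $M$ from finitely many parameters among $\Xcal$ and those $C_q$ with $q\le_P p$. Each $e(p)$ is an $\Ord$-submodel of $(M,\Xcal[\langle C_p:p\in P\rangle])$, is closed under first-order definability, and contains the global well-order of $\Xcal$, so it satisfies Extensionality, Global Choice, and Elementary Comprehension; by the observation that an $\Ord$-submodel of a model of $\GBcm$ which satisfies every axiom of $\GBC$ except possibly Class Replacement in fact satisfies Class Replacement, we conclude $e(p)\in\rlzn\GBC M$, and evidently $e(p)\supseteq\Xcal$.

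Next I would check that $e$ is an order-embedding. If $p\le_P q$ then $\{r:r\le_P p\}\subseteq\{r:r\le_P q\}$, so $e(p)\subseteq e(q)$. Conversely, suppose $p\not\le_P q$; I claim $C_p\in e(p)\setminus e(q)$, which gives $e(p)\not\subseteq e(q)$. Clearly $C_p\in e(p)$. Put $I=\{r\in P:r\le_P q\}$, so $p\notin I$, and factor $\Pbb\cong\Add(\Ord,I)\times\Add(\Ord,\{p\})\times\Add(\Ord,P\setminus(I\cup\{p\}))$; correspondingly $G$ splits into mutually generic $G_I\times G_p\times G'$, and by the product lemma for tame class forcing $C_p$ (read off $G_p$) is $\Add(\Ord,1)$-generic over $(M,\Xcal[G_I])$. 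But a Cohen-generic subclass of $\Ord$ lies in no ground-model realization: for any class $A$ of that realization, the conditions deciding some bit to disagree with $A$ are dense, so the generic is not $A$. Hence $C_p\notin\Xcal[G_I]$; since each $C_r$ with $r\in I$ is read off $G_I$, we have $e(q)=\Def(M;\Xcal,\{C_r:r\in I\})\subseteq\Xcal[G_I]$, so $C_p\notin e(q)$. Thus $p\le_P q\iff e(p)\subseteq e(q)$; in particular $e$ is injective, so $e$ is the required embedding.

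The only non-routine ingredient is the mutual-genericity step, i.e.\ passing from $p\not\le_P q$ to ``$C_p$ is undefinable over $M$ from $\Xcal$ together with $\{C_r:r\le_P q\}$''. I would handle it by the product lemma for class forcing, whose hypotheses are easy to meet here since all the relevant $\Add(\Ord,-)$ forcings and their finite products are $<\lambda$-closed for every $\lambda$ and hence tame; alternatively, the same conclusion follows from a homogeneity argument using the automorphism of $\Pbb$ that flips every bit on the $p$-th coordinate, which fixes all names built from the other coordinates. Everything else---that each $e(p)$ is genuinely a $\GBC$-realization, and that $e$ is monotone---is routine given the earlier results on tame forcing and on Class Replacement in $\Ord$-submodels.
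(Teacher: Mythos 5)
Your proof is correct and is essentially the paper's own argument: the paper first embeds $P$ into a countable atomic boolean algebra and assigns mutually $(M,\Def(M;\Xcal))$-generic Cohen subclasses of $\Ord$ to the atoms, so the image of $p$ under the composite map is exactly your $e(p)=\Def(M;\Xcal,\{C_q : q\le_P p\})$. Your explicit check that $p\not\le_P q$ forces $C_p\notin e(q)$ (via factoring the product, or homogeneity) simply spells out what the paper leaves to the phrase ``mutually generic,'' and your remark about indexing the product inside $M$ (e.g.\ by $\omega^M$ rather than by the external $P$) is the only cosmetic repair needed.
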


This theorem is similar to the result that every countable partial order embeds into the Turing-degrees. See below for further discussion.

\begin{proof}
Let us first see the special case of an atomic boolean algebra. Let $(B,<^B)$ be a countable atomic boolean algebra, with atoms $b_0, b_1, \ldots$. Let $C_0, C_1, \ldots$ be mutually $(M,\Def(M,\Xcal))$-generic Cohen subclasses of $\Ord$. Then the map $b_i \mapsto \Def(M;\Xcal,C_i)$ induces an embedding of $B$ into $\rlzn \GBC M$; given arbitrary $b \in B$ map $b$ to $\Def(M;\Xcal,C_i : b_i \le^B b)$. 

So we are done once we see that every countable partial order embeds into a countable atomic boolean algebra. Let $(P,<^P)$ be a countable partial order. The desired boolean algebra $(B,<^B)$ will be generated by countably many atoms, with an atom $a(p)$ associated to each $p \in P$. For the embedding $e$, map $p$ to the unique $b \in B$ so that $a(q) \le^B b$ if and only if $q \le^P p$. Then $q \le^P p$ if and only if $e(q) \le^B e(p)$.
\end{proof}

The embedding from this argument destroys a lot of information about the partial order. It may be that $p,q \in P$ have no upper bound. But their image under the embedding will have an upper bound, as we first embed $P$ into a boolean algebra---where all pairs of elements have an upper bound---and then embed that boolean algebra into $\rlzn \GBC M$. But recall from corollary \ref{cor1:non-amalg} that there are $\Xcal,\Ycal \in \rlzn \GBC M$ without an upper bound. 

Can we do better and get embeddings that preserve the non-existence of upper  bounds?

\begin{theorem}[Mostowski \cite{mostowski1976}] \label{thm1:star-embedding}
Let $(M,\Xcal) \models \GBc$ be countable and $F$ be a finite family of finite sets, closed under subset.\footnote{Such $F$ are precisely those orders which are initial segments of a finite boolean algebra of sets.}
Then there are $(M,\Xcal)$-generic Cohen subclasses $C_i$ of $\Ord$ for each $i \in \bigcup F$ and an embedding $e$ from $F$ to $\rlzn \GBC M$ satisfying the following.
\begin{enumerate}
\item If $f \in F$ then the $C_i$ for $i \in f$ are mutually generic; and
\item If $f \subseteq \bigcup F$ is not in $F$ then the $C_i$ for $i \in f$ do not amalgamate: there is no $\GBC$-realization $\Ycal$ for $M$ which contains $C_i$ for all $i \in f$.
\end{enumerate}
\end{theorem}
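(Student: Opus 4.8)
The plan is to take the obvious embedding, $e(f) = \Def(M;\Xcal, C_i : i \in f)$, and reduce the whole theorem to a single Cohen-forcing construction over $(M,\Xcal)$ producing the classes $C_i$ for $i \in \bigcup F$. What I want from the construction is: (a) for every $f \in F$, the family $\{C_i : i \in f\}$ is mutually $(M,\Xcal)$-generic for the finite product $\prod_{i \in f}\Add(\Ord,1)$; and (b) for every subset-minimal $h \subseteq \bigcup F$ with $h \notin F$, the family $\{C_i : i \in h\}$ jointly codes a class $B_h \colon \Ord^M \to 2$ whose set of ones is cofinal in $\Ord^M$ of ordertype $\omega$ --- a ``bad class'' that no model of $\GBcm$ can contain, since it witnesses $\cf(\Ord^M)=\omega$ against Replacement (exactly as in the proof of Theorem \ref{thm1:gm-omnibus}(4)).

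Granting (a) and (b), the theorem follows and $F$ being closed under subset does the rest. For $f \in F$ the family $\{C_i : i \in f\}$, being a product-generic, makes $e(f)$ equal to the forcing extension $\Xcal[C_i : i \in f]$, which is a $\GBC$-realization for $M$ (finite products of $\Add(\Ord,1)$ are tame over $(M,\Xcal)$, as in the lemma of the previous section, so $\GBc$ is preserved, and $\GBC$ is obtained just as in Theorem \ref{thm1:gbc-cons-zfc}). Moreover $e$ is a poset embedding: given $f \not\subseteq g$ in $F$, pick $i \in f \setminus g$; if $g \cup \{i\} \in F$ then mutual genericity makes $C_i$ generic over $e(g)$, so $C_i \notin e(g)$; if $g \cup \{i\} \notin F$ then $g \cup \{i\} \supseteq h$ for some subset-minimal $h \notin F$, and were $C_i \in e(g)$ the realization $e(g)$ would contain $\{C_j : j \in h\}$ and hence $B_h$, which is impossible. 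Either way $e(f) \not\subseteq e(g)$, so $e$ is injective and order-preserving in both directions. Clause (1) of the theorem is exactly (a); clause (2) follows since any $f \subseteq \bigcup F$ with $f \notin F$ contains a subset-minimal $h \notin F$, so any $\GBC$-realization containing all $C_i$, $i \in f$, would contain $B_h$, a contradiction.

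The construction of the $C_i$ is a transfinite recursion of length $\Ord^M$ carried out externally, using that $(M,\Xcal)$ is countable: externally list, in ordertype $\omega$ repeated cofinally in $\Ord^M$, all dense subclasses lying in $\Xcal$ of all the products $\prod_{i \in f}\Add(\Ord,1)$ for $f \in F$, and for each subset-minimal $h \notin F$ fix an $\omega$-sequence cofinal in $\Ord^M$, regarded as the target $B_h$. Split each $C_i$ into interleaved channels along a fixed external template: a generic channel, and for each subset-minimal $h \notin F$ with $i \in h$ a coding channel $C_i^{(h)}$. For fixed $h$, the family $\{C_i^{(h)} : i \in h\}$ is built by the cross-referential bookkeeping of the proof of Theorem \ref{thm1:gm-omnibus}(4), generalized from $|h| = 2$ to arbitrary finite $|h|$: a distinguished coordinate carries the successive bits of $B_h$, the other coordinates carry synchronization markers (blocks of zeros capped by a one) whose lengths are tied to the pieces already built on the other coordinates, so that the whole family of channels decodes $B_h$ step by step, while every proper subfamily of the channels stays mutually Cohen-generic (relative to the remaining channels). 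At a successor stage, run one step of the marker protocol for each relevant $h$, then extend everything to meet the next dense class on the list; at limits take unions. Every partial piece is definable from an initial segment of the fixed external sequences, hence is a set of $M$, so each $C_i$ is amenable and $\Def(M;\Xcal, C_i : i \in f) \models \GBC$ for $f \in F$.

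The decoding in (b) is then the verbatim generalization of Theorem \ref{thm1:gm-omnibus}(4): from all of $\{C_i : i \in h\}$ one recovers $B_h$ by following the protocol, so $B_h$ lies in any $\GBC$-realization containing the whole family. I expect the bookkeeping to be the main obstacle: one must design the marker protocol for general finite $|h|$ so that the full family of channels reveals $B_h$ but every proper subfamily is mutually generic over the model together with all the other channels, and this has to hold simultaneously for the (possibly overlapping) minimal forbidden sets $h$ --- which is precisely what makes the argument for (a) go through, since for $f \in F$ each relevant $h$ meets $f$ in a proper subset $f \cap h \subsetneq h$, so the induced chunks $\{C_i^{(h)} : i \in f \cap h\}$ together with the generic channels must still meet every dense subclass of $\prod_{i \in f}\Add(\Ord,1)$. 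Getting the channel template and marker lengths right, and checking that ``partial information about a code looks generic,'' is the technical heart; the remainder is the standard Cohen-forcing-over-a-countable-model machinery already used in Theorems \ref{thm1:gbc-cons-zfc} and \ref{thm1:gm-omnibus}.
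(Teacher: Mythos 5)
Your overall framing---construct all the $C_i$ simultaneously by a recursion of length $\Ord^M$, arrange mutual genericity for the subfamilies indexed by members of $F$, and arrange that each forbidden subfamily jointly codes a class witnessing $\cf(\Ord^M)=\omega$---is the right shape, and your reduction to subset-minimal $h \notin F$ and your check that $e$ is an order-embedding are fine. But the proof has a genuine gap at its center: the coding device is postulated rather than constructed. You split each $C_i$ into a generic channel plus one coding channel per minimal $h\notin F$ and appeal to a ``marker protocol, generalized from $|h|=2$ to arbitrary finite $|h|$,'' required to have the property that the full family of $h$-channels decodes $B_h$ while every proper subfamily stays mutually Cohen-generic relative to all the remaining channels, simultaneously for possibly overlapping minimal $h$'s, and compatibly with meeting the dense subclasses of the full products $\prod_{i\in f}\Add(\Ord,1)$ from $\Xcal$ (which constrain the coding channels just as much as the generic ones, since those dense classes see the whole of each $C_i$). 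That protocol is exactly the hard content of the theorem, and you explicitly flag that you do not have it; as written, neither clause (1) nor the claim that subfamilies in $F$ cannot decode any $B_h$ can be verified.

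For comparison, the paper's proof needs no channels and no per-$h$ codes: fix a single bad $B:\Ord^M\to 2$, and at stage $\alpha$ choose some $f_\alpha\in F$ together with a dense subclass of $\Add(\Ord,|f_\alpha|)$ from $\Xcal$ (with bookkeeping so that every such pair occurs), extend the coordinates in $f_\alpha$ to meet it, pad all other coordinates with $0$s to the same length, and then append $\seq{1,B(\alpha)}$ to \emph{every} coordinate. Mutual genericity for $f\in F$ is immediate from the bookkeeping. For $f\notin F$, closure of $F$ under subsets gives $f\not\subseteq f_\alpha$ for every $\alpha$, so in each dense-meeting block some coordinate of $f$ is identically $0$; hence the positions $\xi$ with $C_i(\xi)=1$ for all $i\in f$ are exactly the coding positions, and $B(\alpha)$ is read off as the next bit, so $\{C_i : i\in f\}$ decodes $B$ and cannot be amalgamated (this even handles all $f\notin F$ at once, with no need to pass to minimal $h$). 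This single synchronized code is the missing idea; to salvage your channel architecture you would have to actually exhibit a protocol with the stated partial-genericity properties, which is a harder combinatorial problem than the one it is meant to solve.
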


Before giving the proof let me extract a corollary.

\begin{corollary}
Let $M \models \ZFC$ be countable.
\begin{enumerate}
\item Suppose $M$ has a definable global well-order and let $P$ be a finite partial order with a least element $\zero^P$. Then there is an embedding $e$ of $P$ into $\rlzn \GBC M$ so that $e(\zero) = \Def(M)$ and $e$ preserves the existence/nonexistence of upper bounds and nonzero lower bounds.
\item Suppose that $M$ does not have a definable global well-order and let $P$ be a finite partial order without a least element. Then there is an embedding $e$ of $P$ into $\rlzn \GBC M$ which preserves the existence/nonexistence of upper bounds and lower bounds.
\item Let $\Xcal \in \GC(M)$ and let $P$ be a finite partial order with a least element $\zero^P$ Then there is an embedding $e$ of $P$ into $\rlzn \GBC M$ above $\Xcal$ so that $e(\zero) = \Xcal$ and $e$ preserves existence/nonexistence of upper bounds and nonzero lower bounds. That is, $e(p) \supseteq \Xcal$ for all $p \in P$ and $p$ and $p'$ have a nonzero lower bound if and only if $e(p)$ and $e(p')$ have $\Xcal$ as their greatest lower bound.
\end{enumerate}
\end{corollary}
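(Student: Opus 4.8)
The plan is to deduce all three parts from the star-embedding theorem (Theorem~\ref{thm1:star-embedding}) by coding the finite partial order $P$ into a finite subset-closed family $F$ of finite sets. For $p \in P$ let $D_p = \{q \in P : q \le^P p\}$ be the principal ideal below $p$. When $P$ has a least element $\zero^P$ (parts~(1) and~(3)) set $g(p) = D_p \setminus \{\zero^P\}$ and $F = \{S \subseteq P \setminus \{\zero^P\} : S \subseteq D_r \text{ for some } r \in P\}$; when $P$ has no least element (part~(2)) set $g(p) = D_p$ and $F = \{S \subseteq P : S \subseteq D_r \text{ for some } r \in P\}$. In all cases $F$ is a finite subset-closed family of finite sets, $\bigcup F$ is the index set (namely $P \setminus \{\zero^P\}$, resp.\ $P$), $g$ is injective, and $g(p) \subseteq g(q) \iff p \le^P q$. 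The combinatorial heart of the coding is that a finite $A \subseteq P$ has an upper bound in $P$ exactly when $\bigcup_{p \in A} g(p) \in F$, and that $p, p'$ have a common lower bound in $P$ other than $\zero^P$ (if a least element exists) exactly when $g(p) \cap g(p') \ne \emptyset$.

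Apply Theorem~\ref{thm1:star-embedding} to $F$ over $(M, \Xcal)$, where $\Xcal = \Def(M)$ in parts~(1) and~(2) and $\Xcal$ is the given realization in part~(3); this is legitimate since $(M, \Def(M)) \models \GBc$ always and $\models \GBC$ under the hypothesis of~(1). The theorem supplies Cohen-generic subclasses $C_i$ of $\Ord^M$ for $i \in \bigcup F$ and an embedding $\hat e : F \to \rlzn\GBC M$ with $\hat e(f) = \Def(M; \Xcal, \seq{C_i : i \in f})$, such that the $C_i$ with $i \in f$ are mutually generic when $f \in F$ and fail to amalgamate when $f \subseteq \bigcup F$ is not in $F$. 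Put $e = \hat e \circ g : P \to \rlzn\GBC M$, so $e(p) = \Def(M; \Xcal, \seq{C_i : i \in g(p)})$; then $e(\zero^P) = \hat e(\emptyset) = \Def(M)$ in part~(1) and $= \Xcal$ in part~(3), as required.

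That $e$ is an order embedding, and that it preserves existence and nonexistence of upper bounds, follows from the two clauses of Theorem~\ref{thm1:star-embedding} together with the combinatorics of $F$. If $p \not\le^P q$, pick $i \in g(p) \setminus g(q)$: then $C_i \in e(p)$ but $C_i \notin e(q)$, since otherwise $e(q)$ would be a realization containing every $C_j$ with $j \in g(q) \cup \{i\}$, which is impossible when $g(q) \cup \{i\} \notin F$ (by non-amalgamation) and contradicts the standard fact that a factor of a product-Cohen-generic family is not added by the remaining factors when $g(q) \cup \{i\} \in F$. If $A \subseteq P$ has an upper bound $r$, then $e(r) \supseteq e(p)$ for all $p \in A$; conversely a common upper bound of $\{e(p) : p \in A\}$ in $\rlzn\GBC M$ would contain every $C_i$ with $i \in \bigcup_{p \in A} g(p)$, forcing that union into $F$ and hence forcing $A$ to have an upper bound. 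As for lower bounds: if $p, p'$ have a common lower bound $r$ (necessarily $\ne \zero^P$ when a least element exists), then for $i \in g(r)$ the realization $\Def(M; \Xcal, C_i) \in \rlzn\GBC M$ lies below both $e(p)$ and $e(p')$ and strictly above $\Xcal$, since $C_i$ is generic over $(M, \Xcal)$ and hence $C_i \notin \Xcal$; by Theorem~\ref{thm1:gm-omnibus}.(1) the greatest lower bound of $e(p)$ and $e(p')$ then exists and properly extends $\Xcal$.

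The step I expect to be the main obstacle is the reverse implication for lower bounds: showing that if $g(p) \cap g(p') = \emptyset$ then $e(p)$ and $e(p')$ have $\Xcal$ as greatest lower bound in $\rlzn\GBC M$ (in parts~(1) and~(3)), resp.\ have no lower bound there at all (in part~(2)) --- equivalently, by Theorem~\ref{thm1:gm-omnibus}.(1), that $\Def(M; \Xcal, \seq{C_i : i \in g(p)}) \cap \Def(M; \Xcal, \seq{C_i : i \in g(p')}) = \Xcal$. When $g(p) \cup g(p') \in F$ this is a routine mutual-genericity computation for product Cohen forcing. The delicate case is $g(p) \cup g(p') \notin F$, where the families $\seq{C_i : i \in g(p)}$ and $\seq{C_i : i \in g(p')}$ are by design \emph{not} mutually generic, jointly coding the class that blocks amalgamation. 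To conclude that the intersection nevertheless collapses to $\Xcal$, I would open up the construction behind Theorem~\ref{thm1:star-embedding} and use that the blocking codes are spread across the family, so that each $e(p)$ is on its own an honest Cohen extension of $\Xcal$ which sees none of the joint code; this is where I expect genuine work, rather than a black-box appeal, to be needed.
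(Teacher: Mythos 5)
You are following the paper's own route: replace $P$ by a finite subset-closed family $F$ of finite sets in which unions lying in $F$ track upper bounds and nonempty intersections track nonzero lower bounds, and then feed $F$ to theorem \ref{thm1:star-embedding} over $(M,\Def(M))$ (parts (1) and (2)) or over $(M,\Xcal)$ (part (3)). Your coding $g(p)=D_p\setminus\{\zero^P\}$ is the right one---it is what the paper's sketch intends, and it is exactly what is needed so that the least element contributes no atom---and your handling of the order-embedding, of upper bounds, and of the forward direction for lower bounds matches the paper's.

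The step you single out at the end is, however, a genuine gap and not merely a place where tedium is expected: the two stated conclusions of theorem \ref{thm1:star-embedding} (mutual genericity of blocks inside $F$, non-amalgamability outside $F$) do not by themselves yield $\Def(M;\Xcal,\seq{C_i : i \in g(p)}) \cap \Def(M;\Xcal,\seq{C_i : i \in g(p')}) = \Xcal$ when $g(p)\cap g(p')=\emptyset$ but $g(p)\cup g(p')\notin F$. To see that no black-box appeal can work, note that one can produce $C,C'$ each Cohen-generic over $(M,\Xcal)$ and jointly non-amalgamable, yet sharing a new class: fix $A$ generic over $(M,\Xcal)$, run the non-amalgamation construction of theorem \ref{thm1:gm-omnibus}.(4) over $(M,\Def(M;\Xcal,A))$ to get $D,D'$, and, using the isomorphism of $\Add(\Ord,1)$ with its square, let $C$ and $C'$ be the single generics corresponding to the pairs $(A,D)$ and $(A,D')$; then each of $C,C'$ is generic over $(M,\Xcal)$ and the pair still codes the bad class, but $A \in \Def(M;\Xcal,C)\cap\Def(M;\Xcal,C')$ while $A \notin \Xcal$. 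So the intersection collapse must be extracted from the particulars of Mostowski's construction---the only information shared across disjoint blocks is the bad class $B$ together with the stage-boundary positions, and recovering any of it from one block alone would contradict that block's genericity, since it would give a class violating Class Replacement---which is precisely the ``genuine work'' you anticipate but do not carry out. For what it is worth, the paper's own proof is no more explicit here: it dismisses the lower-bound condition with the one-line remark that mutual genericity ensures it, which, read literally, is unavailable for pairs whose union falls outside $F$. So your proposal reproduces the paper's argument, and the one step you leave open is the one step the paper also leaves to the reader; a complete write-up should open up the proof of theorem \ref{thm1:star-embedding} and verify the collapse there.
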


\begin{proof}
In each case, we will first embed $P$ into a finite family $F$ of finite sets closed under subset, in such a way that the embedding preserves the existence/non-existence of upper bounds and nonzero lower bounds. We will apply theorem \ref{thm1:star-embedding} using $F$. For $(1)$ and $(2)$ we will work over $(M,\Def(M))$ while for $(3)$ we will work over $(M,\Xcal)$. The preservation of upper bounds will be ensured by mutual genericity, which will also ensure the condition about lower bounds. Finally, the preservation of nonexistence of upper bounds follows from non-amalgamability.

It remains to see the embedding of $P$ into $F$.  First embed $(P,<^P)$ into a boolean algebra of sets $B$ by the following $e$: the boolean algebra is generated by an atom $a(p)$ for each $p \in P$ and we map $p$ to $\{ a(q) : q \le p \}$. Let $F$ be the downward closure of $e''P$ in $B$. Then $e(p)$ and $e(q)$ have an upper bound in $F$ if and only if there is $r \ge^P p,q$. And two sets $f,g \in F$ have a nonzero lower bound if and only if $f \cap g \ne \emptyset$ which happens if and only if $\{i\} \subseteq f \cap g$ for some $i \in \bigcup F$. So if $f = e(p)$ and $g = e(q)$ then $e(p)$ and $e(q)$ have a nonzero lower bound if and only if they both contain $a(r)$ for some $r \in P$ if and only if $r \le p$ and $r \le q$. Thus, we have seen that $e$ is as desired.
\end{proof}

\begin{proof}[Proof of theorem \ref{thm1:star-embedding}]
Without loss of generality $\bigcup F = n \in \omega$. And if $n \in F$ then the result is trivial---merely add $n$ mutually-generic Cohen subclasses of $\Ord$. So assume we are the case where $n \not \in F$.

As in the non-amalgamability argument for theorem \ref{thm1:gm-omnibus}.$(4)$, fix a bad sequence $B: \Ord^M \to 2$ which witnesses that $\Ord^M$ is countable. We want to construct the Cohen generics $C_i$ for $i \in n$ so that for $f \subseteq n$ the Cohen generics $\{ C_i : i \in f \}$ code $B$ if and only if $f \not \in F$. We construct the $C_i$ in $\Ord^M$ many stages. Externally to the model, fix an $\Ord^M$-sequence of the dense subclasses in $\Xcal$ of $\Add(\Ord,\card f)$ for some $f \in F$. We can arrange this so that each stage $\alpha$ has a corresponding $f_\alpha \in F$ and for each $f \in F$ each dense subclass of $\Add(\Ord,\card f)$ appears at some stage $\alpha$ with $f = f_\alpha$.

Start with $c_i^0 = \emptyset$ for all $i \in f$. At limit stages, we will simply take unions. All the work is in the successor stage. Suppose we have already built $c_i^\alpha$ for all $i \in f$. Let $D \subseteq \Add(\Ord,\card{f_\alpha})$ be the dense class for stage $\alpha$. We can extend $c_i^\alpha$ to $d_i^\alpha$ for $i \in f$ so that $\prod_{i \in f} d_i^\alpha$ meets $D$. By padding out with $0$s if necessary, we may assume without loss that the $d_i^\alpha$'s all have the same length, call it $\gamma$. Now, for $i \not \in f$ extend $c_i^\alpha$ to $d_i^\alpha$ of length $\gamma$ by adding $0$s everywhere new. Finally, set $c_i^{\alpha+1} = d_i^\alpha \cat \seq{1,B(\alpha)}$.

Then $C_i = \bigcup_{\alpha \in \Ord^M} c_i^\alpha$ is Cohen-generic. And it is clear from the construction that $\{ C_i : i \in f \}$ is a family of mutually-generic Cohen subclasses of $\Ord$ for $f in F$. It remains to see that if $f \not \in F$ then $\{ C_i : i \in f \}$ codes $B$. This is because, $\{ C_i : i \in f \}$ can recognize the coding points. They occur just after the rows of all $1$s. That is, $B(\alpha) = C_i(\xi_\alpha+1)$ (any $i \in f$) where $\xi_\alpha$ is the $\alpha$th $\xi$ so that $C_i(\xi) = 1$ for all $i \in f$.
\end{proof}

Many questions remain open about the structure of $\rlzn \GBC M$. Let me mention one open-ended question.

\begin{question}
What can be said about the theory of the structure $(\rlzn \GBC M,\subseteq)$? What if we add in predicates for collection of the the countable $\GBC$-realizations or the collection of the principal $\GBC$-realizations?
\end{question}

This project of studying the order structure of $\rlzn \GBC M$ has similarities to two extant projects in mathematical logic. Let me briefly mention them and the connections.

The first and older of the two is the study of the Turing degrees under the order of Turing-reducibility. The structure of this partial order has been well studied and many results about the Turing degrees have counterparts in the context of $\rlzn \GBC M$. For instance, it is known that every countable poset embeds into the Turing degrees. 

The reader should be warned, however, that there are differences between the two. Let me illustrate this with the exact pair theorem as an example.

\begin{theorem}[Spector \cite{spector1956}]
Let $\seq{\dbf_n : n \in \omega}$ be a sequence of Turing degrees so that $\dbf_n <_\Trm \dbf_{n+1}$ for all $n$. Then there are Turing degrees $\abf$ and $\bbf$ which are above each $\dbf_n$ but if $\cbf <_\Trm \abf, \bbf$ then $\cbf <_\Trm \dbf_n$ for some $n$.  
\end{theorem}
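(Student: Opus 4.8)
The plan is to construct $A,B\subseteq\omega$ by a finite-extension priority argument, writing $A=\bigcup_s\alpha_s$ and $B=\bigcup_s\beta_s$ for strictly increasing sequences of finite binary strings. First I would reduce to the case that the degrees are strictly increasing; if the sequence is bounded then the ideal it generates is principal and one simply takes $\abf=\bbf$ to be its top. Then I would fix representatives with $D_n\le_T D_{n+1}$ holding literally for sets --- replacing $D_n$ by $\bigoplus_{i\le n}D_i$ does this and preserves strictness of the degrees --- so that $D_n$ uniformly computes $D_0,\dots,D_n$; this monotonicity is what the diagonalization requirements exploit.

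The requirements, arranged on a priority list of type $\omega$, are coding requirements $\mathcal C_n$ demanding $D_n\le_T A$ and $D_n\le_T B$, and diagonalization requirements $\mathcal N_{e,i}$ demanding that if $\Phi_e^A$ and $\Phi_i^B$ are total with $\Phi_e^A=\Phi_i^B=:f$ then $f\le_T D_{N(e,i)}$, where $N(e,i)$ bounds every index $n$ of a coding requirement of higher priority than $\mathcal N_{e,i}$ (finitely many). I would implement $\mathcal C_n$ by copying $D_n$ bit by bit onto a reserved ``sheet'' inside $A$ and a reserved sheet inside $B$, restarting on a fresh (large-indexed) sheet whenever a higher-priority $\mathcal N_{e,i}$ has corrupted the current one. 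Since each $\mathcal N_{e,i}$ acts at most once, each $\mathcal C_n$ is injured finitely often and its sheet stabilizes at some $m_n^*\in\omega$, at which point $D_n\le_T A$ and $D_n\le_T B$ via the machine that reads sheet $m_n^*$. Because the sheets are assigned dynamically there is no uniform-in-$n$ reduction, so the diagonalization requirements can force $\bigoplus_n D_n\not\le_T A$ --- were $\bigoplus_n D_n$ below both $A$ and $B$, some $\mathcal N_{e,i}$ would put it below $D_{N(e,i)}$, contradicting strict increase --- and this is consistent with each $D_n\le_T A$ precisely because the reductions are non-uniform.

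For $\mathcal N_{e,i}$ I would, at each of its stages, run a $D_{N(e,i)}$-effective search for legal finite extensions $\alpha'\supseteq\alpha_s$, $\beta'\supseteq\beta_s$ --- legal meaning they respect the sheet-commitments of the higher-priority coding requirements, a $D_{N(e,i)}$-decidable condition since those commitments only involve $D_n$ with $n\le N(e,i)$ --- together with an input $x$ such that $\Phi_e^{\alpha'}(x)\!\downarrow\ \ne\ \Phi_i^{\beta'}(x)\!\downarrow$; if such are found I extend to $\alpha',\beta'$, permanently forcing $\Phi_e^A\ne\Phi_i^B$. If $\mathcal N_{e,i}$ never diagonalizes then, past the finite stage after which it is no longer injured, every such search fails, and I claim $f\le_T D_{N(e,i)}$ whenever $f=\Phi_e^A=\Phi_i^B$ is total: to compute $f(x)$, search $D_{N(e,i)}$-effectively for any legal extension $\alpha'$ of the (now fixed) string with $\Phi_e^{\alpha'}(x)\!\downarrow$ and output its value. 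Such an $\alpha'$ exists --- a long enough initial segment of $A$ works --- and its value must be $f(x)$: a long enough initial segment of $B$ is a legal extension with $\Phi_i^{\beta'}(x)=f(x)$, so a convergent legal $e$-side value disagreeing with $f(x)$ would, once the strings are long enough, have been turned up by one of the searches, contradicting the hypothesis. Hence the procedure is total and correct, so $f\le_T D_{N(e,i)}$.

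The verification then assembles from routine finite-injury bookkeeping: every $\mathcal C_n$ and $\mathcal N_{e,i}$ is eventually satisfied, so $\dbf_n\le\abf$ and $\dbf_n\le\bbf$ for all $n$, and any $C$ with $C\le_T A$ and $C\le_T B$ is realized as $\Phi_e^A=\Phi_i^B$ for some $e,i$, whence $C\le_T D_{N(e,i)}$ and so $\cbf\le\dbf_{N(e,i)}$; moreover $\dbf_n<\abf$ since $\abf\ge\dbf_{n+1}>\dbf_n$, which upgrades the conclusion to the strict form stated (from $\cbf<_T\abf,\bbf$ get $\cbf\le\dbf_m$, hence $\cbf\le\dbf_m<\dbf_{m+1}$, so $\cbf<_T\dbf_{m+1}$). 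The main obstacle --- and the step most easily botched --- is the tension between $D_n\le_T A$ for every $n$ and $\bigoplus_n D_n\not\le_T A$: the coding must make each $D_n$ individually but not uniformly $A$-computable, and one has to check that this non-uniformity genuinely survives the construction, i.e.\ that the diagonalization strategy --- which only ever consults $D_{N(e,i)}$ and only constrains sheets of higher-priority coding requirements --- never inadvertently renders the sequence $\langle m_n^*:n\in\omega\rangle$ recoverable. That bookkeeping, rather than any single clever trick, is where the real work lies.
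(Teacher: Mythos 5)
Your overall architecture is the classical Spector construction (the dissertation only cites \cite{spector1956}, so I am comparing against the standard proof): finite extensions of $A$ and $B$, column (``sheet'') coding of the $D_n$ with non-uniform finite offsets, and the key lemma that if the $(e,i)$-requirement cannot force a disagreement through legal extensions then any common value $f=\Phi_e^A=\Phi_i^B$ is computable from $D_{N(e,i)}$. The skeleton is right, but your verification of that key lemma has a genuine gap. You schedule $\mathcal{N}_{e,i}$ to perform a bounded, $D_{N(e,i)}$-effective search at each of infinitely many stages, each search based at the \emph{current} strings $(\alpha_s,\beta_s)$, and you then argue that a legal $\alpha'\supseteq\alpha_{s_0}$ converging at $x$ to a value $\ne f(x)$ ``would, once the strings are long enough, have been turned up by one of the searches.'' That inference fails: such an $\alpha'$ extends the anchored string $\alpha_{s_0}$ but need not extend the later, longer strings $\alpha_s$ at which the subsequent searches are based (the construction may have moved off it while serving other requirements), so no later search is obliged to find it, and the stage-$s_0$ search may miss it merely because it lies beyond that stage's bound. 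Hence ``every search fails'' is compatible with the existence of a legal disagreeing pair above the anchor, which is exactly the situation in which your reduction outputs a wrong value. The reduction must be anchored at a single condition for which you know outright that \emph{no} legal disagreeing pair of extensions exists, and your construction never certifies this for any fixed condition.

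The repair is standard and actually simplifies the construction: drop both the effectiveness of the construction and the injury machinery. Order the requirements in type $\omega$ and let each act exactly once; when $\mathcal{N}_{e,i}$ acts, ask in the metatheory (only the \emph{reduction}, not the construction, needs to be $D_{N(e,i)}$-recursive) whether there exist legal extensions of the current condition and an $x$ on which $\Phi_e$ and $\Phi_i$ give convergent disagreeing values. If yes, extend and the requirement is met permanently; if no, do nothing, and now the classical argument---pair any legal $\alpha'$ giving a wrong value with a long enough true initial segment of $B$---directly contradicts the negative answer, so the $D_{N(e,i)}$-effective search anchored at this very condition computes $f$ correctly. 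With single-shot requirements the coding requirements are never injured, since each diagonalization respects the finitely many sheets already committed, so the sheet-restarting bookkeeping, and the worry in your last paragraph about keeping $\langle m_n^* : n\in\omega\rangle$ unrecoverable, are unnecessary; $\bigoplus_n D_n\not\le_T A$ is not something to arrange separately but a consequence of the exact-pair requirements together with strictness of the sequence.
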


The analogous result is not true for $\GBC$-realizations for a fixed countable $M \models \ZFC$. Let $\seq{\Xcal_n : n \in \omega}$ be an increasing $\subseteq$-chain of $\GBC$-realizations for $M$ and let $\Ycal, \Zcal \in \rlzn \GBC M$ be upper bounds for the sequence. Then $\bigcup \Xcal_n \in \rlzn \GBC M$ is below both $\Ycal$ and $\Zcal$ but above every $\Xcal_n$. The dis-analogy is because $\GBC$-realizations do not have to be generated by a single class. This suggests that the correct analogy is between $\GBC$-realizations and Turing ideals. But even then, the analogy is not perfect. For example, Turing ideals do not have a non-amalgamability phenomenon, as any collection of Turing ideals are all contained in the Turing ideal $\powerset(\omega)$. 

The second connection is to the generic multiverse. Given a countable transitive\footnote{One does not need the assumption of transitivity, but let me leave it in to simplify the discussion.}
model $M$ of set theory the generic multiverse of $M$ is the smallest collection of countable transitive models containing $M$ which is closed under (set) forcing extensions and (set) grounds.\footnote{A {\em ground} of $M$ is a submodel $W$ so that $M = W[g]$ for some $g$ generic for some forcing notion in $W$.}
It follows from work by Usuba \cite{usuba2017} that the generic multiverse of $M$ can be equivalently defined as the collection of all forcing extensions of grounds of $M$.

Let $\Mcal$ be the generic multiverse of $M$. Then $\Mcal$ is partially ordered under inclusion and we can ask about the order-theoretic properties of $\Mcal$. Many of the arguments about Cohen-generic subclasses of $\Ord$ also apply for Cohen-generic subsets of, say, $\omega$ which yields similar results for the generic multiverse as it does for the $\GBC$-realizations. See e.g.\ \cite{hamkins2016}.

Let me conclude by considering to what extent these results generalize from $\GBC$ to other theories.

First, we consider what happens if we drop powerset. As mentioned in section \ref{sec1:preserving-axioms}, forcing with $\Add(\Ord,1)$ may add sets for some models of $\ZFCm$. So most of the techniques of this section fail badly for those models. However, if $M \models \ZFCm$ is such that forcing with $\Add(\Ord,1)$ does not add sets, then the same arguments go through and we get all the same results for $\rlzn \GBCm M$.

Next consider stronger theories. First, let us see that $\rlznp T M$ may be empty.

\begin{proposition}
Let $T \supseteq \GBCm$ prove that for every class $A$ the first-order truth predicate relative to $A$ exists. Then $T$ has no principal models.
\end{proposition}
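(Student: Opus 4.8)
The plan is to show that any principal model would be forced to define its own first-order truth predicate, contradicting Tarski. First I would suppose toward a contradiction that $(M,\Xcal) \models T$ is principal, so $\Xcal = \Def(M;A)$ for some class $A \in \Xcal$. Since $T \supseteq \GBCm$ we have $M \models \ZFCm$, and by hypothesis $T$ proves that the first-order truth predicate relative to $A$ exists; hence $\Xcal$ contains a class $\Tr_A$ which is a satisfaction class for the structure $(M,A)$ in the language $\Lcal_\in(A)$ in the sense of Definition~\ref{def1:tarski-truth-pred}, with $A$ appearing as a parameter in the atomic clause. The crux is that $\Tr_A \in \Xcal = \Def(M;A)$, so $\Tr_A$ is definable over $(M,A)$ by some first-order $\Lcal_\in(A)$-formula $\psi(w,p)$ with a set parameter $p$.

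Next I would run Tarski's argument inside $(M,A)$. By an external induction on the complexity of standard $\Lcal_\in(A)$-formulas, the compositional clauses of Definition~\ref{def1:tarski-truth-pred} force $\Tr_A$ to agree with genuine satisfaction on standard formulas; this is the only correctness about $\Tr_A$ I will use. Writing $\tau(x)$ for $\psi$ applied to the code of the pair consisting of the sentence $x$ together with the empty assignment, I then get $(M,A) \models \sigma \iff (M,A) \models \tau(\godel\sigma)$ for every standard $\Lcal_\in(A)$-sentence $\sigma$. Since $M \models \ZFCm$, the structure $(M,A)$ carries the usual arithmetization of syntax and satisfies the diagonal lemma, so I can produce a standard $\Lcal_\in(A)$-sentence $\lambda$ with $(M,A) \models \lambda \leftrightarrow \neg\tau(\godel\lambda)$; note $\lambda$ is standard precisely because $\tau$ is. Chaining the equivalences yields $(M,A) \models \lambda \iff (M,A) \models \neg\lambda$, the desired contradiction, so $T$ has no principal models.

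The main point requiring care is the $\omega$-nonstandard case: there $\Tr_A$ also adjudicates the `truth' of nonstandard formulas and need not even be unique, so one cannot invoke correctness of $\Tr_A$ wholesale. But the external induction above asserts correctness only on the standard fragment, and the liar sentence produced by the diagonal lemma lives in that fragment, so nothing changes. Everything else — the definability of $\Tr_A$ from $A$ with set parameters, and the availability of the diagonal lemma in $(M,A)$ — is routine once $M \models \ZFCm$.
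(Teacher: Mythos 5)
Your proof is correct and is essentially the paper's argument: the paper simply notes that $\Tr(A) \in \Xcal = \Def(M;A)$ would make first-order truth relative to $A$ definable from $A$ (with set parameters), contradicting Tarski's theorem on the undefinability of truth, and you have unpacked exactly that appeal via the diagonal lemma together with the correct observation that an internal truth predicate must agree with external satisfaction on standard formulas, which is what handles the $\omega$-nonstandard case. The only cosmetic point is that since the defining formula $\psi$ carries a set parameter $p$, the liar should be taken as a standard formula $\lambda(y)$ with the parameter fed in through the valuation (the parameterized diagonal lemma), rather than a literal parameter-free sentence; this is routine and does not affect the argument.
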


\begin{proof}
Suppose otherwise that $(M,\Xcal) \models T$ has that every class is definable from $A \in \Xcal$. But then the truth predicate relative to $A$ is definable from $A$, contradicting Tarski's theorem on the undefinability of truth.
\end{proof}

In particular, there are no principal models of theories extending $\GBCm + \ETR$, or even $\GBCm + \ETR_\omega$. So for most theories $T$ of interest, $\rlznp T M$ is trivial. But we can say something about $\rlzn T M$ and $\rlznc T M$.

The main tool used in this section was Cohen-generic subclasses of $\Ord$. We carefully constructed generics to have certain properties with regard to amalgamability/non-amalgamability, and thereby concluded something about the structure of $\rlzn \GBC M$. Some of these results generalize to $\rlzn T M$ and $\rlznc T M$, for $T$ which is preserved by Cohen-forcing.

\begin{theorem}
Let $M \models \ZFC$ be countable and let $T$ be a second-order set theory which is preserved by forcing with $\Add(\Ord,1)$ over $M$.\footnote{For example, $T$ could be $\KM$, $\KMCC$, $\GBC + \PnCA k$, or $\GBC + \PnCAp k$.}
Suppose $M$ is $T$-realizable. Then $\rlzn T M$ satisfies the following.
\begin{enumerate}
\item For any $\Xcal \in \rlznc T M$ there are $\Ycal,\Zcal \supseteq \Xcal$ in $\rlznc T M$ so that $\Ycal$ and $\Zcal$ lack an upper bound.
\item Every $\Xcal \in \rlznc T M$ has a dense extension. That is, there is $\Ycal \in \rlznc T M)$ so that there is a dense linear order in $\rlzn T M$ between $\Xcal$ and $\Ycal$. 
\item For every countable partial order $P$ and every $\Xcal \in \rlznc T M$ there is an embedding $e : P \to \rlzn T M$ which maps $P$ above $\Xcal$.
\item Every finite partial order embeds into $\rlzn T M$ in such a way as to preserve the existence/nonexistence of upper bounds and nonzero lower bounds. 
\end{enumerate}
\end{theorem}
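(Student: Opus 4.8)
The plan is to observe that each of the four clauses is the $T$-analogue of a result already proved for $\GBC$ — clause (1) of corollary \ref{cor1:non-amalg}, clause (2) of theorem \ref{thm1:dense-extensions}, clause (3) of theorem \ref{thm1:embedding-into-gm}, and clause (4) of the corollary to theorem \ref{thm1:star-embedding} — and to re-run those proofs over a $T$-realization. Three remarks make the transfer go through. First, wherever the $\GBC$-argument certifies that a definable hull or forcing extension is again a $\GBC$-realization, we instead pass to an honest $\Add(\Ord,1)$-extension $\Xcal[C]$ of a countable $\Xcal\in\rlznc T M$; by hypothesis $\Xcal[C]\models T$, and it is countable since $M$ is. (If $M$ has only uncountable $T$-realizations, first shrink to a countable elementary substructure of the classes, still a $T$-realization since $T$ is a first-order theory in the two-sorted language.) Second, the non-amalgamation arguments rest only on the fact that no $\GBcm$-realization of $M$ can contain a class coding a cofinal $\omega$-sequence in $\Ord^M$ — a violation of Class Replacement — and $T\supseteq\GBcm$. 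Third, the density and ``column-splitting'' arguments use only the homogeneity of $\Add(\Ord,1)$, which is theory-neutral.

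The device that keeps every realization inside $\rlzn T M$ is the following. Fix countable $\Xcal\in\rlznc T M$, an $(M,\Xcal)$-generic $C$ for $\Add(\Ord,1)$, and a partition $\Ord^M=\bigsqcup_{i<\omega}Y_i$ into definable unbounded pieces (coded by one class). For $Z\in\Xcal$ with $Z\subseteq\Ord^M$, the restriction $C_Z$ of $C$ to positions in $Z$ is Cohen-generic over $(M,\Xcal)$ by homogeneity, so $\Xcal[C_Z]\models T$ by preservation. Moreover a set-supported product of copies of $\Add(\Ord,1)$ is isomorphic to $\Add(\Ord,1)$ (reindex along a class bijection $\Ord\times\Ord\cong\Ord$), the restriction of a product-generic to a subset of coordinates is $(M,\Xcal)$-generic for the corresponding subproduct, and $\Xcal[G_A]\cap\Xcal[G_B]=\Xcal[G_{A\cap B}]$ for product generics; hence extensions by subfamilies of mutually generic columns, and their intersections, are again $T$-realizations. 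Now: for (1), run the coding construction from the proof of theorem \ref{thm1:gm-omnibus}.(4) over $(M,\Xcal)$, getting $C,D$ each Cohen-generic over $(M,\Xcal)$ that jointly code a cofinal $\omega$-sequence in $\Ord^M$; then $\Xcal[C],\Xcal[D]\in\rlznc T M$ extend $\Xcal$ but no $T$-realization contains both. For (2), put $\Ycal=\Xcal[C]$ and use the rational-indexed increasing family $\seq{X_q:q\in\Qbb}$ of subclasses of $\Ord^M$ in $\Xcal$ with unbounded complements from the proof of theorem \ref{thm1:dense-extensions}; then $q\mapsto\Xcal[C_{X_q}]$ is a dense linear embedding of $\Qbb$ into $\rlzn T M$, with strict inclusions, lying between $\Xcal$ and $\Ycal$. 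For (4) with $P$ finite, embed $P$ into a finite family $F$ of finite sets closed under subset as in the corollary to theorem \ref{thm1:star-embedding}, run that theorem's coding construction over $(M,\Xcal)$ to obtain $C_0,\dots,C_{n-1}$ whose subfamily $\seq{C_i:i\in f}$ is mutually generic when $f\in F$ and codes a bad sequence when $f\notin F$, and set $e(f)=\Xcal[\seq{C_i:i\in f}]$; the product facts give $e(f)\models T$, that $e$ is an order-embedding, and that it preserves (non-)existence of upper bounds and of lower bounds other than $\Xcal$.

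Clause (3) requires one further idea, because the $\GBC$-proof realizes a countable poset as a definable hull of infinitely many generic classes, and such a hull need not satisfy an impredicative $T$. Instead, by a greedy recursion carried out inside $M$ (using only $M\models\ZFC$) build $\seq{X_n:n<\omega}$ with each $X_n\in\powerset(\omega)^M$ and $X_i\subseteq^M X_j$ iff $q_i\le^P q_j$, where $\seq{q_n}$ enumerates $P$; set $Z_n=\bigcup\{Y_j:j\in X_n\}$, which is a class of $(M,\Xcal)$ since $X_n$ and the partition are, and let $e(q_n)=\Xcal[C_{Z_n}]$. Then $e(q_n)\in\rlznc T M$ and $e(q_n)\supseteq\Xcal$ by homogeneity and preservation; $X_i\subseteq^M X_j$ gives $Z_i\subseteq Z_j$ and hence $e(q_i)\subseteq e(q_j)$, while if $q_i\not\le^P q_j$ some piece $Y_k$ lies in $Z_i\setminus Z_j$, so $C_{Y_k}$ belongs to $e(q_i)$ but, being Cohen-generic over $\Xcal[C_{Z_j}]$, not to $e(q_j)$. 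The main obstacle throughout is precisely this: the only certificate we have that a new collection of classes models $T$ is preservation under $\Add(\Ord,1)$, so every place where the $\GBC$-arguments lean on definable hulls or on increasing/directed unions of $\GBC$-realizations must be re-routed through genuine $\Add(\Ord,1)$-extensions and their products; verifying, in each of the four constructions, that the realization produced is literally such an extension — equivalently, that the relevant class is a (sub)product-generic restricted to coordinates retained in $\Xcal$ — is the crux, even though once arranged the remaining checks are routine.
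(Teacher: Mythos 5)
Your overall route is the paper's own: the paper proves this theorem by the single remark that ``the same arguments as in the $T=\GBC$ case'' apply, and your treatment of clauses (1), (2) and (4) is exactly those arguments re-run over a countable $\Xcal\in\rlznc T M$, with the definable hulls $\Def(M;\Xcal,C)$ replaced by honest forcing extensions $\Xcal[C]$. Your insistence that every realization produced be certified as a genuine $\Add(\Ord,1)$-(sub)product extension is the right extra care for impredicative $T$, where increasing unions of realizations and definable hulls of infinitely many generics are no longer available; that is precisely the adjustment the paper's one-line proof leaves implicit.

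The one genuine soft spot is in clause (3), at the step where you claim that ``by a greedy recursion carried out inside $M$'' one can produce $X_n\in\powerset(\omega)^M$ with $X_i\subseteq X_j$ iff $q_i\le^P q_j$. First, the recursion cannot be carried out inside $M$: the countable poset $P$ lives in $V$ and need not be coded in $M$ (in general $\{m:q_m\le^P q_n\}\notin\powerset(\omega)^M$), so the recursion is external and merely chooses its values in $M$. Second, the naive greedy one-point-extension strategy genuinely fails: if $q_0,q_1$ are incomparable and one happens to choose $X_0,X_1$ with $X_0\cap X_1=\emptyset$ (evens and odds, say), then an element $q_2$ below both forces $X_2=\emptyset$, after which no correct value can be assigned to any later element incomparable with $q_2$, nor to a second element below both $q_0$ and $q_1$. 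The claim you need is nevertheless true, and the repair is short: fix in $M$ a definable independent family $\seq{A_n:n\in\omega}$ (e.g.\ $A_n$ the set of numbers whose $n$th binary digit is $1$); the subalgebra of $(\powerset(\omega)^M,\subseteq)$ it generates is the countable atomless Boolean algebra, hence universal for countable Boolean algebras, and $P$ embeds into the countable Boolean algebra generated by its principal downsets via $p\mapsto\{q:q\le^P p\}$; the composite embedding lands on finite Boolean combinations of the $A_n$, which are elements of $M$, and these serve as your $X_n$. With that replacement your partition-slicing argument for (3) --- which is the one place the $\GBC$ proof genuinely must be modified, since $\Def(M;\Xcal,C_i:i\in S)$ for infinite $S$ is only an increasing union of realizations --- goes through as you describe.
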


\begin{proof}[Proof sketch]
The same arguments as in the $T = \GBC$ case.
\end{proof}

However, not all arguments generalize. In particular, the argument for when $\rlzn \GBC M$ has a least element will not work for theories stronger than $\GBC$. Indeed, in chapter 4 we will see that the result is false for sufficiently strong theories. If $T \supseteq \GBC + \PCA$ then $\rlzn T M$ never has a least element, for countable $M$. 

But we are not yet ready to prove this. First we need to know more about the structure of models of strong second-order set theories, which we turn to in chapter 2.

\clearpage

\chapter{\texorpdfstring{Many constructions: unrollings, cutting offs, and second-order $L$}{Many constructions: unrollings, cutting offs, and second-order L}}
\chaptermark{Many constructions}

\epigraph{\singlespacing Second-order logic is set theory in disguise.}{W.V.O. Quine}

This chapter contains an exposition of several constructions relating to models of strong second-order set theories. These constructions are not new, dating at least as far back as work by Marek and Mostowski in the 1970s \cite{marek1973,marek-mostowski1975}. The ultimate origin of the constructions is not clear to me. Indeed, in the introduction to \cite{marek1973} Marek claims that Jensen, Mostowski, Solovay, and Tharp all independently proved that $\KM$ is consistent with $V = L$, which will follow from the third construction considered in this chapter, viz.\ the second-order constructible universe.\footnote{Marek does not mention how they proved the result, but it seems likely that it was via a version of this construction.}
More recently, Antos and S.\ Friedman \cite{antos-friedman2015} independently rediscovered these constructions a few years ago.

Previously these constructions have been done in the context of $\KM$ or $\KMCC$. In this chapter I analyze them more finely, generalizing their application from models of $\KM$ or $\KMCC$ to models of weaker theories.

The first construction I call the unrolling construction. This is essentially the same construction as the one used to code hereditarily countable sets as reals, which has seen wide use within set theory. The point is that the same idea works for sets which are not hereditarily countable. We can code `sets' of rank $> \Ord$ as proper classes. Then, given that our universe satisfies a strong enough second-order set theory these codes can be unrolled to produce a model of first-order set theory. The theory of this unrolled model will depend upon the theory of the ground universe.



The reader who is familiar with reverse mathematics may know that a similar construction is used for theories of second-order arithmetic. This yields that they are bi-interpretable with certain (first-order) set theories. See \cite[chapter VII]{simpson:book} for a thorough exposition.

The second construction is the cutting off construction. This construction starts with a model of $\ZFCm$ (or weaker theory) with a largest cardinal $\kappa$ with $\kappa$ regular and $H_\kappa \models \ZFCm$. We then get a second-order model by considering $(H_\kappa,\powerset(H_\kappa))$ in this model, where $\powerset(H_\kappa)$ is necessarily a (definable) proper class in the model. This will yield a model of $\GBCm$, with more strength coming from a stronger theory in the ground model. If $\kappa$ is moreover inaccessible we will get a model of $\GBC$, or more.

The cutting off construction is exactly the inverse of the unrolling construction. Starting with $(M,\Xcal)$ a model of a sufficiently strong second-order set theory, the cutting off of the unrolling of $(M,\Xcal)$ is isomorphic to $(M,\Xcal)$. In the other direction, start with $N$ a model of a strong enough fragment of $\ZFCm$ with a largest cardinal $\kappa$ with $\kappa$ regular and $H_\kappa^N \models \ZFCm$. Then the unrolling of the cutting off of $N$ is isomorphic to $N$.

Together, these two constructions yield that strong enough second-order set theories are bi-interpretable with certain first-order set theories. I summarize these bi-interpretability results below. First, however, we will need names for the first-order set theories we get from unrolling.

\begin{definition} \label{def2:so-many-theories}
The following are the first-order set theories theories which are satisfied by the unrolled models arising from a model of second-order set theory. Each includes the basic axioms of set theory---namely Extensionality, Pairing, Union, Foundation, Choice, and Infinity---and the assertion that there is a largest cardinal $\kappa$.
\begin{itemize}
\item $\ZFCmi$ consists of the basic axioms plus Separation, Collection, and the assertion that $\kappa$ is inaccessible. To be clear, since this theory does not include Powerset, by ``$\kappa$ is inaccessible'' is meant that $\kappa$ is regular and every set in $V_\kappa$ has a powerset which is also in $V_\kappa$. In particular, $\ZFCmi$ proves that $V_\kappa = H_\kappa$ is a model of $\ZFC$.
\item $\ZFCmr$ consists of the basic axioms plus Separation, Collection, the assertion that $\kappa$ is regular, and the assertion that $H_\kappa$ exists. In particular, $\ZFCmr$ proves that $H_\kappa$ is a model of $\ZFCm$.\footnote{$\ZFCmr$ has natural models, for instance $H_{\omega_2}$. In general, if $\kappa$ is regular then $H_{\kappa^+} \models \ZFCmr$.}
\item $\wZFCmi$ consists of the basic axioms plus Separation and the assertion that $\kappa$ is inaccessible. In particular, $\wZFCmi$ proves that $V_\kappa = H_\kappa$ is a model of $\ZFC$.
\item $\wZFCmr$ consists of the basic axioms plus Separation, the assertion that $\kappa$ is regular, and the assertion that $H_\kappa$ exists. In particular, $\wZFCmr$ proves that $H_\kappa$ is a model of $\ZFCm$.
\end{itemize}
Let $k < \omega$.
\begin{itemize}
\item $\ZFCmi(k)$ consists of the basic axioms plus $\Sigma_k$-Separation, $\Sigma_k$-Collection, and the assertion that $\kappa$ is inaccessible. In particular, $\ZFCmi(k)$ proves that $V_\kappa = H_\kappa$ is a model of $\ZFC$.
\item $\ZFCmr(k)$ consists of the basic axioms plus $\Sigma_k$-Separation, $\Sigma_k$-Collection, the assertion that $\kappa$ is regular, and the assertion that $H_\kappa$ exists. In particular, $\ZFCmr(k)$ proves that $H_\kappa$ is a model of $\ZFCm$.
\item $\wZFCmi(k)$ consists of the basic axioms plus $\Sigma_k$-Separation and the assertion that $\kappa$ is inaccessible. In particular, $\wZFCmi(k)$ proves that $V_\kappa = H_\kappa$ satisfies every axiom of $\ZFC$.
\item $\wZFCmr(k)$ consists of the basic axioms plus $\Sigma_k$-Separation, the assertion that $\kappa$ is regular, and the assertion that $H_\kappa$ exists. In particular, $\wZFCmr(k)$ proves that $H_\kappa$ satisfies every axiom of $\ZFCm$.
\end{itemize}

Let me explain the mnemonic behind the names of these theories for the benefit of the reader, to whom I apologize for giving eight theories to remember. The subscripts tell you what is being asserted about $\kappa$, the largest cardinal. `I' reminds you that $\kappa$ is {\bf i}naccessible while `R' tells you $\kappa$ is merely {\bf r}egular. The $\mathsf{w}$ in front stands for {\bf w}eak, {\bf w}impy, and {\bf w}hy would you ever want to work with a theory which does not have even a fragment of Collection?\footnote{For an extensive case study in why one would want Collection, see \cite{mathias2001}.}\textsuperscript{,}\footnote{So under this naming system $\wZFC$, although not used here, would be Zermelo set theory plus Foundation and Choice. Unfortunately the natural name for this theory, $\ZFC$, is already used to refer to Zermelo set theory plus Foundation, Choice, and Collection.}
The parenthetical $k$ tells us what fragment of Separation and Collection---or just Separation in case $\mathsf{w}$ is in front---is in the theory. 
\end{definition}

\begin{theorem} \label{thm2:bi-int}
The following pairs of theories are bi-interpretable. Below, $k \ge 1$.
\begin{itemize}
\item {\em (Marek \cite{marek1973})} $\KMCC$ and $\ZFCmi$.
\item $\KMCCm$ and $\ZFCmr$.
\item $\KM$ and $\wZFCmi + \Sigma_0$-Transfinite Recursion.\footnote{$\Sigma_0$-Transfinite Recursion, which means what you think it means, will be formally defined in subsection \ref{subsec2:how-hard}.}
\item $\KMm$ and $\wZFCmr + \Sigma_0$-Transfinite Recursion.
\item $\GBC + \PnCAp k$ and $\ZFCmi(k)$.
\item $\GBCm + \PnCAp k$ and $\ZFCmr(k)$.
\item $\GBC + \PnCA k$ and $\wZFCmi(k) + \Sigma_0$-Transfinite Recursion.
\item $\GBCm + \PnCA k$ and $\wZFCmr(k) + \Sigma_0$-Transfinite Recursion.
\item $\GBC + \ETR$ and $\wZFCmi(0) + \Sigma_0$-Transfinite Recursion.
\item $\GBCm + \ETR$ and $\wZFCmr(0) + \Sigma_0$-Transfinite Recursion.
\end{itemize}
\end{theorem}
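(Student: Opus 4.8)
The plan is to establish each bi-interpretability claim by producing, for a pair $(T_{\mathrm{so}}, T_{\mathrm{fo}})$ in the list, an interpretation of $T_{\mathrm{fo}}$ in $T_{\mathrm{so}}$ via the unrolling construction and an interpretation of $T_{\mathrm{so}}$ in $T_{\mathrm{fo}}$ via the cutting-off construction, then checking that the two compositions are definably isomorphic to the identity. Since the constructions and the verifications that they yield models of the appropriate theories are the subject of the bulk of this chapter, for this theorem I would simply assemble the pieces: the unrolling of a model of $T_{\mathrm{so}}$ satisfies $T_{\mathrm{fo}}$, the cutting-off of a model of $T_{\mathrm{fo}}$ satisfies $T_{\mathrm{so}}$, and—as remarked just before the theorem statement—cutting-off $\circ$ unrolling and unrolling $\circ$ cutting-off are each the identity up to isomorphism (this is what makes the interpretations witness bi-interpretability rather than mere mutual interpretability). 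So the heart of the matter is to verify that each arrow lands in exactly the right theory, no stronger and no weaker.

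First I would handle the unrolling direction. Given $(M,\Xcal)\models T_{\mathrm{so}}$, form the unrolled structure $N$ whose ``sets'' are the codes (well-founded extensional relations, suitably construed as classes of $(M,\Xcal)$) and whose membership is the induced relation; the largest cardinal of $N$ is $\kappa = \Ord^M$. The basic axioms (Extensionality, Pairing, Union, Foundation, Choice, Infinity) and ``$\kappa$ is the largest cardinal'' hold in $N$ for every $T_{\mathrm{so}} \supseteq \GBCm$, using Global Choice to get Choice in $N$ and Elementary Comprehension to manipulate codes. The delicate bookkeeping is matching the \emph{strength}: an instance of $\Sigma_k$-Separation or $\Sigma_k$-Collection in $N$ unrolls to an instance of $\Pi^1_k$-Comprehension, resp.\ $\Sigma^1_k$-Class Collection, in $(M,\Xcal)$—hence $\GBC+\PnCAp k$ gives $\ZFCmi(k)$ and $\GBCm+\PnCAp k$ gives $\ZFCmr(k)$; with full Comprehension and full Class Collection ($\KMCC$, $\KMCCm$) one gets full Separation and Collection ($\ZFCmi$, $\ZFCmr$); dropping Class Collection (i.e.\ bare $\KM$, $\GBC+\PnCA k$, or $\GBC+\ETR$) costs Collection in the unrolling, leaving the ``$\mathsf w$'' theories, but one recovers $\Sigma_0$-Transfinite Recursion in $N$ from the recursion-theoretic content available even in $\GBC+\ETR$ (this is exactly why $\Sigma_0$-TR, rather than nothing, appears in the weak clauses). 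Whether $\kappa$ is inaccessible or merely regular in $N$ is controlled by whether the first-order part $M$ satisfies Powerset: if it does, $V_\kappa^N = H_\kappa^N$ carries all of $\ZFC$ and $\kappa$ is inaccessible; if not, only $H_\kappa^N \models \ZFCm$ and $\kappa$ is regular.

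Next I would do the cutting-off direction, which is the formal inverse: given $N \models T_{\mathrm{fo}}$ with largest cardinal $\kappa$, set $M = H_\kappa^N$ and $\Xcal = \powerset(H_\kappa)^N$ (a definable proper class of $N$), and verify $(M,\Xcal)\models T_{\mathrm{so}}$. Here $\GBCm$ for $(M,\Xcal)$ comes from the basic axioms plus $\kappa$ regular; $\Pi^1_k$-Comprehension and $\Sigma^1_k$-Class Collection for $(M,\Xcal)$ come from $\Sigma_k$-Separation and $\Sigma_k$-Collection in $N$ (the reverse of the unrolling translation); the $\Sigma_0$-Transfinite Recursion hypothesis in the weak cases supplies exactly what is needed to run the recursions defining $\Pi^1_k$-Comprehension instances (or the $\ETR$ instances) inside $(M,\Xcal)$ when Collection is unavailable in $N$; and $\kappa$ inaccessible versus regular in $N$ translates to Powerset holding versus failing in $M$, i.e.\ $T_{\mathrm{so}}$ being a ``with-Powerset'' versus ``minus'' theory. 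Finally, the two round-trip isomorphisms: for $N \models T_{\mathrm{fo}}$, every element of $N$ has rank $< \kappa \cdot (\text{something} \le \Ord)$, is coded by a well-founded extensional relation on a set of size $\le\kappa$, hence by a class over $H_\kappa^N$, and unrolling that code returns $N$; conversely, unrolling a model of $T_{\mathrm{so}}$ and then cutting off at its largest cardinal returns the class codes one started from, modulo the canonical code-to-class correspondence. These isomorphisms are definable in the respective theories, completing bi-interpretability.

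The main obstacle I anticipate is not any single clause but the uniform tracking of quantifier complexity through both translations simultaneously: one must check that the unrolling translation sends $\Sigma_k$ first-order formulas over $N$ to $\Pi^1_k$ (or $\Sigma^1_k$) second-order formulas over $(M,\Xcal)$ and back, \emph{exactly}, so that $\Sigma_k$-Separation/Collection corresponds to $\PnCAp k$ with no slippage of indices, and that in the Powerset-free, Collection-free ``$\mathsf w$'' cases the residual $\Sigma_0$-Transfinite Recursion is both necessary (the translations genuinely need it) and sufficient (nothing more is required). Getting the $\KM$ versus $\KMCC$ distinction to land precisely on Collection versus no Collection, and the ``minus'' versus full versions to land precisely on $\kappa$ regular versus inaccessible, is where the care is concentrated; everything else is the routine verification that the constructions of this chapter do what they are built to do.
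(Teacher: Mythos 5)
Your proposal is correct and follows essentially the same route as the paper: the theorem is proved exactly by assembling the unrolling results (each second-order theory yields the matching first-order theory in the unrolled model, with the $*$- and $\star$-translations tracking $\Sigma_k$ against $\Sigma^1_k$), the cutting-off results in the reverse direction, and the two round-trip isomorphism theorems, with $\Sigma_0$-Transfinite Recursion playing precisely the role you identify of making the cut-off model's unrolling recover the original first-order model. The only small slip is attributing the $\Pi^1_k$-Comprehension instances in the cut-off model to $\Sigma_0$-Transfinite Recursion rather than directly to $\Sigma_k$-Separation in the first-order model ($\Sigma_0$-TR is instead what gives $H_\kappa\models\ZFCm$, the $\ETR$ instances, and the transitive closures needed for the round trip), which does not affect the structure of the argument.
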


The third construction I look at in this chapter is the construction of G\"odel's constructible universe but extended from the sets to the classes. Given a class well-order $\Gamma$ we can define $L_\Gamma$ and then consider the definable hyperclass $\Lcal$ consisting of all classes which appear in some $L_\Gamma$. We will consider also the construction of $L$ relative to parameters. The main use to which we will put this construction is in showing how to get models satisfying (a fragment of) Class Collection. The classical result here is that if we start with $(M,\Xcal)$ a model of $\KM$ then the $\Lcal$ we build gives a model of $\KMCC$ \cite{marek1973}. That is, any model of $\KM$ contains an $\Ord$-submodel of $\KMCC$, and it is straightforward to tweak the construction to give a $V$-submodel. I will generalize this result from models of $\KM$ to models of $\GBC + \PnCA k$. This gives the following result.

\begin{theorem}
Let $(M,\Xcal) \models \GBCm + \PnCA k$ for $1 \le k \le \omega$ and suppose $N \in \Xcal$ is an inner model (of $\ZFCm$) of $M$. Then there is $\Ycal \subseteq \Xcal$ a definable hyperclass so that $(N,\Ycal) \models \GBCm + \PnCAp k$. In particular, if $N = M$, this implies that every model of $\GBCm + \PnCA k$ contains a second-order definable $V$-submodel of $\GBC + \PnCAp k$.
\end{theorem}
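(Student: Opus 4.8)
The plan is to relativize Marek's second-order constructible universe to $N$, tracking logical complexity carefully so that the resources of $\PnCA k$ suffice in place of full Comprehension. Work inside $(M,\Xcal)$ and write $W$ for the global well-order provided by $\GBCm$; since $N \in \Xcal$, Elementary Comprehension yields from $W$ a class well-order of $N$ of order type $\Ord$. Using this, define a hierarchy $\Dcal_\Gamma(N)$ indexed by class well-orders $\Gamma$: put $\Dcal_1(N) = \Def(N)$; at successor stages let $\Dcal_{\gamma+1}(N)$ be the subclasses of $N$ definable over the structure $(N,\Dcal_\gamma(N))$ with parameters from $N \cup \Dcal_\gamma(N)$ (at stages past $\Ord$ this definability is genuinely second-order over $M$, so one feeds in the corresponding satisfaction class); take unions at limits. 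Set $\Ycal = \bigcup\{\Dcal_\Gamma(N) : \Gamma \in \Xcal \text{ a class well-order}\}$, with the canonical well-order $<_\Ycal$ given by stage of first appearance, ties broken by $W$. Two things make this legitimate: each $\Dcal_\Gamma(N)$ must exist as a single class of $M$ --- this is where $k \ge 1$ enters, via the transfinite recursions and satisfaction classes available from $\PnCA k$; equivalently one may pass to the unrolling of Theorem~\ref{thm2:bi-int} and run the ordinary $L$ relative to $N$ there using $\Sigma_0$-Transfinite Recursion --- and $\Ycal$ must be $\Sigma^1_1$-definable over $(M,\Xcal)$ from $N,W$, which it is (the usual $\Sigma_1$ definition of the constructible hierarchy, read through the unrolling). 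In particular $\Ycal \subseteq \Xcal$ and $\Ycal$ is a second-order-definable hyperclass.

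That $(N,\Ycal) \models \GBCm$ is routine. The sets are exactly $N \models \ZFCm$; Extensionality for classes is automatic; Global Choice holds since the order-type-$\Ord$ well-order of $N$ lies in $\Dcal_1(N)$; Class Replacement follows from the observation in Section~\ref{sec1:preserving-axioms}; and Elementary Comprehension holds because a class first-order-definable over $(N,\Ycal)$ from parameters in $\Dcal_\Gamma(N)$ already lies in $\Dcal_{\Gamma+1}(N)$, and $\Gamma + 1 \in \Xcal$.

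The content is $\PnCAp k = \PnCA k + \SnCC k$ in $(N,\Ycal)$, and both follow from a reflection lemma: for every $\Sigma^1_k$ formula and every tuple $\bar A \in \Ycal$ there is a class well-order $\Gamma \in \Xcal$ with $\bar A \in \Dcal_\Gamma(N)$ such that $(N,\Dcal_\Gamma(N))$ is $\Sigma^1_k$- and $\Pi^1_k$-correct over $(N,\Ycal)$ for that formula and for the associated ``$<_\Ycal$-least witness'' function. Granting the lemma: for $\Pi^1_k$-Comprehension, $\{x : (N,\Ycal) \models \phi(x,\bar A)\}$ equals its computation inside $(N,\Dcal_\Gamma(N))$, hence is definable over that level and lies in $\Dcal_{\Gamma+1}(N) \subseteq \Ycal$. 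For $\SnCC k$, if $(N,\Ycal) \models \forall x\, \exists Y\, \phi(x,Y,\bar A)$ with $\phi$ being $\Sigma^1_k$, then correctness places the $<_\Ycal$-least witnesses already in $\Dcal_\Gamma(N)$, so the class $C$ with $(C)_x$ equal to the $<_\Ycal$-least $Y$ satisfying $\phi(x,Y,\bar A)$ is definable over $(N,\Dcal_\Gamma(N))$ together with a $\Sigma^1_k$-satisfaction class for that level --- and the latter is $\Sigma^1_k$-definable over $(N,\Ycal)$, hence a class of $(N,\Ycal)$ by the $\Pi^1_k$-Comprehension just established, hence in $\Ycal$ --- so $C \in \Ycal$ and witnesses the instance. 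The reflection lemma itself is the familiar L\"owenheim--Skolem--Tarski plus condensation argument carried out inside the constructible hierarchy: iterate closing under the canonical $\Sigma^1_k$-Skolem functions, which exist as classes because the relativization of a $\Sigma^1_k$ formula to the $\Sigma^1_1$-defined $\Ycal$ is still $\Sigma^1_k$ (so $\PnCA k$ yields the class) and $<_\Ycal$ is definable; the first level $\Dcal_\Gamma(N)$ closed under all of these is then correct by condensation of the $\Dcal$-hierarchy.

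The main obstacle is precisely this complexity bookkeeping. Over $\KM$ one would simply invoke full Comprehension for every satisfaction class, Skolem function, and recognition of a reflecting level; to stay inside $\GBC + \PnCAp k$ one must check that the satisfaction relation relativized to $\Ycal$, the resulting Skolem functions, the well-order $<_\Ycal$, and the predicate ``$A \in \Dcal_\Gamma(N)$'' are all within reach of $\PnCA k$ --- which rests on the $\Sigma^1_1$-definability of $\Ycal$ noted above --- and that the Skolem-closure iteration stabilizes at an actual stage $\Dcal_\Gamma(N)$ of the hierarchy. A subsidiary point is to confirm that building the $\Dcal_\Gamma(N)$ really does not require more than $\GBCm + \PnCA k$. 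Finally, taking $N = M$ (trivially an inner model of itself, with $\Ord^N = \Ord^M$) yields the promised second-order-definable $V$-submodel of $\GBCm + \PnCAp k$, which is $\GBC + \PnCAp k$ in the presence of Powerset.
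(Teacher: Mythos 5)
Your route is the paper's route---relativize the second-order constructible hierarchy to $N$, keep the complexity of the relativized formulae at $\Sigma^1_k$, and obtain Comprehension and the Class Collection fragment by reflecting along the hierarchy---but the proof you sketch for the reflection lemma glosses over exactly the step where the real work lies, and as written it does not go through in $\GBCm + \PnCA k$. To ``iterate closing under the canonical $\Sigma^1_k$-Skolem functions'' and then take ``the first level closed under all of these,'' you must at each step pass to a higher stage of the hierarchy; the stages are proper-class well-orders (meta-ordinals), given only up to isomorphism, and after $\omega$ steps you must choose representatives of the chosen stages and amalgamate them into a single class well-order before you can take a supremum. Choosing those witnessing stages is itself an instance of Class Collection for a $\Sigma^1_k$ property---precisely what is being proved---and the choice ranges over isomorphism classes of proper classes, so Global Choice is of no help. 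The paper's proof of the corresponding sublemma (lemma \ref{lem2:sol-refl}) is organized around this very point: it first displays the naive argument and identifies its circularity, and then supplies the missing device---fix $C \subseteq \Ord^2$ coding $\in^N$, call a membership code \emph{compliant} if it agrees with $C$ on $\Ord$, and observe that compliant copies of a given meta-ordinal inside any two constructible levels coincide, which gives a canonical, choice-free selection of each stage $\Gamma_n$; the sequence can then be coded as the single class $\{(n,a) : a \in \Gamma_n\}$ and its supremum formed. Without this (or some substitute) idea, your assertion that the Skolem-closure iteration ``stabilizes at an actual stage,'' and likewise the assertion in the $\SnCC k$ case that a single level already contains the $<_\Ycal$-least witnesses for all sets $x$ simultaneously, is unsupported: it is exactly the content of $\Sigma_k$-Collection in the constructible unrolling, which you have relegated to a ``subsidiary point.''

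There is also a local error concerning Global Choice in $(N,\Ycal)$. The type-$\Ord$ well-order of $N$ you extract from $W$ is a class of $(M,\Xcal)$, not something definable over $N$, so it does not lie in $\Dcal_1(N)=\Def(N)$; and your parameter-free hierarchy need never produce any global well-order of $N$, since the canonical well-ordering of constructible levels already breaks at the first stage ($\Def(N)$ takes parameters from the not-yet-well-ordered $N$). The repair is the paper's: obtain a $\GBC$-amenable global well-order $G$ of $N$ (lemma \ref{lem2:gbc-amen-gwo}) and build the hierarchy relative to $G$ throughout, so that $G \in \Ycal$. With that change your definition of $\Ycal$, its $\Sigma^1_1$-definability, and the complexity bookkeeping survive intact, but the reflection argument still requires the fix described above.
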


Finally, I will end this chapter with an application of these constructions. Marek and Mostowski showed \cite{marek-mostowski1975} that the shortest height of a transitive model of $\KM$ is less than the shortest height of a $\beta$-model of $\KM$. Moreover, if the former ordinal is $\tau_\omega$ and the latter is $\beta_\omega$ then $L_{\beta_\omega} \models \tau_\omega$ is countable. I will generalize this result to $\GBC + \PnCA n$, showing that $\tau_n$---the least height of a transitive model of $\GBC + \PnCA n$---is less than $\beta_n$---the least height of a beta model of $\GBC + \PnCA n$. Moreover, $L_{\beta_n} \models \tau_n$ is countable.

The results in this chapter will form the bedrock for much of chapter 3 and chapter 4.

\section{The unrolling construction} \label{sec:unrolling}

The structure of this section is as follows. We work throughout in a fixed model $(M,\Xcal)$ of some second-order set theory. I will first lay out the basic definitions for what will be the unrolling of $(M,\Xcal)$. I will then investigate what theory is satisfied by the unrolled structure. This will be set up as a series of propositions, showing that if the ground model satisfies such and such then the unrolled model will satisfy so and so. At the end, these propositions will yield one half of the bi-interpretability theorem above. I will summarize the results before moving on to the next section, about the cutting off construction.

In reading the following the reader would do well to keep in mind what she knows about coding hereditarily countable sets as reals.

\begin{definition}[Over $\GBcm$]
Call a class binary relation $A$ a {\em membership code} if $A$ is the relation for a well-founded, extensional directed graph with a top element. Let $t_A$ denote the top element of $A$. For any $x \in \dom(A)$, let $A \downarrow x = \{ (a,b) \in A : a,b \le_A x \}$, where $\le_A$ is the reflexive transitive closure of $A$, be $A$ restricted below the node $x$.

To suggest the graph theoretic perspective, I will sometimes use $\lol_A$ as a synonym for $A$. If the context is clear, I will just write $\lol$. As a particular example, to express that $x$ is an immediate predecessor of the top element of $A$ I will write $x \lol t_A$ rather than $x \mathbin A t_A$.

Let $\pen A = \{ x \in \dom A : x \lol t_A \}$ denote the penultimate level of $A$. In the unrolled structure, for $x \in \pen A$ we have $A \downarrow x$ is a membership code which represents an element of the set represented by $A$.
\end{definition}

\begin{figure}[ht]
\begin{center}
\begin{tikzpicture}[<-,>=o,auto,node distance=2cm,
  n/.style={circle,draw,fill=white,minimum size=1cm,inner sep=0pt}]
\node[n] (pair) {$n_a$};
\node[n] (2) [below left of=pair] {$n_2$};
\node[n] (1) [below of=2] {$n_1$};
\node[n] (0) [below right of=1] {$n_0$};
\path 
  (pair) edge [bend right] (2)
  (pair) edge [bend left] (0)
  (2) edge [bend right=5] (1)
  (2) edge [bend left] (0)
  (1) edge [bend right] (0);
\end{tikzpicture}
\end{center}
\caption{A membership code representing the set $a = \{0,2\}$.}
\end{figure}
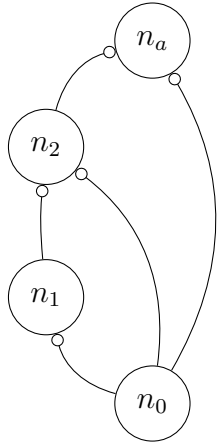

Note that this definition is first-order; it does not require any quantification over classes to check whether $A$ is a membership code. Thus, if $\Xcal$ and $\Ycal$ are $\GBcm$-realizations for the same $M \models \ZFCm$ with $A \in \Xcal \cap \Ycal$ then $\Xcal$ and $\Ycal$ agree on whether $A$ is a membership code.

For insight into the definition, consider the following definition.

\begin{definition}
Let $x$ be a set. The {\em canonical membership code for $x$} is $E_x = \oin \rest \tc(\{x\}($. If $X$ is a class then the {\em canonical membership code for $x$} is $E_X = \oin \rest \tc(\{x\}) \cup \{(x,\star) : x \in X\}$ where $\star$ is a new element.
\end{definition}

Once we have defined the unrolling it will be immediate that the canonical membership code for a set (respectively class) represents that set (respectively class) in the unrolling.

Unlike with the coding of hereditarily countable sets as reals, many membership codes represent virtual objects, `sets' which are too high in rank to be a class in the ground universe. For example, any order of ordertype $\Ord + \Ord + 1$ is a membership code representing an `ordinal' of ordertype $\Ord + \Ord$. But of course there is no literal such ordinal in the ground universe.

Because membership codes can be high in rank, Mostowski's collapse lemma does not apply to them. Indeed, it is precisely those membership codes which are not isomorphic to the restriction of $\in$ to some class that are of interest here. Lacking a way to canonically choose membership codes, we work with all membership codes and will quotient by isomorphism to produce the first-order structure.

Let us check some basic properties about isomorphisms of membership codes. The reader should note that these facts require only a weak background theory to prove. As such, the unrolling construction can be carried out over a model of a rather weak theory to produce some sort of structure. But it will take some strength in order for this unrolled structure to satisfy an appreciable set theory.

First, isomorphisms for membership codes are unique. This is analogous to the rigidity of transitive sets, but applied to membership codes rather than literal sets.

\begin{proposition}[Over $\GBcm$]
Let $A$ and $B$ be membership codes. If $\pi,\sigma : A \cong B$ then $\pi = \sigma$.
\end{proposition}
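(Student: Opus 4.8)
The plan is to mimic the standard proof that an isomorphism between well-founded extensional structures (or transitive sets) is unique, carried out by well-founded induction along the membership code. The key point is that membership codes are well-founded and extensional by definition, so we can do induction on $\le_A$ (or equivalently $\le_B$, since $\pi$ and $\sigma$ transport one to the other), and this induction is available in $\GBcm$ because well-founded recursion/induction for set relations — and here the relevant "ranks" only need first-order recursion over a class well-founded relation, which $\GBcm$ handles via Class Replacement and Foundation on the ordinals ranking the nodes.

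First I would fix membership codes $A$, $B$ and isomorphisms $\pi, \sigma : A \cong B$. I would prove by $\le_A$-induction that $\pi(x) = \sigma(x)$ for every $x \in \dom(A)$. So fix $x \in \dom A$ and assume $\pi(y) = \sigma(y)$ for all $y <_A x$, i.e.\ for all $y$ with $y \mathbin{A} \cdots \mathbin{A} x$. The immediate $A$-predecessors of $x$ are exactly the set $p_A(x) = \{ y : y \lol_A x \}$; since $A$ is a set relation restricted below any node this is a set, and each such $y$ satisfies $y <_A x$, so the inductive hypothesis gives $\pi(y) = \sigma(y)$ for all $y \in p_A(x)$. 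Because $\pi$ is an isomorphism, the immediate $B$-predecessors of $\pi(x)$ are exactly $\{ \pi(y) : y \in p_A(x) \}$, and likewise the immediate $B$-predecessors of $\sigma(x)$ are $\{ \sigma(y) : y \in p_A(x) \}$. By the inductive hypothesis these two sets are equal, so $\pi(x)$ and $\sigma(x)$ have precisely the same immediate $B$-predecessors. Since $B$ is extensional, this forces $\pi(x) = \sigma(x)$. (One should note the induction is anchored: if $x$ is $\le_A$-minimal then $p_A(x) = \emptyset$, and then $\pi(x)$ and $\sigma(x)$ are both $B$-minimal nodes with no predecessors, hence equal by extensionality.)

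Having shown $\pi$ and $\sigma$ agree on all of $\dom A$, we conclude $\pi = \sigma$ as relations/functions. A small remark worth including: the well-founded induction I am invoking is legitimate over $\GBcm$ because to each node $x$ one can assign an ordinal rank $\rk_A(x)$ via the recursion $\rk_A(x) = \sup\{ \rk_A(y) + 1 : y \lol_A x \}$ — this is a first-order recursion along the class well-founded relation $A$ whose output is a class function into $\Ord$, and Class Replacement together with Foundation on $\Ord$ makes the induction principle "every nonempty subclass of $\dom A$ defined by a first-order formula has an $A$-minimal element" available, which is all we need.

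The main obstacle — really the only subtlety — is making sure the induction is genuinely available in the weak theory $\GBcm$ rather than taking it for granted as one would over $\ZFC$. Since membership codes can have rank beyond $\Ord$ only in the unrolled sense (the relation $A$ itself is a bona fide class, and $<_A$ is well-founded as a class relation in the ground model), the induction is over a well-founded class relation, and the ranking-function argument above shows $\GBcm$ suffices. Everything else is the routine extensionality bookkeeping. I do not expect to need any Comprehension beyond the elementary fragment, nor Global Choice, so the proposition holds over $\GBcm$ as stated.
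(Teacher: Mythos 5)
Your core argument --- induct along the well-founded relation $A$ and use extensionality of $B$ to force $\pi(x)=\sigma(x)$ at each node --- is exactly the paper's proof, just phrased as an induction rather than as picking a minimal element of the class of disagreement points. The genuine problem is your justification of that induction. You claim the rank function $\rk_A(x)=\sup\{\rk_A(y)+1 : y \lol_A x\}$ exists as a class function into $\Ord$, obtained ``by first-order recursion along the class well-founded relation $A$'' using Class Replacement. Both halves of this are wrong. First, such a function into $\Ord$ need not exist at all: membership codes are designed precisely to represent `sets' of rank beyond $\Ord$, so for instance the membership code for the ordinal $\Ord+\Ord$ has nodes whose ranks would have to be $\geq \Ord$. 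Second, and more importantly for this paper, producing a solution to a first-order recursion along a well-founded class relation is exactly an instance of Elementary Transfinite Recursion, which $\GBcm$ does not prove --- this is why the paper needs $\GBcm+\ETR$ for the maximum initial partial isomorphism lemma shortly afterwards, and why the base theory is being tracked so carefully here. So as written, your legitimization of the induction smuggles in strength well beyond $\GBcm$.

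Fortunately the rank function is unnecessary. The only induction principle you need is: every nonempty subclass of $\dom A$ that is first-order definable from class parameters has an $A$-minimal element. The class $X=\{x \in \dom A : \pi(x)\ne\sigma(x)\}$ exists by Elementary Comprehension, and if it were nonempty the well-foundedness of $A$ (part of the definition of a membership code) hands you a minimal element directly; your extensionality step at that minimal point then gives the contradiction. This is precisely what the paper does. Replace the ranking-function paragraph with that observation and your proof is correct over $\GBcm$; the rest of your argument, including the remark that no Comprehension beyond the elementary fragment and no Global Choice are needed, is fine.
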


\begin{proof}
By Elementary Comprehension, the class $X = \{ a \in \dom A : \pi(a) \ne \sigma(a) \}$ exists. Suppose towards a contradiction that $X$ is nonempty. Then, we can pick a minimal $x \in X$, as this is expressible via a first-order property. By minimality, $y \lol_A x$ implies $\pi(y) = \sigma(y)$. But because $\pi$ and $\sigma$ are isomorphisms and $A$ and $B$ are extensional, $\pi(x)$ is determined by $\{\pi(y) : y \lol_A x\}$ and $\sigma(x)$ is determined by $\{\sigma(y) : y \lol_A x\}$. So $\pi(x) = \sigma(x)$, contradicting the choice of $x$. So $X$ is nonempty and thus $\pi = \sigma$.
\end{proof}

Next, nodes in a membership code are determined by the isomorphism type of their class of predecessors.

\begin{proposition}[Over $\GBcm$] \label{prop2:local-uniq}
Let $A$ be a membership code and $x,y \in \dom A$. If $A \downarrow x \cong A \downarrow y$ then $x = y$.
\end{proposition}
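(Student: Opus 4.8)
The plan is to run the same argument as in the preceding uniqueness proposition, but now tracking a single isomorphism against the identity. By the hypothesis $A \downarrow x \cong A \downarrow y$ there is (in $\Xcal$) an isomorphism $\pi : A \downarrow x \to A \downarrow y$; fix one. By Elementary Comprehension, the class
\[
X = \{\, a \in \dom(A \downarrow x) : \pi(a) \ne a \,\}
\]
exists (its definition uses only the classes $A\downarrow x$ and $\pi$ as parameters, with a first-order matrix). The goal is to show $X = \emptyset$: once we know $\pi$ is the identity on $\dom(A\downarrow x)$, its range $\dom(A\downarrow y)$ equals $\dom(A\downarrow x)$ and, since $\pi$ preserves $\lol$, we get $A \downarrow x = A \downarrow y$ as class relations; but the top element of a membership code is determined by the relation, and it is $x$ for the left-hand side and $y$ for the right-hand side, so $x = y$.

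To see $X = \emptyset$, suppose not. Since $A$ is well-founded and $X$ is a nonempty subclass of $\dom A$, it has an $A$-minimal element $a$ — here we use, exactly as in the proof of the previous proposition, that this minimal element is picked out by a first-order property with $X$ and $A$ as parameters. By minimality, every immediate $A$-predecessor $b \lol_A a$ lies outside $X$, i.e.\ $\pi(b) = b$. Now observe that because $a \le_A x$ and $A \downarrow x$ is the restriction of $A$ to the $\le_A$-downward-closed subclass below $x$, the immediate predecessors of $a$ inside $A \downarrow x$ are exactly its immediate $A$-predecessors; likewise, the immediate predecessors of $\pi(a)$ inside $A \downarrow y$ are exactly its immediate $A$-predecessors. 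Since $\pi$ is an isomorphism, $\{\, b : b \lol_A \pi(a) \,\} = \{\, \pi(b) : b \lol_A a \,\} = \{\, b : b \lol_A a \,\}$, the last equality by minimality of $a$. Thus $a$ and $\pi(a)$ are nodes of $A$ with the same class of immediate $A$-predecessors, so extensionality of $A$ yields $a = \pi(a)$ — contradicting $a \in X$. Hence $X = \emptyset$, completing the proof.

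The only points requiring care — and each is already licensed by the preceding material — are: that ``well-founded'' is understood in the sense that nonempty subclasses of $\dom A$ have $A$-minimal elements (this is precisely what the previous proposition's proof invoked, so it is available over $\GBcm$); that passing to $A \downarrow x$ does not change a node's immediate predecessors, which is immediate from $\le_A$-downward-closedness; and that extensionality of $A$ refers to immediate predecessors. Nothing here needs Comprehension or Global Choice, so the argument goes through over $\GBcm$. I expect the ``minimal element of a subclass'' step to be the one worth stating explicitly, since it is where the strength of well-foundedness for class relations is used, but it is otherwise routine.
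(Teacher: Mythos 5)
Your proof is correct and follows essentially the same route as the paper: fix an isomorphism $\pi : A \downarrow x \cong A \downarrow y$, form the class of non-fixed points by Elementary Comprehension, take an $A$-minimal member, and use extensionality of $A$ to force $\pi(a) = a$, so $\pi$ is the identity and $x = y$. The only cosmetic difference is that the paper takes $X \subseteq \{a <_A x\}$ and leaves the final step (identity below the tops plus extensionality gives $x = y$) implicit, whereas you include the top element in $X$ and handle it uniformly; this is fine.
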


\begin{proof}
Let $\sigma : A \downarrow x \cong A \downarrow y$. We want to see that $\sigma$ is the identity. Set $X = \{ a <_A x : \sigma(a) \ne a\}$. Suppose towards a contradiction that $X$ is nonempty. Then, pick $a \in X$ minimal. We have that $\sigma(b) = b$ for all $b \lol_A a$ so it must be that $\sigma(a) = a$. This contradicts the choice of $a$, so it must be that $X$ is nonempty and $\sigma$ is the identity.
\end{proof}

Third, the uniqueness of isomorphism generalizes in an appropriate way to certain partial isomorphisms.

\begin{definition}
Let $A$ and $B$ be membership codes. A partial function $\pi \prtlfn A \to B$ is an {\em initial partial isomorphism} if its domain is downward-closed in $A$,\footnote{That is, if $a \in \dom \pi$ and $a' \lol a$ then $a' \in \dom \pi$} its range is downward closed in $B$, and for all $a,a' \in \dom \pi$ we have $a \lol_A a'$ if and only if $\pi(a) \lol_B \pi(a')$.
\end{definition}

\begin{proposition}[Over $\GBcm$] \label{prop2:prtl-isom-agree}
Let $\pi,\sigma \prtlfn A \to B$ be initial partial isomorphisms.
Then $\pi$ and $\sigma$ agree on the intersection of their domains.
\end{proposition}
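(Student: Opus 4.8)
The plan is to argue by contradiction along the same lines as the two preceding propositions: if $\pi$ and $\sigma$ disagree somewhere on the intersection of their domains, then by Elementary Comprehension the class $X = \{ a \in \dom\pi \cap \dom\sigma : \pi(a) \ne \sigma(a) \}$ exists, and since $A$ is well-founded we may pick $a \in X$ that is $<_A$-minimal in $X$. The point will be to show that $a$ cannot actually be in $X$, giving the contradiction.

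First I would observe that $\dom\pi \cap \dom\sigma$ is downward-closed in $A$, since each of $\dom\pi$ and $\dom\sigma$ is. Hence for every immediate predecessor $a' \lol_A a$ we have $a' \in \dom\pi \cap \dom\sigma$, and by minimality of $a$ in $X$ we get $\pi(a') = \sigma(a')$ for all such $a'$. Now I would use the defining property of initial partial isomorphisms: since $\dom\pi$ is downward-closed and $a \in \dom\pi$, the predecessors of $\pi(a)$ in $B$ are exactly $\{ \pi(a') : a' \lol_A a \}$ — here the ``only if'' direction of the initial-partial-isomorphism condition, combined with downward-closure of $\ran\pi$, guarantees there are no extra predecessors of $\pi(a)$ not hit by $\pi$. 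Likewise the predecessors of $\sigma(a)$ in $B$ are exactly $\{ \sigma(a') : a' \lol_A a \}$. Since these two sets of predecessors coincide (as $\pi(a') = \sigma(a')$ for each $a' \lol_A a$), extensionality of $B$ forces $\pi(a) = \sigma(a)$, contradicting $a \in X$. Therefore $X$ is empty and $\pi$, $\sigma$ agree on $\dom\pi \cap \dom\sigma$.

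The only mildly delicate point — and the step I would be most careful about — is the claim that the $B$-predecessors of $\pi(a)$ are precisely the $\pi$-images of the $A$-predecessors of $a$; this is where downward-closure of the range is used, not just the ``if and only if'' on domain elements. Once that is pinned down, the argument is a routine well-founded induction exactly parallel to Proposition~\ref{prop2:local-uniq}, and everything needed (Elementary Comprehension for the class $X$, minimal-element selection via a first-order property, extensionality of membership codes) is available over $\GBcm$.
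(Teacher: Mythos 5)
Your proposal is correct and follows essentially the same route as the paper: form the class $X$ of disagreement points by Elementary Comprehension, take a minimal element using well-foundedness of $A$, and derive a contradiction from extensionality of $B$. The paper compresses the key step into the phrase ``by the properties of isomorphism for membership codes,'' and your explicit justification—that downward-closure of the ranges guarantees the $B$-predecessors of $\pi(a)$ and $\sigma(a)$ are exactly the images of the $A$-predecessors of $a$—is precisely the right way to fill that in.
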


\begin{proof}
Let $C = \dom \pi \cap \dom \sigma$. Then $C$ is nonempty, as it must contain the least element of $A$. Now consider $X = \{ x \in C : \pi(x) \ne \sigma(x) \}$. As before if $X$ is nonempty then it has a minimal element $x$, but then by the properties of isomorphism for membership codes it must be that $\pi(x) = \sigma(x)$, contradicting the non-emptiness of $X$.
\end{proof}

Observe that there are always initial partial isomorphisms between membership codes. In particular, the partial map sending the least element of $A$ to the least element of $B$ is an initial partial isomorphism. 

If our ground universe has a stronger theory then we can prove that there is a maximum initial partial isomorphism between membership codes.

\begin{lemma}[Over $\GBcm + \ETR$] \label{lem2:max-prtl-isom}
Let $A$ and $B$ be membership codes. Then there is a maximum initial partial isomorphism $\pi$ between $A$ and $B$. That is, if $\sigma \prtlfn A \to B$ is any initial partial isomorphism then $\sigma \subseteq \pi$.
\end{lemma}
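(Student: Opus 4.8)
The plan is to construct $\pi$ by an elementary transfinite recursion along the well-founded code $A$, defining $\pi(x)$ in terms of the values $\pi(y)$ for $y \lol_A x$, and then to verify that the resulting class is the maximum initial partial isomorphism. First I would set up the recursion. Let $<_A$ denote the reflexive transitive closure of $A$, a well-founded class relation, and recurse along it. At a node $x \in \dom A$, having already computed the partial function $\pi$ on all $y <_A x$ (equivalently on $A \downarrow x$ minus its top node), attempt to assign $\pi(x)$ to be the unique $b \in \dom B$ such that the immediate predecessors of $b$ in $B$ are exactly $\{\pi(y) : y \lol_A x\}$ and each such $y$ is already in the domain of $\pi$; if $y \lol_A x$ for some $y$ not yet assigned a value, or if no such $b$ exists, or if $b$ is not unique (which cannot happen by extensionality of $B$ and Proposition~\ref{prop2:local-uniq}), then leave $x$ out of $\dom \pi$ and, to keep the domain downward closed along $A$, also remove from $\dom \pi$ every node $<_A$-above such a failed node. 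This last bookkeeping — pruning upward once a failure occurs — is the fiddly part, but it is a first-order condition on the partial solution computed so far, so $\ETR$ applies: formally the recursion computes, for each $x$, the set $S \downarrow x$ encoding the attempted isomorphism on $A \downarrow x$, and the formula $\phi$ saying ``$x$ gets value $b$ provided all strict predecessors were successfully and coherently assigned'' is elementary in the partial solution and in $A, B$ as parameters.

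Next I would check that the class $\pi$ produced is an initial partial isomorphism. Its domain is downward closed in $A$ by construction (we never include $x$ unless all $y \lol_A x$ are included). Its range is downward closed in $B$: if $b \in \ran \pi$, say $b = \pi(x)$, then the immediate $B$-predecessors of $b$ are exactly the $\pi(y)$ for $y \lol_A x$, and these lie in $\ran\pi$; iterating, everything $<_B b$ is hit, using well-foundedness of $B$ and extensionality. The local condition $a \lol_A a' \iff \pi(a) \lol_B \pi(a')$ holds by the defining clause of the recursion (the forward direction is immediate; the backward direction uses extensionality of $A$ together with the fact that $\pi$ restricted to predecessors of $a'$ is injective, which follows from Proposition~\ref{prop2:local-uniq} applied downstairs).

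Finally I would prove maximality: given any initial partial isomorphism $\sigma \prtlfn A \to B$, show $\sigma \subseteq \pi$ by $<_A$-induction (using Elementary Comprehension to form the set of counterexamples and picking a $<_A$-minimal one, exactly as in the proofs of Propositions~\ref{prop2:local-uniq} and~\ref{prop2:prtl-isom-agree}). If $x \in \dom\sigma$ is $<_A$-minimal with $x \notin \dom\pi$ or $\sigma(x) \neq \pi(x)$, then every $y \lol_A x$ lies in $\dom\sigma \cap \dom\pi$ with $\sigma(y) = \pi(y)$; since $\sigma$ is an initial partial isomorphism, $\sigma(x)$ is an element of $B$ whose immediate predecessors are precisely $\{\sigma(y) : y \lol_A x\} = \{\pi(y) : y \lol_A x\}$, so $\sigma(x)$ is exactly the element the recursion would (and does) assign as $\pi(x)$ — contradicting the choice of $x$. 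Hence $\sigma \subseteq \pi$, and $\pi$ is the maximum. The main obstacle I anticipate is organizing the upward-pruning in the recursion cleanly enough that the recursion step stays genuinely elementary (so that $\ETR$, not more, suffices); an alternative that sidesteps this is to instead define $\pi$ directly as the union of all initial partial isomorphisms and use $\ETR$ only to verify that this union is itself a (single, coherent) initial partial isomorphism whose domain is downward closed — Proposition~\ref{prop2:prtl-isom-agree} already gives coherence on overlaps, so the remaining content is that the union's domain is downward closed and that it is ``total as far as it can go,'' which is where a recursion is genuinely needed.
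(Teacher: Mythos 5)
Your main argument is correct and is essentially the paper's proof: define $\pi$ by an elementary transfinite recursion along $A$, assigning to $x$ the unique $b$ whose immediate $B$-predecessors are exactly $\{\pi(y) : y \lol_A x\}$ when all predecessors have been successfully assigned, and get maximality by a minimal-counterexample induction using the agreement of initial partial isomorphisms. Two small remarks. First, the ``upward pruning'' you worry about is automatic and needs no separate bookkeeping: the clause ``$\pi(x)$ is defined only if $\pi(y)$ is defined for every $y \lol_A x$'' already propagates failure to every node above a failed one, so the recursion step is straightforwardly elementary. Second, the alternative you offer at the end --- taking $\pi$ to be the union of all initial partial isomorphisms and using $\ETR$ only for verification --- does not work over $\GBcm + \ETR$, and in fact is exactly the trap the paper warns against after this lemma: to form that union as a class one must define ``$\pi(a)=b$ iff there exists an initial partial isomorphism $\sigma \prtlfn A \to B$ with $\sigma(a)=b$,'' which is a $\Sigma^1_1$ definition, i.e.\ a hidden appeal to $\Pi^1_1$-Comprehension rather than Elementary Comprehension; the whole point of running the recursion is to avoid this. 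So keep the recursion as your only route, not merely your preferred one.
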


\begin{proof}
This maximum initial partial isomorphism $\pi$ is constructed via an elementary transfinite recursion on $A$. Namely, $\pi$ is constructed via the transfinite recursion to construct an isomorphism between $A$ and $B$, except that we stop constructing higher when we reach a local failure of isomorphism. Formally, $\pi$ is defined via the following recursive requirement:
\begin{itemize}
\item $\pi(a)$ is the unique $b \in B$ so that for all $a' <_A a$ we have $\pi(a') \lol_B b$ if and only if $a' \lol_A a$, if such $b$ exists and $\pi(a')$ is defined for all $a' \lol_A a$; and
\item $\pi(a)$ is undefined, otherwise.
\end{itemize}
Elementary Transfinite Recursion says that this recursion has a solution $\pi$. Manifestly $\pi$ is an initial partial isomorphism. Let us check that $\pi$ is the maximum initial partial isomorphism. Take $\sigma \prtlfn A \to B$ an initial partial isomorphism. Then $\sigma$ and $\pi$ agree on their domain, by proposition \ref{prop2:prtl-isom-agree}. So the recursion to construct $\pi$ will work on all of $\dom \sigma$ and restricted to $\dom \sigma$ will give $\sigma$. So $\sigma \subseteq \pi$.
\end{proof}

Let me remark on this proof. One might attempt to more easily prove the existence of maximum initial partial isomorphisms between membership codes by considering the hyperclass of all initial partial isomorphisms and then taking the union of all of them. The issue with this argument is that it makes a hidden appeal to $\Pi^1_1$-Comprehension: we wish to define $\pi$ by saying that $\pi(a)$ is defined if {\em there exists} some initial partial isomorphism $\sigma \prtlfn A \to B$ with $a \in \dom \sigma$ and that then $\pi(a) = \sigma(a)$. This is a $\Sigma^1_1$ assertion.\footnote{Recall that $\Pi^1_1$-Comprehension is equivalent to $\Sigma^1_1$-Comprehension.}
The more convoluted argument is preferred because it works from a weaker base theory.

Isomorphism will become equality in the unrolled structure. We also must say what will become the membership relation.

\begin{definition}[Over $\GBcm$] \label{def2:vin}
Given membership codes $A$ and $B$ say that $A \vin B$ if there is $a \lol_B t_B$ so that $A \cong B \downarrow a$.
\end{definition}

In particular, if $x \lol_A t_A$ then $(A \downarrow x) \vin A$. 

\begin{proposition}[Over $\GBcm$]
Isomorphism of membership codes is a congruence with respect to $\vin$. That is, if $A$ and $B$ are membership codes so that $A \vin B$, $A \cong A'$, and $B \cong B'$, then $A' \vin B'$.
\end{proposition}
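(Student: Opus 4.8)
The plan is to chase isomorphisms through the definition of $\vin$, using the uniqueness facts about isomorphisms of membership codes already established, together with the fact that isomorphisms restrict to isomorphisms of membership codes below a node.

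First I would unpack the hypothesis $A \vin B$: this gives a node $a \lol_B t_B$ with an isomorphism $\pi : A \cong B \downarrow a$. I also have isomorphisms $f : A \cong A'$ and $g : B \cong B'$. The natural candidate for the witness that $A' \vin B'$ is the node $a' = g(a) \in \dom B'$. Since $g$ is an isomorphism of directed graphs carrying top to top, $a \lol_B t_B$ gives $a' = g(a) \lol_{B'} g(t_B) = t_{B'}$, so $a'$ is an immediate predecessor of the top element of $B'$, as required by Definition~\ref{def2:vin}.

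Next I would exhibit an isomorphism $A' \cong B' \downarrow a'$. The key observation is that $g$ restricts to an isomorphism $B \downarrow a \cong B' \downarrow a'$: because $g$ is an isomorphism of the extensional well-founded graphs and $\le_B$, $\le_{B'}$ are just the reflexive transitive closures, $g$ maps $\{ b : b \le_B a \}$ bijectively onto $\{ b' : b' \le_{B'} a' \}$ and preserves the edge relation both ways, hence $g \rest (B \downarrow a) : B \downarrow a \cong B' \downarrow a'$. Composing, $f\inv$ then $\pi$ then $g \rest (B \downarrow a)$, or rather $(g \rest (B\downarrow a)) \comp \pi \comp f\inv : A' \cong B' \downarrow a'$ is the desired isomorphism. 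Therefore $A' \vin B'$ by definition.

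I do not expect any serious obstacle here; the only point needing a little care is the verification that an isomorphism $g$ of a membership code genuinely restricts to an isomorphism of the sub-membership-code below an immediate predecessor of the top — i.e.\ that $g$ respects the downward-closure $B \downarrow a$ and that $B' \downarrow a'$ is again a membership code (well-founded, extensional, with top element $a'$). This is a routine consequence of $g$ being a graph isomorphism, and one could even note it as a small separate remark; the uniqueness propositions (\ref{prop2:local-uniq}, etc.) are not strictly needed for this statement, though they reassure us the witness $a'$ is canonical. All of this goes through over $\GBcm$, since it involves only first-order manipulation of the given classes and Elementary Comprehension to form the relevant restricted relations.
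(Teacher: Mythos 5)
Your proposal is correct and is essentially the paper's own argument: both compose the given isomorphisms with the embedding witnessing $A \vin B$ to obtain an embedding of $A'$ onto $B' \downarrow g(a)$ with $g(a) \lol_{B'} t_{B'}$. Your extra remark that the isomorphism $B \cong B'$ restricts to an isomorphism $B \downarrow a \cong B' \downarrow a'$ is just a slightly more explicit rendering of the step the paper leaves implicit.
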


\begin{proof}
Let $e : A \hookrightarrow B$ embed $A$ onto $B \downarrow a$ for some $a \lol_B t_B$. Let $\pi : A' \cong A$ and $\sigma : B \cong B'$. Then $\sigma \circ e \circ \pi : A ' \hookrightarrow B$ embeds $A$ onto $B' \downarrow a'$ for some $a' \lol_{B'} t_{B'}$.
\end{proof}

The remainder of this section is dedicated to working out just what the theory of the ground universe implies about the theory of the unrolling.

Hereon, let $\Ucal$ denote the hyperclass of all membership codes and let $\Ufrak = (\Ucal/\mathord{\cong}, \vin)$ denote the unrolled structure. Note that $\Ucal$ is a definable hyperclass, via a first-order formula.

\begin{theorem}[Over $\GBcm + \ETR$] \label{thm2:unroll-ext}
The unrolled structure $\Ufrak$ satisfies Extensionality.
\end{theorem}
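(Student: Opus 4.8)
The plan is to unwind what Extensionality asks of $\Ufrak = (\Ucal/\mathord\cong,\vin)$ and then build the required isomorphism by hand using Lemma~\ref{lem2:max-prtl-isom}. Since $\cong$ is a congruence for $\vin$, an instance of Extensionality amounts to: given membership codes $A$ and $B$ such that $C \vin A$ iff $C \vin B$ for every membership code $C$, show $A \cong B$. By the definition of $\vin$ the $\vin$-elements of $A$ are exactly the codes $A \downarrow a$ for $a \in \pen A$ (and likewise for $B$), so the hypothesis says precisely that for each $a \in \pen A$ there is $b \in \pen B$ with $A \downarrow a \cong B \downarrow b$, and symmetrically. I would let $\pi \prtlfn A \to B$ be the maximum initial partial isomorphism supplied by Lemma~\ref{lem2:max-prtl-isom} and show that $\pi$ is a total isomorphism.

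The argument splits into three claims. First, $\pi$ is injective: if $\pi(a)=\pi(a')$ then, since $\dom\pi$ is downward closed and $\pi$ reflects $\lol$, the nodes $a$ and $a'$ have the same $A$-predecessors, hence $a=a'$ by extensionality of the code $A$. Second---the crucial step---$\pen A \subseteq \dom\pi$ and $\pi[\pen A] = \pen B$. Given $a \in \pen A$, use the hypothesis to pick $b \in \pen B$ and an isomorphism $\rho : A \downarrow a \cong B \downarrow b$; one checks that, read as a partial map $A \to B$, $\rho$ is an initial partial isomorphism (its domain $\{a' : a' \le_A a\}$ and range $\{b' : b' \le_B b\}$ are downward closed, and $\lol$ is preserved because $A\downarrow a$, $B\downarrow b$ are induced subgraphs), so $\rho \subseteq \pi$ by maximality and therefore $\pi(a)=b \in \pen B$; running the symmetric argument for $b \in \pen B$ shows every such $b$ is in $\pi[\pen A]$. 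Third, $\dom\pi = \dom A$ and $\ran\pi = \dom B$ with $\pi(t_A)=t_B$: every non-top node of $A$ lies $\le_A$-below some element of $\pen A$ (follow a chain up to $t_A$ and take the penultimate node), so downward closure of $\dom\pi$ gives $\dom\pi \supseteq \dom A \setminus \{t_A\}$; and $\pi \cup \{(t_A,t_B)\}$ is still an initial partial isomorphism---the only new pairs to check involve $t_A$, where $x \lol_A t_A \Leftrightarrow x \in \pen A \Leftrightarrow \pi(x) \in \pen B \Leftrightarrow \pi(x) \lol_B t_B$ by injectivity and Claim~2, while $t_A \lol_A t_A$ and $t_B \lol_B t_B$ both fail by well-foundedness---so maximality forces $t_A \in \dom\pi$ with $\pi(t_A)=t_B$; then $\ran\pi$, being downward closed and containing $t_B$, is all of $\dom B$.

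Combining the claims, $\pi$ is a bijection $\dom A \to \dom B$ preserving and reflecting $\lol$, i.e.\ an isomorphism $A \cong B$, so $[A]=[B]$ in $\Ufrak$; as $A,B$ were arbitrary codes with the same elements, $\Ufrak \models$ Extensionality. The main obstacle is exactly the one flagged after Lemma~\ref{lem2:max-prtl-isom}: one cannot simply take the union of all initial partial isomorphisms from $A$ to $B$, as that is a $\Sigma^1_1$ construction unavailable over $\GBcm$, so every use of an ambient isomorphism must be routed through the $\ETR$-built maximum $\pi$, and the delicate bookkeeping lies in verifying that each candidate partial map---the restrictions $\rho$ coming from the hypothesis, and the one-point extension $\pi \cup \{(t_A,t_B)\}$---genuinely satisfies the initial-partial-isomorphism conditions so that maximality applies. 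Proposition~\ref{prop2:prtl-isom-agree} guarantees that these repeated appeals to maximality are mutually coherent, and it is precisely this reliance on Lemma~\ref{lem2:max-prtl-isom} that makes $\ETR$ (rather than bare $\GBcm$) the right base theory for the statement.
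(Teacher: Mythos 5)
Your proof is correct and follows essentially the same route as the paper's: both take the maximum initial partial isomorphism $\pi$ from lemma \ref{lem2:max-prtl-isom}, use maximality to absorb the isomorphisms $A \downarrow a \cong B \downarrow b$ (and their inverses) supplied by the hypothesis so that $\dom\pi$ and $\ran\pi$ cover everything below the top nodes, and then conclude by sending $t_A$ to $t_B$. Your extra care with injectivity and the top-element step only fills in details the paper glosses over.
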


\begin{proof}
Fix membership codes $A$ and $B$. It needs to be shown that $A \cong B$ if and only if $\forall C\ C \vin A \iff C \vin B$. The forward direction of the implication is immediate. It is the other direction which requires work.

Suppose that for any membership code $C$ we have $C \vin A$ if and only if $C \vin B$. In particular, this is true for $C$ of the form $A \downarrow a$ for $a \lol_A t_A$ or the form $B \downarrow b$ for $b \lol_B t_B$. For $a \lol_A t_A$ let $\pi_a$ be the embedding which maps $A \downarrow a$ onto $B \downarrow b$ for some $b$. And in the other direction, for $b \lol_B t_B$ let $\sigma_b$ be the initial partial isomorphism which maps $B \downarrow b$ onto $A \downarrow a$ for some $a \lol_A t_A$. Proposition \ref{prop2:local-uniq} gives that the choice of $b$ is unique and thus $\pi_a$ is well-defined, and similarly for $\sigma_b$. Notice, however, that in the absence of $\Pi^1_1$-Comprehension we have no way to uniformly refer to the $\pi_a$ and the $\sigma_b$. 

By lemma \ref{lem2:max-prtl-isom} let $\pi$ be the maximum initial partial isomorphism from $A$ to $B$. First, note that $\dom \pi$ includes all $a \lol_A t_A$. This is because $\pi \supseteq \pi_a$ for $a \lol_A t_A$, by maximality. Notice also that $B \downarrow b \subseteq \ran \pi$ for all $b \lol_B t_B$ because  $\sigma_b\inv$ is an initial partial isomorphism from $A$ to $B$ which maps onto $B \downarrow b$.  But then $\dom \pi$ must also include $t_A$ and $\ran \pi$ must include $t_B$ as once we have an initial partial isomorphism defined everywhere but the top elements it is obvious how to extend: send $t_A$ to $t_B$. So $\dom \pi = A$ and $\ran \pi = B$, so $\pi$ is a full isomorphism.
\end{proof}

The unrolled structure satisfies other basic axioms of set theory.

\begin{proposition}[Over $\GBcm + \ETR$] \label{prop2:unroll-basic}
The unrolled structure $\Ufrak$ satisfies Union, Pairing, Infinity, and Foundation. 
\end{proposition}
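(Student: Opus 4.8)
The plan is to verify each of the four axioms in turn by exhibiting an explicit membership code (or construction on codes) witnessing it, mirroring the familiar constructions for hereditarily countable sets coded as reals, but now working with arbitrary membership codes and quotienting by $\cong$. Throughout, $\ETR$ is used exactly as in Lemma \ref{lem2:max-prtl-isom}: to guarantee that maximum initial partial isomorphisms exist, hence that $\vin$ and $\cong$ are well-behaved, and—more importantly here—to perform transfinite recursions that glue together codes along a well-founded relation. Extensionality is already in hand from Theorem \ref{thm2:unroll-ext}, so the congruence facts and the fact that $A \vin B \iff A \cong B{\downarrow}b$ for some $b \lol_B t_B$ can be used freely.

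For \emph{Pairing}, given membership codes $A$ and $B$, form the code $C$ whose nodes are a copy of $\dom A$, a disjoint copy of $\dom B$, and a fresh top node $t_C$, with $\lol_C$ restricted to each copy agreeing with $\lol_A$ and $\lol_B$ respectively, and with exactly $t_A$ and $t_B$ pointing to $t_C$. One checks directly that $C$ is well-founded, extensional (using that $A,B$ are extensional and, if $A \cong B$, collapsing the two copies to one—or noting that Extensionality in $\Ufrak$ lets us pass to the quotient), and that its penultimate level represents exactly $A$ and $B$; this is a first-order construction, no recursion needed. For \emph{Union}, given $A$, let the ``grandchildren'' of $t_A$ be the nodes $x$ with $x \lol_A y \lol_A t_A$ for some $y$; build $\bigcup A$ by taking the union of the codes $A {\downarrow} x$ over all such grandchildren $x$ (realized inside $A$ itself, so no coherence issue arises—just take $A$ with its top level deleted and a new top node placed over the old grandchildren level, using Elementary Comprehension to isolate that level), and verify its penultimate elements are exactly those $C$ with $C \vin D \vin A$ for some $D$. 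For \emph{Infinity}, take the canonical membership code $E_\omega$ for the ground-model set $\omega$ (Definition preceding the unrolling), which exists by Elementary Comprehension; since $\omega^M$ genuinely satisfies ``empty set exists, closed under $x \mapsto x \cup \{x\}$'' and these are first-order facts preserved under passing to the code, $E_\omega$ witnesses Infinity in $\Ufrak$. Here one uses that canonical membership codes represent their sets faithfully, as remarked after their definition.

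\emph{Foundation} is the one place where the class-sized, non-well-collapsible nature of membership codes needs care, and I expect it to be the main obstacle. The statement to prove is: for any membership code $A$ with $\pen A \neq \emptyset$ (i.e.\ $A$ represents a nonempty ``set''), there is $x \lol_A t_A$ such that no $y \lol_A t_A$ has $(A{\downarrow}y) \vin (A{\downarrow}x)$. The naive move is to pick $x$ minimal in $\pen A$ under $<_A$—this works because $A$ itself is a well-founded relation \emph{in the ground model sense}, so $<_A \rest \pen A$ has a minimal element by Foundation (for sets, applied after noting $\pen A$ is a set by Class Replacement, since it is the image of $A$ restricted to immediate predecessors of the single element $t_A$). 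The subtlety is that ``$(A{\downarrow}y) \vin (A{\downarrow}x)$'' is an isomorphism statement, not a $<_A$-statement: we need that if $(A{\downarrow}y)\vin(A{\downarrow}x)$ then in fact $y <_A x$. This follows because $(A{\downarrow}y) \vin (A{\downarrow}x)$ means $A{\downarrow}y \cong (A{\downarrow}x){\downarrow}z = A{\downarrow}z$ for some $z \lol_A x$, whence by Proposition \ref{prop2:local-uniq} (uniqueness of nodes up to isomorphism type of predecessors) $y = z$, so $y \lol_A x$, giving $y <_A x$. Thus a $<_A$-minimal element of $\pen A$ is $\vin$-minimal among the elements of $A$, which is precisely Foundation for $\Ufrak$. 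No recursion beyond what Proposition \ref{prop2:local-uniq} already provides is needed for this axiom, but one must be careful that $\pen A$, $A{\downarrow}x$, and the relevant restrictions are all genuine classes, which Elementary Comprehension and Class Replacement supply over $\GBcm$.

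Assembling these, each axiom is witnessed by an explicitly constructed code whose membership-code properties (well-foundedness, extensionality, existence of a top element) are verified by first-order reasoning about $A$, $B$ in the ground model, together with the congruence of $\cong$ with respect to $\vin$ so that the witnesses descend to the quotient $\Ufrak$. The role of $\ETR$ is confined to ensuring Lemma \ref{lem2:max-prtl-isom} is available (which Theorem \ref{thm2:unroll-ext}, and implicitly the coherence of $\vin$, already depend on); the four axioms here are light enough that no new recursion is required, so in fact the proof goes through over $\GBcm + \ETR$ exactly as the other unrolling results, and I would flag at the end that Pairing, Union, Infinity, and Foundation in $\Ufrak$ actually need only $\GBcm$ plus whatever is already used to establish Extensionality.
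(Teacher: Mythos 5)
Your Union, Infinity, and Foundation arguments essentially match the paper's, but your Pairing construction has a genuine gap. A membership code is required to be an \emph{extensional} directed graph, and the disjoint union of copies of $A$ and $B$ with a fresh top is never extensional: each of $A$ and $B$, being well-founded and extensional with nonempty domain, has exactly one node with empty predecessor set (its node for $\emptyset$), and in the disjoint union these are two distinct nodes with identical (empty) predecessor sets. Your parenthetical only treats the case $A \cong B$, and ``passing to the quotient'' does not help, because the object you built is not a membership code and hence is not in the domain of $\Ufrak$ at all. The repair is exactly where the paper spends its effort: first compute the maximum initial partial isomorphism $\pi \prtlfn A \to B$ (lemma \ref{lem2:max-prtl-isom}, which is where $\ETR$ enters), and glue $A$ onto $B$ along $\pi$ before adjoining the new top, so that overlapping ``hereditary members'' of $A$ and $B$ are identified and extensionality is preserved. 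Your closing claim that Pairing is ``a first-order construction, no recursion needed'' is therefore wrong in a precise sense: proposition \ref{prop2:need-max-ipis} shows that, over $\GBcm$, Pairing in $\Ufrak$ already \emph{implies} the existence of maximum initial partial isomorphisms, so some such recursion-strength input is unavoidable.

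A secondary flaw: in Foundation you justify picking a minimal element of $\pen A$ by claiming $\pen A$ is a set via Class Replacement. That is false in general --- $\pen A$ is the preimage of the single node $t_A$, not the image of a set, and it can be a proper class (e.g.\ for the membership code $\Ord + 1$, whose penultimate level is all of $\Ord$). The argument survives anyway, as in the paper: well-foundedness of the class relation $A$ directly gives an $\lol_A$-minimal element of the nonempty class $\pen A$, and your use of proposition \ref{prop2:local-uniq} to convert $(A \downarrow y) \vin (A \downarrow x)$ into $y \lol_A x$ is exactly the paper's step. With the Pairing construction replaced by the gluing argument and the $\pen A$ justification corrected, the proposal lines up with the paper's proof.
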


\begin{proof}
(Union) Given a membership code $X$ we need to produce a membership code which represents $\bigcup X = \{ Z : \exists Y\ Z \vin Y \vin X\}$. We define such a $Y$ by cutting out the penultimate level of $X$: 
\[
Y = (X \setminus \{(x,t_{X}) : x \lol t_{X}\}) \cup \{(x,t_{X}) : \exists x'\ x \lol x'\lol t_{X}\}.
\]
This can be constructed by an instance of Comprehension and is easily verified to represent $\bigcup X$. See figure \ref{fig2:union-mem-code}.

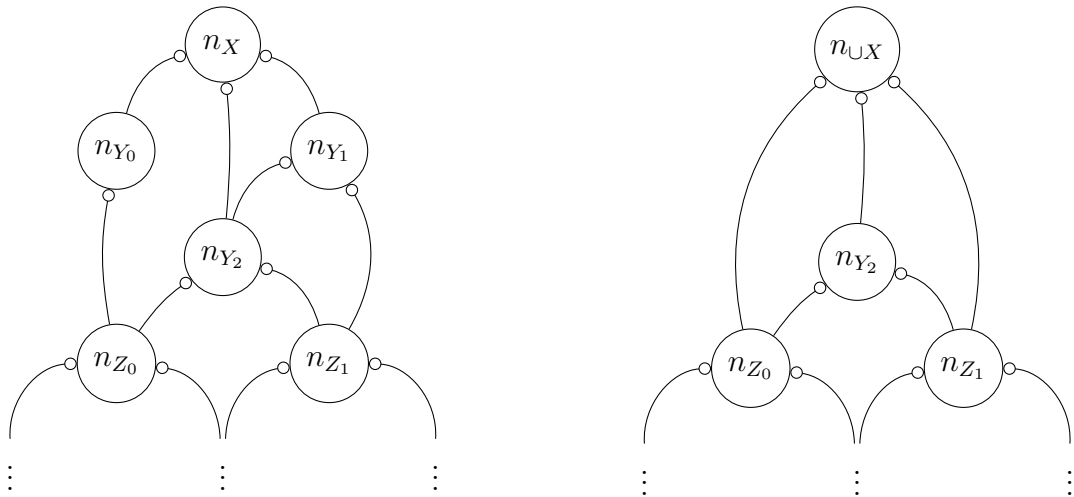
\begin{figure}[ht]
\begin{center}
\begin{multicols}{2}
\begin{tikzpicture}
[->,>=o,auto,node distance=2cm,
  n/.style={circle,draw,fill=white,minimum size=1cm}]
\node[n] (x) {$n_X$};
\node[n] (y0) [below left of=x] {$n_{Y_0}$};
\node[n] (y1) [below right of=x] {$n_{Y_1}$};
\node[n] (y2) [below left of=y1] {$n_{Y_2}$};
\node[n] (z0) [below left of=y2] {$n_{Z_0}$};
\node[n] (z1) [below right of=y2] {$n_{Z_1}$};
\node    (b0) [below left of=z0] {$\vdots$};
\node    (b1) [below right of=z0] {$\vdots$};
\node    (b2) [below right of=z1] {$\vdots$};
\path 
  (y0) edge [bend left] (x)
  (y1) edge [bend right] (x)
  (y2) edge [bend right=5] (x)
  (y2) edge [bend left=30] (y1)
  (z0) edge [bend left=10] (y0)
  (z0) edge [bend left=10] (y2)
  (z1) edge [bend right] (y1)
  (z1) edge [bend right] (y2)
  (b0) edge [bend left=45] (z0)
  (b1) edge [bend right=40] (z0)
  (b1) edge [bend left=40] (z1)
  (b2) edge [bend right=45] (z1);
\end{tikzpicture}

\begin{tikzpicture}
[->,>=o,auto,node distance=2cm,
  n/.style={circle,draw,fill=white,minimum size=1cm}]
\node[n] (x) {$n_{\cup X}$};
\node    (y0) [below left of=x] {};
\node    (y1) [below right of=x] {};
\node[n] (y2) [below left of=y1] {$n_{Y_2}$};
\node[n] (z0) [below left of=y2] {$n_{Z_0}$};
\node[n] (z1) [below right of=y2] {$n_{Z_1}$};
\node    (b0) [below left of=z0] {$\vdots$};
\node    (b1) [below right of=z0] {$\vdots$};
\node    (b2) [below right of=z1] {$\vdots$};
\path 
  (y2) edge [bend right=5] (x)
  (z0) edge [bend left] (x)
  (z0) edge [bend left=10] (y2)
  (z1) edge [bend right] (x)
  (z1) edge [bend right] (y2)
  (b0) edge [bend left=45] (z0)
  (b1) edge [bend right=40] (z0)
  (b1) edge [bend left=40] (z1)
  (b2) edge [bend right=45] (z1);
\end{tikzpicture}
\end{multicols}
\end{center}

\caption{To construct a membership code for $\bigcup X$ we remove all edges ending at the top node and add edges $(n_Z, t_{X})$ for all $n_Z \lol n_Y \lol t_{X}$.}
\label{fig2:union-mem-code}
\end{figure}

(Pairing) Let $A$ and $B$ be membership codes. By lemma \ref{lem2:max-prtl-isom} find $\pi \prtlfn A \to B$ the maximum initial partial isomorphism from $A$ to $B$. Now let 
\begin{align*}
P = &\ \ \ \ A \setminus (A \rest \dom \pi) \\
 &\cup \{ (\pi(a),a') : (a,a') \in A \mand a \in \dom \pi \mand a' \not \in \dom \pi \} \\
 &\cup B \\
 &\cup \{(t'_A,p), (t_B, p)\},
\end{align*}
where $p$ is a new point and $t'_A = t_A$ if $t_A \not \in \dom \pi$ and $t'_A = \pi(t_A)$ otherwise. It is easy to see that $P$ is a membership code which represents the unordered pair consisting of $A$ and $B$.

(Infinity) There is a membership code for $\omega$. It is straightforward to check that it represents an inductive set in $\Ufrak$.

(Foundation) At bottom, Foundation holds in $\Ufrak$ because membership codes are well-founded.

More formally, suppose towards a contradiction that Foundation fails in $\Ufrak$. That is, in the ground universe there is a membership code $A$ (for a nonempty set) so that for every $B \vin A$ there is $C$ such that $C \vin A$ and $C \vin B$. In particular, this holds for $B$ of the form $A \downarrow b$ for $b \lol_A t_A$. Therefore, we get that for all $b \lol_A t_A$ there is $c \lol_A t_A$ so that $c \lol_A t_b$. But then $\pen A$ has no minimal element, contradicting that $A$ is well-founded. 
\end{proof}

We also get that the unrolled structure satisfies Choice, due to having Global Choice in the ground universe.

\begin{proposition}[Over $\GBCm + \ETR$] \label{prop2:unroll-choice}
The unrolled structure $\Ufrak$ satisfies Choice, in the guise of the well-ordering theorem.\footnote{In the absence of Powerset, the various forms of Choice are no longer equivalent \cite{zarach1996}. That every set can be well-ordered is stronger than the existence of choice functions.}
\end{proposition}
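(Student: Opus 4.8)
The plan is to exploit that $\GBCm$ includes Global Choice in the form ``there is a bijection $\Ord\to V$'' to write down, for each membership code $A$, an explicit membership code $W$ which the unrolled structure reads as a well-order of the set represented by $A$. So, first fix once and for all a class bijection $e:\Ord\to V$ with inverse $\bar e$, so that $u<^* v\iff\bar e(u)<\bar e(v)$ defines a class well-order of $V$ of ordertype $\Ord$. Given a membership code $A$, recall from proposition \ref{prop2:local-uniq} that $x\mapsto[A\downarrow x]$ is a definable bijection from $\pen A$ onto the $\vin$-elements of $A$; let $\prec$ be $<^*$ restricted to the subclass $\pen A$ of $V$, a class well-order of $\pen A$.

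Next I would build, by Elementary Comprehension, a class $W$ coding inside $\Ufrak$ the reflexive relation $\{\,\langle[A\downarrow x],[A\downarrow y]\rangle : x,y\in\pen A,\ \bar e(x)\le\bar e(y)\,\}$. Concretely, for $x,y\in\pen A$ one forms a Kuratowski-pair code $K_{x,y}$ out of the subcodes $A\downarrow x$ and $A\downarrow y$ — a membership code whose top has two predecessors coding $\{[A\downarrow x]\}$ and $\{[A\downarrow x],[A\downarrow y]\}$, obtained as in the Pairing clause of proposition \ref{prop2:unroll-basic} (here the two subcodes already sit inside the common code $A$, which makes the merging transparent) — using a fixed pairing function on $V$ to name the finitely many auxiliary vertices of each $K_{x,y}$, so that distinct pairs $(x,y)$ use distinct auxiliary vertices carrying distinct predecessor sets. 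Then $W$ has a fresh top $t_W$ whose predecessors are exactly the tops of those $K_{x,y}$ with $\bar e(x)\le\bar e(y)$, together with all the $K_{x,y}$ lying below. Since $A$, $\bar e$, and the pairing function are classes, $W$ is first-order definable with class parameters, so $W\in\Xcal$; and since the part of $W$ above the well-founded graph $A$ sits in finitely many ranks with all vertices separated, $W$ is a membership code.

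That $\Ufrak$ reads $W$ as a reflexive linear order of $a:=[A]$ with field $a$ follows from the construction together with injectivity of $\bar e$ and linearity of the ordinals, using proposition \ref{prop2:local-uniq} to pass between $\vin$-elements of $a$ and vertices of $\pen A$. The remaining point is that $\Ufrak$ reads $W$ as well-founded. Suppose not, with $C\in\Xcal$ a membership code such that $\Ufrak\models$ ``$C$ is a nonempty subset of $a$ with no $W$-least element''. Here I would use $\ETR$: by lemma \ref{lem2:max-prtl-isom}, and an elementary transfinite recursion along the product of the two membership-code graphs (relating $(x',c')$ to $(x,c)$ when $x'\lol_A x$ and $c'\lol_C c$, whose transitive closure is well-founded), the relation ``$A\downarrow x\cong C\downarrow c$'' is itself given by an elementary recursion, so $\{(x,c):A\downarrow x\cong C\downarrow c\}\in\Xcal$, and hence $T:=\{x\in\pen A:(A\downarrow x)\vin C\}\in\Xcal$ by Elementary Comprehension. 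Unwinding the hypothesis on $C$ through the definitions of $W$ and $\vin$, $T$ is a nonempty subclass of $\pen A$ in which every $x$ has some $y\in T$ with $\bar e(y)<\bar e(x)$; but then $\bar e[T]$ is a nonempty class of ordinals with no least element, contradicting Foundation in $M$. So $\Ufrak\models$ ``$W$ well-orders $a$'', and as $A$ was arbitrary, $\Ufrak$ satisfies the well-ordering theorem.

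The hard part is exactly the point where $\ETR$ enters: a priori ``$(A\downarrow x)\vin C$'' is only $\Sigma^1_1$, and without the observation that over $\GBcm+\ETR$ isomorphism of membership codes — equivalently, $\vin$-membership in a fixed code — is computed by an elementary transfinite recursion, one cannot conclude that $T$ is a class of the model and the well-foundedness step stalls. Everything else is bookkeeping: the merging of shared structure in $K_{x,y}$ and the tagging of auxiliary vertices needed to keep $W$ extensional, and the routine verification that $\Ufrak$ reads $W$ as a linear order with field $a$.
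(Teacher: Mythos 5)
Your construction is essentially the paper's own proof: fix a global well-order of $V$ (the paper instead uses Global Choice to assume without loss that $\dom A \subseteq \Ord$ and orders $\pen A$ by the ordinals), strip the top of $A$, glue Kuratowski-pair codes above $\pen A$ according to that order, and add a new top, exactly as in the paper's figure. The extra work you do at the end --- using the maximum initial partial isomorphism of lemma \ref{lem2:max-prtl-isom} to show that $\Ufrak$ internally recognizes $W$ as well-founded --- is a verification the paper leaves to the reader, so the proposal is correct and matches the paper's approach.
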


\begin{proof}
Fix a membership code $A$. Appealing to Global Choice we may without loss of generality assume that $\dom A \subseteq \Ord$. We want to find a membership code $W$ which codes a well-order of $A$. We do this by modifying $A$ as follows. First, throw out $t_A$ and any edges pointing to $t_A$ to get $A'$. While $A'$ is in general not a membership code we will modify it to get the desired $W$. For each $x, y \in \pen A$ with $x < y$ (under the ordering from the ordinals) we will add the following nodes to $A'$: $\{x\}$, $\{x,y\}$ and $\{\{x\},\{x,y\}\}$. We also add edges to $A'$ in the obvious manner. That is, we add edges from $x$ to $\{x\}$ and $\{x,y\}$, from $y$ to $\{x,y\}$, from $\{x\}$ to $\{\{x\},\{x,y\}\}$, and from $\{x,y\}$ to $\{\{x\},\{x,y\}\}$. Finally, add a new node which will be $t_W$ and add edges from each of the $\{\{x,\},\{x,y\}\}$ nodes to $t_W$. (See figure \ref{fig2:wo-mem-code} for a picture of $W$.) Then $W$ represents a well-order of $A$ of ordertype $\le \Ord$.
\end{proof}

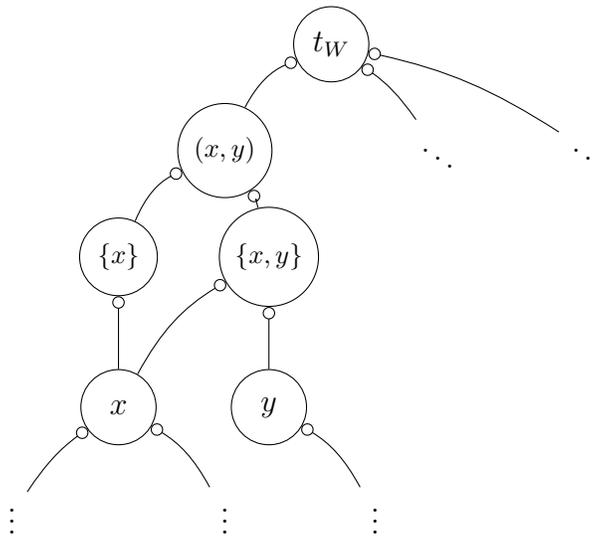
\begin{figure}[ht]
\begin{center}
\begin{tikzpicture}
[->,>=o,auto,node distance=2cm,
  n/.style={circle,draw,fill=white,minimum size=1cm}]
\node[n] (x) {$x$};
\node[n] (y) [right of=x] {$y$};
\node[n] (sx) [above of=x] {\footnotesize$\{x\}$};
\node[n] (xy) [above of=y] {\footnotesize $\{x,y\}$};
\node[n] (pxy) [above right of=sx] {\footnotesize $(x,y)$};
\node[n] (tw)  [above right of=pxy] {$t_W$};
\node (a) [below left of=x] {$\vdots$};
\node (b) [below right of=x] {$\vdots$};
\node (c) [below right of=y] {$\vdots$};
\node (d) [below right of=tw] {$\ddots$};
\node (e) [right of=d] {$\ddots$};

\path 
   (a) edge [bend left=10] (x)
   (b) edge [bend right=15] (x)
   (c) edge [bend right=15] (y)
   (x) edge (sx)
   (x) edge [bend left=15] (xy)
   (y) edge (xy)
  (sx) edge [bend left=20] (pxy)
  (xy) edge [bend right=10] (pxy)
 (pxy) edge [bend left=20] (tw)
   (d) edge [bend right=10] (tw)
   (e) edge [bend right=10] (tw);
\end{tikzpicture}
\end{center}

\caption{A partial picture of $W$. The nodes $x,y$ are from $\pen A$ with $x < y$. Below $x$ and $y$ the membership code $W$ looks like $A$.}
\label{fig2:wo-mem-code}
\end{figure}

The astute reader will note that Elementary Transfinite Recursion was used only so far in two places, namely to get Extensionality and Pairing. In both cases it was used via the lemma that there are maximum initial partial isomorphisms between membership codes. One might wonder whether this use of Elementary Transfinite Recursion can be avoided, whether by a different argument or by a different definition of membership code. See subsection \ref{subsec2:how-hard} below for some discussion.

Let $\Gamma$ be a well-order, possibly class sized. Then $\Gamma + 1 = \{ (g,g') : g \mathbin \Gamma g' \} \cup \{ (g,\star) : g \in \dom \Gamma \} \cup \{(\star,\star)\}$, where $\star \not \in \dom \Gamma$, is a membership code for an ordinal with ordertype $\Gamma$.\footnote{Of course, I assume here that $\dom \Gamma \ne V$. But in case that does happen one can take an isomorphic copy of $\Gamma$ with a smaller domain.}
So every well-order in the ground model corresponds to an ordinal in the unrolling. Of particular interest is the ordinal in the unrolling whose ordertype is the $\Ord$ of the ground universe.

Let $\kappa$ denote the set in $\Ufrak$ represented by the membership code $\Ord + 1$. Then $\Ufrak \models \kappa$ is regular. If the ground universe moreover satisfies Powerset then $\Ufrak \models \kappa$ is inaccessible. To prove these facts---and later results about $\Ufrak$---it will be convenient to be able to translate facts about functions in $\Ufrak$ to facts about class functions on the penultimate level of membership codes, and vice versa.

I will slightly abuse notation by writing e.g.\ $F: A \to B$ to refer to a membership code for a function $F$ in $\Ufrak$ from the set represented by $A$ to the set represented by $B$.

\begin{lemma}[Over $\GBcm + \ETR$] \label{lem2:fns-mem-codes}
There is a correspondence between class functions $F : \pen A \to \pen B$ and membership codes for functions $F^\star : A \to B$.
\end{lemma}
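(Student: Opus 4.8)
The plan is to exhibit both directions of the correspondence explicitly and then check they are mutually inverse modulo $\cong$ (equivalently, modulo $=$ in $\Ufrak$). Throughout, using Global Choice I may assume $\dom A$ and $\dom B$ are disjoint sets of ordinals. Recall that by Proposition \ref{prop2:local-uniq} the assignment $x \mapsto A \downarrow x$ is a bijection between $\pen A$ and the elements (in $\Ufrak$) of the set coded by $A$, and similarly for $B$; this is what makes a class function $\pen A \to \pen B$ the right thing to track.

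\emph{From $F$ to $F^\star$.} Given a class function $F : \pen A \to \pen B$, for each $x \in \pen A$ with $y = F(x)$ build a graph $P_x$ for the Kuratowski pair $\langle A \downarrow x, B \downarrow y \rangle$ exactly as in the ordered-pair gadget from the proof of Proposition \ref{prop2:unroll-choice}: over (tagged, disjoint) copies of $A \downarrow x$ and $B \downarrow y$ adjoin nodes for $\{A\downarrow x\}$, $\{A\downarrow x, B\downarrow y\}$ and the pair itself, with the obvious edges. Now glue: take disjoint tagged copies of all the $P_x$, adjoin one fresh node $t$, and add an edge from the top of each $P_x$ to $t$. Call the result $G'$; it is well-founded with top $t$, but in general it is \emph{not} extensional, since distinct $P_x$'s (and the two coordinates inside a single $P_x$, when $A\downarrow x \cong B\downarrow y$) may carry isomorphic downward substructures. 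To repair this, pass to the extensional quotient: by $\ETR$ along the well-founded class relation $<_{G'} \times <_{G'}$ define the relation $u \sim v \iff [(\forall u' \lol u\, \exists v' \lol v\; u' \sim v') \mand (\forall v' \lol v\, \exists u' \lol u\; u' \sim v')]$ — this is exactly the parallel version of the recursion proving Lemma \ref{lem2:max-prtl-isom}, run over all pairs of nodes at once — and let $F^\star = G'/\mathord\sim$, using Global Choice to pick $\sim$-least representatives. Then $F^\star$ is an extensional, well-founded directed graph with top $[t]$, hence a membership code, and $\{ F^\star \downarrow w : w \lol [t] \}$ is, up to $\cong$, precisely $\{ \langle A\downarrow x, B\downarrow F(x)\rangle : x \in \pen A \}$. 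Since $\Ufrak \models$ Extensionality (Theorem \ref{thm2:unroll-ext}), $F^\star$ represents in $\Ufrak$ a relation whose domain is the set coded by $A$, whose range lies in the set coded by $B$, and which is single-valued because $F$ is a function — i.e.\ a function $A \to B$.

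\emph{From $F^\star$ back to $F$.} Given a membership code $G$ representing a function $g : A \to B$ in $\Ufrak$, define, for $x \in \pen A$, $F(x)$ to be the unique $y \in \pen B$ such that some $w \lol_G t_G$ has $G \downarrow w$ isomorphic to the Kuratowski pair built from $A\downarrow x$ and $B\downarrow y$; existence and uniqueness of $y$ follow from Extensionality in $\Ufrak$ together with Proposition \ref{prop2:local-uniq}. To see $F$ is a genuine class, note that ``$C \cong D$'' is first-order relative to a single class $\Pi$ collecting the maximum initial partial isomorphisms between all pairs of membership codes; such $\Pi$ exists by one application of $\ETR$ (relativizing the uniform recursion of Lemma \ref{lem2:max-prtl-isom} over all pairs $(C,D)$), so $F$ is given by Elementary Comprehension from $G$, $A$, $B$, $\Pi$. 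Finally, the two assignments are inverse up to $\cong$: starting from $F$, forming $F^\star$ and reading off the function returns $F$ by construction; starting from a code $G$ of a function $g$, the code $(F_g)^\star$ has the same penultimate down-sets as $G$ up to $\cong$, so $(F_g)^\star \cong G$ by Extensionality in $\Ufrak$.

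\emph{Main obstacle.} The real work is the extensionality bookkeeping in the forward direction: the naive ``disjoint union of the $P_x$'' is essentially never a membership code, and one must verify that the extensional collapse of a proper-class well-founded graph is available over $\GBcm + \ETR$ — this is precisely where the hypothesis $\ETR$ is used (just as it was for Extensionality and Pairing of $\Ufrak$ in Theorem \ref{thm2:unroll-ext} and Proposition \ref{prop2:unroll-basic}), via the simultaneous-pairs form of Lemma \ref{lem2:max-prtl-isom}. Everything else — the ordered-pair gadget, checking domain/range/single-valuedness, and checking the round trips — is routine graph manipulation of the kind already carried out in Propositions \ref{prop2:unroll-basic} and \ref{prop2:unroll-choice}.
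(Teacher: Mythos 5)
Your proof is correct, but your forward direction takes a genuinely different route from the paper's. The paper never leaves the extensional world: it first forms the membership code $P$ for the unordered pair of $A$ and $B$ (so all identifications between $A$ and $B$ are made up front by a single maximum initial partial isomorphism, as in proposition \ref{prop2:unroll-basic}), deletes the tops, and grafts the nodes $\{a\}$, $\{a,F(a)\}$, $(a,F(a))$ directly onto the existing nodes $a \in \pen A$ and $F(a) \in \pen B$ of $P$, then adds a new top. Your construction instead builds a disjoint, non-extensional union of separate pair gadgets $P_x$ and repairs it afterwards by an extensional (bisimulation) collapse defined by one elementary transfinite recursion along the product relation, with representatives chosen by Global Choice. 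Both work over $\GBcm + \ETR$; the paper's route avoids the collapse machinery entirely (its only appeal to $\ETR$ is through Pairing), while yours requires verifying that $\sim$ is a congruence and that the quotient is well-founded, extensional, and represents the intended pairs, but in exchange it gives a reusable general tool and handles uniformly the degenerate case $A\downarrow x \cong B\downarrow F(x)$, where grafting both $\{a\}$ and $\{a,F(a)\}$ onto a merged node requires a small extra adjustment in the paper's construction. Your backward direction is essentially the paper's: read off $F(x)$ as the unique $y$ with the pair $\langle A\downarrow x, B\downarrow y\rangle$ an element of the code, using proposition \ref{prop2:local-uniq} and Extensionality (theorem \ref{thm2:unroll-ext}).

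One claim should be restated: there can be no single class $\Pi$ coding the maximum initial partial isomorphisms between \emph{all} pairs of membership codes, since the membership codes form a proper hyperclass and each such isomorphism is itself a class. What you actually need, and what $\ETR$ does provide via lemma \ref{lem2:max-prtl-isom}, is only the isomorphism data among the down-sets of the finitely many codes at hand: for instance the maximum initial partial isomorphisms from $G$ to $A$ and from $G$ to $B$ (whose graphs record exactly when $G\downarrow w \cong A\downarrow x$, respectively $G\downarrow w \cong B\downarrow y$) make your definition of $F$ elementary in class parameters, so classhood of $F$ follows from Elementary Comprehension. Also, $\dom A$ and $\dom B$ are classes of ordinals, not sets. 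These are local fixes, not gaps in the argument.
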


\begin{proof}
Let us do the simpler direction first. For the backward direction, suppose that $G$ is a membership code for a function from membership code $A$ to membership code $B$. That is, if $A' \vin A$ then there is $B' \vin B$ so that (in $\Ufrak$) $G$ maps $A'$ to $B'$. In particular, this holds for $A'$ of the form $A \downarrow a$ for $a \lol_A t_A$. Fix such $a$. Then there is a unique $b_a \lol_B t_B$ so that $G(A \downarrow a) \cong B \downarrow b_a$. Set $G_\star(a) = b_a$. This yields (in the ground universe) a class function $G_\star : \pen A \to \pen B$.

For the forward direction of the correspondence, suppose that $F : \pen A \to \pen B$ is a class function. 
Let $P$ be a membership code for the unordered pair whose elements are $A$ and $B$. Such exists by proposition \ref{prop2:unroll-basic}. Taking isomorphic copies if necessary we may assume without loss that $A, B \subseteq P$. We will now modify $P$ to produce the membership code $F^\star$. This is done similar to the argument in proposition \ref{prop2:unroll-choice} to construct a membership code for a well-ordering of a set. Namely, throw away $t_P$, $t_A$, and $t_B$ to produce $P'$.\footnote{Unless either $t_A \lol_P t_B$ or $t_B \lol_P t_A$, in which case keep, respectively, $t_A$ or $t_B$.} This $P'$ is in general not a membership code, but we will modify it to produce $F^\star$. For $a \in \pen A$ add nodes for $\{a\}$, $\{a,F(a)\}$ and $(a,F(a))$ to $P'$. Then add edges in the obvious way: add an edge from $a$ to $\{a\}$, an edge from $a$ to $\{a,F(a)\}$, an edge from $\{a\}$ to $(a,F(a))$, an edge from $F(a)$ to $\{a,F(a)\}$, and an edge from $\{a,F(a)\}$ to $(a,F(a))$. Finally, add a new top element $t_{F^\star}$ and edges from each $(a,F(a))$ to $t_{F^\star}$. This gives the desired $F^\star$.

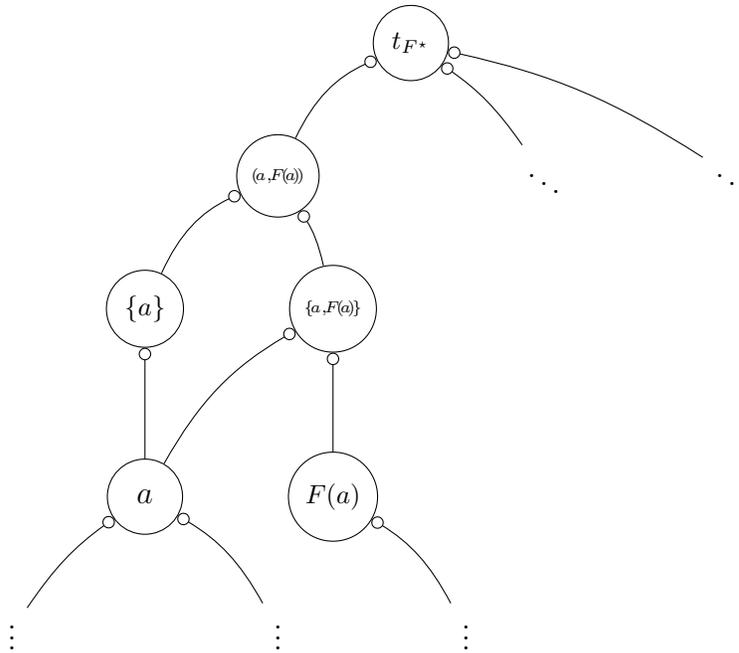
\begin{figure}[ht]
\begin{center}
\begin{tikzpicture}
[->,>=o,auto,node distance=2.5cm,
  n/.style={circle,draw,fill=white,minimum size=1cm}]
\node[n] (x) {$a$};
\node[n] (y) [right of=x] {\footnotesize $F(a)$};
\node[n] (sx) [above of=x] {\footnotesize$\{a\}$};
\node[n] (xy) [above of=y] {\tiny $\{\!a,\hspace{-.08cm}F\!(\!a\!)\!\}$};
\node[n] (pxy) [above right of=sx] {\tiny $(\!a,\hspace{-.08cm}F\!(\!a\!)\!)$};
\node[n] (tw)  [above right of=pxy] {\footnotesize $t_{F^\star}$};
\node (a) [below left of=x] {$\vdots$};
\node (b) [below right of=x] {$\vdots$};
\node (c) [below right of=y] {$\vdots$};
\node (d) [below right of=tw] {$\ddots$};
\node (e) [right of=d] {$\ddots$};

\path 
   (a) edge [bend left=10] (x)
   (b) edge [bend right=15] (x)
   (c) edge [bend right=15] (y)
   (x) edge (sx)
   (x) edge [bend left=15] (xy)
   (y) edge (xy)
  (sx) edge [bend left=20] (pxy)
  (xy) edge [bend right=10] (pxy)
 (pxy) edge [bend left=20] (tw)
   (d) edge [bend right=10] (tw)
   (e) edge [bend right=10] (tw);
\end{tikzpicture}
\end{center}

\caption{A partial picture of $F^\star$.}
\label{fig2:f-star}
\end{figure}

It is now easy to check that $(F^\star)_\star = F$ and $(F_\star)^\star \cong F$.
\end{proof}

The same argument works more generally for relations.

\begin{corollary}[Over $\GBcm + \ETR$] \label{cor2:relns-mem-codes}
There is a correspondence between class relations $R \subseteq \pen A \times \pen B$ and membership codes for relations $R^\star$ between $A$ and $B$. \qed
\end{corollary}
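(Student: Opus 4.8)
The plan is to transcribe the proof of lemma \ref{lem2:fns-mem-codes} almost verbatim, the only difference being that a relation $R \subseteq \pen A \times \pen B$ is, from the internal vantage point of $\Ufrak$, just a set of Kuratowski pairs rather than a set of pairs witnessing functionality, so no totality or single-valuedness is available or needed.

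For the backward direction I would start with a membership code $G$ which codes a relation between $A$ and $B$ in $\Ufrak$; that is, every immediate predecessor $g \lol_G t_G$ has $G \downarrow g$ coding an ordered pair $(A',B')$ with $A' \vin A$ and $B' \vin B$. For each such $g$ read off the unique $a_g \lol_A t_A$ and $b_g \lol_B t_B$ for which $G \downarrow g$ codes $(A \downarrow a_g, B \downarrow b_g)$; uniqueness of $a_g$ and $b_g$ is proposition \ref{prop2:local-uniq}. Then $G_\star = \{ (a_g,b_g) : g \lol_G t_G \}$ is a class by Elementary Comprehension, with $G$, $A$, $B$ as parameters, and $G_\star \subseteq \pen A \times \pen B$.

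For the forward direction, given $R \subseteq \pen A \times \pen B$, take a membership code $P$ for the unordered pair $\{A,B\}$, which exists by proposition \ref{prop2:unroll-basic}, and pass to an isomorphic copy so that $A, B \subseteq P$. Now modify $P$ just as in the proofs of propositions \ref{prop2:unroll-choice} and \ref{lem2:fns-mem-codes}: discard $t_P$ together with $t_A$ and $t_B$ (keeping whichever of $t_A$, $t_B$ lies below the other, if either does) to get $P'$; for each $(a,b) \in R$ adjoin fresh nodes $\{a\}$, $\{a,b\}$, and $(a,b) \mathrel{:=} \{\{a\},\{a,b\}\}$ with edges $a \lol \{a\}$, $a \lol \{a,b\}$, $b \lol \{a,b\}$, $\{a\} \lol (a,b)$, $\{a,b\} \lol (a,b)$ (adjoining $\{a\}$ only once, so it is shared when $a$ occurs in several pairs of $R$); and finally adjoin a new top node $t_{R^\star}$ with an edge $(a,b) \lol t_{R^\star}$ for every $(a,b) \in R$. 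Call the result $R^\star$. It is a well-founded extensional directed graph with a top element — the pair-nodes are pairwise distinct since distinct $(a,b)$ give distinct predecessor sets, and only boundedly much is added above each old node — hence a membership code, and it is a class because it is first-order definable from the parameters $R$, $A$, $B$. Then verify $(R^\star)_\star = R$ and $(R_\star)^\star \cong R$, exactly as in lemma \ref{lem2:fns-mem-codes}.

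The theory needed is the same, $\GBcm + \ETR$, with $\ETR$ entering only through the appeal to proposition \ref{prop2:unroll-basic} (whose proof of Pairing uses lemma \ref{lem2:max-prtl-isom}). I do not expect any real obstacle: the one point needing care — ensuring $R^\star$ stays extensional, and that the backward reading is well-defined — is handled, respectively, by using fresh nodes and by proposition \ref{prop2:local-uniq}, precisely as in the function case.
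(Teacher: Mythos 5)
Your proposal is correct and matches the paper exactly: the paper proves this corollary simply by noting that the argument of lemma \ref{lem2:fns-mem-codes} carries over verbatim to relations, which is precisely what you carry out. Your extra care about sharing the $\{a\}$-nodes when $a$ occurs in several pairs (to preserve extensionality) is a sensible refinement the paper leaves implicit.
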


\begin{proposition}[Over $\GBCm + \ETR$] \label{prop2:unroll-kappa}
In $\Ufrak$, $\kappa$ is regular and the largest cardinal. Additionally, if the ground universe satisfies Powerset then $\kappa$ is a strong limit, hence inaccessible.
\end{proposition}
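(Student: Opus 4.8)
The plan is to run everything through the two parametrization facts already established: Proposition~\ref{prop2:local-uniq}, which says that for a membership code $A$ the map $x \mapsto A \downarrow x$ from $\pen A$ to the $\vin$-elements of $A$ is injective (and it is onto, by the definition of $\vin$), and the correspondence of Lemma~\ref{lem2:fns-mem-codes} and Corollary~\ref{cor2:relns-mem-codes} between class functions/relations on penultimate levels and membership codes for functions/relations in $\Ufrak$. The key structural observation, available after using Global Choice to arrange $\dom A \subseteq \Ord$ (exactly as in the proof of Proposition~\ref{prop2:unroll-choice}), is that the set represented by $A$ is in class bijection with the subclass $\pen A \subseteq \Ord$. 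Also record the ground-model/unrolled-model dichotomy we will lean on: an ordinal $\mu <^{\Ufrak} \kappa$ is represented by a code of the form $(\Ord+1)\downarrow x$, hence by a set-sized code, whereas $\kappa$ itself is represented by $\Ord+1$, whose penultimate level is a proper class in class bijection with $\Ord$ via the ordertype map.

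First I would show $\kappa$ is the largest cardinal. Given a membership code $A$ with $\dom A \subseteq \Ord$, compose the inclusion $\pen A \hookrightarrow \Ord$ with the ordertype bijection $\Ord \cong \pen(\Ord+1)$ to get a class injection $\pen A \to \pen(\Ord+1)$; by Lemma~\ref{lem2:fns-mem-codes} this is witnessed by a membership code for an injection from $A$ into $\kappa$, so $\Ufrak \models \ilabs{A} \le \kappa$. To see $\kappa$ is genuinely a cardinal, note that a bijection $\mu \to \kappa$ in $\Ufrak$ with $\mu <^{\Ufrak} \kappa$ would, via Corollary~\ref{cor2:relns-mem-codes}, give a class surjection from the set $\pen(\text{code of }\mu)$ onto $\pen(\Ord+1)$, i.e.\ a class surjection from a set onto a proper class in class bijection with $\Ord$, contradicting Class Replacement. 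The same contradiction rules out a cofinal class map into $\pen(\Ord+1)$ from the penultimate level of a set-sized code, hence no cofinal map $\mu \to \kappa$ in $\Ufrak$ for $\mu <^{\Ufrak}\kappa$, giving regularity; and since $\omega$ is represented by a set-sized code (used in the proof of Proposition~\ref{prop2:unroll-basic}) we have $\omega <^{\Ufrak} \kappa$, so $\kappa$ is a regular uncountable cardinal, and by the previous paragraph the largest cardinal.

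For the strong-limit clause I would add Powerset in the ground universe and first record two facts. (i) A set-sized membership code $B$ is, by Mostowski's collapse inside $M$, isomorphic to the canonical code $E_w$ of an actual set $w$; and $\ilabs{w}^{\Ufrak} <^{\Ufrak} \kappa$, since a well-ordering of $w$ in $M$ of some set ordertype $\alpha < \Ord$ transfers, via Corollary~\ref{cor2:relns-mem-codes}, to a bijection in $\Ufrak$ between $w$ and the ordinal $\alpha$, and $\alpha$ is represented by a set-sized code hence $<^{\Ufrak}\kappa$. (ii) For an actual ordinal $\beta$, a membership code $S$ representing a subset of $\beta$ in $\Ufrak$ has all its $\vin$-elements isomorphic to $E_y$ with $y \in \beta$, so $S \cong E_z$ for $z = \{\, y \in \beta : E_y \vin S \,\}$, a set by Class Separation; conversely each $E_y$ with $y \subseteq \beta$ represents a subset of $\beta$. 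Hence $\powerset(\beta)^{\Ufrak}$ is represented by the set-sized code $E_{\powerset(\beta)^{M}}$ (using Powerset in $M$), so by~(i), $2^{\ilabs{\beta}^{\Ufrak}} = \ilabs{\powerset(\beta)^{\Ufrak}}^{\Ufrak} <^{\Ufrak} \kappa$. Since every cardinal $\lambda <^{\Ufrak} \kappa$ is represented by a set-sized code, hence by some $E_\beta$, this shows $\kappa$ is a strong limit, and together with regularity (and uncountability) it is inaccessible in $\Ufrak$.

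I expect the main obstacle to be bookkeeping rather than conceptual: making the passage between class functions/relations on penultimate levels and their membership-code counterparts precise and consistent, and verifying that injectivity, surjectivity, cofinality, and ``being a subset of'' transfer faithfully in both directions through this correspondence. For the strong-limit part the delicate point is the Mostowski-collapse step together with the identification of $\powerset(\beta)^{\Ufrak}$ with $E_{\powerset(\beta)^{M}}$, which must be carried out carefully enough that the only ground-model hypothesis used beyond $\GBCm + \ETR$ is Powerset.
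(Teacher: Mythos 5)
Your proposal is correct and follows essentially the same route as the paper's proof: regularity and the largest-cardinal claim via the penultimate-level correspondence of Lemma \ref{lem2:fns-mem-codes} together with Class Replacement and Global Choice, and the strong-limit clause via Mostowski collapse inside $M$ plus ground-model Powerset, using that subsets of a ground-model ordinal in $\Ufrak$ are exactly the ground-model ones. The only cosmetic difference is that for ``largest cardinal'' you inject every set of $\Ufrak$ into $\kappa$, whereas the paper uses Global Choice to biject any ordinal longer than $\Ord$ directly onto $\kappa$; this is the same idea with a slightly different decomposition.
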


\begin{proof}
($\kappa$ is regular) Let $\Gamma$ be a membership code for an ordinal shorter than $\Ord$. Then $\Gamma$ must be set-sized. Suppose towards a contradiction that there is a membership code for a cofinal function $F^\star : \Gamma \to \Ord$. Let $F : \pen \Gamma \to \pen(\Ord+1)$ be the corresponding function. I claim $F$ must be cofinal in the order on $\pen(\Ord+1) = \Ord$. To see this, pick $\alpha \in \Ord$. Then because $F^\star$ is cofinal there is $\beta > \alpha$ so that for some $g \in \pen \Gamma$ we have $F^\star(\Gamma \downarrow g) = (\Ord+1) \downarrow \beta$. But then $F(g) = \beta > \alpha$, establishing that $F$ is cofinal. This is impossible, however, since $\Ord$ is a class and $\pen \Gamma$ is a set.

($\kappa$ is the largest cardinal) Let $\Gamma$ be a membership code for an ordinal longer than $\Ord$. By Global Choice there is a class bijection $F: \pen(\Ord + 1) \to \pen \Gamma$. Let $F^\star$ be the corresponding membership code for a function $F^\star : \Ord+1 \to \Gamma$. We want to see that $F^\star$ represents a bijection. First, suppose that $x \ne y$ are in $\pen(\Ord + 1)$. Then $F(x) \ne F(y)$ and thus $F^\star((\Ord+1) \downarrow x) \not \cong F^\star((\Ord+1) \downarrow y)$. So $f$ is one-to-one. To see $f$ is onto $\alpha$, take $a \in \pen A$. because $F$ is a bijection, there is $x \in \pen(\Ord +1)$ so that $F(x) = a$. Thus $F^\star((\Ord+1) \downarrow x) \cong A \downarrow a$. 

($\kappa$ is strong limit, if the ground universe satisfies Powerset) Take $\Gamma$ a membership code for a well-order shorter than $\Ord$. Then $\Gamma$ is set-sized. Because the ground universe satisfies powerset this means there is a set-sized membership code for $2^\Gamma$ and thus $2^\Gamma$ represents an ordinal less than $\kappa$. More explicitly, by the Mostowski collapse theorem we may assume without loss that $\Gamma = \oin \rest (\gamma + 1)$ for some ordinal $\gamma$. Then $2^\Gamma$ is represented by the membership code $\oin \rest (2^\gamma + 1)$, where $2^\gamma$ denotes cardinal exponentiation while the addition is ordinal addition. 
\end{proof}



It requires full second-order Comprehension to get full Separation in the unrolled model. However, it goes level by level.

To prove this we will need to translate between first-order formulae to be interpreted over $\Ufrak$ and second-order formulae to be interpreted in the ground universe. That is, given first-order $\phi$ there is a formula $\phi^*$ so that $\Ufrak \models \phi$ if and only if the ground universe satisfies $\phi^*$. This purely syntactic translation is given by the following schema.

\begin{itemize}
\item If $\phi$ is of the form $x = y$ then $\phi^*$ is $X \cong Y$.
\item If $\phi$ is of the form $x \in y$ then $\phi^*$ is $X \vin Y$.
\item If $\phi$ is of the form $\psi \lor \theta$ then $\phi^*$ is $\psi^* \lor \theta^*$, and similarly for conjunctions.
\item If $\phi$ is of the form $\neg \psi$ then $\phi^*$ is $\neg \psi^*$. 
\item If $\phi$ is of the form $\exists x \psi(x)$ then $\phi^*$ is $\exists X\ X$ is a membership code and $\psi^*(X)$, and similarly for unbounded universal quantifiers.
\item If $\phi$ is of the form $\exists x \in y \psi(x)$ then $\phi^*$ is $\exists x \lol_Y t_Y\ \psi^*(Y \downarrow x)$, and similarly for bounded universal quantifiers.
\end{itemize}

Recall that capitals are for second-order variables while lowercase letters are for first-order variables. This translation assumes there is some fixed in advance correspondence between first-order variables and second-order variables so that e.g.\ $x$ can be replaced by $X$. The details of this correspondence are completely uninteresting and will be suppressed.

Given a formula $\phi(\bar x)$ we translate it into a formula $\phi^*(\bar X)$. This is a purely syntactic translation defined via a recursion on the countable set of first-order formulae, which can be carried out in a weak fragment of $\ZFCm$. The translation is transparent to parameters: to handle formulae augmented with parameters $\phi(\bar A)$ simply consider $\phi^*(\bar A)$. 

First we need to see that this translation is coherent, since we gave different translations for $\exists x \in y\ \phi(x)$ and $\exists x\ (x \in y \land \phi(x))$.

\begin{lemmaschema}[Over $\GBcm$]
For all formulae $\phi(x)$ in the language of first-order set theory, $\GBcm$ proves that if $A$ and $B$ are isomorphic membership codes then $\phi^*(A) \iff \phi^*(B)$. 
\end{lemmaschema}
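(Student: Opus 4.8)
The plan is to prove this lemma schema by an external induction on the syntactic complexity of the first-order formula $\phi$, with the key technical input being the uniqueness of isomorphisms between membership codes (the rigidity proposition) together with the already-established congruence of $\cong$ with respect to $\vin$ (proposition following definition \ref{def2:vin}). Since the translation $\phi \mapsto \phi^*$ is defined by recursion on formulae, it is natural to prove invariance under isomorphism by a parallel recursion. Fix an isomorphism $e : A \cong B$; I will show $\phi^*(A) \iff \phi^*(B)$ by induction, but actually the cleaner statement to induct on is the stronger claim that for any membership codes $A, B$ with a \emph{partial} correspondence this works — but in fact for this lemma it suffices to handle the case of a full isomorphism, so I will keep it simple and induct on $\phi$ directly, carrying the isomorphism along.

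First I would handle the atomic cases. For $\phi$ of the form $x = y$, we have $\phi^*(A)$ is ``$A \cong A$'' which is trivially true, and similarly $\phi^*(B)$ — so there is nothing to check; more precisely the lemma is about a single free variable, so the atomic formula $x = x$ gives $A \cong A \iff B \cong B$, both true. (One must be slightly careful that the lemma as stated has $\phi(x)$ with one free variable, so atomic formulae are among $x = x$ and $x \in x$; the latter translates to $A \vin A \iff B \vin B$, which holds by the congruence proposition applied with $A \cong A$ and $B \cong B$... more directly since $A \cong B$ we get $A \vin A \iff B \vin B$ from congruence.) The Boolean cases $\psi \lor \theta$ and $\neg \psi$ are immediate from the inductive hypothesis applied to the subformulae (which have the same free variable $x$). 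The two interesting cases are the quantifier cases.

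For the bounded quantifier $\exists x' \in x\ \psi(x', x)$, the translation is $\exists a \lol_A t_A\ \psi^*(A \downarrow a, A)$ versus $\exists b \lol_B t_B\ \psi^*(B \downarrow b, B)$. Here I use that the isomorphism $e : A \cong B$ restricts to a bijection between the immediate predecessors of $t_A$ and those of $t_B$, and that for each $a \lol_A t_A$ the restriction of $e$ gives an isomorphism $A \downarrow a \cong B \downarrow e(a)$. Then the inductive hypothesis, applied to $\psi$ viewed as a formula in its free variables with the isomorphic pair $(A \downarrow a, B \downarrow e(a))$ for the first argument and $(A, B)$ for the second, gives the equivalence. (Strictly, since $\psi$ may have two free variables, I should state and prove the induction hypothesis for formulae in any finite number of free variables, with a tuple of isomorphisms — this is the right level of generality, and I would set it up that way from the start to avoid awkwardness.) For the unbounded quantifier $\exists x'\ \psi(x', x)$, the translation quantifies over all membership codes $X'$; here the existence of a witness $C$ with $\psi^*(C, A)$ immediately gives the same $C$ works for $B$, since $\psi$ now only involves the isomorphic pair $(A, B)$ in the remaining free variable and $C$ is literally the same code — again by the inductive hypothesis with the identity isomorphism on $C$ and $e$ on the $A,B$ slot.

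The main obstacle, such as it is, is purely bookkeeping: getting the induction hypothesis stated at the right generality (formulae with arbitrarily many free variables, and a tuple of membership codes paired up by isomorphisms) so that the quantifier steps go through cleanly, and observing that all of this reasoning — the recursion on the set of first-order formulae, the restriction of isomorphisms to $\downarrow$-subcodes, the appeal to rigidity — is formalizable in $\GBcm$ without any transfinite recursion or impredicative comprehension, since the translation is a recursion on the countable (coded) set of formulae and each clause is first-order. In particular Elementary Comprehension suffices to form the relevant classes (e.g.\ the restriction $e \rest (A \downarrow a)$), and no appeal to lemma \ref{lem2:max-prtl-isom} or $\ETR$ is needed here — which is consistent with the lemma being stated over $\GBcm$ rather than $\GBcm + \ETR$. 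I would note explicitly that this is why the lemma appears before the serious use of $\ETR$: it is exactly the coherence statement needed to make sense of the translation $*$ at all.
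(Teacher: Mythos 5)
Your proof is correct and is exactly the argument the paper has in mind: the paper's own proof of this lemma schema is simply ``an easy induction on formulae,'' and your write-up (strengthening the induction hypothesis to tuples of free variables paired by isomorphisms, using the $\cong$/$\vin$ congruence for the atomic cases and restriction of the isomorphism to $A \downarrow a \cong B \downarrow e(a)$ for the bounded-quantifier case) is the standard way to carry that induction out in $\GBcm$.
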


\begin{proof}
This is an easy induction on formulae.
\end{proof}

\begin{lemmaschema}[Over $\GBcm$]
For all first-order formulae $\phi(x)$ the theory $\GBcm$ proves that $(\exists x\ x\in y \land \phi(x))^*$ is equivalent to $(\exists x \in y\ \phi(x))^*$.
\end{lemmaschema}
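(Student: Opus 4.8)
The plan is to unwind the two translations and show they produce logically equivalent formulas over $\GBcm$. Recall that the syntactic translation sends $\exists x\ (x \in y \land \phi(x))$ to $\exists X\ (X$ is a membership code $\land\ (X \vin Y \land \phi^*(X)))$, whereas it sends the bounded quantifier $\exists x \in y\ \phi(x)$ directly to $\exists x \lol_Y t_Y\ \phi^*(Y \downarrow x)$. So the task reduces to proving, provably in $\GBcm$, that for any membership code $Y$,
\[
[\exists X\ (X \text{ is a membership code} \land X \vin Y \land \phi^*(X))] \iff [\exists x \lol_Y t_Y\ \phi^*(Y \downarrow x)].
\]

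For the backward direction: given $x \lol_Y t_Y$, the class $Y \downarrow x$ is a membership code (this is a first-order fact, checkable by Elementary Comprehension, using that a downward-closed-below-$x$ restriction of a well-founded extensional directed graph with top element is again one, with top element $x$), and $Y \downarrow x \vin Y$ by the remark immediately following Definition \ref{def2:vin} (if $x \lol_A t_A$ then $(A \downarrow x) \vin A$, taking the witnessing isomorphism to be the identity). So $X = Y \downarrow x$ witnesses the left-hand side.

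For the forward direction: suppose $X$ is a membership code with $X \vin Y$ and $\phi^*(X)$ holds. By definition of $\vin$ there is $x \lol_Y t_Y$ with $X \cong Y \downarrow x$. Then by the preceding Lemma Schema (invariance of $\phi^*$ under isomorphism of membership codes), $\phi^*(Y \downarrow x)$ holds, so $x$ witnesses the right-hand side. The only subtlety worth flagging is that the congruence lemma is itself a schema proved by induction on formulas, so strictly this lemma schema is proved by appeal to that schema instance-by-instance; but that is routine and is exactly the kind of bookkeeping the paper has already set up. I do not expect any real obstacle here — the content is entirely in having stated Definition \ref{def2:vin} and the isomorphism-invariance lemma correctly, and the argument is a two-line case analysis in each direction. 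The proof I would write is simply: ``Immediate from the isomorphism-invariance of $\phi^*$ (previous lemma schema) together with the observation after Definition \ref{def2:vin} that $Y \downarrow x \vin Y$ for $x \lol_Y t_Y$ and that such $Y \downarrow x$ are membership codes.''
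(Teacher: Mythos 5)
Your proof is correct and is essentially the paper's own argument: the forward direction uses the definition of $\vin$ together with the isomorphism-invariance lemma schema, and the backward direction takes $X = Y \downarrow x$ as the witness. The extra remarks you flag (that $Y \downarrow x$ is a membership code with $(Y \downarrow x) \vin Y$, and that the invariance lemma is used schematically) are exactly the routine points the paper leaves implicit.
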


\begin{proof}
Consider a formula $\phi$. Work in $\GBcm$ and consider an arbitrary membership code $Y$. 

$(\Rightarrow)$ Suppose there is a membership code $X$ so that $X \vin Y$ and $\phi^*(X)$. By the definition of $\vin$ there is $x \lol t_Y$ so that $X \cong Y \downarrow x$. But then $\phi^*(Y \downarrow x)$ holds because of the previous lemma plus the fact that $\phi^*(X)$ holds, so we are done.

$(\Leftarrow)$ Suppose there is $x \lol t_Y$ so that $\phi^*(Y \downarrow x)$ holds. We want to see there is a membership code $X$ so that $X \vin Y$ and $\phi^*(X)$. Take $X = Y \downarrow x$. Done.
\end{proof}

\begin{lemma}[Over $\GBcm + \ETR$]\label{lem2:relns-are-delta11}
Both $X \cong Y$ and $X \vin Y$ are $\Delta^1_1$.
\end{lemma}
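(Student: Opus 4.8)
The plan is to show that each of the two relations can be written \emph{both} in $\Sigma^1_1$ form and in $\Pi^1_1$ form, the equivalence of the two forms being supplied by $\ETR$. First I would dispatch $X \cong Y$. The $\Sigma^1_1$ form is immediate: $X \cong Y$ iff $\exists \pi\ (\pi \colon X \cong Y)$, and ``$\pi$ is an isomorphism of membership codes'' is first-order in $\pi$ with $X, Y$ as parameters. For the $\Pi^1_1$ form I would invoke lemma \ref{lem2:max-prtl-isom}: the maximum initial partial isomorphism $\pi_{XY} \colon X \to Y$ was built there as the solution to a specific elementary transfinite recursion along the well-founded relation $\lol_X$ on $\dom X$, with $\langle X, Y\rangle$ as class parameter. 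Now ``$S$ is a solution of this recursion'' is a first-order property of $S$ (it asserts $(S)_x = \{ z : \phi(z, S\!\rest\! x, \langle X,Y\rangle)\}$ for all $x \in \dom X$, and $S\!\rest\! x$ is first-order definable from $S$); moreover over $\GBcm$ solutions of recursions along well-founded relations are unique (least-counterexample argument, using Elementary Comprehension and well-foundedness), while over $\GBcm + \ETR$ such a solution exists. Finally, $X \cong Y$ holds iff $\pi_{XY}$ is a total isomorphism of $X$ onto $Y$ (i.e.\ $\dom \pi_{XY} = \dom X$ and $\ran \pi_{XY} = \dom Y$): if $\tau \colon X \cong Y$ then $\tau$ is an initial partial isomorphism, so $\tau \subseteq \pi_{XY}$ by maximality, hence $\pi_{XY} = \tau$ is total; conversely a total initial partial isomorphism onto $\dom Y$ is an isomorphism. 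Combining,
\[
X \cong Y \iff \exists S\,\big(\text{$S$ solves the recursion} \wedge \text{$S$ is a total isomorphism onto $Y$}\big) \iff \forall S\,\big(\text{$S$ solves the recursion} \to \text{$S$ is a total isomorphism onto $Y$}\big),
\]
where the first formula is $\Sigma^1_1$ and the last is $\Pi^1_1$; the middle equivalence uses that the solution exists (by $\ETR$) and is unique (over $\GBcm$). Hence $\cong$ is $\Delta^1_1$.

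For $X \vin Y$ the $\Sigma^1_1$ form is again direct, since $X \vin Y$ iff $\exists \pi\, \exists a\ \big(a \lol_Y t_Y \wedge \pi \colon X \cong Y \downarrow a\big)$. For the $\Pi^1_1$ form the subtlety is that a set-existential $\exists a$ placed in front of the $\Pi^1_1$ description of $\cong$ need not remain $\Pi^1_1$; so instead I would run a \emph{single} elementary transfinite recursion along $\lol_X$ whose solution $S$ simultaneously codes, for every $a \lol_Y t_Y$, the maximum initial partial isomorphism $\pi_a \colon X \to Y \downarrow a$ --- concretely, $(S)_x = \{(a,b) : a \lol_Y t_Y,\ b \le_Y a,\ b = \pi_a(x)\}$, where the recursive clause for $(S)_x$ in terms of $S\!\rest\! x$ is first-order with parameters $X, Y$ (it is the obvious "all $a$ at once" version of the clause of lemma \ref{lem2:max-prtl-isom}). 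Over $\GBcm + \ETR$ this solution exists and is unique, and $X \vin Y$ holds iff there is $a \lol_Y t_Y$ for which the partial map $x \mapsto b$, $(a,b) \in (S)_x$, read off from $S$ is a total isomorphism $X \cong Y \downarrow a$. Exactly as before,
\[
X \vin Y \iff \exists S\,\big(\text{$S$ solves the recursion} \wedge \exists a\lol_Y t_Y\ \text{the $a$-th map from $S$ is total}\big) \iff \forall S\,\big(\text{$S$ solves the recursion} \to \exists a\lol_Y t_Y\ \text{the $a$-th map from $S$ is total}\big),
\]
the first formula being $\Sigma^1_1$ and the last $\Pi^1_1$ (the quantifier $\exists a$ is now a set quantifier inside a first-order matrix). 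So $\vin$ is $\Delta^1_1$ as well.

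The main obstacle, as in lemma \ref{lem2:max-prtl-isom}, is purely bookkeeping: one must check that both recursions are genuinely \emph{elementary}, i.e.\ that their recursive clauses are first-order formulas whose only non-set parameter is the class coding $\langle X, Y\rangle$ (and, for $\vin$, that packaging the family $\{\pi_a : a \lol_Y t_Y\}$ into the slices $(S)_x$ does not smuggle in any class quantifier), and that ``$S$ is the solution'' and ``$S$ is a total isomorphism onto $Y$'' are first-order in $S$. The conceptual content --- and the only role of $\ETR$ --- is that it produces the solution class $S$, so that the naturally $\Sigma^1_1$ assertion ``there is an $S$ solving the recursion with the good property'' can be legitimately flipped to the $\Pi^1_1$ assertion ``every $S$ solving the recursion has the good property'', the flip being justified precisely because solutions exist (by $\ETR$) and are unique (over $\GBcm$).
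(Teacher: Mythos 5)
Your proof is correct, and it rests on the same essential ingredient as the paper's --- the maximum initial partial isomorphism supplied by lemma \ref{lem2:max-prtl-isom}, i.e.\ by $\ETR$ --- but the mechanics are genuinely different. The paper works with $X \vin Y$ directly (and says the same reasoning handles $\cong$): it gives a $\Sigma^1_1$ definition of the \emph{complement}, asserting the existence of an initial partial isomorphism $\pi \prtlfn X \to Y$ that cannot be extended yet whose range fails to be of the form $Y \downarrow a$; the canonicity of the maximum partial isomorphism is what makes this single existential witness suffice. You instead exploit that the maximum partial isomorphism is the \emph{unique solution of a first-order-describable recursion}, so that ``$S$ solves the recursion'' is an elementary predicate of $S$, and then flip $\exists S$ to $\forall S$ using existence (from $\ETR$) plus uniqueness (a least-counterexample argument of exactly the kind the paper itself uses repeatedly). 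Your treatment of $\vin$ is also more scrupulous on one point the paper glosses over: you notice that prefixing the $\Pi^1_1$ form of $\cong$ with a set-existential $\exists a$ is not licensed by the paper's strict definition of $\Pi^1_1$, and you repair this by bundling all the maps $\pi_a$, $a \lol_Y t_Y$, into a single recursion so that the $\exists a$ lands inside the first-order matrix after the $\forall S$; the paper avoids the issue only because it treats $\vin$ directly. One cosmetic remark: in your displayed equivalence for $\vin$, ``the $a$-th map from $S$ is total'' should read, as in your prose, ``is a total isomorphism of $X$ onto $Y \downarrow a$'' --- totality alone does not imply surjectivity onto the domain of $Y \downarrow a$.
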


\begin{proof}
The definitions of these are both $\Sigma^1_1$. So it remains to see that they are equivalent to $\Pi^1_1$ assertions. Let us consider $A \vin B$; similar reasoning works for $A \cong B$. If $A \vin B$ then this must be witnessed by the maximum initial partial isomorphism $\pi \prtlfn A \to B$. So to say that $A \mathbin{\not\! \vin} B$ it is equivalent to say that the maximum initial partial isomorphism $\pi \prtlfn A \to B$ does not have a range of the form $B \downarrow a$ for some $a \lol_B t_B$. Thus, $A \mathbin{\not\!\vin} B$ is $\Sigma^1_1$-expressible, via the formula asserting that there is an initial partial isomorphism $\pi \prtlfn A \to B$ so that $(1)$ for some $x \in \dom A \setminus \dom \pi$ there is no $y \in \dom B$ so that $\pi \cup \{(x,y)\}$ is an initial partial isomorphism and $(2)$ for all $a \lol_B t_B$ there is $a' \le_B a$ so that $a' \not \in \ran \pi$. Therefore $A \vin B$ is equivalent to a $\Pi^1_1$-formula.
\end{proof}

\begin{lemma}[Over $\GBcm + \ETR$] \label{lem2:star-trans}
If $\phi$ is $\Sigma_k$ for $k \ge 1$ then $\phi^*$ is equivalent to a $\Sigma^1_k$ formula.
\end{lemma}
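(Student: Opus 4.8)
The plan is an induction on $k$ with essentially all the work in the base case. Grant for the moment the key claim that $\theta^*$ is $\Delta^1_1$ (over $\GBcm + \ETR$) whenever $\theta$ is $\Sigma_0$. Then the lemma follows by a purely syntactic computation: we may assume $\phi = Q_1\bar x_1\cdots Q_k\bar x_k\,\theta$ is in $\Sigma_k$ normal form with $Q_1 = \exists$ and $\theta$ a $\Sigma_0$ matrix, so that $\phi^*$ is $Q_1\bar X_1\cdots Q_k\bar X_k$ applied to $\theta^*$, with each unbounded quantifier guarded by the first-order predicate ``is a tuple of membership codes.'' Working from the inside out: $\theta^*$ is $\Delta^1_1$, hence both $\Sigma^1_1$ and $\Pi^1_1$; prepending the first-order guard as a conjunct to a $\Sigma^1_n$ formula, or under an implication to a $\Pi^1_n$ formula, leaves the level unchanged; and adjacent class quantifiers of the same kind collapse. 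Thus after the innermost block one has a $\Sigma^1_1$ or $\Pi^1_1$ formula matching the parity of $Q_k$, and after block $i$ one has a $\Sigma^1_{k-i+1}$ or $\Pi^1_{k-i+1}$ formula, so $\phi^*$ is $\Sigma^1_k$. (Equivalently, induct on $k$: for $\Sigma_{k+1} = \exists\bar x\,\psi$ with $\psi\in\Pi_k$, apply the hypothesis to $\neg\psi\in\Sigma_k$ and use that the $*$-translation commutes with negation to get $\psi^*\in\Pi^1_k$, whence $\phi^* = \exists\bar X(\text{guard}\land\psi^*)\in\Sigma^1_{k+1}$.) No form of Class Collection is invoked here — merging like-kind second-order quantifiers and pulling first-order formulas past them are always valid.

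It remains to prove the base case: for $\theta\in\Sigma_0$, $\theta^*$ is $\Delta^1_1$. I would argue by induction on the structure of $\theta$. The atomic cases $X\cong Y$ and $X\vin Y$ are Lemma \ref{lem2:relns-are-delta11}, and $\Delta^1_1$ is closed under $\land$, $\lor$ and $\neg$ (for negation one interchanges the $\Sigma^1_1$- and $\Pi^1_1$-forms). The difficulty — and what I expect to be the main obstacle — is the bounded quantifier, which after translation becomes an $\lol$-bounded set quantifier $\exists x\lol_Y t_Y\,\psi^*(Y\downarrow x)$. That this is $\Sigma^1_1$ is immediate, but producing a $\Pi^1_1$ equivalent amounts to showing $\forall x\lol_Y t_Y\,\exists Z\,\alpha(x,Z)$ is $\Sigma^1_1$, and here second-order set theory genuinely parts ways with second-order arithmetic: lacking Class Collection one cannot pull the class quantifier out past the (possibly proper-class-many) bounded set quantifier. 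So $\ETR$ must enter essentially, and merely having $\Delta^1_1$-Comprehension would not obviously suffice.

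The way through is to make the $\Delta^1_1$ atoms occurring in $\theta^*(\bar A)$ uniformly available as a single class parameter. Observe first that comparisons of sub-codes of a \emph{single} membership code are already first-order: $A\downarrow a\cong A\downarrow b$ iff $a = b$ (Proposition \ref{prop2:local-uniq}) and $A\downarrow a\vin A\downarrow b$ iff $a\lol_A b$. Hence the only genuinely $\Delta^1_1$ atoms appearing in $\theta^*(\bar A)$ compare sub-codes of \emph{distinct} codes among the boundedly many $A_1,\dots,A_n$ assigned to the free variables of $\theta$. For each pair $(i,j)$ I would use $\ETR$ — as in Lemma \ref{lem2:max-prtl-isom}, but now recursing along the well-founded product order on $\dom A_i\times\dom A_j$ — to build a single class $W_{ij}$ coding the maximum initial partial isomorphism from $A_i\downarrow a$ to $A_j\downarrow b$ simultaneously for all $a,b$; $\GBcm$ proves such recursion solutions unique. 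Relative to the finite parameter list $\bar A,(W_{ij})_{i,j}$ every atom of $\theta^*$ — and therefore, since first-order formulas are trivially closed under bounded set quantification, all of $\theta^*(\bar A)$ — is first-order; call the resulting first-order formula $\Psi(\bar A,\vec W)$. Then $\theta^*(\bar A)$ is equivalent to the $\Sigma^1_1$ statement that there exist classes $\vec W$, each satisfying the first-order recursion equation defining $W_{ij}$, with $\Psi(\bar A,\vec W)$; and, using the provable existence and uniqueness of these solutions, equivalent to the dual $\Pi^1_1$ statement. So $\theta^*$ is $\Delta^1_1$, closing the base case and the lemma. One small bookkeeping point to dispatch along the way: the two clauses of the $*$-translation for bounded versus relativized unbounded quantifiers agree, by the coherence lemmas already established, so the normal-form reduction at the start is harmless.
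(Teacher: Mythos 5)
Your proof is correct, and it takes a genuinely more careful route through the one step the paper treats as routine. The paper's proof is the short syntactic argument you sketch at the start: put $\phi^*$ in prenex form, replace the (negated) atoms $A \cong B$ and $A \vin B$ by their $\Sigma^1_1$ or $\Pi^1_1$ equivalents from lemma \ref{lem2:relns-are-delta11} according to the parity of the innermost class-quantifier block, and declare them ``absorbed'' into that block; it does not address the point you isolate as the main obstacle, namely pulling those class quantifiers past the bounded set quantifiers coming from the $\Sigma_0$ matrix, which is exactly where a naive argument would reach for Class Collection, unavailable over $\GBcm + \ETR$. Your uniformization is the right way to justify this: since every atom of $\theta^*$ compares subcodes of the finitely many parameter codes, finitely many $\ETR$-constructed classes witness all atoms at once, so the entire translated matrix is $\Delta^1_1$ and the absorption reduces to merging like-kind class quantifiers, which is unproblematic. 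Two small simplifications: you do not need the product-order recursion for $W_{ij}$ -- the single maximum initial partial isomorphism $\pi_{ij}$ of lemma \ref{lem2:max-prtl-isom} already suffices, since by the rigidity facts (e.g.\ proposition \ref{prop2:local-uniq}) $\pi_{ij}$ coincides with $\{(a,b) : A_i \downarrow a \cong A_j \downarrow b\}$, and $A_i \downarrow a \vin A_j \downarrow b$ holds iff $a \in \dom \pi_{ij}$ and $\pi_{ij}(a) \lol_{A_j} b$; and your $\Sigma^1_1$/$\Pi^1_1$ pair for $\theta^*$ rests on existence (from $\ETR$) and uniqueness (over $\GBcm$) of these classes, which is the same standard of rigor as the paper's own uniqueness arguments for partial isomorphisms and recursion solutions. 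What each approach buys: the paper's proof is a one-paragraph absorption argument that leaves the quantifier-interchange implicit; yours is longer but makes explicit where $\ETR$ enters beyond the $\Delta^1_1$-ness of the atoms, and along the way establishes the sharper and reusable fact that the $*$-translation of any $\Sigma_0$ formula is $\Delta^1_1$.
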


\begin{proof}
First, recall that being a membership code is a first-order property. So that cannot increase the logical complexity of $\phi^*$. 

Lemma \ref{lem2:relns-are-delta11} allows us to absorb the class quantifiers from $A \cong B$ and $A \vin B$ into the innermost quantifier block of the translated formula. First, $\phi^*$ is equivalent to a formula in prenex normal form. In particular, all unbounded quantifiers are on the outside and $\neg$ only occurs in front of atomic statements. Suppose that the innermost quantifier block consists of existential quantifiers. Then produce an equivalent $\Sigma^1_k$-formula by replacing all instances of $\neg A \cong B$ and $\neg A \vin B$ with the equivalent $\Sigma^1_1$ assertion. Thus, all bounded quantifiers in $\phi$ become $\Sigma^1_1$ assertions and thereby get absorbed into the innermost quantifier block. Similarly, if the innermost quantifier block consists of universal quantifiers then replace all instances of $A \cong B$ and $A \vin B$ with the equivalent $\Pi^1_1$ assertion, thereby absorbing them.
\end{proof}

\begin{proposition}[Over $\GBcm + \ETR$] \label{prop2:sep-in-unroll}
If the ground universe satisfies $\PnCA k$ for $k \ge 1$ then $\Ufrak$ satisfies $\Sigma_k$-Separation. Thus, if the ground universe satisfies $\KM$ then the unrolled structure satisfies Separation.
\end{proposition}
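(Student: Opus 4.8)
The plan is to convert an instance of $\Sigma_k$-Separation over $\Ufrak$ into an instance of $\Sigma^1_k$-Comprehension in the ground universe, and then repackage the resulting class as a membership code. Fix a membership code $A$, membership-code parameters $\bar P$, and a $\Sigma_k$ formula $\phi(x,\bar y)$; the goal is a membership code representing $\{x \in A : \Ufrak \models \phi(x,\bar P)\}$. Recall that ``$\Ufrak \models \psi$'' is by definition an abbreviation for the translated statement $\psi^*$, and that the two lemma schemas above show this translation is coherent with respect to isomorphism of membership codes and with respect to bounded versus unbounded quantification. So, at the level of representatives, the set I am after is $\{A \downarrow a : a \lol_A t_A \mand \phi^*(A \downarrow a,\bar P)\}$.

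First I would apply lemma~\ref{lem2:star-trans}: since $\phi$ is $\Sigma_k$ with $k \ge 1$, the formula $\phi^*(x,\bar P)$ is equivalent, provably over $\GBcm + \ETR$, to a $\Sigma^1_k$ formula with $\bar P$ among its class parameters. The map $x \mapsto A \downarrow x$ on $\dom A$ is first-order, so the collection
\[
S = \{a \in \pen A : \phi^*(A \downarrow a, \bar P)\}
\]
is defined by a $\Sigma^1_k$ formula with class parameters $A$ and $\bar P$, and hence exists as a class by $\PnCA k$ (which over $\GBcm$ is equivalent to $\Sigma^1_k$-Comprehension). In the $\KM$ case $\phi^*$ is merely a second-order formula of unrestricted complexity and full Comprehension supplies $S$ directly.

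Next I would build the membership code. Let $A_\phi$ be obtained from $A$ by deleting every edge into $t_A$ whose source is not in $S$ and then discarding any node no longer reachable from $t_A$; concretely $A_\phi = \bigcup_{a \in S}(A \downarrow a) \cup \{(a,t_A) : a \in S\}$, which exists by Elementary Comprehension (using $S$ as a parameter). It is routine to check that $A_\phi$ is a membership code: it is a subgraph of the well-founded extensional directed graph $A$ with the single top $t_A$ re-adjoined, its immediate predecessors of $t_A$ are exactly the nodes of $S$, and $A_\phi \downarrow a = A \downarrow a$ for each $a \in S$, so local uniqueness in $A$ (proposition~\ref{prop2:local-uniq}) gives extensionality below $t_A$, while $A_\phi \not\cong A_\phi \downarrow b$ for $b \ne t_A$ by well-foundedness. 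Consequently $C \vin A_\phi$ iff $C \cong A \downarrow a$ for some $a \in S$, which, using coherence of the translation to pass between $\phi^*(C)$ and $\phi^*(A \downarrow a)$, holds iff $C \vin A$ and $\phi^*(C,\bar P)$. Thus $\Ufrak \models \forall x\,(x \in A_\phi \iff x \in A \land \phi(x,\bar P))$, which is the desired instance of $\Sigma_k$-Separation; allowing $\phi$ of arbitrary complexity under full Comprehension yields Separation in the $\KM$ case.

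I expect the only real obstacle to be bookkeeping the logical complexity: verifying that the passage from $\phi$ to $\phi^*$ --- including absorbing bounded quantifiers using the $\Delta^1_1$-ness of $\cong$ and $\vin$ (lemmas~\ref{lem2:relns-are-delta11} and~\ref{lem2:star-trans}) --- genuinely stays inside $\Sigma^1_k$ rather than secretly needing $\Sigma^1_{k+1}$-Comprehension. This is precisely where the hypothesis $k \ge 1$ is essential, since for $k = 0$ the $\Delta^1_1$ relations cannot be folded into a first-order matrix. The membership-code manipulations are entirely analogous to those in propositions~\ref{prop2:unroll-basic} and~\ref{prop2:unroll-choice} and present no difficulty.
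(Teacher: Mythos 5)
Your proposal is correct and follows essentially the same route as the paper: translate $\phi$ via the $*$-translation, invoke lemma~\ref{lem2:star-trans} to see $\phi^*$ is $\Sigma^1_k$, apply $\Pi^1_k$-Comprehension to form the class of $a \lol_A t_A$ with $\phi^*(A \downarrow a)$, then close downward and attach a top node. The extra verification you supply (that the resulting graph is extensional and well-founded, hence a membership code) is exactly the routine check the paper leaves implicit.
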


In fact, the backwards implication is also true; see section \ref{sec:cutting-off}.

\begin{proof}
Fix a membership code $A$ and a $\Sigma_k$ formula $\phi(x)$, possibly with (suppressed) parameters. We want to find a membership code $B$ so that $B$ represents the subset of $A$ formed using $\phi$. Because $\phi^*$ is equivalent to a $\Sigma^1_k$ formula we can by $\PnCA k$ form the collection of $x \lol_A t_A$ so that $\phi^*(A \downarrow x)$ holds. Closing this collection downward in $A$ and adding a top element gives a membership code $B$ representing the subset of $A$ formed using $\phi$.
\end{proof}

The case for Collection is similar. We need full Class Collection to get Collection in the unrolled model, but it goes level by level.

\begin{proposition}[Over $\GBCm + \ETR$] \label{prop2:unroll-coll}
If the ground universe satisfies $\Sigma^1_k$-Class Collection then the unrolled structure $\Ufrak$ satisfies $\Sigma_k$-Collection. Thus if the ground universe satisfies $\KMCC$ then the unrolled structure satisfies Collection.
\end{proposition}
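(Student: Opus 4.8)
The plan is to run the argument for Proposition~\ref{prop2:sep-in-unroll} ($\Sigma_k$-Separation in the unrolling), replacing the use of $\Pi^1_k$-Comprehension by $\Sigma^1_k$-Class Collection and then assembling the witnessing membership code by hand. So fix a membership code $A$ and a $\Sigma_k$ formula $\phi(x,y)$, possibly with membership-code parameters, and suppose $\Ufrak \models \forall x \vin A\ \exists y\ \phi(x,y)$. Unwinding the $*$-translation and the coherence lemmas about $\exists x \vin y$, this says exactly that in the ground universe, for every $x \lol_A t_A$ there is a membership code $Y$ with $\phi^*(A \downarrow x, Y)$. By Lemma~\ref{lem2:star-trans} (here is where $k \ge 1$ is used), $\phi^*$ is equivalent to a $\Sigma^1_k$ formula, and hence, since for $k \ge 1$ the class $\Sigma^1_k$ is closed under the first-order operations involved, the formula
\[
\Theta(x,Z)\ :\equiv\ \bigl(x \notin \pen A \land Z = D\bigr) \lor \bigl(x \in \pen A \land Z \text{ is a membership code} \land \phi^*(A \downarrow x, Z)\bigr),
\]
with $D$ any fixed membership code, is equivalent to a $\Sigma^1_k$ formula. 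The hypothesis gives $\forall x\ \exists Z\ \Theta(x,Z)$.

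Next I would apply $\Sigma^1_k$-Class Collection to $\Theta$, obtaining a class $C$ with $\forall x\ \exists w\ \Theta(x,(C)_w)$; in particular, for every $x \lol_A t_A$ there is $w$ such that $(C)_w$ is a membership code satisfying $\phi^*(A \downarrow x, (C)_w)$. The required $B$ is then the disjoint union of the slices of $C$ which are membership codes, with a fresh top node glued on top: let $B$ have domain $\{(w,n) : (C)_w \text{ is a membership code and } n \in \dom((C)_w)\} \cup \{t_B\}$, with $(w,n)\lol_B(w,m)$ iff $n \lol_{(C)_w} m$, and $(w,t_{(C)_w}) \lol_B t_B$ for each such $w$. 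By Elementary Comprehension this is a class, and a routine check (of the same character as the arguments for Union and Pairing in Proposition~\ref{prop2:unroll-basic}) shows $B$ is a membership code: well-foundedness is inherited from the $(C)_w$'s plus the newness of $t_B$, and extensionality holds because the copies are tagged disjointly and each $(C)_w$ is internally extensional. Moreover $B \downarrow (w, t_{(C)_w}) \cong (C)_w$, so $(C)_w \vin B$ for each $w$ with $(C)_w$ a membership code.

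Finally I would verify $\Ufrak \models \forall x \vin A\ \exists y \vin B\ \phi(x,y)$: given $x \lol_A t_A$, choose $w$ with $\phi^*(A \downarrow x, (C)_w)$; since $B \downarrow (w,t_{(C)_w}) \cong (C)_w$ and $\phi^*$ respects isomorphism in its arguments (the coherence lemma schema following the definition of the $*$-translation), $\phi^*\bigl(A \downarrow x,\ B \downarrow (w,t_{(C)_w})\bigr)$ holds. Together with $(w,t_{(C)_w}) \lol_B t_B$, this is precisely $(\exists y \vin B\ \phi(x,y))^*$ applied to $A \downarrow x$, and translating back gives $\Ufrak \models \exists y\vin B\ \phi(x,y)$. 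The ``thus'' clause is immediate: $\KMCC$ proves $\GBCm + \ETR$ (it contains $\KM$, and $\KM$ proves $\ETR$ via $\Pi^1_1$-Comprehension) and $\Sigma^1_k$-Class Collection for every $k$, so $\Ufrak$ satisfies $\Sigma_k$-Collection for every $k$, i.e.\ full Collection.

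The only genuine friction I anticipate is bookkeeping, not ideas: checking that the tagged disjoint-union graph really is extensional (hence a bona fide membership code) and that its restriction below $(w, t_{(C)_w})$ is \emph{isomorphic} to $(C)_w$ rather than merely $\vin$-equivalent, and confirming that $\Theta$ lands in $\Sigma^1_k$ for $k \ge 1$. Both mirror steps already carried out for Separation, Union, and Pairing, so no new machinery should be needed.
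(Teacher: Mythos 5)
There is a genuine gap, and it sits exactly at the step you dismiss as bookkeeping: the tagged disjoint union of the slices $(C)_w$ is \emph{never} extensional, so your $B$ is not a membership code. Recall that a membership code is required to be a well-founded \emph{extensional} graph; by well-foundedness and extensionality each nonempty slice $(C)_w$ has exactly one minimal node (its code for $\emptyset$), and in the tagged disjoint union all of these distinct minimal nodes have empty predecessor sets, violating extensionality as soon as two slices are present. Your claim that ``extensionality holds because the copies are tagged disjointly'' is exactly backwards: disjoint tagging is what forces the failure. Nor is the repair a quick quotient by hand: once you identify the $\emptyset$-nodes, the nodes representing $\{\emptyset\}$ in different copies acquire identical predecessor sets, and so on cascading upward, so the identification you need is a recursively defined amalgamation of each slice with the part of $B$ built so far. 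That amalgamation is not available by Elementary Comprehension alone; it is precisely where the background theory $\GBCm + \ETR$ earns its keep.

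This is in fact the heart of the paper's proof of the proposition: the witnessing code $B$ is built by a two-layer elementary transfinite recursion, whose outer layer runs along an enumeration of $\pen A$ (using Global Choice) and whose inner layer, at each stage, constructs the maximum initial partial isomorphism $\pi \prtlfn (C)_{x_\alpha} \to B_\alpha$ (lemma \ref{lem2:max-prtl-isom}) and glues $(C)_{x_\alpha}$ onto $B_\alpha$ along $\pi$, adding only the nodes not already accounted for; a top element is attached only after all stages. Your surrounding scaffolding (the $*$-translation, the use of $\Sigma^1_k$-Class Collection to obtain $C$, the isomorphism-invariance of $\phi^*$, and the final verification that $B$ collects the witnesses) matches the paper, but the existence of a legitimate membership code $B$ cannot be had by disjoint union plus a ``routine check''; you must replace that step by the recursive gluing argument.
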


In fact, the backwards implication is also true; see section \ref{sec:cutting-off}.

\begin{proof}
We want to show that $\Ufrak$ satisfies every instance of $\Sigma_k$ collection. That is, consider a $\Sigma_k$-formula $\phi$ and suppose there are membership codes $A$ and $P$ so that for all $X \vin A$ there is a membership code $Y$ so that $\phi^*(X,Y,P)$. We want to find a membership code $B$ which collects together all these witnessing $Y$. 

By $\Sigma^1_k$-Class Collection there is a class $C$ of pairs $(x,y)$ so that for each $x \lol_A t_A$ we have $\phi^*(A \downarrow x, (C)_x, P)$. We want to glue together the $(C)_x$s to get the desired membership code $B$. This will be done by means of a certain elementary transfinite recursion. 

There are two layers to the recursion. The outer layer is along $\pen A$, according to some fixed enumeration $\seq{x_\alpha : \alpha \in \Ord}$. Each stage $\alpha$ of the outer layer yields a partial construction of $B$, let us call it $B_\alpha$, along with $P_\alpha$ a partial listing of what will be the penultimate level of $B$. Stage $0$ is merely to take $B_0 = \emptyset$ and $P_0 = \emptyset$. Having done stage $\alpha$, for stage $\alpha+1$ we must do the inner layer of the recursion to produce the maximum initial partial isomorphism $\pi \prtlfn (C)_{x_{\alpha}} \to B_\alpha$. We then use it to define $B_{\alpha+1}$ by adding on everything in $(C)_{x_{\alpha}}$ which we have not yet accounted for. Formally, set
\begin{align*}
B_{\alpha+1} = &\ \ \ \  B_\alpha \\
 &\cup (C)_{x_\alpha} \rest (\dom C \setminus \dom \pi) \\
 &\cup \{ (\pi(c),c') : (c,c') \in C \mand c \in \dom \pi \mand c' \not \in \dom \pi \}.
\end{align*}
We also add either $t_{(C)_{x_\alpha}}$ or $\pi(t_{(C)_{x_\alpha}})$ as appropriate to $P_\alpha$ to get $P_{\alpha+1}$ At limit stages $\lambda$ take unions: set $B_\lambda = \bigcup_{\alpha < \lambda} B_\alpha$ and $P_\lambda = \bigcup_{\alpha < \lambda} P_\alpha$. Finally, after $\Ord$ steps we produce $B$ from $B_\Ord$ by adding a top element connecting each element of $P_\Ord$ to this new top element. Then $B$ is the desired membership code.
\end{proof}

The following theorem summarizes what $\Ufrak$'s theory will be, based upon the theory of the ground universe.

\begin{theorem}
Let $(M,\Xcal)$ be a model of second-order set theory and let $\Ufrak$ be the unrolled model constructed inside $(M,\Xcal)$. 
\begin{itemize}
\item If $(M,\Xcal) \models \KMCC$ then $\Ufrak \models \ZFCmi$.
\item If $(M,\Xcal) \models \KMCCm$ then $\Ufrak \models \ZFCmr$. 
\item If $(M,\Xcal) \models \KM$ then $\Ufrak \models \wZFCmi$.
\item If $(M,\Xcal) \models \KMm$ then $\Ufrak \models \wZFCmr$.
\item If $(M,\Xcal) \models \GBC + \PnCAp k$ then $\Ufrak \models \ZFCmi(k)$, for $k \ge 1$.
\item If $(M,\Xcal) \models \GBCm + \PnCAp k$ then $\Ufrak \models \ZFCmr(k)$, for $k \ge 1$.
\item If $(M,\Xcal) \models \GBC + \PnCA k$ then $\Ufrak \models \wZFCmi(k)$, for $k \ge 1$.
\item If $(M,\Xcal) \models \GBCm + \PnCA k$ then $\Ufrak \models \wZFCmr(k)$, for $k \ge 1$.
\end{itemize}
\end{theorem}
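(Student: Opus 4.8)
The plan is to assemble this summary theorem directly from the sequence of propositions already proved in this section, checking that the theory of $(M,\Xcal)$ in each case supplies exactly the hypotheses needed. First I would observe that every listed hypothesis includes $\GBcm + \ETR$ (indeed $\GBCm + \ETR$, since all these theories include Global Choice), so the basic-axioms results of Theorem~\ref{thm2:unroll-ext} (Extensionality), Proposition~\ref{prop2:unroll-basic} (Union, Pairing, Infinity, Foundation), and Proposition~\ref{prop2:unroll-choice} (Choice, as the well-ordering theorem) apply uniformly. Thus in every case $\Ufrak$ satisfies all the ``basic axioms'' of Definition~\ref{def2:so-many-theories}. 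Next, Proposition~\ref{prop2:unroll-kappa} gives that $\kappa$ (the set coded by $\Ord + 1$) is regular and the largest cardinal in $\Ufrak$, and is moreover inaccessible (a strong limit) precisely when the ground universe satisfies Powerset. So the ``$\kappa$ inaccessible'' clauses attach exactly to the Powerset-bearing theories $\KMCC$, $\KM$, $\GBC + \PnCAp k$, $\GBC + \PnCA k$, while the ``$\kappa$ regular, $H_\kappa$ exists'' clauses attach to their minus-counterparts; for the latter I should note that $H_\kappa$ ``exists'' in $\Ufrak$ in the sense that the ground universe $V^{(M,\Xcal)}$ itself, coded via canonical membership codes, serves as $H_\kappa^{\Ufrak}$, which one verifies by checking that a membership code has hereditary size $< \kappa$ iff it is isomorphic to $E_x$ for some set $x$ of the ground model.

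The heart of the proof is then matching up Separation and Collection. For Separation, Proposition~\ref{prop2:sep-in-unroll} says $\PnCA k$ in the ground universe yields $\Sigma_k$-Separation in $\Ufrak$; since $\KM$ is $\PnCA\omega$, full Comprehension gives full Separation. For Collection, Proposition~\ref{prop2:unroll-coll} says $\SnCC k$ yields $\Sigma_k$-Collection, and $\KMCC$ (which is $\SnCC\omega$) gives full Collection. Now I would go case by case: for $(M,\Xcal) \models \KMCC$ we get full Separation, full Collection, and $\kappa$ inaccessible, i.e.\ $\ZFCmi$; dropping Powerset gives $\KMCCm \Rightarrow \ZFCmr$. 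For $\KM$ we get full Separation and $\kappa$ inaccessible but only $\ECC$-level Collection, not full Collection --- so $\Ufrak \models \wZFCmi$ (which has Separation but no Collection), and likewise $\KMm \Rightarrow \wZFCmr$. For $\GBC + \PnCAp k = \GBC + \PnCA k + \SnCC k$, Proposition~\ref{prop2:sep-in-unroll} gives $\Sigma_k$-Separation and Proposition~\ref{prop2:unroll-coll} gives $\Sigma_k$-Collection, plus $\kappa$ inaccessible: that is $\ZFCmi(k)$. Without the $\SnCC k$ we lose $\Sigma_k$-Collection, landing in $\wZFCmi(k)$; and the minus-versions drop Powerset, replacing ``$\kappa$ inaccessible'' by ``$\kappa$ regular and $H_\kappa$ exists'' to get $\ZFCmr(k)$ and $\wZFCmr(k)$ respectively.

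There are two bookkeeping points to dispatch. First, the target theories in Definition~\ref{def2:so-many-theories} are defined with $\Sigma_0$-Transfinite Recursion built in (e.g.\ $\KM$ corresponds to $\wZFCmi + \Sigma_0\text{-TR}$ in Theorem~\ref{thm2:bi-int}), but the present theorem as stated lists only $\wZFCmi$ etc.\ without the recursion clause; I should either note that $\ETR$ in the ground model yields $\Sigma_0$-Transfinite Recursion in $\Ufrak$ (by running the unrolled recursion through the translation $\phi \mapsto \phi^\ast$, using Lemma~\ref{lem2:star-trans} to keep $\Sigma_0$-formulae low-complexity and Lemma~\ref{lem2:relns-are-delta11} to handle the $\cong$ and $\vin$ atoms) and that this is harmless, or simply read the theorem statement as-is with the understanding that $\wZFCmi$ here abbreviates the full theory. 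Second, I should confirm that $\Sigma_k$-Separation plus the basic axioms plus ``$\kappa$ inaccessible'' really does prove, as claimed in the definition, that $V_\kappa^{\Ufrak} = H_\kappa^{\Ufrak} \models \ZFC$ --- this is a routine absoluteness check once regularity of $\kappa$ and closure under powersets below $\kappa$ are in hand.

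The main obstacle I anticipate is not any single proposition --- those are all proved --- but rather the verification for the minus-theories that $H_\kappa$ genuinely exists as a definable object in $\Ufrak$ and that the ground universe's sets are exactly the hereditarily-$<\kappa$-sized elements of $\Ufrak$; this requires showing that a membership code $A$ with $|A \downarrow a| < \kappa$ for all $a$ (suitably interpreted in $\Ufrak$) collapses to a genuine set of $M$, which needs $\ETR$ to run the collapsing recursion and a careful argument that the collapse lands inside $M$ rather than merely inside some virtual extension. Everything else is a direct citation-and-assembly exercise from Propositions~\ref{prop2:unroll-basic}, \ref{prop2:unroll-choice}, \ref{prop2:unroll-kappa}, \ref{prop2:sep-in-unroll}, and \ref{prop2:unroll-coll}.
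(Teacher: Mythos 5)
Your proposal is correct and matches the paper's treatment: the paper gives no separate argument for this theorem, presenting it as a summary assembled from Theorem~\ref{thm2:unroll-ext} and Propositions~\ref{prop2:unroll-basic}, \ref{prop2:unroll-choice}, \ref{prop2:unroll-kappa}, \ref{prop2:sep-in-unroll}, and~\ref{prop2:unroll-coll}, exactly as you do. Your extra bookkeeping (the existence of $H_\kappa$ in the minus cases, and the $\Sigma_0$-Transfinite Recursion clause, which the paper defers to the following subsection) only makes explicit what the paper leaves implicit; the one slip is your parenthetical that $\KM$ yields ``$\ECC$-level Collection'' in the unrolling, which is unwarranted but harmless since $\wZFCmi$ demands no Collection.
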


In section \ref{sec:cutting-off} we will get the backward results. But first we must address the conspicuous absence of $\GBC + \ETR$ and $\GBCm + \ETR$ from this last theorem. The following subsection will also establish that for all of the above, the unrolling $\Ufrak$ satisfies $\Sigma_0$-Transfinite Recursion. Of course, $\Sigma_0$-Transfinite Recursion follows from $\Sigma_1$-Collection, so this is only additional content for the theories which lack a fragment of Collection.

\subsection{\texorpdfstring{Unrolling in $\GBC + \ETR$}{Unrolling in GBC + ETR}} \label{subsec2:unroll-in-etr}

Above, to show that $\Ufrak$ satisfies a fragment of Separation and Collection---dependent upon the fragment of Comprehension and Class Collection satisfied in the ground universe---we used a translation $\phi \mapsto \phi^*$. Given a first-order formula $\phi$ in the language of set theory this translation produced a second-order formula $\phi^*$ about membership codes. We used that $A \vin B$ and $A \cong B$ are $\Delta^1_1$ over $\GBCm + \ETR$, so that the class quantifiers arising from the translation of $a = b$ and $a \in b$ could be absorbed by the block of class quantifiers at the front of a prenex-normal form formula equivalent to $\phi^*$.

This of course will not work if we do not have class quantifiers at the front. These came from translating unbounded quantifiers from $\phi$. But, as we will finish seeing later, $\wZFCmr(1)$---which allows Separation for formulae with only a single unbounded quantifier in front---is bi-interpretable with $\GBCm + \PnCA 1$, which is stronger than $\GBCm + \ETR$. So we cannot hope to absorb the class quantifiers arising from $A \vin B$ and $A \cong B$. If we want to calculate the theory of unrollings of models of $\GBC + \ETR$ and $\GBCm + \ETR$ we must use a different translation.

The key idea is that to check the truth of an assertion which only has bounded quantifiers it suffices to look at a single membership code. Before giving the translation let me illustrate this with an example.

We work in $\GBCm + \ETR$. Suppose we are given a membership code $E$ and we wish to know whether $E$ represents an ordered pair in the unrolling, say according to Kuratowski's definition. Formally, we want to know whether 
\begin{align*}
\exists A,B \vin E\ \Big(\forall X \vin E\ &(X \cong A \lor X \cong B) \\ 
 &\land \forall Y,Y' \vin A\ \big(Y \cong Y' \land Y \vin B \\
 &\phantom{\land \forall Y,Y' \vin A\ \big(}
 \land \forall Z,Z' \vin B\ ([Z \not \cong Y \land Z' \not \cong Y] \impl Z \cong Z')\big)\Big)
\end{align*}
is true.
Expanding out the definitions of $\cong$ and $\vin$ this formula has fourteen class quantifiers, which is too many to handle directly in $\GBC + \ETR$. But we do not have to look at, for example, all membership codes $A,B$ so that $A,B \vin E$. It suffices to look just at all membership codes of the form $E \downarrow x$ for $x \in \pen E$. Similar considerations apply for the other instances of $\vin$ or $\cong$ in this formula. So it is equivalent to ask whether
\begin{align*}
\exists a,b \lol_E t_E\ \Big(\forall x \lol_E t_E\ &(x = a \lor x = b) \\
 &\land \forall y,y' \lol_E a\ \big(y = y' \land y \lol_E b \\
 &\phantom{\land \forall y,y' \lol_e a\ \big(}
 \land \forall z,z' \lol_E b\ ([z \ne y \land z' \ne y] \impl z = z')\big)\Big)
\end{align*}
is true.
This formula only has set quantifiers.

\begin{definition}[Over $\GBCm + \ETR$] \label{def2:etr-star-trans}
Let $\phi(A_1, \ldots, A_n)$ be a $\Sigma_0$-formula in the language of first-order set theory with parameters $A_1, \ldots, A_n$, which are membership codes, and possibly with free variables. Then $\phi^\star(\bar A)$ is defined as follows. First, let $P$ be the membership code for the `set' $\{A_1, \ldots, A_n\}$. This can be constructed by an instance of Elementary Transfinite Recursion, similar to the construction of unordered pairs. For $1 \le i \le n$ let $a_i \lol_P t_P$ be the unique member of the penultimate level of $P$ so that $P \downarrow a_i \cong A_i$. Then $\phi^\star(\bar A)$ is defined by the following schema. Here, $t$ and $s$ are either variables $x,y,z,\ldots$ or one of the $a_i$'s.
\begin{itemize}
\item If $\phi$ is $t = s$ then $\phi^\star$ is $t = s$.
\item If $\phi$ is $t \in s$ then $\phi^\star$ is $t \lol_P s$.
\item If $\phi$ is $\psi \land \theta$ then $\phi^\star$ is $\psi^\star \land \theta^\star$, and similarly for disjunctions.
\item If $\phi$ is $\neg \psi$ then $\phi^\star$ is $\neg \psi^\star$.
\item If $\phi$ is $\exists x \in t\ \psi(x)$ then $\phi^\star$ is $\exists x \lol_P t\ \psi^\star(x)$, and similarly for bounded universal quantification.
\end{itemize}
Thus, $\phi^\star$ is a $\Sigma^0_\omega$-formula in the parameter $P$.
\end{definition}

This translation is not purely syntactic, since we needed to know the parameters $\bar A$ to construct $P$. Nevertheless, given $\phi(\bar A)$ we can construct $\phi^\star(P)$ by an elementary recursion. So $\GBCm + \ETR$ lets us carry out the translation.

\begin{proposition}[Over $\GBCm + \ETR$]
The unrolled model $\Ufrak$ satisfies $\Sigma_0$-Separation.
\end{proposition}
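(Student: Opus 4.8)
The plan is to reuse the translation $\phi \mapsto \phi^\star$ of Definition~\ref{def2:etr-star-trans}, which turns a $\Sigma_0$-formula into a genuinely first-order formula about the nodes of a single membership code, and then apply Elementary Comprehension. Fix a membership code $A$ and a $\Sigma_0$-formula $\phi(x)$ whose parameters $B_1,\dots,B_n$ are membership codes; I must produce a membership code $B$ with $X \vin B$ if and only if both $X \vin A$ and $\Ufrak \models \phi(X)$. Let $\psi(x)$ be the $\Sigma_0$-formula $x \in A_0 \land \phi(x)$, where $A_0$ is a parameter to be interpreted by $A$. Using $\ETR$, exactly as in the construction of unordered pairs for Proposition~\ref{prop2:unroll-basic}, build a membership code $P$ for the `set' $\{A,B_1,\dots,B_n\}$, and let $a,b_1,\dots,b_n \lol_P t_P$ be the unique penultimate nodes with $P\downarrow a \cong A$ and $P\downarrow b_i \cong B_i$; uniqueness is Proposition~\ref{prop2:local-uniq}. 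Then $\psi^\star(x)$ is a formula whose only parameter is $P$, whose atomic subformulae are all of the form $t = s$ or $t \lol_P s$, and whose quantifiers are all $\lol_P$-bounded set quantifiers; in particular $\psi^\star$ is first-order.

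The core of the argument is a correctness lemma: $\GBCm + \ETR$ proves that for every assignment of nodes of $P$ to the free variables, and every subformula $\theta$ of $\psi$, the translated formula $\theta^\star$ holds if and only if $\theta^*\bigl(P\downarrow(\cdot),\dots\bigr)$ holds, where $\theta^*$ is the earlier (class-quantifier) translation and each free variable previously assigned a node $x$ is now assigned the code $P\downarrow x$. I would prove this by induction on the build-up of the $\Sigma_0$-formula. For the atomic cases the point is that nodes of a membership code are rigidly pinned down by the isomorphism type of their class of predecessors: by Proposition~\ref{prop2:local-uniq}, $P\downarrow x \cong P\downarrow y$ iff $x = y$, and from the definition of $\vin$ together with the identity $(P\downarrow y)\downarrow z = P\downarrow z$ for $z \le_P y$ one gets $P\downarrow x \vin P\downarrow y$ iff $x \lol_P y$. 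The boolean cases are immediate. For a bounded quantifier $\exists w \in t\ \theta(w)$ one uses that every membership code $\vin P\downarrow t$ is isomorphic to some $P\downarrow z$ with $z \lol_P t$, together with $\cong$-invariance of truth in $\Ufrak$ (automatic, since $\Ufrak$ is literally $\Ucal/\mathord\cong$), so that $\Ufrak \models \exists w \vin P\downarrow t\ \theta(w)$ iff $\exists z \lol_P t\ \Ufrak \models \theta(P\downarrow z)$, which by the inductive hypothesis is $\exists z \lol_P t\ \theta^\star(z)$. The essential observation — and the reason this yields only $\Sigma_0$-Separation and not more — is that no step of this induction introduces a class quantifier: $\Sigma_0$-formulae have only bounded quantifiers, and once everything is restricted to descendants of the fixed code $P$, the relations $\vin$ and $\cong$ get replaced by the first-order $\lol_P$ and $=$.

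With the correctness lemma in hand I would finish as follows. By Elementary Comprehension the class $S = \{\, x : \psi^\star(x)\,\}$ exists, and note $S \subseteq \pen(P\downarrow a)$ since $\psi^\star$ entails $x \lol_P a$. Let $B$ be the membership code obtained by taking the subgraph of $P$ induced on $\{\, z : z \le_P x \text{ for some } x \in S\,\}$ and adjoining a fresh top node $t_B$ with an edge from each $x \in S$ to $t_B$; this is readily seen to be well-founded, extensional, and directed with top element, inheriting these properties from $P$ and from the fact that distinct chosen nodes give non-isomorphic codes (Proposition~\ref{prop2:local-uniq}). For $x \lol_B t_B$ we have $B\downarrow x = P\downarrow x$ as induced subgraphs, so by the correctness lemma $B\downarrow x$ represents an element $y \vin A$ with $\Ufrak \models \phi(y)$; conversely every $y \vin A$ with $\Ufrak \models \phi(y)$ is $\cong$ to such a $B\downarrow x$. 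Hence $B$ represents $\{\, y \vin A : \Ufrak \models \phi(y)\,\}$, which is the desired instance of $\Sigma_0$-Separation. The main obstacle is purely the correctness lemma, and within it the bounded-quantifier step, where one must carefully match the internal membership relation $\vin$ restricted to pieces of a single code against the literal edge relation; but this reduces entirely to the rigidity of membership codes established in Proposition~\ref{prop2:local-uniq} and the congruence of $\cong$ with respect to $\vin$, so it is routine once the bookkeeping is in place.
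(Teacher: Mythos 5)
Your proposal is correct and follows essentially the same route as the paper: construct the membership code $P$ for $\{A,\bar B\}$ via $\ETR$, apply the $\star$-translation of definition \ref{def2:etr-star-trans} so that the instance of Separation becomes an instance of Elementary Comprehension over the nodes of $P$, and then close downward and add a top element to obtain the desired code. The correctness lemma you spell out is exactly the (implicit) content the paper relies on when it asserts that $\phi^\star$ faithfully reflects truth in $\Ufrak$, so your extra bookkeeping only makes explicit what the paper leaves to the reader.
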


\begin{proof}
Fix a membership code $A$ and a $\Sigma_0$-formula $\phi(x,\bar B)$ with parameters $\bar B$. By Elementary Comprehension form the class of all $x \lol_A t_A$ so that $\phi^\star(x,P)$ holds, where $P$ is constructed from $\bar B$ and $A$ as in the translation. Closing this collection downward in $A$ and adding a top element gives a membership code representing the subset of $A$ formed using $\phi$.
\end{proof}

We get more than $\Sigma_0$-Separation. As a warm-up, let us see that the unrolled model has transitive collapses. 

\begin{proposition}[Over $\GBCm + \ETR$]
The unrolled model $\Ufrak$ satisfies Mostowski's collapse lemma that every well-founded extensional binary relation is isomorphic to the restriction of the membership relation to some set.
\end{proposition}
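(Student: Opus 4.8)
The plan is to work in the ground model $(M,\Xcal) \models \GBCm + \ETR$, fix a membership code $R$ which $\Ufrak$ regards as a well-founded extensional binary relation, and build — inside $(M,\Xcal)$ — a membership code for the transitive collapse together with a membership code for the collapsing isomorphism. First I would note that $\field R$ is a set in $\Ufrak$: this needs only Union and Pairing (proposition \ref{prop2:unroll-basic}) together with $\Sigma_0$-Separation, so there is a membership code $A$ for $\field R$. By corollary \ref{cor2:relns-mem-codes} the membership code $R$ corresponds to a ground-model class relation $R_\star \subseteq \pen A \times \pen A$, characterized by $a \mathrel{R_\star} b$ iff $\Ufrak \models (A \downarrow a)\mathbin R(A \downarrow b)$. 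Using proposition \ref{prop2:local-uniq} (distinct nodes of $A$ code non-isomorphic, hence distinct, elements of $\field R$) together with the hypothesis that $\Ufrak$ believes $R$ extensional, one checks that $R_\star$ is extensional in the ground model.

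The heart of the matter is the claim that $R_\star$ is genuinely well-founded in the ground model. I would prove this by contradiction: given a nonempty subclass $X \subseteq \pen A$ with no $R_\star$-minimal element, restrict $A$ to the $\le_A$-downward closure of $X$ and adjoin a fresh top whose immediate predecessors are exactly the members of $X$ (a minor case split handles the degenerate possibility that some node of $A$ already has $X$ as its exact predecessor set, in which case one uses that node's restriction of $A$ instead). This produces a membership code $\widehat X$ — well-founded as a sub-configuration of $A$ with one added top, and extensional because $A$ is — which $\vin$-codes a nonempty subset of $\field R$ in $\Ufrak$; since $X$ has no $R_\star$-minimal element this subset has no $R$-minimal element, contradicting $\Ufrak \models$ ``$R$ is well-founded''. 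I expect this claim to be the main obstacle: the temptation is to worry that an external descending $\omega$-chain through $R_\star$ might be invisible to $\Ufrak$, but the point is that the operative notion of well-foundedness is ``every nonempty subclass has a minimal element,'' and such a witness is precisely the sort of object that codes up, via $\widehat X$, to a bona fide set of $\Ufrak$.

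Granting the claim, the rest is a direct construction with no further recursion. Let $t$ be a fresh point and take $C$ to be the graph on $\pen A \cup \{t\}$ with edge relation $R_\star \cup \{(a,t) : a \in \pen A\}$ (a class by Elementary Comprehension). Then $C$ is a membership code: its edge relation on $\pen A$ is $R_\star$, well-founded by the claim, with $t$ a top; extensionality follows from extensionality of $R_\star$ together with the fact that no $a$ has $a \mathrel{R_\star} a$, so that $\operatorname{pred}_{R_\star}(a) \subsetneq \pen A$. Moreover $C$ codes a transitive set in $\Ufrak$, since whenever $a \in \pen C = \pen A$ and $c \lol_C a$ we have $c \in \pen A$, hence $c \lol_C t$ and $(C \downarrow c)\vin C$. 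Finally, by lemma \ref{lem2:fns-mem-codes} the identity class function $\pen A \to \pen C$ corresponds to a membership code $\iota^\star$ which $\Ufrak$ sees as a function from $\field R$ to the set coded by $C$; proposition \ref{prop2:local-uniq} applied to $C$ shows $\iota^\star$ is a bijection in $\Ufrak$, and unwinding the definitions gives $\Ufrak \models (A \downarrow a)\mathbin R(A \downarrow b)$ iff $a \mathrel{R_\star} b$ iff $a \lol_C b$ iff $\Ufrak \models (C\downarrow a)\in(C\downarrow b)$, so $\iota^\star$ is an isomorphism of $R$ onto the restriction of $\in$ to that set. Since $\Ufrak$ satisfies Extensionality (theorem \ref{thm2:unroll-ext}) and $R$ was arbitrary, this gives Mostowski's collapse lemma in $\Ufrak$. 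Apart from the well-foundedness claim, the only care needed is the routine verification that the modified graphs $\widehat X$ and $C$ are really membership codes — well-founded, extensional, topped — which is immediate from the rigidity facts already proved and the fact that membership codes are well-founded graphs by definition.
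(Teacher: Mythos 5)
Your proof is correct and takes essentially the same route as the paper's own (sketched) argument: pass from $R$ to the class relation $R_\star$ on $\pen A$ via corollary \ref{cor2:relns-mem-codes}, note that after adjoining a top element it is itself a membership code, and check that this code represents the collapse. The details you supply --- in particular that ground-model well-foundedness of $R_\star$ follows from $\Ufrak$'s belief that $R$ is well-founded, by turning an offending subclass of $\pen A$ into a genuine subset of $\field R$ via the downward-closure-plus-top trick, and the explicit isomorphism from lemma \ref{lem2:fns-mem-codes} --- are exactly the verifications the paper's sketch leaves implicit.
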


I provide only a sketch of an argument, as we will prove a more general statement later.

\begin{proof}[Proof sketch]
Let $E$ be a membership code for a well-founded extensional relation on $A$. By corollary \ref{cor2:relns-mem-codes} take the corresponding class relation $E_\star \subseteq \pen A \times \pen A$. Then $E_\star$ is itself almost a membership code; all that is missing is that it does not have a top element. Consider the membership code $F = E_\star \cup \{ (e,\dagger) : e \in \dom E_\star \}$ where $\dagger$ is a new element. Then in the unrolling $E$ is isomorphic to the membership relation restricted to the set represented by $F$.
\end{proof}

More generally, the unrolled model $\Ufrak$ starting from a model of $\GBCm + \ETR$ will satisfy a transfinite recursion principle.

\begin{definition} \label{def2:phi-tr}
Let $\Phi$ be a collection of formulae in the language of first-order set theory. Then {\em $\Phi$-Transfinite Recursion} is the axiom schema consisting of the following axiom for each $\phi(x,y,a) \in \Phi$:

Suppose $a$ is a parameter so that $\phi(x,y,a)$ defines a class function $F : V \to V$ and $D$ is a set equipped with a well-ordering $<_D$. Then there is a function $s : D \to V$ so that for all $d \in D$ we have $s(d) = F(s \rest d)$ where $s \rest d$ means $s \rest \{d' \in D : d' <_D d\}$. 
\end{definition}

Before seeing that the unrolled structure satisfies $\Sigma_0$-Transfinite Recursion let us justify the ``more generally'' above and check that $\Sigma_0$-Transfinite Recursion (along with the other axioms we already know to be satisfied by the unrolled model) proves Mostowski's collapse lemma.

\begin{proposition}
The theory $\wZFCmr(0) + \Sigma_0$-Transfinite Recursion\footnote{That is, the theory with axioms axioms Extensionality, Pairing, Union, Infinity, Foundation, Choice (in the guise of the well-ordering theorem), $\Sigma_0$-Separation, and $\Sigma_0$-Transfinite Recursion.}
proves Mostowski's collapse lemma.
\end{proposition}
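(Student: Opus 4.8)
\emph{Proof proposal.} The plan is to construct the Mostowski collapse $\pi \colon A \to V$ of $(A,E)$ by a single use of $\Sigma_0$-Transfinite Recursion --- run along an ordinal rather than along $E$ itself --- and then to verify it is an $\in$-isomorphism onto a transitive set by a well-founded-induction argument of exactly the kind used in proposition~\ref{prop2:local-uniq}.

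First I would set up the bookkeeping. Since $E \subseteq A \times A$ is a set, each section $E_a = \{b : b \mathbin E a\}$ is a set by $\Sigma_0$-Separation, and $\pi[E_a] = \{z \in \bigcup\bigcup\pi : \exists b \in E_a\,(b,z)\in\pi\}$ is a set whenever $\pi$ is. Then I would define, simultaneously and by one instance of $\Sigma_0$-Transfinite Recursion along a prescribed ordinal $\theta$, the cumulative $E$-stratification together with its partial collapses: $A_0 = \pi_0 = \emptyset$; at successors $A_{\alpha+1} = \{a \in A : E_a \subseteq A_\alpha\}$ and $\pi_{\alpha+1} = \pi_\alpha \cup \{(a,\pi_\alpha[E_a]) : a \in A_{\alpha+1}\}$; unions at limits. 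The passage $(A_\alpha,\pi_\alpha) \mapsto (A_{\alpha+1},\pi_{\alpha+1})$ is $\Sigma_0$-definable with $A$ and $E$ as parameters, so the recursion schema delivers the sequence $\langle (A_\alpha,\pi_\alpha) : \alpha < \theta\rangle$. Writing $\pi = \bigcup_{\alpha<\theta}\pi_\alpha$, the remaining checks are routine: $\pi$ is injective, since the set $B = \{a \in A : \exists a' \in A\,(a \ne a' \wedge \pi(a) = \pi(a'))\}$ --- which exists by $\Sigma_0$-Separation, using essentially that $\pi$ and $A$ are sets --- would, if nonempty, have an $E$-minimal element whose $E$-predecessors (by extensionality of $(A,E)$) produce a strictly $E$-smaller counterexample; $\ran\pi$ is transitive because $\pi(a) = \pi[E_a]$; and $a \mathbin E a' \iff \pi(a) \in \pi(a')$ falls out of injectivity together with the defining equation. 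Hence $\pi$ witnesses $(A,E) \cong (\ran\pi,\mathord\in)$ with $\ran\pi$ transitive.

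The real content --- and the step I expect to be the main obstacle --- is guaranteeing that $\theta$ can be chosen large enough that $A_\theta = A$, i.e.\ that the stratification exhausts $A$. Once the sequence stabilizes at a stage $\theta$ (so $A_{\theta+1} = A_\theta$), exhaustion is free: $A \setminus A_\theta$ is a set by $\Sigma_0$-Separation, and a nonempty such set would have an $E$-minimal element $b$ with $E_b \subseteq A_\theta$, forcing $b \in A_{\theta+1} = A_\theta$. So everything comes down to termination. The difficulty is that, with only $\Sigma_0$-Separation available, one cannot simply form the range of the class function $\alpha \mapsto A_\alpha$ and take a supremum; the recursion must be run along an ordinal fixed in advance. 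To supply such an ordinal I would argue that the $E$-rank of a well-founded relation on a set is always a genuine ordinal $\theta_0 < \Ord$: the ranks that occur form a set of ordinals of cardinality at most $\card A$, and $\Ord$ admits no cofinal set-indexed sequence --- a fact provable in the ambient theory from Pairing, Union, Foundation and $\Sigma_0$-Separation --- so their supremum lies below $\Ord$, whence $\langle (A_\alpha,\pi_\alpha) : \alpha \le \theta_0 \rangle$ already has $A_{\theta_0} = A$. Making this last step rigorous without circularity under the weak Separation available --- in particular, getting hold of ``the set of ranks that occur'' before one has the rank function in hand --- is the delicate point, and it is exactly here that well-foundedness of $E$ on the set $A$ itself, rather than merely on its $\Sigma_0$-definable subsets, is used.
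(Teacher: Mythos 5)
There is a genuine gap, and it sits exactly where you yourself flag it: the termination step. Because you run the recursion along an ordinal $\theta$ fixed in advance, you must first prove that the $E$-stratification stabilizes at some genuine ordinal, i.e.\ that the well-founded set relation $E$ has ordinal rank. The argument you sketch for this---collect ``the ranks that occur'' into a set of ordinals of size at most $\card A$ and bound its supremum below $\Ord$---is not available in $\wZFCmr(0) + \Sigma_0$-Transfinite Recursion: forming that set is an instance of Replacement (or else presupposes the rank function, which is precisely the recursion whose length you are trying to bound), and the theory has no Collection or Replacement whatsoever. This is not a removable wrinkle. In Zermelo-style theories without Replacement the statement ``every well-founded extensional relation on a set has an ordinal rank'' can fail, and it is of essentially the same character as Mostowski's lemma itself, so it cannot be taken as a free preliminary; the only apparent way to obtain it inside this theory is to run a recursion along $E$ itself, at which point the detour through the ordinal-indexed levels $A_\alpha$ does no work. (Your verification that a total $\pi$ with $\pi(a)=\pi[E_a]$ is injective, has transitive range, and is an $\in$-isomorphism is fine; the problem is solely how $\pi$ is produced.)

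The paper's proof avoids the issue entirely by taking the given well-founded relation as the scaffold of the recursion: the defining clause $\pi(d) = \{\pi(d') : d' \mathbin E d\}$ is a $\Sigma_0$ recursion rule, so a single application of $\Sigma_0$-Transfinite Recursion along $E$ yields the collapse, with no antecedent bound on the rank of $E$ and no exhaustion argument. That is the whole point of having the recursion principle as a hypothesis---it replaces exactly the Replacement-flavored bookkeeping your ordinal-length version reintroduces. To repair your write-up you would have to recurse along the relation (or along some well-ordering of $A$ compatible with $E$, whose construction again should not presuppose a rank bound), which in effect collapses your argument back into the paper's one-line proof.
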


\begin{proof}
Let $e$ be a binary well-founded, existential relation on a set $D$. We want to see that there is a function $\pi$ with domain $D$ satisfying $\pi(d) = \{ \pi(d') : d' \mathbin e d \}$. This is a recursive requirement on $\pi$, where the property we want to recursively satisfy is $\Sigma_0$. So it exists by $\Sigma_0$-Transfinite Recursion.
\end{proof}

\begin{proposition}[Over $\GBCm + \ETR$] \label{prop2:unroll-etr-to-s0tr}
The unrolled structure $\Ufrak$ satisfies $\Sigma_0$-Transfinite Recursion.
\end{proposition}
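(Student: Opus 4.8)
The plan is to build the required solution inside $(M,\Xcal)$ as a single membership code $S$, by running an elementary transfinite recursion externally and then gluing the resulting pieces together exactly as in the proof of Proposition~\ref{prop2:unroll-coll}. Fix the given instance of $\Sigma_0$-Transfinite Recursion in $\Ufrak$: membership codes $A$, $\Delta$, and $W$ representing a parameter $a$, a set $D$, and a well-order $<_D$ of $D$, where $\phi(x,y,a)$ is $\Sigma_0$ and $\Ufrak$ thinks $\phi$ defines a class function $F$. I want to produce a membership code $S$ representing a function $s\colon D\to V$ with $\Ufrak\models\phi(s\restriction d,s(d),a)$ for every $d\vin\Delta$.

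The first step --- and the one place the hypothesis that $<_D$ is genuinely a well-order is needed --- is to show that the relation $W_\star\subseteq\pen\Delta\times\pen\Delta$ corresponding to $W$ via Corollary~\ref{cor2:relns-mem-codes} (so that $b\mathbin{W_\star}b'$ iff $\Ufrak\models\Delta\downarrow b<_D\Delta\downarrow b'$) is well-founded in $(M,\Xcal)$. Since $<_D$ is a linear order, $W_\star$ is transitive, hence equal to its own transitive closure. And if some nonempty $B\subseteq\pen\Delta$ in $\Xcal$ had no $W_\star$-minimal element, then the class $\bigcup_{b\in B}(\Delta\downarrow b)\cup\{(b,\star):b\in B\}$, where $\star$ is new --- a membership code by Proposition~\ref{prop2:local-uniq}, existing by Elementary Comprehension --- would represent in $\Ufrak$ a nonempty subset of $D$ with no $<_D$-least element, contradicting that $\Ufrak$ thinks $<_D$ is a well-order. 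So $\ETR$ over $(M,\Xcal)$ applies to recursions along $W_\star$.

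Next I would run such a recursion. At stage $b\in\pen\Delta$ the partial solution supplies a class $\{(b',V_{b'}):b'\mathbin{W_\star}b\}$, where each $V_{b'}$ is a membership code representing $s(\Delta\downarrow b')$; using Lemma~\ref{lem2:max-prtl-isom} and a nested elementary recursion --- identifying shared substructure through maximal initial partial isomorphisms and appealing to Global Choice for canonical bookkeeping, just as in Proposition~\ref{prop2:unroll-coll} --- I glue these into one membership code $G_b$ for the function $s\restriction(\Delta\downarrow b)$, then let $V_b$ be a membership code for $F^{\Ufrak}$ applied to the set coded by $G_b$, and declare the $b$-th slice of the solution class to be $V_b$. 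Once the recursion terminates, attaching a new top node above the pair-nodes of the assembled structure yields $S$, and, as in Proposition~\ref{prop2:unroll-coll}, one checks that $S$ is extensional and represents the (necessarily unique) solution~$s$.

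The step producing $V_b$ is where I expect the real work to lie: one must define, by a \emph{single} first-order formula and uniformly in $G_b$ and $A$, a membership code for the unique $y$ with $\Ufrak\models\phi(\text{set coded by }G_b,\,y,\,a)$. Here $\phi$ being $\Sigma_0$ is essential: its $\star$-translation (Definition~\ref{def2:etr-star-trans}) is first-order, and since a $\Sigma_0$-definable total function behaves rudimentarily --- the transitive closure of any value being a fixed first-order combination of the transitive closures of the arguments --- one reads off such a definition much as in the proof that $\Ufrak$ satisfies $\Sigma_0$-Separation. The defining formula for the $b$-th slice then bundles, in order, the gluing that builds $G_b$ and this evaluation; that the whole construction is a bona fide instance of $\ETR$ over $(M,\Xcal)$ uses only that $W_\star$ is a well-founded class relation, which the first step secures. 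The remaining checks --- extensionality of $S$, and that it genuinely codes $s$ --- proceed exactly as in Proposition~\ref{prop2:unroll-coll}.
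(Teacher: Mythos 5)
Your proposal is correct and follows essentially the same route as the paper's proof: mimic the first-order recursion at the level of membership codes by an elementary transfinite recursion along the class relation on $\pen\Delta$ given by corollary \ref{cor2:relns-mem-codes} (your explicit check that $(M,\Xcal)$ sees this relation as well-founded is a detail the paper leaves implicit), with an inner layer of gluing via maximum initial partial isomorphisms exactly as in proposition \ref{prop2:unroll-coll}, and with the $\Sigma_0$-ness of $\phi$, through the $\star$-translation, making the evaluation of $F$ on codes a first-order matter---this is precisely the paper's ``outer layer/inner layer'' argument. One quibble: your gloss that a total $\Sigma_0$-definable function ``behaves rudimentarily'' is not literally right (e.g.\ the constant function $x \mapsto \omega$ has a $\Sigma_0$ graph but is not rudimentary), though the conclusion you actually need---that a membership code for the value is uniformly first-order definable from codes for the arguments---is exactly what the paper asserts via the translation and Elementary Comprehension.
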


\begin{proof}
Consider an instance of $\Sigma_0$-Transfinite Recursion. That is, $F$ is a class function $\Ufrak \to \Ufrak$ which is $\Sigma_0$-definable, possibly using a membership code as a parameter, and $D$ is a membership code equipped with $<_D$ a membership code for a well-ordering of $D$. We want to see that there is a membership code for the desired $s$. First, observe that in the ground universe that $F$ is first-order definable (from parameters), by the translation. 

We will build the the desired membership code $S$ via an instance of Elementary Transfinite Recursion. The idea is to mimic the recursion to produce $s$, but in membership codes. This introduces some extra work, since we have to deal with the picky details of how membership codes work. 

The iteration proceeds as follows, with an outer layer and an inner layer. The outer layer occurs on $\pen D$ according to the well-ordering corresponding to the membership code $<_D$ (see corollary \ref{cor2:relns-mem-codes}). Each step $d$ in the outer layer produces a partial construction of $S$, call it $S_d$. We start with $S_0 = \emptyset$ and take unions at limit stages. The hard work is done in the successor step, where the inner layer of the transfinite recursion occurs. We start with $S_d$ and want to produce $S_{d+1}$. By construction, each $d' \in \pen D$ which comes before $d$ in $<_D$ is in $S_d$. More, there is a corresponding node, call it $f(d')$, which represents $F(S_d \rest d')$ and then nodes for $\{d'\}$, $\{d',f(d')\}$, and $(d',f(d'))$ above, similar to the constructions in proposition \ref{prop2:unroll-choice} and lemma \ref{lem2:fns-mem-codes}. In particular, $S_d$ itself may not be a membership code. Modify $S_d$ to produce a membership code $U$ by adding a top node $t_U$ and edges from each $(d',f(d'))$ node in $S_d$ to $t_U$. Then, we have a membership code $F(U)$ by Elementary Comprehension. Construct by transfinite recursion the maximum initial partial isomorphism between $S_d$ and $F(U)$ and use it to glue a copy of $F(U)$ onto $S_d$, as in the argument for proposition \ref{prop2:unroll-coll}. Then, add $d+1$ to $S_d$ along with nodes for $\{d+1\}$, $\{d+1,t_{F(U)}\}$, and $(d+1,t_{F(U)})$ and the corresponding edges to produce $S_{d+1}$. 
\end{proof}

This completes the last step in the calculation of the theory of the unrolled model starting with a ground universe satisfying $\GBC + \ETR$ or $\GBCm + \ETR$.

\begin{corollary}
Let $(M,\Xcal)$ be a model of second-order set theory and let $\Ufrak$ be the unrolled model constructed inside $(M,\Xcal)$. 
\begin{itemize}
\item If $(M,\Xcal) \models \GBCm + \ETR$ then $\Ufrak \models \wZFCmr(0) + \Sigma_0$-Transfinite Recursion.
\item If $(M,\Xcal) \models \GBC + \ETR$ then $\Ufrak \models \wZFCmi(0) + \Sigma_0$-Transfinite Recursion.
\end{itemize}
\end{corollary}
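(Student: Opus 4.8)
The plan is to observe that this corollary is pure bookkeeping: every axiom of $\wZFCmr(0)$, respectively $\wZFCmi(0)$, together with $\Sigma_0$-Transfinite Recursion, has already been shown to hold in $\Ufrak$ by one of the preceding results of this section, and each of those results was proved over $\GBcm + \ETR$ or $\GBCm + \ETR$. Since both $\GBC + \ETR$ and $\GBCm + \ETR$ extend $\GBCm + \ETR$, which in turn extends $\GBcm + \ETR$, every cited result applies to both ground theories; the only relevant difference between the two cases is that $\GBC$ carries Powerset while $\GBCm$ does not.

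So first I would collect the citations. Extensionality is Theorem \ref{thm2:unroll-ext}; Pairing, Union, Infinity and Foundation are Proposition \ref{prop2:unroll-basic}; Choice, in the form of the well-ordering theorem, is Proposition \ref{prop2:unroll-choice}. Proposition \ref{prop2:unroll-kappa} supplies the clauses about the largest cardinal: the ordinal $\kappa$ of $\Ufrak$ represented by the membership code $\Ord + 1$ is regular and is the largest cardinal of $\Ufrak$, and moreover, when $(M,\Xcal) \models \GBC + \ETR$, it is a strong limit -- this is the clause of \ref{prop2:unroll-kappa} that invokes Powerset in the ground universe -- hence inaccessible. Thus in the $\GBC$ case the hypothesis ``$\kappa$ is inaccessible'' of $\wZFCmi(0)$ is met, and in the $\GBCm$ case the hypothesis ``$\kappa$ is regular'' of $\wZFCmr(0)$ is met. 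Finally $\Sigma_0$-Separation is the $\Sigma_0$-Separation proposition of subsection \ref{subsec2:unroll-in-etr}, and $\Sigma_0$-Transfinite Recursion is Proposition \ref{prop2:unroll-etr-to-s0tr}.

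The one clause not already on the books is ``$H_\kappa$ exists'', which belongs to $\wZFCmr(0)$. Here I would identify $H_\kappa^\Ufrak$ with the set of $\Ufrak$ represented by the canonical membership code $E_V$ for the universe class, namely $E_V = (\oin \rest V) \cup \{ (x,\star) : x \in V \}$ with $\star$ a fresh top node. This $E_V$ lies in $\Xcal$ by Elementary Comprehension, and it is a genuine membership code because $\oin$ is a well-founded, extensional class relation over any model of $\GBCm$. For the two inclusions: each element of the set represented by $E_V$ is some $E_x$ with $x \in M$, and $E_x \cong \oin \rest \tc(\{x\})$ has set-sized domain, so it is hereditarily of $\Ufrak$-size $< \kappa = \Ord^M$; conversely, a membership code $A$ all of whose transitive $\vin$-members have $\Ufrak$-size $< \kappa$ has domain a small class -- a straightforward induction on $\vin$-rank, using Class Replacement and Union in the ground model, bounds $\dom A$ -- so by the ordinary Mostowski collapse carried out inside $M \models \ZFCm$ it is isomorphic to $E_x$ for some set $x$, whence $A \vin E_V$. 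Therefore $H_\kappa^\Ufrak = V^M$ is a set of $\Ufrak$. (The same observation identifies $V_\kappa^\Ufrak$ with $V^M$, should one wish to verify separately that $V_\kappa$ is a set of $\Ufrak$ in the inaccessible case.) I expect this identification to be the only step requiring real argument; everything else is citation, and the sole care point is to confirm that each cited proposition is stated over a theory contained in $\GBCm + \ETR$.
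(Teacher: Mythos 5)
Your proposal is correct and matches the paper's treatment: the corollary is stated there without a separate proof, precisely because it is the assembly of Theorem \ref{thm2:unroll-ext}, Propositions \ref{prop2:unroll-basic}, \ref{prop2:unroll-choice}, \ref{prop2:unroll-kappa}, the $\Sigma_0$-Separation proposition of subsection \ref{subsec2:unroll-in-etr}, and Proposition \ref{prop2:unroll-etr-to-s0tr}, all of which are proved over (sub)theories of $\GBCm + \ETR$, with Powerset in the ground model supplying inaccessibility of $\kappa$ in the $\GBC$ case. Your explicit verification that the canonical membership code for the class $V$ represents $H_\kappa^{\Ufrak}$ supplies the ``$H_\kappa$ exists'' clause that the paper leaves implicit, and it is argued in the same spirit as the paper's own manipulations of membership codes (via the function/relation correspondence of Lemma \ref{lem2:fns-mem-codes} and Mostowski collapse inside $M$), so it fits the intended proof rather than departing from it.
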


In section \ref{sec:cutting-off} we will get the backward direction.

\subsection{How hard is it to unroll?} \label{subsec2:how-hard}

I wish now to address the question of what is needed for the unrolling to satisfy a reasonable theory. As the reader who has familiarity with coding hereditarily countable sets as reals will know, there is more than one way to do the coding. Which coding one prefers is, to a significant extent, a matter of taste. However, it means I must also address whether my choice of coding affects how strong a theory is needed in the ground universe to carry out the unrolling.

First, let us consider other codings based on well-founded extensional directed graphs. For example, rather than requiring the graph to have a largest element one could work with pointed graphs with a designated point. If $(A,p_A)$ and $(B,p_B)$ are two such pointed graphs then they represent the same `set' if $A \downarrow p_A \cong B \downarrow p_B$, with a corresponding definition of $\vin$.\footnote{I leave it to the reader to explicitly write down the definition of $\vin$ for this coding.}
It is not hard to see that such a coding will require the existence of maximum initial partial isomorphisms in the same places where they are required by membership codes according to my definition. So this coding is no easier than the one I use. 

The question then is, what is needed to carry out my coding? As remarked in section \ref{sec:unrolling}, $\GBcm$ suffices to define membership codes, isomorphism between them, and their membership relation $\vin$. So it takes very little just to unroll into some structure. But we do not want to unroll to some arbitrary structure, we want to unroll into a model of (first-order) set theory. This lacks a precise definition, but it should be uncontroversial to say that to be a model of set theory a structure must at least satisfy the basic axioms: Extensionality, Pairing, Union, and so forth. Earlier, we used Elementary Transfinite Recursion to show that the unrolling satisfies Extensionality and Pairing. More specifically, $\GBCm + \ETR$ proves the existence of maximal initial partial isomorphisms between membership codes, which is what we used. Indeed, the existence of such is necessary.

\begin{proposition}[Over $\GBcm$] \label{prop2:need-max-ipis}
If $\Ufrak$ satisfies Pairing then if $A$ and $B$ are membership codes there is a maximum initial partial isomorphism between them.
\end{proposition}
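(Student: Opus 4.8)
\emph{Proof proposal.}
The plan is to use Pairing in $\Ufrak$ to place isomorphic copies of $A$ and $B$ inside a single membership code $C$, and then to observe that, between two sub-codes of one membership code, the maximum initial partial isomorphism is extremely simple---essentially the identity on their common part---so it can be produced by Elementary Comprehension with no recursion at all. This is exactly the point where local uniqueness of membership codes (Proposition~\ref{prop2:local-uniq}) can stand in for the Elementary Transfinite Recursion used in Lemma~\ref{lem2:max-prtl-isom}.

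In detail, I would first apply Pairing in $\Ufrak$ to obtain a membership code $C$ with $X \vin C \iff X \cong A \lor X \cong B$. Since $A \vin C$ and $B \vin C$, there are nodes $a^{*},b^{*}\lol_C t_C$ (unique by Proposition~\ref{prop2:local-uniq}, and possibly equal) with $C\downarrow a^{*}\cong A$ and $C\downarrow b^{*}\cong B$; fix the isomorphisms $\alpha\colon A\cong C\downarrow a^{*}$ and $\beta\colon B\cong C\downarrow b^{*}$, which exist as classes---they are exactly the witnesses to $A\vin C$ and $B\vin C$---and are unique by the rigidity of membership codes. Now set
\[
\pi_0 = \{\,(x,x) : x \le_C a^{*}\ \text{and}\ x\le_C b^{*}\,\},
\]
a class by Elementary Comprehension. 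It is routine that $\pi_0$ is an initial partial isomorphism from $C\downarrow a^{*}$ to $C\downarrow b^{*}$: its domain and range both equal the $\le_C$-downward-closed set $\{x : x\le_C a^{*}\}\cap\{x : x\le_C b^{*}\}$, and being the identity it preserves edges in both directions. The crucial point, and the only one requiring genuine care, is that $\pi_0$ is the \emph{maximum} such: if $\rho\colon C\downarrow a^{*}\to C\downarrow b^{*}$ is any initial partial isomorphism and $(x,y)\in\rho$, then $\rho$ restricted to the $\le_C$-predecessors of $x$ is an isomorphism of $C\downarrow x$ onto $C\downarrow y$ (injectivity of $\rho$ together with downward-closure of $\ran\rho$ give surjectivity onto $\dom(C\downarrow y)$, and edge-preservation in both directions does the rest), so $C\downarrow x\cong C\downarrow y$, whence $x=y$ by Proposition~\ref{prop2:local-uniq}; consequently $\rho\subseteq\pi_0$.

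Finally I would transport back to $A$ and $B$, putting
\[
\pi^{\max} = \{\,(x,y)\in\dom A\times\dom B : \alpha(x)=\beta(y)\,\},
\]
which is a class by Elementary Comprehension with $\alpha$ and $\beta$ as parameters. Since $\alpha$ and $\beta$ are isomorphisms, the map $\sigma\mapsto\beta\circ\sigma\circ\alpha^{-1}$ is an inclusion-preserving bijection between initial partial isomorphisms $A\to B$ and those $C\downarrow a^{*}\to C\downarrow b^{*}$, and it carries $\pi^{\max}$ to $\pi_0$; hence $\pi^{\max}$ is the maximum initial partial isomorphism between $A$ and $B$, as required. Everything here is available over $\GBcm$ (Elementary Comprehension, rigidity of membership codes, and Proposition~\ref{prop2:local-uniq}), so no appeal to $\ETR$ or Powerset is needed; the only delicate step is the maximality of $\pi_0$, where one must verify that an initial partial isomorphism between sub-codes of a single code can only be the identity on its domain.
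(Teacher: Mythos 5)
Your proposal is correct and takes essentially the same approach as the paper: the paper also uses Pairing in $\Ufrak$ to obtain a single code $P$ containing copies of $A$ and $B$, fixes the embeddings $\pi_A\colon A\to P$ and $\pi_B\colon B\to P$, and declares $\pi_B^{-1}\circ\pi_A$ restricted to the overlap of the ranges to be the maximum initial partial isomorphism --- which is exactly your $\pi^{\max}=\{(x,y):\alpha(x)=\beta(y)\}$. Your additional verification of maximality via $\pi_0$ and Proposition~\ref{prop2:local-uniq} simply makes explicit a step the paper leaves unargued.
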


\begin{proof}
Fix $A$ and $B$. Then there is a membership code $P$ so that $E \vin P$ if and only if $E \cong A$ or $E \cong B$. Let $\pi_A : A \to P$ and $\pi_B : B \to P$ be the embeddings into $P$. Then $\pi_B\inv \circ \pi_A \rest (\ran A \cap \ran B)$ is the maximum initial partial isomorphism from $A$ to $B$.
\end{proof}

A special case of this is of interest.

\begin{corollary}[Over $\GBCm$]
If the unrolling $\Ufrak$ satisfies Pairing then the ground model satisfies the comparability of class well-orders: given class well-orders $\Gamma$ and $\Delta$ either $\Gamma$ embeds as an initial segment of $\Delta$ or vice versa.
\end{corollary}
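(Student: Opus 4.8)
The plan is to feed the two class well-orders, as membership codes for ordinals, into the previous proposition and argue that the resulting maximum initial partial isomorphism must be either total on the one side or onto on the other.

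First I would fix class well-orders $\Gamma$ and $\Delta$ and let $A$ and $B$ be membership codes for the ordinals of ordertype $\Gamma$ and $\Delta$, e.g.\ $A = \Gamma + 1$ and $B = \Delta + 1$ in the notation fixed earlier (replacing $\Gamma,\Delta$ by isomorphic copies with domain $\ne V$ if necessary). Restricted to $\dom \Gamma \subseteq \dom A$, the edge relation $\lol_A$ is just $\Gamma$, and every node of $\dom \Gamma$ lies below $t_A$; likewise for $B$. Since $\Ufrak$ satisfies Pairing, Proposition \ref{prop2:need-max-ipis} supplies a maximum initial partial isomorphism $\pi \prtlfn A \to B$. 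I would then record two easy consequences of the definition of initial partial isomorphism: (i) $I := \dom \pi \cap \dom \Gamma$ is $\Gamma$-downward closed and $J := \ran \pi \cap \dom \Delta$ is $\Delta$-downward closed, and $\pi$ restricts to an order isomorphism from the initial segment $I$ of $\Gamma$ onto the initial segment $J$ of $\Delta$ (injectivity is automatic from extensionality of $B$ together with the biconditional defining initial partial isomorphisms); (ii) if $t_A \in \dom \pi$ then downward closure forces $I = \dom \Gamma$, and symmetrically if $t_B \in \ran \pi$ then $J = \dom \Delta$.

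The heart of the argument is the claim that $I = \dom \Gamma$ or $J = \dom \Delta$. Suppose not. By Elementary Comprehension the classes $\dom \Gamma \setminus I$ and $\dom \Delta \setminus J$ are nonempty; since ``being $\Gamma$-least'' is first-order and $\Gamma$ is well-founded, let $g$ be $\Gamma$-least in $\dom \Gamma \setminus I$ and $d$ be $\Delta$-least in $\dom \Delta \setminus J$, so $I = \{h : h \mathbin\Gamma g\}$ and $J = \{e : e \mathbin\Delta d\}$. Then $\pi \cup \{(g,d)\}$ is again an initial partial isomorphism from $A$ to $B$: its domain $I \cup \{g\}$ and range $J \cup \{d\}$ stay downward closed in $A$ and $B$ (neither reaches $t_A$ or $t_B$, as $I, J$ are proper in $\dom \Gamma, \dom \Delta$), and for every $h \in I$ both $h \lol_A g$ and $\pi(h) \lol_B d$ hold (the first because $h \mathbin\Gamma g$, the second because $\pi(h) \in J = \{e : e\mathbin\Delta d\}$), so the biconditional is respected. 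This properly extends $\pi$, contradicting maximality.

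Finally I would translate back. If $I = \dom \Gamma$, then $\pi \rest \dom \Gamma$ is an order embedding of $\Gamma$ into $\Delta$ whose image $J$ is $\Delta$-downward closed (using once more that $\ran \pi$ is downward closed in $B$ to see that a $\Delta$-predecessor of some $\pi(h)$ is again $\pi(h')$ with $h' \in \dom \Gamma$), i.e.\ $\Gamma$ embeds onto an initial segment of $\Delta$; if $J = \dom \Delta$, then symmetrically $\pi\inv \rest \dom \Delta$ embeds $\Delta$ onto an initial segment of $\Gamma$. The main obstacle I anticipate is purely bookkeeping: checking that the ``initial segment'' restriction and the one-step extension of $\pi$ genuinely satisfy all three clauses of the definition of initial partial isomorphism given the idiosyncratic role of the top nodes $t_A, t_B$, and confirming that the construction never needs to act at the top level before the levels below $\dom \Gamma$ (resp.\ $\dom \Delta$) are exhausted. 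No more than $\GBcm$ is used; Elementary Comprehension is all that is needed to extract the least omitted elements.
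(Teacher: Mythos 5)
Your proposal is correct and takes essentially the same route as the paper: both invoke the maximum initial partial isomorphism between the membership codes $\Gamma + 1$ and $\Delta + 1$ supplied by the Pairing proposition, and show that if neither side were exhausted then mapping the least omitted element to the least omitted element would properly extend it, contradicting maximality (the paper phrases this for the full codes and then notes $\Gamma,\Delta$ are comparable iff $\Gamma+1,\Delta+1$ are). One bookkeeping caveat: when $\Gamma \cong \Delta + 1$ the maximum $\pi$ sends the top element of $\dom\Gamma$ to $t_B$, so the branch ``$I = \dom\Gamma$ yields an embedding of $\Gamma$ into $\Delta$'' is literally false in that subcase; but there $J = \dom\Delta$ holds as well and your other branch applies, so the dichotomy still delivers the conclusion.
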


\begin{proof}
Let $A$ and $B$ be membership codes for ordinals. That is, $A$ and $B$ are class well-orders of successor length. The initial partial isomorphism $\pi$ between them gives a comparison map. If neither $\dom \pi = A$ nor $\ran \pi = B$ then we can extend $\pi$ to a larger initial partial isomorphism, namely by mapping $\min(A \setminus \dom \pi)$ to $\min(B \setminus \ran \pi)$. Note that this definition can be done using Elementary Comprehension, because it is a first-order property to be the minimum of a well-order. So it must be that either $\dom \pi = A$, in which case $A$ has ordertype $\le$ that of $B$, or $\ran \pi = B$, in which case $B$ has ordertype $\le$ that of $A$.

The general case then follows because $\Gamma$ and $\Delta$ are comparable if and only if $\Gamma + 1$ and $\Delta + 1$ are comparable.
\end{proof}

What does it take to show that class well-orders are always comparable? It is clear that $\GBCm + \ETR$ suffices; we want a map which satisfies the recursive requirement that $e(g)$ is the least element of $\dom \delta$ which is above the range of $e \rest g$. Can we get away with less than Elementary Transfinite Recursion?

In second-order arithmetic, the answer to the analogous question is no. Over $\RCA_0$, the comparability of well-orders is equivalent to $\ATR_0$ (see \cite[chapter V.6]{simpson:book}). But this proof uses that in arithmetic, ``$X$ is a well-order'' is $\Pi^1_1$-universal. This fails badly in set theory, as in this context ``$X$ is a well-order'' is an elementary assertion. So the proof from second-order arithmetic will not generalize.

\begin{question}
Over $\GBCm$, what is the strength of the comparability of class well-orders?
\end{question}

Let us now look at alternative codings which use trees instead of graphs. Perhaps there things may be easier. This is the approach taken by Antos and Friedman \cite{antos-friedman2015}, whom one should consult for precise definitions.
To illustrate the idea, consider a set-sized well-founded tree $(T,<^T)$. By the Mostowski collapse lemma, there is a map $\pi$ which maps $T$ to a transitive set $a$ so that $s <^T t$ if and only if $\pi(s) \in \pi(t)$. (This $\pi$ will not in general be one-to-one.) Rather than representing sets with graphs we can represent them with certain trees. Membership is defined similarly as to my membership codes; say that $S \vin^{\mathrm{tree}} T$ if there is is a child node $t$ of the root node of $T$ so that $S$ is isomorphic to the subtree of $T$ below $t$.

An advantage of this coding is that satisfying Pairing is trivial. To form a code for the unordered pair of $S$ and $T$ just make a tree whose root has two children, which are the roots of $S$ and $T$. However, Extensionality still gives trouble. Similar to the case with membership codes, we want to conclude that $S \cong T$ knowing that $U \vin^{\mathrm{tree}} S$ if and only if $U \vin^{\mathrm{tree}} T$. This amounts to wanting to glue together partial isomorphisms between $S$ and $T$ to get a full isomorphism. Naively, one wants to define an isomorphism $\pi$ between $S$ and $T$ by mapping subtrees from a child of the root of $S$ to the isomorphic copy under the root of $T$ and mapping the root of $S$ to the root of $T$. But this uses an obfuscated appeal to $\Pi^1_1$-Comprehension; cf.\ the remarks after lemma \ref{lem2:max-prtl-isom}. The same idea used to show my membership codes satisfy Extensionality can be used here, namely using Elementary Transfinite Recursion to conclude there is a maximal initial partial isomorphism between the trees and then showing that this must be a full isomorphism. So we look to be in no better condition by coding with trees instead of graphs.

\section{The cutting off construction} \label{sec:cutting-off}

The other direction of the bi-interpretability results is more straightforward. Suppose we have a first-order model of set theory which has a largest cardinal $\kappa$ and $H_\kappa$ exists in the model so that $H_\kappa \models \ZFCm$. We can construct a second-order model whose first-order part is $H_\kappa$ and whose second-order part is the (proper class of) subsets of $H_\kappa$ in the model. The stronger the theory satisfied by the first-order model, the stronger the theory that will be satisfied by the second-order model gotten by this cutting off.

Formally, we will define an interpretation of formulae in the language of second-order set theory in the language of first-order set theory with a constant symbol $\kappa$ for the largest cardinal of the (first-order) model. Of course, $\kappa$ is definable so this use of a constant symbol is only a convenience. This interpretation $\phi \mapsto \phi^I$ is given by the following schema.

\begin{itemize}
\item The interpretation of $x \in y$ is $x \in y$ and the interpretation of $x \in Y$ is $x \in Y$---that is, both membership relations for the second-order model will be the membership relation of the first-order model.
\item The interpretation of $x = y$ is $x = y$ and the interpretation of $X = Y$ is $X = Y$. Unlike the unrolling construction we can directly use equality and not have to quotient out by an equivalence relation.
\item The interpretation of $\phi \land \psi$ is $\phi^I \land \psi^I$, and similarly for disjunction and negation;
\item The interpretation of $\forall x \phi$ is $\forall x\ (x \in H_\kappa \impl \phi^I)$ and similarly for first-order existential quantification; and
\item The interpretation of $\forall X \phi$ is $\forall x\ (x \subseteq H_\kappa \impl \phi^I)$ and similarly for second-order existential quantification.
\end{itemize}

It is immediate that if $\phi$ is $\Sigma^1_k$ (in parameters) then $\phi^I$ is $\Sigma_k$ (in parameters).

As a base theory for this section I will take $\wZFCmr(0) + \Sigma_0$-Transfinite Recursion. Recall that $\wZFCmr(0)$ is the set theory axiomatized by Extensionality, Union, Pairing, Infinity, Foundation, Choice, $\Sigma_0$-Separation, plus the assertions that there is a largest cardinal $\kappa$, that $\kappa$ is regular, and that $H_\kappa$ exists. In particular, $\wZFCmr(0) + \Sigma_0$-Transfinite Recursion proves that $H_\kappa \models \ZFCm$.

\begin{proposition}
Work in $\wZFCmr(0) + \Sigma_0$-Transfinite Recursion and let $\kappa$ be the largest cardinal. Then $H_\kappa \models \ZFCm$.
\end{proposition}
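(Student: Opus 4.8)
The plan is to check each axiom of $\ZFCm$ in the transitive set $H_\kappa$, relativizing quantifiers to $H_\kappa$. Extensionality and Foundation are immediate from the transitivity of $H_\kappa$ and the well-foundedness of $\in$. Pairing, Union, Infinity and Choice reduce to routine cardinality bookkeeping together with the regularity of $\kappa$: if $a,b \in H_\kappa$ then $\tc(\{a,b\}) = \{a,b\} \cup \tc(\{a\}) \cup \tc(\{b\})$ has cardinality $< \kappa$, and likewise $\bigcup a \subseteq \tc(\{a\})$ handles Union; $\omega \in H_\kappa$ since $\omega < \kappa$, giving Infinity; and for Choice the well-ordering theorem in $V$ supplies a well-order of a given $a \in H_\kappa$ whose transitive closure is built from elements and ordered pairs of elements of $a$, hence again has cardinality $< \kappa$ and lies in $H_\kappa$. (Here one uses that the transitive closure and rank functions, and the finite stages of the cumulative hierarchy, are available, being produced by instances of $\Sigma_0$-Transfinite Recursion.)

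The real work --- and the main obstacle --- is Separation and Collection, since the ambient theory provides only $\Sigma_0$-Separation and $\Sigma_0$-Transfinite Recursion, not the full Separation and Collection used in the classical argument that $H_\kappa \models \ZFCm$. The device that bridges the gap is an internal satisfaction predicate $\mathrm{Sat}$ for the set structure $(H_\kappa, \in)$, built inside $V$ by a Tarski recursion along $\omega$: at stage $n$ one assigns to each (code of a) first-order formula of length $\le n$ its set of satisfying assignments into $H_\kappa$, computed from the earlier stages by the usual clauses. The only quantifier appearing in those clauses --- namely ``$\exists y\ (\godel{\psi}, s[v\mapsto y]) \in (\text{earlier stage})$'' in the clause for $\exists v\,\psi$ --- is bounded by the set $H_\kappa$, so the recursive step is $\Sigma_0$-definable in the parameter $H_\kappa$, and each stage is a set (a subclass of the product of the set of formula codes with the set of finite partial assignments into $H_\kappa$, cut out by $\Sigma_0$-Separation). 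Hence $\Sigma_0$-Transfinite Recursion along $\omega$ produces the sequence of stages, whose union $\mathrm{Sat}$ is a set satisfying the expected recursive conditions, so by induction $(\godel{\phi},s) \in \mathrm{Sat}$ iff $(H_\kappa,\in) \models \phi[s]$.

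With $\mathrm{Sat}$ available, Separation in $H_\kappa$ is immediate: given $a \in H_\kappa$, a formula $\phi$, and parameters $\vec p \in H_\kappa$, the class $\{x \in a : (\godel{\phi}, \langle x, \vec p\rangle) \in \mathrm{Sat}\}$ is defined by a $\Sigma_0$ formula in the parameters $a, \vec p, \mathrm{Sat}$, hence is a set by $\Sigma_0$-Separation; being a subset of $a$ it lies in $H_\kappa$, and by correctness of $\mathrm{Sat}$ it equals $\{x \in a : H_\kappa \models \phi(x,\vec p)\}$. For Collection, suppose $a \in H_\kappa$ and $H_\kappa \models \forall x \in a\ \exists y\ \phi(x,y,\vec p)$. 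Let $f$ send each $x \in a$ to the least rank of a witness $y \in H_\kappa$ with $(\godel{\phi}, \langle x,y,\vec p\rangle) \in \mathrm{Sat}$; this $f$ is $\Sigma_0$-definable in $\mathrm{Sat}$ and the rank function, its values lie below $\kappa$ since its witnesses lie in $H_\kappa$, and so $f$ is a set by $\Sigma_0$-Separation. As $\card{a} < \kappa$ and $\kappa$ is regular, $\delta := \sup(\ran f) < \kappa$. Fixing a well-order $R$ of the set $H_\kappa$, let $b$ collect, for each $x \in a$, the $R$-least witness $y$ of rank $f(x)$; then $b$ is a set by $\Sigma_0$-Separation, $\card{b} \le \card{a} < \kappa$, each element of $b$ lies in $H_\kappa$, so $\card{\tc(\{b\})} < \kappa$ by regularity and thus $b \in H_\kappa$. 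By construction $H_\kappa \models \forall x \in a\ \exists y \in b\ \phi(x,y,\vec p)$, as required.
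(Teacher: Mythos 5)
Your proof is correct and uses exactly the paper's key idea: build a satisfaction predicate for $H_\kappa$ by an instance of $\Sigma_0$-Transfinite Recursion along $\omega$ and then extract Separation (and Collection) inside $H_\kappa$ via $\Sigma_0$-Separation with that predicate as a parameter. The paper's own proof only carries out the Separation case and leaves the remaining axioms as an exercise, so your treatment of Collection (via the rank function, regularity of $\kappa$, and a well-order of $H_\kappa$) simply fills in details the paper omits.
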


\begin{proof}[Proof sketch]
I will show that $H_\kappa$ satisfies Separation. The rest is an easy exercise for the reader. Consider $a,p \in H_\kappa$ and fix some formula $\phi(x,p)$. We want to see that $b = \{ x \in a : \phi(x,p)^{H_\kappa} \} \in H_\kappa$. Let $T$ be the truth predicate for $H_\kappa$, which exists by an instance of $\Sigma_0$-transfinite recursion. We can then define $b$ by $\Sigma_0$-Separation, namely to consist of those $x \in a$ for which $(\phi, x \cat p) \in T$.
\end{proof}

We can now see that all the axioms of $\GBC$ are satisfied by the cut-off model.

\begin{proposition}
If $\phi \in \GBCm$, then $\wZFCmr(0) + \Sigma_0$-Transfinite Recursion $\proves$ $\phi^I$. Also, if $\phi \in \GBC$ then $\wZFCmi(0) + \Sigma_0$-Transfinite Recursion $\proves$ $\phi^I$.
\end{proposition}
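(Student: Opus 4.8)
The plan is to verify that each axiom of $\GBCm$, when passed through the interpretation $\phi \mapsto \phi^I$, becomes provable in $\wZFCmr(0) + \Sigma_0$-Transfinite Recursion, proceeding axiom by axiom (and schema instance by schema instance). Under $\phi \mapsto \phi^I$ the cut-off model has first-order part $H_\kappa$ with the true $\in$ and second-order part the (ambient) subsets of $H_\kappa$; note also that $\Ord^{H_\kappa} = \kappa$, since an ordinal $\alpha$ lies in $H_\kappa$ exactly when $\alpha < \kappa$. The axioms ``$\ZFCm$ for sets'' translate to the statements ``$H_\kappa \models \sigma$'' for $\sigma$ an axiom of $\ZFCm$, which are precisely the content of the preceding proposition that $H_\kappa \models \ZFCm$; Class Extensionality is immediate, as two subsets of $H_\kappa$ with the same elements are equal by Extensionality in the ambient model.

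For Elementary Comprehension, fix a first-order formula $\psi(x,\bar v,\bar V)$. Its interpretation $\psi^I$ introduces only quantifiers bounded to the \emph{set} $H_\kappa$ --- this is exactly where it matters that $\psi$ carries no class quantifiers --- so $\psi^I$ is $\Delta_0$ in the parameters $H_\kappa,\bar v,\bar V$, where the $\bar V$ are ambient sets that happen to be subsets of $H_\kappa$. Hence $\Sigma_0$-Separation in the ambient model yields the set $B = \{ x \in H_\kappa : \psi^I(x,\bar v,\bar V) \}$, which is a subset of $H_\kappa$ and thus a class of the cut-off model. For Class Replacement, given $F \subseteq H_\kappa$ coding a function and $a \in H_\kappa$, the image $F''a = \{ y \in H_\kappa : \exists x \in a\ (x,y) \in F\}$ is a set by $\Sigma_0$-Separation; since $|F''a| \le |a| < \kappa$, each of its elements lies in $H_\kappa$, and $\kappa$ is regular, the transitive closure $\tc(F''a)$ is a union of fewer than $\kappa$ sets each of size less than $\kappa$, so $|\tc(F''a)| < \kappa$ and $F''a \in H_\kappa$.

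For Global Choice we need a subset of $H_\kappa$ coding a bijection $\Ord^{H_\kappa} \to H_\kappa$, i.e.\ a bijection $\kappa \to H_\kappa$. From $\kappa \subseteq H_\kappa$ we get $|H_\kappa| \ge \kappa$, and since $\kappa$ is the largest cardinal every set --- in particular $H_\kappa$ --- has cardinality at most $\kappa$, so $|H_\kappa| = \kappa$. By Choice in the ambient model there is a bijection $g \colon \kappa \to H_\kappa$, and each pair $(\alpha,x) \in g$ lies in $H_\kappa$ (its transitive closure has size less than $\kappa$), so $g$ itself is a subset of $H_\kappa$, hence a class of the cut-off model realizing Global Choice. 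This completes the $\GBCm$ case. For the $\GBC$ case, $\wZFCmi(0) + \Sigma_0$-Transfinite Recursion extends $\wZFCmr(0) + \Sigma_0$-Transfinite Recursion (inaccessibility of $\kappa$ gives its regularity and the existence of $H_\kappa$), so all of the above goes through verbatim; the single extra $\GBC$ axiom is Powerset for sets, whose interpretation ``$H_\kappa \models$ Powerset'' holds because $\kappa$ inaccessible gives $H_\kappa = V_\kappa$ and, by the definition of $\wZFCmi$, $V_\kappa \models \ZFC$.

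The only point requiring real care is the logical-complexity bookkeeping in Elementary Comprehension: because the Comprehension formula is first-order, its interpretation stays $\Delta_0$ once all quantifiers are bounded by the set $H_\kappa$, so the weak $\Sigma_0$-Separation in the base theory is enough. If class quantifiers were allowed --- as in full Comprehension --- the interpreted formula would acquire genuine unbounded quantifiers over $\powerset(H_\kappa)$ and one would need a correspondingly strong fragment of Separation in the ground model; this asymmetry is exactly why the strong end of Theorem~\ref{thm2:bi-int} pairs with theories like $\wZFCmi(k)$, and it is taken up in the later propositions of this section. The remaining cases (Class Replacement, Global Choice) reduce to the one-line cardinal-arithmetic facts above, using only that $\kappa$ is regular and is the largest cardinal.
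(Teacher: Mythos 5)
Your proof is correct and follows essentially the same route as the paper's: an axiom-by-axiom verification using $H_\kappa \models \ZFCm$ for the first-order axioms, the observation that interpretations of first-order Comprehension instances are $\Sigma_0$ (so $\Sigma_0$-Separation suffices), regularity of $\kappa$ for Class Replacement, and Choice plus the fact that $\kappa$ is the largest cardinal for Global Choice. Your write-up is simply more explicit than the paper's about the cardinality bookkeeping, which is fine.
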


\begin{proof}
That (the interpretation of) $\ZFCm$ holds for the first-order part is because  $H_\kappa \models \ZFCm$. If $\kappa$ is moreover inaccessible then $H_\kappa = V_\kappa$ moreover satisfies Powerset, thus full $\ZFC$. Extensionality for classes holds because of Extensionality in the first-order model. Global Choice holds because Choice holds in the ground model: Any well-order of $H_\kappa$ must have ordertype $\alpha$ for some $\alpha$ of size $\kappa$, since $\kappa$ is the largest cardinal. But then we can use this to get a well-order of ordertype $\kappa$, from which we can extract a bijection $\kappa \to H_\kappa$. Replacement holds because $\kappa$ is regular. If $\phi$ is an instance of Elementary Comprehension, then $\phi^I$ is $\Sigma_0$ and thus holds by Separation applied to $H_\kappa$.
\end{proof}

To get that the cut off model satisfies $\Pi^1_k$-Comprehension requires $\Sigma_k$-Separation from the ground universe.

\begin{proposition}
If $\phi \in \PnCAm k$, for $k \ge 1$, then $\wZFCmr(k) + \Sigma_0$-Transfinite Recursion, i.e.\ $\wZFCmr(0) + \Sigma_0$-Transfinite Recursion $+$ $\Sigma_k$-Separation, proves $\phi^I$. Thus, if $\phi \in \PnCA k$, for $k \ge 1$, then $\wZFCmi(k) + \Sigma_0$-Transfinite Recursion proves $\phi^I$.
\end{proposition}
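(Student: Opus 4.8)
The plan is to piggyback on the previous proposition. Since $\wZFCmr(k) + \Sigma_0$-Transfinite Recursion extends $\wZFCmr(0) + \Sigma_0$-Transfinite Recursion, we already know that $\phi^I$ holds for every $\phi \in \GBCm$, and likewise $\wZFCmi(k) + \Sigma_0$-Transfinite Recursion proves $\phi^I$ for every $\phi \in \GBC$ (in the inaccessible case $H_\kappa = V_\kappa$ also satisfies Powerset). So the only thing left to verify is an arbitrary instance of $\Pi^1_k$-Comprehension. Because the cut-off model satisfies $\GBcm$, I would first invoke the remark recorded earlier that over $\GBcm$ the schema $\PnCA k$ is equivalent to Comprehension restricted to $\Sigma^1_k$-formulae; thus it suffices to check instances of $\Sigma^1_k$-Comprehension.

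Next I would fix a $\Sigma^1_k$-formula $\psi(x,\bar v,\bar V)$ with free set variables $\bar v$ and free class variables $\bar V$, and consider the corresponding instance $\phi$ of Comprehension, namely $\forall \bar v\,\forall \bar V\,\exists Z\,\forall x\,(x \in Z \iff \psi(x,\bar v,\bar V))$. Unwinding the interpretation, $\phi^I$ asserts: for all $\bar p \in H_\kappa$ and all $\bar P \subseteq H_\kappa$ there is $Z \subseteq H_\kappa$ with $\forall x \in H_\kappa\,(x \in Z \iff \psi^I(x,\bar p,\bar P))$. Working in $\wZFCmr(k) + \Sigma_0$-Transfinite Recursion, the set $H_\kappa$ exists (an axiom), and by the observation just above that a $\Sigma^1_k$-formula interprets as a $\Sigma_k$-formula, $\psi^I(x,\bar p,\bar P)$ is $\Sigma_k$ in the parameters $\bar p$, $\bar P$, $H_\kappa$ (here $H_\kappa$ enters only as a parameter bounding the first-order quantifiers of $\psi$). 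Hence $\Sigma_k$-Separation applied to the set $H_\kappa$ yields $Z = \{ x \in H_\kappa : \psi^I(x,\bar p,\bar P) \}$, which is a subset of $H_\kappa$, hence a legitimate witnessing ``class'' of the cut-off model. This establishes $\phi^I$, and the proposition follows; the $\wZFCmi(k)$ case is identical, the only change being that $H_\kappa = V_\kappa$ also gives Powerset so that the cut-off model is a model of $\GBC + \PnCA k$ rather than merely of $\GBCm + \PnCAm k$.

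I do not expect any genuine obstacle: the argument is bookkeeping about quantifier complexity under $\phi \mapsto \phi^I$ (already isolated as ``immediate'' in the text) plus the trivial fact that $\Sigma_k$-Separation proves $\Pi_k$-Separation by complementing inside a set, so it does not matter whether one formulates $\PnCA k$ via $\Sigma^1_k$- or $\Pi^1_k$-formulae. The one point worth stating carefully is that the class parameters $\bar V$ of the Comprehension instance interpret as arbitrary subsets $\bar P$ of $H_\kappa$, which are proper-class-sized in the ground model; since $\Sigma_k$-Separation is formulated allowing arbitrary parameters this is harmless. It is also worth remarking that, combined with the unrolling computations of Section~\ref{sec:unrolling}, this proposition supplies one direction of the bi-interpretability of $\GBC + \PnCA k$ with $\wZFCmi(k) + \Sigma_0$-Transfinite Recursion (and of their powerset-free variants) claimed in Theorem~\ref{thm2:bi-int}.
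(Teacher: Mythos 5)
Your proposal is correct and takes essentially the same route as the paper's proof: reduce to a single instance of Comprehension, note that the interpretation of the defining second-order formula has the corresponding first-order complexity, and apply $\Sigma_k$-Separation to $H_\kappa$ to obtain the witnessing subset. The paper's version is simply terser (it works directly with the $\Pi^1_k$ form and leaves the complementation/bookkeeping about $\Sigma_k$ versus $\Pi_k$ and the class parameters implicit), so your extra care about the translation and parameters adds detail but no new idea.
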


\begin{proof}
Consider $\phi$ is an instance of $\Pi^1_k$-Comprehension, i.e.\ $\phi$ asserts that there is a class whose members are precisely those sets satisfying $\psi$ where $\psi$ is $\Pi^1_k$, possibly with parameters. By $\Sigma_k$-Comprehension form $A$ the subset of $H_\kappa$ consisting of those sets which satisfy $\psi^I$. This $A$ is the desired class in the cut off model.
\end{proof}

As an immediate corollary we get full Comprehension if the ground universe satisfies $\wZFCmr + \Sigma_0$-Transfinite Recursion.

\begin{corollary}
If $\phi \in \KMm$ then $\wZFCmr + \Sigma_0$-Transfinite Recursion $\proves$ $\phi^I$. Thus, if $\phi \in \KM$ then $\wZFCmi + \Sigma_0$-Transfinite Recursion $\proves$ $\phi^I$. \qed
\end{corollary}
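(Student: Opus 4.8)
The plan is to reduce the corollary to the preceding proposition by unwinding the definition of $\KMm$. Recall that $\KMm$ is axiomatized by the axioms of $\GBCm$ together with every instance of the full Comprehension schema; moreover full Comprehension is exactly the union, over $k < \omega$, of the schemata $\PnCAm k$, and full Separation is exactly the union, over $k$, of the $\Sigma_k$-Separation schemata. So fix $\phi \in \KMm$. If $\phi$ is one of the axioms of $\GBCm$ --- in particular if $\phi$ is an instance of Elementary Comprehension, which is $\PnCAm 0$ --- then the earlier proposition on $\GBCm$ already shows that $\wZFCmr(0) + \Sigma_0$-Transfinite Recursion proves $\phi^I$, and hence so does $\wZFCmr + \Sigma_0$-Transfinite Recursion, since the latter theory extends the former.

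The remaining case is that $\phi$ is an instance of Comprehension for some second-order formula $\psi$. First I would observe that $\psi$ has only finitely many class quantifiers, so after passing to prenex form it lies at a finite level of the second-order hierarchy: $\psi$ is $\Pi^1_k$ (equivalently, over $\GBcm$, $\Sigma^1_k$) for some $k < \omega$, possibly with parameters. Hence this instance of Comprehension is an instance of $\PnCAm k$. If $k = 0$ it is Elementary Comprehension, already handled; if $k \ge 1$, the preceding proposition gives that $\wZFCmr(k) + \Sigma_0$-Transfinite Recursion proves $\phi^I$. Since $\wZFCmr$ includes full Separation it proves every instance of $\Sigma_k$-Separation, so $\wZFCmr + \Sigma_0$-Transfinite Recursion extends $\wZFCmr(k) + \Sigma_0$-Transfinite Recursion for each $k$; we conclude that $\wZFCmr + \Sigma_0$-Transfinite Recursion proves $\phi^I$. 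The ``thus'' clause for $\KM$ is verbatim the same argument, with $\wZFCmi$ in place of $\wZFCmr$ and the inaccessible variants of the earlier propositions in place of the regular ones.

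There is essentially no obstacle here: the substantive work was carried out in the previous proposition, and what remains is the bookkeeping that full Comprehension and full Separation are the respective unions of their level-$k$ fragments. The one point I would state with care is the passage from an arbitrary second-order defining formula $\psi$ to the fact that $\psi$ sits at some finite level $k$, together with the reminder that $\Pi^1_k$- and $\Sigma^1_k$-Comprehension coincide over the base theory, so that the sign of the outermost class-quantifier block of $\psi$ is immaterial.
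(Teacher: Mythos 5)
Your proof is correct and is exactly what the paper has in mind: the corollary is stated with no written proof (just a \qed) precisely because, as you say, full Comprehension is the union of the $\Pi^1_k$-Comprehension fragments and $\wZFCmr$ (resp.\ $\wZFCmi$) contains $\Sigma_k$-Separation for every $k$, so the preceding propositions apply level by level. Your care about the prenex form and the $\Pi^1_k$/$\Sigma^1_k$ equivalence over the base theory matches the paper's own remarks and raises no gap.
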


Next we turn to Class Collection.

\begin{proposition}
If $\phi \in \Sigma^1_k$-Class Collection, for $k \ge 1$, then $\ZFCmr(k)$ proves $\phi^I$.\footnote{Note that $\ZFCmr(k)$ includes $\Sigma_0$-Transfinite Recursion.}
\end{proposition}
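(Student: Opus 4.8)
The plan is to argue inside $\ZFCmr(k)$ and verify that the cut-off structure $(H_\kappa,\powerset(H_\kappa))$ satisfies the given instance of $\Sigma^1_k$-Class Collection. Fix a $\Sigma^1_k$ formula $\psi(x,Y,A)$ — I exhibit a single class parameter $A$, but set parameters are treated identically — so that $\phi$ is the assertion $[\forall x\,\exists Y\ \psi(x,Y,A)]\impl[\exists C\,\forall x\,\exists y\ \psi(x,(C)_y,A)]$. Unwinding the interpretation $\phi\mapsto\phi^I$, the sentence $\phi^I$ says: if for every $x\in H_\kappa$ there is $Y\subseteq H_\kappa$ with $\psi^I(x,Y,A)$, then there is $C\subseteq H_\kappa$ such that for every $x\in H_\kappa$ there is $y\in H_\kappa$ with $\psi^I(x,(C)_y,A)$. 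Assume the hypothesis. Note that $\psi^I$ is $\Sigma_k$ and ``$Y\subseteq H_\kappa$'' is $\Delta_0$ in the parameter $H_\kappa$, so the formula $\theta(x,Y)$ given by ``$Y\subseteq H_\kappa\wedge\psi^I(x,Y,A)$'' is equivalent to a $\Sigma_k$ formula.

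First I would apply $\Sigma_k$-Collection, with the set $H_\kappa$ as the bounding set: since $\forall x\in H_\kappa\,\exists Y\ \theta(x,Y)$ by the hypothesis, there is a set $B$ with $\forall x\in H_\kappa\,\exists Y\in B\ \theta(x,Y)$. Using $\Sigma_0$-Separation, put $B'=\{\,Y\in B : Y\subseteq H_\kappa\,\}$; then still $\forall x\in H_\kappa\,\exists Y\in B'\ \psi^I(x,Y,A)$, and $B'\neq\emptyset$ since, e.g., $\emptyset\in H_\kappa$ has a witness. Here is where the shape of the cut-off model is used: since $\kappa$ is the largest cardinal, $|B'|\le\kappa$, while $\kappa\subseteq H_\kappa$ gives $|H_\kappa|\ge\kappa$, so there is a surjection $f:H_\kappa\to B'$.

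Next I would glue the witnesses into a single code. Set $C=\{\,(y,z) : y\in H_\kappa \text{ and } z\in f(y)\,\}$. Since $H_\kappa$ is closed under pairing (indeed $\ZFCmr(k)$ proves $H_\kappa\models\ZFCm$) and each $f(y)$ is a subset of $H_\kappa$, we have $C\subseteq H_\kappa$; and $C$ is a set, being definable from $f$ and contained in the set $H_\kappa$ (an instance of $\Sigma_0$-Separation suffices), so $C\in\powerset(H_\kappa)$ is a genuine class of the cut-off model. By construction $(C)_y=f(y)$ for each $y\in H_\kappa$, so the slices of $C$ are exactly the members of $B'$. Finally, given any $x\in H_\kappa$, choose $Y\in B'$ with $\psi^I(x,Y,A)$; surjectivity of $f$ yields $y\in H_\kappa$ with $(C)_y=f(y)=Y$, hence $\psi^I(x,(C)_y,A)$. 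This is the conclusion of $\phi^I$, so $\phi^I$ holds.

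I expect the only delicate point to be the complexity bookkeeping: one must check that restricting the class quantifier $\exists Y$ to ``$Y\subseteq H_\kappa$'' keeps $\theta$ within $\Sigma_k$ so that $\Sigma_k$-Collection genuinely applies, and that the code $C$ remains definable over $H_\kappa$ cheaply enough to be produced by Separation. The conceptual heart is the step converting the set $B'$ of witnesses into one class indexed by $H_\kappa$; this is the cut-off analogue of the bijection $b:\Ord\to D$ used earlier in the chapter to show that pretame forcing preserves Class Collection.
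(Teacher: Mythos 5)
Your proof is correct and follows essentially the same route as the paper: interpret the instance, note that $\psi^I$ is $\Sigma_k$, apply $\Sigma_k$-Collection to collect a set of class-witnesses, and then glue these subsets of $H_\kappa$ into the slices of a single class $C \subseteq H_\kappa$. The only divergence is in the gluing step and it is cosmetic: the paper well-orders the collected set and indexes each slice $(Z)_x$ by the least witness for $x$ (the Global-Choice form of the conclusion, needing a $\Sigma_k$-style Separation to define $Z$), whereas you index the slices by an arbitrary surjection $f:H_\kappa\to B'$ and verify only the literal $\exists y$ form of the schema, which gets by with $\Delta_0$-Separation and Choice---both versions sit comfortably inside $\ZFCmr(k)$.
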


\begin{proof}
Let $\phi$ be an instance of $\Sigma^1_k$-Class Collection. Then, $\phi^I$ asserts that if for every $x \in H_\kappa$ there is $Y \subseteq H_\kappa$ so that $\psi^I(x,Y,P)$, then there is $Z \subseteq H_\kappa$ so that for every $x \in H_\kappa$ we have $\psi^I(x,(Z)_x,P)$. Because $\psi$ is $\Sigma^1_k$ we have that $\psi^I$ is $\Sigma_k$. Apply $\Sigma_k$-Collection to get $B$ so that for all $x \in H_\kappa$ there is $Y \in B$ so that $Y \subseteq H_\kappa$ and $\psi^I(x,Y,P)$. Define
\[
Z = \left\{(x,b) \in V_\kappa \times \bigcup B : b \in Y \text{ where } Y \text{ is least so that } \psi^I(x,Y,P) \right\}.
\]
Here, $Y$ is least by a fixed well-order of $B$.
Then $Z$ is manifestly a subset of $H_\kappa$ and for all $x \in H_\kappa$ we have $\psi^I(x,(Z)_x,P)$.
\end{proof}

As an immediate corollary, full Collection in the ground model translates to full Class Collection in the cut off model.

\begin{corollary}
If $\phi \in \KMCCm$ then $\ZFCmr \proves \phi^I$. Thus if $\phi \in \KMCC$ then $\ZFCmi \proves \phi^I$. \qed
\end{corollary}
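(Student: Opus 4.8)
The plan is to read the corollary off the two results immediately preceding it, using only the axiomatization of $\KMCCm$ and the inclusion relations among the first-order theories. Recall that $\KMCCm$ is axiomatized by the axioms of $\KMm$ together with every instance of the Class Collection schema, and that every instance of Class Collection is an instance of $\SnCC k$ for some $k \ge 1$: the formula $\phi$ appearing in such an instance has some finite second-order complexity, and even if $\phi$ is merely $\Sigma^1_0$ it is in particular $\Sigma^1_1$, so we may always take $k \ge 1$. Hence it suffices to verify $\ZFCmr \proves \psi^I$ separately for $\psi$ ranging over the axioms of $\KMm$ and over the instances of Class Collection.

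For the $\KMm$ axioms I would invoke the earlier corollary, which gives $\wZFCmr + \Sigma_0$-Transfinite Recursion $\proves \psi^I$ for every $\psi \in \KMm$. Since $\ZFCmr$ extends $\wZFCmr$ and, having full Collection, proves $\Sigma_1$-Collection and therefore $\Sigma_0$-Transfinite Recursion, it follows that $\ZFCmr \proves \psi^I$ for all such $\psi$. For the Class Collection instances I would, given such a $\psi$, fix $k \ge 1$ with $\psi$ an instance of $\SnCC k$ and appeal to the immediately preceding proposition, which yields $\ZFCmr(k) \proves \psi^I$; because full Separation and Collection imply their $\Sigma_k$ fragments, $\ZFCmr$ extends $\ZFCmr(k)$, so $\ZFCmr \proves \psi^I$. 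Combining the two cases gives the first assertion of the corollary.

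For the second assertion I would note that $\KMCC$ is $\KMCCm$ together with Powerset for the sets, that $\ZFCmi$ extends $\ZFCmr$ (inaccessibility of $\kappa$ gives regularity of $\kappa$ and that $V_\kappa = H_\kappa$ exists), and that $\ZFCmi$ additionally proves that $V_\kappa = H_\kappa$ satisfies Powerset; since the first-order part of the cut-off model is $H_\kappa$, the interpretation of Powerset holds, and so $\ZFCmi \proves \psi^I$ for every $\psi \in \KMCC$. There is essentially no obstacle here: the only thing requiring care is the bookkeeping of which fragment of Separation and Collection is consumed at each level, and organising the argument around the $\SnCC k$ stratification --- rather than trying to handle the full Class Collection schema at once --- makes that bookkeeping automatic.
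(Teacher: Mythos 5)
Your proof is correct and matches the paper's intent: the paper states this corollary with no proof precisely because it follows immediately from the preceding proposition on $\Sigma^1_k$-Class Collection together with the earlier corollary handling $\KMm$ and $\KM$ (plus the fact that $\ZFCmr$, having full Separation and Collection, subsumes $\wZFCmr + \Sigma_0$-Transfinite Recursion and every $\ZFCmr(k)$, and $\ZFCmi$ subsumes $\ZFCmr$ and yields Powerset in $H_\kappa = V_\kappa$). Your stratification of the Class Collection instances through $\SnCC k$ is just an explicit spelling-out of that same bookkeeping, so there is nothing further to add.
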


Finally, let us see what we need to get merely $\GBC + \ETR$ or $\GBCm + \ETR$.

\begin{proposition}
If $\phi \in \GBCm + \ETR$ then $\wZFCmr(0) + \Sigma_0$-Transfinite Recursion proves $\phi^I$.
\end{proposition}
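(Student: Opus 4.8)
By the proposition above, $\wZFCmr(0) + \Sigma_0$-Transfinite Recursion already proves $\phi^I$ for every axiom $\phi$ of $\GBCm$, so the plan is to verify that the cut-off model $(H_\kappa,\powerset(H_\kappa))$ satisfies Elementary Transfinite Recursion, working in the ground theory $\wZFCmr(0) + \Sigma_0$-Transfinite Recursion. Fix an instance of $\ETR$: a first-order formula $\phi(x,Y,A)$ (no class quantifiers), a class parameter $A$, and a well-founded class relation $R$. Interpreted in the cut-off model, $A$ and $R$ become subsets of $H_\kappa$, hence sets of the ground model, and the cut-off model believes $R$ is well-founded.

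The first step is to note that $R$ is genuinely well-founded in the ground model: any nonempty subset of $\dom R$ is a subset of $H_\kappa$, hence a class of the cut-off model, hence has an $R$-minimal element by hypothesis. Second, $\phi^I(x,Y,A)$ is $\Sigma_0$ in the parameters $A$ and $H_\kappa$, since all of its quantifiers are bounded to $H_\kappa$. Third, the transitive closure $<_R$ of $R$ exists as a subset of $H_\kappa$: form $\langle R^n : n \in \omega\rangle$ by an instance of $\Sigma_0$-Transfinite Recursion along $\omega$ and take its union by $\Sigma_0$-Separation; and because all witnessing $R$-chains lie in $H_\kappa$, $<_R$ is also the transitive closure of $R$ as computed inside the cut-off model.

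Now carry out the recursion in the ground model: define $S$ along $R$ by the requirement $(S)_r = \{ x \in H_\kappa : \phi^I(x, S \rest r, A)\}$, where $S \rest r = S \cap (\{r' : r' <_R r\} \times H_\kappa)$. The step here is $\Sigma_0$ in the parameters $A$, $H_\kappa$, $R$, and each stage produces a subset of $H_\kappa$ (using $\Sigma_0$-Separation), so $\Sigma_0$-Transfinite Recursion yields a solution $S$. To match Definition \ref{def2:phi-tr} literally one first reduces this well-founded recursion to a recursion along the rank ordinal of $R$, exactly as in the proof of Mostowski's collapse lemma above. Then $S \subseteq \dom R \times H_\kappa \subseteq H_\kappa$, so $S$ is a class of the cut-off model, and since $S$ satisfies the recursion equation---a first-order assertion whose quantifiers range over elements and subsets of $H_\kappa$, hence transferring between the ground model and the cut-off model---it witnesses the given instance of $\ETR$ there. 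Together with the previous proposition this gives $\phi^I$ for all $\phi \in \GBCm + \ETR$ (and, with $\wZFCmi(0)$ in place of $\wZFCmr(0)$, for $\phi \in \GBC + \ETR$), completing the backward direction of the bi-interpretability for these theories.

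The only genuinely delicate point is the passage from recursion along a well-ordering, as in Definition \ref{def2:phi-tr}, to the well-founded recursion used here; this is handled by the standard rank-function reduction, just as in the Mostowski collapse argument. The remaining ingredients---genuine well-foundedness of $R$, the $\Sigma_0$ form of $\phi^I$, existence of $<_R$, and the transfer of the solution's correctness---are routine bookkeeping, exploiting throughout that $H_\kappa$ is available as a bounding set for all the relevant quantifiers.
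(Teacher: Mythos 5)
Your proposal is correct and follows essentially the same route as the paper, whose own proof is a one-liner saying the witnessing subset of $H_\kappa$ is obtained ``in the obvious way'' by an instance of $\Sigma_0$-Transfinite Recursion; you simply supply the bookkeeping (genuine well-foundedness of $R$, $\phi^I$ being $\Sigma_0$ with $H_\kappa$ as a bounding set, existence of $<_R$, and transfer of the solution). The one spot to tighten is your reduction of well-founded to well-ordered recursion: the Mostowski collapse proof you point to does not actually carry out a rank-ordinal reduction (and producing the rank function is itself a well-founded recursion), so it is cleaner to note that the cut-off model already satisfies $\GBCm$ by the preceding proposition and invoke the equivalence, over $\GBC$, of $\ETR$ for well-founded relations and for well-orders (lemma 7 of Gitman--Hamkins, cited in chapter 1), after which $\Sigma_0$-Transfinite Recursion applies literally to the ground-model well-ordering.
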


\begin{proof}
Consider an instance of Elementary Transfinite Recursion. We want to find the subset of $H_\kappa$ which witnesses that instance holds. This is done in the obvious way by an instance of $\Sigma_0$-Transfinite Recursion.
\end{proof}

Altogether we can summarize the results so far in this section.

\begin{theorem} \label{thm2:cutting-offs}
Let $N$ be an appropriate model for performing the cut off construction and let $(M,\Xcal)$ be the cut off model constructed from $N$.
\begin{itemize}
\item If $N \models \ZFCmi$ then $(M,\Xcal) \models \KMCC$.
\item If $N \models \ZFCmr$ then $(M,\Xcal) \models \KMCCm$.
\item If $N \models \wZFCmi + \Sigma_0$-Transfinite Recursion then $(M,\Xcal) \models \KM$.
\item If $N \models \wZFCmr + \Sigma_0$-Transfinite Recursion then $(M,\Xcal) \models \KMm$.
\item If $N \models \ZFCmi(k)$ then $(M,\Xcal) \models \GBC + \PnCAp k$, for $k \ge 1$.
\item If $N \models \ZFCmr(k)$ then $(M,\Xcal) \models \GBCm + \PnCAp k$, for $k \ge 1$.
\item If $N \models \wZFCmi(k) + \Sigma_0$-Transfinite Recursion then $(M,\Xcal) \models \GBC + \PnCA k$, for $k \ge 1$.
\item If $N \models \wZFCmr(k) + \Sigma_0$-Transfinite Recursion then $(M,\Xcal) \models \GBCm + \PnCA k$, for $k \ge 1$.
\item If $N \models \wZFCmi(0) + \Sigma_0$-Transfinite Recursion then $(M,\Xcal) \models \GBC + \ETR$.
\item If $N \models \wZFCmr(0) + \Sigma_0$-Transfinite Recursion then $(M,\Xcal) \models \GBCm + \ETR$.
\end{itemize}
\end{theorem}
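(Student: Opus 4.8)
The plan is to obtain the theorem purely by composing implications already established in this section. Recall that the interpretation $\phi \mapsto \phi^I$ was arranged so that $(M,\Xcal) \models T$ precisely when $N \models \phi^I$ for every axiom $\phi$ of $T$; thus each line of the theorem reduces to checking that the first-order theory hypothesised on $N$ proves $\phi^I$ for every axiom $\phi$ of the target second-order theory, and for this it is enough to see that the hypothesised theory proves each of the first-order theories named in the relevant earlier proposition or corollary.

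So I would first record the containments among the first-order theories that make this matching go through: $\Sigma_k$-Separation extends $\Sigma_j$-Separation for $j \le k$, and full Separation (resp.\ full Collection) extends all of its $\Sigma_k$-fragments; $\Sigma_1$-Collection proves $\Sigma_0$-Transfinite Recursion, so any theory including a fragment of Collection at level $\ge 1$ already includes $\Sigma_0$-Transfinite Recursion; and ``$\kappa$ is inaccessible'' proves ``$\kappa$ is regular and $H_\kappa = V_\kappa$ exists,'' so every ``$\mathrm I$''-theory proves the corresponding ``$\mathrm R$''-theory. Granting these, the lines of the theorem follow by matching. For example, if $N \models \ZFCmi$ then $N$ proves $\wZFCmi + \Sigma_0$-Transfinite Recursion, so the $\KM$ corollary gives $\phi^I$ for $\phi \in \KM$, and $N$ also proves $\ZFCmr(k)$ for each $k$, so the $\Sigma^1_k$-Class Collection proposition gives $\phi^I$ for every instance of Class Collection; hence $(M,\Xcal) \models \KMCC$. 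Replacing inaccessibility by regularity throughout gives the $\ZFCmr$/$\KMCCm$ line verbatim. The lines for $\GBC + \PnCAp k$ and $\GBCm + \PnCAp k$ combine the $\GBC$ (resp.\ $\GBCm$) proposition, the $\Pi^1_k$-Comprehension proposition, and the $\Sigma^1_k$-Class Collection proposition in the same fashion; the lines for $\GBC + \PnCA k$ and $\GBCm + \PnCA k$ use only the $\GBC$ (resp.\ $\GBCm$) proposition together with the $\Pi^1_k$-Comprehension proposition; the lines for $\GBC + \ETR$ and $\GBCm + \ETR$ use the $\GBC$ (resp.\ $\GBCm$) proposition together with the Elementary Transfinite Recursion proposition; and the $\KM$ and $\KMm$ lines quote the corresponding corollaries directly.

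I do not expect any genuine obstacle, since only bookkeeping is involved. The one point meriting a sentence of care is that $N \models \phi^I$ for all $\phi \in T$ really does yield $(M,\Xcal) \models T$; but this is immediate from the recursive definition of $\phi \mapsto \phi^I$ given at the start of the section, the clause $\forall X\,\phi \mapsto \forall x\,(x \subseteq H_\kappa \to \phi^I)$ being exactly the statement that the classes of $(M,\Xcal)$ are the subsets of $H_\kappa^N$ while its sets are the elements of $H_\kappa^N$. The verifications I have deliberately left aside --- that $\Sigma_1$-Collection yields $\Sigma_0$-Transfinite Recursion and that inaccessibility yields the ``regular plus $H_\kappa$ exists'' hypotheses --- are routine, and I would relegate them to a remark rather than clutter the proof with them.
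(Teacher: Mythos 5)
Your proposal is correct and matches the paper's own treatment: the theorem is stated there as a summary of the propositions and corollaries proved earlier in the section (the interpretations of $\GBCm$/$\GBC$, $\Pi^1_k$-Comprehension, $\Sigma^1_k$-Class Collection, $\KMm$/$\KM$, $\KMCCm$/$\KMCC$, and $\ETR$), with no further argument given, so the proof really is just the bookkeeping composition you describe. Your added remarks --- that $\Sigma_1$-Collection yields $\Sigma_0$-Transfinite Recursion and that the inaccessible (``I'') theories prove the corresponding regular (``R'') theories --- are exactly the containments the paper itself invokes implicitly, so nothing is missing.
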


Let us now see that these interpretations, combined with the ones from the previous section, give bi-interpretability results. We need to see that performing one construction then its inverse brings us back to the original model.

\begin{theorem}
Let $(M,\Xcal) \models \GBCm + \ETR$ and let $\Ufrak$ be the unrolled model constructed from $(M,\Xcal)$. Then, if $(M',\Xcal')$ is the cut off model constructed from $\Ufrak$ we get $(M,\Xcal) \cong (M',\Xcal')$. Moreover, this isomorphism is definable over $(M,\Xcal)$. 
\end{theorem}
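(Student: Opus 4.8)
The plan is to unwind the two constructions and exhibit the composite isomorphism explicitly. Write $\Ufrak$ for the unrolling of $(M,\Xcal)$, so $\Ufrak = (\Ucal/{\cong}, \vin)$ with $\Ucal$ the hyperclass of membership codes. By Proposition \ref{prop2:unroll-kappa} (or rather its analogue proved in subsection \ref{subsec2:unroll-in-etr} together with the corollary that $\Ufrak \models \wZFCmr(0) + \Sigma_0$-Transfinite Recursion), the ordinal $\kappa \in \Ufrak$ represented by the membership code $\Ord + 1$ is the largest cardinal of $\Ufrak$ and is regular, and $\Ufrak$ computes $H_\kappa$ correctly. So the cut off model $(M',\Xcal')$ is $(H_\kappa^\Ufrak, \powerset^\Ufrak(H_\kappa^\Ufrak))$. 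First I would identify which membership codes land in $H_\kappa^\Ufrak$: these are precisely (the $\cong$-classes of) the {\em set-sized} membership codes, since a membership code $A$ represents a set of hereditary size $<\kappa$ in $\Ufrak$ exactly when $|\tc(A)| < \kappa = \Ord$, i.e.\ exactly when $\dom A$ is a set in $M$. The key technical input here is Proposition \ref{prop2:need-max-ipis} and Lemma \ref{lem2:max-prtl-isom}: within $\GBCm+\ETR$ every set-sized membership code is isomorphic to the Mostowski collapse of its transitive closure, so every element of $H_\kappa^\Ufrak$ has a {\em canonical} representative, namely $E_x = {\oin}\rest\tc(\{x\})$ for a unique $x \in M$.

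The map is then the obvious one. Define $F_0 : M \to H_\kappa^\Ufrak$ by $F_0(x) = [E_x]_{\cong}$; this is a bijection by the uniqueness of Mostowski collapses for set-sized codes (using Proposition \ref{prop2:local-uniq}), and $x \in y$ iff $E_x \vin E_y$, so $F_0$ is an $\in$-isomorphism of $M$ onto the first-order part of $(M',\Xcal')$. For the second-order part, given a class $X \in \Xcal$ form its canonical membership code $E_X = {\oin}\rest\tc(\{x\}) \cup \{(x,\star) : x \in X\}$ (in the sense of the definition of canonical membership codes, applied slicewise) — more precisely, send $X$ to the subset $\{ [E_x]_\cong : x \in X \} \subseteq H_\kappa^\Ufrak$, which is a class in $M'$ since it is the $\vin$-extension of a single membership code coding $X$ as a ``set of rank $\Ord{+}1$''. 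Call this map $F_1 : \Xcal \to \Xcal'$. One checks $F_1$ is a bijection: surjectivity is exactly the observation that every subset of $H_\kappa^\Ufrak$ in $\Ufrak$ is coded by a membership code $B$ with $\dom B$ of size $\Ord$, and such $B$ is, after collapsing its set-sized proper initial segments, determined by the class $\{ x \in M : E_x \vin B \} \in \Xcal$; and $F_1$ respects membership, i.e.\ $x \in X$ iff $F_0(x) \in F_1(X)$, immediately from the definitions. So $(F_0, F_1) : (M,\Xcal) \to (M',\Xcal')$ is an isomorphism of second-order structures.

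For the ``moreover'', note that each clause in the definition of $F_0$ and $F_1$ quantifies only over membership codes (classes of $(M,\Xcal)$) and is expressed by a first-order-over-$(M,\Xcal)$ condition together with the relations $\vin$ and $\cong$, which are $\Delta^1_1$ over $\GBcm + \ETR$ by Lemma \ref{lem2:relns-are-delta11}; hence the graphs of $F_0$ and $F_1$ are definable over $(M,\Xcal)$ (with no parameters beyond those implicit in the translation), so the isomorphism is definable as claimed.

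The main obstacle I anticipate is pinning down surjectivity of $F_1$ cleanly — i.e.\ showing that {\em every} element of $\powerset^\Ufrak(H_\kappa^\Ufrak)$ arises as $F_1(X)$ for a genuine class $X \in \Xcal$. This is where one must argue that an arbitrary $\Ord$-sized membership code $B$ with all its elements of ``$\Ufrak$-rank $< \kappa$'' can be normalized: its penultimate-level nodes $\pen B$ each generate a set-sized code $B\downarrow b$, which collapses canonically to some $x_b \in M$ by the Mostowski argument available in $\GBCm+\ETR$ via Lemma \ref{lem2:max-prtl-isom}, and then $X = \{ x_b : b \in \pen B \} \in \Xcal$ by Elementary Comprehension (applied to the class relation $\{(b, x) : x$ is the collapse of $B\downarrow b\}$, which exists since being a collapsing isomorphism is first-order). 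Once that normalization is in hand the rest is bookkeeping of the sort the paper elsewhere leaves to the reader.
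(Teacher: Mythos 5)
Your proof is correct and is essentially the paper's argument: the paper's own proof is just the observation that the map sending each set/class to its canonical membership code $E_x$ (resp.\ $E_X$) induces the isomorphism, and you have simply spelled out the verification (identification of $H_\kappa^\Ufrak$ with the set-sized codes, Mostowski collapse inside $M$, surjectivity onto $\Xcal'$, and definability) that the paper leaves implicit. No genuinely different route is taken, so there is nothing further to compare.
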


\begin{proof}
Fix a class $A$. Then in $\Ufrak$ this class $A$ is represented by $E_A$, the canonical membership code representing $A$. So the map $A \mapsto E_A$ gives an isomorphism $(M,\Xcal) \cong (M',\Xcal')$. 
\end{proof}

And in the other direction.

\begin{theorem}
Let $N \models \wZFCmr(0) + \Sigma_0$-Transfinite Recursion and let $(M,\Xcal)$ be the cut off model constructed from $N$. Then if $\Ufrak$ is the unrolled model constructed from $(M,\Xcal)$ we get $N \cong \Ufrak$. Moreover, this isomorphism is definable over $N$.
\end{theorem}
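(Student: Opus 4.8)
The plan is to exhibit an explicit isomorphism $\Ufrak\cong N$ that runs through the Mostowski collapse computed inside $N$. First recall the setup: by Theorem~\ref{thm2:cutting-offs} we have $(M,\Xcal)\models\GBCm+\ETR$, so all the results of Section~\ref{sec:unrolling} are available; moreover $M=H_\kappa^N$ where $\kappa$ is the largest cardinal of $N$, and $\Xcal$ is the collection of all subsets of $H_\kappa$ lying in $N$. Since being a membership code is a first-order condition on a relation (independent of the ambient classes), and since any witness to ill-foundedness of a relation $R\in\Xcal$ would be an $\omega$-sequence of elements of $\dom R\subseteq H_\kappa$, hence a set of $N$ of size $<\kappa$ and so itself a class in $\Xcal$, the structures $(M,\Xcal)$ and $N$ agree on which subsets of $H_\kappa\times H_\kappa$ are membership codes. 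Thus $N$ sees the domain $\Ucal$ of $\Ufrak$ as a definable class of sets. Finally, recall that $\wZFCmr(0)+\Sigma_0$-Transfinite Recursion proves Mostowski's collapse lemma, so inside $N$ every membership code $A$ has a unique Mostowski collapse $\pi_A\colon\dom A\to\tc(\{\pi_A(t_A)\})$.

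I would then define a class function $F$ on $\Ucal$, definable over $N$ without parameters, by $F(A)=\pi_A(t_A)$, and claim it induces the desired isomorphism $\Ufrak\to N$. The verifications are routine bookkeeping with membership codes: (i) if $A\cong B$ then composing the isomorphism with $\pi_B$ is a collapse of $A$, so by uniqueness $F(A)=F(B)$, i.e.\ $F$ respects $\cong$; (ii) $\ran\pi_A=\tc(\{F(A)\})$ (the inclusion $\supseteq$ by transitivity of the range, $\subseteq$ because every node is $\le_A t_A$ and the collapse turns an $A$-path to $t_A$ into an $\in$-chain ending at $F(A)$), so if $F(A)=F(B)=c$ then $\pi_B^{-1}\circ\pi_A$ is an isomorphism $A\cong B$, giving injectivity modulo $\cong$; (iii) for $c\in N$ fix in $N$ a bijection $g$ of $\tc(\{c\})$ onto an ordinal $\lambda\le\kappa$ and set $A_c=g\bigl[\oin\rest\tc(\{c\})\bigr]$, a subset of $\lambda\times\lambda\subseteq H_\kappa$ lying in $N$, hence a member of $\Xcal$; it is a membership code (isomorphic via $g^{-1}$ to $\oin\rest\tc(\{c\})$, which is well-founded, extensional, and has top $c$) with collapse $g^{-1}$, so $F(A_c)=c$ and $F$ is surjective; (iv) $A\vin B$ iff $A\cong B\downarrow b$ for some $b\lol_B t_B$, which (since $\pi_B$ restricted to $B\downarrow b$ is the collapse of $B\downarrow b$) holds iff $F(A)=F(B\downarrow b)$ for some such $b$, and since $F(B)=\pi_B(t_B)=\{F(B\downarrow b):b\lol_B t_B\}$, injectivity modulo $\cong$ makes this equivalent to $F(A)\in F(B)$. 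Points (i)--(iv) together say exactly that $F$ descends to an isomorphism $\Ufrak\cong N$ definable over $N$.

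The main obstacle—really the only step that is not pure formalism—is surjectivity, step (iii), and it is precisely there that the hypothesis that $\kappa$ is the \emph{largest} cardinal of $N$ is used: it guarantees that every set of $N$ has transitive closure of size at most $\kappa$ and can therefore be faithfully re-encoded as a subset of $H_\kappa$, i.e.\ as a class of the cut-off model. (Without a largest cardinal the unrolling of the cut-off would only recover $H_\kappa$, not all of $N$.) Everything else is the familiar manipulation of membership codes from Section~\ref{sec:unrolling} together with the existence and uniqueness of Mostowski collapses in the base theory.
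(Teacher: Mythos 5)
Your proof is correct and is essentially the paper's argument: the paper simply defines the map in the opposite direction, sending each $a \in N$ to a code for $(\tc(\{a\}),\in)$ on $H_\kappa$ (possible since $\kappa$ is the largest cardinal), which is exactly your surjectivity step, while your use of Mostowski collapse in $N$ (established there from $\Sigma_0$-Transfinite Recursion) supplies the inverse and the verifications the paper leaves implicit. So you have the same proof, just written from the $\Ufrak$ side with more detail.
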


\begin{proof}
Let $\kappa$ be the largest cardinal in $N$. Then for any set $a \in N$ there is a binary relation $E_a$ on $H_\kappa$ so that $(\tc(\{a\}),\in) \cong E_a$. This uses that $\tc(\{a\})$ exists in $N$, which follows from $\Sigma_0$-Transfinite Recursion. So in the cut off model $(M,\Xcal)$ we have that $E_a$ is a membership code representing the set $a$ in the unrolling. This map $a \mapsto E_a$ gives the isomorphism $N \cong \Ufrak$.
\end{proof}

Altogether this finishes the proof of theorem \ref{thm2:bi-int}.

Let me remark on on the role of $\Sigma_0$-Transfinite Recursion here. The reader may have noticed that $\Sigma_0$-Transfinite Recursion played little role in the arguments for calculating the theory of the cut off model. It was used to conclude the cut off model satisfies $\ETR$ given a weak base theory, but it was not used to get (fragments of) Comprehension in the cut off model. However, it plays an in important role for the bi-interpretability results.

To illustrate this, take $\kappa$ an inaccessible cardinal. Then $V_{\kappa + \omega}$ satisfies all the axioms of $\ZFC$ except the Collection schema. Let $W$ consist of all subsets of $H_{\kappa^+}$ which appear in $V_{\kappa + \omega}$. Then $W \models \wZFCmi$. So cutting off $W$ gives $(V_\kappa,\Xcal) \models \KM$. But when we unroll $(V_\kappa,\Xcal)$ we get back more than $W$. For instance, this unrolling contains an ordinal $\gamma$ for each well-order in $\Xcal$, but these will of course often be larger than $\kappa + \omega$. So applying the cutting off construction followed by the unrolling construction starting from $W$ does not produce an isomorphic copy of $W$.

To finish off this section let us consider a basic but useful fact about unrollings.

\begin{proposition} \label{prop2:beta-unroll-trans}
Let $(M,\Xcal) \models \GBCm + \ETR$ and let $\Ufrak$ be its unrolling. Then $\Ufrak$ is well-founded if and only if $(M,\Xcal)$ is a $\beta$-model.
\end{proposition}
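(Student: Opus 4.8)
The plan is to prove both directions by relating infinite $\vin$-descending chains in $\Ufrak$ to infinite $\lol$-descending chains inside membership codes that live in $\Xcal$.

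For ``$(M,\Xcal)$ a $\beta$-model implies $\Ufrak$ well-founded'' I would argue by contraposition. Suppose $\Ufrak$ is ill-founded, so there is a sequence of membership codes $\seq{B_n : n \in \omega}$ with $B_{n+1} \vin B_n$ in $\Ufrak$ for every $n$. By the definition of $\vin$ there is $a_n \lol_{B_n} t_{B_n}$ with $B_{n+1} \cong B_n \downarrow a_n$; iterating and using that $(B \downarrow x) \downarrow y = B \downarrow y$ whenever $y \le_B x$, one extracts nodes $a_0 \lol_{B_0} t_{B_0}$ and $a_{n+1} \lol_{B_0} a_n$, that is, an infinite $\lol_{B_0}$-descending chain inside the single membership code $B_0 \in \Xcal$. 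But $B_0$ is a membership code, so $(M,\Xcal)$ believes $B_0$ is well-founded, and since $(M,\Xcal)$ is a $\beta$-model $B_0$ is genuinely well-founded --- a contradiction.

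For the converse, suppose $(M,\Xcal)$ is not a $\beta$-model. I would first reduce to a uniform situation: there is $R \in \Xcal$ that $(M,\Xcal)$ believes is well-founded but which carries a genuine infinite descending chain $\seq{a_n : n \in \omega}$, $a_{n+1} \lol_R a_n$. (If the failure of $\beta$-ness is instead that $\oin^M$ is externally ill-founded, witnessed by $c_{n+1} \in^M c_n$, take $R = \oin \rest \tc(\{c_0\})$, a class in $\Xcal$ since $M \models \ZFCm$; otherwise $R$ is given directly.) Working inside $(M,\Xcal)$, I would then use $\ETR$ --- recursion along the internally well-founded relation $R$ --- to define, for each $x \in \dom R$, a membership code $E_x$ built by gluing disjoint copies of the $E_y$ for $y \lol_R x$ below a fresh top element; this is just the Mostowski collapse of $R$ performed in the realm of membership codes rather than transitive sets, and it is arranged so that $E_x \downarrow (\text{the copy of } t_{E_y}) \cong E_y$, hence $E_y \vin E_x$, whenever $y \lol_R x$. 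Running this along the chain gives $E_{a_{n+1}} \vin E_{a_n}$ for all $n$, so $\seq{[E_{a_n}]_\cong : n \in \omega}$ is an infinite $\vin$-descending chain in $\Ufrak$, i.e.\ $\Ufrak$ is ill-founded.

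The step I expect to be the main obstacle is verifying that this last recursion is a legitimate instance of $\ETR$. Unlike the ordinary Mostowski collapse, some $E_x$ may be proper classes (precisely when $R$ fails to be set-like, e.g.\ when it codes an ``ordinal'' past $\Ord$), so the slices of the $\ETR$-solution may be proper classes; this is harmless, because the recursion only ever incorporates each $E_y$ into $E_x$ as a subgraph, never as an element, so no node is required to sit above a proper-class slice. Tagging the nodes of $E_y$ by $y$ disjointifies the copies and makes the step operation first-order definable, using Global Choice only in the way already available in $\GBCm$. I would also remark that a coincidence $E_{a_n} \cong E_{a_m}$ with $n \ne m$ would create a finite $\vin$-cycle, contradicting that each $E_x$ is internally well-founded, so the chain is genuinely descending. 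Since the gluing machinery duplicates that used to prove $\Ufrak$ satisfies Mostowski's collapse lemma, I would lean on that argument rather than redo it in detail.
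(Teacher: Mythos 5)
Your first direction is fine and is essentially the paper's own argument for that implication, just with the extraction of the descending chain $a_0 \mathrel{\text{\footnotesize$\lol$}}_{B_0} a_1 \mathrel{\text{\footnotesize$\lol$}}_{B_0} \cdots$ inside the single code $B_0$ spelled out explicitly; correctness of the model about the well-foundedness of $B_0$ then gives the contradiction exactly as you say.

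The other direction has a genuine gap, and it sits exactly where you predicted trouble but not for the reason you guard against. Your recursion builds $E_x$ by placing \emph{disjoint, tagged} copies of the $E_y$, $y \mathrel{R} x$, below a fresh top. The resulting graph is in general not extensional, hence not a membership code, hence its isomorphism class is not an element of $\Ufrak$ at all. Concretely: every $E_y$ is well-founded and so contains at least one node with empty predecessor set (a node representing $\emptyset$); as soon as $x$ has two distinct $R$-predecessors, your disjointified $E_x$ contains two distinct childless nodes, violating extensionality of the graph (and the problem propagates upward through the recursion). Tagging by $y$ is precisely what creates this: it prevents the identifications that extensionality forces. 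Note that the worry you do address---proper-class slices, legitimacy of the $\ETR$ instance, accidental isomorphisms $E_{a_n} \cong E_{a_m}$---is not the issue; the issue is that the objects you produce are not in the structure whose ill-foundedness you are trying to witness. The repair requires real work or a change of route: either glue the copies of the $E_y$ along their maximum initial partial isomorphisms (lemma \ref{lem2:max-prtl-isom}, as in the Pairing and Class Collection arguments, propositions \ref{prop2:unroll-basic} and \ref{prop2:unroll-coll}), which is the honest ``Mostowski collapse in membership codes,'' or do what the paper does: use $\ETR$ to get a ranking function of the internally well-founded $R$ into a class well-order $\Gamma$; then $\Gamma + 1$ is automatically a membership code (linear orders are extensional), an external $R$-descending chain pushes forward to an external descending chain in $\Gamma$, and restricting $\Gamma+1$ along that chain gives the $\vin$-descending sequence in $\Ufrak$. (For the special case where $\in^M$ itself is externally ill-founded, no recursion is needed at all: the canonical code $\oin \rest \tc(\{c_0\})$ with a top element is already an externally ill-founded membership code.)
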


\begin{proof}
$(\Rightarrow)$ If $\Ufrak$ is well-founded then every membership code in $(M,\Xcal)$ is well-founded. But every successor class well-order is a membership code and every relation $(M,\Xcal)$ thinks is well-founded has a ranking function into a class well-order, by $\ETR$. Since the class well-orders are actually well-founded, so must every relation the model thinks is well-founded.

$(\Leftarrow)$ If $(M,\Xcal)$ is a $\beta$-model then in particular every membership code in $\Xcal$ is well-founded.
\end{proof}

\section{The constructible universe in the classes} \label{sec2:so-l}

In this section I will exposit a construction of the constructible universe where we iterate longer than $\Ord$. If $(M,\Xcal)$ satisfies a sufficiently strong theory then it can carry out this construction. Let $\Lcal$ denote the hyperclass consisting of constructible classes from $\Xcal$. Then we will see that $(L^M,\Lcal)$ has a nice theory, with the precise details depending upon the theory of $(M,\Xcal)$. We will also carry out the construction of the constructible relative to a class parameter. The main parameter of interest to us will be $(N,G)$ where $N \in \Xcal$ is an inner model of $M$ (possibly $N = M$) and $G$ is a global well-order of $N$. Letting $\Lcal(N,G)$ denote the hyperclass of $(N,G)$-constructible classes we will then get that $(N,\Lcal(N,G))$ satisfies a nice theory. This will imply that for many natural second-order set theories $T$ that being $T$-realizable is closed under taking inner models. 

\begin{theorem} \label{thm2:inner-model-rlzble}
Let $(M,\Xcal) \models \GBCm + \ETR$ and suppose $N \in \Xcal$ is an inner model (of $\ZFCm$ or of $\ZFC$, as appropriate) of $M$. Then there is a hyperclass $\Ycal \subseteq \Xcal \cap \powerset(N)$ second-order definable in $\Xcal$ so that the following.
\begin{itemize}
\item If $(M,\Xcal) \models \KM$ then $(N,\Ycal) \models \KMCC$.
\item If $(M,\Xcal) \models \KMm$ then $(N,\Ycal) \models \KMCCm$.
\item If $(M,\Xcal) \models \GBC + \PnCA k$ then $(N,\Ycal) \models \GBC + \PnCAp k$.
\item If $(M,\Xcal) \models \GBCm + \PnCA k$ then $(N,\Ycal) \models \GBCm + \PnCAp k$.
\item If $(M,\Xcal) \models \GBC + \ETR$ then $(N,\Ycal) \models \GBC + \ETR + \ECC$.
\item If $(M,\Xcal) \models \GBCm + \ETR$ then $(N,\Ycal) \models \GBCm + \ETR + \ECC$.
\end{itemize}
\end{theorem}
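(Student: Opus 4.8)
The plan is to build $\Ycal$ as the hyperclass realized by a relativized constructible hierarchy and then read off its theory from that of $(M,\Xcal)$, translating the relevant closure properties into their first-order counterparts via the unrolling/cutting-off dictionary of Sections \ref{sec:unrolling} and \ref{sec:cutting-off}. Concretely, working inside $(M,\Xcal) \models \GBCm + \ETR$, I would fix $G \in \Xcal$ a global well-order of $N$ of ordertype $\Ord$ (this is the given global well-order when $N = M$, and is definable for the canonical inner models $L^M$, $\HOD^M$, etc.), and define $L_\Gamma(N,G)$ for class well-orders $\Gamma$ by transfinite recursion: set $L_\Ord(N,G) = N$, let $L_{\Delta+1}(N,G)$ be the definable power set of $L_\Delta(N,G)$ with $N$ and $G$ adjoined as predicates, and take unions at limits. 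For $\Delta$ past $\Ord$ a level is a virtual set, i.e.\ a membership code, and the recursion is a legitimate elementary transfinite recursion: each stage needs only a first-order truth predicate for the unrolled structure below the membership code constructed so far, and $\ETR$ supplies these by the material of Section \ref{sec:unrolling}. Let $\Ycal = \Lcal(N,G)$ be the collection of those $A \in \Xcal$ with $A \subseteq N$ occurring in some $L_\Gamma(N,G)$; this is manifestly a second-order definable hyperclass contained in $\Xcal \cap \powerset(N)$.

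Next I would check, case by case, that $(N,\Ycal)$ has the asserted theory. The $\GBCm$ (resp.\ $\GBC$) axioms are cheap: Class Extensionality is automatic; $N \models \ZFCm$ (resp.\ $\ZFC$) is a hypothesis; Global Choice holds since $G \in L_{\Ord+1}(N,G) \subseteq \Ycal$; Elementary Comprehension holds because $\Ycal$ is closed under first-order definability over $N$ with class parameters — a parameter from $\Ycal$ lies in some $L_\Delta(N,G)$ and the defined class reappears one level higher — using the $\ETR$-provided truth predicates; and Class Replacement then comes for free from the observation of Section \ref{sec1:preserving-axioms}, since $(N,\Ycal)$ is an $\Ord$-submodel of $(M,\Xcal) \models \GBcm$ satisfying all of $\GBCm$ except possibly Class Replacement. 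For $\ETR$ itself, given well-founded $\Gamma \in \Ycal$ and a first-order recursive requirement with class parameter from $\Ycal$, the solution $S$ is assembled by an iteration of length $\le \Ord$ of definability steps, so it lands in some $L_{\Ord+\gamma}(N,G)$ and hence in $\Ycal$; the iteration is performed in $(M,\Xcal)$ by $\ETR$. The one delicate point here is that a $\Gamma$ well-founded in $(N,\Ycal)$ need not be well-founded in $(M,\Xcal)$, so I would establish $\ETR$ for $(N,\Ycal)$ through the equivalent formulation by iterated truth predicates of Chapter 3, all of which are again $(N,G)$-constructible.

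The substantive step, and the main obstacle, is the Class Collection content. For this I would pass to the unrolled picture: by bi-interpretability (Theorem \ref{thm2:bi-int}), $L(N,G)$ realized inside the unrolling $\Ufrak$ of $(M,\Xcal)$ is isomorphic to the relativized constructible universe $L(\bar N,\bar G)^\Ufrak$ of the first-order model $\Ufrak$, and $(N,\Ycal)$ is the cutting-off of $L(\bar N,\bar G)^\Ufrak$. Now $L(\bar N,\bar G)^\Ufrak$ carries a global well-order definable from $\bar G$ together with its own constructibility rank, and the classical Gödel-style condensation/reflection argument — that a constructible universe collects witnesses — shows it proves Collection to whatever extent the base theory supports the reflection, even though $\Ufrak$ need not. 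Cutting off converts this into the corresponding fragment of Class Collection for $(N,\Ycal)$ via Proposition \ref{prop2:unroll-coll} and Theorem \ref{thm2:cutting-offs}. The bookkeeping giving exactly the claimed strength is: from $\GBCm + \ETR$ one has only $\Sigma_0$-Transfinite Recursion downstairs, which runs the reflection for $\Sigma_0$ formulae and yields $\Sigma_0$-Collection in $L(\bar N,\bar G)^\Ufrak$, i.e.\ $\ECC$ in $(N,\Ycal)$, and no more; from $\GBCm + \PnCA k$ (resp.\ $\KMm$) one has $\Sigma_k$-Separation (resp.\ full Separation) downstairs, powering the reflection for $\Sigma_k$ (resp.\ all) formulae and delivering $\Sigma_k$-Collection (resp.\ Collection), i.e.\ $\SnCC k$ (resp.\ full Class Collection) in $(N,\Ycal)$. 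I expect the genuinely delicate parts to be (i) verifying that the condensation argument really goes through over these weak base theories with the stated complexity, and (ii) confirming that cutting off $L(\bar N,\bar G)^\Ufrak$ at its largest cardinal returns a model whose first-order part is exactly $N$ — which is what makes the cutting-off produce $(N,\Ycal)$ rather than a model on a larger first-order part — where the hypothesis that $G$ is a genuine global well-order of $N$ of ordertype $\Ord$ lying in $\Xcal$ is exactly what I would use.
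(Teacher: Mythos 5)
Your route is the paper's own route (the relativized second-order constructible hierarchy $\Lcal(N,G)$, with the $\ETR$ clauses handled via the constructibility/iterated-truth characterization), and the routine verifications you list are fine; but the step you defer is exactly where the real content lies, and the justification you give for it would fail. The classical G\"odel-style argument that a constructible universe satisfies Collection uses Replacement/Collection (via reflection) in the \emph{ambient} universe; here the ambient unrolled structure has only a fragment of Separation (in the $\GBCm + \PnCA k$ cases) or $\Sigma_0$-Transfinite Recursion (in the $\ETR$ case) and no Collection at all, and Separation alone does not suffice --- Zermelo set theory does not prove that its $L$ satisfies Replacement. Concretely, to reflect a $\Sigma_k$ formula to a level $L_\gamma(N,G)$ one wants an $\omega$-sequence of levels $\Gamma_0 < \Gamma_1 < \cdots$ closing under witnesses; each next level exists by $\Pi^1_k$-Comprehension upstairs, but the levels are meta-ordinals, i.e.\ isomorphism classes of membership codes, and simultaneously \emph{choosing} codes for all the $\Gamma_n$ so as to form $\sup_n \Gamma_n$ is itself an instance of the (Class) Collection being proved --- the paper even presents the naive argument as a ``false proof''. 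The missing idea is the paper's device for breaking this circle: fix $C \subseteq \Ord^2$ isomorphic to $\in^N$, call a membership code compliant if it restricts to $C$ on $\Ord$, and show every meta-ordinal has a canonical representative, the same in every compliant $L_\Delta(N,G)$; with canonical representatives the sequence $\seq{\Gamma_n}$ is a single class obtainable by Comprehension, and then the reflection, hence $\Sigma_k$-Collection in $\Lfrak(N,G)$ and so $\SnCC k$ (or $\ECC$) after cutting off, goes through. Without some such canonical-choice mechanism your sketch of the substantive step does not close.

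Three smaller points. First, for an arbitrary inner model $N \in \Xcal$ you still owe the existence of a global well-order of $N$ in $\Xcal$; your parenthetical covers only $N = M$ and canonical inner models, whereas the paper gets it in general by meeting the definable dense subclasses of $\Add(\Ord,1)^N$, using the truth predicates supplied by $\ETR$. Second, your delicate point (ii) is aimed at the wrong target: $\Ord^N$ need not be the largest cardinal of $\Lfrak(N,G)$ (the paper gives an explicit counterexample), and $H_{\Ord^N}$ computed there can properly contain $N$ (it contains, e.g., a real coding $\Th(N)$ even when no such real lies in $N$); one does not cut off at the largest cardinal but simply takes $\Ycal$ to be the subsets of $N$ coded in $\Lfrak(N,G)$, with Global Choice coming from $G$ itself. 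Third, the well-foundedness worry for $\ETR$ is not dodged by passing to iterated truth predicates, since constructing $\Tr_\Gamma(A)$ still requires $(M,\Xcal)$ to run a recursion along $\Gamma$; the correct observation is that $(N,\Ycal)$ and $(M,\Xcal)$ must agree on whether $\Gamma$ is well-founded, because a failure of well-foundedness is witnessed by a set (using Global Choice and Class Replacement), which is how the paper argues.
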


The last two, that being $(\GBC + \ETR)$-realizable or $(\GBCm + \ETR)$-realizable is closed under inner models, will be proved in the next chapter. The rest will follow from results in this section.

The special case where $N = M$ yields the following result, which I state separately.

\begin{corollary} \label{cor2:get-a-plus}
Let $(M,\Xcal) \models \GBCm + \ETR$. Then there is a definable hyperclass $\Ycal \subseteq \Xcal$ so that the following.
\begin{itemize}
\item If $(M,\Xcal) \models \KM$ then $(M,\Ycal) \models \KMCC$.
\item If $(M,\Xcal) \models \KMm$ then $(M,\Ycal) \models \KMCCm$.
\item If $(M,\Xcal) \models \GBC + \PnCA k$ then $(M,\Ycal) \models \GBC + \PnCAp k$, for $k \ge 1$.
\item If $(M,\Xcal) \models \GBCm + \PnCA k$ then $(M,\Ycal) \models \GBCm + \PnCAp k$, for $k \ge 1$.
\item If $(M,\Xcal) \models \GBC + \ETR$ then $(M,\Ycal) \models \GBC + \ETR + \ECC$.
\item If $(M,\Xcal) \models \GBCm + \ETR$ then $(M,\Ycal) \models \GBCm + \ETR + \ECC$.
\end{itemize}
\end{corollary}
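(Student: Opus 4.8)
The plan is to read this off as the degenerate case $N = M$ of Theorem~\ref{thm2:inner-model-rlzble}. The first thing to check is that $M$ really does count as an inner model of $(M,\Xcal)$ in the sense of the definition from Chapter~1: trivially $(M,\Xcal)$ is an $\Ord$-submodel of itself, and $\Xcal$ is (vacuously) second-order definable over $(M,\Xcal)$ via the formula $X = X$; moreover $M \in \Xcal$ since the class $V$ of all sets belongs to any $\GBCm$-realization. So the hypotheses of Theorem~\ref{thm2:inner-model-rlzble} are met with $N = M$, and since $\powerset(M) \supseteq \Xcal$ we have $\Xcal \cap \powerset(M) = \Xcal$, so the hyperclass the theorem produces is automatically a sub-hyperclass of $\Xcal$, as the corollary demands.

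Applying the theorem then gives a second-order definable hyperclass $\Ycal \subseteq \Xcal$ satisfying the stated theory in each case. Concretely, $\Ycal$ should be taken to be the hyperclass $\Lcal(M,G)$ of classes constructible relative to a global well-order $G \in \Xcal$ of $M$ (which exists by Global Choice in $(M,\Xcal)$), constructed and analyzed in this section. The point, which I expect to be where the real content lives, is that in this constructible hyperclass Class Collection (or its $\Sigma^1_k$ fragment, as appropriate) becomes available essentially by a condensation argument: given $\forall x \exists Y\, \phi(x,Y)$ holding in $(M,\Lcal(M,G))$, a witnessing class for each $x$ can be found inside some $L_\Gamma(M,G)$ of bounded length, and these witnesses can then be assembled into a single class, yielding the required instance of (a fragment of) Class Collection. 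The matching of complexity levels—$\PnCAp k$ coming out when $\PnCA k$ goes in, and full $\KMCC$ (or $\KMCCm$) coming out of $\KM$ (or $\KMm$)—is what the lemmas of this section are designed to track.

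The cases for $\GBC + \ETR$ and $\GBCm + \ETR$, where the conclusion is only $\ECC$ rather than full Class Collection, are inherited directly from the corresponding (deferred) cases of Theorem~\ref{thm2:inner-model-rlzble}, whose proofs are completed in the next chapter; so the corollary inherits that deferral as well. Beyond bookkeeping, there is no genuine obstacle here—the nominal difficulty, that $\Ycal$ must be definable over the two-sorted structure $(M,\Xcal)$ rather than over the first-order part alone, is precisely what Theorem~\ref{thm2:inner-model-rlzble} already delivers, so for the corollary itself all the work has been done upstream.
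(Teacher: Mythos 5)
Your proposal is essentially the paper's own proof: the corollary is introduced there precisely as the special case $N = M$ of Theorem \ref{thm2:inner-model-rlzble}, with $\Ycal = \Lcal(M,G)$ for a global well-order $G \in \Xcal$, and the (fragments of) Class Collection obtained exactly as you describe—witnesses located in bounded levels $L_\Gamma(M,G)$ and assembled into one class—though the paper frames this as the reflection argument of Lemma \ref{lem2:sol-refl} and Proposition \ref{prop2:coll-in-l} rather than condensation. The only cosmetic divergence is bookkeeping on the $\ETR$ bullets: the paper treats the $\ECC$ corollary at the end of section \ref{sec2:so-l} as completing the proof of Corollary \ref{cor2:get-a-plus}, whereas you defer those cases to the next chapter along with the theorem's inner-model $\ETR$ cases; either reading is harmless.
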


That is, we can always obtain (a fragment of) Class Collection by moving to a possibly smaller $V$-submodel.

The strategy in this section will be to define $\Lcal$ somewhat indirectly. I will define $\Lfrak$ to consist of the constructible membership codes. Then, $\Lcal$ will be the classes which are coded in $\Lfrak$. (Similar remarks apply when constructing relative to a parameter.) This approach follows the usual construction of $L$. But since we want to iterate the construction beyond $\Ord$ we will be too high in rank to use actual sets and will settle for membership codes for sets. This approach also has the advantage that some of the arguments are trivial modifications of arguments from section \ref{sec:unrolling} of this chapter. As such, the reader is strongly encouraged to read section \ref{sec:unrolling} before reading this section.

Let us move now to the definitions.

\begin{definition}[Over $\GBCm + \ETR$]
Let $\Gamma$ be a well-order. Denote by $L_\Gamma$ the class obtained by iterating the definition of $L$ along $\Gamma$. To clarify, $L_\Gamma$ is the membership code constructed according to the following recursion:
\begin{itemize}
\item $L_0$ is a membership code for $\emptyset$.
\item $L_{\alpha + 1}$ is a membership code for $\Def(L_\alpha)$.
\item $L_\lambda$ is a membership code for $\bigcup_{\alpha < \lambda} L_\alpha$, for $\lambda$ a limit element of $\Gamma$.
\end{itemize}
\end{definition}

This definition cries out for clarification. Recall that $\GBCm + \ETR$ proves the existence of (first-order) truth predicates relative to a class. In particular, if $(A,R_0, R_1, \ldots)$ is a class-sized structure then by Elementary Transfinite Recursion can be constructed the truth predicate for $(A,R_0,R_1, \ldots)$; simply take take the truth predicate for $(V,\in,A,R_0,R_1,\ldots)$ and throw away the irrelevant facts about the background universe. In particular, this works for membership codes. So if $E$ is a membership code which represents a `set' $e$ we can build the truth predicate for $(\dom E, E)$ and from it extract a membership code $D_E$ for the `set' $\Def(e)$. So $\ETR$ lets us construct membership codes that are of the form $L_\Gamma$. We will see in the next chapter that weaker principles will not suffice.

Of course, these membership codes are not unique, as there will be many isomorphic copies of $L_\Gamma$. But if we fix in advance the details of how we build $D_E$ from $E$ then we have fixed a way to build $L_\Gamma$ from $\Gamma$.\footnote{I will not go into the details of the necessary coding, as it is routine and uninteresting.}
So in this sense it is justified to refer to $L_\Gamma$ as {\em the} membership code obtained by iterated the definition of $L$ along $\Gamma$.

\begin{definition}[Over $\GBCm + \ETR$]
Say that a membership code $E$ is {\em constructible} if $E \vin L_\Gamma$ for some $\Gamma$. 
Let $\Lfrak$ consist of the membership codes for all constructible classes. We consider $\Lfrak$ as a first-order model by quotienting out by isomorphism and using $\vin$ (see definition \ref{def2:vin}) for its membership relation. I will call $\Lfrak$ the {\em constructible unrolled model}. Let $\Lcal$ consist of all classes coded in $\Lfrak$.
\end{definition}

In particular, each $L_\Gamma$ is constructible, as $L_\Gamma \vin L_{\Gamma + 1}$. Note that this is true even if $\Gamma$ is very complicated and codes a lot of information. It will nevertheless be isomorphic to $L_{\Gamma + 1} \downarrow e$ for some $e \in \pen(L_{\Gamma + 1})$.

As in the usual first-order setting we can carry out the construction of $L$ relative to some parameter(s). 

\begin{definition}
Let $P$ be a class and $\Gamma$ a well-order. Denote by the $L_\Gamma(P)$ the membership code obtained by iterating the definition of $L(P)$ along $\Gamma$. 
\end{definition}

\begin{definition}
A membership code $E$ is {\em $P$-constructible} if $E \vin L_\Gamma(P)$ for some $\Gamma$.

Similar to before, let $\Lfrak(P)$ consist of the membership codes for all constructible classes. We consider $\Lfrak(P)$ as a first-order model by quotienting out by isomorphism and using $\vin$ as its membership relation. I will call $\Lfrak(P)$ the {\em $P$-constructible unrolled model}. Let $\Lcal(P)$ consist of all classes coded in $\Lfrak(P)$.
\end{definition}

Hereon work in a fixed $(M,\Xcal) \models \GBCm + \ETR$, where we will require more from $(M,\Xcal)$ in various propositions. Fix $N \in \Xcal$ an inner model (of $\ZFCm$ or $\ZFC$, as appropriate) of $M$ and fix $G \in \Xcal$ a $\GBC$-amenable global well-order of $N$. We want to calculate the theory of $(N,\Lcal(N,G))$, based upon how strong the theory of $(M,\Xcal)$ is. Below, $\kappa$ will denote the ordinal in $\Lfrak(N,G)$ which represents the $\Ord$ of $N$, i.e.\ $\kappa$ is represented by the membership code $\Ord+1$.

But first, let us check that there really is such a $G$.

\begin{lemma} \label{lem2:gbc-amen-gwo}
Let $(M,\Xcal) \models \GBCm + \ETR$ and let $N \in \Xcal$ be an inner model of $M$ which satisfies Choice. Then there is a class $G \in \Xcal$ which is a $\GBC$-amenable global well-order of $N$.
\end{lemma}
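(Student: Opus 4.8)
The plan is to produce $G$ as the union of a coherent, rank-stratified chain of well-orderings of the rank-initial-segments of $N$, built by an instance of $\ETR_\Ord$ (which follows from $\ETR$), and then to verify that $\Def(N;G)$ is a $\GBC$-realization (a $\GBCm$-realization when $N\models\ZFCm$) containing $G$, which witnesses that $G$ is $\GBC$-amenable to $N$.

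First I would fix the resources supplied by the ambient model: since $(M,\Xcal)\models\GBCm$, Global Choice gives a class bijection $\Ord^M\to M$ in $\Xcal$, hence a global well-order $<^M$ of $M$ and a global choice function, and $\Ord^N=\Ord^M$ because $N$ is an inner model. Then, working inside $(M,\Xcal)$, I would define by recursion on $\beta\in\Ord^N$ a well-order $g_\beta$ of $N_\beta:=\{x\in N:\rk^N(x)<\beta\}$ so that the $g_\beta$ form an increasing chain: at a successor stage one appends, after $g_\beta$, a well-ordering of the next rank level of $N$ selected canonically (for instance the $<^M$-least well-order of that level lying in $N$, which exists since $N\models$ Choice), and at limit stages one takes unions. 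Because each stage is first-order definable from the earlier stages together with the parameters $N$ and $<^M$, this is a legitimate instance of $\ETR_\Ord$, so its solution $S$, and hence $G:=\bigcup_{\beta\in\Ord^N}g_\beta$, belongs to $\Xcal$. By construction $G$ is a global well-order of $N$ of ordertype $\Ord^N$.

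It then remains to see that $G$ is $\GBC$-amenable to $N$, for which I would show $(N,\Def(N;G))\models\GBC$ (resp.\ $\GBCm$). Since $N,G\in\Xcal$ and every class in $\Def(N;G)$ is first-order definable over $(M,\Xcal)$ from $N$, $G$, and set parameters (relativizing quantifiers to $N$ and using $N$ as a predicate), Elementary Comprehension for $(M,\Xcal)$ gives $\Def(N;G)\subseteq\Xcal$, so $(N,\Def(N;G))$ is an $\Ord$-submodel of $(M,\Xcal)\models\GBcm$. Extensionality for classes is automatic, Elementary Comprehension holds by the definition of $\Def(N;G)$, the first-order part is $\ZFC$ (resp.\ $\ZFCm$) because $N$ is, and Global Choice holds because $G\in\Def(N;G)$ is a global well-order of $N$ of ordertype $\Ord^N$, which yields the strong form needed for $\GBCm$. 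Class Replacement then comes for free from the earlier observation that an $\Ord$-submodel of a $\GBcm$-model which satisfies all the remaining axioms of $\GBCm$ automatically satisfies Class Replacement.

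The step I expect to be the main obstacle is the bookkeeping in the recursion: one must organize the choice of level-orderings so that $G$ is genuinely amenable to $N$, i.e.\ so that $G\cap c\in N$ for every $c\in N$ — equivalently, so that each proper initial segment $g_\beta$ is a set belonging to $N$ — since this amenability is precisely what makes the instances of Separation and Replacement over $(N,G)$ implicit in the previous paragraph actually hold. The rank-stratified organization of the recursion, together with $N\models$ Choice (so that each rank level of $N$ can be well-ordered inside $N$), is what makes this work, and keeping track of it carefully is the technical heart of the argument.
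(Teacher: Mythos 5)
There is a genuine gap, and it sits exactly where you locate the ``technical heart'': canonicity of the choices via $<^M$ does not make $G$ amenable to $N$. Each chosen level ordering lies in $N$, but the assignment $\alpha\mapsto o_\alpha$ is defined from $<^M\notin N$, and nothing forces its initial segments (equivalently $G\cap (V_\beta^N\times V_\beta^N)$) to lie in $N$. Concretely, suppose $0^\sharp$ exists in $M$ and $N=L^M$; starting from a $\GBC$-realization $\Xcal$ containing $0^\sharp$ and some global well-order $<_0$, one can rearrange $<_0$ on the (set-sized) collection of well-orders of the finite levels $V_n$ so that the $<^M$-least well-order of $V_n$ lying in $L$ encodes the $n$-th bit of $0^\sharp$; this $<^M$ is still a global well-order in $\Xcal$, but now $G\cap(V_\omega\times V_\omega)$ computes $0^\sharp$ and so is not in $L^M$. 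Hence your $G$ need not even be amenable to $N$, let alone $\GBC$-amenable. The appeal to the chapter~1 observation (Class Replacement is free for $\Ord$-submodels) cannot rescue this: Separation for classes, i.e.\ amenability, is itself a consequence of Class Replacement, so in the example above Class Replacement genuinely fails in $(N,\Def(N;G))$ even though Extensionality, Elementary Comprehension, Global Choice and $\ZFC$ for sets all hold -- the failing image $F''a$ is a bounded subclass of $\omega$, which is precisely the kind of failure that argument does not control. A separate problem: the lemma is stated and used over $\GBCm$, where neither $M$ nor $N$ need satisfy Powerset, so the rank levels $V_\alpha^N$ need not be sets and your recursion does not even get started; note also that, as you set it up, no recursion is really involved (the definition of $G$ is outright first-order in $N$ and $<^M$), which is a sign that the $\ETR$ hypothesis is not being used where it is needed.

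The paper's proof puts the work in a different place: by $\ETR$ one has first-order truth predicates relative to any class, hence uniform access to the dense subclasses of the class forcing $\Add(\Ord,1)^N$ that are definable over $N$; inside $(M,\Xcal)$ one then builds a single class $C\in\Xcal$ meeting all of them. It is genericity, not canonicity, that guarantees $\GBC$-amenability: by the chapter~1 results, $\Add(\Ord,1)$ is tame, adds no sets, and its generic extension satisfies $\GBc$ and codes a global well-order, so $C$ (and the well-order it codes) is $\GBC$-amenable to $N$. If you want a stage-by-stage construction, the choices must be interleaved with meeting the dense classes definable over $N$, and it is exactly the truth predicates supplied by $\ETR$ that make that bookkeeping possible.
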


\begin{proof}
Because $\Xcal$ contains first-order truth predicates relative to any class it has uniform access to the definable subclasses of a definable forcing notion over $N$. In particular, this works for the Cohen forcing $\Add(\Ord,1)^N$ to add a subclass of $\Ord$. Let $C \in \Xcal$ meet every definable dense subclass of $\Add(\Ord,1)^N$. Then $C$ codes a global well-order of $N$, which is necessarily $\GBC$-amenable to $N$.
\end{proof}

Now let us see that $\Lfrak(N,G)$ satisfies some basic axioms.

\begin{proposition}[Over $\GBCm + \ETR$]
The $(N,G)$-constructible unrolled model $\Lfrak(N,G)$ satisfies Extensionality, Pairing, Union, Infinity, Foundation, Choice, and that the cardinal $\kappa = \Ord^N$ is regular.
\end{proposition}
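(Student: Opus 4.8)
The plan is to run the same arguments used for the full unrolled model $\Ufrak$ in Section \ref{sec:unrolling}, checking at each step that the witnessing membership codes can be taken $(N,G)$-constructible. (That such a $G$ exists in $\Xcal$ is Lemma \ref{lem2:gbc-amen-gwo}, so $\Lfrak(N,G)$ is a legitimate structure.) The first thing I would record is that $\Lfrak(N,G)$ is $\vin$-transitive inside $\Ufrak$: if $B$ is $(N,G)$-constructible, say $B \cong L_\Gamma(N,G) \downarrow b$, then every sub-code $B \downarrow x$ is isomorphic to $L_\Gamma(N,G) \downarrow x'$ for a suitable $x'$, hence is again $(N,G)$-constructible, and so is any $A \vin B$. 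Given this, Extensionality and Foundation come for essentially free: the proof of Theorem \ref{thm2:unroll-ext} uses only $\GBcm + \ETR$ together with the existence of maximum initial partial isomorphisms (Lemma \ref{lem2:max-prtl-isom}) and invokes as witnesses only sub-codes of the given codes, so it relativizes verbatim to $\Lfrak(N,G)$; Foundation holds because membership codes are well-founded, just as in Proposition \ref{prop2:unroll-basic}.

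For Pairing, Union, and Infinity I would argue that $\Lfrak(N,G)$ is closed under the corresponding operations. Given constructible $A, B$, fix a well-order $\Gamma$ extending both of the well-orders witnessing their constructibility (using comparability of class well-orders over $\GBcm + \ETR$), so that $A, B \vin L_\Gamma(N,G)$. A membership code for the unordered pair of $A$ and $B$, for $\bigcup A$, and for $\omega$ are all first-order definable over $(\dom L_\Gamma(N,G), {\vin})$ with $A, B$ as parameters, hence are represented in $L_{\Gamma+1}(N,G)$, which by construction codes $\Def(L_\Gamma(N,G))$. Thus these witnesses are constructible, and Pairing, Union, Infinity follow as in Proposition \ref{prop2:unroll-basic}.

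Choice is the one axiom where the parameter $G$ is genuinely needed, and I would handle it as in Proposition \ref{prop2:unroll-choice} but replacing the appeal to Global Choice with the canonical well-ordering of the constructible hierarchy. By an instance of $\ETR$, build a well-ordering of $\seq{L_\alpha(N,G) : \alpha \le \Gamma}$: well-order $L_0(N,G) = N$ by $G$ and at each step extend to the induced well-ordering of $\Def(L_\alpha(N,G))$ via a fixed enumeration of formulae together with their parameters. This gives a definable well-ordering of $\Lfrak(N,G)$, from which, for any constructible code $A$, one reads off a well-ordering of $\pen A$ and then (via Lemma \ref{lem2:fns-mem-codes} and Corollary \ref{cor2:relns-mem-codes}) a constructible membership code for a well-order of the set $A$ represents; so $\Lfrak(N,G)$ satisfies the well-ordering theorem. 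Regularity of $\kappa = \Ord^N$ is then exactly the argument of Proposition \ref{prop2:unroll-kappa}: a constructible code $F^\star$ for a cofinal map from an ordinal $\Gamma < \kappa$ into $\kappa$ would have $\pen \Gamma$ a set (an ordinal shorter than $\Ord^N$ collapses to a genuine ordinal of $N$, and $\pen\Gamma$ is then the image of that ordinal under a class bijection, hence a set by Class Replacement), and the associated class function $F : \pen \Gamma \to \Ord^N$, definable from $F^\star$ and so lying in $\Xcal$, would have image cofinal in $\Ord^N = \Ord^M$, contradicting Class Replacement in $(M,\Xcal)$.

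The main obstacle — really the only place needing care — is the closure claim of the second paragraph and the $\ETR$ construction in the third: that ``definable over $L_\Gamma(N,G)$'' translates, at the level of membership codes, into ``represented in $L_{\Gamma+1}(N,G)$'', and that the canonical well-order is assembled by a legitimate elementary recursion. Both come down to unwinding the (routine but unenlightening) coding conventions fixed when the operation $E \mapsto D_E$ and hence $\Lfrak(N,G)$ were defined, so I would dispatch them with a remark rather than a full verification.
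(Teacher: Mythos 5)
Your proposal is correct and follows essentially the same route as the paper, whose own proof is just a sketch stating that the arguments of Theorem \ref{thm2:unroll-ext} and Propositions \ref{prop2:unroll-basic}, \ref{prop2:unroll-choice}, and \ref{prop2:unroll-kappa} go through once one checks the witnesses stay among the constructible membership codes, with Choice holding precisely because $G$ is coded in $\Lfrak(N,G)$. You have simply carried out that checking in more detail than the paper does.
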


\begin{proof}[Proof sketch]
This is proved similarly to the analogous results from section \ref{sec:unrolling} (namely, theorem \ref{thm2:unroll-ext} and propositions \ref{prop2:unroll-basic}, \ref{prop2:unroll-choice}, and \ref{prop2:unroll-kappa}).
The new content is to check that the arguments never take us outside of the constructible membership codes. Note in particular that $\Lfrak(N,G)$ satisfies Choice precisely because $G$ is coded in $\Lfrak(N,G)$. 
\end{proof}

Note that the previous proposition does not claim that $\Lfrak(N,G) \models \text{``}\kappa$ is the largest cardinal''. Indeed, this will not in general be true. For a counterexample, consider countable $L_\alpha \models \ZFC$ with $\kappa < \alpha$ so that $L_\alpha \models \kappa$ is inaccessible. Now do a class forcing over $L_\alpha$ to collapse all cardinals $> \kappa$. This produces $W \models \ZFCmi$ with $\Ord^M = \alpha$ and $\kappa$ is the largest cardinal in $W$. Let $(M,\Xcal) \models \KMCC$ be the model obtained by the cutting off construction applied to $W$. Then we get that $\Lfrak^{(M,\Xcal)}$ is (isomorphic to) $L_\alpha$. So $\Lfrak$ does not think that $\kappa$ is the largest cardinal. Observe however, that $L^M = L_\kappa$ is still $\KMCC$-realizable, as $V_{\kappa+1}^{L_\alpha}$ is a $\KMCC$-realization for $L^M$.

Back to calculating the theory of $\Lfrak(N,G)$, if the ground universe satisfies Powerset then $\Lfrak(N,G)$ will think that $\kappa$, the cardinal in the unrolling corresponding to the $\Ord$ of the ground universe, is inaccessible.

\begin{corollary}[Over $\GBC + \ETR$]
The unrolled model $\Lfrak(N,G)$ satisfies that $\kappa$ is inaccessible.
\end{corollary}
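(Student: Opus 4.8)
The plan is to mirror the proof of proposition~\ref{prop2:unroll-kappa}, checking at each step that the witnesses we produce stay inside the $(N,G)$-constructible membership codes. Since we are now over $\GBC+\ETR$, the ground universe satisfies Powerset; and $N$ is an inner model of $M$, so $N\models\ZFC$ and in particular $N$ has Powerset. We already know from the previous proposition that $\Lfrak(N,G)\models\kappa$ is regular, so it remains only to check that $\kappa$ is a strong limit in $\Lfrak(N,G)$, which together with regularity gives inaccessibility.

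First I would recall why $\kappa$ is a strong limit in the full unrolling $\Ufrak$ over a ground universe with Powerset: given a membership code $\Gamma$ for a well-order shorter than $\Ord^N$, $\Gamma$ is set-sized (in $N$), so by Powerset in $N$ there is a set-sized membership code for $2^\Gamma$, whence $2^\Gamma$ represents an ordinal below $\kappa$. More explicitly, via Mostowski collapse inside $N$ one may take $\Gamma=\oin\rest(\gamma+1)$ for some ordinal $\gamma$ of $N$, and then $2^\Gamma$ is represented by the membership code $\oin\rest(2^\gamma+1)$, where $2^\gamma$ is cardinal exponentiation in $N$ and the addition is ordinal addition. The point to verify is that these particular membership codes — $\oin\rest(\gamma+1)$ and $\oin\rest(2^\gamma+1)$, and more generally whatever code witnesses ``$2^\Gamma$ represents a set'' — are $(N,G)$-constructible. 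But every set of $N$ appears in $L(N,G)$ (indeed $L(N,G)$ built along a long enough $\Gamma$ eventually contains all of $N$'s sets, since $N$ has a $\GBC$-amenable global well-order $G$ and $N\models\ZFC$ so $N=L(N,G)_{\Ord^N}$ up to isomorphism of codes), so the canonical membership code $E_a$ for any set $a\in N$ lies in $\Lfrak(N,G)$. In particular $\oin\rest(\gamma+1)$ and $\oin\rest(2^\gamma+1)$, being canonical codes for genuine sets of $N$, are constructible. Hence the witness to strong-limit-ness never leaves $\Lfrak(N,G)$.

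I would then assemble the argument: take any $\Gamma\in\Lfrak(N,G)$ with $\Gamma$ a code for an ordinal $<\kappa$; by regularity (already established) $\Gamma$ corresponds to a genuine set-sized well-order of $N$, so after collapsing we may assume $\Gamma=E_{\gamma+1}$ for $\gamma\in\Ord^N$; then $E_{2^\gamma+1}$ is a constructible membership code representing an ordinal $<\kappa$ into which $2^\Gamma$ injects, and the injection is itself coded by a canonical code for a function in $N$, hence constructible. So $\Lfrak(N,G)\models 2^\Gamma<\kappa$, i.e. $\kappa$ is a strong limit, and with regularity, $\kappa$ is inaccessible in $\Lfrak(N,G)$.

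The main obstacle — though it is really a bookkeeping point rather than a genuine difficulty — is verifying the claim that $L(N,G)$, built along well-orders available in $\Xcal$, eventually exhausts all sets of $N$, so that every canonical code $E_a$ for $a\in N$ is constructible; this is where $N$ being an \emph{inner model of $\ZFC$} together with the $\GBC$-amenable global well-order $G$ from lemma~\ref{lem2:gbc-amen-gwo} is used, via the standard fact that $N=L(N,G)$ holds in the unrolling because $G$ wellorders $N$ in ordertype $\Ord^N$. Everything else is a direct transcription of the corresponding clause of proposition~\ref{prop2:unroll-kappa}, with the extra observation that the codes involved are canonical codes of honest sets of $N$ and hence constructible.
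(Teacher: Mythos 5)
Your proof goes a different way than the paper, and along that way there is a genuine gap. You verify only one inclusion: that every set of $N$ --- in particular $\gamma+1$, $(2^\gamma)^N$, $\powerset(\gamma)^N$, and $N$'s bijection of $\powerset(\gamma)^N$ with $(2^\gamma)^N$ --- has its canonical membership code among the $(N,G)$-constructible codes. That shows the set coded by $E_{\powerset(\gamma)^N}$ is small in $\Lfrak(N,G)$, but it does not show that $\kappa$ is a strong limit there. For that you must know what $\Lfrak(N,G)$ itself thinks $\powerset(\gamma)$ is: a priori the $L(N,G)$-hierarchy could contain a code for a subset of $\gamma$ that is not in $N$ (new subsets of old sets are exactly what iterating $\Def$ threatens to produce), and your injection ``coded by a function in $N$'' is simply undefined on any such subset, nor is it clear at this stage that the model's collection of subsets of $\gamma$ is even a set of the model. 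What you need is the nontrivial direction of your casually asserted ``$N = L(N,G)_{\Ord^N}$'' --- that the constructible unrolling adds no subsets of sets of $N$ --- and nothing in your proposal proves it; the clause you flag as ``the main obstacle'' (that every $E_a$ for $a \in N$ is constructible) is the trivial direction. (A smaller slip: a code for an ordinal $<\kappa$ is set-sized because it embeds onto a proper initial segment of $\Ord$ and Replacement applies, not ``by regularity.'')

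The paper's proof is a one-line downward-absoluteness observation that deliberately avoids this issue: proposition \ref{prop2:unroll-kappa} already gives that the full unrolling $\Ufrak$ satisfies ``$\kappa$ is inaccessible'' (this is where Powerset in the ground universe is used), and $\Lfrak(N,G)$ is obtained by restricting to a subcollection of the membership codes, with the same $\vin$ and the same ordinals up to $\kappa$; having fewer sets can only remove potential witnesses to the singularity or non-strong-limitness of $\kappa$, never create them. So the paper never has to identify the powerset operation of $\Lfrak(N,G)$ at all, whereas your argument stands or falls with that identification. If you want to keep your route, you must add a lemma that every subset of a set of $N$ represented by an $(N,G)$-constructible code already lies in $N$ (using amenability of $G$ and an induction along the $L_\Gamma(N,G)$-hierarchy), which is substantially more work than the corollary requires.
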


\begin{proof}
Unrolling with all membership codes, not just the $(N,G)$-constructible ones, gave that $\kappa$ is inaccessible. Using fewer membership codes---and thus possibly having fewer sets in the unrolling---can only make it easier for $\kappa$ to be inaccessible.
\end{proof}

\newcommand\starl{{*L}}

Next, let us see that $\Lfrak(N,G)$ satisfies the level of Separation corresponding to the level of Comprehension in the ground universe $(M,\Xcal)$. Similar to the proof of proposition \ref{prop2:sep-in-unroll} we will need to translate first-order formulae $\phi$ into second-order formulae about membership codes. Here, however, we want to confine to talking only about constructible membership codes. This translation $\phi \mapsto \phi^\starl$ is given by the following schema.
\begin{itemize}
\item If $\phi$ is of the form $x = y$ then $\phi^\starl$ is $X \cong Y$.
\item If $\phi$ is of the form $x \in y$ then $\phi^\starl$ is $X \vin Y$.
\item If $\phi$ is of the form $\psi \lor \theta$ then $\phi^\starl$ is $\psi^\starl \lor \theta^\starl$, and similarly for conjunctions.
\item If $\phi$ is of the form $\neg \psi$ then $\phi^\starl$ is $\neg \psi^\starl$.
\item If $\phi$ is of the form $\exists x\psi(x)$ then $\phi^\starl$ is $\exists X$ $X$ is a constructible membership code and $\psi^\starl(X)$, and similarly for unbounded universal quantifiers.
\item If $\phi$ is of the form $\exists x \in y \psi(x)$ then $\phi^\starl$ is $\exists x \lol_Y t_Y\ \psi^\starl(Y \downarrow x)$, and similarly for bounded universal quantifiers.
\end{itemize} 

Like before, this translation does not increase complexity.

\begin{lemma}
If $\phi$ is $\Sigma_k$ for $k \ge 1$ then $\phi^\starl$ is equivalent to a $\Sigma^1_k$-formula.
\end{lemma}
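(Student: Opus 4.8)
The plan is to follow the proof of Lemma~\ref{lem2:star-trans} almost verbatim, the only new ingredient being a short analysis of the predicate ``$X$ is a constructible membership code'' that now appears in the clauses translating unbounded quantifiers. Recall from Lemma~\ref{lem2:relns-are-delta11} that $A \cong B$ and $A \vin B$ are both $\Delta^1_1$. So, exactly as before, putting $\phi^\starl$ into prenex normal form with all unbounded class quantifiers pulled to the front and $\neg$ driven in front of atomic subformulae, each $\cong$- and $\vin$-atom (the only sources of class quantifiers besides the fronted ones and the constructibility predicates) can be rewritten, according to whether it sits inside the innermost existential or the innermost universal block, as a $\Sigma^1_1$ or $\Pi^1_1$ formula and absorbed into that block; the remaining quantifiers introduced by the translation are bounded set quantifiers and cost nothing. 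Since $\phi$ is $\Sigma_k$ with $k \ge 1$, there is always such a block to absorb into.

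The new point is to check that ``$X$ is a constructible membership code'' is $\Sigma^1_1$ in the fixed parameters $N$ and $G$, and that it occurs in $\phi^\starl$ with a compatible polarity. For the first: unwinding the definition, the predicate says that $X$ is a membership code and there is a well-order $\Gamma$ together with the class $S = L_\Gamma(N,G)$ with $X \vin S$. The relation ``$S = L_\Gamma(N,G)$'' is first-order in $S,\Gamma,N,G$ once the coding of the $L$-recursion has been fixed in advance --- this rests on the fact, noted when $L_\Gamma$ was introduced, that over $\GBCm + \ETR$ this elementary recursion has a unique solution and that ``being the solution of a given elementary recursion'' is itself a first-order property of the solution class --- and $X \vin S$ is $\Delta^1_1$, so pairing the leading class existentials into one shows the whole predicate is $\Sigma^1_1$ and its negation $\Pi^1_1$. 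For the second: in the translation, ``$X$ is a constructible membership code'' is conjoined under $\exists X$ (when translating $\exists x$) and used as a hypothesis under $\forall X$ (when translating $\forall x$); pushing $\neg$ inward keeps this arrangement consistent, so under an existential class quantifier it always appears positively as a $\Sigma^1_1$ formula and under a universal class quantifier as $\neg(\cdots)$, i.e.\ as a $\Pi^1_1$ formula --- in each case exactly the polarity that lets its class quantifier be swallowed by the surrounding block, just like the $\vin$- and $\cong$-atoms.

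Tracking the quantifier alternations through this rewriting then shows that $\phi^\starl$ is equivalent to a formula whose block structure of unbounded class quantifiers matches that of $\phi$; in particular $\phi^\starl$ is equivalent to a $\Sigma^1_k$ formula when $\phi$ is $\Sigma_k$ with $k \ge 1$. The only step requiring genuine care --- as opposed to bookkeeping --- is the first-orderness of ``$S = L_\Gamma(N,G)$'', which I would justify by appealing to the uniqueness of $L_\Gamma(N,G)$ under $\GBCm + \ETR$ and to the fact that solutions of elementary transfinite recursions are recognizable by a first-order condition; everything else is a routine transcription of Lemma~\ref{lem2:star-trans}.
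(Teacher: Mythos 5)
Your argument is correct, but it takes a different route from the paper's. You rerun the prenex-and-absorb argument of lemma \ref{lem2:star-trans} directly on $\phi^\starl$, treating ``$X$ is a constructible membership code'' as a new atom and checking that it is $\Sigma^1_1$ (so its negation is $\Pi^1_1$) and that it always occurs with the polarity that lets its class quantifiers be swallowed by the surrounding block; the one point needing care, as you note, is that ``$S = L_\Gamma(N,G)$'' is first-order in $S,\Gamma,N,G$, which holds because the construction of $L_\Gamma(N,G)$ was fixed canonically and being the solution of an elementary transfinite recursion along a class well-order is a first-order condition on the solution class (with uniqueness provable over $\GBCm$). The paper instead factors the problem: it first shows by induction that $\phi^\starl$ is equivalent to $(\phi^L)^*$, where $\phi\mapsto\phi^L$ is ordinary relativization to $L$ on the first-order side, then observes that $\phi^L$ is $\Sigma_k$ for $\Sigma_k$ formulae $\phi$ with $k\ge 1$ (since ``$x\in L$'' is $\Sigma_1$), and finally quotes lemma \ref{lem2:star-trans}. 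The two proofs rest on the same facts --- your $\Sigma^1_1$-ness of ``constructible membership code'' is exactly the class-level avatar of ``$x\in L$'' being $\Sigma_1$ --- but the paper's factorization outsources the bookkeeping to results already proved (relativization in first-order set theory and lemma \ref{lem2:star-trans}), whereas your direct absorption is self-contained at the cost of redoing the quantifier-tracking and explicitly verifying the first-orderness of the $L_\Gamma(N,G)$-recursion; either is acceptable.
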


\begin{proof}[Proof sketch]
First, let us see by induction that $(\phi^L)^*$, using the $*$-translation from lemma \ref{lem2:star-trans}, is equivalent to $\phi^\starl$. The only step to check is the unbounded quantifier step, as that is the only nontrivial step in the definition of the relativization $\phi \mapsto \phi^L$. Suppose $\phi$ is of the form $\exists x\ \psi(x)$. Then $\phi^L$ is of the form $\exists x\in L\ \psi(x)^L$. By inductive hypothesis, $(\psi(x)^L)^*$ is equivalent to $\psi(x)^\starl$. But then we are done, since $(x \in L)^*$ is equivalent to ``$X$ is a constructible membership code''.

So, using lemma \ref{lem2:star-trans}, we are done once we know that $\phi^L$ is equivalent to a $\Sigma_k$ formula when $\phi$ is $\Sigma_k$. This is also seen by induction. The base case, $k = 0$, is trivial because $\phi^L$ is just $\phi$ for a $\Sigma_0$-formula $\phi$. Now suppose $\phi$ is of the form $\exists x\ \psi(x)$ where $\psi$ is $\Pi_k$. By inductive hypothesis $\psi^L$ is equivalent to a $\Pi_k$-formula. So $\phi^L$, which is $\exists x \in L\ \psi(x)^L$ is equivalent to a $\Sigma_{k+1}$-formula, because ``$x \in L''$ is $\Sigma_1$, as desired.
\end{proof}

\begin{proposition}[Over $\GBCm + \ETR$] \label{prop2:sep-in-lfrak}
If the ground universe $(M,\Xcal)$ satisfies $\PnCA k$ then $\Lfrak(N,G)$ satisfies $\Sigma_k$-Separation, for $k \ge 1$.
\end{proposition}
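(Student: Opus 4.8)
The plan is to run the proof of proposition~\ref{prop2:sep-in-unroll} essentially verbatim, making two changes: use the translation $\phi \mapsto \phi^\starl$ in place of $\phi \mapsto \phi^*$, and verify at the end that the membership code produced is constructible. So fix a constructible membership code $A$, say representing the `set' $a$ of $\Lfrak(N,G)$, together with a $\Sigma_k$ formula $\phi(x)$ whose (suppressed) parameters are constructible membership codes. By the preceding lemma, $\phi^\starl$ is equivalent to a $\Sigma^1_k$ formula, so $\PnCA k$ in the ground universe $(M,\Xcal)$ lets us form the class
\[
X = \{ x \lol_A t_A : \phi^\starl(A \downarrow x) \}.
\]
Closing $X$ downward in $A$ and adjoining a new top element yields a membership code $B$; unwinding the definitions, $B$ represents the collection of those $z$ with $z \vin A$ for which $\Lfrak(N,G) \models \phi(z)$, and hence $B$ witnesses the relevant instance of $\Sigma_k$-Separation in $\Lfrak(N,G)$ --- provided $B$ is itself constructible.

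Showing that $B$ is constructible is the genuinely new ingredient here, and I expect it to be the main obstacle. The idea is the standard one for verifying Separation inside an $L$-like structure: show that the `set' $b$ represented by $B$ is first-order definable, with parameters, over an initial segment $L_\Gamma(N,G)$ of the constructible unrolled hierarchy, whence $b \in L_{\Gamma+1}(N,G)$ and so $B$ is constructible. Concretely, since $A$ is constructible we may fix a class well-order $\Gamma_0$ with $A \vin L_{\Gamma_0}(N,G)$, and then I want a class well-order $\Gamma \ge \Gamma_0$ such that for every $z \vin A$ the assertion $\Lfrak(N,G) \models \phi(z)$ is equivalent to its relativization to the membership codes appearing in $L_\Gamma(N,G)$; here we are reflecting the first-order formula $\phi$ over the (class-sized, first-order) structure $\Lfrak(N,G)$, so the $\Sigma^1_k$-complexity of $\phi^\starl$ over $(M,\Xcal)$ plays no role. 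Granting such a $\Gamma$, the code $B$ is obtained from $L_\Gamma(N,G)$ and the parameters by an elementary operation (as in propositions~\ref{prop2:unroll-choice} and \ref{prop2:sep-in-unroll}), so $b$ is definable over $L_\Gamma(N,G)$, giving $b \in L_{\Gamma+1}(N,G)$ as required.

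Finally, the existence of a reflecting $\Gamma$ is a Montague-style reflection statement for the cumulative, continuous, class-length hierarchy $\langle L_\Gamma(N,G) : \Gamma \text{ a class well-order}\rangle$, which is definable over $(M,\Xcal)$ from the parameters $N$ and $G$ precisely because $\GBCm + \ETR$ supplies the truth predicates used to define $L_\Gamma(N,G)$ in the first place. Reflection for a definable cumulative hierarchy of set length is already provable in $\GBcm$; the only subtlety is the class length of our hierarchy, and this is handled by the same device that lets $\ETR$ run $\Ord$-length recursions. Namely, for each subformula $\exists y\,\theta$ of $\phi$ one forms the class function sending a code to the least class well-order at which a witness in $\Lfrak(N,G)$ appears (using the canonical well-order on class well-orders, so no appeal to Global Choice is needed beyond what $\GBCm$ provides), and then closes off above $\Gamma_0$ under these finitely many functions by a recursion of length $\omega$, taking the $\omega$-fold concatenation as the desired $\Gamma$. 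I would also note that, exactly as in proposition~\ref{prop2:sep-in-unroll}, no use of Powerset is made beyond the base theory and the argument is uniform in $k$, so the statement holds over $\GBCm + \ETR$ for all $k \ge 1$ as claimed.
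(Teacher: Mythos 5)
Your construction of $B$ coincides with the paper's proof: by the preceding lemma $\phi^\starl$ is $\Sigma^1_k$, so $\PnCA k$ in $(M,\Xcal)$ yields the class of $x \lol_A t_A$ with $\phi^\starl(A \downarrow x)$, and closing downward and adding a top gives the code $B$. Where you go further is in insisting on a verification that $B$ is $(N,G)$-constructible; that is a reasonable instinct, and the reflection along the $L_\Gamma(N,G)$-hierarchy you want is essentially what the paper proves later as lemma \ref{lem2:sol-refl} (in the proof of Collection for $\Lfrak(N,G)$), from which constructibility of $B$ does follow.

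The gap is in how you produce the reflecting $\Gamma$. Your closure argument is exactly the naive argument that the paper explicitly flags as circular in the proof of lemma \ref{lem2:sol-refl}: at each step you must produce an actual class coding the least stage $\Gamma_{n+1}$ of the $L(N,G)$-hierarchy containing the required witnesses, and then assemble $\seq{\Gamma_n : n \in \omega}$ into a single class in order to form $L_{\sup_n \Gamma_n}(N,G)$. But ``least'' only determines the isomorphism type of that meta-ordinal; there is no ``canonical well-order on class well-orders'' handing you a distinguished class representative, and choosing one representative for each $n$ and collecting them is precisely an instance of ($\Sigma^1_k$-)Class Collection --- which is not available here, and indeed obtaining a fragment of Class Collection is the whole point of passing to $\Lcal(N,G)$. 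Nor is this ``the same device that lets $\ETR$ run $\Ord$-length recursions'': the step of your $\omega$-recursion is a $\Sigma^1_k$ operation on classes (it quantifies over constructible membership codes), not an elementary one, so $\ETR$ does not supply it, and the $\Sigma^1_k$-complexity of $\phi^\starl$ very much matters for what the ground universe can define. The paper repairs exactly this with the compliant membership codes trick: fix $C \subseteq \Ord^2$ isomorphic to $\in^N$, restrict attention to codes agreeing with $C$ on $\Ord$, and observe that the $L$-least copy of a given meta-ordinal inside a compliant $L_\Delta(N,G)$ does not depend on $\Delta$; this makes each $\Gamma_n$ uniquely definable, so the whole sequence can be obtained by a single instance of $\PnCA k$ rather than by Class Collection. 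With that device your constructibility verification goes through; without it, the step where you ``close off under these finitely many functions by a recursion of length $\omega$'' is unjustified.
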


As an immediate corollary we get the following.

\begin{corollary}[Over $\GBCm + \ETR$]
If the ground universe $(X,\Xcal)$ satisfies full Comprehension then $\Lfrak(N,G)$ satisfies Separation. \qed
\end{corollary}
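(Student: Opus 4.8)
The plan is to mimic the proof of proposition \ref{prop2:sep-in-unroll}, substituting the $\starl$-translation for the $*$-translation, and then to deal with the one feature that has no counterpart there: the separating subclass must turn out to be coded by a \emph{constructible} membership code. The first step is to pin down the semantics of the $\starl$-translation: for every first-order $\phi$ and all constructible membership codes $\bar E$,
\[
\Lfrak(N,G) \models \phi(\bar E) \quad\text{if and only if}\quad (M,\Xcal) \models \phi^\starl(\bar E).
\]
This holds because $\Lfrak(N,G)$ is just the inner model $L(N,G)$ computed inside the full unrolling $\Ufrak$, so the left side is equivalent to $\Ufrak \models \phi^{L(N,G)}(\bar E)$; the truth lemma underlying the $*$-translation (cf.\ the discussion before lemma \ref{lem2:star-trans}) turns that into $(M,\Xcal) \models (\phi^{L(N,G)})^*(\bar E)$; and the lemma just proved identifies $(\phi^{L(N,G)})^*$ with $\phi^\starl$.

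Next, given a $\Sigma_k$ formula $\phi(x)$ with (suppressed) parameters coded by constructible membership codes $\bar P$ and a constructible membership code $A$ representing a `set' $a$, I would invoke the displayed lemma to replace $\phi^\starl$ by an equivalent $\Sigma^1_k$ formula. Since over $\GBcm$ the schema $\PnCA k$ is equivalent to $\Sigma^1_k$-Comprehension, the class
\[
X = \{\, x \lol_A t_A : (M,\Xcal) \models \phi^\starl(A \downarrow x,\bar P) \,\}
\]
lies in $\Xcal$. Closing $X$ downward in $A$ and adjoining a new top element produces, by Elementary Comprehension, a membership code $B \in \Xcal$; exactly as in \ref{prop2:sep-in-unroll}, $B$ represents the `set' $b = \{\, x \in a : \phi(x) \,\}$ in $\Ufrak$. (Each $A \downarrow x$ is again constructible, being an $\vin$-member of the constructible code $A$.)

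The heart of the argument is then to check that $B$ is constructible, that is, $B \vin L_\Delta(N,G)$ for some well-order $\Delta$ --- only then does $B$ belong to $\Lfrak(N,G)$ and witness the required instance of $\Sigma_k$-Separation. The route I would take is the usual one for showing $L(A) \models$ Separation: find $\Delta$ with $A,\bar P \vin L_\Delta(N,G)$ large enough that $L_\Delta(N,G)$ decides $\phi$ on members of $a$ the same way $\Lfrak(N,G)$ does, so that $b$ is definable over $L_\Delta(N,G)$ and hence $b \vin L_{\Delta+1}(N,G)$. Producing such a reflecting $\Delta$ is where I expect the real difficulty: under the bare hypothesis $\PnCA k$ the unrolling $\Ufrak$ satisfies only $\Sigma_k$-Separation and not $\Sigma_k$-Collection, so the Montague--L\'evy closure argument cannot simply be run inside $\Ufrak$. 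The plan is instead to carry out the reflection in $(M,\Xcal)$, where $\ETR$ makes the whole hierarchy $\seq{L_\alpha(N,G)}$ of membership codes --- together with the truth predicate of each $L_\alpha(N,G)$ --- available, and to exploit the definable global well-order of $L(N,G)$ to name canonical witnesses; this is exactly the ``$L$ needs less than $V$'' phenomenon behind Marek's theorem that $\KM$ proves $L$ realizes $\KMCC$, and its careful execution with only $\ETR$ in hand rather than full Comprehension is the main obstacle. Everything else is a routine transcription of proposition \ref{prop2:sep-in-unroll}; the stated corollary then follows at once, since full Comprehension is $\PnCA k$ for every $k$ and full Separation is $\Sigma_k$-Separation for every $k$.
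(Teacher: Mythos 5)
Your opening moves coincide with the paper's own treatment: the stated corollary is presented there as an immediate instance of proposition \ref{prop2:sep-in-lfrak} (letting $k$ range over all standard naturals), and that proposition is proved exactly by your first two steps --- translate $\phi$ to $\phi^\starl$, note it is equivalent to a $\Sigma^1_k$-formula, apply $\Pi^1_k$-Comprehension to form $\{x \lol_A t_A : \phi^\starl(A\downarrow x)\}$, close downward in $A$ and add a top to obtain $B$. The divergence, and the gap, is your third step. You are right that, strictly speaking, one wants $B \vin L_\Delta(N,G)$ for some $\Delta$ in order for $B$ to name an element of $\Lfrak(N,G)$; the paper's short proof leaves this implicit, and the machinery that justifies it appears only later. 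But in your proposal that step is not proved: you state the reflection fact you need --- a level $L_\Delta(N,G)$ containing $A$ and the parameters which decides $\phi$ on the members of $a$ exactly as $\Lfrak(N,G)$ does --- and then explicitly defer its production as ``the main obstacle.'' Since everything else is a routine transcription of proposition \ref{prop2:sep-in-unroll}, this unexecuted reflection is the entire content of your added step, so the proposal as written does not establish the corollary.

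The sketch you offer for that reflection also misplaces the difficulty. The hypothesis here is full Comprehension, so the scarce resource is neither Comprehension nor $\ETR$; it is Class Collection: one must choose, for each $n$, a level $\Gamma_{n+1}$ of the $L(N,G)$-hierarchy absorbing the relevant witnesses over $L_{\Gamma_n}(N,G)$, and then assemble the $\omega$-many levels $\seq{\Gamma_n}$ into a single class so as to build $L$ at their supremum. Appealing to the definable well-order of $L(N,G)$ does not by itself accomplish this, because the objects being chosen are not elements of $L(N,G)$ but levels of the hierarchy --- class well-orders of $(M,\Xcal)$, available only up to isomorphism, with many literal copies. This is precisely what sublemma \ref{lem2:sol-refl} (inside the proof of proposition \ref{prop2:coll-in-l}) resolves: fix one coding $C$ of $\in^N$, call a membership code compliant when its restriction to the ordinals is $C$, and observe that compliant codes $L_\Delta(N,G)$ all compute the same literal copy of a given meta-ordinal (the least one in the $L$-order); each $\Gamma_n$ is then uniquely definable, so the sequence is coded by the single class $\{(n,a) : a \in \Gamma_n\}$ with no appeal to Class Collection. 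To complete your route you should either prove a reflection lemma along these lines or simply invoke \ref{lem2:sol-refl}; with it, $B$ is definable over a reflecting level and hence $B \vin L_{\Delta+1}(N,G)$, as you wanted.
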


\begin{proof}[Proof of proposition \ref{prop2:sep-in-lfrak}]
Fix a constructible membership code $A$ and a $\Sigma_k$-formula $\phi(x)$, possibly with (suppressed) parameters. Because $\phi^\starl$ is equivalent to a $\Sigma^1_k$-formula we may by $\Pi^1_k$-Comprehension form the collection of $x \lol_A t_A$ so that $\phi^\starl(A \downarrow x)$. Closing this collection downward in $A$ and adding a top element gives a membership code $B$ representing the subset of $A$ formed using $\phi$. 
\end{proof}

Altogether, we have seen the following.

\begin{theorem}
Let $(M,\Xcal) \models \GBCm + \ETR$, $N \in \Xcal$ be an inner model (of $\ZFCm$ or $\ZFC$, as appropriate) of $M$, and $G \in \Xcal$ a $\GBC$-amenable global well-order of $N$. 
\begin{itemize}
\item If $(M,\Xcal) \models \KM$ then $\Lfrak(N,G) \models \wZFCmi$.
\item If $(M,\Xcal) \models \KMm$ then $\Lfrak(N,G) \models \wZFCmr$.
\item If $(M,\Xcal) \models \GBC + \PnCA k$ then $\Lfrak(N,G) \models \wZFCmi(k)$.
\item If $(M,\Xcal) \models \GBCm + \PnCA k$ then $\Lfrak(N,G) \models \wZFCmr(k)$.
\end{itemize}
\end{theorem}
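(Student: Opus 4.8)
The plan is to read this theorem as the bookkeeping statement that collects the propositions already proved in this section, together with one genuinely new ingredient. First I would record what is already available for an arbitrary $(M,\Xcal)\models\GBCm+\ETR$ with $N\in\Xcal$ an inner model and $G\in\Xcal$ a global well-order of $N$: the proposition on the basic axioms gives that $\Lfrak(N,G)$ satisfies Extensionality, Pairing, Union, Infinity, Foundation, Choice, and ``$\kappa=\Ord^N$ is regular''; the corollary on inaccessibility gives, when the ground universe has Powerset (hence for $\KM$ and for $\GBC+\PnCA k$), that $\Lfrak(N,G)\models$``$\kappa$ is inaccessible''; proposition~\ref{prop2:sep-in-lfrak} gives, when the ground universe satisfies $\PnCA k$, that $\Lfrak(N,G)\models\Sigma_k$-Separation; and its corollary gives full Separation in $\Lfrak(N,G)$ when the ground universe satisfies full Comprehension (hence for $\KM$ and $\KMm$). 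Matching these against Definition~\ref{def2:so-many-theories}, for each of the four bullets the only clauses of the target theory not yet accounted for are ``$\kappa$ is the largest cardinal'' (all four cases) and, for the two theories $\wZFCmr$ and $\wZFCmr(k)$ that require the largest cardinal only to be regular, ``$H_\kappa$ exists''.

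So the heart of the proof is to establish those two clauses, and here the hypothesis that $G$ is a global well-order of $N$ does real work --- recall the remark after the basic-axioms proposition, where the parameterless $\Lfrak$ is exhibited failing ``$\kappa$ is the largest cardinal''. The idea I would use: since $\Lfrak(N,G)$ has transitive closures and can run $\Sigma_0$-recursions (by the same argument as proposition~\ref{prop2:unroll-etr-to-s0tr}), the well-order $G$, which has ordertype $\Ord^N$, collapses inside $\Lfrak(N,G)$ to a bijection between $\kappa$ and $N$; consequently $\Lfrak(N,G)\models V=L[g]$ for a class $g$ of ordinals below $\kappa$ coding $(N,G)$, and in particular $\Lfrak(N,G)$ carries a definable global well-order. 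Then I would run the standard condensation argument relative to this hierarchy, carried out as a transfinite recursion along class well-orders of $M$ (legitimate since $\ETR$ holds, exactly as elsewhere in the section): by induction on the index one shows that each $(N,G)$-constructible membership code $L_\Delta(N,G)$ admits an $(N,G)$-constructible bijection with an ordinal $\le\kappa$ of $\Lfrak(N,G)$, using at the limit step that the index well-order, being a subset of a transitive level $L_{\Delta'}(N,G)$ already known to have size $\le\kappa$, itself has size $\le\kappa$ there. Since by a Skolem-hull/condensation argument every set of $\Lfrak(N,G)$ appears at some $(N,G)$-constructible stage, this yields that every ordinal of $\Lfrak(N,G)$ above $\kappa$ injects into $\kappa$, i.e.\ ``$\kappa$ is the largest cardinal''; and it identifies $H_\kappa^{\Lfrak(N,G)}$ with the set represented by $L_{\Ord^N}(N,G)$, giving ``$H_\kappa$ exists''. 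Assembling these with the ingredients of the first paragraph gives all four bullets.

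The step I expect to be the main obstacle is precisely this condensation/cardinality argument. Two points need care: one must check that all the collapsing bijections produced are $(N,G)$-constructible, i.e.\ actually appear as membership codes in $\Lfrak(N,G)$ rather than merely existing in the ambient $(M,\Xcal)$ via Global Choice --- this is where arbitrary Global Choice well-orderings of $M$ are not good enough and one must route everything through the definable $L(N,G)$-order coming from $G$; and one must verify that the condensation genuinely brings an arbitrary set of $\Lfrak(N,G)$ down to a stage indexed by an $(N,G)$-constructible class well-order, so that the induction has purchase on it. Everything else --- matching clauses, absorbing class quantifiers via the $\starl$-translation, and the like --- is routine given the propositions already in place.
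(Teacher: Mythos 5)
Your first paragraph is exactly what the paper does: the theorem is stated with no further argument (``altogether, we have seen the following''), as a pure assembly of the proposition on the basic axioms, the corollary on inaccessibility, and proposition \ref{prop2:sep-in-lfrak} with its corollary. The trouble is with what you call the heart of the proof. The clause ``$\kappa$ is the largest cardinal'' is not provable in this setting, and the paper never claims it for $\Lfrak(N,G)$ --- indeed the remark you cite is a counterexample to precisely the statement you propose to establish, parameter and all. In that example one may take $N = M = L_\kappa$ and $G$ the canonical $L$-ordering of $L_\kappa$: it is definable over $N$, lies in $\Xcal$, and is a $\GBC$-amenable global well-order of $N$ of ordertype $\Ord^N$, yet since $(N,G)$ is itself constructible the hierarchy $L_\Gamma(N,G)$ coincides (up to isomorphism, from an early stage on) with the plain $L_\Gamma$, so $\Lfrak(N,G) \cong L_\alpha$, which has many cardinals above $\kappa$. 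So the hypothesis that $G$ is a global well-order of $N$ does not do the work you assign it: $G$ well-orders $N$, but it does not supply $(N,G)$-constructible collapses of the longer class well-orders of $M$ onto $\Ord^N$; those collapses come from Global Choice in $(M,\Xcal)$ --- that is how proposition \ref{prop2:unroll-kappa} works for the full unrolling $\Ufrak$ --- and they need not be constructible.

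The concrete failure in your condensation argument is the inductive claim that every level $L_\Delta(N,G)$ admits an $(N,G)$-constructible bijection with an ordinal $\le\kappa$: granting your own reduction that $\Lfrak(N,G)\models V=L[g]$ for a set $g\subseteq\kappa$, that claim would assert that $L[g]$ has no cardinals above $\kappa$, which is false already for $g=\emptyset$ (condensation yields GCH above $\kappa$, not a collapse of everything onto $\kappa$). The limit step is where it breaks down: the index $\Delta$ is an arbitrary class well-order of $M$ from $\Xcal$, and inside $\Lfrak(N,G)$ there is simply no reason it has size $\le\kappa$; the justification you give presupposes what is to be proved. The intended reading of the theorem --- made explicit by the paper's preceding remark --- is as a summary of the clauses actually verified: Extensionality, Pairing, Union, Infinity, Foundation, Choice, ``$\kappa$ is regular'' (inaccessible when the ground universe has Powerset), and ($\Sigma_k$-)Separation. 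Measured against definition \ref{def2:so-many-theories} the largest-cardinal clause can genuinely fail for $\Lfrak(N,G)$, and nothing downstream needs it: the passage to $(N,\Lcal(N,G))$ proceeds by cutting off at $\kappa=\Ord^N$, for which regularity of $\kappa$, the presence of $G$, and the Separation fragment are what matter.
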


Combined with theorem \ref{thm2:cutting-offs} we immediately get the following.

\begin{theorem}
Let $(M,\Xcal) \models \GBCm + \ETR$, $N \in \Xcal$ be an inner model (of $\ZFCm$ or $\ZFC$, as appropriate) of $M$, and $G \in \Xcal$ a $\GBC$-amenable global well-order of $N$. 
\begin{itemize}
\item If $(M,\Xcal) \models \KM$ then $(N,\Lcal(N,G)) \models \KM$.
\item If $(M,\Xcal) \models \KMm$ then $(N,\Lcal(N,G)) \models \KMm$.
\item If $(M,\Xcal) \models \GBC + \PnCA k$ then $(N,\Lcal(N,G)) \models \GBC + \PnCA k$.
\item If $(M,\Xcal) \models \GBCm + \PnCA k$ then $(N,\Lcal(N,G)) \models \GBCm + \PnCA k$.
\end{itemize}
\end{theorem}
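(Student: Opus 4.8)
The plan is to combine the preceding theorem --- which computes the first-order theory of $\Lfrak(N,G)$ from the theory of $(M,\Xcal)$ --- with the cutting off theorem~\ref{thm2:cutting-offs}, exactly as the bi-interpretability results of section~\ref{sec:cutting-off} were obtained by pairing the unrolling propositions with the cutting off propositions. Concretely: the preceding theorem gives, e.g., $\Lfrak(N,G)\models\wZFCmi$ when $(M,\Xcal)\models\KM$, and the argument of proposition~\ref{prop2:unroll-etr-to-s0tr} applies essentially verbatim --- one need only check that all the membership codes produced in that recursion stay $(N,G)$-constructible --- to give $\Lfrak(N,G)\models\Sigma_0$-Transfinite Recursion as well. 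So $\Lfrak(N,G)$ is a model of $\wZFCmi+\Sigma_0$-Transfinite Recursion (respectively $\wZFCmr$, $\wZFCmi(k)$, $\wZFCmr(k)$, each together with $\Sigma_0$-Transfinite Recursion), and~\ref{thm2:cutting-offs} then says that the cut off model built from $\Lfrak(N,G)$ satisfies $\KM$ (respectively $\KMm$, $\GBC+\PnCA k$, $\GBCm+\PnCA k$).

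It remains to identify this cut off model with $(N,\Lcal(N,G))$. The cut off model built from $\Lfrak(N,G)$ has as its first-order part $H_\kappa^{\Lfrak(N,G)}$, where $\kappa$ is the largest cardinal of $\Lfrak(N,G)$, which by the preceding theorem is precisely the ordinal represented by the membership code $\Ord+1$, i.e.\ $\Ord^N$; and it has as its second-order part the collection of subsets of $H_\kappa$ in $\Lfrak(N,G)$. I would verify, just as in the round-trip lemmas at the end of section~\ref{sec:cutting-off}: (i) the map $a\mapsto E_a$ sending $a\in N$ to its canonical membership code is an isomorphism of $N$ onto $H_\kappa^{\Lfrak(N,G)}$ --- for injectivity and preservation of $\in$ one invokes the rigidity facts (propositions~\ref{prop2:local-uniq} and~\ref{prop2:prtl-isom-agree}), for ``into'' one uses that $E_a$ is $(N,G)$-constructible because $\tc(\{a\})\in N$, and for ``onto'' one observes that every $(N,G)$-constructible membership code of hereditary size below $\kappa$ is isomorphic to some $E_a$ by taking a Mostowski collapse inside $N$ (available since $N\models\ZFCm$ and $N$ carries the global well-order $G$); and (ii) under this identification a subset of $H_\kappa$ in $\Lfrak(N,G)$ is exactly a class of $N$ coded in $\Lfrak(N,G)$, i.e.\ an element of $\Lcal(N,G)$, and the interpretation $\phi\mapsto\phi^I$ of the cutting off translates back to genuine satisfaction in $(N,\Lcal(N,G))$. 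Granting (i) and (ii), $(N,\Lcal(N,G))$ is definably isomorphic to the cut off model of $\Lfrak(N,G)$, and the four bullet points follow at once.

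I expect the main obstacle to be clause (i), and within it the surjectivity of $a\mapsto E_a$ onto $H_\kappa^{\Lfrak(N,G)}$: one must argue that $\kappa=\Ord^N$ really is the largest cardinal of $\Lfrak(N,G)$ --- this is where it is essential that $G$, which is a global well-order of $N$ and hence (after coding) a bijection of $\Ord$ with $V$ on the $N$-side, is coded in $\Lfrak(N,G)$, so that any longer well-order appearing in $\Lfrak(N,G)$ is put into bijection with $\kappa$, as in proposition~\ref{prop2:unroll-kappa} --- and that consequently every membership code in $\Lfrak(N,G)$ has hereditary size $\le\kappa$ and so Mostowski-collapses to a genuine set of $N$. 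The remaining verifications --- that the recursions of sections~\ref{sec:unrolling} and~\ref{sec2:so-l} never leave the $(N,G)$-constructible codes, and the bookkeeping for the interpretation $\phi\mapsto\phi^I$ --- are routine modifications of arguments already in place.
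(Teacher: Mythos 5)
Your overall route is exactly the paper's: the theorem is obtained by combining the preceding computation of the first-order theory of $\Lfrak(N,G)$ with the cutting off construction of theorem \ref{thm2:cutting-offs}, plus the (routine) round-trip identification of the cut off model with $(N,\Lcal(N,G))$. However, the step you yourself flag as the main obstacle is resolved incorrectly. You claim that $\kappa=\Ord^N$ is the largest cardinal of $\Lfrak(N,G)$ because $G$ is coded there, ``as in proposition \ref{prop2:unroll-kappa}.'' That argument does not transfer to the constructible unrolling: in proposition \ref{prop2:unroll-kappa} the bijection between $\pen(\Ord+1)$ and $\pen\Gamma$ is produced by Global Choice in the ambient model $(M,\Xcal)$, and there is no reason the resulting membership code should be $(N,G)$-constructible. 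Having $G$ coded only shows that the code for $N$ has cardinality $\kappa$ inside $\Lfrak(N,G)$; it does not exclude larger cardinals. The paper makes this point explicitly, with a counterexample: cutting off a class-forcing extension of $L_\alpha$ that collapses all cardinals above an inaccessible $\kappa$ gives $(M,\Xcal)\models\KMCC$ whose constructible unrolling is (isomorphic to) $L_\alpha$ --- and this persists for $\Lfrak(N,G)$ when $G$ is taken to be the constructible well-order --- so $\Lfrak(N,G)$ can have cardinals above $\kappa$ and indeed no largest cardinal at all. Your auxiliary claim that every membership code in $\Lfrak(N,G)$ has hereditary size $\le\kappa$ is likewise false (consider the code $L_\Gamma(N,G)$ for long $\Gamma$).

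Fortunately the claim is not needed, but this means theorem \ref{thm2:cutting-offs} cannot be quoted entirely verbatim, since the theories named there include the assertion that $\kappa$ is the largest cardinal: one cuts off at $\kappa=\Ord^N$ and reruns the verification. Global Choice in the cut off model comes directly from the fact that $G$ is coded in $\Lfrak(N,G)$ and hence is a class of $(N,\Lcal(N,G))$ which is a bijection $\Ord^N\to N$; no appeal to ``every well-order of $H_\kappa$ has order type of size $\kappa$'' is required. Class Replacement needs only the regularity of $\kappa$ in $\Lfrak(N,G)$, which is among the basic facts the paper does prove for the constructible unrolling; Elementary Comprehension and $\Pi^1_k$-Comprehension come from the $\Sigma_k$-Separation computation as in your plan. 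For the identification of the first-order part with $N$, one only needs that each $(N,G)$-constructible code which $\Lfrak(N,G)$ thinks is hereditarily of size below $\kappa$ collapses to an element of $N$ (using regularity of $\kappa$ and the $\GBC$-amenability of $G$ to $N$), not any bound on all codes. With those corrections your argument is the intended one.
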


This is most of theorem \ref{thm2:inner-model-rlzble}. We have not yet handled the cases of $\GBC + \ETR$ and $\GBCm + \ETR$, which will happen in the next chapter. It also remains to see that $(N,\Lcal(N,G))$ satisfies a fragment of Class Collection corresponding to the amount of Comprehension satisfied by the ground universe.

\begin{proposition} \label{prop2:coll-in-l}
If the ground universe $(M,\Xcal)$ satisfies $\Pi^1_k$-Comprehension, then $\Lfrak(N,G)$ satisfies $\Sigma_k$-Collection.
\end{proposition}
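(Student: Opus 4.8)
The plan is to imitate the proof of Proposition \ref{prop2:unroll-coll}, replacing every reference to Class Collection in the ground universe with the weaker resource we actually have at hand, namely $\Pi^1_k$-Comprehension plus Global Choice, and to replace every reference to ``all membership codes'' with ``all $(N,G)$-constructible membership codes.'' The key observation that makes this work is that in the constructible setting we do not need a genuine Class Collection principle to select witnesses: once we know that for each $x \lol_A t_A$ there \emph{exists} a constructible membership code $Y$ with $\phi^{\starl}(A\downarrow x, Y, P)$, we can pick the \emph{canonical} such $Y$, namely the one that appears first in the natural stratification of $\Lfrak(N,G)$ by levels $L_\Gamma(N,G)$ together with the $G$-least code at the least such level. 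This replaces the appeal to Class Collection with a definable choice, exactly as in the analogous situation for ordinary $L$ where Collection in $L$ is proved using condensation and the definable well-order rather than assuming it.

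\textbf{Key steps, in order.} First, I would fix a $\Sigma_k$-formula $\phi$ and a constructible membership code $A$ (with suppressed constructible parameter $P$), and suppose $\Lfrak(N,G) \models \forall x \in a\ \exists Y\ \phi(x,Y,P)$, i.e.\ in the ground universe $(M,\Xcal)$ that for every $x \lol_A t_A$ there is a constructible membership code $Y$ with $\phi^{\starl}(A\downarrow x, Y, P)$. Since $\phi^{\starl}$ is equivalent to a $\Sigma^1_k$-formula (by the lemma preceding Proposition \ref{prop2:sep-in-lfrak}), the predicate ``$Y$ is $(N,G)$-constructible and $\phi^{\starl}(A\downarrow x, Y, P)$ holds'' is $\Sigma^1_k$ with parameters. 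Second, for each $x \lol_A t_A$ I would define $Y_x$ to be the canonical witness: by $\ETR$ we can define the hierarchy $\Gamma \mapsto L_\Gamma(N,G)$, so ``$Y \vin L_\Gamma(N,G)$ for the $G$-least ordinal-code $\Gamma$ at which such a witness occurs, and $Y$ is the $G$-least node of $\pen L_\Gamma(N,G)$ yielding such a witness'' is an elementary-in-parameters description (using $G$ to linearly order everything) once we quantify over the existence of \emph{some} constructible witness, which is the $\Sigma^1_k$ predicate above. Hence the relation ``$x \mapsto Y_x$'' is $\Sigma^1_k$-definable, and by $\Pi^1_k$-Comprehension (equivalently $\Sigma^1_k$-Comprehension) the class $C = \{(x,y) : x \lol_A t_A \wedge y \in \dom Y_x \wedge \ldots\}$ coding the sequence $\seq{Y_x : x \lol_A t_A}$ exists in $\Xcal$. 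Third, I would glue the $Y_x$ together into a single constructible membership code $B$ exactly as in Proposition \ref{prop2:unroll-coll}: run the two-layer elementary transfinite recursion along $\pen A$, at each successor step computing the maximum initial partial isomorphism (Lemma \ref{lem2:max-prtl-isom}) between the next slice $(C)_{x}$ and the partial construction $B_\alpha$, appending the unaccounted-for nodes, and finally adjoining a top node wired to the collected penultimate level. One checks that $B$ is $(N,G)$-constructible because each $Y_x$ lies inside some $L_{\Gamma_x}(N,G)$, the ordinal-codes $\Gamma_x$ are bounded (their sup, a class well-order, is itself a legitimate ordinal-code since $\pen A$ is ``set-sized'' in the unrolling), and the glued code $B$ is then $\vin L_\Gamma(N,G)$ for $\Gamma$ slightly past that bound; and $B$ witnesses the relevant instance of $\Sigma_k$-Collection in $\Lfrak(N,G)$.

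\textbf{Main obstacle.} The delicate point is the third step's claim that $B$ remains \emph{constructible}, not just a membership code. In Proposition \ref{prop2:unroll-coll} this was automatic because we were unrolling with all membership codes; here we must verify that the glued-together code does not ``escape'' the constructible hierarchy. The argument will hinge on showing that the sequence $\seq{\Gamma_x : x \lol_A t_A}$ of levels at which canonical witnesses appear is coded by a single class (which it is, since it is $\Sigma^1_k$-definable and we have $\Pi^1_k$-Comprehension), so that its supremum $\Gamma^*$ is a class well-order and hence an ordinal-code in $\Lfrak(N,G)$; then each $Y_x \vin L_{\Gamma^*}(N,G)$, and the glued code $B$ is definable over $L_{\Gamma^*+1}(N,G)$ from $C$ via the recursion — but one must check $C$ itself, or rather its restriction relevant to the recursion, is accounted for inside a constructible level, which follows because $C$ is $\Sigma^1_k$-definable and, in the constructible unrolled model, $\Sigma_k$-definable classes over a level appear at the next level. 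I expect this bookkeeping — tracking which constructible level absorbs $C$ and the partial constructions $B_\alpha$ — to be the only part requiring genuine care; everything else is a routine transcription of the $*L$-translation and the gluing recursion already established in the excerpt.
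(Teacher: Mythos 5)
Your guiding instinct—replace the appeal to Class Collection by a definable choice of canonical witnesses along the constructible hierarchy—is exactly the spirit of the paper's argument, but your execution has a genuine gap at the one step where that choice must be made uniform. You select $Y_x$ as a witness appearing at ``the $G$-least ordinal-code $\Gamma$'' and then take the ``$G$-least'' witness there. But $G$ is a global well-order of the sets of $N$; it does not order the hyperclass of ordinal-codes or of membership codes, which are proper classes in $\Xcal$, and each meta-ordinal has hyperclass-many pairwise isomorphic codes. So ``the least level at which a witness occurs'' determines at best an ordertype, not a class: the actual class you extract as $Y_x$ depends on which representative level-code $L_\Delta(N,G)$ you look inside, since isomorphic level-codes have literally different nodes. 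Without a device making the extraction representative-independent, ``$(u,v)\in Y_x$'' is not given by a formula, so $\PnCA{k}$ cannot be invoked to form your coding class $C$, nor the class coding $\seq{\Gamma_x}$ whose supremum you need (and even setting this aside, the ``least level'' clause adds a universal quantifier over level-codes, so the relation is not $\Sigma^1_k$ as claimed). This is precisely the circularity the paper's proof isolates and repairs: it fixes once and for all a class $C_0\subseteq\Ord^2$ isomorphic to $\in^N$, calls a level-code \emph{compliant} when its restriction to $\Ord$ equals $C_0$, and observes that the $L$-least copy of a given meta-ordinal computed inside any compliant level-code is literally the same class, independently of the code. Some such trick is indispensable in your second step and in your ``main obstacle'' paragraph; the $G$-least hand-wave does not supply it.

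A secondary point: once the witness levels are uniformly bounded by a single meta-ordinal $\Gamma^*$, the gluing recursion is superfluous—inside $\Lfrak(N,G)$ the level $L_{\gamma^*}(N,G)$ itself already collects the witnesses. This is why the paper routes the proof through a $\Sigma_k$-reflection lemma along the $L(N,G)$-hierarchy (reflection implies collection, with the collecting set a level), which also dissolves your worry about whether the glued code $B$ is constructible. After repairing the canonicity issue your argument essentially collapses into the paper's; as written, the selection step is where it breaks.
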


As an immediate corollary we get the following.

\begin{corollary}[Over $\GBCm + \ETR$]
If the ground universe satisfies full Comprehension, then $\Lfrak(N,G)$ satisfies Collection. \qed
\end{corollary}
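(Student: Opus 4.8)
The plan is to adapt the classical argument that the constructible universe satisfies Replacement whenever $V$ does. Two extra ingredients make this work here: (i) $\Lfrak(N,G)$ carries a definable global well-order, since $G$ — and with it a well-order of each level $L_\Gamma(N,G)$ — is coded into $\Lfrak(N,G)$, so canonical witnesses can be selected without appeal to any choice principle; and (ii) the ambient model has Class Replacement together with $\Pi^1_k$-Comprehension, which jointly play the role of ``Replacement in $V$''. Together these replace the appeal to $\Sigma^1_k$-Class Collection that was used in the proof of Proposition \ref{prop2:unroll-coll}.

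Concretely, I would fix an instance: a $\Sigma_k$-formula $\phi$ and constructible membership codes $A$, $P$ with $\Lfrak(N,G) \models \forall X \vin A\ \exists Y\ \phi(X,Y,P)$. Unravelling the $\starl$-translation (and the relativization to $L$ baked into it), this asserts that for every $x \lol_A t_A$ there is a constructible membership code $Y$ with $\phi^{\starl}(A \downarrow x, Y, P)$. Every such $Y$ appears, up to isomorphism, as $L_\Gamma(N,G) \downarrow y$ for some well-order $\Gamma$ and some $y \lol t_{L_\Gamma(N,G)}$; by the comparability of class well-orders (available over $\GBCm + \ETR$) and the fact that a witness appearing in $L_\Gamma(N,G)$ reappears, up to isomorphism, in $L_{\Gamma'}(N,G)$ for every longer $\Gamma'$, there is for each $x$ a canonical witnessing level $\Gamma_x$: the least-order-type well-order for which $L_{\Gamma_x}(N,G)$ already contains such a witness.

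The heart of the argument is to produce a single well-order $\Gamma^*$ bounding all the $\Gamma_x$ simultaneously, so that $L_{\Gamma^*}(N,G)$ contains a $\phi^{\starl}$-witness for every $x \lol_A t_A$; this is where $\Pi^1_k$-Comprehension and Class Replacement enter, mirroring the classical step of taking the supremum of the ranks at which witnesses first appear and invoking Replacement. Given $\Gamma^*$, the remainder is routine. Using $\Sigma^1_k$-Comprehension (equivalent to $\Pi^1_k$-Comprehension over $\GBcm$) one forms the set $R = \{(x,y) : x \lol_A t_A,\ y \lol t_{L_{\Gamma^*}(N,G)},\ \phi^{\starl}(A \downarrow x, L_{\Gamma^*}(N,G) \downarrow y, P)\}$ — a subclass of a product of two sets, hence a set — and then Elementary Comprehension, using the definable well-order on $L_{\Gamma^*}(N,G)$, yields a class $C$ whose slice $(C)_x$ is $L_{\Gamma^*}(N,G) \downarrow y$ for the least such $y$. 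Gluing the $(C)_x$ together by the nested Elementary Transfinite Recursion of Proposition \ref{prop2:unroll-coll}, and checking — as in Proposition \ref{prop2:sep-in-lfrak} — that one never leaves the constructible membership codes, produces a membership code $B$ (indeed $L_{\Gamma^*}(N,G)$ itself already works) witnessing the instance in $\Lfrak(N,G)$.

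I expect the main obstacle to be the construction of $\Gamma^*$ together with the accompanying complexity bookkeeping. Naively, ``$\Gamma$ is the canonical witnessing level $\Gamma_x$'' is the conjunction of a $\Sigma^1_k$ condition (a witness occurs in $L_\Gamma(N,G)$) with a $\Pi^1_k$ condition (no proper initial segment of $\Gamma$ carries a witness), and the class packaging the assignment $x \mapsto \Gamma_x$ — to which Class Replacement is to be applied to obtain the bound — must be arranged to stay within the reach of $\Pi^1_k$-Comprehension. The point that needs genuine care is keeping this packaging at the right level; I would handle it by exploiting that initial segments of $\Gamma$ are indexed by $\dom \Gamma$, so the ``no initial segment'' clause is governed by a first-order, not a second-order, quantifier. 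Everything else is bookkeeping of the kind already carried out in Propositions \ref{prop2:unroll-coll} and \ref{prop2:sep-in-lfrak}.
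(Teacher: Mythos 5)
Your overall strategy --- witnesses appear at levels of the $L(N,G)$-hierarchy, bound the levels, and then a single level serves as the collecting set --- is the same in spirit as the paper's (the corollary is immediate from proposition \ref{prop2:coll-in-l}, which is proved via the reflection sublemma \ref{lem2:sol-refl}, i.e.\ reflection implies collection). But the step you defer, the construction of $\Gamma^*$, is exactly the heart of the matter, and the tools you name for it cannot supply it. The assignment $x \mapsto \Gamma_x$ sends elements of $\pen A$ to (isomorphism types of) class well-orders, that is, to proper-class-sized objects. Class Replacement only says that the image of a set under a class function with \emph{set} values is a set; it cannot ``take the supremum'' of a family of meta-ordinals. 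To bound the $\Gamma_x$ one must first package the family into a single class, and doing that by choosing, for each $x$, some code for $\Gamma_x$ is precisely an instance of Class Collection --- the very principle this construction is being used to recover in the constructible submodel, and which is \emph{not} available in the ground universe (even full Comprehension does not yield it, by Gitman--Hamkins, so working at $k=\omega$ does not dissolve the problem). The paper flags this circularity explicitly in the proof of lemma \ref{lem2:sol-refl}: the naive argument ``appeals to Class Collection to pick out (membership codes for) these $\gamma_n$.'' Your closing remarks about complexity bookkeeping misdiagnose the obstacle: it is not the quantifier complexity of the least-witnessing-level predicate, but the absence of a uniform choice of representatives for proper-class objects.

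The paper's fix is the ``compliant membership code'' device: fix once and for all a relation $C \subseteq \Ord^2$ isomorphic to $\in^N$, call a membership code compliant if its restriction to $\Ord$ is $C$, and observe that the $<_L$-least copy of a given meta-ordinal inside a compliant $L_\Delta(N,G)$ does not depend on $\Delta$. This gives a canonical, uniformly definable representative of each witnessing level, so the whole family can be coded as a single class $Z=\{(x,a): a \in \Gamma_x\}$ by Comprehension alone, from which one builds a meta-ordinal of ordertype $\sup_x \Gamma_x$ and takes $L$ of that level as the collecting set. Your point (i), that $\Lfrak(N,G)$ carries a definable well-order, gestures in this direction but does not do the job by itself: the $L$-least copy of a meta-ordinal inside $L_\Delta(N,G)$ a priori depends on the code $\Delta$, and removing that dependence is exactly what compliance is for. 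Once $\Gamma^*$ exists, your remaining steps are fine (indeed simpler than you make them: as in the paper, $L_{\Gamma^*}(N,G)$ itself is the collecting set, so no gluing via proposition \ref{prop2:unroll-coll} is needed), but without some such canonicalization the proposal has a genuine gap at its central step.
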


\begin{proof}[Proof of proposition \ref{prop2:coll-in-l}]
The key step in the argument for this proof is encapsulated by the following lemma.

\begin{sublemma}[Over $\GBCm + \PnCA k$] \label{lem2:sol-refl}
The $(N,G)$-constructible unrolled model $\Lfrak(N,G)$ satisfies $\Sigma_k$-reflection with respect to the $L(N,G)$-hierarchy: For any $a \in \Lfrak(N,G)$ and any $\Sigma_k$-formula $\phi(x,p)$ there is an ordinal $\gamma$ so that $a \in L_\gamma(N,G)^{\Lfrak(N,G)}$ and for all $x \in L_\gamma(N,G)^{\Lfrak(N,G)}$ we have $L_\gamma(N,G)^{\Lfrak(N,G)} \models \phi(x,a)$ if and only if $\Lfrak(N,G) \models \phi(x,a)$. Moreover, such $\gamma$ are unbounded in the ordinals of $\Lfrak(N,G)$.
\end{sublemma}

Before proving the lemma, let us see how it lets us prove the proposition. This is the familiar argument that reflection implies collection.

Work in $\Lfrak(N,G)$. Fix $a$ and assume that for every $x \in a$ there is $y$ so that $\phi(x,y,p)$, where $p$ is some parameter. We want to find a $b$ so that for all $x \in a$ there is $y \in b$ so that $\phi(x,y,p)$. By the lemma, there is $\gamma$ so that $\phi$ reflects to $L_\gamma(N,G) \ni a$. Hence, for every $x \in a$ there is $y \in L_\gamma(N,G)$ so that $\phi(x,y,p)$. The desired $b$ is then $L_\gamma(N,G)$.

\begin{proof}[Proof of lemma \ref{lem2:sol-refl}] 
I will give a false proof, indicate the error therein, and then explain how to fix the error, giving a proof that works.

The first way one might attempt to prove this is to work in $\Lfrak(N,G)$ and try to find $\gamma$ so that $L_\gamma(N,G)$ is closed under witnesses to existential subformulae of $\phi$. We start with $\gamma_0$ so that $p \in L_{\gamma_0}(N,G)$. Then, define $\gamma_{n+1}$ to be the least ordinal $\eta$ so that $L_\eta(N,G)$ is closed under witnesses to existential subformulae of $\phi$ with parameters in $L_\gamma(N,G)$. Such $\gamma_{n+1}$ exists because the ground universe satisfies $\Pi^1_k$-Comprehension and $\phi^\starl$ is $\Sigma^1_k$. We then define $\gamma$ to be the supremum of the sequence $\seq{\gamma_n}$. 

The problem with this argument is that it is circular; Collection says we can collect the $\gamma_n$, but that is what we are trying to prove! If we look from the perspective of the $\KM$ model, we are appealing to Class Collection to pick out (membership codes for) these $\gamma_n$. If we could find a way to pick these $\gamma_n$ without using Class Collection, then the argument would go through. We would get a sequence of meta-ordinals $\seq{\Gamma_n}$ coded in $\Xcal$ and from that be able to construct a membership code for $L_{\sup_n \Gamma_n}(N,G)$.

Fix a relation $C \subseteq \Ord^2$ which is isomorphic to $\in^N$. Call a membership code {\em compliant} if $E \rest \Ord = C$. Let $\Gamma$ be a meta-ordinal. I claim that in $(M,\Xcal)$ we can uniquely pick an isomorphic copy of $\Gamma$. First, note that if $\Gamma \vin L_\Delta(N,G)$ then in $L_\Delta(N,G)$ there is a least $\Gamma_\Delta \cong \Gamma$. That is, $\Gamma_\Delta = L_\Delta(N,G) \downarrow g$ for some $g \lol t_{L_\Delta(N,G)}$ where this $g$ is picked according to the $L$-order for $L_\Delta(N,G)$. The key observation now is that if $L_\Delta(N,G)$ and $L_{\Delta'}(N,G)$ are compliant, then $\Gamma_\Delta = \Gamma_{\Delta'}$. This is because they agree on how they code $N$ but elements of $\Gamma_\Delta$ and $\Gamma_{\Delta'}$ are pairs of elements of the code of $N$ in $L_\Delta(N,G)$ and $L_{\Delta'}(N,G)$, respectively. This yields a way to uniquely pick an isomorphic copy of $\Gamma$: pick any $\Delta$ so that $\Gamma \vin L_\Delta(N,G)$ and then look at $\Gamma_\Delta$. This is well-defined.

We can use this now to pick the $\Gamma_n$ for the above argument. Since we can do this for all $n$, we can code the sequence $\seq{\Gamma_n}$ by the class $Z = \{ (n,a) : a \in \Gamma_n \}$. From this produce a meta-ordinal $\Gamma$ of ordertype $\sup_n \Gamma_n$ and build $L_\Gamma(N,G)$. Moving to the first-order model, $L_\Gamma(N,G)$ is a representative of the equivalence class for the desired $L_\gamma(N,G)$.

Finally, note that the moreover is immediate as to ensure that $\gamma > \alpha$ for some ordinal $\alpha$ one can run the argument using $a' = (a,\alpha)$ rather than $a$.
\end{proof} \let\qed\relax
\end{proof}

The same argument shows that we can get a fragment of Class Collection starting from $\ETR$, except using the $\star$-translation of definition \ref{def2:etr-star-trans} rather than the $*$-translation.\footnote{See subsection \ref{subsec2:unroll-in-etr} for discussion of why we need the alternate translation.} 
In fact, the argument is a little easier in this case since we only have to worry about $\Sigma_0$-formulae and if $\phi$ is $\Sigma_0$ then $\phi^L$ is just $\phi$. 

\begin{corollary}[Over $\GBCm + \ETR$]
The $(N,G)$-constructible unrolling $\Lfrak(N,G)$ satisfies Elementary Class Collection $\ECC$. \qed
\end{corollary}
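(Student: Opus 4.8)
The statement to prove is: over $\GBCm + \ETR$, the $(N,G)$-constructible unrolling $\Lfrak(N,G)$ satisfies Elementary Class Collection $\ECC$. Here is my proof plan.

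\medskip

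The plan is to mimic the proof of Proposition \ref{prop2:coll-in-l} (that $\Pi^1_k$-Comprehension in the ground universe yields $\Sigma_k$-Collection in $\Lfrak(N,G)$) at the level $k=0$, but using the $\star$-translation of Definition \ref{def2:etr-star-trans} in place of the $*$-translation, since over $\GBCm + \ETR$ we cannot afford to absorb class quantifiers arising from $\vin$ and $\cong$ into a front block of class quantifiers. The key point, as remarked in the paper, is that for a $\Sigma_0$-formula $\phi$ one has $\phi^L \equiv \phi$, so that $\phi^{\starl}$ reduces to the $\star$-translation $\phi^\star$ of a $\Sigma_0$-formula, which by Definition \ref{def2:etr-star-trans} is a $\Sigma^0_\omega$-formula in a single membership-code parameter $P$ built by an instance of $\ETR$. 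Thus the relevant reflection and collection arguments need only Elementary Comprehension and $\ETR$, not any fragment of Comprehension above $\PnCA 0$.

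\medskip

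First I would establish the analogue of Sublemma \ref{lem2:sol-refl} at level $\Sigma_0$: the model $\Lfrak(N,G)$ satisfies $\Sigma_0$-reflection with respect to the $L(N,G)$-hierarchy, and moreover the reflecting ordinals $\gamma$ are unbounded. The proof follows the corrected argument in Lemma \ref{lem2:sol-refl}. Working in $(M,\Xcal)$, fix a compliant relation $C \subseteq \Ord^2$ isomorphic to $\in^N$ and use the ``compliant membership code'' device to canonically pick an isomorphic copy of any meta-ordinal $\Gamma$ appearing in some $L_\Delta(N,G)$; this uses only that the $L$-order is definable and Elementary Comprehension. Given $a$ and a $\Sigma_0$-formula $\phi(x,p)$, define $\Gamma_0$ with $p, a \in L_{\Gamma_0}(N,G)$ and then $\Gamma_{n+1}$ the least meta-ordinal $\eta$ such that $L_\eta(N,G)$ is closed under witnesses to bounded existential subformulae of $\phi$ with parameters in $L_{\Gamma_n}(N,G)$—here leastness makes the choice canonical, so no appeal to Class Collection is hidden, and only Elementary Comprehension is needed to verify $L_\eta(N,G)$ has the closure property since $\phi^\star$ is first-order. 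Collect the sequence $\seq{\Gamma_n}$ via the class $Z = \{(n,b) : b \in \Gamma_n\}$ (using the canonical copies), form $\Gamma$ of ordertype $\sup_n \Gamma_n$, and build $L_\Gamma(N,G)$; moving to the quotient, this represents the desired reflecting $L_\gamma(N,G)$. The ``moreover'' is handled by replacing $a$ with $(a,\alpha)$.

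\medskip

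With $\Sigma_0$-reflection in hand, I would then run the standard reflection-implies-collection argument, exactly as in the proof of Proposition \ref{prop2:coll-in-l}, to conclude $\Lfrak(N,G) \models \Sigma_0$-Collection. Recalling that $\ECC = \SnCC 0$ is Elementary Class Collection for $(N,\Lcal(N,G))$, and that the interpretation moves $\Sigma^1_0$-Class Collection for $(N,\Lcal(N,G))$ to $\Sigma_0$-Collection for $\Lfrak(N,G)$ (the $k=0$ analogue of the translation used in Proposition \ref{prop2:unroll-coll} and in the bi-interpretability setup), this gives $(N,\Lcal(N,G)) \models \ECC$. The main obstacle, and the only genuinely delicate point, is the one already flagged in Lemma \ref{lem2:sol-refl}: ensuring the construction of the sequence $\seq{\Gamma_n}$ is uniform and canonical so that it is coded by a single class in $\Xcal$ without covertly invoking Class Collection—this is precisely where the compliant-membership-code device is essential, and it goes through because two compliant codes $L_\Delta(N,G)$, $L_{\Delta'}(N,G)$ agree on how they code $N$ and hence on $\Gamma_\Delta = \Gamma_{\Delta'}$. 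Everything else is a routine adaptation of arguments already in Sections \ref{sec:unrolling} and \ref{sec2:so-l}.
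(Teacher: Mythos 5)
Your proposal takes essentially the same route as the paper: the paper's entire proof of this corollary is the preceding remark that the argument of proposition \ref{prop2:coll-in-l} (reflection along the $L(N,G)$-hierarchy via sublemma \ref{lem2:sol-refl}, with the compliant-code device to canonically pick copies of meta-ordinals and so avoid a hidden appeal to Class Collection) goes through at level $k=0$ using the $\star$-translation in place of the $*$-translation, and is even easier since $\phi^L$ is just $\phi$ for $\Sigma_0$-formulae --- which is exactly what you spell out. The only small infelicity is your phrase ``closed under witnesses to bounded existential subformulae'': what the stages of the recursion must be closed under is witnesses for the \emph{unbounded} existential of the Collection instance (i.e.\ one reflects $\exists y\,\phi$), as in the paper's sublemma, and this is clearly what you intend.
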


This is the final step needed to prove corollary \ref{cor2:get-a-plus}, which I restate below for the convenience of the reader who does not want to flip back a few pages.

\begin{theorem*}
Let $(M,\Xcal) \models \GBCm + \ETR$. Then there is a definable hyperclass $\Ycal \subseteq \Xcal$ so that the following.
\begin{itemize}
\item If $(M,\Xcal) \models \KM$ then $(M,\Ycal) \models \KMCC$.
\item If $(M,\Xcal) \models \KMm$ then $(M,\Ycal) \models \KMCCm$.
\item If $(M,\Xcal) \models \GBC + \PnCA k$ then $(M,\Ycal) \models \GBC + \PnCAp k$, for $k \ge 1$.
\item If $(M,\Xcal) \models \GBCm + \PnCA k$ then $(M,\Ycal) \models \GBCm + \PnCAp k$, for $k \ge 1$.
\item If $(M,\Xcal) \models \GBC + \ETR$ then $(M,\Ycal) \models \GBC + \ETR + \ECC$.
\item If $(M,\Xcal) \models \GBCm + \ETR$ then $(M,\Ycal) \models \GBCm + \ETR + \ECC$.
\end{itemize}
\end{theorem*}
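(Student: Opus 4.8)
The plan is to specialize the machinery of this section to the case $N = M$. Since $\GBCm$ includes Global Choice, $\Xcal$ already contains a global well-order of $M$ of ordertype $\Ord$; arguing as in lemma \ref{lem2:gbc-amen-gwo} (and as in the proof of theorem \ref{thm1:gbc-cons-zfc}), by passing to a suitable code of it we may fix $G \in \Xcal$ which is a $\GBC$-amenable global well-order of $M$ with the additional feature that $M \models V = L[G]$, i.e.\ every set of $M$ is decodable from a set-sized initial segment of $G$. Set $\Ycal = \Lcal(M,G)$, the hyperclass of classes of $M$ coded in the $(M,G)$-constructible unrolling $\Lfrak(M,G)$. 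Then $\Ycal$ is second-order definable over $(M,\Xcal)$ from the parameter $G$: recognizing the codes $L_\Gamma(M,G)$ uses only $\ETR$, and $\vin$ is $\Delta^1_1$ by lemma \ref{lem2:relns-are-delta11}. Moreover $\Ycal \subseteq \Xcal$, since each $A \in \Ycal$ is given by Elementary Comprehension as $A = \{ x \in M : E_x \vin E \}$ from an $(M,G)$-constructible membership code $E \in \Xcal$; in particular $(M,\Ycal)$ is a $V$-submodel of $(M,\Xcal)$.

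Next I would identify $(M,\Ycal)$ with the cutting-off of $\Lfrak(M,G)$ at the ordinal $\kappa = \Ord^M$ (the ordinal of $\Lfrak(M,G)$ represented by the membership code $\Ord + 1$). Because $M \models V = L[G]$, the membership code $L_\kappa(M,G)$ represents an isomorphic copy of $M$: every set of $M$ lies in some $L_\alpha[G]$ with $\alpha < \Ord^M$, so its code appears below $L_\kappa(M,G)$, and the $\vin$-structure among these codes matches $\in^M$. Hence $H_\kappa^{\Lfrak(M,G)} = L_\kappa(M,G)^{\Lfrak(M,G)} \cong M$, and the subclasses of it lying in $\Lfrak(M,G)$ are exactly the members of $\Ycal$. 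Note that although $\kappa$ need not be the largest cardinal of $\Lfrak(M,G)$ (as the remark after the regularity proposition observes), the cutting-off interpretation $\phi \mapsto \phi^I$ of section \ref{sec:cutting-off} still applies with $\kappa$ in place of the largest cardinal: $\kappa$ is regular in $\Lfrak(M,G)$, $H_\kappa = L_\kappa(M,G)$ exists there and carries the canonical $L(M,G)$-ordering of ordertype $\kappa$, so Global Choice for the cut-off model comes directly from that ordering. Thus the theory of $(M,\Ycal)$ is obtained from the theory of $\Lfrak(M,G)$ via the propositions of section \ref{sec:cutting-off}.

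It then remains only to read off the theory of $\Lfrak(M,G)$, which is what the earlier parts of this section compute. Over $\GBCm + \ETR$, $\Lfrak(M,G)$ always satisfies the basic axioms, $\kappa$ regular, $\Sigma_0$-Separation, $\Sigma_0$-Transfinite Recursion, and $\ECC$, and if $(M,\Xcal)$ additionally satisfies Powerset then $\kappa$ is inaccessible. If $(M,\Xcal) \models \PnCA k$ then $\Lfrak(M,G)$ satisfies $\Sigma_k$-Separation (proposition \ref{prop2:sep-in-lfrak}) and $\Sigma_k$-Collection (proposition \ref{prop2:coll-in-l}, via the reflection of sublemma \ref{lem2:sol-refl}), so $\Lfrak(M,G) \models \wZFCmr(k) + \Sigma_0$-Transfinite Recursion, or $\wZFCmi(k) + \Sigma_0$-Transfinite Recursion with Powerset. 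If $(M,\Xcal) \models \KM$ then $\Lfrak(M,G)$ satisfies full Separation and Collection, hence $\wZFCmr + \Sigma_0$-Transfinite Recursion, or $\wZFCmi + \Sigma_0$-Transfinite Recursion with Powerset. Feeding each of these into theorem \ref{thm2:cutting-offs} and the surrounding propositions of section \ref{sec:cutting-off} yields the six bullet points in turn.

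The assembly above is routine; the genuine content lies in the lemmas already established in this section, above all in sublemma \ref{lem2:sol-refl}, whose ``compliant membership code'' device is what permits $\Sigma_k$-reflection along the $L(M,G)$-hierarchy without a prior appeal to Class Collection. The one point in the assembly itself that needs care --- and which dictates the choice of $G$ --- is that the first-order part of the cutting-off of $\Lfrak(M,G)$ be an isomorphic copy of $M$ rather than a proper inner model of it; this is precisely why we insist that $G$ be chosen with $M \models V = L[G]$, and it is also what makes $\Ycal$ a $V$-submodel of $(M,\Xcal)$.
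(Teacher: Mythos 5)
Your overall route is the paper's own: specialize the section's construction to $N = M$, take $\Ycal = \Lcal(M,G)$ for a $\GBC$-amenable global well-order $G \in \Xcal$, identify $(M,\Ycal)$ with the cutting off of $\Lfrak(M,G)$ at $\kappa = \Ord^M$, and read off its theory from propositions \ref{prop2:sep-in-lfrak} and \ref{prop2:coll-in-l} (via sublemma \ref{lem2:sol-refl}) together with the cutting-off results of section \ref{sec:cutting-off}. Your explicit remark that ``$\kappa$ is the largest cardinal'' is only needed for Global Choice in the cut-off, and that this can be supplied directly (indeed simply by $G$ itself, which is coded in $\Lfrak(M,G)$), is a fair and even careful handling of a point the paper glosses.

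The one misstep is the claim that $H_\kappa^{\Lfrak(M,G)} = L_\kappa(M,G)^{\Lfrak(M,G)} \cong M$, and the attendant insistence that $G$ be chosen with $M \models V = L[G]$ to make the first-order part of the cut-off come out as $M$. Neither is right as stated: the hierarchy $L_\Gamma(M,G)$ is built over the parameter $(M,G)$, so already $L_1(M,G)$ contains (codes for) $M$, $G$, and $\Ord^M$ as elements; hence $L_\kappa(M,G)$ is not $H_\kappa$ and certainly not a copy of $M$, and the condition $V = L[G]$ does no work here --- every set of $M$ appears at the bottom of the $L(M,G)$-hierarchy no matter which global well-order $G$ you take (your worry would be apt only for a predicate-style relativization $L[G]$ that does not include $M$ as a parameter, which is not the construction of this section). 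What you actually need, and what is true, is only that $H_\kappa^{\Lfrak(M,G)} \cong M$ with the subsets of it in $\Lfrak(M,G)$ corresponding to $\Lcal(M,G)$; this is proved as in the paper's verification that cutting off the unrolling recovers the original model: if a constructible membership code has hereditary size $< \kappa$ in $\Lfrak(M,G)$, the internal bijection with an ordinal $< \kappa$ corresponds (by lemma \ref{lem2:fns-mem-codes}) to a class function from a set onto the relevant part of the code, which is therefore set-sized by Class Replacement, and its Mostowski collapse in $M$ identifies the code with $E_x$ for some $x \in M$; conversely each $x \in M$ has its canonical code already in $L_1(M,G)$, of hereditary size $< \kappa$ there since $G$ well-orders $\tc(\{x\})$ in ordertype $< \Ord^M$. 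With that repair (and dropping the superfluous $V = L[G]$ requirement), your assembly goes through and coincides with the paper's proof.
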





\section{\texorpdfstring{The smallest heights of transitive models and $\beta$-models}{The smallest heights of transitive models and beta-models}}

As an application of the results in this chapter I would like to investigate the smallest heights of transitive and $\beta$-models of $\GBC + \PnCA k$. This builds upon prior work of Marek and Mostowski.

\begin{theorem}[Marek and Mostowski \cite{marek-mostowski1975}]
Assume there is a $\beta$-model of $\KM$. Let $\beta_\omega$ be the least height of a $\beta$-model of $\KM$ and let $\tau_\omega$ be the least height of a transitive model of $\KM$. Then
\begin{itemize}
\item $\tau_\omega < \beta_\omega$; and moreover
\item $L_{\beta_\omega} \models \tau_\omega$ is countable.
\end{itemize}
\end{theorem}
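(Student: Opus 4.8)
The plan is to prove the two claims in parallel, using the unrolling construction together with an absoluteness/condensation argument. First I would observe that, by the bi-interpretability results of this chapter (theorem~\ref{thm2:bi-int}), a $\beta$-model $(M,\Xcal) \models \KM$ of height $\beta_\omega$ unrolls to a well-founded model $\Ufrak$ of $\wZFCmi + \Sigma_0$-Transfinite Recursion whose largest cardinal is inaccessible (proposition~\ref{prop2:beta-unroll-trans} gives well-foundedness of $\Ufrak$ from the $\beta$-model hypothesis). Taking the transitive collapse, $\Ufrak$ is (isomorphic to) a transitive set, and since its largest cardinal $\kappa$ is inaccessible in $\Ufrak$, the cut off of $\Ufrak$ recovers $(M,\Xcal)$ and $\Ufrak$ has height some ordinal $\delta$ with $\Ord^M = \kappa < \delta$. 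The point is that $\Ufrak \models V_\kappa \models \ZFC$ and much more: $\Ufrak$ thinks $\kappa$ is inaccessible and $V_{\kappa+1}^{\Ufrak}$ together with $\kappa$ gives back a $\KM$-realization. So inside $\Ufrak$, at some level well above $\kappa$, one can verify ``there is a transitive model of $\KM$'' -- indeed $\Ufrak$ itself knows of such a model, or can build one via its satisfaction machinery.

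Next I would pass to $L$. Since $\Ufrak$ is a transitive model of enough set theory, $L^{\Ufrak} = L_\delta$ for $\delta = \Ord^{\Ufrak}$, and $L_\delta \models$ ``there is a transitive model of $\KM$'' provided $\delta$ is large enough -- which it is, because the statement ``$X$ is a transitive model of $\KM$'' is absolute between transitive models containing $X$, and $\Ufrak$ (hence $L_\delta$, by reflecting the relevant $\Sigma_0$-facts witnessed by a genuine transitive $\KM$-model living inside $L_\delta$) verifies it. The key structural input is that $\KM$ has consistency strength well below an inaccessible, so that the least transitive model of $\KM$, of height $\tau_\omega$, is a genuine set that appears in $L_{\beta_\omega}$: since $\beta_\omega$ is the height of a $\beta$-model of $\KM$, and $L_{\beta_\omega}$ correctly computes well-foundedness up to $\beta_\omega$, $L_{\beta_\omega}$ sees a real transitive model of $\KM$, so $\tau_\omega < \beta_\omega$ follows from the leastness of $\tau_\omega$ among heights of \emph{all} transitive models (not just those in $L_{\beta_\omega}$). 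I would be careful here: I must rule out $\tau_\omega = \beta_\omega$, which follows because a transitive model of height $\tau_\omega$ need not be a $\beta$-model, and in fact the least transitive model is \emph{not} a $\beta$-model -- its class well-orders can be ill-founded -- so $\tau_\omega \ne \beta_\omega$, giving the strict inequality.

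For the ``moreover'', I would argue that $L_{\beta_\omega}$ not only contains a transitive $\KM$-model but can carry out the search for the \emph{least} one. Working inside $L_{\beta_\omega}$, the statement ``$\alpha$ is the least ordinal such that $L_\alpha \models \KM$'' -- or more precisely ``there is a transitive model of $\KM$ of height $\alpha$ and none of smaller height'' -- is expressible, and $L_{\beta_\omega}$ computes it correctly because it is correct about well-foundedness and about the satisfaction relation of set models (both being available as sets in $L_{\beta_\omega}$ once $\beta_\omega$ is a limit of sufficiently large ordinals, which it is since $\beta_\omega$ is the height of a model of $\KM \supseteq \ZFC$). Hence $\tau_\omega$ is definable over $L_{\beta_\omega}$ as this least height, so $\tau_\omega \in L_{\beta_\omega}$ and $L_{\beta_\omega} \models$ ``$\tau_\omega$ exists''. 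To get that $L_{\beta_\omega} \models \tau_\omega$ is \emph{countable}: since $\tau_\omega$ is the height of the least transitive model of $\KM$, every element of that model -- in particular every ordinal below $\tau_\omega$ -- is definable in it without parameters (by a Skolem hull / condensation argument: the definable hull is an elementary submodel, transitively collapses to a transitive model of $\KM$, which by leastness must equal the original, so the original is pointwise definable). A pointwise definable model is countable, and this fact is witnessed by a surjection $\omega \to \tau_\omega$ built from the definable satisfaction relation, which is an element of $L_{\beta_\omega}$ (again using that $\beta_\omega$ is large enough to contain such a map). Therefore $L_{\beta_\omega} \models \tau_\omega$ is countable.

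The main obstacle I anticipate is getting the absoluteness and definability claims exactly right at the margin -- specifically, verifying that $L_{\beta_\omega}$ really does contain a truth predicate for the least transitive $\KM$-model and a surjection $\omega \to \tau_\omega$, i.e.\ that $\beta_\omega$ (the height of a $\beta$-model of $\KM$) is provably large enough for these constructions. This should follow because a $\beta$-model of $\KM$ satisfies $\KM$, hence $\ZFC$, hence $\Ord$ of the $\beta$-model is a limit of ordinals $\lambda$ with $L_\lambda \models \ZFC^-$, and the transitive model of height $\tau_\omega$ plus its satisfaction relation has rank a successor ordinal above $\tau_\omega$, comfortably below $\beta_\omega$; I would spell this out by noting that the $\beta$-model, being correct about well-foundedness, correctly identifies $\tau_\omega$ and everything below it, and that its $L$ agrees with $L_{\beta_\omega}$. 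The generalization to $\GBC + \PnCA n$ then proceeds by replacing $\KM$/$\wZFCmi$ with $\GBC + \PnCAp n$/$\wZFCmi(n)$ throughout, using the corresponding clauses of theorems~\ref{thm2:bi-int} and~\ref{thm2:cutting-offs}, and noting that $\GBC + \PnCAp n$ and $\GBC + \PnCA n$ have the same transitive models up to the level-$n$ Class Collection adjustment supplied by corollary~\ref{cor2:get-a-plus}.
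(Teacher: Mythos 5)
Your proposal never actually produces a transitive model of $\KM$ of height below $\beta_\omega$, and that is the entire content of the theorem. The two places where you assert it --- ``$\Ufrak$ itself knows of such a model, or can build one via its satisfaction machinery'' and ``$L_{\beta_\omega}$ sees a real transitive model of $\KM$'' because $\KM$ ``has consistency strength well below an inaccessible'' --- are exactly what must be proved, and neither suggestion works as stated. The unrolling $\Ufrak$ has $\kappa=\beta_\omega$ as its largest cardinal, so $V_{\kappa+1}$ is not a set in $\Ufrak$ and there is no evident set-sized $\KM$-model inside it; and no Skolem-hull/condensation argument can supply one of smaller height, because an elementary submodel of a $\beta$-model of $\KM$ Mostowski-collapses to another $\beta$-model (ill-foundedness of a class is inherited by the collapsed copy and transfers back by elementarity), whose height is therefore $\ge\beta_\omega$ by minimality of $\beta_\omega$. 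So any transitive model of height $<\beta_\omega$ must fail to be a $\beta$-model, and one has to \emph{deliberately introduce ill-foundedness} to build it. That is the missing idea, and it is how the paper proves the statement (as the $n=\omega$ case of theorem \ref{thm2:beta-tau-2}): work in the admissible set $A=\Hyp(\beta_\omega)$, observe that the $\Lcal_A$-theory $T$ consisting of $\ZFCmi$, the infinitary $\in$-diagram of $A$, and ``$\beta_\omega$ is inaccessible'' is $\Sigma_1$ over $A$ and is satisfied by $H(\beta_\omega)$, the least $L_\xi\models\ZFCm$ above $\beta_\omega$ (this uses the unrolling of the least $\beta$-model with constructible classes); condense $(A,\in,\beta_\omega)$ inside $H(\beta_\omega)$ to some $(L_\gamma,\in,\delta)$ with $\gamma,\delta$ countable there, the witnessing bijection appearing already in $L_{\beta_\omega}$ because $\beta_\omega$ is inaccessible from $A$'s point of view; then apply Barwise compactness \emph{inside $A$} to the condensed theory $T'$ to get $C\models\ZFCmi$ end-extending $L_\gamma$ with $\delta$ inaccessible in $C$. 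Cutting $C$ off at $\delta$ shows $L_\delta$ is $\KMCC$-realizable, so $\tau_\omega\le\delta$, and since $L_{\beta_\omega}\models\text{``}\delta$ is countable'', both $\tau_\omega<\beta_\omega$ and $L_{\beta_\omega}\models\text{``}\tau_\omega$ is countable'' follow at once.

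Two further steps of your sketch are independently broken. First, your argument that $\tau_\omega\ne\beta_\omega$ (``the least transitive model is not a $\beta$-model'') is circular: that transitive models of height $\tau_\omega$ fail to be $\beta$-models is a \emph{consequence} of $\tau_\omega<\beta_\omega$, not a fact you may invoke to prove it. Second, your ``moreover'' rests on ``the least transitive model of $\KM$'' being pointwise definable ``by leastness''; but $\KM$ has no least transitive model --- that is theorem \ref{thm4:least-trans-models} of this very dissertation --- so the step ``the collapse of the definable hull must equal the original'' fails (leastness of the \emph{height} does not identify the model), and even granting a pointwise definable model of height $\tau_\omega$ you would only get countability of $\tau_\omega$ in $V$, whereas the theorem requires a surjection $\omega\to\tau_\omega$ lying in $L_{\beta_\omega}$, which you assert (``$\beta_\omega$ is large enough to contain such a map'') but never construct. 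In the paper's argument this internal countability is not an afterthought: it falls out of the condensation step, which is performed inside $H(\beta_\omega)$ and forces the collapsing bijection below $\beta_\omega$ in the $L$-hierarchy.
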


I will generalize their argument to fragments of $\KM$.

\begin{definition}
Let $\beta_\omega$ be the least height of a $\beta$-model of $\KM$ and let $\tau_\omega$ be the least height of a transitive model of $\KM$. For $n \in \omega$ let
\begin{itemize}
\item $\beta_n$ be the least height of a $\beta$-model of $\GBC + \PnCA n$; and
\item $\tau_n$ be the least height of a transitive model of $\GBC + \PnCA n$.
\end{itemize}
\end{definition}

By corollary \ref{cor2:get-a-plus} it is equivalent to define $\beta_n$ as the least height of a $\beta$-model of $\GBC + \PnCAp n$ and $\tau_n$ as the least height of a transitive model of $\GBC + \PnCAp n$. 

\begin{definition}
For $\gamma < \delta$ ordinals countable in $L$ say that $\gamma \ll \delta$ if $L_\delta \models \gamma$ is countable.
\end{definition}

\begin{theorem} \label{thm2:beta-tau-1}
Let $m < n \le \omega$. Assume there is a transitive model of $\GBC + \PnCA n$.  Then, $\beta_m \ll \tau_n$.
\end{theorem}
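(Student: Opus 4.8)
The goal reduces to producing a single real, lying in $L_{\tau_n}$, which codes a genuine $\beta$-model of $\GBC + \PnCA m$ of height $<\tau_n$; the rest is absoluteness bookkeeping. By hypothesis and the definition of $\tau_n$, fix a transitive $(M,\Xcal) \models \GBC + \PnCA n$ with $\Ord^M = \tau_n$, so $L^M = L_{\tau_n} \models \ZFC$. First note $\tau_n < \omega_1^L$: the assertion ``there is a countable well-founded model of $\GBC + \PnCA n$'' is $\Sigma^1_2$ and true in $V$ (apply Löwenheim--Skolem and the Mostowski collapse to $(M,\Xcal)$), hence true in $L$ by Shoenfield absoluteness, so $L$ has a transitive model of $\GBC + \PnCA n$ of height $<\omega_1^L$, and that height is $\ge \tau_n$. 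Thus if I can exhibit $r' \in L_{\tau_n}$ coding a genuine $\beta$-model of $\GBC + \PnCA m$ of height $\delta' := \otp(r')$, then (since $\delta' < \Ord^{L_{\tau_n}} = \tau_n$) $\beta_m$ is well defined, $\beta_m \le \delta' < \tau_n < \omega_1^L$, so both ordinals are countable in $L$, and $L_{\tau_n}\models\ZFC$ contains $r'$ and computes that $\delta'$ — hence $\beta_m \le \delta'$ — is countable; that is precisely $\beta_m \ll \tau_n$.

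\textbf{Producing the $\beta$-model via unrolling.} By Corollary~\ref{cor2:get-a-plus} I may shrink $\Xcal$ so that $(M,\Xcal)\models \GBC+\PnCAp n$, and then the unrolling $\Ufrak$ satisfies $\ZFCmi(n)$, with the ordinal $\kappa$ represented by $\Ord^M+1$ inaccessible, $V_\kappa = H_\kappa \models \ZFC$, and, by Observation~\ref{obs1:unc-cof-beta} applied internally, $(V_\kappa,\powerset(V_\kappa))$ a $\beta$-model of $\KMCC$ (for $n=\omega$ one uses $\KMCC\rightsquigarrow\ZFCmi$ instead). Working inside $\Ufrak$: using $\Sigma_0$-Transfinite Recursion to form truth predicates and $\Sigma_n$-Collection (legitimate since $n \ge m+1 \ge 1$) to run a reflection argument of the flavour of Proposition~\ref{prop1:stram-tr-impl-tower}, I obtain a set model $t \models \ZFCmi(n)$ with $\kappa \in t$; ordinary Löwenheim--Skolem and the Mostowski collapse lemma (both available in $\ZFCmi(n)$) then give a countable transitive $\bar t \models \ZFCmi(n)$, whose cut-off at its largest cardinal is a countable model of $\GBC+\PnCA n$, hence of $\GBC+\PnCA m$. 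Checking that this cut-off is a $\beta$-model in $\Ufrak$'s sense uses that the collapsed largest cardinal is inaccessible, so its $V$-level is closed under $\omega$-sequences and therefore agrees with $\bar t$ about well-foundedness of its class relations. Thus $\Ufrak \models$ ``there is a real coding a $\beta$-model of $\GBC+\PnCA m$''.

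\textbf{Transferring down to $L_{\tau_n}$.} Since $\Ufrak$ has the same $\omega$ and the same subsets of $\omega$ as $M$, and ``$r$ codes a $\beta$-model of $\GBC+\PnCA m$'' is arithmetic-plus-$\Pi^1_1$ in $r$, the structures $\Ufrak$ and $M$ agree on it; and $M$, a transitive model of $\ZFC$, is correct about it. Hence $M \models \exists r\,(r \text{ codes a } \beta\text{-model of } \GBC+\PnCA m)$, a $\Sigma^1_2$ sentence, so by Shoenfield absoluteness inside $M$ it holds in $L^M = L_{\tau_n}$; pick a witness $r' \in L_{\tau_n}$. By Mostowski absoluteness in $L_{\tau_n}$ (again a transitive model of $\ZFC$), $r'$ genuinely codes a $\beta$-model of $\GBC+\PnCA m$, of height $\otp(r') < \tau_n$, which is exactly what the first paragraph needs.

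\textbf{Expected main obstacle.} The delicate part is the internal construction in $\Ufrak$ of a countable $\beta$-model together with the verification that it survives as a genuine $\beta$-model once everything is pulled back: one must be careful about which model's notion of well-foundedness is being used at each stage. Two points make this manageable: $\Ufrak$ may itself be ill-founded, but this is harmless because only first-order consequences of $\ZFCmi(n)$ are used; and the relations of the final coded structure are low-rank objects, on which $\Ufrak$, $M$, and $V$ all agree about well-foundedness. One must also confirm that the available fragments of Separation and Collection in $\ZFCmi(n)$ — equivalently, the level of Comprehension present in $(M,\Xcal)$ — suffice to form the requisite truth predicates and Skolem hulls, and this is exactly where the hypothesis $m < n$ enters.
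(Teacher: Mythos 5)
Your overall architecture --- produce a countable, genuinely well-founded $\beta$-model of $\GBC + \PnCA m$ that is visible to the height-$\tau_n$ model, then push it into $L_{\tau_n}$ by Shoenfield absoluteness plus Mostowski absoluteness of well-foundedness --- is sound, and that final transfer is essentially the step the paper leaves implicit. The gap is in the central construction. You claim that inside the unrolling $\Ufrak \models \ZFCmi(n)$, truth predicates obtained from $\Sigma_0$-Transfinite Recursion together with $\Sigma_n$-Collection and a reflection argument produce a \emph{set} model $t \models \ZFCmi(n)$ with $\kappa \in t$. As described, this uses only axioms of $\ZFCmi(n)$, so if it worked it would show $\ZFCmi(n) \proves \Con(\ZFCmi(n))$ (equivalently, $\GBC + \PnCAp n$ would prove its own consistency), contradicting the second incompleteness theorem. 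Concretely, the instances of $\Sigma_n$-Separation and $\Sigma_n$-Collection are sentences of complexity beyond $\Sigma_n$, and $\Ufrak$ has no amenable full truth predicate about itself in the style of Proposition \ref{prop1:stram-tr-impl-tower}, so the reflection you can actually run only yields set models of a bounded-complexity fragment --- never of the full schema at level $n$. This is precisely where $m < n$ must do real work, and it is what the paper's proof does: working directly in the transitive $(N,\Ycal) \models \GBC + \PnCA n$ of height $\tau_n$, it uses the $\Sigma^1_m$-truth predicate (a single class, available exactly because $m < n$) to get a class $C$ coding a $V$-submodel $(N,\bar\Xcal) \models \GBC + \PnCA m$; since ``the family coded by $C$ gives a model of $\GBC + \PnCA m$'' is a first-order statement about $(N,C)$, Montague reflection relative to $C$ yields club many $\alpha$ with $(V_\alpha,\Xcal_\alpha) \models \GBC + \PnCA m$, and picking $\alpha$ of uncountable cofinality gives a genuine $\beta$-model of height $< \tau_n$; a countable elementary submodel of its unrolling then supplies the countable $\beta$-model you need. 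Your construction should be repaired along these lines, aiming at level $m$, not $n$.

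A second, smaller error: you justify $\beta$-ness of the countable collapsed cut-off by saying its largest cardinal is inaccessible, ``so its $V$-level is closed under $\omega$-sequences.'' That is the uncountable-cofinality trick of Observation \ref{obs1:unc-cof-beta} and it cannot apply to a countable structure, which is never closed under $\omega$-sequences. The paper applies that trick to the large set-sized $(V_\alpha^N,\Xcal_\alpha)$ inside the transitive $N$ and only afterwards passes to a countable elementary submodel of its (already well-founded) unrolling; for the countable transitive collapse one must instead argue correctness about well-foundedness directly (e.g.\ via rank functions, using the available fragment of Separation and Collection), not via $\omega$-closure.
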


\begin{theorem} \label{thm2:beta-tau-2}
Let $1 \le n \le \omega$. Assume there is a $\beta$ model of $\GBC + \PnCA n$. Then, $\tau_n \ll \beta_n$.
\end{theorem}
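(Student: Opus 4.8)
The plan is to combine the unrolling construction with condensation in $L$. Fix a $\beta$-model $(M,\Xcal) \models \GBC + \PnCA n$ of least possible height, so its height is $\beta_n$. First I would apply Corollary \ref{cor2:get-a-plus} to pass to a definable sub-hyperclass $\Ycal \subseteq \Xcal$ with $(M,\Ycal) \models \GBC + \PnCAp n$ (or $\KMCC$ when $n = \omega$); since the first-order part is unchanged and we have only removed classes, $(M,\Ycal)$ is still a $\beta$-model of height $\beta_n$. Unrolling $(M,\Ycal)$ then yields $\Ufrak \models \ZFCmi(n)$ (resp.\ $\ZFCmi$) by the results of Section \ref{sec:unrolling}, and $\Ufrak$ is well-founded by Proposition \ref{prop2:beta-unroll-trans}; let $\Nfrak$ be its Mostowski collapse, a transitive set. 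Because the membership code $\Ord+1$ codes a genuine ordinal of ordertype $\Ord^M = \beta_n$, the largest cardinal $\kappa$ of $\Nfrak$ is exactly $\beta_n$, and by Proposition \ref{prop2:unroll-kappa} (invoking Powerset in the ground model) $\Nfrak \models$ ``$\beta_n$ is inaccessible''; note also $\Ord^{\Nfrak} =: \delta > \beta_n$.

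Since $\beta_n$ is inaccessible in the transitive set $\Nfrak$, it is inaccessible in $L^{\Nfrak} = L_\delta$ (inaccessibility being downward absolute to $L$), so the usual argument relativized to $L_\delta$ gives $L_{\beta_n} = (V_{\beta_n})^{L_\delta} \models \ZFC$, and moreover $\omega_1^{L_{\beta_n}} = \omega_1^{L_\delta} < \beta_n$, with $L_{\beta_n}$ computing cardinalities and the $L$-hierarchy correctly below $\beta_n$.

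It remains to exhibit, inside $L_{\beta_n}$, a transitive model of $\GBC + \PnCA n$ whose height $L_{\beta_n}$ regards as countable and which has the least possible height among all transitive models of $\GBC + \PnCA n$. For the first part I would work inside $L_\delta$: there $\beta_n$ is inaccessible, so by the cutting-off construction $(L_{\beta_n}, \powerset(L_{\beta_n})^{L_\delta}) \models \GBC + \PnCA n$; this structure carries a global well-order, hence has definable Skolem functions, so taking a countable-in-$L_\delta$ Skolem hull and collapsing produces a transitive model of $\GBC + \PnCA n$ of some height $\bar\tau < \beta_n$ which, being hereditarily countable in $L_\delta$, already appears in $L_{\beta_n}$ with $L_{\beta_n} \models$ ``$\bar\tau$ is countable''. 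Thus $\tau_n \le \bar\tau < \beta_n$. For the minimality, starting from any transitive model of height $\tau_n$ and replacing it by its second-order constructible universe (Section \ref{sec2:so-l}), I may take the minimal model to be $(L_{\tau_n}, \powerset(L_{\tau_n}) \cap L)$, a constructible structure appearing at an $L$-stage bounded by $(|\tau_n|^+)^L < \beta_n$; hence it lies in $L_{\beta_n}$, and by absoluteness of the satisfaction relation $L_{\beta_n}$ agrees that it is a transitive model of $\GBC + \PnCA n$ of height $\tau_n$. Therefore the value of $\tau_n$ computed inside $L_{\beta_n}$ is the real $\tau_n$, and $\tau_n < \omega_1^{L_{\beta_n}}$ gives $L_{\beta_n} \models$ ``$\tau_n$ is countable'', i.e.\ $\tau_n \ll \beta_n$.

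The main obstacle I anticipate is the absoluteness-and-stage bookkeeping in the last step: one must check that the collapsed Skolem hull and the minimal constructible model really are built at stages of $L$ below $\beta_n$ (so that they belong to $L_{\beta_n}$), and that ``transitive model of $\GBC + \PnCA n$'' and ``countable'' transfer correctly between $L_\delta$, $L$, and $L_{\beta_n}$ --- everything here hinges on $\beta_n$ being inaccessible in $L_\delta$ and on $\omega_1^{L_{\beta_n}} = \omega_1^{L_\delta} < \beta_n$. A minor point is to treat uniformly the finite-$n$ case (via the bi-interpretation of $\GBC + \PnCAp n$ with $\ZFCmi(n)$) and the case $n = \omega$ (via $\KMCC$ and $\ZFCmi$) from Theorem \ref{thm2:bi-int}.
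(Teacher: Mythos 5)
The step that fails is the central one: forming, \emph{inside} $L_\delta$, a countable Skolem hull of the cut-off structure $(L_{\beta_n},\powerset(L_{\beta_n})^{L_\delta})$. The second-order part of that structure is in general a proper class of $L_\delta$, not a set: you may as well take your height-$\beta_n$ $\beta$-model to be the least one, and then $\Nfrak$ is (a copy of) the least transitive $L_\xi\models\ZFCmi(n)$, so $L_\delta=\Nfrak$, $\beta_n$ is its largest cardinal, and Cantor's theorem shows $\powerset(L_{\beta_n})\cap L_\delta$ cannot be a set of $L_\delta$. So you are asking a model whose theory guarantees only $\Sigma_n$-Separation and $\Sigma_n$-Collection to produce a countable elementary submodel of a class-sized structure. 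Even defining the Skolem hull of $\emptyset$ for that structure requires a satisfaction predicate for it, which nothing in $\ZFCmi(n)$ provides; and the weaker route of hulling $L_\delta$ itself with just enough partial elementarity to preserve the translated axioms of $\GBC+\PnCA{n}$ requires closing under $\Sigma_{n+1}$- and $\Sigma_{n+2}$-definable Skolem functions, i.e.\ Collection at a level the model is not assumed to satisfy. This is exactly the obstruction that confines the internal reflection argument to one level down, which is why theorem \ref{thm2:beta-tau-1} yields $\beta_m\ll\tau_n$ only for $m<n$; your step needs the same-level statement, and nothing in your setup supplies it. Nor can you take the hull externally in $V$: then its collapse has no reason to be constructible, and the claim that it ``already appears in $L_{\beta_n}$'' --- the claim the whole proof rests on --- is lost.

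Two further remarks. Your final paragraph is unnecessary: since the ordinals countable in $L_{\beta_n}$ form an initial segment, $\tau_n\le\bar\tau$ together with $L_{\beta_n}\models$ ``$\bar\tau$ is countable'' already gives $\tau_n\ll\beta_n$; and as stated it is also incorrect, because $(L_{\tau_n},\powerset(L_{\tau_n})\cap L)$ is typically not a model of $\GBC$ at all ($\tau_n$ is not a regular cardinal of $L$, so the full constructible powerset contains a map from a smaller ordinal cofinally into $\tau_n$, violating Class Replacement), and for a non-$\beta$-model of height $\tau_n$ the internal constructible classes of section \ref{sec2:so-l} need not be real constructible sets. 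The missing idea is the paper's replacement for L\"owenheim--Skolem: work in the admissible set $A=\Hyp(\beta_n)$, note that $H_n(\beta_n)$ models the $\Sigma_1$-over-$A$ infinitary theory consisting of $\ZFCmi(n)$, the $\in$-diagram of $A$, and ``$\beta_n$ is inaccessible''; condense $(A,\beta_n)$ inside $H_n(\beta_n)$ to a pair $(L_\gamma,\bar\beta)$ with $\gamma$ countable in $L_{\beta_n}$, and apply Barwise compactness \emph{inside} $A$ to the condensed theory; cutting off the resulting (possibly ill-founded) end-extension of $L_\gamma$ at $\bar\beta$ produces a transitive model of $\GBC+\PnCAp{n}$ of height $\bar\beta<\omega_1^{L_{\beta_n}}$, which is exactly what your Skolem-hull step was meant to deliver.
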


See figure \ref{fig2:heights} for a pictorial representation of these theorems.

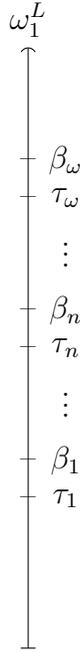
\begin{figure}[ht]
\begin{center}
\begin{tikzpicture}[d/.style={}]

\draw[|-)] (0,0) -> (0,8) node [above] {$\omega_1^L$};

\draw (0,2)   node[d] {--} (.5,2)   node {$\tau_1$}
      (0,2.5) node[d] {--} (.5,2.5) node {$\beta_1$}
                          (.5,3.35) node {$\vdots$}
      (0,4)   node[d] {--} (.5,4)   node {$\tau_n$}
      (0,4.5) node[d] {--} (.5,4.5) node {$\beta_n$}
                          (.5,5.35) node {$\vdots$}
      (0,6)   node[d] {--} (.5,6)   node {$\tau_\omega$}
      (0,6.5) node[d] {--} (.5,6.5) node {$\beta_\omega$};

\end{tikzpicture}
\end{center}

\caption{The least heights of transitive and $\beta$-models of fragments of $\KM$. The ordinals in the picture are ordered by $\ll$.}
\label{fig2:heights}
\end{figure}

\begin{proof}[Proof of theorem \ref{thm2:beta-tau-1}]
Let $m < n \le \omega$. We need to see that $\beta_m \ll \tau_n$.

Consider $(N,\Ycal) \models \GBC + \PnCA n$ with $\Ord^N = \tau_n$. Then, by the existence of $\Sigma^1_m$-truth predicates in $(N,\Ycal)$ we get $C \in \Ycal$ coding $\bar \Xcal \subseteq \Ycal$ so that $(N,\bar \Xcal) \models \GBC + \PnCA m$. Work in $(N,\Ycal)$. Apply reflection to this $C$ to get a club of ordinals $\alpha$ with $C_\alpha \subseteq V_\alpha$ coding $\Xcal_\alpha \subseteq \powerset(V_\alpha)$ so that $(V_\alpha, \Xcal_\alpha) \models \GBC + \PnCA m$. Pick $\alpha$ with uncountable cofinality from this club. Then $N \models (V_\alpha, \Xcal_\alpha)$ is a $\beta$-model because any transitive model with uncountable cofinality is a $\beta$-model.\footnote{See observation \ref{obs1:unc-cof-beta}.}
But $N$ is a transitive model of $\ZFC$ so it is correct about well-foundedness, so $(V_\alpha^N, \Xcal_\alpha)$ really is a $\beta$-model. 

This establishes that $\beta_m < \tau_n$. To further see that $\beta_m \ll \tau_n$ observe that $N$ sees the unrolling $\bar W$ of the model of $\GBC + \PnCAp m$ contained inside $(V_\alpha^N,\Xcal_\alpha)$. But then this $\bar W$ has a countable (from the perspective of $N$) submodel $W$. Cutting off this $W$ gives a countable $\beta$-model of $\GBC + \PnCA m$.
\end{proof}

\begin{proof}[Proof of theorem \ref{thm2:beta-tau-2}]
Fix $n$ with $1 \le n \le \omega$. We want to see that $\tau_n \ll \beta_n$. Let $H_n(\alpha)$ denote $L_\xi$ where $\xi > \alpha$ is least so that $L_\xi \models \ZFCm(n)$.\footnote{Following the naming convention of definition \ref{def2:so-many-theories}, $\ZFCm(n)$ is the theory axiomatized by Extensionality, Pairing, Union, Foundation, Infinity, Choice, $\Sigma_n$-Separation, and $\Sigma_n$-Collection. So, for instance, $\ZFCm(0)$ is $\KP$ plus Choice plus Infinity.}
And let $\Hyp(\alpha)$ be the least admissible set containing $\alpha$, i.e.\ $L_\xi$ where $\xi > \alpha$ is least so that $L_\xi \models \KP$. It is then obvious that $\Hyp(\alpha) \subseteq H_n(\alpha)$. Consider the admissible sets $A = \Hyp(\beta_n)$ and $B = H_n(\beta_n)$. Consider the $\Lcal_A$-theory\footnote{Recall that if $A$ is an admissible set then $\Lcal_A$ is the admissible fragment of $\Lcal_{\Ord,\omega}$ associated with $A$, i.e.\ the infinitary language consisting of formulae in $A$. See \cite{barwise1975} for details.}
$T$ whose axioms consist of
\begin{itemize}
\item $\ZFCmi(n)$;
\item The infinitary $\in$-diagram of $A$;\footnote{That is, the collection of all sentences of the form $\forall x\ x \in a \iff \bigvee_{b \in a} x = b$. Any structure which satisfies all these sentences will have $A$ as a transitive submodel. That is, if $N$ satisfies all these sentences then $A \subseteq N$ and for every $a \in A$ and $b \in N$ if $N \models b \in a$ then $b \in A$.}
and 
\item The assertion that $\beta_n$ is inaccessible.
\end{itemize}
Then $T$ is $\Sigma_1$-definable over $A$ via a formula $\theta$. I claim that $B \models T$ and thus $T$ is consistent. First, it is clear that $A \in B$ and $B \models \ZFCm(n)$. What remains is to see that $B \models \beta_n$ is inaccessible.
To see this, let $(L_{\beta_n},\Xcal)$ be a $\beta$-model of $\GBC + \PnCAp n$, whose classes are all constructible. Then $(L_{\beta_n},\Xcal)$ unrolls to a transitive model $L_\xi \models \ZFCmi(n)$ with $\beta_n \in L_\xi$ inaccessible and the largest cardinal of $L_\xi$. By definition of $B$, we have that $\Ord^B \le \xi$. So $B$ must agree with $L_\xi$ that $\beta_n$ is inaccessible and is the largest cardinal. Thus, $B \models T$.\footnote{If we think in terms of the $\GBC + \PnCAp n$ model $(L_{\beta_n},\Xcal)$, then what we have seen is that this structure contains a membership code for $\Hyp(V)$, the smallest admissible `set' containing its $V$. See chapter 4 for more on $\Hyp(V)$, where it will play an important role.}

It looks like we are setting up to apply the Barwise compactness theorem, but we are not yet ready to do so. (Indeed, we will not apply the Barwise compactness theorem directly to $T$.) First, work in $B$. By condensation there are countable (i.e.\ from the perspective of $B$) ordinals $\gamma,\delta$ so that $j : (L_\gamma, \in, \delta) \to (A, \in, \beta_n)$ is an elementary embedding. Then, $A \models \gamma$ is countable; this is true in $B$ and the bijection from $\gamma$ to $\omega$ must occur earlier than $\beta_n$ in the $L$-hierarchy because $A \models \beta_n$ is inaccessible. 

Now let $T'$ be $L_\gamma$'s version of $T$; formally, $T' = \{ \phi : L_\gamma \models \theta(\phi) \}$. We can explicitly list the axioms of $T'$, namely:
\begin{itemize}
\item $\ZFCmi(n)$;
\item The infinitary $\in$-diagram of $L_\gamma$; and
\item The assertion that $\delta$ is inaccessible.
\end{itemize}
Because $A \models \Con(T)$ we get by elementarity that $L_\gamma \models \Con(T')$. Also by elementarity, $L_\gamma = \Hyp(\delta)$. 

Finally we apply the Barwise compactness theorem, but within $A$ to the theory $T'$. This yields $C \in A$ with $C \models T'$ and $C \supsetend L_\xi$. So the same is true in $V$, by absoluteness. Applying the cutting off construction to $C$ we get that $L_\xi$ is $(\GBC + \PnCAp n)$-realizable. So $\tau_n \le \delta \ll \beta_n$, as desired. 
\end{proof}

Essentially the same argument works for the version of the theories without Powerset, i.e. $\KMm$ and $\GBCm + \PnCA k$. Let $\beta_\omega^-$ be the least height of a $\beta$-model of $\KMm$ and $\tau_\omega^-$ be the least height of a transitive model of $\KMm$. For $k \in \omega$ let $\beta_k^-$ be the least height of a $\beta$-model of $\GBCm + \PnCA k$ and let $\tau_k^-$ be the least height of a transitive model of $\GBCm + \PnCA k$. 

\begin{theorem}
Assume there is a $\beta$-model of $\KMm$. Let $n < m \le \omega$. Then the following hold.
\begin{itemize}
\item $\beta_n^- \ll \tau_m^-$; and
\item $\tau_n^- \ll \beta_n^-$.
\end{itemize}
\end{theorem}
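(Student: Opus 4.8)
\emph{Overall plan.} The plan is to re-run the proofs of Theorems~\ref{thm2:beta-tau-1} and~\ref{thm2:beta-tau-2}, making only the substitutions forced by dropping Powerset. Throughout one uses: that $\GBCm+\PnCA k$ (for $k\ge 1$) and $\GBCm+\ETR$ include $\ETR$, so the unrolling and cutting-off constructions of Sections~\ref{sec:unrolling}--\ref{sec2:so-l} apply; that the unrolling of $\GBCm+\PnCAp k$ is $\ZFCmr(k)$ rather than $\ZFCmi(k)$, so ``$\kappa$ inaccessible'' must everywhere be replaced by ``$\kappa$ regular and $H_\kappa$ exists'' (Theorem~\ref{thm2:bi-int}); that a well-founded model of $\ZFCm$, indeed of $\KP$, is still correct about well-foundedness, so cutting off a well-founded model of $\ZFCmr(k)$ still yields a $\beta$-model, a fortiori a transitive model; and that by the second-order $L$ construction together with Corollary~\ref{cor2:get-a-plus} we may always take the transitive and $\beta$-models in play to have constructible classes and first-order part an $L$-level, and to satisfy the versions of their theories augmented with the relevant fragment of Class Collection.

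\emph{The inequality $\tau_n^- \ll \beta_n^-$.} Here the argument of Theorem~\ref{thm2:beta-tau-2} transfers essentially verbatim. Starting from a $\beta$-model $(L_{\beta_n^-},\Xcal)\models\GBCm+\PnCAp n$ with constructible classes, Proposition~\ref{prop2:beta-unroll-trans} gives that its unrolling is a well-founded --- hence an $L_\xi$ --- model of $\ZFCmr(n)$ in which $\beta_n^-$ is the largest cardinal, is regular, and has $H_{\beta_n^-}$ existing. One then forms, over the admissible set $\Hyp(\beta_n^-)$, the infinitary theory $T$ consisting of $\ZFCmr(n)$, the infinitary $\in$-diagram of $\Hyp(\beta_n^-)$, and the assertion ``$\beta_n^-$ is regular and $H_{\beta_n^-}$ exists'' (in place of ``$\beta_n^-$ is inaccessible''); the verification that $H_n(\beta_n^-)\models T$, the condensation to a countable $L_\gamma$, the passage to the collapsed theory $T'$, the application of Barwise compactness inside $\Hyp(\beta_n^-)$, and the final cutting off all go through unchanged, producing a transitive model of $\GBCm+\PnCAp n$ of some height $\delta$ with $\delta\ll\beta_n^-$, whence $\tau_n^-\le\delta\ll\beta_n^-$.

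\emph{The inequality $\beta_n^- \ll \tau_m^-$.} This is the analogue of Theorem~\ref{thm2:beta-tau-1}, and it is where the real work lies, because the $\ZFC$ proof reflects a coded $(\GBC+\PnCA n)$-realization down the $V_\alpha$-hierarchy of the transitive model $N\models\GBC+\PnCA m$, and a model of $\ZFCm$ has neither a $V_\alpha$-hierarchy nor, in general, the Reflection schema. The plan to repair this: take $N=L_{\tau_m^-}$ with $(N,\Ycal)\models\GBCm+\PnCA m$; the global well-order supplied by $\GBCm$ is $\GBCm$-amenable to $N$, and lets $N$ carry out Definable $\omega$-Dependent Choice, hence (by the Gitman--Hamkins--Johnstone equivalence with the Reflection schema over $\ZFCm$) $N$ satisfies Reflection in the parameter the global well-order. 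Extract a coded $(\GBCm+\PnCAp n)$-realization from a $\Sigma^1_n$-truth predicate, pass to its unrolling $\bar W\models\ZFCmr(n)$, whose largest cardinal is $\kappa=\Ord^N=\tau_m^-$ and which satisfies $H_\kappa^{\bar W}=N$, and note that $\bar W$ is well-founded below $\kappa$ (any ill-foundedness sits above $\kappa$ and witnesses precisely a failure of $(N,-)$ to be a $\beta$-model). Working inside $\bar W$, where one has a fragment of Collection and hence $\Sigma_n$-reflection to transitive sets, produce a short transitive realization of $\GBCm+\PnCA n$; since all the descending sequences relevant to it have hereditary size below $\kappa$ and so lie in the well-founded part of $\bar W$, $\bar W$'s verdict on which short structures are $\beta$-models is correct, so one obtains a genuine $\beta$-model of $\GBCm+\PnCA n$ of height $<\tau_m^-$. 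Taking a Skolem hull that $N$ regards as countable (using the global well-order to choose Skolem terms, so no Dependent Choice beyond what $\GBCm$ already supplies), and using that an elementary submodel of a well-founded model is well-founded together with Proposition~\ref{prop2:beta-unroll-trans} applied to its cut-off, one gets a genuinely countable $\beta$-model of $\GBCm+\PnCA n$ inside $N$. Finally, ``there is a countable $\beta$-model of $\GBCm+\PnCA n$'' is $\Sigma^1_2$, so by Shoenfield absoluteness inside $N$ it holds in $L^N=L_{\tau_m^-}$, giving $\beta_n^-\ll\tau_m^-$.

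\emph{Main obstacle.} The step I expect to be genuinely delicate --- the honest replacement for the clean ``reflect inside a model of $\ZFC$'' of Theorem~\ref{thm2:beta-tau-1} --- is the reflection carried out inside $\bar W$: one must check that $\ZFCmr(n)$ really proves enough reflection to land on a transitive set carrying a full $(\GBCm+\PnCA n)$-realization (not merely a finite fragment, and with the reflected classes genuinely witnessing $\Pi^1_n$-Comprehension and Global Choice), and that $\bar W$ recognizes the result as a $\beta$-model; care is needed here precisely because, unlike in the $\ZFC$ case, $\bar W$ need not possess levels $H_\lambda$ of uncountable cofinality below its largest cardinal. Everything else is a mechanical translation of the constructions already developed in this chapter.
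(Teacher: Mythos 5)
Your handling of the second bullet is exactly the paper's proof: the paper's own argument for this theorem is simply to rerun Theorems \ref{thm2:beta-tau-1} and \ref{thm2:beta-tau-2}, with $\ZFCmr(n)$ in place of $\ZFCmi(n)$ when axiomatizing the theory $T$, and your $\tau_n^- \ll \beta_n^-$ paragraph does precisely that. You are also right that the $\beta_n^- \ll \tau_m^-$ direction is where the Powerset-free setting actually bites (the paper's sketch glosses this): the model $N$ of height $\tau_m^-$ may well think every set is countable, so there is no analogue of ``pick $\alpha$ of uncountable cofinality,'' and that step was the entire source of $\beta$-ness in the proof of Theorem \ref{thm2:beta-tau-1}.

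The gap is that your replacement never actually produces that $\beta$-ness. You reflect inside $\bar W$ to get a ``short transitive realization'' of $\GBCm + \PnCA n$ and then argue that $\bar W$'s verdicts about which short structures are $\beta$-models are correct (which is true, since a ranking function in $\bar W$ for a relation on a set in $H_\kappa^{\bar W}=N$ maps into the genuinely well-founded ordinals below $\kappa$). But correctness of verdicts only says: \emph{if} $\bar W$ affirms $\beta$-ness, the structure really is a $\beta$-model. A transitive set-sized model of $\GBCm+\PnCA n$ all of whose classes lie in $N$ need not be a $\beta$-model, internally or externally: a class relation it believes well-founded can be genuinely ill-founded, the witnessing descending sequence simply failing to belong to $N$. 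So the step ``short transitive realization $+$ correct verdicts $\Rightarrow$ genuine $\beta$-model'' is a non sequitur, and the later steps (the Skolem hull, ``an elementary submodel of a well-founded model is well-founded,'' Proposition \ref{prop2:beta-unroll-trans}) all presuppose the very well-foundedness that is missing. What closes the hole is a device closer to the condensation step in the proof of Theorem \ref{thm2:beta-tau-2} than to the cofinality trick: take the coded realization with constructible classes, so $\bar W \models \ZFCmr(n)$ plus $V=L(N,G)$ with largest cardinal $\kappa=\Ord^N$ and $H_\kappa^{\bar W}=N$; use $\Sigma_{n+1}$-Collection (available since $m>n$) to reflect along $\bar W$'s internal $L$-hierarchy and obtain $\delta>\kappa$ with $L_\delta^{\bar W}\models\ZFCmr(n)$ having largest cardinal $\kappa$; then apply L\"owenheim--Skolem and condensation \emph{inside} $\bar W$ to get a countable-in-$\bar W$ copy $L_{\bar\delta}$ with largest cardinal $\bar\gamma<\omega_1^{\bar W}\le\kappa$. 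Since every $\bar W$-ordinal below $\kappa$ is a genuine ordinal, $\bar\delta$ and $L_{\bar\delta}$ are the real thing, so cutting off gives an honest $\beta$-model of $\GBCm+\PnCA n$ of height $\bar\gamma<\tau_m^-$, and the bijection witnessing $\bar\gamma$'s countability lies in $H_\kappa^{\bar W}=N$ (or push down to $L^N$ by your Shoenfield step), giving $\beta_n^-\ll\tau_m^-$. The case $n=0$ needs the small extra remark that the coded realization should be taken to satisfy $\ETR$ so that it unrolls at all.
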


\begin{proof}[Proof sketch]
Just like the proof of theorems \ref{thm2:beta-tau-1} and \ref{thm2:beta-tau-2}. The difference is that when axiomatizing the theory $T$ use $\ZFCmr(n)$ rather than $\ZFCmi(n)$.
\end{proof}

It is quite obvious that $\beta_n^- \ll \tau_m$ for any $n,m \le \omega$. So the above picture could be extended by putting the minus ordinals at the bottom, ordered the same but with superscripts everywhere.

We can ask the same question for other theories. For a second-order set theory $T$, let $\tau(T)$ be the least height of a transitive model of $T$ and let $\beta(T)$ be the least height of a $\beta$-model of $T$.

\begin{question}
Do we have $\tau(\GBC) < \beta(\GBC)?$ Do we have $\tau(\GBC + \ETR) < \beta(\GBC + \ETR)$?
\end{question}

\clearpage

\chapter{Truth and transfinite recursion}
\chaptermark{Truth and transfinite recursion}

\epigraph{\singlespacing Die alte und ber\"uhmte Frage, womit man die Logiker in die Enge zu treiben vermeinte und sie dahin zu bringen suchte, dass sie sich entweder auf einer elenden Diallele mussten betreffen lassen oder ihre Unwissenheit, mithin die Eitelkeit ihrer ganzen Kunst bekennen sollten, ist diese: {\em Was ist Wahrheit?}}{Immanuel Kant}

In this chapter I will explicate the relationship between transfinite recursion and iterated truth predicates. This will be used to separate fragments of $\ETR$ and $\SkTR k$. (See definition \ref{def3:sktr} below, and note that $\Sigma^1_0$-Transfinite Recursion is a synonym for Elementary Transfinite Recursion.) The main result of this chapter is the following.

\begin{theorem} \label{thm3:main-thm}
Consider $(M,\Xcal) \models \GBC$ and $\Gamma \in \Xcal$ a well-order with $\Gamma \ge \omega^\omega$. Fix finite $k \ge 0$. Then, if $(M,\Xcal)$ satisfies the $\Sigma^1_k$-Transfinite Recursion principle for recursions of height $\le \Gamma \cdot \omega$, there is $\Ycal \subseteq \Xcal$ coded in $\Xcal$ so that $(M,\Ycal) \models \GBC$ plus the $\Sigma^1_k$-Transfinite Recursion principle for recursions of height $\le \Gamma$.
\end{theorem}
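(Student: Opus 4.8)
The plan is to run a \emph{throwing-out-classes} argument in the spirit of theorem \ref{thm1:gm-omnibus}.$(5)$, using as its engine the equivalence --- established earlier in this chapter --- between the $\Sigma^1_k$-transfinite recursion principle along a well-order $\Delta$ and the existence (relative to an arbitrary class parameter) of the $\Delta$-iterated $\Sigma^1_k$-truth predicate, together with the fact that solutions to $\Sigma^1_k$-recursions of height $\le\Delta$ are elementarily definable from such an iterated truth predicate. It is in this equivalence that the hypothesis $\Gamma\ge\omega^\omega$ is used; the rest of the argument is comparatively soft.

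First I would fix a global well-order $G\in\Xcal$. Since $\Gamma\cdot\omega$ is a class well-order elementarily definable from $\Gamma$, it lies in $\Xcal$, and the hypothesis gives a class $S\in\Xcal$ which is the $(\Gamma\cdot\omega)$-iterated $\Sigma^1_k$-truth predicate relative to $\langle G,\Gamma\rangle$. For $\alpha$ in the domain of $\Gamma\cdot\omega$ write $S_\alpha=S\rest\alpha$ for the partial solution below $\alpha$; each $S_\alpha\in\Xcal$, and $S_\beta$ is elementarily definable from $S_\alpha$ whenever $\beta\le\alpha$. I then set
\[
\Ycal=\bigcup_{m<\omega}\Def(M;S_{\Gamma\cdot m},G,\Gamma),
\]
the classes elementarily definable with set parameters from one of the $S_{\Gamma\cdot m}$ (bundled with $G$ and $\Gamma$). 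Then $\Ycal\subseteq\Xcal$, since $\Xcal$ is closed under elementary definability, and $\Ycal$ is coded in $\Xcal$: using the first-order truth predicate relative to $\langle S,G,\Gamma\rangle$ --- which is in $\Xcal$ because the hypothesis implies $\ETR_\omega$ and hence the existence of first-order truth predicates relative to any class --- Elementary Comprehension produces a single class $C$ with $(C)_{(m,\godel\phi,p)}=\{x:(M,\Xcal)\models\phi(x,S_{\Gamma\cdot m},G,\Gamma,p)\}$, so that $\Ycal=\{(C)_x:x\in M\}$.

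Checking $(M,\Ycal)\models\GBC$ is routine: Extensionality and $\ZFC$ for sets are free, Class Replacement is inherited from $(M,\Xcal)$ because $\Ycal\subseteq\Xcal$, Global Choice holds since $G\in\Ycal$, and Elementary Comprehension holds because given finitely many classes of $\Ycal$, definable respectively from $S_{\Gamma\cdot m_1},\dots,S_{\Gamma\cdot m_j}$, they are all definable from $S_{\Gamma\cdot m}$ for $m=\max_i m_i$ (as $\Gamma\cdot m_i\le\Gamma\cdot m$), hence so is anything elementarily definable from them --- this is precisely the increasing-union pattern of theorem \ref{thm1:gm-omnibus}.$(5)$.

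The substantive point is that $(M,\Ycal)$ satisfies the $\Sigma^1_k$-transfinite recursion principle for recursions of height $\le\Gamma$. Given $A\in\Ycal$, fix $m$ and a first-order $\psi$ with $A$ elementarily definable from $\langle S_{\Gamma\cdot m},G,\Gamma\rangle$ via $\psi$. The block of levels $[\Gamma\cdot m,\Gamma\cdot(m+1))$ of $S$ is, by the recursion scheme defining $S$ and the fact that the final block of a well-order of type $\Gamma\cdot(m+1)$ has type $\Gamma$, a $\Gamma$-iterated $\Sigma^1_k$-truth predicate relative to $\langle S_{\Gamma\cdot m},G,\Gamma\rangle$; replacing the predicate ``$\cdot\in A$'' by $\psi$ sends a $\Sigma^1_k$ formula to a $\Sigma^1_k$ formula over that parameter, so this block elementarily computes the $\Gamma$-iterated $\Sigma^1_k$-truth predicate relative to $A$. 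Hence the latter lies in $\Def(M;S_{\Gamma\cdot(m+1)},G,\Gamma)\subseteq\Ycal$, and so, by the equivalence again, every solution to a $\Sigma^1_k$-recursion of height $\le\Gamma$ with parameter $A$ is elementarily definable from it and therefore belongs to $\Ycal$. The main obstacle is exactly this last paragraph's bookkeeping: verifying, inside the precise formalism for iterated truth predicates set up earlier in the chapter, that the final $\Gamma$-block of the $(\Gamma\cdot(m+1))$-iterate over a parameter $P$ genuinely is a $\Gamma$-iterate over $S_{\Gamma\cdot m}\cat P$, and that relativizing it to a parameter $A$ that is first-order over $S_{\Gamma\cdot m}$ does not push the complexity past $\Sigma^1_k$. (As usual, for $\omega$-nonstandard $M$ all talk of formulas and of $\Def$ is internal, measured by the relevant full satisfaction classes.)
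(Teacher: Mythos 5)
For $k=0$ your argument is essentially the paper's own (theorem \ref{thm3:sep-etr-gamma}): take $\Tr_{\Gamma\cdot\omega}(G)$ and let $\Ycal$ be the union of the classes definable from its proper initial segments. The problem is the case $k\ge 1$, where your engine --- ``the equivalence, established earlier in this chapter, between $\SkTR{k}_\Delta$ and the existence of the $\Delta$-iterated $\Sigma^1_k$-truth predicate'' --- does not exist. No such equivalence is proved in the chapter, and the chapter explicitly argues it cannot be: a $\Sigma^1_k$-truth predicate for $k\ge1$ must measure formulae with arbitrary class parameters, hence is a proper hyperclass and cannot be coded by a single class (else one could code the hyperclass of all classes, contradicting a diagonal argument). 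If instead you mean the set-parameter version with class quantifiers interpreted over $\Xcal$ (which one can indeed build as a class by a $\Sigma^1_k$-recursion), then the final step of your proof breaks: the block of $S$ you extract witnesses the recursion equations $(S)_r=\{x:\phi(x,S\rest r,A)\}$ with the $\Sigma^1_k$-formula $\phi$ evaluated in $(M,\Xcal)$, but what is needed is that the equations hold with class quantifiers ranging over $\Ycal$. Since $\Sigma^1_k$-satisfaction is not absolute between a $V$-submodel and the ambient model (already for $k=1$ the equation is a conjunction of a $\Sigma^1_1$ and a $\Pi^1_1$ inclusion, so neither direction transfers), membership of your truth-predicate blocks in $\Ycal$ does not give $(M,\Ycal)\models\SkTR{k}_\Gamma$. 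This is exactly the obstacle the paper flags at the start of its treatment of non-elementary recursion, and it is not ``bookkeeping''; it is the heart of the matter.

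The paper's route for $k\ge1$ is genuinely different: first pass (via the second-order $L$ of chapter 2) to the constructible classes to obtain $\Sigma^1_k$-Class Collection without loss; then unroll to a first-order model of $V=L(M,G)$ satisfying $\Sigma_k$-Transfinite Recursion $\le\gamma$; use reflection along the $L_\alpha(M,G)$-hierarchy to find a level that is simultaneously $\Sigma_k$-elementary in the unrolling and closed under solutions of $\Sigma_k$-recursions of height $\le\gamma$ (the closure is itself arranged by a $\Sigma_k$-recursion of height $\gamma\cdot\omega$); and finally cut off that level to get the coded $\Ycal$. The $\Sigma_k$-elementarity is precisely what guarantees that ``being a solution'' is computed the same way in the submodel, which is the step your construction has no substitute for. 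To repair your proposal you would need to build into $\Ycal$ some form of $\Sigma^1_k$-elementarity in $(M,\Xcal)$ (or reproduce the reflection argument), at which point the iterated-truth-predicate scaffolding is doing no work.
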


This theorem gives a separation of fragments of $\Sigma^1_k$-Transfinite Recursion by consistency strength. Combined with the easy facts that $\Pi^1_{k+1}$-Comprehension proves $\Sigma^1_k$-Transfinite Recursion and that $\Sigma^1_{k+1}$-Transfinite Recursion for recursions of finite length proves $\Pi^1_k$-Comprehension, this gives a hierarchy of transfinite recursion principles ranging in strength from $\GBC$ to $\KM$. 
Figure \ref{fig3:tfr-hierarchy} gives a visual representation of the hierarchy of the second-order transfinite induction principles, spanning from $\GBC$ to $\KM$. 

\begin{figure}

\begin{center}
\begin{tikzpicture}[scale=.75]

\draw[|-|] (0,0) -> (0,25);

\draw                   (.5,0)  node[anchor=west] {$\ETR_1$}
                       (-.5,6)  node[anchor=east] {Open Class Determinacy}
                       (-.5,5)  node[anchor=east] {Clopen Class Determinacy}
                       (-.5,3)  node[anchor=east] {Class Forcing Theorem}
                       (-.5,0)  node[anchor=east] {$\GBC$}
      (0,1)   node {--} (.5,1)  node[anchor=west] {$\ETR_{\omega}$}
      (0,2)   node {--} (.5,2)  node[anchor=west] {$\ETR_\gamma$}
      (0,3)   node {--} (.5,3)  node[anchor=west] {$\ETR_\Ord$}
      (0,4)   node {--} (.5,4)  node[anchor=west] {$\ETR_\Gamma$}
      (0,5)   node {--} (.5,5)  node[anchor=west] {$\ETR$}
      (0,7)   node {--} (.5,7)  node[anchor=west] {$\SkTR 1_1$}
                       (-.5,7)  node[anchor=east] {$\PCA$}
      (0,8)   node {--} (.5,8)  node[anchor=west] {$\SkTR 1_\gamma$}
      (0,9)   node {--} (.5,9)  node[anchor=west] {$\SkTR 1_\Ord$}
      (0,10)  node {--} (.5,10) node[anchor=west] {$\SkTR 1_\Gamma$}
      (0,11)  node {--} (.5,11) node[anchor=west] {$\SkTR 1$}
      (0,13)             (1,13) node[anchor=west] {$\vdots$}
      (0,15)  node {--} (.5,15) node[anchor=west] {$\SkTR k_1$}
                       (-.5,15) node[anchor=east] {$\PnCA k$}
      (0,16)  node {--} (.5,16) node[anchor=west] {$\SkTR k_\gamma$}
      (0,17)  node {--} (.5,17) node[anchor=west] {$\SkTR k_\Ord$}
      (0,18)  node {--} (.5,18) node[anchor=west] {$\SkTR k_\Gamma$}
      (0,19)  node {--} (.5,19) node[anchor=west] {$\SkTR k$}
      (0,21)  node {--} (.5,21) node[anchor=west] {$\SkTR{k+1}_1$}
                       (-.5,21) node[anchor=east] {$\PnCA{k+1}$}
                         (1,23) node[anchor=west] {$\vdots$}
                        (.5,25) node[anchor=west] {$\SkTR{\omega}$}
                       (-.5,25) node[anchor=east] {$\KM$}
      (8,25) node {}; 

\end{tikzpicture}
\end{center}

\caption{Transfinite recursion, from $\GBC$ to $\KM$. Ordered by consistency strength.}
\label{fig3:tfr-hierarchy}
\end{figure}
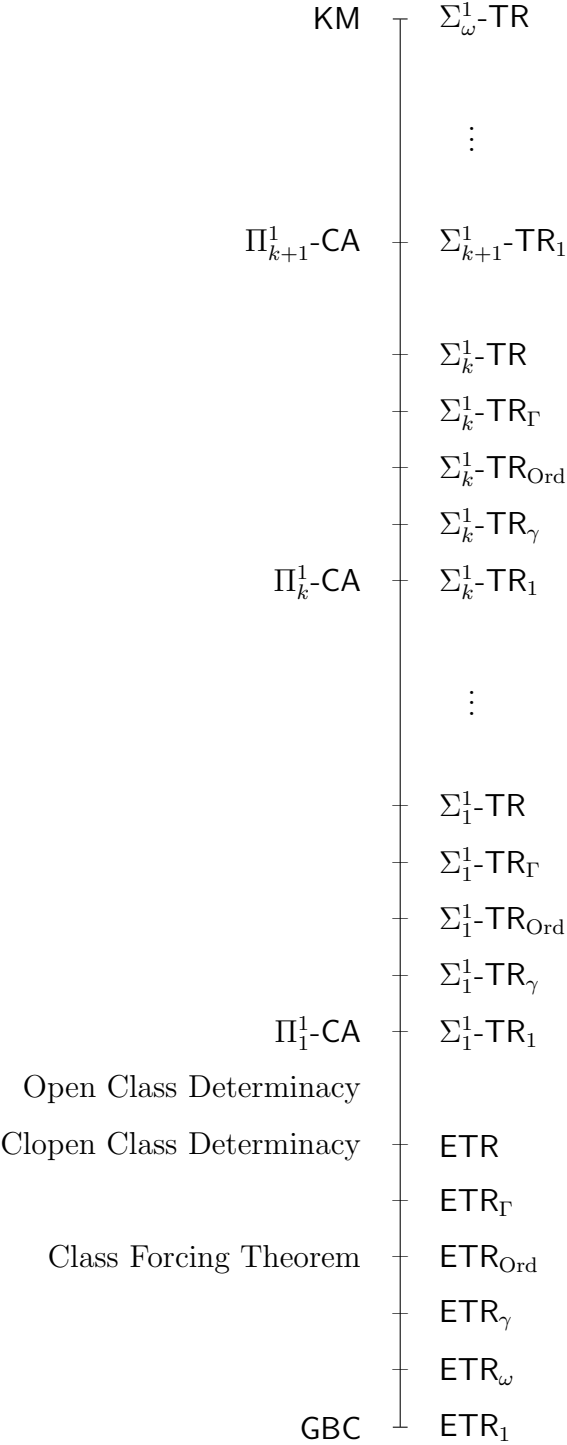

This chapter is organized as follows. First, we consider iterated truth predicates and see that (fragments of) Elementary Transfinite Recursion can be equivalently formulated as asserting the existence of certain iterated truth predicates. Next we look at the connections between iterated truth predicates and the second-order constructible universe. This will show that $\GBC + \ETR$ and $\GBCm + \ETR$ go down to inner models, completing the proof of theorem \ref{thm2:inner-model-rlzble} from chapter 2. We will also see the same for fragments of $\ETR$, though by a different argument. We then settle theorem \ref{thm3:main-thm} for the case $k=0$, separating fragments of $\ETR$. We further will see that fragments of $\ETR$ can be separated in a strong sense, getting transitive models of $\ZFC$ satisfying a strong theory which are $(\GBC + \ETR_\Gamma)$-realizable but not $(\GBC + \ETR_{\Gamma \cdot \omega})$-realizable. So the separation here is due to an inherently second-order property of the model, not its (first-order) theory. This is followed by a detour to arithmetic, where we consider an analogous situation for iterated truth predicates over models of arithmetic. We see that the strong separation results for fragments of $\ETR$ do not have a counterpart in the arithmetic realm. Finally, we turn to non-elementary recursion and settle the $k \ge 1$ case of theorem \ref{thm3:main-thm}.

Much of this chapter will make use of the constructions from chapter 2, so the reader is strongly encouraged to read that chapter first.

\section{Truth and iterated truth}

In chapter 1 we discussed truth predicates over models of set theory. I wish to return to this subject for a deeper look. As we will see in this chapter, certain principles of second-order set theory can be characterized in terms of the existence of truth predicates as classes. The best known case of such is the Tarskian truth predicate for first-order formulae, which we saw in chapter 1. But we can also consider truth predicates relative to a class parameter.

\begin{definition} \label{def3:tarski-relative}
Let $A$ be a class over a model $M$ of first-order set theory. The {\em truth predicate for $M$ relative to $A$}---or, synonymously, the {\em satisfaction class for $M$ relative to $A$}---is the class $T$ satisfying the recursive Tarskian definition of truth for formulae in the language $\Lcal_\in(A)$, i.e.\ the language of first-order set theory with a symbol for $A$. Formally, these recursive requirements are the same as in definition \ref{def1:tarski-truth-pred}, with the following addition:
\begin{itemize}
\item $(x \in A,a)$ is in $T$ if and only if $a \in A$.
\end{itemize}
\end{definition}

\begin{observation} \label{obs3:truth-is-unique}
Let $(M,\Xcal) \models \GBcm$. If $T,T' \in \Xcal$ both satisfy the definition of a truth predicate relative to $A \in \Xcal$ then $T = T'$. 
\end{observation}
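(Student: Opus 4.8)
The plan is to run the familiar uniqueness-of-truth-predicate argument, now carried out in the presence of the extra clause for the parameter $A$. Since $\GBcm$ includes Elementary Comprehension, the class $X = \{ (\phi,\bar a) : (\phi,\bar a) \in T \symdiff T' \}$ exists as a class in $\Xcal$. The goal is to show $X = \emptyset$, whence $T = T'$ by Class Extensionality.

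Suppose toward a contradiction that $X \ne \emptyset$. First I would argue that $X$ has a member $(\phi,\bar a)$ of minimal syntactic complexity — that is, minimal among the (standard or nonstandard) formulae, measured by length of $\phi$ or by the rank in the tree of subformulae. This minimality selection is a first-order property, so no appeal to class choice or impredicative comprehension is needed; we are picking a minimal element of a definable subclass of the (possibly nonstandard) set of formula-codes, which $\GBcm$ can do. Then I would run through the cases of the recursive Tarskian definition (definition \ref{def1:tarski-truth-pred} augmented with the clause of definition \ref{def3:tarski-relative}): if $\phi$ is atomic of the form $x \in A$, then both $T$ and $T'$ must declare $(x \in A, a)$ true iff $a \in A$, so they agree — contradiction. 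If $\phi$ is atomic of the form $a \in b$ or $a = b$, both must agree with the actual fact about $M$. If $\phi$ is $\psi \lor \theta$, $\neg \psi$, or $\exists x\ \psi(x)$, then the truth of $(\phi,\bar a)$ in either $T$ or $T'$ is determined by the truth of strictly simpler pairs — $(\psi,\bar a)$, $(\theta,\bar a)$, or $(\psi, b\cat\bar a)$ for $b \in M$ — and by minimality of $(\phi,\bar a)$ all such pairs lie outside $X$, so $T$ and $T'$ agree on them; hence they agree on $(\phi,\bar a)$, contradicting $(\phi,\bar a) \in X$. This exhausts the cases, so $X = \emptyset$.

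The one point that needs care — and which I expect to be the main obstacle to state cleanly rather than to prove — is the $\omega$-nonstandard case, since the definition explicitly requires $T$ and $T'$ to measure the truth of nonstandard formulae as well. Here one must be sure that the minimality selection genuinely descends: a nonstandard formula has nonstandard "subformulae" according to the model's internal subformula relation, but this internal relation is (internally) well-founded on the formula-codes, so a minimal counterexample exists and its (internal) immediate subformulae are internally simpler, which is all the case analysis uses. Crucially the argument never needs to step outside the model to evaluate "$M \models \phi$" for nonstandard $\phi$; it only uses that $T$ and $T'$ satisfy the \emph{same} recursive clauses, so any disagreement must propagate downward in the internal subformula tree and hence cannot have a minimal instance. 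This observation is exactly parallel to the remark following proposition \ref{prop1:stram-tr-impl-tower} and to the uniqueness remark after definition \ref{def1:tarski-truth-pred}; the present statement is just that remark relativized to a class parameter $A$, and the proof is the obvious adaptation.
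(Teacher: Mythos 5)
Your proposal is correct and is essentially the paper's own argument: the paper's proof is just the one-line observation that if $T \ne T'$ they disagree at a minimal stage, which contradicts the recursive requirement there, and your write-up is that same minimal-counterexample argument spelled out (including the internal-well-foundedness point needed for $\omega$-nonstandard models).
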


\begin{proof}
If $T \ne T'$ then they disagree at a minimal stage. But this would contradict the recursive requirement at that stage.
\end{proof}

As such, we are justified in talking about {\em the} truth predicate relative to $A$.\footnote{The reader who is familiar with satisfaction classes over non-$\omega$-models may want to object here. She should hold her objection. We will discuss that case shortly.}
I will use $\Tr(A)$ to denote the truth predicate relative to $A$.

As a first application of this idea, let us see that there is no principal model of $\GBC + \ETR$. 

\begin{proposition}
Let $(M,\Xcal)$ be a model of second-order set theory. If for every $A \in \Xcal$ we have $\Tr(A) \in \Xcal$ then $(M,\Xcal)$ is not principal.
\end{proposition}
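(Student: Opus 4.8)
The plan is to derive a contradiction from the assumption that $(M,\Xcal)$ is principal, say $\Xcal = \Def(M;A)$ for some $A \in \Xcal$, together with the hypothesis that $\Tr(A) \in \Xcal$. The key observation is that $\Tr(A)$ is a truth predicate for the language $\Lcal_\in(A)$, and since every class in $\Xcal$ is definable from $A$ (with set parameters), $\Tr(A)$ is essentially a truth predicate for all of $\Xcal$. This should run afoul of Tarski's theorem on the undefinability of truth, exactly as in the proof of the earlier proposition stating that theories proving ``$\Tr(A)$ exists for every class $A$'' have no principal models.

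First I would unpack what principality gives us: if $\Xcal = \Def(M;A)$, then a subclass $X \subseteq M$ is in $\Xcal$ if and only if $X = \{x : (M,A) \models \phi(x,p)\}$ for some first-order $\phi$ in $\Lcal_\in(A)$ and some parameter $p \in M$. Now consider $\Tr(A) \in \Xcal$. Using $\Tr(A)$, I would attempt to define, inside $(M,\Xcal)$, the ``diagonal'' class $D = \{ \godel{\phi} : \phi(x)$ is an $\Lcal_\in(A)$-formula with one free variable and $(\godel{\phi}, \godel{\phi}) \notin \Tr(A) \}$, i.e.\ the set of (codes of) formulas that do not hold of their own code. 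This $D$ is first-order definable from $\Tr(A)$ — hence definable from $A$ — so $D \in \Xcal = \Def(M;A)$. But then $D$ itself is defined by some $\Lcal_\in(A)$-formula $\delta(x)$ with a parameter, and asking whether $\godel{\delta}$ (suitably paired with its parameter) belongs to $D$ yields the usual liar-style contradiction: $\godel{\delta} \in D$ iff $(\godel{\delta},\godel{\delta}) \notin \Tr(A)$ iff $\delta(\godel{\delta})$ fails iff $\godel{\delta} \notin D$.

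The one subtlety to handle carefully is the presence of set parameters in the definitions, since $\Def(M;A)$ allows parameters from $M$ whereas $\Tr(A)$ as defined in \ref{def3:tarski-relative} is the satisfaction class for $\Lcal_\in(A)$ with its own parameter slot already built in (the $\bar a$ component). So I would phrase the diagonalization over pairs $(\phi, p)$ with $\phi$ an $\Lcal_\in(A)$-formula and $p$ a set, exploiting that $\Tr(A)$ evaluates truth of $\phi$ with parameter sequence $\bar a$; the class $D$ of pairs $(\phi,p)$ such that $(\phi, \langle \godel{(\phi,p)}, p\rangle) \notin \Tr(A)$ (or an appropriate encoding thereof) is then in $\Xcal$, definable over $(M,\Xcal)$ from $A$ alone, and running it against its own defining formula-and-parameter gives the contradiction. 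The main obstacle is purely bookkeeping: choosing a clean coding of $\Lcal_\in(A)$-formula/parameter pairs so that the self-reference goes through cleanly and so that the requisite instance of Elementary Comprehension (available since $\GBcm \subseteq \GBC$, and $\Tr(A) \in \Xcal$) actually produces $D$ as a class. Once $D$ is in hand the contradiction is immediate, so I expect the writeup to be short, mirroring the earlier undefinability-of-truth argument almost verbatim but localized to the single class $A$.
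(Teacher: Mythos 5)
Your proposal is correct and is essentially the paper's argument: principality gives $\Tr(A) \in \Xcal = \Def(M;A)$, so the truth predicate for $(M,A)$ is definable from $A$, contradicting Tarski's theorem on the undefinability of truth. The only difference is that the paper simply cites Tarski's theorem at that point, whereas you unpack its diagonal proof (including the parameter bookkeeping), which is fine but not needed.
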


\begin{proof}
Otherwise, if $\Xcal = \Def(M;P)$ for some $P \in M$ then $\Tr(P)$ is definable from $P$, contradicting Tarski's theorem on the undefinability of truth.
\end{proof}

\begin{proposition} \label{prop3:etromega-truth-preds}
The theory $\GBCm + \ETR_\omega$ proves that $\Tr(A)$ exists for every class $A$.
\end{proposition}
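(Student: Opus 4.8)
The plan is to formalize the familiar observation, already flagged in Chapter 1, that the Tarskian definition of $\Tr(A)$ is an elementary recursion of length $\omega$, where the recursion runs along the complexity (depth) of formulas.

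First I would fix a class $A$ and work in $\GBCm + \ETR_\omega$ (Global Choice will play no role). Using a fixed G\"odel coding of the formulas of $\Lcal_\in(A)$ — available already in $\ZFCm$ — the ``depth'' of the code of a formula is a definable function taking values in $\omega$; in an $\omega$-nonstandard $M$ every formula, including the nonstandard ones, still has depth some $d < \omega^M$. I would then set up an instance of $\ETR_\omega$ along the well-order $\omega$ (with relation $m <_R n \iff m < n$), defining a class $S \subseteq \omega \times V$ whose recursive requirement says that $(S)_n$ is the set of all pairs $(\phi, \bar a)$ with $\phi$ of depth $\le n$ that are ``true'', computed as follows from $S \rest n$: at a code of depth $< n$ membership in $(S)_n$ agrees with $(S \rest n)_{n-1}$; at a code $\phi$ of depth exactly $n$ membership is determined from $(S\rest n)_{n-1}$ by the top-level Tarskian clause — atomic $\in$ and $=$ formulas by whether $M$ models the corresponding atomic fact, the new clause $(x \in A, a)$ by whether $a \in A$, the clauses for $\neg$ and $\vee$ by the obvious Boolean combinations applied to the immediate subformulas (which have depth $< n$), and $(\exists x\,\psi(x), \bar a)$ by whether there is a witness $b \in M$ with $(\psi, b \cat \bar a) \in (S\rest n)_{n-1}$. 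Every nonzero $n \in \omega^M$ has an immediate predecessor, so there are no limit stages to worry about, and each such requirement is first-order in $S\rest n$ and $A$; hence this is a legitimate instance of $\ETR_\omega$, which produces $S$.

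Next I would set $\Tr(A) = \{(\phi, \bar a) : \exists n\ (\phi,\bar a) \in (S)_n\}$, which is a class by Elementary Comprehension (it is first-order definable from $S$ and $A$). Since every formula $\phi$ of $\Lcal_\in(A)$ in $M$ has depth some $d < \omega^M$, the pair $(\phi, \bar a)$ is already decided at stage $d$; thus $\Tr(A)$ measures the truth of every formula, standard or nonstandard, and a short induction on depth, reading off the recursive requirements satisfied by $S$, verifies that $\Tr(A)$ satisfies exactly the recursive clauses of Definition \ref{def3:tarski-relative}. Uniqueness is already handled by Observation \ref{obs3:truth-is-unique}, so only existence is at issue, which is what the construction delivers.

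The one delicate point is the treatment of $\omega$-nonstandard models: one must stratify the recursion by depth (a length-$\omega$ recursion) rather than attempt an ill-fated recursion ``on subformula complexity'' of some fixed standard length, and one must check both that running the recursion along $\omega^M$ genuinely reaches the nonstandard formulas and that the defining clause at each stage remains first-order in the previous stages and in $A$. Once the bookkeeping of G\"odel codes and depth is in place, the rest is routine.
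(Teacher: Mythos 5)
Your proposal is correct and follows essentially the same route as the paper, which simply observes that the Tarskian definition of $\Tr(A)$ is an elementary transfinite recursion of height $\omega$ (stratified by formula depth) and invokes $\ETR_\omega$. Your added bookkeeping about depth in $\omega$-nonstandard models and the first-orderness of each stage's clause just fills in the details the paper leaves implicit.
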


\begin{proof}
From $A$ we can define $\Tr(A)$ by means of an elementary transfinite recursion of height $\omega$. See definitions \ref{def3:tarski-relative} and \ref{def1:tarski-truth-pred}.
\end{proof}

\begin{corollary}
No model of any $T \supseteq \GBCm + \ETR_\omega$ is principal. \qed
\end{corollary}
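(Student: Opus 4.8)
The final statement is the corollary that no model of any $T \supseteq \GBCm + \ETR_\omega$ is principal. The plan is to chain together the two immediately preceding propositions. First I would invoke Proposition~\ref{prop3:etromega-truth-preds}, which gives that $\GBCm + \ETR_\omega$ proves $\Tr(A)$ exists for every class $A$; since $T \supseteq \GBCm + \ETR_\omega$, any model $(M,\Xcal) \models T$ therefore has the property that $\Tr(A) \in \Xcal$ for every $A \in \Xcal$. Then I would apply the first proposition of the pair (the one stating that if $\Tr(A) \in \Xcal$ for every $A \in \Xcal$ then $(M,\Xcal)$ is not principal) to conclude that $(M,\Xcal)$ is not principal.

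Since this is a two-line deduction, the only thing worth spelling out is the argument inside that first proposition, in case a self-contained proof is wanted: suppose toward a contradiction that $(M,\Xcal) \models T$ is principal, so $\Xcal = \Def(M;P)$ for some $P \in M$ (or some $P \in \Xcal$). By hypothesis $\Tr(P) \in \Xcal = \Def(M;P)$, so the truth predicate relative to $P$ is first-order definable over $M$ from the parameter $P$. But the truth predicate relative to $P$ is in particular a truth predicate for the $\Lcal_\in(P)$-structure $(M,P)$, and Tarski's theorem on the undefinability of truth says no structure can define its own truth predicate (with only its own parameters), a contradiction. Here one uses Observation~\ref{obs3:truth-is-unique} to know $\Tr(P)$ is well-defined, and one uses that $M$ is an $\omega$-model is \emph{not} needed, since the uniqueness argument of that observation is purely about minimal stages of the recursion.

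I do not anticipate any real obstacle: the substance was already done in Propositions and the appeal to Tarski undefinability. The one mild subtlety is just making sure the parameter $P$ witnessing principality is allowed to be used in the definition of its own relative truth predicate, which is exactly the setup in which Tarski's theorem applies. So the proof is simply:

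\begin{proof}
Let $(M,\Xcal) \models T$ where $T \supseteq \GBCm + \ETR_\omega$. By proposition \ref{prop3:etromega-truth-preds}, $\Tr(A) \in \Xcal$ for every $A \in \Xcal$. Hence by the preceding proposition, $(M,\Xcal)$ is not principal.
\end{proof}
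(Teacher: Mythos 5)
Your proposal is correct and matches the paper exactly: the corollary is an immediate consequence of Proposition \ref{prop3:etromega-truth-preds} together with the preceding proposition that the closure of $\Xcal$ under relative truth predicates rules out principality, which is precisely the Tarski-undefinability argument you spell out. The paper gives no further proof body (it is stated with \qed), so your two-line deduction is the intended one.
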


Proposition \ref{prop3:etromega-truth-preds} gives that every model of $\GBCm + \ETR_\omega$ contains truth predicates. This includes non-$\omega$-models, so it would be helpful to discuss what that means in this context.

The trouble is simple. Consider a $\omega$-nonstandard model $M$ and its externally defined satisfaction relation $S = \{ (\phi,a) : M \models \phi(a) \}$. This $S$ will only include standard formulae in its domain, so it cannot be amenable to $M$. 
So this cannot be the class in $\Xcal$ which $(M,\Xcal)$ thinks is the truth predicate if $\Xcal$ is to be a $(\GBCm + \ETR_\omega)$-realization for $M$. Instead, the class in $\Xcal$ which $(M,\Xcal)$ thinks is the truth predicate must measure the `truth' of all formulae in $M$, including the nonstandard formulae. In the literature on nonstandard models, such a class is also known as a {\em full satisfaction class}. 

Let us step away for a moment from the context of second-order set theory. 
Classical results of Krajewski \cite{krajewski1976} show that full satisfaction classes can be non-unique, in contrast to the uniqueness of truth predicates in the second-order context. If $M$ is a countable $\omega$-nonstandard model of, say, $\ZFC$ which admits a full satisfaction class then it admits continuum many different full satisfaction classes. But the disagreement must be confined to the nonstandard realm. An easy induction shows that any full satisfaction class for $M$ agrees with the truth predicate of $M$ (as seen externally from $V$) for standard formulae. 

This implies that for $\omega$-nonstandard models $M$ of set theory we can have different $\GBCm$-realizations for $M$ which contain different truth predicates. On the other hand, if $M$ is $\omega$-standard and two $\GBCm$-realizations for $M$ each contain a truth predicate then their truth predicates are the same.

Let us return now to truth predicates relative to a class parameter. Suppose for the moment that we have a model of, say, $\GBC$ satisfying $\forall A\ \Tr(A)$ exists. In particular this holds when $A$ itself is a truth predicate. So we can talk about truth about truth, truth about truth about truth, and so on. It is easy to see that this gives us $n$th order truth for every standard $n$.

But we are set theorists here. Our inclination is to extend things to the transfinite. We would like to do that for truth about truth about\ldots{}  Formally, this is captured by the following definition of an iterated truth predicate. Informally, an iterated truth predicate is a class of triples $(\gamma, \phi, \bar a)$ so that $\phi(\bar a)$ is true at level $\gamma$, where $\phi$ is allowed to make reference to level $< \gamma$.

\begin{definition}
Let $(M,\Xcal)$ be a model of second-order set theory with $\Gamma \in \Xcal$ a well-order. An {\em iterated truth predicate of length $\Gamma$} (or, synonymously, a {\em $\Gamma$-iterated truth predicate}\footnote{The reader may now see one advantage of using ``truth predicate'' in place of ``full satisfaction class'', even in the $\omega$-nonstandard case. The latter would then lead to talk of iterated full satisfaction classes, which is a mouthful---as can be attested by anyone who has given a talk about iterated truth predicates where he made the mistake of using the wrong terminology.})
 for $M$ is a class $T$ of triples $(\gamma, \phi, \bar a)$ with $\gamma \in \dom \Gamma$ satisfying the following recursive requirements. Here, $\phi$ is in $\Lcal_\in(\Trm)$, the language of set theory augmented with a trinary predicate $\Trm$, and $\bar a$ is a valuation for $\phi$.
\begin{enumerate}
\item $(\gamma, x = y, \bar a)$ is in $T$ if and only if $a_x = a_y$.\footnote{To clarify the notation: if $\bar a$ is a valuation with variable $x$ in its domain, then $a_x$ is the value assigned to $x$.}
\item $(\gamma, x \in y, \bar a)$ is in $T$ if and only if $a_x \in a_y$. 
\item $(\gamma, \Trm(x,y,z), \bar a)$ is in $T$ if and only if
 \begin{itemize}
 \item $a_x <_\Gamma \gamma$;
 \item $a_y$ is an $\Lcal_{\in}(\Trm)$-formula;
 \item $a_z$ is a valuation for $a_y$; and
 \item $(a_x, a_y, a_z)$ is in $T$.
 \end{itemize}
\item $(\gamma, \phi \lor \psi,\bar a)$ is in $T$ if and only if $(\gamma, \phi, \bar a)$ or $(\gamma, \psi, \bar a)$ are in $T$.
\item $(\gamma, \neg \phi, \bar a)$ is in $T$ if and only if $(\gamma, \phi, \bar a)$ is not in $T$.
\item $(\gamma, \exists x\ \phi(x), \bar a)$ is in $T$ if and only if there is $b \in M$ so that $(\gamma, \phi, b \cat \bar a)$ is in $T$.\footnote{Here we of course have the implicit requirement that $x$ be free in $\phi$. 

To clarify the situation with the valuations: By $b \cat \bar a$ I mean the valuation which modifies $\bar a$ by assigning $x$ the value $b$. Note that it could be that $\bar a$ already assigns $x$ a value, as there is no requirement that our valuations only assign values to free variables which appear in the formula. (Indeed, clause $(4)$ of this definition will imply that this always happens; consider e.g.\ the formula $\phi$ given by $x=x \lor y=y$. Then $(\gamma,\phi,\seq{a_x,a_y}) \in T$ if and only if $(\gamma,x=x,\seq{a_x,a_y}) \in T$ or $(\gamma,y=y,\seq{a_x,a_y}) \in T$.) So if $\bar a$ does not assign $x$ a value then $b \cat \bar a$ extends $\bar a$ by assigning $b$ to $x$. Otherwise, if $\bar a$ does assign $x$ a value, then we get $b \cat \bar a$ by dropping that assignment from $\bar a$ and then adding in the assignment of $b$ to $x$.

In the sequel I will avoid repeating this footnote, but the reader should keep these issues in mind.}
\end{enumerate}
We can also have iterated truth predicates relative to a class parameter $A$. This has the same definition, except that the formulae must be in the language $\Lcal_\in(\Trm, A)$ and the following additional criterion must be satisfied:
\begin{itemize}
\item $(\gamma, x \in A, a)$ is in $T$ if and only if $a \in A$.
\end{itemize}
\end{definition}

Note that the property of being the $\Gamma$-iterated truth predicate relative to $A$ is first-order expressible (in parameters $\Gamma$ and $A$). So if $(M,\Xcal)$ and $(M,\Ycal)$ are $\GBc m$ models which both contain $T,\Gamma,A$ then they agree on whether $T$ is the $\Gamma$-iterated truth predicate relative to $A$.

The following observation generalizes observation \ref{obs3:truth-is-unique} and is proved in the same manner. It justifies talk of {\em the} $\Gamma$-iterated truth predicate.

\begin{observation}
Let $(M,\Xcal) \models \GBcm$ with $\Gamma \in \Xcal$ a well-order. Suppose $T,T' \in \Xcal$ both satisfy the definition of a $\Gamma$-iterated truth predicate. Then $T = T'$. The same fact holds for iterated truth predicates relative to a parameter. \qed
\end{observation}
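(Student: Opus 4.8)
The plan is to follow the template of the proof of observation \ref{obs3:truth-is-unique}: assume $T \ne T'$, locate a minimal place where they disagree, and read off from the governing recursive clause that they must in fact agree there. The only genuine change is that ``stages'' are now indexed by pairs — a $\Gamma$-level together with a formula complexity — rather than by formula complexity alone, so I would order these pairs lexicographically, first by $<_\Gamma$ and then by the internal order on $\omega^M$.

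Concretely, working in $(M,\Xcal)$, suppose $T \ne T'$ and let $X = T \symdiff T'$; this is a class by Elementary Comprehension, since membership in $T$ and in $T'$ is first-order with $T,T'$ as parameters. For $(\gamma,\phi,\bar a)\in X$ write $n(\phi)\in\omega^M$ for the internal syntactic complexity of $\phi$, and set $B = \{(\gamma,n)\in\dom\Gamma\times\omega^M : (\gamma,\phi,\bar a)\in X \text{ for some }\phi,\bar a \text{ with } n(\phi)=n\}$, again a class by Elementary Comprehension. I would then argue, just as in the non-iterated case, that $\GBcm$ proves the nonempty class $B$ has a lexicographically least element $(\gamma,n)$: this is transfinite induction along the class well-order $\Gamma$ on the outside, and at a fixed $\gamma$ the least-number principle inside $\omega^M$ applied to $\{m : (\gamma,m)\in B\}\subseteq\omega^M$. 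Fixing $\phi$ with $n(\phi)=n$ and $\bar a$ witnessing $(\gamma,\phi,\bar a)\in X$, I would then case on the outermost connective of $\phi$. If $\phi$ is atomic ($x=y$, $x\in y$, or $x\in A$ in the relative version) the relevant clause pins down membership of $(\gamma,\phi,\bar a)$ outright, so $T$ and $T'$ agree, contradicting $(\gamma,\phi,\bar a)\in X$. If $\phi$ is $\neg\psi$, $\psi\lor\chi$, or $\exists x\,\psi$, the clause determines membership of $(\gamma,\phi,\bar a)$ from membership of triples $(\gamma,\psi,\bar b),(\gamma,\chi,\bar b)$ whose formulas have complexity $<n$; by minimality of $(\gamma,n)$ none of these is in $X$, so $T$ and $T'$ agree on them and hence on $(\gamma,\phi,\bar a)$. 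The one essentially new case is $\phi=\Trm(x,y,z)$: the clause imposes the first-order side conditions ($a_x<_\Gamma\gamma$, $a_y$ a formula, $a_z$ a valuation for it), on which $T$ and $T'$ trivially agree, and then reduces to whether $(a_x,a_y,a_z)\in T$; but $a_x<_\Gamma\gamma$ makes $(a_x,n(a_y))$ lexicographically below $(\gamma,n)$, so by minimality $(a_x,a_y,a_z)\notin X$ and $T,T'$ agree here too. In every case $(\gamma,\phi,\bar a)\notin X$, a contradiction, so $X=\emptyset$ and $T=T'$; the parameter-relative version is identical, with the $x\in A$ clause treated alongside the other atomics.

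The one step that needs a little care — and it is precisely the step already compressed into ``disagree at a minimal stage'' in observation \ref{obs3:truth-is-unique} — is verifying within $\GBcm$ that the nonempty class $B$ has a lexicographically least member; once that is granted the case analysis is mechanical. I expect this to be the main (mild) obstacle, and I would dispatch it by noting that lexicographic induction of this shape is the combination of first-order induction along a class well-order with the least-number principle in $\omega^M$, both available in $\GBcm$. It is also worth remarking that the $\Trm$-clause is the unique clause that steps to a strictly lower $\Gamma$-level, which is exactly why the lexicographic ordering (rather than plain subformula complexity) is the right notion of stage here.
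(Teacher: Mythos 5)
Your proof is correct and is essentially the paper's own argument: the paper proves this observation exactly as observation \ref{obs3:truth-is-unique}, by locating a minimal stage of disagreement and noting the recursive clauses force agreement there, which is precisely your lexicographic minimal-disagreement argument with the $\Trm$-clause stepping down in $\Gamma$. Your fleshing out of the minimality step (least $\Gamma$-level via well-foundedness of $\Gamma$, then the least-number principle in $\omega^M$, both available with class parameters in $\GBcm$) is exactly the detail the paper compresses into ``disagree at a minimal stage.''
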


I will use $\Tr_\Gamma(A)$ to refer to the $\Gamma$-iterated truth predicate relative to $A$ and $\Tr_\Gamma$ to refer to the parameter-free $\Gamma$-iterated truth predicate. Observe that $\Tr_1(A)$ and $\Tr(A)$, the ordinary Tarskian truth predicate relative to $A$, are inter-definable. 

Note that if $\Gamma$ is (standard) finite, then the existence of $\Tr_\Gamma(A)$ is equivalent to the existence of certain ordinary truth predicates. That is, for standard finite $n$ we have $\Tr_n(A)$ exists if and only if $T_1 = \Tr(A)$, $T_2 = \Tr(T_1)$, \ldots, $T_n = \Tr(T_{n-1})$ all exist. So the main interest in iterated truth is when the length is transfinite or nonstandard finite.

Finally, let me address the skeptic. She may worry there is danger in allowing the iterated truth predicate to measure the truth of statements that themselves make reference to the iterated truth predicate, thus allowing the liar's paradox to slip in. This worry is unfounded. Clause (3) in the definition of an iterated truth predicate legislates that it be ramified, with truth at level $\gamma$ unable to make reference to truth at level $\ge \gamma$. So any vicious circles are avoided.



We are now ready to begin to see why the word ``recursion'' is in the title of this chapter. Namely, the existence of iterated truth predicates gives an equivalent characterization of Elementary Transfinite Recursion.

\begin{theorem}[Fujimoto \cite{fujimoto2012}] \label{thm3:etr-iff-itr}
The following are equivalent over $\GBC$.
\begin{enumerate}
\item The principle of Elementary Transfinite Recursion; and
\item For all class well-orders $\Gamma$ and all classes $A$ the class $\Tr_\Gamma(A)$ exists.
\end{enumerate}
\end{theorem}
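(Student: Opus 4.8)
The plan is to prove both directions directly, using the machinery of elementary transfinite recursion on the one hand and the recursive characterization of iterated truth predicates on the other. The direction $(1) \Rightarrow (2)$ is the easy one: given a class well-order $\Gamma$ and a class $A$, the defining clauses for $\Tr_\Gamma(A)$ constitute precisely an elementary recursion along $\Gamma$. More carefully, I would set up the recursion so that the $\gamma$-th slice $(S)_\gamma$ of the solution $S$ is the set of pairs $(\phi, \bar a)$ that hold ``at level $\gamma$'', where the recursive clause for $\Trm(x,y,z)$ consults the earlier slices $S \rest \gamma$. The point is that the relation $\phi$ defining each slice from the partial solution $S \rest \gamma$ is first-order (with $A$ and $\Gamma$ as class parameters), since quantification is only over sets and over the formula-codes and valuations; the consultation of lower levels is a lookup in $S \rest \gamma$, which is exactly what $\ETR$ permits. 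Applying $\ETR$ yields $S$, and then $\Tr_\Gamma(A) = \{ (\gamma, \phi, \bar a) : (\phi, \bar a) \in (S)_\gamma \}$ is the desired class, obtained by Elementary Comprehension. One should double-check that the solution really does satisfy the Tarskian clauses at each level --- this is a routine induction along $\Gamma$ using the defining property of $S$ --- but there are no obstacles here.

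The substantive direction is $(2) \Rightarrow (1)$. Suppose every $\Tr_\Gamma(A)$ exists; we must show an arbitrary instance of $\ETR$ has a solution. So fix a first-order formula $\phi(x, Y, A)$ and a well-founded class relation $R$, and let $<_R$ be its transitive closure; we want a class $S$ with $(S)_r = \{ x : \phi(x, S \rest r, A) \}$ for all $r \in \dom R$. By the remark after the definition of $\ETR$ in the excerpt (citing \cite[lemma 7]{gitman-hamkins2016}), it is equivalent to handle recursions along well-founded tree orders, so I would first reduce to the case where $R$ is (the order relation of) a well-founded tree, or even a well-order, by the standard unravelling; in fact the cleanest route is to reduce to recursions along a class well-order $\Gamma$, since the rank function of $R$ into $\Ord$ exists and one can do the recursion level by level along $\Ord$, at each level collecting the values at all $R$-nodes of that rank. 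The key idea is then that a single recursion of the form ``compute $(S)_r$ from $S \rest r$ using the first-order formula $\phi$'' can be \emph{simulated} inside a sufficiently long iterated truth predicate: truth at level $\gamma+1$ can speak about truth at all levels $\le \gamma$, which is enough to encode ``$\phi(x, S \rest r, A)$ holds'' as a statement whose parameters are truth-facts at earlier levels, provided we have arranged that the earlier levels already encode the partial solution $S \rest r$.

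Concretely, I would proceed as follows. Let $\Gamma$ be a class well-order long enough to carry the recursion --- taking $\Gamma = \Ord$ typically suffices, but if $R$ has class rank one uses $\Gamma$ of ordertype $\Ord$ still works since ranks of class well-founded relations over a $\GBC$ model are bounded by $\Ord$; I should be careful here and may need $\Gamma = \Ord$ with an extra parameter, which is where I expect the bookkeeping to be most delicate. Form $T = \Tr_\Gamma(A)$, which exists by hypothesis. The claim is that from $T$ one can \emph{define}, by a single first-order formula, the class $S$: namely, $x \in (S)_r$ if and only if the appropriate formula-valuation triple lies in $T$ at the level corresponding to the rank of $r$. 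The reason this works is that the Tarskian clauses for $T$ at level $\gamma$ literally evaluate the formula $\phi$ with the predicate $\Trm$ interpreted as ``membership in $T$ at lower levels'', and one checks by induction along $\Gamma$ that this coincides with the intended meaning ``$\phi(x, S\rest r, A)$'', where $S \rest r$ is itself read off from $T$ at levels below $\gamma$. Having defined $S$ from $T$ and $A$ by a first-order formula, Elementary Comprehension (which is available in $\GBC$) gives $S \in \Xcal$, and the verification that $S$ solves the recursion is the induction just described. The main obstacle, as I see it, is getting the encoding of ``$S \rest r$'' as a parameter inside the truth predicate exactly right --- $S \rest r$ is a class, not a set, so one cannot just plug it in as a valuation; instead one must phrase $\phi$'s dependence on $S \rest r$ as a dependence on \emph{truth at earlier levels}, which requires rewriting $\phi(x, Y, A)$ with the second-order-parameter slot $Y$ replaced by a formula in $\Trm$. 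This is exactly the kind of syntactic manipulation Fujimoto carries out, and I would follow \cite{fujimoto2012} for the details; it is a finitary transformation of $\phi$ definable in the metatheory, so the resulting equivalence is provable in $\GBC$ once $T$ is in hand.
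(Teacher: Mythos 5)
Your overall strategy coincides with the paper's (an elementary recursion of length $\omega \cdot \Gamma$ builds $\Tr_\Gamma(A)$; conversely the solution to a recursion is first-order definable from a suitable iterated truth predicate), but the one genuinely non-routine idea in the direction $(2) \Rightarrow (1)$ is exactly the step you leave to a citation. You correctly isolate the obstacle — the partial solution $S \rest r$ is a class and its role must be re-expressed as dependence on truth at earlier levels — but the rewriting you describe (``replace the parameter slot $Y$ by a formula in $\Trm$'') cannot be done directly, because the formula needed to read the partial solution off the earlier levels is the very formula one is trying to write down. The resolution in the paper is an application of the G\"odel fixed-point lemma: one produces a single formula $\psi$ such that $\psi(x,i)$ is true at level $i$ (i.e., over $(M,\in,A,\Tr_{\Gamma\rest i}(A))$) if and only if $\phi(x,i)$ holds of the partial solution whose slices are themselves read off from the earlier $\psi$-truth facts; the solution is then $F=\{(i,x) : (i,\psi,x) \in \Tr_\Gamma(A)\}$, and an induction along $\Gamma$ verifies the recursion equations. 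Without invoking the diagonal lemma (or some equivalent self-reference device), your central step is a pointer to Fujimoto rather than an argument, so as written there is a genuine gap.

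Two further problems. First, your reduction of the well-founded relation $R$ to a well-order is wrong as stated: ranks of well-founded \emph{class} relations are not bounded by $\Ord$ (a class well-order of type $\Ord+1$ already admits no ranking function into $\Ord$), so confining yourself to $\Gamma=\Ord$ would at best yield $\ETR_\Ord$, not full $\ETR$. The hypothesis in $(2)$ deliberately supplies $\Tr_\Gamma(A)$ for \emph{all} class well-orders $\Gamma$, and the reduction (via the equivalence of recursion along well-founded relations, tree orders, and well-orders, \cite{gitman-hamkins2016}) must be allowed to produce well-orders much longer than $\Ord$. Second, in the easy direction $(1)\Rightarrow(2)$ your recursion is set up with one first-order step per level of $\Gamma$, defining the whole level-$\gamma$ slice from $S\rest\gamma$; that slice is the Tarskian truth predicate for $(M,\in,A,S\rest\gamma)$ and so is \emph{not} first-order definable from $S\rest\gamma$ in a single step, by Tarski's theorem. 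The correct bookkeeping (as in the paper) is a recursion of rank $\omega\cdot\Gamma$, each level being an $\omega$-length recursion on formula complexity — harmless here since full $\ETR$ applies to every class well-order, but your setup does not literally work.
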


\begin{proof}
$(1 \impl 2)$ Note that $\Tr_\Gamma(A)$ is definable via an elementary transfinite recursion of rank $\omega \cdot \Gamma$. To define truth at level $g \in \dom \Gamma$ requires to have first defined truth at all levels $< g$ and each level is defined via a recursion of rank $\omega$. 

$(2 \impl 1)$ Take an instance of $\ETR$, iterating $\phi(x,i,F,A)$ along a well-order $\Gamma$. I claim that from $\Tr_\Gamma(A)$ can be defined a solution $F$ to this recursion. Specifically, let $F = \{ (i,x) : (i,\psi,x) \in \Tr_\Gamma(A) \}$, where $\psi$ is a formula so that $(M,\in,P,\Tr_{\Gamma \rest i}(A)) \models \psi(x,i)$ if and only if $(M,\in,P, F \rest i) \models \phi(x,i)$. (For the latter, $F$ is defined via $\psi$ as above.) Such $\psi$ exists by an application of the G\"odel fixed-point lemma. It then follows that $F$ satisfies $\phi$ at each stage and is therefore a solution to the full recursion.
\end{proof}

This result can be refined to give equivalences for fragments of $\ETR$. 

\begin{corollary}
Let $\Gamma \ge \omega^\omega$ be a well-order. 
Over $\GBC$ the following are equivalent.
\begin{enumerate}
\item The principle of Elementary Transfinite Recursion for recursions of rank $\le \Gamma$; and
\item For all classes $A$ the class $\Tr_\Gamma(A)$ exists. 
\end{enumerate}
\end{corollary}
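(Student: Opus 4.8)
The plan is to read this off the preceding theorem by keeping careful track of which \emph{lengths} of recursion are used in its proof. \emph{Direction $(1)\impl(2)$.} Recall from the proof of the theorem that $\Tr_\Gamma(A)$ is produced by an elementary transfinite recursion of length $\omega\cdot\Gamma$: the outer layer runs along $\dom\Gamma$, contributing one $\omega$-block per level $\gamma$, and inside the $\gamma$-th block one runs the ordinary Tarski recursion on formula complexity (rank $\omega$), consulting the already-built lower levels only as a parameter. Hence the whole content of this direction is the lemma that, over $\GBC$, $\ETR$ for recursions of length $\le\Gamma$ already proves that recursions of length $\le\omega\cdot\Gamma$ have solutions (equivalently $\ETR_\Gamma\equiv\ETR_{\omega\cdot\Gamma}$), \emph{when} $\Gamma\ge\omega^\omega$. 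The lemma rests on the ordinal inequality $\omega\cdot\Gamma\le\Gamma+\Gamma$: dividing by the set ordinal $\omega^\omega$ — which $\GBC$ carries out even for class well-orders — write $\Gamma=\omega^\omega\cdot\Delta+\rho$ with $1\le\Delta$ and $\rho<\omega^\omega$; then $\omega\cdot\Gamma=\omega^\omega\cdot\Delta+\omega\cdot\rho$ with $\omega\cdot\rho<\omega^\omega$, so $\omega\cdot\Gamma<\omega^\omega\cdot(\Delta+1)\le\Gamma+\Gamma$. Granting this, a recursion along a well-order $R$ of length $\le\omega\cdot\Gamma$ is handled by cutting $R$ at its element in position $\otp\Gamma$ (if $R$ is that long; otherwise one piece already suffices): the initial segment has length $\le\Gamma$, and by the inequality the tail has length $\le\Gamma$ as well. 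Solve the initial segment by an instance of $\ETR_{\le\Gamma}$ using the original class parameter, obtaining a partial solution $S_1$; solve the tail by a second instance of $\ETR_{\le\Gamma}$, now with $S_1$ (coded together with the original parameter) as the class parameter — this is a legitimate $\ETR$ instance since at each tail-position $r$ the required partial solution is $S_1$ together with the tail below $r$. Gluing the two pieces yields the solution, and applying this to the length-$(\omega\cdot\Gamma)$ recursion defining $\Tr_\Gamma(A)$ gives $\Tr_\Gamma(A)\in\Xcal$ for every class $A$.

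\emph{Direction $(2)\impl(1)$.} No ordinal arithmetic is needed here. Given an instance of $\ETR$ iterating $\phi(x,i,F,P)$ along a well-order $\Delta$ with $\Delta\le\Gamma$, first observe that $\Tr_\Delta(P)$ is available: by hypothesis $\Tr_\Gamma(P)$ exists, and its restriction to the levels lying in the initial segment of $\Gamma$ of order type $\Delta$ is a class by Elementary Comprehension and, by uniqueness of iterated truth predicates, equals $\Tr_\Delta(P)$. Now repeat the theorem's argument verbatim with $\Delta$ in place of $\Gamma$: by the G\"odel fixed-point lemma fix a formula $\psi$ with $(M,\in,P,\Tr_{\Delta\rest i}(P))\models\psi(x,i)$ if and only if $(M,\in,P,F\rest i)\models\phi(x,i)$ when $F$ is defined from $\psi$, and set $F=\{(i,x):(i,\psi,x)\in\Tr_\Delta(P)\}$; then $F$ satisfies the recursion at each $i\in\dom\Delta$, giving the desired instance of $\ETR_{\le\Gamma}$.

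I expect the only real friction to be bookkeeping in the lemma of the first direction: checking $\omega\cdot\Gamma\le\Gamma+\Gamma$ for arbitrary class well-orders $\Gamma\ge\omega^\omega$ inside $\GBC$, verifying that the ``cut and recurse with a class parameter'' step genuinely falls under the schema $\ETR_{\le\Gamma}$, and noting why the hypothesis $\Gamma\ge\omega^\omega$ cannot be dropped — for $\Gamma<\omega^\omega$, e.g.\ $\Gamma=\omega^2$, one has $\omega\cdot\Gamma=\omega^3$, which is additively indecomposable and cannot be written as a sum of finitely many well-orders of length $\le\Gamma$, blocking exactly this argument. Everything else is a direct specialization of the theorem already proved.
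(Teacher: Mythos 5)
Your proof is correct and follows essentially the same route as the paper: for $(1)\impl(2)$ the paper likewise notes that $\Tr_\Gamma(A)$ arises from a recursion of rank $\omega\cdot\Gamma$, uses $\Gamma\ge\omega^\omega$ to get $\omega\cdot\Gamma<\Gamma+\Gamma$, and splits the recursion into two $\Gamma$-length pieces (the second taking the first solution as a parameter), while $(2)\impl(1)$ is the same G\"odel fixed-point argument as in the theorem. Your write-up just supplies the ordinal-arithmetic and gluing details the paper leaves implicit.
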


\begin{proof}
$(1 \impl 2)$ Again, $\Tr_\Gamma(A)$ is definable by a recursion of rank $\omega \cdot \Gamma$. While it may be that $\Gamma < \omega \cdot \Gamma$, because $\omega^\omega \le \Gamma$ it must be that $\omega \cdot \Gamma < \Gamma + \Gamma$. So we can carry out this recursion as we can get solutions to recursions of rank $\Gamma + \Gamma$ by first getting a solution to the first $\Gamma$ many stages, then doing a second recursion to get the rest. 

$(2 \impl 1)$ The same argument goes through as before.
\end{proof}

This also implies that $\GBC + \ETR$ and $\GBC + \ETR_\Gamma$ are finitely axiomatizable. 

\begin{corollary}
The theories $\GBC + \ETR$ and $\GBC + \ETR_\Gamma$ are finitely axiomatizable, for $\Gamma \ge \omega^\omega$.
\end{corollary}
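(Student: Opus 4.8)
The plan is to combine the two preceding equivalences with the classical fact that $\GBC$ is finitely axiomatizable. The whole content is that, over $\GBC$, the schemata $\ETR$ and $\ETR_\Gamma$ collapse to single axioms, so adjoining them to a finite axiomatization of $\GBC$ keeps the axiomatization finite.

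First I would fix a finite axiomatization of $\GBC$ (such exists by the classical result of Bernays mentioned in chapter 1); since it is finite, the conjunction of its members is a single sentence. For $\GBC + \ETR_\Gamma$ with $\Gamma \ge \omega^\omega$ a fixed well-order: by the preceding corollary, over $\GBC$ the schema $\ETR_\Gamma$ is equivalent to the single statement $\tau_\Gamma$ asserting that for every class $A$ the class $\Tr_\Gamma(A)$ exists. This $\tau_\Gamma$ is a genuine sentence — of $\Lcal_\in(\Gamma)$, or of $\Lcal_\in$ when $\Gamma$ is definable, upon substituting its definition — because, as noted after the definition of iterated truth predicate, being the $\Gamma$-iterated truth predicate relative to $A$ is a first-order property of $T$ in the parameters $\Gamma$ and $A$; hence ``$\exists T\ (T = \Tr_\Gamma(A))$'' is $\Sigma^1_1$ in those parameters and $\tau_\Gamma$ is obtained by prefixing one universal set quantifier. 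So the finite axiomatization of $\GBC$ together with $\tau_\Gamma$ finitely axiomatizes $\GBC + \ETR_\Gamma$. For $\GBC + \ETR$: by theorem \ref{thm3:etr-iff-itr}, over $\GBC$ the schema $\ETR$ is equivalent to the single statement $\tau$ asserting that for every class well-order $\Gamma$ and every class $A$ the class $\Tr_\Gamma(A)$ exists, which is again a single $\Lcal_\in$-sentence: ``$\Gamma$ is a well-order'' is first-order expressible (over $\GBC$, which proves dependent choice for set-indexed sequences, well-foundedness of a class relation amounts to the nonexistence of an infinite descending sequence, and such a sequence is a set), and ``$\exists T\ (T = \Tr_\Gamma(A))$'' is $\Sigma^1_1$, so $\tau$ is these under two universal quantifiers. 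Thus the finite axiomatization of $\GBC$ together with $\tau$ finitely axiomatizes $\GBC + \ETR$.

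There is no real obstacle here: with the equivalences in hand, the only thing to verify is that the replacement axioms are honest single sentences, which reduces to the first-order expressibility of ``is a $\Gamma$-iterated truth predicate.'' The hypothesis worth keeping in mind is $\Gamma \ge \omega^\omega$, which is precisely what the preceding corollary needs in order to define $\Tr_\Gamma(A)$ — a recursion of length $\omega\cdot\Gamma$ — using only recursions of length $\le\Gamma$; without it the single sentence $\tau_\Gamma$ need not recover the full schema $\ETR_\Gamma$, and the finite axiomatizability of $\GBC + \ETR_\Gamma$ would require a separate argument.
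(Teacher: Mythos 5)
Your proposal is correct and follows the paper's own route: cite the folklore finite axiomatizability of $\GBC$ and then replace the schema $\ETR$ (respectively $\ETR_\Gamma$) by the single sentence asserting the existence of $\Tr_\Gamma(A)$ for all $\Gamma$ and $A$ (respectively for all $A$), via theorem \ref{thm3:etr-iff-itr} and its corollary. The extra checks you include---that being the $\Gamma$-iterated truth predicate is first-order in the parameters and that $\Gamma \ge \omega^\omega$ is what makes the truth-predicate characterization of $\ETR_\Gamma$ go through---are exactly the points the paper leaves implicit, so nothing further is needed.
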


\begin{proof}
Recall the folklore result that $\GBC$ is finitely axiomatizable. Hence the only work is to see that Elementary Transfinite Recursion (or Elementary Transfinite Recursion restricted to $\Gamma$) is finitely axiomatizable. But as Fujimoto's theorem shows, Elementary Transfinite Recursion is equivalent to $\forall A \forall \Gamma\ \Tr_\Gamma(A)$ exists. Similarly, Elementary Transfinite Recursion restricted to $\Gamma$ is equivalent to $\forall A\ \Tr_\Gamma(A)$ exists.
\end{proof}

\section{Iterated truth and constructibility}

The purpose of this section is to explicate the connection between iterated truth predicates and constructability in the second-order realm. 

As a warm-up, let us first see that $\Tr(A)$ gives a code for $\Def(M;A)$.

\begin{proposition} \label{prop3:truth-impl-coded}
Suppose $(M,\Xcal) \models \GBcm$ is an $\omega$-model with $A \in \Xcal$. Then if $\Tr(A)$ is in $\Xcal$ we get that $\Def(M;A)$ is coded in $\Xcal$. In particular, if $\Xcal$ contains a truth predicate then it contains a coded $V$-submodel. 
\end{proposition}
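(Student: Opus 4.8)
The plan is to build, directly from $\Tr(A)$, a single class $C$ whose slices enumerate exactly $\Def(M;A)$, and then note that $C$ lies in $\Xcal$ because $\GBcm$ includes Elementary Comprehension. First I would fix a standard coding of pairs and of finite valuations by sets of $M$, together with a G\"odel numbering of the formulae of $\Lcal_\in(A)$. Then I would set, for $x \in M$, the slice $(C)_x$ to be $\{\, y \in M : x \text{ codes a pair } (\godel{\phi},p) \text{ with } \phi \in \Lcal_\in(A) \text{ having a distinguished free variable and a parameter variable, and } (\godel{\phi}, \langle y, p\rangle) \in \Tr(A)\,\}$, where $\langle y,p\rangle$ is the valuation sending the distinguished variable to $y$ and the parameter variable to $p$; and $(C)_x = \emptyset$ otherwise. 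This is a first-order definition in the class parameter $\Tr(A)$, so Elementary Comprehension yields $C \in \Xcal$. (Multiple parameters are folded into one $p$ via pairing; $\ZFCm$ supplies what is needed.)

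Next I would verify $\Def(M;A) = \{(C)_x : x \in M\}$. The inclusion $\subseteq$ is routine: given $B = \{y : (M,A)\models \phi(y,p)\}$ with $\phi$ first-order and $p \in M$, an external induction on the structure of the (standard) formula $\phi$, using the recursive clauses of Definition~\ref{def3:tarski-relative} built on Definition~\ref{def1:tarski-truth-pred}, shows $(M,A)\models\phi(y,p)$ iff $(\godel{\phi},\langle y,p\rangle) \in \Tr(A)$, so $B = (C)_x$ for $x$ coding $(\godel{\phi},p)$. The reverse inclusion $\supseteq$ is where the $\omega$-model hypothesis does the work: because $M$ is $\omega$-standard, every element of $M$ that $(M,\Xcal)$ regards as a formula code is genuinely the code of a standard formula $\phi$, and then the same correctness fact gives $(C)_x = \{y : (M,A)\models\phi(y,p)\} \in \Def(M;A)$; empty slices lie in $\Def(M;A)$ trivially. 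This last step is the main obstacle to keep honest, and it is exactly the point that fails over $\omega$-nonstandard $M$, where $C$ would additionally code sets ``defined'' by nonstandard formulae, which in general are not in $\Def(M;A)$ at all.

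Finally, for the ``in particular'' I would apply the construction with $A$ the empty class (equivalently, read $\Tr$ for $\Tr(A)$ and $\Def(M)$ for $\Def(M;A)$): if $\Xcal$ contains the first-order truth predicate, then the resulting $C \in \Xcal$ codes $\Def(M)$. Since $\Xcal$ is closed under first-order definability (Elementary Comprehension), $\Def(M) \subseteq \Xcal$, so $(M,\Def(M))$ — which is itself a model of $\GBc$ — is a $V$-submodel of $(M,\Xcal)$, and being coded by $C$ it is a coded $V$-submodel, as claimed.
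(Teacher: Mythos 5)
Your proposal is correct and follows essentially the same route as the paper: the paper likewise notes that $\omega$-standardness makes $\Tr(A)$ coincide with external satisfaction and then exhibits the coding class $\{((\phi,\bar a),b) : (\phi,\bar a \cat b) \in \Tr(A),\ \phi(\bar a) \text{ with one free variable}\}$, which is in $\Xcal$ by Elementary Comprehension; your version just spells out the bookkeeping (pairing of parameters, the external induction, and the reverse inclusion) in more detail. The only nitpick is that in the ``in particular'' step the submodel $(M,\Def(M;A))$ should be cited as a model of $\GBcm$ rather than $\GBc$, since the ambient theory here lacks Powerset.
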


\begin{proof}
Because $M$ is $\omega$-standard $\Tr(A) = \{ (\phi,\bar a) : (M,A) \models \phi(\bar a) \}$. The following class, which is first-order definable from $\Tr(A)$, is a code for $\Def(M;A)$:
\[
\left\{
\big((\phi, \bar a), b\big) : (\phi, \bar a \cat b) \in \Tr(A) \mand \phi(\bar a) \text{ has a single free variable}
\right\}. \qedhere
\]
\end{proof}

For possibly $\omega$-nonstandard models we can do the same coding to get access to what $(M,\Xcal)$ thinks are the definable classes. I will use $\Def^{(M,\Xcal)}(M;A)$, for $A \in \Xcal$, to refer to the hyperclass coded by
\[
\left\{
\big((\phi, \bar a), b\big) : (\phi, \bar a \cat b) \in \Tr(A) \mand \phi(\bar a) \text{ has a single free variable}
\right\}.
\]
Of course, this can only be done if $(M,\Xcal) \models \Tr(A)$ exists. If $M$ is $\omega$-standard then $\Def^{(M,\Xcal)}(M;A) = \Def(M;A)$. But if $M$ is $\omega$-nonstandard then we get that $\Def^{(M,\Xcal)}(M;A) \supsetneq \Def(M;A)$. 

Indeed, over $\GBC$, the existence of coded $V$-submodels is equivalent to the existence of truth predicates.

\begin{proposition} \label{prop3:truth-iff-coded}
Let $(M,\Xcal) \models \GBC$. Then the following are equivalent.
\begin{enumerate}
\item For all $A \in \Xcal$ we have $\Tr(A) \in \Xcal$; and
\item For all $A \in \Xcal$ there is a coded $V$-submodel $(M,\Ycal) \models \GBC$ of $(M,\Xcal)$ with $A \in \Ycal$.
\end{enumerate}
\end{proposition}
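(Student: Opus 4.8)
\emph{Proof proposal.} The plan is to prove the two implications separately, in each case translating between truth predicates and codes for $V$-submodels. Fix at the outset a global well-order $G \in \Xcal$, available since $(M,\Xcal) \models \GBC$.

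\emph{$(1 \impl 2)$.} Given $A \in \Xcal$, I would apply (1) to the class $\langle A, G \rangle \in \Xcal$ to obtain $T = \Tr(\langle A, G\rangle) \in \Xcal$. As in proposition \ref{prop3:truth-impl-coded} and the discussion following it, $T$ first-order defines a single class coding the hyperclass $\Ycal := \Def^{(M,\Xcal)}(M; \langle A, G\rangle)$, and this code lies in $\Xcal$, so $\Ycal$ is coded in $\Xcal$. It then remains to check $(M,\Ycal) \models \GBC$ with $A \in \Ycal$. That $A, G \in \Ycal$ is immediate (``$x \in A$'' and ``$x \in G$'' are definable over $(M, \langle A, G\rangle)$), so Global Choice holds; Class Extensionality is free; and Class Replacement holds because $\Ycal \subseteq \Xcal$ and $(M,\Xcal)$ satisfies Class Replacement. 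The one substantive point is Elementary Comprehension: given a first-order $\psi$ and parameters $B_1, \dots, B_n \in \Ycal$, each $B_i$ has the form $\{ b : (\phi_i, \bar a_i \cat b) \in T \}$ for an ($M$-)formula $\phi_i$ and valuation $\bar a_i$; substituting the $\phi_i$ for the class variables of $\psi$ produces inside $M$ a single ($M$-)formula $\psi'$ with $\{ x : (M,\Ycal) \models \psi(x, \bar B) \} = \{ x : (\psi', \bar a \cat x) \in T \} \in \Ycal$, where $\bar a$ gathers the relevant valuations. The reason (1) suffices here, rather than merely the weaker fact that $M$-definable classes lie in $\Ycal$, is that $T$ measures the truth of \emph{all} of $M$'s formulas, including nonstandard ones, so $\psi'$ need not be standard.

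\emph{$(2 \impl 1)$.} Given $A \in \Xcal$, take a coded $V$-submodel $(M,\Ycal) \models \GBC$ with $A \in \Ycal$ and fix $C \in \Xcal$ with $\Ycal = \{ (C)_x : x \in M \}$. I would show that $\Tr(A)$ is first-order definable from $C$ and $A$, hence lies in $\Xcal$. Define $T^*$ by putting $(\phi, \bar a) \in T^*$ exactly when there is $x \in M$ such that $(C)_x$ is a partial truth predicate for $(M, A)$ whose formula-domain is subformula-closed and contains $\phi$, with $(\phi, \bar a) \in (C)_x$; this is a first-order condition on $x$ with parameters $C$ and $A$, so $T^* \in \Xcal$. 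For $\Tr(A) \subseteq T^*$: for each standard $n$ the $\Sigma^0_n \cup \Pi^0_n$ truth predicate for $(M,A)$ is definable over $(M,A)$ by a single formula, hence lies in $\Ycal$ (by Elementary Comprehension in $(M,\Ycal)$ with the parameter $A$), hence is some slice $(C)_x$; any $(\phi, \bar a) \in \Tr(A)$ is covered by such a class once $n$ is at least the complexity of $\phi$. For $T^* \subseteq \Tr(A)$: an external induction on the standard formula $\phi$ shows every partial truth predicate agrees with $\Tr(A)$ on $\phi$ and its subformulas.

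The hard part, and the place where this plan needs more than the sketch above, is the $\omega$-nonstandard case of $(2 \impl 1)$. There $\Tr(A)$ must be a \emph{full} satisfaction class, the external induction for $T^* \subseteq \Tr(A)$ no longer applies, and distinct partial truth predicates occurring as slices of $C$ may disagree on nonstandard formulas, so $T^*$ as defined may fail to be a genuine satisfaction class. I expect this to require the finer analysis of full satisfaction classes from the earlier part of the excerpt: one must argue that some slice of $C$ is itself a full satisfaction class for $(M,A)$ — equivalently, that a coded $V$-submodel of $\GBC$ carries a full satisfaction class relative to each of its classes — or else reorganize the coding so that this is built in. By contrast, the $(1 \impl 2)$ direction already goes through uniformly for $\omega$-nonstandard $M$, precisely because it only ever \emph{uses} a full satisfaction class rather than having to construct one.
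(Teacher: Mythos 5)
Your $(1 \impl 2)$ direction is correct, and it is in fact a cleaner route than the paper's: where you simply apply (1) to $\langle A,G\rangle$ for a global well-order $G \in \Xcal$ and take $\Ycal = \Def^{(M,\Xcal)}(M;A,G)$, the paper instead uses the coded family $\Def^{(M,\Xcal)}(M;A)$ to line up its dense subclasses of $\Add(\Ord,1)$ and adds a Cohen-generic class $C$ coding a global well-order, then takes $\Def^{(M,\Xcal)}(M;A,C)$. Both work, and your substitution argument for Elementary Comprehension (handling the possibly nonstandard defining formulas via the full satisfaction class $T$) is exactly the point that needs care there.

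The genuine gap is where you flagged it, in $(2 \impl 1)$ for $\omega$-nonstandard $M$, and your proposed repair points in the wrong direction. The claim that a coded $V$-submodel of $\GBC$ must ``carry a full satisfaction class relative to each of its classes'' is false in general: the slices of $C$ are precisely the members of $\Ycal$, and $\GBC$ does not prove the existence of truth predicates, so nothing in (2) forces any slice to be a full satisfaction class for $(M,A)$ --- indeed the $\Ycal = \Def^{(M,\Xcal)}(M;A,G)$ you build in your own $(1 \impl 2)$ argument is a typical witness to (2) whose slices need not include one. Worse, in an $\omega$-nonstandard $(M,\Ycal) \models \GBC$ the slices are only guaranteed to include the $\Sigma_n$-partial truth predicates for \emph{standard} $n$ (``for all $n$ the $\Sigma_n$-truth predicate exists'' is not a theorem of $\GBC$), so your $T^*$ may decide no nonstandard formula at all and hence fail to be $\Tr(A)$ in the paper's sense, which explicitly requires a full satisfaction class. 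The paper's proof avoids assembling $\Tr(A)$ from class-sized partial truth predicates altogether: using the code $C$ it gets a definable club of $\alpha$ with $(V_\alpha^M, \bar A, \bar C) \prec_{\Sigma_1} (M,A,C)$ and runs a Tarski--Vaught argument --- the key step being that $\{x : M \models \phi(x,p,A)\}$ is a slice of $C$, so witnesses can be pulled down into $V_\alpha^M$ --- to conclude $(V_\alpha^M,\bar A) \prec (M,A)$; it then defines $(\phi,\bar a) \in \Tr(A)$ iff $(V_\alpha^M,\bar A) \models \phi(\bar a)$ for sufficiently large $\alpha$ in the club, where satisfaction here is $M$'s \emph{internal} satisfaction relation for the set-sized structures $(V_\alpha^M,\bar A)$. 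Since that internal relation already measures nonstandard formulas and satisfies the Tarskian recursion inside $M$, the $\omega$-nonstandard case comes for free, and the whole definition is first-order in $A$ and $C$, so Elementary Comprehension puts $\Tr(A)$ in $\Xcal$. Some device of this kind (passing through set-sized structures and internal satisfaction) is what your sketch is missing.
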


\begin{proof}
$(1 \impl 2)$ We saw earlier that we have a coded $V$-submodel of $\GBc$. Specifically, the model $(M,\Def^{(M,\Xcal)}(M;A))$ is coded in $\Xcal$. We want to see that we can get Global Choice. Because $\Def^{(M,\Xcal)}(M;A)$ is coded in $\Xcal$ we have in $\Xcal$ uniform access to the dense subclasses of $\Add(\Ord,1)$ which appear in $\Def(M;A)$. (We also get dense subclasses which $(M,\Xcal)$ thinks are defined by nonstandard formulae, but the important thing is that we get all the dense subclasses defined by standard formulae.) So inside $(M,\Xcal)$ we can line them up in ordertype $\Ord$ to find $C \in \Xcal$ which meets each of them. Then $(M,\Def^{(M,\Xcal)}(M;A,C)) \models \GBC$ is coded in $\Xcal$.

$(2 \impl 1)$ Fix $A$ and let $C$ be a code for a $\GBC$-realization $\Ycal$ for $M$ which contains $A$. Then, by reflection, there are club many ordinals $\alpha$ so that $(V_\alpha^M,\bar A,\bar C) \prec_{\Sigma_1} (M,A,C)$, where $\bar A = A \cap V_\alpha^M$ and $\bar C = C \cap V_\alpha^M$. I claim that $(V_\alpha^M,\bar A) \prec (M,A)$. We can see this by the Tarski--Vaught test. The setup is we assume by induction that we know $\Sigma_n$-elementarity and we would like to show $\Sigma_{n+1}$-elementarity. We fix a $\Sigma_{n+1}$-formula $\exists x\ \phi(x,p,\dot A)$, where $\dot A$ is a predicate symbol for $A$ or $\bar A$ as appropriate. Assume there is $x \in M$ so that $\phi(x,p,\bar A)$ holds for $p \in V_\alpha^M$. We would like to find such an $x'$ in $V_\alpha^M$. But $\{ x \in M : M \models \phi(x,p,A) \}$ is $(C)_y$ for some $y$. So by $\Sigma_1$-elementarity $W = \{ x \in V_\alpha^M : V_\alpha^M \models \phi(x,p,\bar A) \}$ is $(\bar C)_y$ for some $y$. And by $\Sigma_1$-elementarity it is nonempty. So pick $x' \in W$ and we are done.

This then gives us $\Tr(A)$ as a class. Namely, to decide whether $(\phi,\bar a) \in \Tr(A)$ look at some large enough $\alpha$ from this club so that $V_\alpha^M$ can see all the parameters. Then say that $(\phi,\bar a) \in \Tr(A)$ if and only if $(V_\alpha^M,\bar A) \models \phi(\bar a)$. Because (what $M$ thinks is) the satisfaction relation for $(V_\alpha,\bar A)$ satisfies the recursive Tarskian requirements, so will $\Tr(A)$. 
This definition is first-order in the parameters $A$ and $C$, so by Elementary Comprehension $\Tr(A) \in \Xcal$.
\end{proof}

This lays bare the basic idea behind this section: iterating the $\Def$ operation is essentially the same thing as iterating a truth predicate.

Let us now move to the main result of this section.

\begin{theorem} \label{thm3:etr-iff-l-hier}
The following are equivalent over $\GBCm$.
\begin{enumerate}
\item The principle of elementary transfinite recursion.
\item For any class $A$ and any well-order $\Gamma$ the iterated truth predicate $\Tr_\Gamma(A)$ exists.
\item For any class $A$ and any well-order $\Gamma$ the membership code $L_\Gamma(A)$ exists.\footnote{See chapter 2 for a definition of $L_\Gamma(A)$.}
\end{enumerate}
\end{theorem}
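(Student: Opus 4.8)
The plan is to prove the three equivalences in a cycle, leveraging the earlier results on iterated truth predicates and the second-order constructible universe. The implication $(1) \Rightarrow (2)$ is already essentially Fujimoto's theorem (\ref{thm3:etr-iff-itr}), whose proof works verbatim over $\GBCm$ since the construction of $\Tr_\Gamma(A)$ is just an elementary transfinite recursion of rank $\omega \cdot \Gamma$ and requires no Powerset. So the real work is in $(1) \Rightarrow (3)$, $(3) \Rightarrow (2)$, and $(2) \Rightarrow (1)$; I would probably organize it as $(1) \Rightarrow (3) \Rightarrow (2) \Rightarrow (1)$, with $(1) \Leftrightarrow (2)$ already in hand as a sanity check.

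For $(1) \Rightarrow (3)$: working over $\GBCm + \ETR$, I would recall from section \ref{sec2:so-l} that $\ETR$ suffices to construct the membership code $L_\Gamma(A)$ for any well-order $\Gamma$ and class parameter $A$ — this is precisely the point made in the discussion following the definition of $L_\Gamma$ in chapter 2, where $\ETR$ is used to build truth predicates for class-sized structures $(\dom E, E)$ and hence to pass from a membership code $E$ for a `set' $e$ to a membership code $D_E$ for $\Def(e)$. Iterating this along $\Gamma$ via a single elementary transfinite recursion of length $\Gamma$ (with the fixed coding conventions) produces $L_\Gamma(A)$. For $(3) \Rightarrow (2)$: given that all $L_\Gamma(A)$ exist, I would extract the iterated truth predicate by the coding-into-$\Def$ idea from proposition \ref{prop3:truth-impl-coded}, iterated. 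Concretely, $\Tr_\Gamma(A)$ should be definable (by Elementary Comprehension) from the single membership code $L_{\Gamma'}(A)$ for $\Gamma'$ slightly longer than $\Gamma$ — the levels $L_\gamma(A)$ for $\gamma \in \dom\Gamma$ are recoverable inside $L_{\Gamma'}(A)$, and the satisfaction facts at each level $\gamma$ (including reference to lower levels via the predicate $\Trm$) are read off from the definable-classes structure of $L_\gamma(A)$, exactly as a relativized Tarskian truth predicate is read off from $\Def^{(M,\Xcal)}$. The bookkeeping here — matching the ramified clause (3) for $\Trm$ against the stratification of the $L$-hierarchy — is the fiddly part, but it is a straightforward elementary recursion once the correspondence is set up.

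For $(2) \Rightarrow (1)$: this is the same argument as $(2 \Rightarrow 1)$ in theorem \ref{thm3:etr-iff-itr}. Given an instance of $\ETR$ iterating $\phi(x,i,F,A)$ along $\Gamma$, one uses the G\"odel fixed-point lemma to produce a formula $\psi$ so that $(M,\in,A,\Tr_{\Gamma\rest i}(A)) \models \psi(x,i)$ iff $(M,\in,A, F\rest i) \models \phi(x,i)$ where $F$ is itself defined from $\Tr_\Gamma(A)$ via $\psi$; then $F = \{(i,x) : (i,\psi,x) \in \Tr_\Gamma(A)\}$ is a solution. Nothing in this uses Powerset, so it goes through over $\GBCm$.

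The main obstacle I anticipate is $(3) \Rightarrow (2)$: making precise how a $\Gamma$-iterated truth predicate, with its self-referential-but-ramified clause for $\Trm$, is recovered from the membership code $L_\Gamma(A)$. One must check that the level-$\gamma$ truth facts encoded in $L_{\gamma+1}(A)$ (as the new sets added by $\Def$) genuinely align with the recursive clauses of an iterated truth predicate, including that $\Trm(x,y,z)$ at level $\gamma$ correctly points to level $a_x <_\Gamma \gamma$ — this requires knowing the $L(A)$-construction is set up so that truth-at-level-$\delta$ information is available in $L_{\delta+1}(A)$ and is faithfully carried upward, together with an appeal to the uniqueness of iterated truth predicates (the observation following the definition) to conclude the recovered class really is $\Tr_\Gamma(A)$. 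I would handle this by an explicit induction on $\dom\Gamma$ identifying, for each $\gamma$, the relevant slice of $L_{\gamma+1}(A)$ with the Tarskian satisfaction class of the expansion $(M, \Tr_{\Gamma\rest\gamma}(A), A)$, relative to which clause (3) becomes an ordinary atomic clause; everything else is routine.
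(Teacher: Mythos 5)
Your proposal is correct and follows essentially the same route as the paper: $(1\Rightarrow 3)$ is the chapter~2 construction of $L_\Gamma(A)$ by an elementary recursion of height $\omega\cdot\Gamma$, $(3\Rightarrow 2)$ is the inductive extraction of $\Tr_\Gamma(A)$ from the $L(A)$-hierarchy using that the truth predicate of a coded structure appears finitely many levels higher (the paper phrases this in the $(M,A)$-constructible unrolling via $\Tr(\Tr_\delta)\in L_{\beta+2}(M,A)$, so $\Tr_\gamma$ sits in $L_{\gamma+\gamma}(M,A)$, which is exactly the bookkeeping you flag), and $(2\Rightarrow 1)$ is the G\"odel fixed-point argument already given for theorem~\ref{thm3:etr-iff-itr}. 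Your off-by-a-level indexing ($L_{\gamma+1}$ versus a couple of levels up) is harmless since $(3)$ supplies $L_\Delta(A)$ for every well-order $\Delta$.
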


\begin{proof}
We have already seen $(1 \iff 2)$. The new content is $(1 \Rightarrow 3)$ and $(3 \Rightarrow 2)$.

$(1 \Rightarrow 3)$ We saw this back in chapter 2. Briefly: fix $A$ and $\Gamma$. Then $L_\Gamma(A)$ is constructed via an elementary recursion of height $\omega \cdot \Gamma$. It takes $\omega$ many steps to construct (a membership code for) $\Def(X)$ from $X$ and this must be done $\Gamma$ many times to get $L_\Gamma(A)$. 

$(3 \Rightarrow 2)$ This essentially comes down to the fact that $L_{\gamma + 2}$ contains the truth predicate for $L_\gamma$. Fix a class $A$. Work in the $(M,A)$-constructible unrolling $\Lfrak(M,A)$. Inductively see that $\Tr_\gamma^{(M,A)}$---the $\gamma$-iterated truth predicate for the structure $(M,\in^M,A)$---appears in $L_{\gamma + \gamma}(M,A)$; if $\Tr_\delta^{(M,A)}$ is in $L_\beta(M,A)$ then $\Tr(\Tr_\delta^{(M,A)})$ is in $L_{\beta + 2}(M,A)$, from which we can define $\Tr_{\delta + 1}^{(M,A)}$. Now given $\Gamma \in \Xcal$ let $\gamma \in \Lfrak(M,A)$ be isomorphic to $\Gamma$. Then we can transform $\Tr_\gamma^{(M,A)}$ into $\Tr_\Gamma(A) \in \Xcal$, as desired.
\end{proof}

\begin{corollary} \label{cor3:etr-inner-model}
If $(M,\Xcal) \models \GBCm + \ETR$ and $N \in \Xcal$ is an inner model of $M$ of $\ZFC$ then $N$ is $(\GBCm + \ETR)$-realizable. In particular, if $(M,\Xcal) \models \GBC + \ETR$ then $N$ is $(\GBC + \ETR)$-realizable.
\end{corollary}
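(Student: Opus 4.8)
The plan is to realize $N$ with the second-order constructible universe relative to $(N,G)$ built in Chapter~2 (section \ref{sec2:so-l}) and then verify it models $\ETR$ via the characterization of Theorem \ref{thm3:etr-iff-l-hier}: over $\GBCm$, $\ETR$ is equivalent to the statement that $L_\Gamma(A)$ exists for every class $A$ and every well-order $\Gamma$. So, given $(M,\Xcal)\models\GBCm+\ETR$ and an inner model $N\in\Xcal$ of $\ZFCm$ (of $\ZFC$ in the $\GBC$ case), first apply Lemma \ref{lem2:gbc-amen-gwo} to fix $G\in\Xcal$ a $\GBC$-amenable global well-order of $N$, and let $\Ycal=\Lcal(N,G)$ be the hyperclass of $(N,G)$-constructible classes. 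This $\Ycal$ is definable over $(M,\Xcal)$ and contained in $\Xcal$, and the claim is that $(N,\Ycal)\models\GBCm+\ETR$ (in fact $+\,\ECC$, which is what is needed to finish Theorem \ref{thm2:inner-model-rlzble}).

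First I would check $(N,\Ycal)\models\GBCm$. That $\ZFCm$ holds for sets is because $N$ is an inner model of $\ZFCm$; Extensionality for classes is automatic; Global Choice holds because $G$ is itself $(N,G)$-constructible, so $G\in\Ycal$ (with order type $\Ord^N$ in the $\GBC$ case, giving the full form of Global Choice). Class Replacement comes for free since $(N,\Ycal)$ is an $\Ord$-submodel of $(M,\Xcal)$ and we may invoke the observation from section \ref{sec1:preserving-axioms} that $\Ord$-submodels of models of $\GBcm$ which satisfy the remaining axioms satisfy Class Replacement. Elementary Comprehension, i.e. closure of $\Ycal$ under first-order definability over $(N,\Ycal)$, follows from the construction of $\Lcal(N,G)$: if $A\in\Ycal$ is coded in $L_\Theta(N,G)$, then anything first-order definable over $(N,A)$ is coded in $L_{\Theta+1}(N,G)$; this is the iterated counterpart of Propositions \ref{prop3:truth-impl-coded} and \ref{prop3:truth-iff-coded}, and uses the translation machinery of Chapter~2 to see that the unrolled $\Def$ operation computes genuine definability over $N$ (with the usual care about $\Def$ versus $\Def^{(M,\Xcal)}$ when $M$ is $\omega$-nonstandard).

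Then I would verify $\ETR$ in $(N,\Ycal)$ through clause (3) of Theorem \ref{thm3:etr-iff-l-hier}: it suffices that for every $A\in\Ycal$ and every $\Gamma\in\Ycal$ that $(N,\Ycal)$ regards as a well-order, the membership code $L_\Gamma(A)$ computed inside $(N,\Ycal)$ is a class in $\Ycal$. Since $A$ and $\Gamma$ are $(N,G)$-constructible they are coded at some level $L_\Theta(N,G)$, and because $A\in L_\Theta(N,G)$ the $\Def$-iteration relative to $A$ introduces nothing not already constructible from $(N,G)$, so a code for $L_\Gamma(A)$ appears at level $L_{\Theta+\omega\cdot\Gamma}(N,G)$ and hence yields a class in $\Ycal$. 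That this level is genuinely reached — that the background model can run the corresponding recursion of length $\Theta+\omega\cdot\Gamma$ — is exactly where $(M,\Xcal)\models\ETR$ is used, together with the correspondence (via $\Lfrak(N,G)\subseteq\Ufrak$) between $\Gamma$ as a class well-order over $N$ and $\Gamma$ as an ordinal internal to $\Lfrak(N,G)$. The $\ECC$ strengthening then comes from the corollary at the close of section \ref{sec2:so-l} applied with $N$ in place of $M$. The ``in particular'' is immediate: if $(M,\Xcal)\models\GBC+\ETR$ then $M\models\ZFC$, the inner model $N$ satisfies $\ZFC$, and $G$ may be taken of order type $\Ord^N$, so $(N,\Ycal)$ satisfies Global Choice in the full $\GBC$ form and the argument delivers a $(\GBC+\ETR)$-realization of $N$.

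The hard part will be the bookkeeping that matches the constructions carried out externally inside $(M,\Xcal)$ with their internal counterparts in $(N,\Ycal)$: that a $(N,G)$-constructible $\Gamma$ which $(N,\Ycal)$ deems a well-order is something $(M,\Xcal)$'s transfinite recursions can legitimately iterate along, and that the $L_\Gamma(A)$ formed in $(M,\Xcal)$ is the object $(N,\Ycal)$ would build for itself. Extra care is needed when $(M,\Xcal)$ is not a $\beta$-model or when $M$ is $\omega$-nonstandard; but since ``$T$ is the $\Gamma$-iterated truth predicate relative to $A$'' and ``$E\cong L_\Gamma(A)$'' are first-order expressible with $\Gamma$ and $A$ as parameters, the models involved agree on them, so the apparent mismatches dissolve, and what remains is a routine trace through the membership-code constructions of Chapter~2.
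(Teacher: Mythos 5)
Your overall route is the paper's own: fix a $\GBC$-amenable global well-order $G\in\Xcal$ of $N$ by lemma \ref{lem2:gbc-amen-gwo}, set $\Ycal=\Lcal(N,G)$, observe $(N,\Ycal)\models\GBCm$, and verify $\ETR$ through clause (3) of theorem \ref{thm3:etr-iff-l-hier} by locating (a code for) $L_\Gamma(A)$ at a level of the $L(N,G)$-hierarchy above one where $A$ and $\Gamma$ are coded; the exact level you name ($\Theta+\omega\cdot\Gamma$ versus the paper's $\Delta+\Gamma+1$) is immaterial bookkeeping, and your verification of the $\GBCm$ axioms is in line with what the paper treats as immediate.

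The gap is in the step you yourself flag as the hard part and then wave away: that a $\Gamma\in\Ycal$ which $(N,\Ycal)$ deems a well-order is one along which $(M,\Xcal)$'s recursions may legitimately be run, i.e.\ that $(M,\Xcal)$ agrees $\Gamma$ is a well-order. Your justification---that ``$E\cong L_\Gamma(A)$'' and ``$T$ is the $\Gamma$-iterated truth predicate'' are first-order expressible in the class parameters, ``so the models involved agree on them''---only yields agreement between models sharing the same first-order part; that is how such observations are used in the paper, for $V$-submodels. Here $N$ is in general a proper transitive subclass of $M$, so the first-order quantifiers in ``there is no infinite descending sequence through $\Gamma$'' range over different domains, and what you need is upward absoluteness of well-foundedness from the $\Ord$-submodel $(N,\Ycal)$ to $(M,\Xcal)$ (downward is the trivial direction). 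The paper spends a dedicated paragraph on exactly this: if $(M,\Xcal)$ thought $\Gamma$ ill-founded, the ill-foundedness would be witnessed by a set of $M$; using Global Choice in $\Ycal$ fix $F\in\Ycal$ a bijection of $\Ord^N$ with $\dom\Gamma$, so already some restriction $\Gamma\rest F''\alpha$ is ill-founded in $M$; by Class Replacement that restriction is a set of $N$; and since $N\subseteq M$ are transitive models of $\ZFCm$, they agree on well-foundedness of set relations, so $(N,\Ycal)$ would already see $\Gamma$ as ill-founded, a contradiction. Without some such argument your proof does not get off the ground: the level $L_{\Theta+\omega\cdot\Gamma}(N,G)$ is only guaranteed to exist in $\Xcal$ once $(M,\Xcal)$ itself regards $\Gamma$ as a well-order, so supply this absoluteness argument rather than appealing to first-order expressibility.
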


\begin{proof}
Fix $G \in \Xcal$ a $\GBCm$-amenable global well-order of $N$. Such exists by lemma \ref{lem2:gbc-amen-gwo}. By the theorem, $L_\Gamma(N,G)$ exists in $\Xcal$ for all $\Gamma \in \Xcal$. So we can work in the $(N,G)$-constructible unrolling $\Lfrak(N,G)$, i.e.\ the structure consisting of all membership codes $E$---quotiented out by isomorphism---$E$ so that $E \vin L_\Gamma(N,G)$ for some $\Gamma$.\footnote{See chapter 2 for a definition of $\vin$ the membership relation between membership codes.}
Now let $\Ycal = \Lcal(N,G)$. That is, $\Ycal$ consists of the classes of $N$ which are coded by membership codes in $\Lfrak(N,G)$. It is immediate that $(N,\Ycal) \models \GBCm$. We want to see that it also satisfies $\ETR$. Fix $A,\Gamma \in \Ycal$ where $(N,\Ycal) \models \Gamma$ is a well-order. Then, it must be that $(M,\Xcal)$ agrees that $\Gamma$ is a well-order. Otherwise, there is an ordinal $\alpha \in M$ so that $M \models \Gamma \rest F''\alpha$ is ill-founded, where $F \in \Ycal$ is a bijection between $\Ord$ and $\dom \Gamma$, which exists by Global Choice. But by Replacement $\Gamma \rest F''\alpha$ must be in $N$. And $N$ is a transitive submodel of $M$ and they are both models of $\ZFCm$, so they must agree on what sets are well-founded. So $N \models \Gamma \rest F''\alpha$ is ill-founded, so $(N,\Ycal) \models \Gamma$ is ill-founded, a contradiction.

Then there is some $\Delta \in \Xcal$ so that $A,\Gamma$ are coded in $L_\Delta(N,G)$. More formally, $E_A$ and $E_\Gamma$, the canonical membership codes for $A$ and $\Gamma$, are $\vin$-elements of $L_\Delta(N,G)$. But then $L_\Gamma(A) \vin L_{\Delta + \Gamma + 1}(N,G)$. So $L_\Gamma(A) \in \Ycal$. Since this worked for arbitrary $A$ and $\Gamma$, we get by the theorem that $(N,\Ycal) \models \ETR$, completing the argument.
\end{proof}

Let us now turn to $\ETR_\Gamma$. We get that $\GBC + \ETR_\Gamma$ goes down to inner models, but we need a different argument. The trouble is that without satisfying full $\ETR$ it is not clear that the unrolling process gives a model of a sensible theory, so how are we to build second-order $L$? (Cf. subsection \ref{subsec2:how-hard}.)

\begin{theorem} \label{thm3:etr-gamma-inner-model}
Let $(M,\Xcal) \models \GBCm$ and let $N \in \Xcal$ be an inner model of $M$. Suppose $\Gamma \in \Xcal$ is a well-order $\ge \omega^\omega$ and is a $\GBC$-amenable subclass of $N$ so that $(M,\Xcal) \models \GBCm + \ETR_\Gamma$. Then $N$ is $(\GBCm + \ETR_\Gamma)$-realizable, via some $\Ycal \subseteq \Xcal$.
\end{theorem}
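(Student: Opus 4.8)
The obstruction, as flagged before the statement, is that without full $\ETR$ the unrolling of a $\GBCm$ model need not satisfy a usable set theory, so we cannot build second‑order $L$ over $N$ in one sweep. Instead I would build the second‑order constructible hierarchy over $N$ \emph{clocked by finite multiples of $\Gamma$}: form $L_{\Gamma\cdot k}(N,P)$ separately for each standard finite $k$, and let $\Ycal$ be the union of the corresponding levels. The guiding heuristic is the same one running through this chapter: a few more levels of the $L(N,P)$‑hierarchy compute a few more iterates of a truth predicate over $N$, so the resulting $\Ycal$ will be closed enough to verify $\ETR_\Gamma$ via Fujimoto's theorem (Theorem~\ref{thm3:etr-iff-itr}) and its proof, while staying a $\GBCm$‑realization by the usual inner‑model bookkeeping.

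\textbf{Step 1: A parameter to construct relative to.} Since $\Gamma\ge\omega^\omega\ge\omega$, the hypothesis $(M,\Xcal)\models\GBCm+\ETR_\Gamma$ gives $\GBCm+\ETR_\omega$, hence $\Tr(A)\in\Xcal$ for every $A\in\Xcal$ by Proposition~\ref{prop3:etromega-truth-preds}. The proof of Lemma~\ref{lem2:gbc-amen-gwo} uses only the availability of such truth predicates, so it applies here: there is $G\in\Xcal$ a $\GBC$‑amenable global well‑order of $N$ (this uses $N\models\ZFCm$, so in particular $N\models\AC$). Let $P=\seq{G,\Gamma}$. Then $P\in\Xcal$, both $G$ and $\Gamma$ are first‑order definable from $P$, and $P$ is amenable to $N$ because $G$ is and $\Gamma$ is ($\GBC$‑amenability of $\Gamma$ to $N$ is exactly the niceness hypothesis; note that, e.g., $0^\sharp$ as a subclass of $L$ is \emph{not} amenable to $L$, which is what the footnote's pathology violates).

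\textbf{Step 2: Forming $L_{\Gamma\cdot k}(N,P)$ and defining $\Ycal$.} First observe that $\GBCm+\ETR_\Gamma\proves\ETR_{\Gamma\cdot m}$ for each standard finite $m$: split a recursion of length $\Gamma\cdot m$ into $m$ successive recursions of length $\le\Gamma$, passing each partial solution forward as a class parameter. (It is crucial and expected that this does \emph{not} iterate to $\ETR_{\Gamma\cdot\omega}$, by Theorem~\ref{thm3:main-thm}.) As in Chapter~2, the membership code $L_{\Gamma\cdot k}(N,P)$ is produced by an elementary recursion of length $\omega\cdot\Gamma\cdot k$ (each $\Def$‑step on membership codes costs $\omega$ stages), and $\omega\cdot\Gamma\cdot k\le\Gamma\cdot(2k)$ because $\omega\cdot\Gamma<\Gamma\cdot 2$ whenever $\Gamma\ge\omega^\omega$ (the ordinal‑arithmetic point already used in the proof of the corollary to Theorem~\ref{thm3:etr-iff-itr}; this is exactly where $\Gamma\ge\omega^\omega$ is needed). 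Hence each $L_{\Gamma\cdot k}(N,P)\in\Xcal$; here amenability of $P$ to $N$ guarantees the relativized construction behaves, in particular that each $\Def$‑step really does land in $\Xcal$. Define
\[
\Ycal=\bigcup_{k<\omega}\Lcal_{\Gamma\cdot k}(N,P),
\]
where $\Lcal_{\Gamma\cdot k}(N,P)$ is the collection of classes of $N$ coded by $\vin$‑elements of $L_{\Gamma\cdot k}(N,P)$ (using the $\Def^{(M,\Xcal)}$‑style codes of Chapter~3 when $M$ is $\omega$‑nonstandard). By Elementary Comprehension each $\Lcal_{\Gamma\cdot k}(N,P)\subseteq\Xcal$, so $\Ycal\subseteq\Xcal$.

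\textbf{Step 3: $(N,\Ycal)\models\GBCm+\ETR_\Gamma$.} Extensionality is free and $N\models\ZFCm$. For Elementary Comprehension: finitely many parameters from $\Ycal$ all lie in $\Lcal_{\Gamma\cdot k}(N,P)$ for the largest relevant $k$, and the first‑order definable closure over $N$ then lies in $\Lcal_{\Gamma\cdot k+\omega}(N,P)\subseteq\Lcal_{\Gamma\cdot(k+1)}(N,P)$, so $\Ycal$ is closed under first‑order definability. Since $G\in\Ycal$, passing to ordertype $\Ord^N$ yields Global Choice. Class Replacement is then automatic from the Chapter~1 observation that an $\Ord$‑submodel of a $\GBcm$ model which has Global Choice and satisfies the remaining $\GBCm$ axioms satisfies Class Replacement. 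Finally $\ETR_\Gamma$: $\Gamma\in\Ycal$ (definable from $P\in\Lcal_1(N,P)$), and $(N,\Ycal)\models$ ``$\Gamma$ is a well‑order'' since a descending sequence would be a set of $N\subseteq M$ contradicting $(M,\Xcal)\models\GBCm+\ETR_\Gamma$ (cf.\ Corollary~\ref{cor3:etr-inner-model}). Given an instance of $\ETR_\Gamma$ over $(N,\Ycal)$ — iterating an elementary $\phi$ along a well‑order of length $\le\Gamma$ with parameter $A\in\Ycal$, say with $A$ coded in $L_{\Gamma\cdot k}(N,P)$ — we may pad to length exactly $\Gamma$; then, by the localized form of Theorem~\ref{thm3:etr-iff-l-hier} (namely $L_{\delta+2}(N,P)$ codes the truth predicate of $L_\delta(N,P)$, so $\Tr_\Gamma^N(A)$ for $A$ coded at level $\delta$ is coded by level $\delta+\Gamma+\Gamma$), the $\Gamma$‑iterated truth predicate over $N$ relative to $A$ is coded in $L_{\Gamma\cdot(k+2)}(N,P)$, hence lies in $\Ycal$; and the solution $F$ is first‑order definable over $N$ from this iterated truth predicate by the G\"odel fixed‑point argument in the proof of Theorem~\ref{thm3:etr-iff-itr}, so $F\in\Lcal_{\Gamma\cdot(k+3)}(N,P)\subseteq\Ycal$.

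\textbf{Expected main obstacle.} The routine part is the adaptation of the arguments of Corollary~\ref{cor3:etr-inner-model}. The delicate points, which I would write out carefully, are: (i) verifying that the niceness hypothesis on $\Gamma$ (its $\GBC$‑amenability to $N$) genuinely suffices for the relativized hierarchy $L(N,P)$ to behave correctly over $N$ — this is the analogue of avoiding the $0^\sharp$ pathology; and (ii) the ordinal‑arithmetic bookkeeping showing that every $L_{\Gamma\cdot k}(N,P)$ needed for both $\GBCm$‑closure and for absorbing $\Tr_\Gamma^N(A)$ is obtainable from $\ETR_\Gamma$ via finitely many restarts, which is exactly where $\Gamma\ge\omega^\omega$ is used and where one must be careful not to accidentally invoke $\ETR_{\Gamma\cdot\omega}$.
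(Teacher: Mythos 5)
Your proposal is correct, but it takes a genuinely different route from the paper. The paper never touches the constructible hierarchy here: it fixes a $\GBCm$-amenable global well-order $G$ of $N$ and builds $\Ycal=\bigcup_n\Ycal_n$ directly as an external $\omega$-chain, where $\Ycal_0=\Def^{(M,\Xcal)}(N;G,\Gamma)$ and $\Ycal_{n+1}$ adjoins, for every $A\in\Ycal_n$, the class $\bigl(\Tr_\Gamma(A)^N\bigr)^{(M,\Xcal)}$ (computable in $(M,\Xcal)$ by a recursion of height $\omega\cdot\Gamma\le\Gamma\cdot 2$) together with everything definable from it; $\ETR_\Gamma$ in $(N,\Ycal)$ then falls out immediately from the characterization of $\ETR_\Gamma$ via $\Gamma$-iterated truth predicates, since each $\Ycal_n$ is coded in $\Xcal$ and the two models agree on what an iterated truth predicate over $N$ is. You instead resurrect the second-order-$L$ route of Corollary~\ref{cor3:etr-inner-model} --- which the paper explicitly abandons for lack of full $\ETR$ --- by observing that one never needs the whole unrolling, only the individual membership codes $L_{\Gamma\cdot k}(N,G,\Gamma)$ for standard $k$, each obtainable from $\ETR_{\Gamma\cdot 2k}$, and then taking $\Ycal$ to be the externally formed union of the coded classes. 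Both arguments are external $\omega$-unions of pieces coded in $\Xcal$ and both lean on the same three pillars (availability of $\ETR_{\Gamma\cdot m}$ for standard $m$, well-order and truth-predicate agreement between $(N,\Ycal)$ and $(M,\Xcal)$, and the Fujimoto-style equivalence), so the consistency-strength bookkeeping is identical; what the paper's version buys is economy, avoiding the chapter-2 membership-code machinery altogether, while yours buys a more canonical, constructibility-flavored $\Ycal$ at the cost of extra bookkeeping. The one point you should spell out if you write this up: your ``localized form of Theorem~\ref{thm3:etr-iff-l-hier}'' (that $\Tr_\Gamma^N(A)$, for $A$ coded at level $\delta$, is coded within roughly $\Gamma\cdot 2$ further levels) is not literally in the paper and needs its own argument --- a least-counterexample induction along $\Gamma$, which works because ``is the $\gamma$-iterated truth predicate for $(N,A)$'' and ``is the class coded below node $x$ of a compliant code'' are first-order in class parameters --- and the exact offsets ($+2$ versus $+\omega$ per truth layer) should be left slack, as you in effect do by clocking in whole multiples of $\Gamma$.
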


Recall that ``$(N,\Ycal) \models \ETR_\Gamma$'' only makes sense when $\Gamma \in \Ycal$ is a well-order. (You can express it as a theory in first-order logic by using a parameter for $\Gamma$.) So in order to have $N$ be $(\GBC + \ETR_\Gamma)$-realizable it must be that $\Gamma$ can be put into the classes for $N$. The condition that $\Gamma$ be $\GBCm$-amenable for $N$ ensures this can happen. Otherwise, you can run into pathologies. For example, if $N = L^M$ and $\Gamma$ codes $0^\sharp$, then $\Gamma$ is not $\GBC$-amenable to $N$.

And of course, if $M$ is a model of Powerset then $N$ must also be a model of Powerset, so this shows that being $(\GBC + \ETR_\Gamma)$-realizable goes down to inner models.

\begin{proof}
Fix $G \in \Xcal$ a bijection $\Ord^M \to N$ which is $\GBCm$-amenable to $N$. (See lemma \ref{lem2:gbc-amen-gwo}.) We define $\Ycal$ an $\ETR_\Gamma$-realization for $N$ as a certain subset of $\Xcal$, built as a union of an $\omega$-chain of $\GBCm$-realizations for $N$. Start with $\Ycal_0 = \Def^{(M,\Xcal)}(N;G,\Gamma)$. Then $\Ycal_0$ is coded in $\Xcal$ because $\Xcal$ has a truth predicate for $N$ relative to $G$ and $\Gamma$. Also, note that $(N,\Ycal_0) \models \Gamma$ is a well-order; otherwise, $(M,\Xcal)$ would also see the witness that $\Gamma$ is ill-founded, contradicting that $(M,\Xcal) \models \Gamma$ is a well-order. Observe that $(N,\Ycal_0) \models \GBCm$. Now, given $\Ycal_n$ let $\Ycal_{n+1}$ be the smallest extension of $\Ycal_n$ which contains all $\Gamma$-iterated truth predicates relative to parameters from $\Ycal_n$. Formally,
\[
\Ycal_{n+1} = \bigcup \left\{ \Def^{(M,\Xcal)}\left(N;\left(\Tr_\Gamma(A)^N\right)^{(M,\Xcal)}\right) : A \in \Ycal_n \right\}.
\]
Some remarks are in order. First, because $\Ycal_n$ is coded in $\Xcal$, so is $\Ycal_{n+1}$. Second, it must be addressed what this iterated truth predicate is. By way of a transfinite induction of height $\omega \cdot \Gamma$, our model $(M,\Xcal)$ can build what it thinks is the $\Gamma$-iterated truth predicate for $N$, relative to a parameter. This is $(\Tr_\Gamma(A)^N)^{(M,\Xcal)}$.

Finally, set $\Ycal = \bigcup_{n \in \omega} \Ycal_n$. We know that $(N,\Ycal) \models \GBCm$, because  $\Ycal$ is the union of an increasing chain of $\GBCm$-realizations for $N$. We now want to see that $(N,\Ycal) \models \ETR_\Gamma$. Fix $A \in \Ycal$. Then $A \in \Ycal_n$ for some $n$. Thus
\[
\Tr_\Gamma(A)^{(N,\Ycal)} = (\Tr_\Gamma(A)^N)^{(M,\Xcal)} \in \Ycal_{n+1} \subseteq \Ycal.
\]
So $(N,\Ycal)$ contains $\Tr_\Gamma(A)$ for all $A \in \Ycal$, as desired.
\end{proof}

A similar strategy can be used to show that $\GBC + \ETR$ is closed under inner models, rather than going through second-order $L$. For full $\ETR$, however, to build $\Ycal_{n+1}$ we want to include iterated truth predicates of all lengths in $\Ycal_n$, not just those of length $\Gamma$. We will also see a version of this construction reappear in chapter 4.

\section{\texorpdfstring{Separating levels of $\ETR$}{Separating levels of ETR}}

In this section we will see that the levels of $\ETR$ form a hierarchy in consistency strength. 

Let us begin with a lemma that $\GBC + \ETR_\Gamma$ proves well-order comparability for $\Gamma$. That is, if $\Delta$ is any well-order then exactly one of the following holds: $\Delta < \Gamma$, $\Delta = \Gamma$, or $\Delta > \Gamma$.\footnote{If $\Delta$ and $\Gamma$ are class well-orders then $\Delta \le \Gamma$ if there is an embedding of $\Delta$ onto an initial segment of $\Gamma$.}
This is a refinement of the fact that $\ETR$ proves that any two class well-orders are comparable.

\begin{lemma}
The theory $\GBC + \ETR_\Gamma$ proves that $\Gamma$ is comparable to any class well-order. That is, if $\Delta$ is a class well-order then either there is an embedding of $\Gamma$ onto an initial segment of $\Delta$ or else an embedding of $\Delta$ onto an initial segment of $\Gamma$.
\end{lemma}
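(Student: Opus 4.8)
The plan is to define, via a single elementary transfinite recursion along $\Gamma$, the comparison map between $\Gamma$ and $\Delta$, and then argue that it must exhaust one of the two orders. First I would set up the recursion: the recursive requirement is that $e(g)$ should be the $<_\Delta$-least element of $\dom\Delta$ strictly above every $e(g')$ for $g' <_\Gamma g$, if such an element exists, and $e(g)$ should be undefined otherwise. This is a first-order recursive requirement along $\Gamma$ with $\Delta$ (and $\Gamma$) as class parameters, so $\ETR_\Gamma$ delivers a class $e$ satisfying it. By construction $e$ is an initial partial isomorphism: its domain is downward-closed in $\Gamma$ (once $e(g)$ is undefined, every $<_\Gamma$-larger node is undefined too), its range is downward-closed in $\Delta$ (we always pick the least available element above the current range), and it is order-preserving.

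Next I would analyze the two ways the recursion can terminate. Case one: $e$ is total on $\dom\Gamma$. Then $e$ is an order-embedding of $\Gamma$ into $\Delta$ whose range is downward-closed, hence $\Gamma$ is isomorphic to an initial segment of $\Delta$. Case two: there is a $<_\Gamma$-least $g^*$ with $e(g^*)$ undefined. By minimality $e$ is defined on all $g' <_\Gamma g^*$, and $e$ restricted to this initial segment $\Gamma\rest g^*$ is an isomorphism onto its range, which is a downward-closed subclass of $\Delta$. The reason $e(g^*)$ is undefined is precisely that $\ran(e)$ has no $<_\Delta$-upper bound strictly above all of it — but $\ran(e)$ is downward-closed and linearly ordered, so the only way this happens is if $\ran(e) = \dom\Delta$, i.e.\ $e$ already surjects onto all of $\Delta$. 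Thus $\Delta$ is isomorphic to the initial segment $\Gamma\rest g^*$ of $\Gamma$. Either way, one of $\Gamma$, $\Delta$ embeds as an initial segment of the other, which is the claim.

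For the "exactly one" sharpening mentioned in the surrounding text (strict vs.\ equal), I would note the standard fact that no well-order is isomorphic to a proper initial segment of itself, so at most one of the three alternatives $\Delta<\Gamma$, $\Delta=\Gamma$, $\Delta>\Gamma$ can hold; this uses only that isomorphisms of well-orders (equivalently, of membership codes/extensional well-founded relations) are unique and rigid, a fact available over $\GBc$ by the uniqueness-of-isomorphism results of chapter~2, applied to the set-sized or class-sized orders at hand. The main obstacle — really the only nontrivial point — is justifying that $\ETR_\Gamma$ suffices to run the recursion: the recursion has length exactly $\Gamma$, and the comparison requirement at stage $g$ refers only to the partial solution $e\rest g$ and the fixed class $\Delta$, so it is a bona fide instance of Elementary Transfinite Recursion of height $\le\Gamma$, with no hidden appeal to longer recursions or to Comprehension beyond the elementary level. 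Everything else is the routine verification that the resulting map is an initial partial isomorphism and that its failure to extend forces surjectivity, which I would present briefly rather than in full detail.
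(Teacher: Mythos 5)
Your proof is correct and is essentially the paper's own argument: the same single elementary recursion of length $\Gamma$ sending each $g$ to the least unused element of $\Delta$, with the same two-case analysis (totality gives $\Gamma$ onto an initial segment of $\Delta$; first failure forces the range to exhaust $\Delta$, giving the inverse embedding). Your extra remarks on downward-closure, the length of the recursion, and trichotomy via rigidity of well-orders are accurate but add nothing beyond the paper's proof.
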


\begin{proof}
Fix $\Delta$. Consider the transfinite recursion which attempts to construct an embedding of $\Gamma$ onto an initial segment of $\Delta$. That is, this recursion builds such an embedding $\pi$ according to the following rule: for $g \in \dom \Gamma$ set $\pi(g)$ to be the least element of $\dom \Delta \setminus \ran (\pi \rest (\Gamma \rest \mathord<_\Gamma g))$, if such exists, otherwise $\pi(g)$ is undefined. There are two cases. If $\pi(g)$ is always defined then we have embedded $\Gamma$ onto an initial segment of $\Delta$. If $\pi(g)$ is ever undefined at a stage then it will be undefined at every subsequent stage. So we get that $\pi\inv$ embeds $\Delta$ onto an initial segment of $\Gamma$.
\end{proof}

It is clear that if $\Delta < \Gamma$ then $\ETR_\Gamma$ implies $\ETR_\Delta$. But if $\Delta$ and $\Gamma$ are sufficiently close then they are in fact equivalent. For instance, $\ETR_\Delta$ is equivalent (over $\GBC$) to $\ETR_{\Delta + \Delta}$ because to carry out a recursion of height $\Delta + \Delta$ one first carries out a recursion of height $\Delta$, then using the solution of such as a parameter carries out a second recursion of height $\Delta$. In general, $\ETR_\Delta$ is equivalent to $\ETR_{\Delta \cdot n}$ for any standard $n > 0$.

This does not generalize from $n$ to $\omega$.

\begin{theorem} \label{thm3:sep-etr-gamma}
Suppose $(M,\Xcal) \models \GBCm + \ETR_\Gamma$ for $\Gamma \in \Xcal$ a class well-order $\ge \omega^\omega$ and let $\Delta \in \Xcal$ be a class well-order such that $\Delta \cdot \omega \le \Gamma$. Then, there is $(M,\Ycal)$ a coded $V$-submodel of $(M,\Xcal)$ so that $(M,\Ycal) \models \GBCm + \ETR_\Delta$.
\end{theorem}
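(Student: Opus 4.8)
The plan is to adapt the construction from the proof of theorem~\ref{thm3:etr-gamma-inner-model}, but with the inner model there replaced by $M$ itself, so that the collection of classes produced is a coded $V$-submodel of $(M,\Xcal)$ rather than merely a realization of an inner model. By the implication $(2\Rightarrow1)$ of the corollary to Fujimoto's theorem~\ref{thm3:etr-iff-itr} — whose proof places no lower bound on the length of the iterated truth predicate — it suffices to produce a coded $V$-submodel $(M,\Ycal)\models\GBCm$ with the property that $\Tr_\Delta(A)\in\Ycal$ for every $A\in\Ycal$; any such model satisfies $\ETR_\Delta$. I would first fix $G\in\Xcal$ a global well-order of $M$ of ordertype $\Ord$ (available since $(M,\Xcal)\models\GBCm$), and record two facts: since $\ETR_\Gamma$ includes $\ETR_\omega$, by proposition~\ref{prop3:etromega-truth-preds} every $A\in\Xcal$ has $\Tr(A)\in\Xcal$, so $\Def^{(M,\Xcal)}(M;A)$ is coded in $\Xcal$; and, given any class $C$ coding a sequence $\langle (C)_x : x\in M\rangle$, the model $(M,\Xcal)$ can form the class $\langle\Tr_\Delta((C)_x):x\in M\rangle$, since this is a single elementary transfinite recursion of height $\omega\cdot\Delta$, and $\omega\cdot\Delta\le\Gamma$ by the ordinal arithmetic of the corollary to~\ref{thm3:etr-iff-itr} ($\omega\cdot\Delta=\Delta$ when $\Delta\ge\omega^\omega$, while $\omega\cdot\Delta<\omega^\omega\le\Gamma$ when $\Delta<\omega^\omega$).

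Next I would build an increasing $\omega$-chain $\Ycal_0\subseteq\Ycal_1\subseteq\cdots$ of coded $\GBCm$-realizations for $M$ inside $\Xcal$, exactly as in~\ref{thm3:etr-gamma-inner-model}: put $\Ycal_0=\Def^{(M,\Xcal)}(M;G,\Delta)$, and given $\Ycal_n$ coded by $C_n\in\Xcal$, put $D_n=\langle\Tr_\Delta((C_n)_x):x\in M\rangle$ and $\Ycal_{n+1}=\Def^{(M,\Xcal)}(M;D_n)$, coded by $\Tr(D_n)\in\Xcal$. Routine checks then give $\Ycal_n\subseteq\Ycal_{n+1}$ (each $A=(C_n)_x$ is definable from $\Tr_\Delta(A)=(D_n)_x$ via the ``$x\in A$'' clause of the iterated-truth definition), that each $(M,\Ycal_n)\models\GBCm$ (it contains $G$, is closed under first-order definability, and $M\models\ZFCm$), and that $(M,\Ycal_n)$ agrees that $\Delta$ is a well-order (a witness of ill-foundedness would lie in $\Ycal_n\subseteq\Xcal$ and contradict $(M,\Xcal)\models\ETR_\Gamma$). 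The one genuinely new point beyond~\ref{thm3:etr-gamma-inner-model} is that here $\langle C_n:n\in\omega\rangle$ must itself land in $\Xcal$, so that $\Ycal=\bigcup_n\Ycal_n$ is coded: since $C_n\mapsto C_{n+1}$ is a fixed definable operation on classes (form $D_n$, take $\Tr(D_n)$, decode), the sequence $\langle C_n\rangle$ is the output of a single elementary transfinite recursion obtained by flattening the nested recursions, of height at most $(\omega\cdot\Delta+\omega)\cdot\omega$, which is $\le\Gamma$ by the hypothesis $\Delta\cdot\omega\le\Gamma$ together with $\Gamma\ge\omega^\omega$. Hence $\langle C_n\rangle\in\Xcal$ by $\ETR_\Gamma$, and $\Ycal$ is coded in $\Xcal$ by $E=\{\,((n,x),y) : y\in(C_n)_x\,\}$.

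Finally I would verify $(M,\Ycal)\models\GBCm+\ETR_\Delta$. That $(M,\Ycal)\models\GBCm$ follows because $\Ycal$ is an increasing union of $\GBCm$-realizations for $M$, by the argument of theorem~\ref{thm1:gm-omnibus}$(5)$ transposed to the powerset-free setting, and $(M,\Ycal)$ thinks $\Delta$ is a well-order as before. For $A\in\Ycal$, choose $n$ with $A\in\Ycal_n$; then the class that $(M,\Xcal)$ computed as $\Tr_\Delta(A)$ lies in $\Ycal_{n+1}\subseteq\Ycal$, and since ``$T$ is the $\Delta$-iterated truth predicate relative to $A$'' is a first-order assertion in the parameters $\Delta,A,T$ and $(M,\Ycal)$ agrees $\Delta$ is a well-order, $(M,\Ycal)$ recognizes that class as $\Tr_\Delta(A)$. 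So $(M,\Ycal)\models\forall A\,(\Tr_\Delta(A)\text{ exists})$, and hence $(M,\Ycal)\models\ETR_\Delta$, completing the argument.

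I expect the main obstacle to be the flattening step: verifying carefully that an $\omega$-length iteration whose individual stages are themselves transfinite recursions of height about $\omega\cdot\Delta$ can be realized as a single elementary transfinite recursion of height $\le\Gamma$, i.e.\ that all the relevant ordinal inequalities do follow from $\Delta\cdot\omega\le\Gamma$ and $\Gamma\ge\omega^\omega$. Everything else is a transcription of the inner-model argument with $M$ in place of $N$.
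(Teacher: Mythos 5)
Your proof is correct in substance, but it takes a different route from the paper's, so a comparison is worth recording. The paper's proof is more direct: fix $G \in \Xcal$ a global well-order, observe that the single iterated truth predicate $\Tr_{\Delta\cdot\omega}(G)$ lies in $\Xcal$ (its construction is an elementary recursion of rank $(\omega\cdot\Delta)\cdot\omega \le \Gamma$), and let $\Ycal$ consist of all classes in $\Def^{(M,\Xcal)}(M;\Tr_\Upsilon(G))$ for $\Upsilon < \Delta\cdot\omega$; this $\Ycal$ is coded via $\Tr_{\Delta\cdot\omega}(G)$ itself, and $\ETR_\Delta$ holds because $A \in \Def(M;\Tr_\Upsilon(G))$ implies $\Tr_\Delta(A) \in \Def(M;\Tr_{\Upsilon+\Delta}(G)) \subseteq \Ycal$. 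You instead transpose the $\omega$-chain construction of theorem \ref{thm3:etr-gamma-inner-model} to $N = M$ and then flatten the nested recursions to get the chain of codes inside $\Xcal$. The two constructions generate essentially the same hierarchy (your $\Ycal_n$ is in effect the classes definable from $\Tr_{\Delta\cdot n}(G)$-type data), but the ``glue'' differs: the paper needs the relativization fact that $\Tr_\Delta(A)$ is definable from a longer iterated truth predicate over $G$ (the G\"odel fixed-point trick from theorem \ref{thm3:etr-iff-itr}), which you avoid by constructing the relative truth predicates explicitly; in exchange you must carry out the flattening argument and verify the resulting ordinal bounds, which the paper's single-recursion formulation makes automatic. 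Your approach is also closer to the construction that the paper reuses in chapter 4, so nothing is lost mathematically.

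One small correction: the identity $\omega\cdot\Delta = \Delta$ for $\Delta \ge \omega^\omega$ is false in general (e.g.\ $\Delta = \omega^\omega + 1$ gives $\omega\cdot\Delta = \omega^\omega + \omega$). The inequalities you actually need do hold, but for a different reason: writing the leading Cantor normal form exponent of $\Delta$ as $\beta$, if $\Delta \ge \omega^\omega$ then $1 + \beta = \beta$, so every exponent appearing in $\omega\cdot\Delta$ is at most $\beta < \beta + 1$ and hence $\omega\cdot\Delta < \omega^{\beta+1} = \Delta\cdot\omega \le \Gamma$, while if $\Delta < \omega^\omega$ then $\omega\cdot\Delta < \omega^\omega \le \Gamma$; the same computation gives $(\omega\cdot\Delta + \omega)\cdot\omega \le \max(\Delta\cdot\omega, \omega^\omega) \le \Gamma$, which is exactly the bound your flattening step requires (and is the same computation the paper uses for $(\omega\cdot\Delta)\cdot\omega \le \Gamma$).
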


\begin{proof}
Fix $G \in \Xcal$ a bijection $\Ord^M \to M$. Consider $T_{\Delta \cdot \omega}(G)$ which is in $\Xcal$ because it can be constructed via an elementary recursion of rank $(\omega \cdot \Delta) \cdot \omega \le \Gamma$. Now let $\Ycal$ consist of all sets (internally) definable from an initial segment of $T_{\Delta \cdot \omega}(G)$. That is
\[
\Ycal = \{ Y : Y \in \Def^{(M,\Xcal)}(M; T_\Upsilon(G)) \text{ for some } \Upsilon < \Delta \cdot \omega \}.
\]
Then $\Ycal$ is coded in $\Xcal$ by using $T_{\Delta \cdot \omega}(G)$. 

It is immediate that $(M,\Ycal)$ satisfies Extensionality, Replacement, and Global Choice. It satisfies First-order Comprehension because $\Ycal$ is an increasing union of $\GBCm$-realizations, namely the $\Def(M; T_\Upsilon(G))$ for $\Upsilon < \Delta \cdot \omega$. Finally, it satisfies $\ETR_\Delta$ because if $A \in \Ycal$ then $A \in \Def(M; T_\Upsilon(G))$ for some $\Upsilon < \Delta \cdot \omega$ and thus $\Tr_\Delta(A) \in \Def(M;T_{\Upsilon+\Delta}(G)) \subseteq \Ycal$.
\end{proof}

This establishes theorem \ref{thm3:main-thm} for the $\GBC + \ETR$ case.

As an immediate corollary we get that levels of $\ETR$ can be separated by consistency strength.

\begin{corollary} 
Let $(M,\Xcal) \models \GBC$ and suppose $\Gamma \in \Xcal$ is a well-order $\ge \omega^\omega$. Then if $(M,\Xcal) \models \ETR_{\Gamma \cdot \omega}$ we have that $(M,\Xcal) \models \Con(\GBC + \ETR_\Gamma)$. \qed
\end{corollary}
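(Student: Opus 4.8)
The statement to prove is the corollary: if $(M,\Xcal) \models \GBC$ with $\Gamma \in \Xcal$ a well-order $\ge \omega^\omega$, and $(M,\Xcal) \models \ETR_{\Gamma \cdot \omega}$, then $(M,\Xcal) \models \Con(\GBC + \ETR_\Gamma)$.

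The plan is to derive this as a straightforward consequence of theorem \ref{thm3:sep-etr-gamma} (equivalently theorem \ref{thm3:main-thm}) together with the reflection/completeness-theorem machinery available in $\GBC$. First I would apply theorem \ref{thm3:sep-etr-gamma} with $\Delta = \Gamma$: since $(M,\Xcal) \models \GBC + \ETR_{\Gamma \cdot \omega}$ and $\Gamma \cdot \omega \le \Gamma \cdot \omega$, and $\Gamma \ge \omega^\omega$, the theorem yields a class $\Ycal \subseteq \Xcal$, coded in $\Xcal$ by a single class $C \in \Xcal$, so that $(M,\Ycal) \models \GBC + \ETR_\Gamma$. (Here we use $\ETR_{\Gamma\cdot\omega}$ to build $\Tr_{\Gamma\cdot\omega}(G)$ as a class, since $(\omega\cdot\Gamma)\cdot\omega \le \Gamma$ when $\Gamma \ge \omega^\omega$ — this is exactly the arithmetic fact invoked in the proof of the earlier corollary on fragments.)

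The second and key step is to pass from "there is a coded $V$-submodel satisfying $\GBC + \ETR_\Gamma$" to "$\Con(\GBC + \ETR_\Gamma)$ holds in $(M,\Xcal)$." Because $\Ycal$ is coded by a class $C \in \Xcal$, the structure $(M,\Ycal)$ is a definable (in the parameter $C$) class structure inside $(M,\Xcal)$, and $(M,\Xcal)$ can form a full satisfaction predicate for it — here we need that $(M,\Xcal) \models \ETR_{\Gamma\cdot\omega}$ hence $\GBC + \ETR_\omega$, which by proposition \ref{prop3:etromega-truth-preds} gives $\Tr(C) \in \Xcal$, providing the needed truth predicate for the coded structure $(M,\Ycal)$ relative to $C$. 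With a definable class model equipped with a satisfaction class, $(M,\Xcal)$ verifies by an internal induction that every axiom of $\GBC + \ETR_\Gamma$ holds in $(M,\Ycal)$ and hence, by the (internal) soundness/completeness theorem — which is provable in $\GBC$ since it is a statement about the set $\omega$ and finite proofs — that the theory $\GBC + \ETR_\Gamma$ has a model, so $\Con(\GBC + \ETR_\Gamma)$. One subtlety worth flagging: $\ETR_\Gamma$ as formalized uses $\Gamma$ as a parameter, so the consistency statement really asserts $\Con$ of the $\Lcal_\in(\Gamma)$-theory; since $\Gamma \in \Xcal$ this is expressible and the argument goes through verbatim with $\Gamma$ as a parameter throughout.

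I expect the main obstacle — really the only point requiring care rather than routine bookkeeping — to be making precise the move "coded $V$-submodel with a satisfaction class $\Rightarrow$ internal consistency." The delicate issue is that $(M,\Xcal)$ is $\omega$-nonstandard in general, so the satisfaction class $\Tr(C)$ measures truth of nonstandard formulas too; but that is exactly what is needed, since $(M,\Xcal)$'s internal axiomatization of $\GBC + \ETR_\Gamma$ may include nonstandard-length instances, and the full satisfaction class handles these uniformly. Once one observes that $(M,\Xcal)$ can internally check each (possibly nonstandard) axiom against $\Tr(C)$ and that the arithmetized completeness theorem is a theorem of $\GBC$, the conclusion $\Con(\GBC + \ETR_\Gamma)$ follows. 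I would keep this paragraph brief in the final writeup, as the pattern is standard, citing the construction of $\Ycal$ from theorem \ref{thm3:sep-etr-gamma} and the existence of the truth predicate from proposition \ref{prop3:etromega-truth-preds}.
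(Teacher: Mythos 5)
Your proof is correct and follows essentially the same route as the paper: the corollary is the intended ``immediate'' consequence of theorem \ref{thm3:sep-etr-gamma} (applied with the big well-order $\Gamma \cdot \omega$ and $\Delta = \Gamma$), followed by the routine internalization that a coded $V$-submodel of $\GBC + \ETR_\Gamma$, together with a truth predicate relative to its code (available since $(M,\Xcal) \models \ETR_\omega$), yields $\Con(\GBC + \ETR_\Gamma)$ inside $(M,\Xcal)$, with the $\omega$-nonstandard case handled exactly as you flag. Only two cosmetic points: the inequality in your parenthetical should read $(\omega \cdot \Gamma)\cdot\omega \le \Gamma \cdot \omega$ rather than $\le \Gamma$, and the final step needs only soundness (induction on internal proofs against the satisfaction class), not the completeness theorem.
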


There is also a version of this corollary for $\GBCm$.

To turn this into a statement about theories in $\Lcal_\in$, i.e.\ the language of set theory without any names for distinguished well-orders, we need that $\Gamma$ is definable. Moreover, in order for $(M,\Xcal)$ to agree with its $V$-submodels as to what $\Gamma$ is we need that $\Gamma$ is defined by a first-order formula (without parameters). So we can say that, for instance, $\GBC + \ETR \proves \Con(\GBC + \ETR_\Ord)$, where both theories are in $\Lcal_\in$. See the discussion in section \ref{sec4:medium} for further details.

The proof for theorem \ref{thm3:sep-etr-gamma} also separates $\ETR_\Gamma$ and $\ETR_{<\Gamma}$ for $\Gamma$ closed under addition.

\begin{definition}
Let $(M,\Xcal) \models \GBCm$ and suppose $\Gamma \in \Xcal$ is a well-order. Then $(M,\Xcal) \models \ETR_{<\Gamma}$ if it satisfies $\ETR_\Delta$ for all $\Delta < \Gamma$. 
\end{definition}

\begin{theorem}
Suppose $(M,\Xcal) \models \GBCm + \ETR_\Gamma$ for $\Gamma \in \Xcal$ a well-order $\ge \omega^\omega$ so that $\Delta + \Delta < \Gamma$ for all $\Delta < \Gamma$. Then, $(M,\Xcal)$ has a coded $V$-submodel $(M,\Ycal) \models \ETR_{<\Gamma}$.
\end{theorem}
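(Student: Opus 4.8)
The plan is to mimic the proof of theorem \ref{thm3:sep-etr-gamma}, but using the whole family of iterated truth predicates of length below $\Gamma$ rather than a single length. Fix $G \in \Xcal$ a bijection $\Ord^M \to M$. Since $\Gamma \ge \omega^\omega$ and $(M,\Xcal) \models \ETR_\Gamma$, the corollary to theorem \ref{thm3:etr-iff-itr} gives $\Tr_\Gamma(G) \in \Xcal$. The key structural fact about $\Gamma$ is that $\Delta + \Delta < \Gamma$ for all $\Delta < \Gamma$ makes $\Gamma$ closed under ordinal addition, hence additively principal and in particular a limit; so for every $\Upsilon < \Gamma$ we have $\Upsilon + 1 < \Gamma$, the class $\Tr_\Upsilon(G)$ is first-order definable from $\Tr_\Gamma(G)$ as its restriction to triples with first coordinate ${<}\,\Upsilon$, and $\Tr(\Tr_\Upsilon(G))$ is interdefinable with (the level-$\Upsilon$ slice of) $\Tr_{\Upsilon+1}(G)$, hence also definable from $\Tr_\Gamma(G)$. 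Consequently the hyperclass
\[
\Ycal = \{ Y : Y \in \Def^{(M,\Xcal)}(M;\Tr_\Upsilon(G)) \text{ for some } 1 \le \Upsilon < \Gamma \}
\]
is coded in $\Xcal$ by a single class first-order definable from $\Tr_\Gamma(G)$; thus $(M,\Ycal)$ is a coded $V$-submodel of $(M,\Xcal)$.

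Next I would check that $(M,\Ycal) \models \GBCm$. Each $\Def^{(M,\Xcal)}(M;\Tr_\Upsilon(G))$ is a $\GBCm$-realization for $M$ by the reasoning of proposition \ref{prop3:truth-iff-coded}: the Tarskian truth predicate relative to the parameter $\Tr_\Upsilon(G)$ gives a coded $V$-submodel of $\GBc$, and Global Choice is automatic since $G$ is itself definable from $\Tr_\Upsilon(G)$ (for $\Upsilon \ge 1$). These form an increasing chain under $\subseteq$ (since $\Tr_\Upsilon(G)$ is an initial segment of $\Tr_{\Upsilon'}(G)$ for $\Upsilon \le \Upsilon'$), so Extensionality, Class Replacement, Global Choice, and Elementary Comprehension all pass to the union $\Ycal$ exactly as in theorem \ref{thm1:gm-omnibus}.$(5)$.

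The heart of the argument is that $(M,\Ycal) \models \ETR_\Delta$ for every $\Delta < \Gamma$. Given $A \in \Ycal$, fix $\Upsilon < \Gamma$ with $A \in \Def^{(M,\Xcal)}(M;\Tr_\Upsilon(G))$. As in the proof of theorem \ref{thm3:sep-etr-gamma}, the restriction of $\Tr_{\Upsilon+\Delta}(G)$ to levels in the interval $[\Upsilon,\Upsilon+\Delta)$ is, after reindexing, a $\Delta$-iterated truth predicate over the structure obtained by adjoining $\Tr_\Upsilon(G)$ as a parameter; since $A$ is first-order definable from $\Tr_\Upsilon(G)$ (and $G$), this lets one define $\Tr_\Delta(A)$ from $\Tr_{\Upsilon+\Delta}(G)$. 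The additive-closure hypothesis on $\Gamma$ is exactly what guarantees $\Upsilon+\Delta < \Gamma$, so $\Tr_{\Upsilon+\Delta}(G) \in \Xcal$ (again a restriction of $\Tr_\Gamma(G)$) and therefore $\Tr_\Delta(A) \in \Def^{(M,\Xcal)}(M;\Tr_{\Upsilon+\Delta}(G)) \subseteq \Ycal$. By the corollary to theorem \ref{thm3:etr-iff-itr}, closure of $\Ycal$ under $A \mapsto \Tr_\Delta(A)$ yields $(M,\Ycal) \models \ETR_\Delta$; ranging over all $\Delta < \Gamma$ gives $(M,\Ycal) \models \ETR_{<\Gamma}$.

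The main obstacle is the bookkeeping in the third paragraph: verifying that a $\Delta$-iterated truth predicate relative to a parameter already present at stage $\Upsilon$ is absorbed into the parameter-free iterated truth predicate $\Tr_{\Upsilon+\Delta}(G)$, with the level indices lining up correctly through the reindexing. This is precisely the mechanism already used (and, implicitly, justified) in theorem \ref{thm3:sep-etr-gamma}, so I expect to invoke it rather than re-derive it; the genuinely new ingredient is only the elementary ordinal arithmetic observation that additive indecomposability of $\Gamma$ keeps $\Upsilon+\Delta$ below $\Gamma$ whenever $\Upsilon,\Delta < \Gamma$, which is what allows the single class $\Tr_\Gamma(G)$ to simultaneously code $\Ycal$ and witness every recursion of length ${<}\,\Gamma$.
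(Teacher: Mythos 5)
Your proposal is correct and follows essentially the same route as the paper: it defines $\Ycal$ as the classes definable from some $\Tr_\Upsilon(G)$ with $\Upsilon < \Gamma$ (coded via $\Tr_\Gamma(G)$), verifies $\GBCm$ via the increasing union of $\GBCm$-realizations, and gets $\ETR_\Delta$ from $\Tr_\Delta(A) \in \Def(M;\Tr_{\Upsilon+\Delta}(G)) \subseteq \Ycal$, with the additive-closure hypothesis ensuring $\Upsilon+\Delta < \Gamma$ — exactly the adaptation of theorem \ref{thm3:sep-etr-gamma} that the paper's proof sketch intends. The only difference is that you spell out the bookkeeping the paper leaves implicit.
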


\begin{proof}[Proof sketch]
Similar to the proof of theorem \ref{thm3:sep-etr-gamma}, but set
\[
\Ycal = \{ Y : Y \in \Def^{(M,\Xcal)}(M;T_\Upsilon(G)) \text{ for some } \Upsilon < \Gamma \}
\]
where $G \in \Xcal$ is some bijection $\Ord^M \to M$. Then $(M,\Ycal) \models \ETR_{<\Gamma}$.
\end{proof}

Confining one's attention to transitive models (or, more broadly, $\omega$-standard models) this is the end of the story. For any standard $n$ we have that $\ETR_\Gamma$ is equivalent to $\ETR_{\Gamma \cdot n}$ so $\ETR_\Gamma$ is equivalent to $\ETR_{< \Gamma \cdot \omega}$ for models with the correct $\omega$. But if a model has ill-founded $\omega$ then there is a gap. Can an intermediate theory be found in this gap?

The answer is yes.

Fix $(M,\Xcal) \models \GBC + \ETR_{\Gamma \cdot \omega}$ an $\omega$-nonstandard model where $\Gamma \in \Xcal$ so that $(M,\Xcal) \models \Gamma$ is well-founded. Given $\Ycal \subseteq \Xcal$ define the {\em $\Gamma$-recursion cut for $\Ycal$} to be $I_\Gamma(\Ycal) = \{ e \in \omega^M : (M,\Ycal) \models \GBC + \ETR_{\Gamma \cdot e} \}$. Note that $I_\Gamma(\Ycal)$ must be closed under addition, as being closed under multiplication by standard $n$ is equivalent to being closed under addition. This is the only restriction on what $I_\Gamma(\Ycal)$ can be. 

\begin{theorem}
Let $(M,\Xcal) \models \GBC + \ETR_{\Gamma \cdot \omega}$ be $\omega$-nonstandard where $\Gamma \in \Xcal$ is well-founded according to $(M,\Xcal)$. Let $I \subseteq \omega^M$ be a cut closed under addition. Then there is $\Ycal \subseteq \Xcal$ so that $I_\Gamma(\Ycal) = I$.
\end{theorem}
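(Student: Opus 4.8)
The plan is to reuse the construction from theorem~\ref{thm3:sep-etr-gamma}, but to index the family of iterated truth predicates by the natural numbers $e \in \omega^M$ and then keep only the slices with $e \in I$. Fix $G \in \Xcal$ a bijection $\Ord^M \to M$. Since $(M,\Xcal) \models \ETR_{\Gamma\cdot\omega}$ and $\Gamma\cdot\omega \ge \Gamma \ge \omega^\omega$ (recall $\Gamma \ge \omega^\omega$ throughout this section), the corollary to theorem~\ref{thm3:etr-iff-itr} gives $\Tr_{\Gamma\cdot\omega}(G) \in \Xcal$. Here ``$\Gamma\cdot\omega$'' denotes $\omega^M$-many copies of $\Gamma$ laid end to end, so that $\Gamma\cdot e < \Gamma\cdot\omega$ for \emph{every} $e \in \omega^M$, standard or nonstandard; therefore each $\Tr_{\Gamma\cdot e}(G)$ --- being the restriction of $\Tr_{\Gamma\cdot\omega}(G)$ to its first $\Gamma\cdot e$ levels, with $\Gamma\cdot e$ first-order definable from $\Gamma\cdot\omega$ and $e$ --- lies in $\Xcal$, as does $\Tr\bigl((M,\in,\Tr_{\Gamma\cdot e}(G))\bigr)$. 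For $e \in \omega^M$ put $\Xcal_e = \Def^{(M,\Xcal)}\!\bigl(M; \Tr_{\Gamma\cdot e}(G)\bigr)$, a coded $V$-submodel of $(M,\Xcal)$, and set $\Ycal = \bigcup_{e \in I}\Xcal_e$. (A cut is closed under successor, so $I \supseteq \omega$ and each $\Xcal_e$ with $e \ge 1$ contains $G$, whence it satisfies $\GBC$ by the argument of proposition~\ref{prop3:truth-iff-coded}.)

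First I would verify $(M,\Ycal) \models \GBC$: for $e \le e'$ in $\omega^M$ the class $\Tr_{\Gamma\cdot e}(G)$ is a definable initial segment of $\Tr_{\Gamma\cdot e'}(G)$, so $\Xcal_e \subseteq \Xcal_{e'}$; thus $\{\Xcal_e : e \in I\}$ is a $\subseteq$-chain of $\GBC$-realizations of $M$, and its union is again a $\GBC$-realization by theorem~\ref{thm1:gm-omnibus}.$(5)$. Note $\Ycal \subseteq \Xcal$, and $\Gamma, \Gamma\cdot e \in \Ycal$ for all $e$, so the statements $\ETR_{\Gamma\cdot e}$ make sense over $(M,\Ycal)$.

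Then I would compute $I_\Gamma(\Ycal)$. Since $(M,\Ycal) \models \GBC$ and $\Gamma\cdot e \ge \omega^\omega$ for $e \ge 1$, the corollary to theorem~\ref{thm3:etr-iff-itr} reduces ``$(M,\Ycal) \models \ETR_{\Gamma\cdot e}$'' to ``$\Tr_{\Gamma\cdot e}(A) \in \Ycal$ for every $A \in \Ycal$''. For $I \subseteq I_\Gamma(\Ycal)$: given $e \in I$ and $A \in \Ycal$, fix $e_0 \in I$ with $A \in \Xcal_{e_0}$; by the same absorption of a class parameter into an iterated truth predicate over $G$ that appears in the proof of theorem~\ref{thm3:sep-etr-gamma} (a G\"odel fixed-point argument, exactly as in the $(2\Rightarrow1)$ direction of theorem~\ref{thm3:etr-iff-itr}), $\Tr_{\Gamma\cdot e}(A)$ is definable from $\Tr_{\Gamma\cdot e_0 + \Gamma\cdot e}(G)$, i.e.\ $\Tr_{\Gamma\cdot e}(A) \in \Xcal_{e_0 + e}$, and $e_0 + e \in I$ by closure under addition, so $\Tr_{\Gamma\cdot e}(A) \in \Ycal$. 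For $I_\Gamma(\Ycal) \subseteq I$: let $e \in \omega^M \setminus I$; as $I$ is downward closed, $e' < e$ for every $e' \in I$, so $\Gamma\cdot e' + 1 < \Gamma\cdot(e'+1) \le \Gamma\cdot e$. I claim $\Tr_{\Gamma\cdot e}(G) \notin \Ycal$; since $G \in \Ycal$, this yields $(M,\Ycal) \not\models \ETR_{\Gamma\cdot e}$ and hence $e \notin I_\Gamma(\Ycal)$. If instead $\Tr_{\Gamma\cdot e}(G) \in \Xcal_{e'}$ for some $e' \in I$, then because the level-$(\Gamma\cdot e'+1)$ slice of $\Tr_{\Gamma\cdot e}(G)$ is the Tarskian satisfaction class of $(M,\in,G,\Tr_{\Gamma\cdot e'}(G))$ --- from which $\Tr\bigl((M,\in,\Tr_{\Gamma\cdot e'}(G))\bigr)$ is first-order definable --- we would obtain $\Tr\bigl((M,\in,\Tr_{\Gamma\cdot e'}(G))\bigr) \in \Def^{(M,\Xcal)}\!\bigl(M;\Tr_{\Gamma\cdot e'}(G)\bigr)$, contradicting an internal instance of Tarski's theorem on the undefinability of truth. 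Together these give $I_\Gamma(\Ycal) = I$.

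The main obstacle I anticipate is the bookkeeping around the two dual facts driving the computation --- that a parameter definable from $\Tr_{\Gamma\cdot e_0}(G)$ can be folded into a $\Gamma\cdot(e_0+e)$-iterated truth predicate over $G$, and that conversely $\Tr_{\Gamma\cdot e}(G)$ is not definable from any shorter $\Tr_{\Gamma\cdot e'}(G)$ --- made uniform over nonstandard indices $e$, together with keeping the transfinite arithmetic honest, the key point being that $\Gamma\cdot e < \Gamma\cdot\omega$ for all $e \in \omega^M$ so that $\ETR_{\Gamma\cdot\omega}$ supplies every iterated truth predicate the construction calls for. Both facts are instances of arguments already carried out in theorems~\ref{thm3:etr-iff-itr} and~\ref{thm3:sep-etr-gamma}, so the genuinely new work is in the assembly rather than in a new idea.
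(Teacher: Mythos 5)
Your proposal is correct and is essentially the paper's own proof: the paper likewise sets $\Ycal = \bigcup_{e \in I} \Def\bigl(M; (\Tr_{\Gamma\cdot e}(G))^{(M,\Xcal)}\bigr)$ for a global well-order $G$, gets $\GBC$ from the increasing union of $\GBC$-realizations, obtains $I \subseteq I_\Gamma(\Ycal)$ by folding a parameter from $\Xcal_{e_0}$ into $\Tr_{\Gamma\cdot(e_0+e)}(G)$ using closure of $I$ under addition, and rules out $e > I$ exactly as you do, by noting that $\Tr_{\Gamma\cdot e}(G) \in \Xcal_{e'}$ would make $\Tr_{\Gamma\cdot e'+1}(G)$ definable from $\Tr_{\Gamma\cdot e'}(G)$, contradicting Tarski's undefinability of truth. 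Your added bookkeeping (extracting each $\Tr_{\Gamma\cdot e}(G)$, including nonstandard $e$, as an initial segment of $\Tr_{\Gamma\cdot\omega}(G)$, and the uniqueness of iterated truth predicates within the fixed second-order model) is exactly the justification the paper's terser write-up relies on.
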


Observe that, unlike for theorem \ref{thm3:sep-etr-gamma}, $\Ycal$ cannot be coded in $\Xcal$ as then $(M,\Xcal)$ could define $I$, which is impossible.

\begin{proof}
Fix $G \in \Xcal$ a global well-order of $M$. Set 
\[
\Ycal = \bigcup_{e \in I} \Def(M; (\Tr_{\Gamma \cdot e}(G))^{(M,\Xcal)}.
\]
A comment is in order. Because $M$ is $\omega$-nonstandard it in general can admit multiple incompatible full satisfaction classes. So $(\Gamma \cdot e)$-iterated full satisfaction classes will not be unique. Nevertheless, $(M,\Xcal)$ will have at most one $\Delta$-iterated truth predicate relative to a given parameter, because iterated truth predicates are unique in a fixed second-order model. So $(\Tr_{\Gamma \cdot e}(G))^{(M,\Xcal)}$ is well-defined. 

Note also that each $\Def(M; (\Tr_{\Gamma \cdot e}(G))^{(M,\Xcal)}$ is a $\GBC$-realization for $M$. Thus, because $\Ycal$ is the increasing union of these $\GBC$-realizations it too must be a $\GBC$-realization. It remains only to check that $(M,\Ycal) \models \ETR_{\Gamma \cdot e}$ if and only if $e \in I$. The backward direction of this implication is immediate from the definition of $\Ycal$.

For the forward direction, take $a > I$. Suppose towards a contradiction that $(M,\Ycal) \models \ETR_{\Gamma \cdot a}$. Then $\Ycal$ has (what it thinks is) $\Tr_{\Gamma \cdot a}(G)$. But then $\Tr_{\Gamma \cdot a}(G)$ is definable from $\Tr_{\Gamma \cdot e}(G)$ for some $e \in I$. In particular, this means that $\Tr_{\Gamma \cdot e + 1}(G)$ is definable from $\Tr_{\Gamma \cdot e}(G)$, contradicting Tarski's theorem on the undefinability of truth. 
\end{proof}

So while the separation between $\ETR_\Gamma$ and $\ETR_{\Gamma \cdot \omega}$ is optimal for $\omega$-standard models, for $\omega$-nonstandard models there are always intermediate levels of $\ETR$.

Let us return now to transitive models. Earlier when we separated $\ETR_\Gamma$ from $\ETR_{\Gamma \cdot \omega}$ we did so via the second-order part of the model. Starting from $(M,\Xcal) \models \GBC + \ETR_{\Gamma \cdot \omega}$ we found $\Ycal \subseteq \Xcal$ so that $(M,\Ycal)$ satisfies $\ETR_\Gamma$ but not $\ETR_{\Gamma \cdot \omega}$. So the separation is entirely due to which classes we allow in each model.

Can we separate $\ETR_\Gamma$ and $\ETR_{\Gamma \cdot \omega}$ via the first-order part of a model? Can we do so with a transitive model? That is, can we find transitive $M \models \ZFC$ which is $(\GBC + \ETR_\Gamma)$-realizable but not $(\GBC + \ETR_{\Gamma \cdot \omega})$-realizable? 

Yes we can, if $\Gamma$ is a set, rather than a proper class. 

\begin{theorem} \label{thm3:strong-sep-etr-frag}
Let $\gamma$ be an ordinal $\ge \omega^\omega$ given by a first-order definition. That is, there is a first-order formula $\phi(x)$ without parameters so that $\ZFC$ proves $\phi(x)$ has a unique witness $\gamma$ and this witness is an ordinal $\ge \omega^\omega$. Suppose there is a transitive model of $\ETR_{\gamma \cdot \omega}$. Then there is a transitive model of $\ZFC$ which is $(\GBC + \ETR_\gamma)$-realizable but not $(\GBC + \ETR_{\gamma \cdot \omega})$-realizable.
\end{theorem}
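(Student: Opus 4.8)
The plan is to run the argument of theorem~\ref{thm3:sep-etr-gamma} through Montague reflection, so that it produces a strictly shorter \emph{transitive} model rather than merely a $V$-submodel. Throughout I read $\ETR_\gamma$ and $\ETR_{\gamma\cdot\omega}$ as $\Lcal_\in$-theories, which is legitimate since $\gamma$ is given by a parameter-free first-order definition (cf.\ the discussion earlier in this section); thus for transitive $N\models\ZFC$, ``$N$ is $(\GBC+\ETR_\gamma)$-realizable'' means there is $\Zcal\subseteq\powerset(N)$ with $(N,\Zcal)\models\GBC+\ETR_{\gamma^N}$, and note $\gamma^N\ge\omega^\omega$ automatically because $N\models\ZFC$ is transitive.

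The engine is the following descent lemma: if $(M,\Xcal)\models\GBC+\ETR_{\gamma\cdot\omega}$ with $M$ transitive, then there are $\alpha<\Ord^M$ and $\Zcal\subseteq\powerset(V_\alpha^M)$ with $(V_\alpha^M,\Zcal)\models\GBC+\ETR_{\gamma^M}$. To prove it, fix $G\in\Xcal$ a global well-order of $M$ of ordertype $\Ord^M$ whose proper initial segments are exactly the $V_\beta^M$, and put $T=\Tr_{\gamma\cdot\omega}(G)\in\Xcal$, which exists by Fujimoto's theorem since $\gamma\cdot\omega\ge\omega^\omega$. Working inside $(M,\Xcal)$, apply Montague reflection to $(M,\in,T,G)$ and intersect the countably many resulting clubs to get $\alpha>\gamma^M$ with $(V_\alpha^M,\in,T\cap V_\alpha^M,G\cap V_\alpha^M)\prec(M,\in,T,G)$; in particular $V_\alpha^M\prec M$, so $V_\alpha^M\models\ZFC$ and $\gamma^{V_\alpha^M}=\gamma^M$. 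By the standard argument that elementarity preserves (iterated) truth predicates downward---induction on formulae, using $\Sigma_1$-elementarity for the quantifier clause and absoluteness for the $\Trm$-clause, as in proposition~\ref{prop1:stram-tr-impl-tower}---the class $T\cap V_\alpha^M$ is genuinely the $(\gamma^M\cdot\omega)$-iterated truth predicate of $(V_\alpha^M,\in,G\cap V_\alpha^M)$. Now set
\[
\Zcal=\bigcup_{n<\omega}\Def\bigl(V_\alpha^M;\Tr_{\gamma^M\cdot n}(G\cap V_\alpha^M)\bigr),
\]
where each $\Tr_{\gamma^M\cdot n}(G\cap V_\alpha^M)$ is the (definable-from-$(T\cap V_\alpha^M)$) restriction of $T\cap V_\alpha^M$ to levels below $\gamma^M\cdot n$. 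Since $V_\alpha^M$ is a set, $\Zcal\subseteq\powerset(V_\alpha^M)$ is a genuine set of classes, and that $(V_\alpha^M,\Zcal)\models\GBC+\ETR_{\gamma^M}$ is precisely the verification from the proof of theorem~\ref{thm3:sep-etr-gamma} carried out over $V_\alpha^M$: each $\Def(V_\alpha^M;\Tr_{\gamma^M\cdot n}(G\cap V_\alpha^M))$ is a $\GBC$-realization (amenability and a global well-order of ordertype $\Ord$ both descend from $(M,T)$ by elementarity), their union is a $\GBC$-realization by theorem~\ref{thm1:gm-omnibus}.(5), and it satisfies $\ETR_{\gamma^M}$ because $A\in\Def(V_\alpha^M;\Tr_{\gamma^M\cdot n}(G\cap V_\alpha^M))$ forces $\Tr_{\gamma^M}(A)\in\Def(V_\alpha^M;\Tr_{\gamma^M\cdot(n+1)}(G\cap V_\alpha^M))\subseteq\Zcal$ via a G\"odel fixed-point argument. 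So $(V_\alpha^M,\Zcal)$ is a transitive $\ZFC$-model, $(\GBC+\ETR_\gamma)$-realizable, of height $\alpha<\Ord^M$.

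Given the lemma the theorem follows quickly. By hypothesis there is a transitive $(M_0,\Xcal_0)\models\GBC+\ETR_{\gamma\cdot\omega}$, which is in particular a transitive $\ZFC$-model that is $(\GBC+\ETR_{\gamma\cdot\omega})$-realizable and a fortiori $(\GBC+\ETR_\gamma)$-realizable. Iterate the lemma: as long as the current transitive model is $(\GBC+\ETR_{\gamma\cdot\omega})$-realizable, the lemma yields a strictly shorter transitive $\ZFC$-model that is $(\GBC+\ETR_\gamma)$-realizable. The heights form a strictly descending sequence of ordinals, so this halts at a transitive $M\models\ZFC$ which is $(\GBC+\ETR_\gamma)$-realizable but not $(\GBC+\ETR_{\gamma\cdot\omega})$-realizable. (Equivalently, take $M$ of least height among transitive $\ZFC$-models that are $(\GBC+\ETR_\gamma)$-realizable; were it $(\GBC+\ETR_{\gamma\cdot\omega})$-realizable, the lemma would contradict minimality.)

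The main obstacle is the lemma, and within it the claim that $T\cap V_\alpha^M$ is the bona fide iterated truth predicate of the reflected structure rather than a truncation or distortion of it: one must check that every first-order recursive clause defining a $(\gamma\cdot\omega)$-iterated truth predicate relative to a parameter is downward absolute under $\prec$, with the mild extra care that $\gamma^M\cdot\omega$ (a set, not a proper class) and $G\cap V_\alpha^M$ are themselves reflected faithfully. The rest---rerunning theorem~\ref{thm3:sep-etr-gamma}'s bookkeeping with $M$ replaced by $V_\alpha^M$ and the proper-class global well-order replaced by $G\cap V_\alpha^M$---is routine.
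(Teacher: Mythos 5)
Your proof works, but it is a genuinely different argument from the paper's. The paper fixes a transitive $(M,\Xcal)\models\GBC+\ETR_{\gamma\cdot\omega}$ with a definable global well-order, codes the sequence $\vec T=\seq{\Tr_\delta : \delta<\gamma\cdot\omega}$ of iterated truth predicates as a single class of $\Xcal$, and takes the Skolem hull $N$ of the empty set in $(M,\gamma,\vec T)$; then $\Def(N;\gamma^N,\vec S)$ witnesses $(\GBC+\ETR_\gamma)$-realizability, while $(\GBC+\ETR_{\gamma\cdot\omega})$-realizability fails for an inherently second-order reason: a $(\gamma\cdot\omega)$-iterated truth predicate over $N$ would let a $\GBC$-realization see that $N$ is pointwise definable, hence countable, which no model of $\GBC$ can believe of its own first-order part. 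You instead reflect $(M,T,G)$ to a fully elementary rank-initial segment and then kill $(\GBC+\ETR_{\gamma\cdot\omega})$-realizability by a minimal-height argument. Both are sound; the paper's construction buys the extra information that the bad model is elementarily equivalent to the model one starts with (and can be arranged to compute the same $\gamma$), so the failure is demonstrably not due to the first-order theory, while your descent lemma buys a $\tau$-versus-$\beta$-style corollary: inside any transitive $(\GBC+\ETR_{\gamma\cdot\omega})$-realizable model there is a strictly shorter transitive $(\GBC+\ETR_\gamma)$-realizable one.

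The one step you should repair is the reflection step. ``Apply Montague reflection and intersect the countably many resulting clubs'' is not justified as stated: externally, the intersection of countably many clubs in $\Ord^M$ can be empty when $\Ord^M$ has countable cofinality (for instance, the least transitive model of $\ZFC$ has no fully elementary rank-initial segment at all), and internally the clubs are indexed by metatheoretic formulas, so $(M,\Xcal)$ cannot form their intersection without further input. The fix is exactly the device of proposition \ref{prop1:stram-tr-impl-tower}, which you cite only for the downward transfer of $T$: since $\ETR_{\gamma\cdot\omega}$ implies $\ETR_\omega$, the truth predicate $\Tr(\seq{T,G})$ for the structure $(M,\in,T,G)$ lies in $\Xcal$, and reflecting the single formula ``$(\phi,\bar a)\in\Tr(\seq{T,G})$'' with this class as a parameter yields a single club of $\alpha$ with $(V_\alpha^M,T\cap V_\alpha^M,G\cap V_\alpha^M)\prec(M,T,G)$; this also hands you the Replacement schema in the expanded language, which your $\Def$-layers need for Class Replacement. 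With that substitution your descent lemma, and hence the whole proof, goes through.
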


\begin{proof}
Take $(M,\Xcal) \models \GBC + \ETR_{\gamma \cdot \omega}$ transitive with a definable global well-order. That such $(M,\Xcal)$ exists is a consequence of theorem \ref{cor3:etr-inner-model}. Let $\vec T = \seq{(\Tr_{\delta})^M : \delta < \gamma^{M} \cdot \omega}$ be the sequence of $\delta$-iterated truth predicates for $M$ for $\delta < \gamma \cdot \omega$. Then $\vec T \in \Xcal$ because $\Xcal$ contains $(\gamma \cdot \omega)$-iterated truth predicates. Now let $(N,\gamma^N, \vec S) \prec (M,\gamma^{(M,\Xcal)}, \vec T)$ be the Skolem hull of the empty set, using the global well-order of $M$ to pick witnesses. Then, $(N,\gamma^N, \vec S) \models \vec S$ consists of $\delta$-iterated truth predicates for $\delta < \gamma^N$. I claim that $N$ is $(\GBC + \ETR_\gamma)$-realizable but not $(\GBC + \ETR_{\gamma \cdot \omega})$-realizable.

For the former, let $\Ycal = \Def(N; \gamma^N, \vec S)$ consist of the subsets of $N$ which are definable (with set parameters) over the structure $(N; \gamma^N, \vec S)$. It is immediate that $(N,\Ycal)$ satisfies Extensionality, Global Choice, and First-Order Comprehension. Suppose towards a contradiction that $(N,\Ycal)$ does not satisfy Replacement. Then there is $F \in \Ycal$, defined from $\gamma^N$ and $\vec S$ via a formula $\phi$ possibly with parameters, and a set $a \in N$ so that $F''a \not \in N$. But then by elementary in $(M, \gamma^M, \vec T)$ the formula $\phi$ defines a class function $G$ and there is a set $b \in M$ so that $G'' b \not \in M$. But then $(M,\Xcal)$ fails to satisfy Replacement, contradicting that it is a model of $\GBC$. Altogether we get that $\Ycal$ is a $\GBC$-realization for $N$.

Next let us see that $(N,\Ycal) \models \ETR_\gamma$. But this is immediate; given any class $A \in \Ycal$ there is $\delta < \gamma^N$ so that $A$ is definable from $\Tr_\delta$ so $\Tr_\gamma(A)$ is definable from $\Tr_{\delta + \gamma}$ and hence $\Tr_\gamma(A) \in \Ycal$. 

Finally, $N$ cannot be $(\GBC + \ETR_{\gamma \cdot \omega})$-realizable because if $\Zcal$ were an $(\GBC + \ETR_{\gamma \cdot \omega})$-realization for $N$ then $\Zcal$ would contain $\Tr_{\gamma \cdot \omega}$ but then $\Zcal$ would see that $N$ is a Skolem hull, hence countable. Note that this uses that $N$ is an $\omega$-model, so that there is only one subset of $N$ which can satisfy the definition of a $(\gamma^N \cdot \omega)$-iterated truth predicate, namely the externally constructed one. But no model of $\GBC$ thinks its first-order part is countable, so the existence of such $\Zcal$ is impossible.
\end{proof}

Observe that we can ensure that $\gamma^N = \gamma^M$ by requiring $N$ to be the Skolem hull of, say, $V_\gamma^M$ rather than the Skolem hull of the empty set.

This theorem gives a strong separation for sufficiently weak fragments of $\ETR$. The model $N$ we constructed cannot be made into a model of $\ETR_{\gamma \cdot \omega}$ not because it fails to have a compatible (first-order) theory, but rather due to inherently second-order properties of the model. In the next section we will see that this phenomenon depends essentially upon the transfinite; it does not occur for models of finite set theory, equivalently models of arithmetic.

\section{A detour through the finite realm}

While my analysis has mainly been confined to models of set theory, analogous results are possible for models of arithmetic. I wish to take a brief detour from the infinite world to consider the applications of these ideas to the finite world. 

Let me recall some standard facts about satisfaction classes for nonstandard models of arithmetic. First, we will need a few definitions.

\begin{definition}
A structure $M$ is {\em resplendent} if it realizes any consistent $\Sigma^1_1$-formula. That is, if $\hat X$ is a new predicate symbol, $\bar a$ are elements of $M$, and $\phi(\hat X,\bar a)$ is consistent with $\Th(M,\bar a)$ then there is $X \subseteq M$ so that $(M,X) \models \phi(\hat X,\bar a)$. 

Further say that $M$ is {\em chronically resplendent} if $X$ may be chosen so that $(M,X)$ is resplendent.
\end{definition}

\begin{definition}
A structure $M$ is {\em recursively saturated} if it realizes any consistent computable type. That is, if $p(x,\bar a)$ is a consistent type so that the set of formulae $\phi \in p$ form a computable subset of $\omega$ then there is $t \in M$ so that $M \models \phi(t,\bar a)$ for all $\phi \in p$.
\end{definition}

I am primarily interested in structures which allow an appreciable amount of coding, such as models or arithmetic or models of set theory. For such structures we can write down a formula $\phi(\hat X)$ which asserts that $\hat X$ is a truth predicate. Every countable recursively saturated model admits a full satisfaction class---a theorem of Kotlarski, Krajewski, and Lachlan \cite{KKL1981}.\footnote{But see \cite{enayat-visser2015} for a more elegant proof.}
So because every completion of $\PA$ has a countable recursively saturated model we get that resplendent models admit full satisfaction classes. But note that in general these models will not satisfy induction in the expanded language with a predicate for the full satisfaction class, as induction in the expanded language allows one to prove $\Con(\PA)$.

\begin{theorem}[Lachlan \cite{lachlan1981}]
If $M \models \PA$ admits a full satisfaction class then $M$ is recursively saturated.
\end{theorem}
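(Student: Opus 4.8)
The plan is to prove the theorem in its contrapositive-flavored form directly: assuming $M \models \PA$ carries a full satisfaction class $S$, I will show that every recursive type over $M$ that is finitely realized in $M$ is realized in $M$. (The content is for $\omega$-nonstandard $M$; for the standard model the statement is read under the usual convention, since $\mathbb{N}$ with its true truth predicate is not recursively saturated in the literal reading.) The routine opening moves are: absorb the finitely many parameters of a given type into the language, so that $p(x) = \{\varphi_i(x) : i \in \omega\}$ is parameter-free, recursive, and finitely satisfiable in $M$; pass to the type of finite conjunctions $\chi_n(x) = \bigwedge_{i<n}\varphi_i(x)$; and observe, by an external induction on the Tarskian clauses defining $S$, that $S$ agrees with genuine satisfaction on every \emph{standard} formula. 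Hence it suffices to produce $y \in M$ with $(\varphi_i,\seq{y}) \in S$ for all standard $i$, equivalently with $(\chi_n,\seq{y}) \in S$ for all standard $n$.

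Next I would internalize the type. Since $p$ is recursive and any model of $\PA$ is $\Sigma_1$-correct and so evaluates recursive functions correctly on standard inputs, there is an $M$-definable map $c \mapsto \lceil\chi_c\rceil$ assigning to each $c \in M$ the internal Gödel number of the length-$c$ conjunction $\bigwedge_{i<c}\varphi_i$, agreeing with the real Gödel numbers on standard $c$. For nonstandard $c$ this $\chi_c$ is an $M$-coded (nonstandard) formula. Writing $\eta(c)$ for $\exists y\,(\chi_c,\seq{y}) \in S$, two things hold: first, by finite satisfiability of $p$ together with correctness of $S$ on standard formulas, $\eta(n)$ holds for every standard $n$; and second, if $\eta(c)$ holds for some \emph{nonstandard} $c$ with witness $y_0$, then peeling conjuncts off via the conjunction clause for $S$ gives $(\varphi_i,\seq{y_0}) \in S$ for all $i<c$, in particular for all standard $i$, so $y_0$ realizes $p$. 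Thus the entire problem reduces to exhibiting a nonstandard $c$ with $\eta(c)$.

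The natural way to get such a $c$ is a least-number argument inside $(M,S)$ applied to $\neg\eta$: either $\neg\eta$ is false throughout $M$, so $\eta$ holds at nonstandard points and we are done, or $\neg\eta$ has a least witness $c_0$, which cannot be standard since $\eta$ holds on $\omega$, whence $\eta(c_0-1)$ holds with $c_0-1$ nonstandard. \textbf{The main obstacle is that this argument, and already the ``peel off $c-i$ conjuncts'' step above, demands more induction for formulas mentioning $S$ than the Tarskian clauses supply outright}: a full satisfaction class need not be inductive, so neither $I\Delta_0(S)$ nor the least-number principle for the $\Pi_1(S)$ formula $\neg\eta$ is free. Securing exactly this modest amount of $S$-induction — or sidestepping it — is the real heart of the theorem, and this is where I expect the genuine work to lie. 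The route I would pursue, following Lachlan, is to be careful never to apply $S$ to nonstandard formulas in an uncontrolled way and instead to combine the satisfaction class with $M$'s own partial arithmetic truth definitions, arranging the relevant overspill so that it ultimately takes place on an arithmetic formula in $M$, where it is legitimate; a fallback, should that bookkeeping prove delicate, is to pre-process $p$ with the recursion theorem so that the type refers to its own satisfiers, forcing any realization out of the standard cut. Modulo making this overspill rigorous, the remainder of the argument is routine.
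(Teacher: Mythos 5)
There is a genuine gap, and you have in fact located it yourself: everything in your sketch up to the final paragraph is the routine reduction (absorb parameters, pass to conjunctions, check by external induction that $S$ is correct on standard formulas, internalize the recursive type, and note that a witness at a single nonstandard level $c$ realizes the type after standardly many applications of the conjunction clause), and the entire content of Lachlan's theorem is the step you defer — producing a nonstandard $c$ with $\exists y\, (\chi_c, \seq{y}) \in S$. Your two proposed escape routes are not worked out, and the first one does not obviously survive scrutiny: the natural candidates for "an arithmetic formula in $M$ on which the overspill legitimately takes place" would be $M$'s partial truth definitions, but $\chi_c$ for nonstandard $c$ has nonstandard quantifier complexity, so no fixed $\Sigma_k$-truth definition of $M$ applies to it; that is exactly the obstruction that makes the theorem nontrivial, since a full satisfaction class need not be inductive and so neither overspill nor the least-number principle is available for formulas mentioning $S$. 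Lachlan's actual argument gets around this with a genuinely different combinatorial construction rather than a rearranged overspill, so "modulo making this overspill rigorous, the remainder is routine" has the emphasis backwards: the remainder is routine, and the overspill is the theorem. (Two smaller points: the paper itself gives no proof of this statement — it is quoted from Lachlan — so the only question is whether your sketch is a proof, and it is not; and your intermediate claim that a witness $y_0$ at level $c$ satisfies $(\varphi_i,\seq{y_0}) \in S$ "for all $i < c$" is itself an induction-requiring assertion — only "for all standard $i$" is justified, which is all you need, and even that requires the internal conjunction to be nested so that each standard conjunct is reachable in standardly many peels.)
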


For countable models all these notions are equivalent, but separations can happen in the uncountable. Kaufmann \cite{kaufmann1977} produced recursively saturated, rather classless\footnote{$M \models \PA$ is rather classless if every class of $M$ is definable, where $A \subseteq M$ is a class if $A \cap [0,x)^M$ is definable for every $x \in M$. Cf.\ definition \ref{def1:rather-classless}.}
models. Combined with a theorem of Smith's \cite{smith1989} that models with a full satisfaction class must have undefinable classes, this gives recursively saturated models which do not admit a full satisfaction class.

\begin{theorem}[Barwise--Schlipf \cite{barwise-schlipf1976}] 
If $M$ is countable and recursively saturated then $M$ is chronically resplendent.
\end{theorem}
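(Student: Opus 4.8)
The plan is to deduce the theorem from the two results just quoted---the Kotlarski--Krajewski--Lachlan theorem that every countable recursively saturated model admits a full satisfaction class, and Lachlan's theorem that admitting a full satisfaction class forces recursive saturation---together with the standard fact that, for countable structures, recursive saturation and resplendency coincide. That last fact is really the technical heart, so I would isolate it as a lemma: \emph{every countable recursively saturated structure $M$ (in a countable language allowing an appreciable amount of coding) is resplendent.}

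To prove the lemma I would run a Henkin-style construction. Given a $\Sigma^1_1$ formula $\phi(\hat X,\bar a)=\exists \bar Y\,\theta(\hat X,\bar Y,\bar a)$ consistent with $\Th(M,\bar a)$, introduce new relation/function symbols for $\hat X$ and $\bar Y$ and build, in $\omega$ stages (here countability of $M$, hence of the language $\Lcal_\in(M)\cup\{\hat X,\bar Y\}$, is used), a complete Henkin theory $T\supseteq \mathrm{ElDiag}(M)\cup\{\theta\}$ whose Henkin witnesses are drawn from $M$ itself. The term model of $T$ then has $\Lcal_\in$-reduct $M$ and supplies the desired expansion $(M,X,Y)\models\theta$. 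The one point requiring care---and the place where recursive saturation enters---is the verification that each finite stage can be extended: deciding the next sentence, and, once an existential sentence has been committed to, choosing a witness $m\in M$ for it. Both reduce, via compactness together with the hypothesis that $\phi$ is consistent with $\Th(M,\bar a)$, to realizing in $M$ a recursive type in finitely many variables over finitely many parameters, and recursive saturation provides the realization. I expect this bookkeeping to be the main obstacle, though it is by now routine.

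With the lemma in hand the theorem follows by a two-step bootstrap. Fix a $\Sigma^1_1$ formula $\phi(\hat X,\bar a)$ consistent with $\Th(M,\bar a)$. Since ``$\hat S$ is a full satisfaction class for the structure $(M,\hat X)$'' is a first-order condition on $\hat S$ in $(M,\hat X,\hat S)$, the formula $\phi^\ast(\hat X,\bar a)$ asserting $\phi(\hat X,\bar a)$ and $\exists \hat S\,[\hat S$ is a full satisfaction class for $(M,\hat X)]$ is again $\Sigma^1_1$. First I would check that $\phi^\ast$ is still consistent with $\Th(M,\bar a)$: a consistent first-order theory in a countable language has a countable recursively saturated model, so $\Th(M,\bar a)\cup\{\theta\}$ has one, say $(N,X',\bar Y')$; its reduct $(N,X')$ is still countable, recursively saturated, and a model of enough coding, so by Kotlarski--Krajewski--Lachlan it carries a full satisfaction class, witnessing the consistency of $\phi^\ast$. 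Applying the lemma (resplendency of $M$) to $\phi^\ast$ then yields $X\subseteq M$ with $(M,X)\models\phi$ and $(M,X)$ carrying a full satisfaction class; by Lachlan's theorem $(M,X)$ is recursively saturated, and it is countable, so the lemma applied once more shows $(M,X)$ is resplendent. Thus $X$ witnesses chronic resplendency for $\phi$, and as $\phi$ was arbitrary, $M$ is chronically resplendent.
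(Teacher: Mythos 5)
The paper itself gives no proof of this theorem; it is quoted from Barwise--Schlipf, so your argument has to stand on its own. Your overall plan is a legitimate but genuinely different route from the standard one: the usual proof of chronic resplendency folds everything into the Henkin construction of the lemma, interleaving steps that realize the recursive types of the \emph{expanded} language so that the expansion $(M,X,\bar Y)$ comes out recursively saturated, and then invokes the lemma once more; that argument needs no coding and proves the theorem for arbitrary countable recursively saturated structures. Your bootstrap through full satisfaction classes instead buys a quick reduction to quoted results, but only for structures interpreting enough arithmetic (which is all the paper uses). Your sketch of the lemma itself is essentially the standard Henkin argument and is fine modulo routine points: one should note why $\mathrm{ElDiag}(M)\cup\{\theta\}$ is consistent (consistency with $\Th(M,\bar a)$ suffices, by quantifying out the finitely many extra parameters), and the witness-selection types are naturally only computably enumerable, so one needs the usual trick of passing to the recursive set of initial conjunctions before citing recursive saturation.

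The real gap is in the bootstrap: you apply Kotlarski--Krajewski--Lachlan to $(N,X')$ and Lachlan's theorem to $(M,X)$, but both results, as quoted, are about models of $\PA$, whereas your structures are expansions by a predicate for which induction may fail --- nothing in $\theta$ forces $(N,X')$ or $(M,X)$ to satisfy induction in the language with $\hat X$. The relativized statements you need (every countable recursively saturated expansion of a model of $\PA$ by finitely many predicates carries a full satisfaction class for the expanded language; any such expansion carrying one is recursively saturated) are in fact true --- the Enayat--Visser/KKL constructions and Lachlan's argument use induction only for arithmetic formulas and see the new predicate only through the atomic clauses of the satisfaction class --- but this is exactly the point that must be argued or referenced, and as written it is unjustified. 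Two smaller remarks: Lachlan's theorem requires the model to be nonstandard (automatic here, since recursively saturated models of $\PA$ are nonstandard, but worth saying, as the paper's statement omits the hypothesis), and your proof does not recover the general Barwise--Schlipf theorem for structures without coding, though that generality is not needed where the paper uses the result.
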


As an immediate consequence we get that countable resplendent models are chronically resplendent. It is open whether this is true in general.

\begin{question}
Does resplendency always imply chronic resplendency?
\end{question}

Let us turn now to iterated full satisfaction classes\footnote{In this section and this section alone I will talk about iterated full satisfaction classes instead of iterated truth predicates. We are not working in a second-order context so the uniqueness of iterated truth predicates within a second-order model does not apply here. I wish to emphasize this change of perspective with a change of language. The exception to this choice is when I talk about standard models, in which case the only possible choice of a full satisfaction class is the truth predicate for the model, as seen externally.} 
over models of arithmetic. Essentially the same argument that resplendent models admit full satisfaction classes yields that resplendent models admit iterated full satisfaction classes. But that is not the only way to get models with iterated full satisfaction classes. Another way goes through models of second-order arithmetic.

Recall that $\ATR_0$ is the theory of second-order arithmetic axiomatized by the following: $\PA$ for the first-order part; Extensionality for sets; comprehension for arithmetical (i.e.\ first-order) properties; and arithmetical transfinite recursion, asserting that inductions of arithmetical properties along a well-founded relation have solutions. That is, $\ATR_0$ is the arithmetic counterpart to $\GBC + \ETR$. Many arguments from $\GBC + \ETR$ generalize to $\ATR_0$ and vice versa. One example is the construction of iterated truth predicates. Let me give a definition specialized to this context, for the sake of clarity.

\begin{definition}
Let $M \models \PA$ and $n \in M$.  An {\em iterated full satisfaction class of length $n$} (or, synonymously, an {\em $n$-iterated full satisfaction class}
 for $M$ is a set $T \subseteq M$ of triples\footnote{Recall that $\PA$ allows for the coding of sequences as single numbers, so we can think of $T$ as a subset of $M$, rather than a subset of $M^3$.}
$(i, \phi, \bar a)$ with $i < n$ satisfying the following recursive requirements. Here, $\phi$ is in $\Lcal_\PA(\Trm)$, the language of arithmetic augmented with a trinary predicate $\Trm$, and $\bar a$ is a valuation for $\phi$.
\begin{enumerate}
\item For atomic $\phi$: $(i, \phi, \bar a)$ is in $T$ if and only if $\phi$ given the values from $\bar a$ is true.\footnote{For example, $2 + 3 < 5 + 1$ is declared true at every level $< n$, because $2 + 3$ really is less than $5 + 1$.}
\item $(i, x \in y, \bar a)$ is in $T$ if and only if $a_x \in a_y$. 
\item $(i, \Trm(x,y,z), \bar a)$ is in $T$ if and only if
 \begin{itemize}
 \item $a_x < i$;
 \item $a_y$ is an $\Lcal_{\in}(\Trm)$-formula;
 \item $a_z$ is a valuation for $a_y$; and
 \item $(a_x, a_y, a_z)$ is in $T$.
 \end{itemize}
\item $(i, \phi \lor \psi,\bar a)$ is in $T$ if and only if $(i, \phi, \bar a)$ or $(i, \psi, \bar a)$ are in $T$.
\item $(i, \neg \phi, \bar a)$ is in $T$ if and only if $(i, \phi, \bar a)$ is not in $T$.
\item $(i, \exists x\ \phi(x), \bar a)$ is in $T$ if and only if there is $b \in M$ so that $(i, \phi, b \cat \bar a)$ is in $T$.\footnote{Here we of course have the implicit requirement that $x$ be free in $\phi$.}
\end{enumerate}
We can also consider iterated full satisfaction classes of length $\omega$ (in the sense of the model) by allowing $i$ to be any element of the model, not just those $< n$.
\end{definition}

In this section I will only consider iterated full satisfaction classes of length $\le \omega$. One could consider longer lengths, but the general theory requires additional care. In particular, ``$\Gamma$ is a well-order'' is a second-order assertion in the arithmetical case, so there are subtleties in formulating  $(M,\Gamma,S) \models \text{``}S$ is an iterated full satisfaction class of length $\Gamma$'' for $\Gamma,S \subseteq M$.

Observe that each step in the recursive requirements above is first-order. So $\ATR_0$ proves the existence of iterated full satisfaction classes of all lengths $\le \omega$. Moreover, these will be inductive\footnote{A set $X \subseteq M \models \PA$ is {\em inductive} if $(M,X)$ satisfies the induction schema in the expanded language. Equivalently, $X \subseteq M$ is inductive if there is an $\ACA_0$-realization for $M$ which contains $X$. Compare to $\GBc$-amenability, definition \ref{def1:t-amenable}.}
iterated full satisfaction classes, because any set in a model of $\ATR_0$ must be inductive.

Observe that not every resplendent $M \models \PA$ admits inductive iterated full satisfaction classes. This is a consequence of the following observation.

\begin{observation}
If $M \models \PA$ admits an inductive iterated full satisfaction class of length $n$, possibly nonstandard, then $M \models \Con^n(\PA)$. So if $M$ admits an inductive iterated full satisfaction class of length $\omega$ (in the sense of $M$), then $M \models \forall n\ \Con^n(\PA)$.
\end{observation}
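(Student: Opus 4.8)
The observation to prove is: if $M \models \PA$ admits an inductive iterated full satisfaction class of length $n$ (possibly nonstandard), then $M \models \Con^n(\PA)$, and the $\omega$-length consequence follows.

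The plan is to argue by an internal induction inside $M$ on the level $i < n$, showing that at level $i$ the satisfaction class $T$ provides (inside $M$) a truth definition for the theory $\PA + \{\Con^j(\PA) : j < i\}$ over the relevant language, and hence $M \models \Con(\PA + \{\Con^j(\PA) : j < i\})$, which unwinds to $M \models \Con^i(\PA)$ by the provable equivalence (in $\PA$) between $\Con^{i+1}(\PA)$ and $\Con(\PA + \Con^i(\PA))$. Because $T$ is assumed \emph{inductive}—that is, $(M,T)$ satisfies the full induction schema in the expanded language $\Lcal_\PA(\Trm)$—we may carry out this induction on $i$ as a legitimate instance of induction in $(M,T)$; this is exactly the place where inductiveness is essential, and where a merely resplendent model (which need not admit an \emph{inductive} such class) can fail.

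Concretely, first I would recall the standard fact, provable in $\PA$, that an inductive (ordinary) full satisfaction class for $M$ yields $M \models \Con(\PA)$: given such a class one verifies by induction in the expanded language that every axiom of $\PA$ is satisfied and satisfaction is closed under the inference rules, so no contradiction is derivable; this is the Kotlarski–Krajewski–Lachlan circle of ideas, and I would cite \cite{KKL1981} or \cite{enayat-visser2015}. Then I would observe that the level-$i$ slice $T_i = \{(\phi,\bar a) : (i,\phi,\bar a) \in T\}$ of an $n$-iterated full satisfaction class behaves, inside $M$, like a full satisfaction class for the structure $(M, T{\restriction}(\,\cdot\, < i))$ in the language $\Lcal_\PA(\Trm)$ with $\Trm$ interpreted by the lower levels; the clause (3) for $\Trm(x,y,z)$ is precisely what makes $T_i$ correctly evaluate statements referring to levels $< i$. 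So the induction hypothesis "$M \models \Con^i(\PA)$, witnessed in a way visible to $T$" propagates: $T_{i}$ being an inductive satisfaction class for a structure in which $\Trm$ codes truth up to level $i$ gives $M \models \Con(\PA + \Con^{<i}(\PA))$, i.e.\ $M \models \Con^{i+1}(\PA)$ at the next stage. Running this induction (inside $(M,T)$) up to $n$ yields $M \models \Con^n(\PA)$.

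For the $\omega$-length case: an inductive iterated full satisfaction class of length $\omega$ (in the sense of $M$) restricts, for each $m \in M$, to an inductive iterated full satisfaction class of length $m$, so by the first part $M \models \Con^m(\PA)$ for every $m \in M$, which is exactly $M \models \forall n\ \Con^n(\PA)$. The main obstacle I anticipate is bookkeeping the internal-vs-external distinction cleanly: one must phrase "$T_i$ is a full satisfaction class for $(M, \text{lower levels})$" as a statement $(M,T)$ can see and induct on, and one must be careful that the reflection principle "$\Con(\PA + \Con^{i}(\PA)) \leftrightarrow \Con^{i+1}(\PA)$" is being used as a theorem of $\PA$ (hence available inside $M$) rather than as an external fact; getting the induction schema instance in $\Lcal_\PA(\Trm)$ exactly right—so that it genuinely follows from inductiveness of $T$—is the crux, but it is routine once set up. No genuinely hard new idea is needed beyond the classical KKL-style argument plus the ramification clause of the definition.
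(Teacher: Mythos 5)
Your proposal is correct and is essentially the paper's own argument, just spelled out in more detail: the paper's proof is the two-sentence sketch that an inductive iterated full satisfaction class of length $k+1$ lets the model see a completion of $\PA + \Con^k(\PA)$, and a model that sees a completion of a theory must believe its consistency, which is exactly your level-by-level use of inductiveness to verify that the declared-true sentences form a consistent complete theory containing the earlier consistency statements. The internal-induction bookkeeping you flag (expressing $\Con^j(\PA)$ uniformly in $j$ and using induction in $\Lcal_\PA(\Trm)$ so the argument reaches nonstandard levels) is the right way to fill in the paper's sketch, and your treatment of the length-$\omega$ case by restriction matches the paper.
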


\begin{proof}
An inductive iterated full satisfaction class of length $k+1$ allows one to get a completion of $\PA + \Con^k(\PA)$. But if the model can see a completion of $T$ then it must think $\Con(T)$. 
\end{proof}

In the previous section we considered transitive models of set theory which admitted $\GBC$-amenable iterated truth predicates of length $\eta$ but not ones of  longer lengths.
Moreover, we could find such models satisfying any extension $T$ of $\ZFC$ which has a transitive model and is consistent with the existence of iterated truth predicates of length $\eta$. See theorem \ref{thm3:strong-sep-etr-frag}. In particular, $T$ could be the $\Lcal_\in$-reduct of $\GBC$ $+$ there is an iterated truth predicate of length $\zeta$, for $\zeta$ larger than $\eta$. Can we have the same phenomenon for models of arithmetic? That is, for any $T \supseteq \PA$ can we find a model of $T$ which admits an inductive full satisfaction class of length $\eta$ but not an inductive iterated full satisfaction class of length $\eta + 1$?

For the standard model, there is only one choice for a full satisfaction class, namely the truth predicate for the model. But this will not work because $\nats$ admits iterated truth predicates of any length and any class over $\nats$ is inductive. So we must look at nonstandard models.

\begin{observation}
Let $M \models \PA$ be countable and admit an inductive full satisfaction class. Then $M$ admits an inductive full satisfaction class of any length compatible with the theory of $M$. That is, if ``$S$ is an inductive iterated full satisfaction class of length $n$'' is consistent with $\Th(M)$ then there is $S \subseteq M$ which is an inductive $n$-iterated full satisfaction class for $M$. (And similarly for length $\omega$.)
\end{observation}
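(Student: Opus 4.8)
The plan is to produce $S$ by resplendency. First I would observe that since $M$ admits an inductive full satisfaction class, it in particular admits a full satisfaction class, so by Lachlan's theorem \cite{lachlan1981} $M$ is recursively saturated; as $M$ is also countable, the Barwise--Schlipf theorem \cite{barwise-schlipf1976} gives that $M$ is chronically resplendent. In particular, via the recursive-saturation construction underlying that theorem, $M$ realizes any recursively axiomatized expansion by finitely many new predicate symbols that is consistent with $\Th(M,\bar a)$ for the relevant parameters $\bar a$, not merely a single consistent $\Sigma^1_1$ sentence.

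Next I would package the desired conclusion as exactly such an expansion. Introduce a new trinary predicate symbol $\hat S$ and let $\Phi(\hat S, n)$ be the $\Lcal_\PA(\hat S)$-theory consisting of (i) the single first-order sentence $\sigma(\hat S, n)$ formalizing ``$\hat S$ is an $n$-iterated full satisfaction class for $M$'', together with (ii) the induction schema $\PA(\hat S)$ for all $\Lcal_\PA(\hat S)$-formulae, which is precisely the requirement that $\hat S$ be inductive. Here $\sigma(\hat S,n)$ is genuinely first-order because the Tarskian clauses of the definition quantify over (codes of) $\Lcal_\PA(\Trm)$-formulae inside $M$ --- the same feature that makes the Kotlarski--Krajewski--Lachlan construction \cite{KKL1981} possible. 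Now $\Phi(\hat S,n)$ is recursively axiomatized with parameter $n$, and by hypothesis $\Phi(\hat S,n)\cup\Th(M,n)$ is consistent. Hence resplendency yields $S\subseteq M$ with $(M,S)\models\Phi(\hat S,n)$, and this $S$ is an inductive $n$-iterated full satisfaction class for $M$, as required.

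For the length-$\omega$ case I would run the identical argument, replacing $\sigma(\hat S,n)$ by the sentence asserting that $\hat S$ is an iterated full satisfaction class indexed by all of $M$ (drop the bound $a_x < n$ in clause (3) and let the level range over all elements). This is still first-order, $\PA(\hat S)$ is still recursive, and consistency with $\Th(M)$ is again exactly the hypothesis, so resplendency again produces the desired $S$.

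The main obstacle --- really the only point needing care --- is the passage from resplendency for a single consistent $\Sigma^1_1$ sentence to resplendency for the recursively axiomatized theory $\Phi$, which in particular contains the full induction schema in the new predicate $\hat S$. For countable recursively saturated models this is legitimate: one builds $S$ in stages, at each stage invoking recursive saturation to meet the next finitely many requirements arising from $\sigma(\hat S,n)$ and from instances of $\PA(\hat S)$, exactly as in the proof of the Barwise--Schlipf theorem, and the consistency of $\Phi\cup\Th(M,n)$ is what prevents any finite approximation from getting stuck. I would also double-check the two routine points already flagged in the excerpt: that ``$\hat S$ is an $n$-iterated full satisfaction class'' is first-order expressible (with parameter $n$), and that being inductive is equivalent to $(M,S)\models\PA(\hat S)$.
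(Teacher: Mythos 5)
Your proposal is correct and follows essentially the same route as the paper: Lachlan's theorem gives recursive saturation, Barwise--Schlipf gives resplendency for the countable model, and resplendency then realizes the (consistent with $\Th(M)$) assertion that $\hat S$ is an inductive $n$-iterated (or $\omega$-iterated) full satisfaction class. Your extra discussion of why resplendency handles the recursively axiomatized theory including the induction schema $\PA(\hat S)$, rather than a single $\Sigma^1_1$ sentence, just fills in a detail the paper leaves implicit.
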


\begin{proof}
Because $M$ admits a full satisfaction class it must be recursively saturated. But since $M$ is countable and recursively saturated it is resplendent. So if ``$S$ is an inductive full satisfaction class of length $n$'' is consistent with $\Th(M)$ then there is $S \subseteq M$ realizing that theory. (And similarly for length $\omega$.)
\end{proof}

This observation tells us that for resplendent models the only way to rule out having an inductive iterated full satisfaction class of length $n$ is by the theory of the model. This is unlike the case for transitive models of set theory, where we can find a model of any reasonable $T$ so that the model does not admit a $\GBc$-amenable iterated truth predicate of length $n$. 

If we want to find an inductive iterated full satisfaction class over $M \models \PA$ which cannot be extended to a longer one then there are two ways to do such. One would be if $(M,T)$ is not recursively saturated for $T$ an inductive iterated full satisfaction class. Then $(M,T)$ does not admit a full satisfaction class, inductive or otherwise. But this is a boring case. The more interesting case is when $(M,T)$ is recursively saturated (which, for countable $M$, is equivalent to being resplendent). In this case the only potential obstacle is $\Th(M,T)$. 

As a warm-up let us see that different full satisfaction classes can give different consequences in the extended model.

\begin{proposition}
There is $M \models \PA$ with $U,I \subseteq M$ inductive full satisfaction classes so that $\Th(M,U) \ne \Th(M,I)$.
\end{proposition}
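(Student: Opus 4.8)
The plan is to build a single model $M$ that carries two inductive full satisfaction classes which a genuine $\Lcal_\PA(\hat X)$-sentence tells apart. The guiding principle is that the distinguishing sentence must mention the predicate $\hat X$ nontrivially, not merely assert that $\hat X$ declares some fixed (possibly nonstandard) $\Lcal_\PA$-sentence true: the latter is already decided by $M$ alone, since it is a purely arithmetical fact. Throughout I will use resplendency (Barwise--Schlipf: a countable recursively saturated structure is chronically resplendent) to turn the problem into a consistency question.

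First I would fix the ground model. Let $\mathrm{CT}$ be the $\Lcal_\PA(\hat X)$-theory asserting that $\hat X$ is a full satisfaction class together with the full induction schema in the expanded language. Since $\mathrm{CT}$ proves $\Con^n(\PA)$ for each standard $n$ but not the uniform statement $\forall y\,\Con^y(\PA)$, the theory $\mathrm{CT} + \neg\forall y\,\Con^y(\PA)$ is consistent; take a countable recursively saturated $N$ modeling it and set $M = N \rest \Lcal_\PA$. Then $M$ is countable and recursively saturated, hence chronically resplendent; it satisfies $\Con^n(\PA)$ for all standard $n$ and $\neg\forall y\,\Con^y(\PA)$; and it admits an inductive full satisfaction class. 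Inside $M$ the least $y$ with $\neg\Con^y(\PA)$ is a \emph{definable nonstandard} element, and from a $\PA^{(y-1)}$-derivation of $\bot$ at that level one extracts a definable nonstandard $\Lcal_\PA$-sentence $\psi_0$ whose truth value is not forced by the Tarskian recursion on its Boolean/quantifier structure.

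Next I would realize the two classes. Letting $\theta(x)$ define $\godel{\psi_0}$ in $M$, it suffices by resplendency to check that the two $\Sigma^1_1$ sentences
\[
\exists \hat X\,\bigl(\mathrm{CT}(\hat X)\wedge \forall x(\theta(x)\to x\in\hat X)\bigr)
\quad\text{and}\quad
\exists \hat X\,\bigl(\mathrm{CT}(\hat X)\wedge \forall x(\theta(x)\to x\notin\hat X)\bigr)
\]
are each consistent with $\Th(M)$; one then lets $U$ witness the first and $I$ the second. Granting this, the $\Lcal_\PA(\hat X)$-sentence $\chi :\equiv \forall x(\theta(x)\to x\in\hat X)$ holds in $(M,U)$ and fails in $(M,I)$, so $\chi\in\Th(M,U)\setminus\Th(M,I)$, proving $\Th(M,U)\neq\Th(M,I)$.

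The main obstacle is exactly the consistency claim for the two displayed sentences: one must show that, over models of $\Th(M)$, membership of (the pair coding) $\psi_0$ in an inductive full satisfaction class is genuinely \emph{free}. This is the kind of freedom underlying the Kotlarski--Krajewski--Lachlan construction of full satisfaction classes over countable recursively saturated models: when one extends a partial inductive satisfaction class, the commitments forced at standard and low-nonstandard levels pin down neither ``$\psi_0$ true'' nor ``$\psi_0$ false'' precisely because $\psi_0$ has nonstandard logical complexity and is not a disguised Boolean combination of atoms; both choices then extend to inductive full satisfaction classes. A secondary point to verify is that $\psi_0$ can indeed be selected with this non-forced character --- here the hypothesis $M \models \neg\forall y\,\Con^y(\PA)$ is used to supply a ``nontrivial'' definable nonstandard sentence into the construction.
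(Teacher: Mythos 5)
There is a genuine gap, and it appears at the very first step. Your theory $\mathrm{CT}+\neg\forall y\,\Con^y(\PA)$ is inconsistent: with full induction in the language with the satisfaction predicate, $\mathrm{CT}$ proves the global reflection principle for $\PA$ (``every $\PA$-provable sentence is declared true''), since induction shows all (even nonstandard) induction axioms are declared true and then induction on proof length gives closure under provability; iterating this by induction on $y$ applied to the $\Lcal_\PA(\hat X)$-formula ``the sentence coded by $f(y)$ is declared true'' (where $f(y)$ codes $\Con^y(\PA)$) yields $\mathrm{CT}\proves\forall y\,\Con^y(\PA)$. So no model of $\PA$ satisfying $\neg\forall y\,\Con^y(\PA)$ admits an \emph{inductive} full satisfaction class at all, and the model $M$ and the definable nonstandard element $y$ you build your $\psi_0$ from do not exist. (The fact you cite, that the theory of an inductive satisfaction class proves each $\Con^n(\PA)$ ``but not the uniform statement,'' is exactly what fails.) Your second, self-flagged obstacle is also real and points the same way: Krajewski-style freedom concerns \emph{non-inductive} full satisfaction classes, whereas an inductive class is closed (in the model's sense) under $\PA$-provability; hence any sentence that $M$ thinks is $\PA$-refutable --- in particular anything extracted from an internal inconsistency proof, such as $\Con^{y-1}(\PA)$ --- has its truth value forced, so the two $\Sigma^1_1$ sentences you want to realize cannot both be consistent with $\Th(M)+\mathrm{CT}$ for that choice of $\psi_0$.

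For contrast, the paper avoids exhibiting any concrete independent sentence. It shows abstractly that the $\Lcal_\PA(S)$-theory $T=\TA+{}$``$S$ is an inductive full satisfaction class'' is incomplete, by a Turing-degree computation: $T\equiv_T 0^{(\omega)}$ while the complete theory $\Th(\nats,\TA)$ extending it has degree $0^{(\omega+\omega)}$. One then takes two incompatible completions $T_U,T_I$ of $T$ (which share the same $\Lcal_\PA$-consequences, namely $\TA$) and uses resplendency of a countable recursively saturated $M\models\TA$ to realize both as inductive full satisfaction classes $U,I$ over the same $M$. If you want to salvage your strategy, you would need the independent sentence to be produced by such an indirect incompleteness argument rather than by hand, since the hands-on candidates are exactly the ones induction forces.
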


I learned of this argument from \cite{hamkins-yang2014}, where they attribute the argument to Schmerl.

\begin{proof}
Consider the $\Lcal_\PA(S)$ theory $T = \TA + \text{``}S$ is an inductive full satisfaction class, where $\TA = \Th(\nats)$ is true arithmetic. By a well-known fact from computability theory $T$ is Turing-equivalent to $0^{(\omega)}$, the $\omega$-th iterate of the Turing jump of the empty set. However, $\Th(\nats,\TA)$, which extends $T$, is Turing-equivalent to $0^{(\omega+\omega)}$. So if $T$ were complete then we would get $0^{(\omega)} \equiv_T 0^{(\omega+\omega)}$, which is of course impossible. So $T$ must be incomplete. Now take $T_I$ and $T_U$ incompatible extensions of $T$. That is, $T_I \cup T_U$ is inconsistent while each is individually consistent. Notice, though, that $T_I$ and $T_U$ have the same $\Lcal_\PA$ consequences, namely $\TA$. So if $M \models \TA$ is resplendent then there are $U,I \subseteq M$ so that $(M,U) \models T_U$ and $(M,I) \models T_I$. These are the desired $M$, $U$, and $I$.
\end{proof}

Note that the restricting $U$ and $I$ to $\omega$ both yield $\TA$. So $U$ and $I$ agree on standard truth, but not nonstandard truth. On the other hand, for truth about truth disagreement happens on the standard level. 

\begin{definition}
For $n \le \omega$ let $\ITR^n$ be the $\Lcal_\PA(T)$ theory asserting $\PA$ plus that $T$ is an inductive full satisfaction class of length $n$.\footnote{The reader should think {\bf i}terated {\bf tr}uth for $\ITR$.}
Let $\itr^n$ be the $\Lcal_\PA$ consequences of $\ITR^n$. 
\end{definition}

With this definition in mind, the above observation can be phrased as: if $M \models \itr^n$ is countable and recursively saturated then $M$ can be extended to a model of $\ITR^n$. 

\begin{definition}
Let $m < n \le \omega$. Let $\itr^n_m$ be the reduct of $\ITR^n$ to language for a structure with an $m$-iterated full satisfaction class. More formally, we can use an $\Lcal_\PA(T)$-formula to define an $m$-iterated full satisfaction class from an $n$-iterated full satisfaction class by restricting to the first $m$ levels of the iterated full satisfaction class. Then $\itr^n_m$ is what $\ITR^n$ proves about this reduct.
\end{definition}

\begin{theorem} \label{thm3:itr2}
Let $M \models \itr^2$ be nonstandard, countable, and recursively saturated. Then there are $S,S' \subseteq M$ so that the following hold:
\begin{itemize}
\item $(M,S)$ and $(M,S')$ are recursively saturated;
\item $S$ and $S'$ are inductive full satisfaction classes;
\item $S$ can be extended to an inductive iterated full satisfaction class of length $2$; and
\item $S'$ cannot be extended in this way.
\end{itemize}
\end{theorem}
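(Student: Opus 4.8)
The statement I need to prove is Theorem \ref{thm3:itr2}: given a nonstandard, countable, recursively saturated $M \models \itr^2$, I want to produce $S, S' \subseteq M$, both inductive full satisfaction classes making $(M,S)$ and $(M,S')$ recursively saturated, with $S$ extendable to an inductive iterated full satisfaction class of length $2$ but $S'$ not.

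The plan is to run a computability-theoretic argument analogous to the warm-up proposition just proved, but now one level up: exploit that the theory ``$T$ is an inductive full satisfaction class'' has strictly less Turing-theoretic information than ``$T$ is an inductive iterated full satisfaction class of length $2$''. First I would set up two $\Lcal_\PA(S)$-theories over true arithmetic: $T_0 = \TA + \text{``}S$ is an inductive full satisfaction class'' and $T_1 = \TA + \text{``}S$ is an inductive full satisfaction class that extends to an inductive iterated full satisfaction class of length $2$'' (the latter phrased with $S$ naming just the level-$1$ part and an existential asserting the level-$2$ part exists and is inductive; one should be careful that this is expressible, but it is, since it just asserts consistency/realizability which over a resplendent model is a first-order-after-naming matter — more cleanly, take $T_1$ to directly name the length-$2$ class and then reduct). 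By the standard facts quoted in the section, $T_0 \equiv_T 0^{(\omega)}$ while $T_1 \equiv_T 0^{(\omega + \omega)}$ (a length-$2$ inductive iterated full satisfaction class computes the truth set of $(\nats, \TA)$, hence $0^{(\omega+\omega)}$), so $T_0$ and $T_1$ are not Turing-equivalent; in particular $T_1$ properly extends $T_0$, so $T_0$ has a completion $T_0'$ that is incompatible with every completion of $T_1$ — equivalently, ``$S$ does not extend to an inductive length-$2$ iterated full satisfaction class'' is consistent with $T_0$. The $\Lcal_\PA$-consequences of both $T_0$ and this incompatible completion are still just $\TA$ (adding a satisfaction class is conservative over $\TA$ in the base language, and the non-extendability statement is about $S$).

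Next I would invoke resplendency. Since $M \models \itr^2$ is countable and recursively saturated, by Barwise–Schlipf it is chronically resplendent. The theory $T_1$ (with a predicate naming the length-$2$ iterated class) is consistent with $\Th(M)$ — here is precisely where the hypothesis $M \models \itr^2$ is used, since $\itr^2$ is by definition the $\Lcal_\PA$-part of $\ITR^2 \supseteq T_1$ — so there is $T \subseteq M$ realizing it with $(M,T)$ resplendent; let $S$ be the level-$1$ reduct of $T$. Then $(M,S)$ is also resplendent (reducts of resplendent structures are resplendent), $S$ is an inductive full satisfaction class, and $S$ extends to the inductive length-$2$ iterated full satisfaction class $T$. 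For $S'$, I instead realize the theory $T_0'$ = ``$S'$ is an inductive full satisfaction class $\wedge$ $S'$ does not extend to an inductive length-$2$ iterated full satisfaction class'', which by the previous paragraph is consistent with $\Th(M)$ (its $\Lcal_\PA$-reduct is $\TA$, which $M$ satisfies since $M \models \itr^2 \supseteq \TA$... wait — one must check $M \models \TA$; indeed $\itr^2$ contains $\TA$ as part of its definition via $\ITR^2$, so yes). Resplendency gives $S' \subseteq M$ with $(M,S')$ resplendent, $S'$ an inductive full satisfaction class, and by construction $S'$ does not extend to an inductive iterated full satisfaction class of length $2$.

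The main obstacle — the step I expect to need the most care — is getting the non-extendability of $S'$ to be genuinely first-order-expressible as a schema or single sentence over $(M, S')$ so that resplendency applies. ``There is no inductive length-$2$ extension'' is prima facie a $\Pi^1_1$-over-$(M,S')$ statement, which resplendency does not directly handle. The fix is the usual one: instead of asserting non-existence of an extension directly, I assert the scheme saying $(M,S')$ is a model of $\PA(S')$ together with the single $\Lcal_\PA(S')$-sentence expressing $\neg\Con(\ITR^2)$-relative-to-the-standard-system, or more robustly, I build the incompatibility into $T_0'$ as an honest completion: pick a specific $\Lcal_\PA(S')$-sentence $\sigma$ with $T_0 + \sigma$ consistent but $T_1 + \sigma$ inconsistent (such $\sigma$ exists because $T_1$ is a proper, non-conservative-in-the-expanded-language extension of $T_0$), and realize $T_0 + \sigma$. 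Then any inductive length-$2$ extension would give a model of $T_1 + \sigma$ on the same first-order part, contradicting $(M,S') \models \sigma$ — but one still needs that "any inductive length-$2$ iterated full satisfaction class over $(M,S')$ yields a model of $T_1$", which holds because such a class, being inductive, makes $(M, \text{class})$ a model of $\ACA_0$-like induction and hence of $\ITR^2$. I would spell out this last implication carefully, as it is the crux that converts the computability-theoretic separation into an honest model-theoretic one, exactly paralleling (one level up) the Schmerl-style argument reproduced in the warm-up proposition.
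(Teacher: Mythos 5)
Your outer scaffolding matches the paper's: obtain $S$ by restricting a length-$2$ class produced by chronic resplendency, and obtain $S'$ by using resplendency to realize $\Th(M)+\ITR^1$ together with the negation of some $\Lcal_\PA(S)$-sentence that is forced by extendability (any inductive length-$2$ extension of $S'$ makes $(M,S',\cdot)$ a model of $\Th(M)+\ITR^2$, so its reduct consequences, i.e.\ $\itr^2_1$ over $\Th(M)$, would hold of $(M,S')$). But the existence of such a separating sentence is exactly the nontrivial claim, and both of your justifications for it fail. First, you assert $M\models\TA$ on the grounds that ``$\itr^2$ contains $\TA$''; it does not. $\ITR^2$ is a computably axiomatizable $\Lcal_\PA(T)$-theory, so $\itr^2$, its set of $\Lcal_\PA$-consequences, is computably enumerable and cannot contain true arithmetic; the theorem is about an arbitrary countable recursively saturated nonstandard model of $\itr^2$, so an argument pinned to $\TA$ does not cover it. Second, even over $\TA$ the Turing-degree separation is wrong: your $T_1$ (true arithmetic plus ``there is an inductive length-$2$ iterated full satisfaction class,'' or its $\Lcal_\PA(S)$-reduct) is computably enumerable in $\TA\equiv_T 0^{(\omega)}$, hence of degree at most $0^{(\omega+1)}$, not $0^{(\omega+\omega)}$. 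You have conflated the first-order theory with the actual $2$-iterated truth predicate over $\nats$; in the warm-up proposition the degree $0^{(\omega+\omega)}$ belongs to the complete theory $\Th(\nats,\TA)$ of an expanded standard model, and that comparison does not lift to comparing $T_0$ with $T_1$. So your proposal never establishes that $T_1$ properly extends $T_0$, and the construction of $S'$ has no foundation.

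The paper's proof supplies precisely this missing ingredient, proof-theoretically and uniformly in $\Th(M)$: it shows that $\itr^2_1$ is not contained in the consequences of $\Th(M)+\ITR^1$ by taking as separating sentence $\Con(\ITR^1+\Tr)$ (expressible because $\ITR^1$ is computably axiomatized and the satisfaction class gives access to the model's arithmetic theory), noting it lies in $\itr^2_1$, and proving that $\Th(M)+\ITR^1$ does not prove it via a Rosser-style fixed point, with a case split on whether the witnessing proof code is standard or nonstandard (this is what handles the fact that $\Tr$ codes nonstandard sentences and that $\Th(M)$ is not computably enumerable). To repair your argument you would need to replace the degree computation by such a G\"odel--Rosser incompleteness argument relative to $\Th(M)$; once that claim is in hand, the rest of your proposal goes through as you describe.
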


\begin{proof}
It is easy to find $S$. Just take an inductive iterated full satisfaction class of length $2$ over $M$ and restrict it to get $S$. We can ensure $(M,S)$ is recursively saturated because $M$ is chronically resplendent. So the work is in getting $S'$. This reduces to the following claim.

\begin{claim}
The theory $\itr^2_1$ is independent over $\Th(M) + \ITR^1$. 
\end{claim}

Given the claim, find $S'$ by chronic resplendency to get a class over $M$ so that the expansion satisfies $\Th(M) + \ITR^1$ but not $\itr^2_1$. So to finish the proof let us prove the claim. The basic idea is that having an inductive full satisfaction class allows a model to get a handle of the theory of its arithmetic reduct, enabling a diagonalization trick.

Consider the sentence $\Con(\ITR^1 + \Tr)$, where $\Tr$ is a name for the full satisfaction class. This sentence can be expressed in the language of $\ITR^1$, because $\ITR^1$ is computably axiomatizable and the full satisfaction class gives access to $\Tr$. Note that for any standard $\phi$ in the language of arithmetic and any $(N,S) \models \Th(M) + \ITR^1$ we get that $\phi \in S$ if and only if $\phi \in \Th(M)$. Clearly, $\ITR^2 \proves \phi$ so $\itr^2_1 \proves \phi$. Let us see that $\Th(M) + \ITR^1$ does not prove $\Con(\ITR^1 + \Tr)$, which will then yield the claim. By the G\"odel fixed-point lemma there is a sentence $\psi$ so that $\ITR^1$ proves
\[
\psi \iff \underbrace{\forall x\ \Pr_{\ITR^1 + \Tr}(\psi,x) \impl \exists y<x\ \Pr_{\ITR^1 + \Tr}(\neg \psi,y)}_{= \rho(\psi)}
\]
where $\Pr_{\ITR^1 + \Tr}(\theta,x)$ asserts $x$ codes a proof of $\theta$ from the axioms of $\ITR^1 + \Tr$. It is immediate that $\ITR^1$ proves $\rho(\psi) \impl \neg \Con(\ITR^1 + \Tr)$. So $\ITR^1$ proves $\Con(\ITR^1 + \Tr) \impl \neg \psi$. Now suppose towards a contradiction that $\Th(M) + \ITR^1$ proves $\neg \psi$. Then there is a standard natural number which codes this proof. Now work in a model of $\Th(M) + \ITR^1$. Then, this model thinks that $\ITR^1 + \Tr$ proves $\neg \rho(\psi)$, which is equivalent to
\[
\exists x\ \Pr_{\ITR^1 + \Tr}(\psi,x) \land \forall y < x\ \neg \Pr_{\ITR^1 + \Tr}(\neg \psi, x). 
\]
There are two cases to consider, the first being the case that there is a witnessing $x$ which is standard. Then the proof coded by $x$ could only use formulae from the standard part of $\Tr$, which is $\Th(M)$. Thus we would get a standard proof of $\psi$ from $\Th(M) + \ITR^1$, which would be a contradiction. The second case is then that all witnessing $x$ are nonstandard. But then $\forall y < x\ \neg \Pr_{\ITR^1 + \Tr}(\neg \psi, x)$ cannot be a theorem of $\ITR^1 + \Tr$, as there is a standard $y$ which codes a proof of $\neg \psi$ from $\Th(M) + \ITR^1$, and so our model will think $y$ codes a proof of $\neg \psi$ from $\ITR^1 + \Tr$. In either case we get a contradiction, so our original assumption that $\Th(M) + \ITR^1$ proves $\neg \psi$ must be false, and so it cannot prove $\Con(\ITR^1 + \Tr)$. This completes the proof of the claim, which completes the proof of theorem.
\end{proof}

Krajewski's methods give that there are continuum many such $S$ and $S'$. 

There is nothing special about about $1$ and $2$ in theorem \ref{thm3:itr2}. We can get the same result for $m$-iterated full satisfaction classes and $n$-iterated full satisfaction classes for $m \le n$.

\begin{theorem}
Let $M \models \PA$ be countable and recursively saturated and $m \le n \in M$. Assume that $M \models \itr^n$. Then there is $S \subseteq M$ so that
\begin{itemize}
\item $(M,S)$ is recursively saturated;
\item $S$ is an inductive $m$-iterated full satisfaction class for $M$; and
\item $S$ cannot be extended to an inductive $(m+1)$-iterated full satisfaction class.
\end{itemize}
\end{theorem}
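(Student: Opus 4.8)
The plan is to follow exactly the template of Theorem~\ref{thm3:itr2}, simply promoting the numbers $1,2$ to $m,n$. The model $S$ will be obtained by chronic resplendency: since $M \models \itr^n$ and $M$ is countable recursively saturated (hence chronically resplendent by Barwise--Schlipf), there is $S' \subseteq M$ so that $(M,S')$ is recursively saturated and $S'$ is an inductive $n$-iterated full satisfaction class for $M$. Restricting $S'$ to its first $m$ levels gives an inductive $m$-iterated full satisfaction class which obviously can be extended. So, exactly as in the $m=1,n=2$ case, the content is all in producing the $S$ which \emph{cannot} be extended, and this reduces to the analogue of the Claim: the theory $\itr^n_m$ is independent over $\Th(M) + \ITR^m$.

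First I would set up the diagonalization. Work over the base theory $\ITR^m$, which is computably axiomatizable and which (via its built-in $m$-iterated full satisfaction class $\Trm$, restricted to level $0$, i.e.\ the ordinary full satisfaction class $\Tr$) gives a model access to the $\Lcal_\PA$-truth predicate. Consider the $\Lcal_\PA(\Trm)$-sentence $\Con(\ITR^m + \Tr)$, asserting the consistency of $\ITR^m$ together with the full diagram of truth at level $0$. By the G\"odel fixed-point lemma produce $\psi$ with
\[
\ITR^m \proves \psi \iff \big(\forall x\ \Pr_{\ITR^m + \Tr}(\psi,x) \impl \exists y < x\ \Pr_{\ITR^m + \Tr}(\neg\psi,y)\big),
\]
where $\Pr_{\ITR^m + \Tr}$ is the provability predicate for the axioms of $\ITR^m$ augmented by the $\Lcal_\PA$-sentences declared true by $\Tr$. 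Then $\ITR^m \proves \Con(\ITR^m + \Tr) \impl \neg\psi$. The Rosser-style twist (a witness $x$ is either standard---forcing a standard $\Th(M)+\ITR^m$-proof of $\psi$---or nonstandard---allowing a smaller standard proof of $\neg\psi$) shows that $\Th(M) + \ITR^m$ cannot prove $\neg\psi$. On the other hand $\ITR^n$ (for any $n > m$) proves $\Con(\ITR^m + \Tr)$, hence proves $\neg\psi$; and since $\neg\psi$ is an $\Lcal_\PA$-statement deducible in the reduct to $m$ levels, one gets $\itr^n_m \proves \neg\psi$. Thus $\neg\psi \in \itr^n_m \setminus (\Th(M) + \ITR^m)$, which is the required independence. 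Chronic resplendency of $M$ then supplies $S \subseteq M$ with $(M,S) \models \Th(M) + \ITR^m$, $(M,S)$ recursively saturated, but $(M,S) \models \psi$, so $(M,S) \not\models \itr^n_m$; in particular $S$ cannot be extended to an inductive $(m+1)$-iterated full satisfaction class, since any such extension would witness $\itr^{m+1}_m \subseteq \itr^n_m$ and in particular force $\neg\psi$. (One also notes $n$ plays no essential role beyond $n \geq m+1$: the statement is really about the $m$-vs-$(m+1)$ gap, and the hypothesis $M \models \itr^n$ is used only to guarantee $M \models \itr^{m+1}$, so that the relevant reduct theories are consistent with $\Th(M)$ and chronic resplendency can be invoked.)

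The main obstacle is checking that the fixed-point construction genuinely goes through with $\ITR^m$ in place of $\ITR^1$ as the base theory---i.e.\ that $\ITR^m$ is still computably axiomatizable (it is: $\PA$ plus finitely many schematic clauses defining the $m$-iterated satisfaction class, all of which are computable even when $m$ is a nonstandard element, provided we axiomatize ``$\Trm$ is an $m$-iterated full satisfaction class'' by a single formula quantifying over levels $< m$), and that the internal provability predicate $\Pr_{\ITR^m + \Tr}$ is well-defined and $\Sigma_1$. Once that bookkeeping is done, the Rosser argument is verbatim that of the proof of Theorem~\ref{thm3:itr2}; I would simply write ``the same diagonalization as in the proof of Theorem~\ref{thm3:itr2}, now carried out over $\ITR^m$, establishes the claim'' after recording the fixed-point sentence and the two-case analysis of the witness $x$. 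A secondary point worth a sentence is that $(M,S)$ being recursively saturated is automatic from chronic resplendency of $M$, and that this is exactly why we want $M$ countable recursively saturated rather than merely resplendent.
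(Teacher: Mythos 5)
Your proposal is correct and takes essentially the paper's own route: the paper's proof is exactly the sketch that one runs the same G\"odel--Rosser diagonalization as in the $1$-versus-$2$ case to show $\itr^n_m$ is independent over $\Th(M) + \ITR^m$, and then invokes chronic resplendency, which is what you carry out. One small caveat: your closing parenthetical misstates the role of $n$ when $n = m$ (there $M \models \itr^{m+1}$ need not hold), but your argument never actually needs it---non-extendibility only uses the provable fact that $\ITR^{m+1}$ proves $\Con(\ITR^m + \Tr)$, and consistency of $\Th(M) + \ITR^m$ already follows from $M \models \itr^m$.
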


Theorem \ref{thm3:itr2} is a direct consequence of this result: apply it with $m = n = 2$ and with $m = 1$ and $n = 2$ to get, respectively, $S$ and $S'$.

\begin{proof}[Proof sketch]
If $n = m$ this is easy. For $n < m$, use a similar G\"odel--Rosser trick to show that $\itr^n_m$ is independent over $\Th(M) + \ITR^m$. 
\end{proof}

Again, Krajewski's work implies that there are continuum many such $S$.

\begin{corollary}
Let $M \models \PA$ be countable and recursively saturated and $k \le m < n \in M$. Assume that $M \models \itr^n$. Then there is $S \subseteq M$ so that
\begin{itemize}
\item $(M,S)$ is recursively saturated;
\item $S$ is an inductive $k$-iterated full satisfaction class for $M$; and
\item $S$ can be extended to an inductive $m$-iterated full satisfaction class but no further.
\end{itemize}
\end{corollary}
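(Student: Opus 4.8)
The plan is to mimic the proof of the preceding theorem, but tracking only the first $k$ levels rather than all $m$ of them. A direct appeal to that theorem is not quite enough: it produces a length-$m$ inductive satisfaction class $T$ for which \emph{$T$ itself} does not extend to length $m+1$, whereas for the corollary I need that the length-$k$ reduct $T\rest k$ does not extend, and these are not the same condition. So first I would check that the G\"odel--Rosser argument underlying the preceding theorem in fact yields the stronger statement that $\itr^{m+1}_k$ is independent over $\Th(M) + \ITR^m$. The unprovability half is the same diagonalization: the Rosser-style fixed point $\psi$ used there only asks the model to recover the computable axiomatization of $\ITR^{m+1}$ together with the standard part of its satisfaction class and $\Th(M)$, so $\psi$ may be taken to be a sentence about a length-$k$ satisfaction class, i.e.\ $\psi \in \itr^{m+1}_k$, and still $\Th(M) + \ITR^m \not\proves \psi$. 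The consistency half is immediate: since $M \models \itr^n$ with $n > m$ and $M$ is countable and recursively saturated, hence resplendent, $M$ carries an inductive full satisfaction class of length $n \ge m+1$; truncating it to length $m$ (and viewing its first $k$ levels appropriately) exhibits a model of $\Th(M) + \ITR^m + \itr^{m+1}_k$.

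Next I would build $S$. By the previous paragraph $\Th(M) + \ITR^m + \neg\psi$ is consistent for a suitable $\psi \in \itr^{m+1}_k$, where the length-$k$ predicate occurring in $\psi$ is treated as the $\Lcal_\PA$-definable restriction to the first $k$ levels of the length-$m$ predicate of $\ITR^m$. Since $M$ is countable and recursively saturated it is chronically resplendent by Barwise--Schlipf, so there is $T \subseteq M$ with $(M,T)$ recursively saturated and $(M,T) \models \Th(M) + \ITR^m + \neg\psi$; in particular $T$ is an inductive $m$-iterated full satisfaction class for $M$. Put $S = T \rest k$, the restriction of $T$ to its first $k$ levels. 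Then $S$ is $\Lcal_\PA(T)$-definable in $(M,T)$, so $(M,S)$ is recursively saturated as a definable reduct of $(M,T)$, and $S$ is an inductive $k$-iterated full satisfaction class for $M$ (the recursive clauses at levels below $k$ are unchanged, and induction in the $S$-language follows from induction in the $T$-language). Moreover $T$ itself witnesses that $S$ extends to an inductive $m$-iterated full satisfaction class, giving the first two bullets and one direction of the third.

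Finally I would verify that $S$ extends no further. Suppose $U \subseteq M$ were an inductive $(m+1)$-iterated full satisfaction class extending $S$, i.e.\ with $U \rest k = S$. Then $(M,U) \models \ITR^{m+1}$, so $(M,U) \models \psi$ with the length-$k$ predicate of $\psi$ interpreted as $U \rest k = S$; that is, $(M,S) \models \psi$. But $(M,T) \models \neg\psi$ says precisely that $(M, T \rest k) = (M,S) \models \neg\psi$, a contradiction. An inductive full satisfaction class of any length $> m$ restricts to such a $U$, so none extends $S$ either. (When $k = m$ the whole argument collapses to the statement of the preceding theorem.)

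The step I expect to be the main obstacle is the first one: confirming that the self-referential sentence from the Rosser construction can genuinely be confined to the language of a length-$k$ satisfaction class --- i.e.\ that the diagonalization never consults more than the first $k$ levels of truth, drawing only on the computable theory $\ITR^{m+1}$ and the first-order theory $\Th(M)$. Once that is secured, the remainder is routine bookkeeping with reducts and chronic resplendency.
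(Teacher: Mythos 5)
Your argument is correct, and it is essentially the paper's argument: a G\"odel--Rosser independence statement realized by chronic resplendency, followed by restriction to the first $k$ levels. The extra care you take is worth keeping, though, because the paper's proof of the corollary is literally ``apply the theorem, then restrict,'' and under the intended (strong) reading of ``no further'' that one-liner leaves open exactly the point you isolate: non-extendability of the length-$m$ class $T$ does not by itself rule out that $T \rest k$ extends to length $m+1$ through a \emph{different} length-$m$ class, so one really does need a sentence of $\itr^{m+1}_k$, in the language of the length-$k$ predicate alone, which fails in $(M,T)$. Your localization of the Rosser argument to that language works: the witnessing sentence is (a Rosser variant of) $\Con$ of $\ITR^m$ together with the data recorded in the first $k$ levels (their diagram, or the arithmetic sentences level $0$ declares true), which is expressible from $\Tr_k$ alone, is proved by $\ITR^{m+1}$ via its top level, and is not provable from $\Th(M)+\ITR^m$ by the same two-case analysis, which uses only that the standard part of level $0$ is $\Th(M)$. (One small adjustment: the theory inside the provability predicate should be $\ITR^m$ plus the $\Tr_k$-data, not $\ITR^{m+1}$; the latter enters only as the theory proving the consistency statement.) The remaining bookkeeping---recursive saturation and inductivity of the definable reduct, restriction of any longer class to length $m+1$, and the collapse to the theorem when $k=m$---is fine.
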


\begin{proof}
Apply the theorem, then restrict the $m$-iterated full satisfaction class to get a $k$-iterated full satisfaction class.
\end{proof}

Given countable and recursively saturated $M \models \PA$ we can form a tree consisting of the inductive iterated full satisfaction classes over $M$. Namely, for $m < n \in M$ an $m$-iterated full satisfaction class $S_m$ is before an $n$-iterated full satisfaction class $S_n$ in the tree if $S_m$ is the restriction of $S_n$ to the first $m$ levels. The results in this section tell us that this tree has lots of branches of all possible lengths.

For concreteness, suppose $M \models \itr^\omega$, i.e.\ the $\Lcal_\PA$-consequences of $\PA + \text{``}T$ is an inductive $\omega$-iterated full satisfaction class''. (Here, $\omega$ is in the sense of the model $M$.) The tree of inductive iterated full satisfaction classes for $M$---see figure \ref{fig3:truth-tree}---has continuum many branches, coming from the continuum many inductive $\omega$-iterated full satisfaction classes. The above corollary implies that for any $m < n \in m$ this tree has continuum many nodes of depth $m$ which extend to a node of depth $n$, but no further. Moreover, if a node $s$ of depth $m$ has extensions to a node of depth $n$, then for any $k$ between $m$ and $n$ we have that $s$ has continuum many extensions to a leaf node of depth $k$.

\begin{figure}
\begin{center}
\begin{tikzpicture}[n/.style={draw=black,fill=white, minimum size=.2cm}]
\draw[dotted] (-5,-8) -- (0,0) -- (5,-8);
\draw (5,-2) node {depth $m$};
\draw (5,-4) node {depth $k$};
\draw (5,-6) node {depth $n$};
\draw (0,0) .. controls (-.5,-1) and (-.2,-1.6) .. (-.5,-2) 
   (-.5,-2) .. controls (-2,-4) and (-1.5,-5) .. (-2,-6)
   (-.5,-2) .. controls (.2,-3) .. (0,-4)
   (-.5,-2) .. controls (.4,-3) .. (.5,-4)
   (-.5,-2) .. controls (1.3,-3) .. (1,-4);
\draw (1.5,-4) node {$\cdots$};
\draw[dashed] (-2,-6) -- (-3,-7.5)
              (-2,-6) -- (-1,-7.5);
\draw (-.5,-2) node[n] {}
       (-2,-6) node[n] {}
        (0,-4) node[n] {}
       (.5,-4) node[n] {}
        (1,-4) node[n] {};
              
\end{tikzpicture}
\end{center}
\caption{The tree of inductive iterated full satisfaction classes over a model of arithmetic. The node at depth $m$ has continuum many extensions to leaf nodes of depth $k$.}
\label{fig3:truth-tree}
\end{figure}
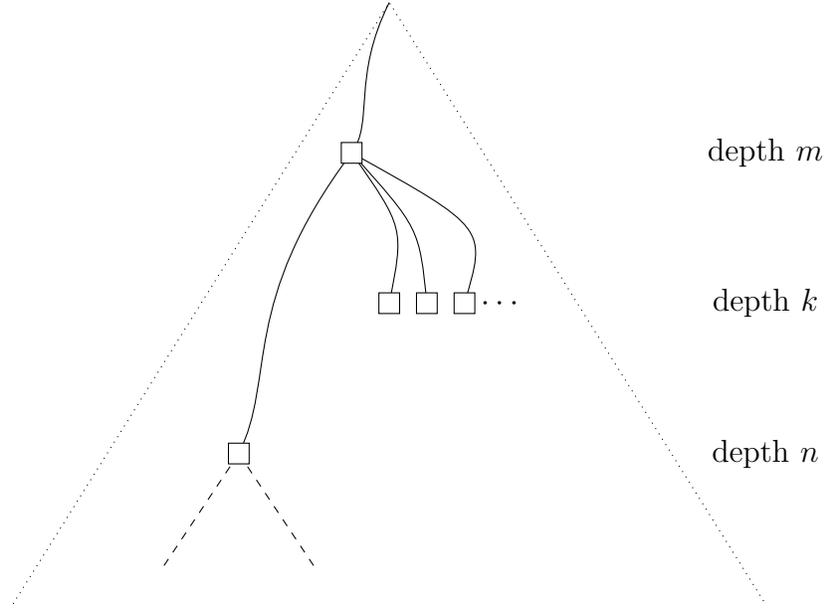

\section{Non-elementary transfinite recursions}

We now return to the infinite. We have investigated transfinite recursion for elementary properties. What about second-order properties?

\begin{definition} \label{def3:sktr}
Let $k$ be a natural number. We define the $\Sigma^1_k$-transfinite Recursion schema $\SkTR k$. Instances of $\Sigma^1_k$-Transfinite Recursion take the following form: let $\phi(x,Y,P)$ be a $\Sigma^1_k$-formula, possibly with a class parameter $P$, and $R$ be a well-founded class relation with transitive closure $<_R$. The instance of recursion for $\phi$ and $R$ asserts that there is a class $S \subseteq \dom R \times V$ which satisfies
\[
(S)_r = \{ x : \phi(x,S \rest r, A) \}
\]
for all $r \in \dom R$. Here, $(S)_r = \{ x : (r,x) \in S \}$ denotes the $r$-th slice of $S$ and
\[
S \rest r = S \cap \left(\{ r' \in \dom R : r' <_R r\} \times V\right)
\]
is the partial solution below $r$.

If $\Gamma$ is a well-order then  $\SkTR{k}_\Gamma$ is the schema obtained by restricting $\SkTR k$ to only ask for solutions to recursions of height $\le \Gamma$.
\end{definition}

Note that $\SkTR 0$ is another name for $\ETR$.

Before wading into a finer analysis, let us put the $\SkTR k$ in the context of the more familiar $\PnCA k$. First, an upper bound.

\begin{proposition}[Over $\GBcm$]
$\PnCA{k+1}$ proves $\SkTR k$. 
\end{proposition}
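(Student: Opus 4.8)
The plan is to show that, working over $\GBcm$, an instance of $\Sigma^1_k$-Transfinite Recursion can be solved using a single application of $\Pi^1_{k+1}$-Comprehension. The idea is the familiar one: rather than building the solution $S$ stage by stage (which is what we cannot do in the absence of a recursion principle), we give a non-recursive, absolute characterization of membership in $S$ and then use impredicative comprehension to form it all at once.

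First I would fix an instance of $\SkTR k$: a $\Sigma^1_k$-formula $\phi(x,Y,P)$ with class parameter $P$ and a well-founded class relation $R$ with transitive closure $<_R$. Call a class $T \subseteq \dom R \times V$ a \emph{partial solution up to $r$} if $\dom T \subseteq \{ r' : r' <_R r \text{ or } r' = r \}$ (appropriately downward closed) and $T$ satisfies the recursive requirement $(T)_{r'} = \{ x : \phi(x, T \rest r', P) \}$ at every $r'$ in its domain. The key uniqueness observation, proved by $<_R$-induction (which is available in $\GBcm$ since well-founded induction for first-order properties of classes is just an instance of the foundation scheme applied to the relevant definable class of counterexamples): any two partial solutions agree on the intersection of their domains. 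This is the analogue of the uniqueness-of-initial-partial-isomorphism argument from \ref{prop2:prtl-isom-agree}, and it means there is a well-defined ``virtual'' solution even before we know it is a set.

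Next I would define $S$ directly by $S = \{ (r,x) : \exists T\ (T \text{ is a partial solution up to } r \text{ and } (r,x) \in T) \}$. The existential quantifier here is over classes; because ``$T$ is a partial solution up to $r$'' requires verifying the clause $(T)_{r'} = \{x : \phi(x, T\rest r', P)\}$ for each $r'$, and $\phi$ is $\Sigma^1_k$ (so its negation is $\Pi^1_k$), the matrix is a boolean combination of $\Sigma^1_k$ and $\Pi^1_k$ statements, hence arithmetical over a $\Sigma^1_k$ oracle; prefixing the single existential class quantifier makes the whole thing $\Sigma^1_{k+1}$. Since $\PCA{k+1}$ is equivalent to $\Sigma^1_{k+1}$-Comprehension (as noted in the paper, over $\GBcm$ these coincide), we may form $S$ as a class. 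It remains to verify that $S$ is actually a solution: that $S \subseteq \dom R \times V$, that $\dom S = \dom R$, and that $(S)_r = \{ x : \phi(x, S\rest r, P)\}$ for all $r$. Surjectivity of $\dom S$ onto $\dom R$ and the recursion equation both follow from showing, by $<_R$-induction, that for every $r$ there \emph{exists} a partial solution up to $r$ — build it by ``gluing'' the partial solutions that exist at all $<_R$-predecessors (which cohere by the uniqueness lemma) and then adding the slice $(\cdot)_r := \{x : \phi(x, S\rest r, P)\}$, which is a class by Elementary Comprehension since $S\rest r$ is now available. The uniqueness lemma also gives that $S \rest r$ as computed inside $S$ agrees with whichever partial solution witnesses $(r,x) \in S$, so the recursion equation holds.

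The main obstacle I anticipate is purely bookkeeping: making sure the complexity count is right. One must be careful that quantifying over the node $r'$ in ``$T$ is a partial solution'' stays first-order (it ranges over $\dom R$, a class, but it is a first-order quantifier) so that only the one class quantifier $\exists T$ is responsible for the jump from $\Sigma^1_k$ to $\Sigma^1_{k+1}$; and one must confirm the well-founded induction used to prove existence and coherence of partial solutions is genuinely available in $\GBcm$ (it is, being an instance of $\in$-induction / the foundation scheme for the definable class of $<_R$-minimal failures). No transfinite recursion is needed anywhere in the argument — that is the whole point — and the construction is deliberately the ``convoluted'' non-recursive route rather than the stage-by-stage one, exactly as in the remark following lemma \ref{lem2:max-prtl-isom}.
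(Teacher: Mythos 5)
Your proof is correct and takes essentially the same route as the paper's: the paper likewise applies $\Pi^1_{k+1}$-Comprehension to the statement ``there is a partial solution of $\phi$ up to $r$'' and uses $<_R$-minimality of a purported failure to derive a contradiction. Your write-up just supplies the details the paper leaves implicit---uniqueness/coherence of partial solutions, the gluing step at a minimal failure, and the assembly of the full solution $S$ by a $\Sigma^1_{k+1}$ instance of Comprehension---with a complexity count that is, if anything, more careful than the paper's.
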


\begin{proof}
Let $R$ be a well-founded relation and $\phi$ be $\Sigma^1_k$, possibly with parameters. By Comprehension form the class $\{r \in \dom R : $ there is no partial solution of $\phi$ up to $r \}$. This uses $\Pi^1_{k+1}$-Comprehension because it asserts there is no class satisfying the $\Sigma^1_k$-property of being a partial solution to a $\Sigma^1_k$-recursion. We want to see this class is empty, so assume otherwise. Then it has a minimal member $r$. That is, for all $r' \mathbin R r$ we have a partial solution up for $\phi$ up to $r'$. But then there is a partial solution up to $r$, a contradiction. 
\end{proof}

We also get a lower bound.

\begin{observation}[Over $\GBCm$]
$\PnCA k$ is equivalent to $\SkTR k_1$.
\end{observation}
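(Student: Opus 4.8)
The claim is that $\PnCA k$ and $\SkTR k_1$ are equivalent over $\GBCm$, where $\SkTR k_1$ is $\Sigma^1_k$-Transfinite Recursion restricted to recursions of height $\le 1$, i.e.\ to a well-founded relation with a single node (or, more generally, a relation whose transitive closure is trivial so that every slice $S \rest r$ is empty). The plan is to prove both directions directly, both being short.

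For the forward direction, that $\PnCA k$ implies $\SkTR k_1$: an instance of $\SkTR k_1$ asks, given a $\Sigma^1_k$-formula $\phi(x,Y,P)$ and a well-order $\Gamma$ of length $\le 1$, for a class $S$ with $(S)_r = \{x : \phi(x, S\rest r, P)\}$ for the unique $r \in \dom R$. Since $R$ has height $1$, $S \rest r = \emptyset$, so we just need the class $\{x : \phi(x,\emptyset,P)\}$. But $\phi(x,\emptyset,P)$ is a $\Sigma^1_k$-formula in $x$ with parameter $P$ (the class variable $Y$ has been replaced by the empty class, which is available by Elementary Comprehension), so $\Sigma^1_k$-Comprehension — equivalent to $\PnCA k$ over $\GBcm$ — yields this class, and from it we assemble $S \subseteq \dom R \times V$ by an elementary operation. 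The only mild care needed is to handle the degenerate shape of a height-$\le 1$ recursion cleanly; this is routine.

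For the reverse direction, that $\SkTR k_1$ implies $\PnCA k$: given a $\Sigma^1_k$-formula $\psi(x)$ (possibly with parameters) for which we want $\{x : \psi(x)\}$ to be a class, take $R$ to be the trivial well-founded relation with a single point and apply $\SkTR k_1$ to the formula $\phi(x,Y,P) := \psi(x)$ (which does not mention $Y$). This produces a class $S$ whose unique slice is exactly $\{x : \psi(x)\}$, and then Elementary Comprehension extracts $\{x : \psi(x)\}$ from $S$. This gives $\Sigma^1_k$-Comprehension, which over $\GBcm$ is $\PnCA k$.

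There is no real obstacle here; the statement is essentially an unwinding of definitions, with the one point worth stating carefully being that a recursion of height $\le 1$ has all partial-solution slices empty, so such a recursion is just a single instance of comprehension for the given $\Sigma^1_k$-property. I would keep the proof to a few lines for each direction and not belabor the bookkeeping of coding $S$ as a subclass of $\dom R \times V$.
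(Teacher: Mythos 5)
Your proof is correct and is essentially the paper's argument: the paper simply observes that a $\Sigma^1_k$-recursion of length $1$ is just asking for a single $\Sigma^1_k$-definable class, which is exactly the unwinding you spell out in both directions.
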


\begin{proof}
Simply observe that a $\Sigma^1_k$ recursion of length $1$ is just asking for a single $\Sigma^1_k$-definable class.
\end{proof}

So, because $\SkTR k$ clearly proves $\SkTR k_1$ we have that $\SkTR k$ is between $\PnCA k$ and $\PnCA{k+1}$. Later---specifically theorem \ref{thm3:pnca-impl-sktr} and a special case of theorem \ref{thm3:main-thm}---we will see that this separation is by consistency strength. 

This observation can also be used to show that $\KM$ can be alternatively axiomatized by a transfinite recursion principle.

\begin{proposition}
Over $\GBCm$, the following are equivalent.
\begin{enumerate}
\item The full second-order Comprehension schema; and
\item $\Sigma^1_\omega$-Transfinite Recursion, the schema asserting that every transfinite recursion of a second-order property has a solution.
\end{enumerate}
\end{proposition}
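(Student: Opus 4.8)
The plan is to prove the two implications separately, with the forward direction being essentially trivial and the backward direction carrying all the content. For $(1 \Rightarrow 2)$, I would observe that a solution to a $\Sigma^1_\omega$-transfinite recursion can be produced directly by Comprehension: given a well-founded relation $R$ and a second-order formula $\phi(x,Y,P)$, the argument is the standard one---form the class of $r \in \dom R$ admitting no partial solution below $r$, using full Comprehension (the property ``there is a partial solution up to $r$'' is itself second-order, hence permissible under the full schema), show this class has no $R$-minimal element, and conclude it is empty. This is the same argument as in the proposition that $\PnCA{k+1}$ proves $\SkTR k$, just without tracking the complexity of $\phi$.

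For $(2 \Rightarrow 1)$, the key point is to mimic the observation that $\PnCA k$ is equivalent to $\SkTR k_1$. A transfinite recursion of length $1$ (i.e. along a well-order with a single point, or equivalently along the trivial relation on a one-element domain) applied to a second-order formula $\phi(x,Y,P)$ with the parameter $Y$ vacuous simply asserts the existence of the class $\{x : \phi(x,\emptyset,P)\}$. So every instance of full Comprehension is an instance of $\Sigma^1_\omega$-Transfinite Recursion for a length-$1$ recursion. This is immediate once one notes that $\Sigma^1_\omega$ is closed under the syntactic operation of forgetting a free class variable, so any second-order formula witnessing an instance of Comprehension is $\Sigma^1_\omega$ and hence eligible.

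I do not expect any real obstacle here; the proposition is a packaging statement of the kind already proved for the $\SkTR k$ levels. The only mild subtlety worth a sentence is the bookkeeping about what ``$\Sigma^1_\omega$'' means---recall the excerpt fixes $\Sigma^1_\omega$ (equivalently $\Pi^1_\omega$) as a synonym for the class of all second-order formulae, so there is no stratification to worry about and ``$\Sigma^1_\omega$-formula'' is literally ``second-order formula.'' One should also note in passing that we work over $\GBCm$ throughout, so Elementary Comprehension, Class Replacement, and Global Choice are available for the routine verifications (e.g. that the length-$1$ well-order and the relevant restrictions are classes), but neither direction needs anything beyond what is already assumed. I would close by remarking that combined with the earlier observation that $\SkTR k$ sits between $\PnCA k$ and $\PnCA{k+1}$, this gives the promised reformulation of $\KM$ as the full transfinite recursion principle $\Sigma^1_\omega\text{-}\mathsf{TR}$ over $\GBC$.
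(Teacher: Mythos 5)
Your proposal is correct and follows essentially the same route as the paper: the paper's proof simply cites its earlier proposition that $\PnCA{k+1}$ proves $\SkTR k$ for the forward direction and its observation that $\PnCA k$ is equivalent to $\SkTR k_1$ (length-one recursions are Comprehension instances) for the backward direction, which are exactly the two arguments you spell out. No gaps; the only difference is that you unpack the citations into the underlying arguments.
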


\begin{proof}
$(1 \impl 2)$ Use $\Pi^1_{k+1}$-Comprehension to get an instance of $\Sigma^1_k$-Transfinite Recursion. $(2 \impl 1)$ Use $\Sigma^1_{k+1}$-Transfinite Recursion to get an instance of $\Pi^1_k$-Comprehension.
\end{proof}

In the sequel we will stratify $\SkTR k$ by the length of recursions, separating them by consistency strength similar to the previous analysis of $\ETR$. Before doing so, however, let us see why the arguments for $\ETR$ do not carry over immediately.

To separate $\ETR_{\Gamma \cdot \omega}$ from $\ETR_\Gamma$ we started with a model of $\ETR_{\Gamma \cdot \omega}$ and looked at the coded $V$-submodel generated from initial segments of $\Tr_{\Gamma \cdot \omega}(G)$, where $G$ was a global well-order. This submodel then satisfied $\ETR_\Gamma$ because iterated first-order truth predicates from the original model continued to be iterated first-order truth predicates in the submodel. This is because the two models have the same first-order part, so they agree on well-orders and they agree on first-order truth. We then inductively get that they agree on iterated truth. And since fragments of Elementary Transfinite Recursion are equivalent to the existence of certain iterated truth predicates, we get $\ETR_\Gamma$ in the submodel.

But that strategy cannot work for the non-elementary transfinite recursion principles $\SkTR k$ for $k \ge 1$. The problem is simple: a $\Sigma^1_k$-truth predicate must be a hyperclass. But this hyperclass contains $(X=X,A)$ for all classes $A$. If we had a class coding this truth predicate we could thus from it define a class coding the hyperclass of all classes. But that is impossible, as seen by an easy diagonalization argument. Because of this, there is no hope for having an iterated $\Sigma^1_k$-truth predicate. So we must take a different strategy. 

Instead, we will work directly with solutions to $\Sigma^1_k$-transfinite recursions. Here we run into a potential obstacle. Unlike with elementary transfinite recursions which depend only upon the sets (and possibly finitely many class parameters), determining whether a class is a solution to a $\Sigma^1_k$-transfinite recursion requires quantifying over all the classes. So if we restrict the classes to a $V$-submodel it may be that the smaller model disagrees about what is a solution to the recursion. We overcome this obstacle by ensuring that our $V$-submodel is sufficiently elementary in the larger model.

It will be convenient to work in the unrolling, so let us see what the theory of the unrolling is. First, recall a special case of definition \ref{def2:phi-tr}.

\begin{definition} 
Let $k \in \omega$. Then the {\em $\Sigma_k$-Transfinite Recursion principle} is the axiom schema consisting of the following axiom for each $\Sigma_k$-formula $\phi(x,y,a)$: 

Suppose $a$ is a parameter so that $\phi(x,y,a)$ defines a class function $F : V \to V$ and $\delta$ is an ordinal. Then there is a function $s : \delta \to V$ so that for all $i \in \delta$ we have $s(i) = F(s \rest i)$.
\end{definition}

We can refine this to the {\em $\Sigma_k$-Transfinite Recursion $\le \gamma$ principle}, which restricts the ordinals $\delta$ allowed to only those $\delta \le \gamma$.

\begin{theorem}
Let $(M,\Xcal) \models \GBC + \PnCA k$ be a model of second-order set theory, with $k \ge 1$, and let $\Ufrak$ be the unrolled model obtained from $(M,\Xcal)$. Then, if $(M,\Xcal) \models \SkTR k$ we have $\Ufrak \models \wZFCmi(k) + \Sigma_k$-Transfinite Recursion.
\end{theorem}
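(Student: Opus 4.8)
The plan is to build on the earlier calculation of the theory of $\Ufrak$: since $(M,\Xcal) \models \GBC + \PnCA k$ we already know $\Ufrak \models \wZFCmi(k)$, so it suffices to verify that $\Ufrak$ satisfies $\Sigma_k$-Transfinite Recursion in the sense of definition \ref{def2:phi-tr}. Fix an instance: a $\Sigma_k$-formula $\phi(x,y,a)$, possibly with a membership-code parameter $a$, which $\Ufrak$ thinks defines a class function $F$, together with an ordinal $\delta$ of $\Ufrak$ represented by a membership code $\Delta$ with penultimate level $\pen\Delta$, ordered by $<_\Delta$ (a class well-order of $(M,\Xcal)$). By the $*$-translation of lemma \ref{lem2:star-trans}, $\phi^*$ is equivalent to a $\Sigma^1_k$-formula about membership codes, and the hypothesis on $F$ becomes: $(M,\Xcal)$ proves that for every membership code $X$ there is a $Y$ with $\phi^*(X,Y,a)$, unique up to isomorphism.

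I would construct a membership code $S$ for the solution $s : \delta \to V$ by a single application of $\SkTR{k}$ along $(\pen\Delta, <_\Delta)$, arranging that the slice $(\Scal)_\gamma$ of the resulting class is the edge set of a cumulative partial code $S_\gamma$ representing $s \rest (\gamma+1)$, with $S_{\gamma'} \subseteq S_\gamma$ whenever $\gamma' <_\Delta \gamma$. The recursive step, given $S_{<\gamma} = \bigcup_{\gamma' <_\Delta \gamma} (\Scal)_{\gamma'}$, mimics the proofs of propositions \ref{prop2:unroll-coll} and \ref{prop2:unroll-etr-to-s0tr}: form $U_\gamma$, a code for $s \rest \gamma$, by adjoining a top node to $S_{<\gamma}$; pick $Y$ with $\phi^*(U_\gamma, Y, a)$; glue $Y$ onto $S_{<\gamma}$ along the maximum initial partial isomorphism, giving $<_G$-least fresh names (for a fixed global well-order $G$) to the part of $Y$ not matched; and adjoin nodes coding the ordered pair $(\gamma, F(s \rest \gamma))$. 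The crucial point is complexity control: although a maximum initial partial isomorphism is only $\Delta^1_1$ and a priori requires $\ETR$ to be exhibited (lemma \ref{lem2:max-prtl-isom}), among initial partial isomorphisms it is characterized by the first-order property of admitting no single-pair extension, and such a $\pi$ exists since $(M,\Xcal) \models \ETR$. Hence the clause ``there are $U_\gamma, Y, \pi$ with $U_\gamma$ the (first-order definable) assembly of $S_{<\gamma}$, $\phi^*(U_\gamma, Y, a)$, $\pi$ an initial partial isomorphism admitting no single-pair extension, and $x$ the edge dictated by $U_\gamma, Y, \pi, G$'' has the form $\exists\,(\Sigma^1_k \wedge \text{first-order})$, so is $\Sigma^1_k$ for $k \ge 1$. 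This sidesteps ever selecting $Y$ by a least-witness clause, which would add a quantifier block and push us to $\Sigma^1_{k+1}$; the output $S_\gamma$ is nonetheless well defined, since all admissible $Y$ are isomorphic and $F$ is functional up to isomorphism, so the glued result is independent of the choice of $Y$ and $\pi$.

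Finally I would put $S = \bigl(\bigcup_{\gamma \in \pen\Delta} (\Scal)_\gamma\bigr)$ together with a fresh top node joined to each slot root, and check by transfinite induction along $<_\Delta$ (available since $(M,\Xcal) \models \ETR$) that each $S_\gamma$ is a genuine membership code representing $s \rest (\gamma+1)$ off which $\Ufrak$ reads the recursion equation $s(\gamma') = F(s \rest \gamma')$ for all $\gamma' \le_\Delta \gamma$ --- the inductive step using the defining property of $\phi^*$ to identify the glued-in $Y$ with a code for $F(s \rest \gamma)$. Passing to the limit, $S$ is a membership code and $\Ufrak \models$ ``$S$ is a function with domain $\delta$ and $S(\gamma) = F(S \rest \gamma)$ for all $\gamma < \delta$'', which is the desired instance. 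I expect the genuine difficulty to be precisely this complexity bookkeeping --- organizing the single recursion invoked to be $\Sigma^1_k$ rather than $\Sigma^1_{k+1}$, despite having to iterate the $\Sigma^1_k$ operation $F$ (which has no canonical output) and to perform extensionality-preserving gluings that on their face invoke $\ETR$ --- together with the routine but finicky verification that the cumulative name bookkeeping produces an extensional, well-founded graph, in particular at limit stages, where $S_{<\gamma}$ is assembled as a union.
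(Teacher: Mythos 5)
Your proposal is correct and takes essentially the same route as the paper: the $\wZFCmi(k)$ part is quoted from chapter 2, and the solution is built by a single $\Sigma^1_k$-transfinite recursion along $\pen$ of the code for $\delta$, mimicking the $\Sigma_0$ case (propositions \ref{prop2:unroll-coll} and \ref{prop2:unroll-etr-to-s0tr}) but obtaining the maximum initial partial isomorphism in one step so that the recursion stays $\Sigma^1_k$ rather than requiring an inner $\ETR$ recursion --- exactly the paper's argument. Your additional bookkeeping (the first-order ``no single-pair extension'' characterization of the maximum initial partial isomorphism, and the fresh-naming device meant to make the glued stage independent of the choice of $Y$) is a more explicit treatment of a canonicity point that the paper itself passes over by simply gluing ``a copy of $F(U)$''.
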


\begin{proof}
We have already seen in chapter 2 that the unrolling of a model of $\GBC + \PnCA k$ must satisfy $\wZFCmi(k)$. The new content is that $\Ufrak$ satisfies $\Sigma_k$-Transfinite Recursion. This is done similar to the argument for the proof of proposition \ref{prop2:unroll-etr-to-s0tr}.

Consider an instance of $\Sigma_k$-transfinite recursion. That is, $F$ is a class function $\Ufrak \to \Ufrak$ which is $\Sigma_k$-definable, possibly using a membership code as a parameter, and $D$ is a membership code equipped with $<_D$ a membership code for a well-ordering of $D$. We want to see that there is a membership code for the desired $s$. First, observe that in the ground universe that $F$ is $\Sigma^1_k$-definable; cf.\ lemma \ref{lem2:star-trans}.

We will build the the desired membership code $S$ via an instance of $\Sigma^1_k$-Transfinite Recursion. The idea is to mimic the recursion to produce $s$, but in membership codes. This introduces some extra work, since we have to deal with the picky details of membership codes. 

The iteration proceeds as follows, with an outer layer and an inner layer. The outer layer occurs on $\pen D$ according to the well-ordering corresponding to the membership code $<_D$ (see corollary \ref{cor2:relns-mem-codes}). Each step $d$ in the outer layer produces a partial construction of $S$, call it $S_d$. We start with $S_0 = \emptyset$ and take unions at limit stages. The hard work is done in the successor step, where the inner layer of the transfinite recursion occurs. We start with $S_d$ and want to produce $S_{d+1}$. By construction, each $d' \in \pen D$ which comes before $d$ in $<_D$ is in $S_d$. More, there is a corresponding node, call it $f(d')$, which represents $F(S_d \rest d')$ and then nodes for $\{d'\}$, $\{d',f(d')\}$, and $(d',f(d'))$ above, similar to the constructions in proposition \ref{prop2:unroll-choice} and lemma \ref{lem2:fns-mem-codes}. In particular, $S_d$ itself may not be a membership code. Modify $S_d$ to produce a membership code $U$ by adding a top node $t_U$ and edges from each $(d',f(d'))$ node in $S_d$ to $t_U$. Then, we have a membership code $F(U)$ by Elementary Comprehension. In the proof for proposition \ref{prop2:unroll-etr-to-s0tr} at this point we build the maximum initial partial isomorphism between $S_d$ and $F(U)$ by elementary transfinite recursion. Here we can do it in a single step, as asserting the existence of such is a $\Sigma^1_1$-assertion. Given this partial isomorphism we can glue a copy of $F(U)$ onto $S_d$, as in the argument for proposition \ref{prop2:unroll-coll}. Then, add $d+1$ to $S_d$ along with nodes for $\{d+1\}$, $\{d+1,t_{F(U)}\}$, and $(d+1,t_{F(U)})$ and the corresponding edges to produce $S_{d+1}$. 
\end{proof}

\begin{corollary} \label{cor3:unroll-sktr-gamma}
Let $(M,\Xcal) \models \GBC + \SkTR k_\Gamma$ where $k \ge 1$ and $\Gamma$ is a well-order, possibly class-sized. Then the unrolled model $\Ufrak$ satisfies $\wZFCmi(k)$ plus the $\Sigma_k$-Transfinite Recursion $\le \Gamma$ principle.
\end{corollary}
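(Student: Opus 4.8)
The plan is to reduce this to the theorem proved immediately above, which handles the full (unrestricted-length) case of $\Sigma^1_k$-Transfinite Recursion. The content of the corollary is that the length restriction is preserved by the unrolling, in both directions.

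First I would observe that $\wZFCmi(k)$ holds in $\Ufrak$ by the results of chapter 2, since $\GBC + \SkTR k_\Gamma$ proves $\GBC + \PnCA k$ (via the observation that $\SkTR k_1$ is equivalent to $\PnCA k$, so that $\SkTR k_\Gamma$ proves $\SkTR k_1$ for any $\Gamma \ge 1$). So the only new content is the $\Sigma_k$-Transfinite Recursion $\le \Gamma$ principle in $\Ufrak$. For this I would run exactly the construction from the proof of the preceding theorem, observing that it is sensitive to length. Given a $\Sigma_k$-definable class function $F : \Ufrak \to \Ufrak$ and a membership code $D$ with $<_D$ a membership code for a well-ordering of $D$ of length $\le \Gamma$ (in the sense of $\Ufrak$), we must produce a membership code for the solution $s$. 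The construction builds the desired membership code $S$ by a $\Sigma^1_k$-recursion over $\pen D$ with the well-ordering corresponding to $<_D$; the key point is that this well-ordering, as a class relation in $(M,\Xcal)$, has length $\le \Gamma$ because $\Ufrak$ thinks its length is $\le \Gamma$ and $\kappa = \Ord+1$ (the ordinal representing $\Ord^{(M,\Xcal)}$) sits at the top, so a well-order of $\Ufrak$-length $\le \Gamma$ unrolls to a class well-order of length $\le \Gamma$ over $(M,\Xcal)$. Hence the recursion used to build $S$ has height $\le \Gamma$ and is available by $\SkTR k_\Gamma$. The inner layer — building the maximum initial partial isomorphism between $S_d$ and $F(U)$ and gluing on a copy of $F(U)$ — is carried out in a single step as in the preceding theorem, since asserting the existence of such a partial isomorphism is $\Sigma^1_1$ and thus absorbed, and then the node-padding to produce $S_{d+1}$ proceeds exactly as in the proofs of proposition \ref{prop2:unroll-choice}, lemma \ref{lem2:fns-mem-codes}, and proposition \ref{prop2:unroll-coll}. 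After $\le \Gamma$ many stages one caps off $S$ with a top node connecting the $(d,f(d))$ nodes, yielding a membership code for $s$.

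The main obstacle — and it is a minor bookkeeping point rather than a genuine difficulty — is making precise the correspondence between ``a well-order in $\Ufrak$ of length $\le \Gamma$'' and ``a class well-order over $(M,\Xcal)$ of length $\le \Gamma$,'' so that the instance of $\SkTR k_\Gamma$ one invokes really does have height $\le \Gamma$. This is handled by the correspondence between class relations on $\pen A \times \pen A$ and membership codes for relations (corollary \ref{cor2:relns-mem-codes}) together with well-order comparability: if $D$ has $\Ufrak$-length $\le \Gamma$ then the corresponding class well-order on $\pen D$ embeds as an initial segment of $\Gamma$, which is exactly what is needed to legitimate the use of $\SkTR k_\Gamma$. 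Everything else is a transcription of the unrestricted argument with ``height $\le \Gamma$'' carried along.
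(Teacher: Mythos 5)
Your proposal is correct and takes essentially the same route as the paper's own proof: rerun the construction from the preceding theorem, note that the inner layer now collapses to boundedly many $\Sigma^1_k$-steps (the maximum initial partial isomorphism being obtained in a single step), and check via the membership-code/class-relation correspondence that the outer recursion stays within length $\Gamma$, while $\wZFCmi(k)$ comes from chapter 2 since $\SkTR{k}_\Gamma$ yields $\SkTR{k}_1$, hence $\PnCA{k}$. The only difference is bookkeeping: the paper counts the total recursion as having height $n \cdot \Gamma$ for some standard finite $n \ge 2$ and then uses $n \cdot \Gamma \le \Gamma \cdot n$ together with the equivalence of $\SkTR{k}_\Gamma$ and $\SkTR{k}_{\Gamma \cdot n}$, whereas you compress each outer stage into one $\Sigma^1_k$-definable step to claim height $\le \Gamma$ directly---either accounting is fine.
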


\begin{proof}
In the case for $\ETR$, we could not control the height of the recursion to produce the desired membership code $S$ because we did not know how long a recursion was needed to construct partial isomorphisms between membership codes. So although the outer layer of our recursion had $\Gamma$ many steps---where $\Gamma$ is the height of the first-order recursion we were mimicking---each of those steps consisted of an inner layer of recursion which could be very long. But for $\SkTR k$ those inner steps are of finite, bounded length. We produce the partial isomorphism in a single step, because the existence of such a function follows from an instance of $\Pi^1_1$-Comprehension, and then immediately use to define the next approximation to $S$. So this recursion is $n \cdot \Gamma$ for some standard finite $n \ge 2$. Conclude that $n \cdot \Gamma \le \Gamma \cdot n$ by some simple arithmetic of well-orders\footnote{If $\Gamma$ is finite then this is obvious and indeed equality holds. So consider the case where $\Gamma \ge \omega$. Take $\Delta$ and finite $m$ so that $\Gamma = \omega \cdot \Delta + m$. Then, $n \cdot \Gamma = \omega \cdot \Delta + nm = \Gamma + (n-1)m$. This is less than $\Gamma \cdot 2 \le \Gamma \cdot n$ because $\Gamma \ge \omega$.}
and observe that $\SkTR k_\Gamma$ is equivalent to $\SkTR k_{\Gamma \cdot n}$. So $\SkTR k_\Gamma$ suffices to prove there is a solution. 
\end{proof}

And in the other direction.

\begin{proposition} \label{prop3:cut-off-sktr-gamma}
Suppose $N$ is a model of $\wZFCmi(k)$ plus the $\Sigma_k$-Transfinite Recursion $\le \gamma$ principle. Let $\kappa$ be the largest model of $N$ and $(M,\Xcal)$ be the cut off model obtained from $N$, i.e.\ $M = V_\kappa^N$ and $\Xcal$ is the (definable) class in $N$ consisting of all subsets of $M$. Then, $(M,\Xcal) \models \GBC + \SkTR k_\Gamma$, where $\Gamma \in \Xcal$ is such that $N \models \Gamma \cong \gamma$.

Consequently, if $N$ further satisfies the full $\Sigma_k$-Transfinite Recursion principle, then $(M,\Xcal) \models \SkTR k$.
\end{proposition}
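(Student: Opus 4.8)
The plan is to get the base theory $\GBC + \PnCA k$ essentially for free from the cutting off results of the previous section, and then to verify the recursion schema $\SkTR k_\Gamma$ (Definition~\ref{def3:sktr}) directly, by transferring each instance into a $\Sigma_k$-transfinite recursion inside $N$. For the base theory, note that the only use of $\Sigma_0$-Transfinite Recursion in the proof of Theorem~\ref{thm2:cutting-offs} is to produce a truth predicate for $H_\kappa$ and thereby conclude $H_\kappa \models \ZFCm$; this is a recursion of length $\omega \le \gamma$, so the $\Sigma_k$-Transfinite Recursion $\le\gamma$ principle supplies it. Thus Theorem~\ref{thm2:cutting-offs} already gives $(M,\Xcal) \models \GBC + \PnCA k$, and it remains only to check $\SkTR k_\Gamma$.

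So fix an instance: a $\Sigma^1_k$-formula $\phi(x,Y,P)$ with class parameter $P \in \Xcal$ and a well-founded class relation $R \in \Xcal$ which $(M,\Xcal)$ thinks has height $\le \Gamma$; we must produce a solution $S \in \Xcal$. Pass to $N$. There $R$ and $P$ are subsets of $M = V_\kappa^N = H_\kappa^N$, hence elements of $N$, and $N$ and $(M,\Xcal)$ agree that $R$ is well-founded, since any descending $\omega$-sequence through $R$ would lie in $H_\kappa$ and so be available to both. Let $\rho \in N$ be the rank function of $R$; since $(M,\Xcal)$ thinks $R$ has height $\le\Gamma$ and $N \models \Gamma \cong \gamma$, the range of $\rho$ is an ordinal $\delta \le \gamma$. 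Recall that by the construction of the cut off model (unwinding the interpretation $\phi\mapsto\phi^I$ of this section), $(M,\Xcal) \models \psi(\bar A)$ if and only if $N \models \psi^I(\bar A)$ for every formula $\psi$ of second-order set theory and every tuple $\bar A$ from $\Xcal$; in particular $\phi^I(x,Y,P)$ is $\Sigma_k$.

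Working in $N$, I would define a class function $F\colon V\to V$ by a $\Sigma_k$-formula in the parameters $R,\rho,P$ as follows: on a function $u$ whose domain is an ordinal $\alpha$, let $F(u)$ be the set of all pairs $(r,x)$ with $r\in\dom R$ of $R$-rank $\alpha$ and $\phi^I(x,\ S_u\rest r,\ P)$, where $S_u$ is the partial solution coded by $u$ (so for $r'\in\dom R$ with $\rho(r')<\alpha$, the slice $(S_u)_{r'}$ is the one stored in $u(\rho(r'))$), and $F(u)=\emptyset$ on all other inputs. The point that keeps $F$ genuinely $\Sigma_k$ is that $\phi^I$ is only ever invoked positively: we never need to certify that the input $u$ is itself a correct partial solution (the naive ``$u$ is a partial solution'' predicate would be $\Pi_{k+1}$). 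Applying the $\Sigma_k$-Transfinite Recursion $\le\gamma$ principle in $N$ to $F$ along $\delta\le\gamma$ yields $s\colon\delta\to V^N$ with $s(\alpha)=F(s\rest\alpha)$ for all $\alpha<\delta$. Set $S=\bigcup_{\alpha<\delta}s(\alpha)$. Then $S\subseteq H_\kappa\times H_\kappa\subseteq H_\kappa$, and $S\in N$ by Replacement, so $S\in\Xcal$. A straightforward induction along $R$ shows $N\models(S)_r=\{x:\phi^I(x,S\rest r,P)\}$ for every $r\in\dom R$, which via the interpretation says exactly that $(M,\Xcal)\models(S)_r=\{x:\phi(x,S\rest r,P)\}$; hence $S$ witnesses this instance of $\SkTR k_\Gamma$. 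For the ``consequently'', the identical construction with the bound dropped works: an arbitrary well-founded class relation in $\Xcal$ is a well-founded set relation in $N$ with a rank function into some ordinal, and full $\Sigma_k$-Transfinite Recursion in $N$ produces the solution, so $(M,\Xcal)\models\SkTR k$.

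The main obstacle I anticipate is the bookkeeping in the third paragraph: arranging $F$ so that it is simultaneously total and of complexity exactly $\Sigma_k$, and confirming that the induced recursion has length $\le\gamma$ rather than merely comparable to it. Both are handled in the same spirit as the mimicking argument of Proposition~\ref{prop2:unroll-etr-to-s0tr} (and here, as noted there, one step is easier than in the elementary case, since the inner partial-isomorphism bookkeeping of that proof does not arise). The faithfulness of $\phi\mapsto\phi^I$ in both directions is a routine induction on formulas, implicit in the very definition of the cut off model, and I would not spell it out.
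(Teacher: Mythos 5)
Your proposal is correct and takes essentially the same approach as the paper: the paper's proof is just a two-sentence sketch that transfers the given instance of $\SkTR{k}_\Gamma$ through the interpretation $\phi \mapsto \phi^I$ and appeals to an instance of $\Sigma_k$-Transfinite Recursion of height $\gamma$ inside $N$ ``in the obvious way.'' Your write-up simply supplies the bookkeeping (the rank function, the $\Sigma_k$ recursion step, and the base-theory check via theorem \ref{thm2:cutting-offs}) that the paper leaves implicit.
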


\begin{proof}
Consider an instance of $\Sigma_k$-Transfinite Recursion for a recursion along $\Gamma$, possibly using a parameter from $\Xcal$. We want to find the subset of $V_\kappa^N$ which witnesses that this recursion has a solution. This is done in the obvious way in $N$ by means of an instance of $\Sigma_k$-Transfinite Recursion of height $\gamma$, where $\gamma$ is the ordinal isomorphic to $\Gamma$.
\end{proof}

Now let us see that we may assume a fragment of Class Collection without loss.

\begin{theorem} \label{thm3:sktr-goes-to-sol}
Let $(M,\Xcal) \models \GBC + \SkTR k_\Gamma$ for $\Gamma \in \Xcal$ and $k \ge 1$.
Then there is $\Ycal \subseteq \Xcal$ second-order definable over $(M,\Xcal)$ so that $(M,\Ycal) \models \GBC + \SkTR k_\Gamma + \Sigma^1_k$-Class Collection.
\end{theorem}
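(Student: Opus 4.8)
The plan is to move through the unrolling, where the extra strength of $\SkTR k_\Gamma$ over $\GBC + \PnCA k$ is visible as a fragment of transfinite recursion, apply the second-order $L$ construction there to obtain a fragment of Class Collection, and then cut back down. Concretely: by corollary \ref{cor3:unroll-sktr-gamma} the unrolling $\Ufrak$ of $(M,\Xcal)$ is a model of $\wZFCmi(k)$ plus the $\Sigma_k$-Transfinite Recursion $\le\Gamma$ principle. Since $\wZFCmi(k)$ includes $\Sigma_0$-Transfinite Recursion and hence proves transitive closures exist, $\Ufrak$ makes sense as a base for building second-order $L$-style codes; more to the point, inside $(M,\Xcal)$ I can run the $(M,G)$-constructible unrolling $\Lfrak(M,G)$ of section \ref{sec2:so-l}, where $G \in \Xcal$ is a $\GBC$-amenable global well-order of $M$ (which exists by lemma \ref{lem2:gbc-amen-gwo}, using that $\PnCA k$ for $k\ge 1$ gives first-order truth predicates relative to any class). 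Then I set $\Ycal = \Lcal(M,G)$ and claim $(M,\Ycal)$ is the desired model.

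The verification of the $\GBC + \PnCA k$ part of the target theory for $(M,\Ycal)$ is exactly as in the earlier theorems of section \ref{sec2:so-l}: propositions \ref{prop2:sep-in-lfrak} and \ref{prop2:coll-in-l} give that $\Lfrak(M,G)$ satisfies $\wZFCmi(k)$, and cutting off returns a model of $\GBC + \PnCAp k$, so in particular $(M,\Ycal) \models \GBC + \PnCA k + \Sigma^1_k$-Class Collection. What is new here and needs to be checked is that $(M,\Ycal)$ still satisfies the recursion principle $\SkTR k_\Gamma$, i.e.\ that passing to the constructible classes does not destroy solvability of $\Sigma^1_k$-recursions of height $\le\Gamma$. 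The key point is that $\Gamma \in \Ycal$: since $\Gamma$ is coded by some $L_\Delta(M,G)$ and the canonical membership code $E_\Gamma$ lies $\vin$-below it, $\Gamma$ reappears as a well-order in $(M,\Ycal)$ (and $(M,\Xcal)$ and $(M,\Ycal)$ agree it is a well-order, by the usual Replacement argument inside the inner model, cf.\ the proof of corollary \ref{cor3:etr-inner-model}). Given a $\Sigma^1_k$-recursion along a well-order $\le\Gamma$ with parameter $A \in \Ycal$, I translate it, via the $\starl$-translation of section \ref{sec2:so-l}, into a first-order recursion over the constructible unrolling $\Lfrak(M,G)$; this is a $\Sigma_k$-recursion of height $\le$ the ordinal $\gamma \cong \Gamma$ in $\Lfrak(M,G)$. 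Now I invoke that $\Lfrak(M,G)$ satisfies the $\Sigma_k$-Transfinite Recursion $\le\gamma$ principle — which follows because $\Ufrak$ does (corollary \ref{cor3:unroll-sktr-gamma}) and, as with Separation and Collection in proposition \ref{prop2:sep-in-lfrak} and \ref{prop2:coll-in-l}, restricting to the constructible membership codes does not hurt: one mimics the argument that $\Ufrak \models \Sigma_k$-Transfinite Recursion but checks each step stays among $(M,G)$-constructible codes, exactly paralleling the Separation/Collection proofs of that section. The solution membership code built in $\Lfrak(M,G)$ then codes a class in $\Ycal$ which is a solution to the original $\SkTR k_\Gamma$ instance in $(M,\Ycal)$.

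The main obstacle I expect is precisely this last step: verifying that the recursion constructing a solution-code inside $\Lfrak(M,G)$ never leaves the constructible membership codes, and that its height stays $\le\Gamma\cdot n$ for some standard $n$ so that $\SkTR k_\Gamma$ (equivalently $\SkTR k_{\Gamma\cdot n}$, by the arithmetic of well-orders used in corollary \ref{cor3:unroll-sktr-gamma}) actually proves the solution exists. The delicate part is the inner layer of the two-layer recursion — gluing copies via maximum initial partial isomorphisms — which for $k\ge 1$ can be done in a single step since ``there is an initial partial isomorphism'' is $\Sigma^1_1$ and hence available from $\Pi^1_1$-Comprehension, as in corollary \ref{cor3:unroll-sktr-gamma}; one must check this single-step move is compatible with confining everything to $\Lcal(M,G)$, using that $\Pi^1_1$-truth (indeed $\Sigma^1_k$-truth) is reflected into the constructible hierarchy via sublemma \ref{lem2:sol-refl}. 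Finally I note the case $\Gamma$ a proper class is handled uniformly, and the special case where the ambient theory is full $\KM$ (i.e.\ $k=\omega$) is already covered by corollary \ref{cor2:get-a-plus}.
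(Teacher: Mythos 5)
Your overall route is the paper's: take $G \in \Xcal$ a global well-order, set $\Ycal = \Lcal(M,G)$, get $\GBC + \PnCAp k$ (hence $\Sigma^1_k$-Class Collection) from corollary \ref{cor2:get-a-plus}, and reduce $\SkTR k_\Gamma$ in $(M,\Ycal)$ to showing that the constructible unrolling $\Lfrak(M,G)$ satisfies the $\Sigma_k$-Transfinite Recursion $\le \gamma$ principle, where $\gamma \cong \Gamma$. (A small slip: the reduction goes through the interpretation $\phi \mapsto \phi^I$ of the cutting-off construction, i.e.\ proposition \ref{prop3:cut-off-sktr-gamma}, not the $\starl$-translation, which runs in the opposite direction; this is cosmetic.) The genuine gap is exactly at the step you flag as ``the main obstacle'': your justification that $\Lfrak(M,G)$ satisfies $\Sigma_k$-TR $\le\gamma$ is ``mimic the proof of corollary \ref{cor3:unroll-sktr-gamma} and check each step stays among constructible codes,'' but that check is not a local matter and cannot be carried out as stated. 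The recursion in that proof is driven by $\SkTR k_\Gamma$ in $(M,\Xcal)$, so the partial solution codes $S_d$ it produces are classes in $\Xcal$ built by comprehension over $(M,\Xcal)$; knowing that each ingredient is constructible does not make the assembled code constructible, and proving ``the stage-$i$ partial solution lies in $\Lcal(M,G)$'' by induction along $\Gamma$ requires, uniformly, a class function $i \mapsto$ (level of the $L(M,G)$-hierarchy where it appears) --- which is precisely the missing content, not something that falls out of re-reading the earlier proof. You cannot instead run the recursion inside $(M,\Ycal)$, since $(M,\Ycal) \models \SkTR k_\Gamma$ is what is being proved.

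The paper closes this gap differently and more cheaply: work in the full unrolling $\Ufrak$, which satisfies $\Sigma_k$-TR $\le\gamma$ by corollary \ref{cor3:unroll-sktr-gamma}, and note $\Lfrak(M,G) = L(M,G)^\Ufrak$. Given a $\Sigma_k$-recursion with parameter in $\Lfrak(M,G)$, relativize the defining formula to $L(M,G)$; since ``$x \in L(M,G)$'' is $\Sigma_1$, the relativized formula is still $\Sigma_k$, so $\Ufrak$ produces a solution to the relativized recursion. To see this solution lies in $\Lfrak(M,G)$, one uses $\Sigma_k$-Collection \emph{inside} $\Lfrak(M,G)$ (available by proposition \ref{prop2:coll-in-l}): for each $i \in \gamma$ there is $\alpha_i$ with $L_{\alpha_i}(M,G)$ containing the partial solution up to $i$, and collecting the sequence $\seq{\alpha_i : i \in \gamma}$ bounds the whole construction inside the $L(M,G)$-hierarchy, so the full solution is an element of $\Lfrak(M,G)$. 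This is the uniformity your sketch needs; your gesture at sublemma \ref{lem2:sol-refl} points at the right machinery (it is how Collection in $\Lfrak(M,G)$ is proved), but the argument itself has to be made, and it replaces rather than supplements the ``check each step is constructible'' plan.
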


\begin{proof}
Fix $G \in \Xcal$ a global well-order of $M$. Set $\Ycal = \Lcal(M,G)$, the hyperclass of $(M,G)$-constructible classes. (See section \ref{sec2:so-l} for a definition.) Corollary \ref{cor2:get-a-plus} tells us that $(M,\Ycal) \models \GBC + \PnCAp k$. It remains only to see that $(M,\Ycal) \models \Sigma^1_k$-Transfinite Recursion for recursions of height $\le \Gamma$. By proposition \ref{prop3:cut-off-sktr-gamma} it suffices to show that $\Lfrak(M,G)$, the $(M,G)$-constructible unrolling of $(M,\Xcal)$, satisfies the $\Sigma_k$-Transfinite Recursion $\le \gamma$ principle, where $\gamma \in \Lfrak(M,G)$ is the ordinal isomorphic to $\Gamma$.

Work in the unrolling $\Ufrak$ of $(M,\Xcal)$. Then, $\Lfrak(M,G) = L(M,G)^\Ufrak$. Let $\phi(x,y,a)$ be a $\Sigma_k$-formula with $a \in \Lfrak(M,G)$ a parameter. Then, $\phi(x,y,a)^{L(M,G)}$ is $\Sigma_k$ (in parameters). So applying an instance of $\Sigma_k$-Transfinite Recursion $\le \gamma$ in $\Ufrak$ gives a solution to the recursion of $\phi(x,y,a)^{L(M,G)}$ along $\gamma$. We want to see that this solution is in $\Lfrak(M,G)$. Using $\Sigma_k$-Collection inside $\Lfrak(M,G)$, we can for each $i \in \gamma$ find $\alpha_i$ so that $L_{\alpha_i}(M,G)$ sees a partial solution up to $i$ for this recursion. So we have the sequence $\seq{\alpha_i : i \in \gamma} \in \Lfrak(M,G)$ and thus can get the entire solution in $\Lfrak(M,G)$, as desired.
\end{proof}

Because this $\Ycal$ is a definable hyperclass in $(M,\Xcal)$ any consistency assumptions witnessed in $(M,\Ycal)$ are also visible to $(M,\Xcal)$. Of particular interest to us is the following consequence thereof: If $\SkTR k_{\Gamma} + \Sigma^1_k$-Class Collection proves there is a coded $V$-submodel of $\SkTR k_\Delta$ then so does $\SkTR k_\Gamma$.

\begin{theorem} \label{thm3:sep-sktr-by-levels}
Suppose $(M,\Xcal) \models \GBC + \SkTR k_{\Gamma \cdot \omega} + \Sigma^1_k$-Class Collection, for $k \ge 1$ and $\Gamma \in \Xcal$, has that every class is $(M,G)$-constructible for some fixed global well-order $G$. Then there is $\Ycal \subseteq \Xcal$ a coded $V$-submodel of $(M,\Xcal)$ so that $(M,\Ycal) \models \GBC + \SkTR k_\Gamma$.
\end{theorem}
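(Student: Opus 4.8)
The strategy mirrors the elementary case (theorem \ref{thm3:sep-etr-gamma}), but since there is no iterated $\Sigma^1_k$-truth predicate we work instead with solutions to $\Sigma^1_k$-transfinite recursions, iterating the passage to a ``definable over a solution'' $V$-submodel. The plan is to pass through the unrolling $\Lfrak(M,G)$ and do the construction there, using the $\Sigma_k$-Transfinite Recursion principle and $\Sigma_k$-Collection which it satisfies by corollary \ref{cor3:unroll-sktr-gamma} and the hypothesis of constructibility, then cut off to come back down.

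First I would set up in the unrolled model. By corollary \ref{cor3:unroll-sktr-gamma}, the unrolling $\Ufrak$ of $(M,\Xcal)$ satisfies $\wZFCmi(k)$ plus $\Sigma_k$-Transfinite Recursion $\le \gamma \cdot \omega$ (where $\gamma \in \Ufrak$ is isomorphic to $\Gamma$), and since every class is $(M,G)$-constructible we have $\Ufrak = \Lfrak(M,G)$, so in fact the whole unrolling is $L(M,G)$ from its own perspective. Now fix $G$ (a membership code for it) as a parameter. The key object will be a single $\Sigma_k$-transfinite recursion of length $\gamma \cdot \omega$ whose stage $\gamma \cdot n$ roughly produces (a code for) the $n$-fold iterate of the ``$\Sigma_k$-Skolem/definability hull over the previous solution.'' More precisely: I would recursively define $s : \gamma \cdot \omega \to \Ufrak$ so that the restriction $s \rest (\gamma \cdot n)$ codes a $\GBC + \PnCAp k$-realization $\Ycal_n$ for $M$, where $\Ycal_0 = \Lcal(M,G)$-hull of $G$ and $\Ycal_{n+1}$ adds solutions to all $\Sigma^1_k$-recursions of length $\le \Gamma$ with parameters from $\Ycal_n$ (these solutions exist as $\Ufrak$ satisfies $\Sigma_k$-Transfinite Recursion $\le \gamma \cdot n$, hence a fortiori $\le \gamma$ relativized appropriately, and we can collect them by $\Sigma_k$-Collection in $\Ufrak$, which holds since $\SkTR k$ gives us $\Sigma^1_k$-Class Collection in the ground — wait, we're assuming that). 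Then set $\Ycal = \bigcup_n \Ycal_n$, which is coded in $\Ufrak$ by the single class $s$, hence corresponds to a coded $V$-submodel of $(M,\Xcal)$.

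Next I would verify that $(M,\Ycal) \models \GBC$: Extensionality, Replacement, Global Choice are immediate ($G \in \Ycal_0$), and Elementary Comprehension holds because $\Ycal$ is an increasing union of $\GBC$-realizations linearly ordered by $\subseteq$, exactly as in theorem \ref{thm1:gm-omnibus}.$(5)$. For $\SkTR k_\Gamma$ in $(M,\Ycal)$: given $A \in \Ycal$, say $A \in \Ycal_n$, and a $\Sigma^1_k$-recursion of length $\le \Gamma$ using $A$ as parameter, the solution lies in $\Ycal_{n+1} \subseteq \Ycal$ by construction. The one genuinely delicate point — and the main obstacle — is that being a solution to a $\Sigma^1_k$-recursion is \emph{not} absolute between $(M,\Ycal)$ and $(M,\Xcal)$, because checking that $(S)_r = \{x : \phi(x, S\rest r, A)\}$ for $\Sigma^1_k$ $\phi$ requires quantifying over all classes. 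To handle this I would arrange, in the unrolling, that each $\Ycal_{n+1}$ is not merely closed under taking solutions but is built as a $\Sigma_k$-elementary hull — i.e. use that $\Ufrak \models \wZFCmi(k)$ provides a $\Sigma_k$-satisfaction class (an instance of $\Sigma_0$-Transfinite Recursion gives truth predicates, and by the level-by-level translation we can isolate $\Sigma_k$-truth), and close $\Ycal_n$ under $\Sigma_k$-Skolem functions. Then $L(M,G)^{\Ufrak}$-levels $L_{\alpha_n}(M,G)$ witnessing the $\Ycal_n$ are $\Sigma_k$-elementary in $\Ufrak$, so that ``$S$ is a solution'' computed inside $L_{\alpha_{n+1}}(M,G)$ agrees with the computation in $\Ufrak$, hence translates to the correct statement in $(M,\Xcal)$ and in $(M,\Ycal)$. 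Coming back down via the cut-off construction (proposition \ref{prop3:cut-off-sktr-gamma}) then yields $(M,\Ycal) \models \GBC + \SkTR k_\Gamma$. I expect the bookkeeping for the $\Sigma_k$-elementarity of the hulls — ensuring it is preserved at limit stages of the outer $\omega$-length recursion and that the total recursion still has length $\le \gamma \cdot \omega$ (using, as in corollary \ref{cor3:unroll-sktr-gamma}, that the inner steps are of bounded finite length so $n\cdot(\gamma\cdot\omega) \le (\gamma\cdot\omega)\cdot n$ collapses back) — to be the part requiring the most care.
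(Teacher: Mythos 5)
Your proposal is correct and follows essentially the same route as the paper's proof: pass to the unrolling (which is $L(M,G)$ by hypothesis), use the $\Sigma_k$-Transfinite Recursion $\le \gamma\cdot\omega$ budget to produce levels of the $L(M,G)$-hierarchy closed under solutions to length-$\le\gamma$ recursions, handle the non-absoluteness of ``being a solution'' via $\Sigma_k$-elementarity of those levels in the unrolling, and then cut off via proposition \ref{prop3:cut-off-sktr-gamma}. The only differences are organizational: the paper obtains a single level $L_\upsilon(M,G)$ by intersecting the $\Sigma_k$-reflection club (lemma \ref{lem2:sol-refl}) with a club of solution-closed levels built by the grid recursion, whereas you interleave the elementarity (via $\Sigma_k$-Skolem hulls, which is just reflection done inline) with the closure steps and take the union of the resulting $\omega$-chain — both yield the same coded $V$-submodel.
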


By the above remarks, this immediately yields the following corollary.

\begin{corollary}
Suppose $(M,\Xcal) \models \SkTR k_{\Gamma \cdot \omega}$ for $k \ge 1$ and $\Gamma \in \Xcal$. Then there is $\Ycal \subseteq \Xcal$ a coded $V$-submodel of $(M,\Xcal)$ so that $(M,\Ycal) \models \SkTR k_\Gamma$. \qed
\end{corollary}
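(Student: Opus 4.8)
\textbf{Proof proposal for Theorem~\ref{thm3:sep-sktr-by-levels}.} The plan is to mimic the argument of Theorem~\ref{thm3:sep-etr-gamma} but replace the sequence of iterated first-order truth predicates with a single solution to a long $\Sigma^1_k$-recursion, carefully chosen so that its initial segments generate the desired coded $V$-submodel. First I would pass to the unrolling. Working inside $(M,\Xcal)$, which satisfies $\GBC + \SkTR k_{\Gamma\cdot\omega} + \Sigma^1_k$-Class Collection, let $\Ufrak$ be the unrolled model; by the theorem preceding Corollary~\ref{cor3:unroll-sktr-gamma} (together with the fact that $\Sigma^1_k$-Class Collection unrolls to $\Sigma_k$-Collection, proposition~\ref{prop2:unroll-coll}), $\Ufrak \models \wZFCmi(k) + \Sigma_k$-Transfinite Recursion for recursions of height $\le \gamma\cdot\omega$, where $\gamma\in\Ufrak$ is the ordinal isomorphic to $\Gamma$. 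Moreover, since every class of $(M,\Xcal)$ is $(M,G)$-constructible, $\Ufrak = L(M,G)^{\Ufrak} = \Lfrak(M,G)$, so in $\Ufrak$ we have condensation and a $\Sigma_k$-definable well-order, and in particular $\Ufrak$ satisfies $\Sigma_k$-reflection along the $L$-hierarchy (sublemma~\ref{lem2:sol-refl}).

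Next I would run the key recursion. Inside $\Ufrak$, consider the transfinite recursion of height $\gamma\cdot\omega$ that at each stage builds, using the $\Sigma_k$-definable $L$-order, a canonical $L_{\alpha}(M,G)$-like structure recording $\Sigma_k$-truth relative to all the previous stages; more precisely, I would iterate $\gamma\cdot\omega$ times the operation ``pass to the least $L_\eta(M,G)$ that is $\Sigma_k$-elementary in $\Ufrak$ and contains the previous stage and suitably many witnesses.'' This is an instance of $\Sigma_k$-Transfinite Recursion of height $\le\gamma\cdot\omega$ in $\Ufrak$, so it has a solution $s = \seq{L_{\eta_\xi}(M,G) : \xi < \gamma\cdot\omega}$, and by $\Sigma_k$-Collection in $\Ufrak$ this whole solution is an element of $\Ufrak$, hence corresponds to a class $S\in\Xcal$ in the cut-off picture. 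Now define
\[
\Ycal = \{\, Y \in \Xcal : Y \text{ is coded in } L_{\eta_\xi}(M,G)^{\Ufrak} \text{ for some } \xi < \gamma \,\},
\]
i.e.\ the classes of $M$ that are coded in some initial, strictly-below-$\gamma$ segment of the solution $s$. Since $S$ codes the whole $\gamma\cdot\omega$-length solution, $\Ycal$ is coded in $\Xcal$ by $S$, so $(M,\Ycal)$ is a coded $V$-submodel of $(M,\Xcal)$.

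Then I would verify $(M,\Ycal)\models\GBC + \SkTR k_\Gamma$. Extensionality, Class Replacement, and Global Choice are immediate (the global well-order lives in $L_{\eta_0}(M,G)$), and Elementary Comprehension holds because $\Ycal$ is an increasing union of $\GBC$-realizations, namely the classes coded in each $L_{\eta_\xi}(M,G)$ for $\xi<\gamma$. For the recursion principle: given $A\in\Ycal$, say $A$ is coded in $L_{\eta_\xi}(M,G)$ with $\xi<\gamma$, and given a $\Sigma^1_k$-recursion along some $\Delta$ with $(M,\Ycal)\models\Delta\le\Gamma$. One shows, exactly as in the proof of Theorem~\ref{thm3:sktr-goes-to-sol}, that a solution is computed inside $L_{\eta_{\xi'}}(M,G)$ for some $\xi'$ with $\xi \le \xi' < \gamma$ (here one uses $\xi + \gamma \le \gamma\cdot\omega$, so that $\gamma$-many further stages of the recursion are available, and that $(M,\Ycal)$ agrees with $(M,\Xcal)$, hence with $\Ufrak$, on which relations are well-founded and on $\Sigma^1_k$-truth of partial solutions — this is the point where the $\Sigma_k$-elementarity of the $L_{\eta_\xi}(M,G)$ in $\Ufrak$ is essential). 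Hence $(M,\Ycal)$ gets solutions to all $\Sigma^1_k$-recursions of height $\le\Gamma$. The corollary then follows from Theorem~\ref{thm3:sktr-goes-to-sol}: starting from an arbitrary model of $\SkTR k_{\Gamma\cdot\omega}$, first move to the definable hyperclass $\Lcal(M,G)$ to get a $V$-submodel of $\SkTR k_{\Gamma\cdot\omega} + \Sigma^1_k$-Class Collection in which every class is $(M,G)$-constructible, then apply the theorem.

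\textbf{Main obstacle.} The delicate point — and the place where the $\ETR$ argument genuinely does not transfer — is that membership of a class in $\Ycal$ must not depend on which classes are around to witness ``solutionhood'' of a $\Sigma^1_k$-recursion, since that is a genuinely second-order (not first-order) property. I expect the real work to be in showing that the stages $L_{\eta_\xi}(M,G)$ are $\Sigma_k$-elementary substructures of $\Ufrak$ (so that the submodel and the full model cannot disagree about what is a partial solution), and in checking that $\Sigma_k$-Collection inside $\Ufrak$ really does pull the entire $\gamma\cdot\omega$-length solution down into $\Ufrak$ so that it gives rise to a single coding class $S\in\Xcal$ — this is exactly the use of the hypothesis $\Sigma^1_k$-Class Collection, and it is why that hypothesis appears in the theorem statement but only ``$\SkTR k_{\Gamma\cdot\omega}$'' is needed for the corollary.
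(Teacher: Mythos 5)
Your reduction of the corollary itself is the paper's: pass to the definable hyperclass $\Lcal(M,G)$ via theorem \ref{thm3:sktr-goes-to-sol} to gain $\Sigma^1_k$-Class Collection and $V=L(M,G)$, prove theorem \ref{thm3:sep-sktr-by-levels}, and observe that a class coding the resulting $V$-submodel lies in $\Lcal(M,G)\subseteq\Xcal$. The gap is in your proof of the theorem, at its central recursion. You propose to obtain the tower $\seq{L_{\eta_\xi}(M,G) : \xi<\gamma\cdot\omega}$ as the solution of a single $\Sigma_k$-transfinite recursion whose step is ``pass to the least $L_\eta(M,G)$ that is $\Sigma_k$-elementary in $\Ufrak$ and contains the previous stage and suitably many witnesses.'' But an instance of $\Sigma_k$-Transfinite Recursion requires the step function to have a $\Sigma_k$-definable graph, and this one does not: asserting $L_\eta(M,G)\prec_{\Sigma_k}\Ufrak$ involves the $\Pi_k$ direction of agreement with the universal $\Sigma_k$-satisfaction predicate (already $\Pi_1$ when $k=1$, and for $k\ge 2$ the upward direction is not automatic either), and the minimization over $\eta$ adds a further alternation, putting the graph at roughly $\Delta_{k+1}$ at best. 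The hypothesis $\SkTR k_{\gamma\cdot\omega}$ does not license such a recursion. This is exactly why the paper keeps the two jobs separate: the $\Sigma_k$-elementary levels come from the reflection sublemma \ref{lem2:sol-refl} as a club, with no new recursion, and the height-$\gamma\cdot\omega$ recursion is used only to fill in, row by row, a grid of ordinals bounding the length-$i$ partial solutions --- each of those steps really is $\Sigma_k$, since ``is a (partial) solution'' is $\Sigma_k$ once $\Sigma_k$-Collection is available in the unrolling. One then intersects the two clubs and cuts off a single level $L_\upsilon(M,G)$, rather than a union of designated stages.

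There is also an internal mismatch in your definition of $\Ycal$: you use only the stages $\xi<\gamma$, but your own estimate (``$\xi+\gamma\le\gamma\cdot\omega$'') only places a solution for a recursion whose parameter appears at stage $\xi$ somewhere by stage $\xi+\gamma$, which in general is $\ge\gamma$; so the $\Ycal$ you wrote down need not contain the solutions it is supposed to. Taking all stages $\xi<\gamma\cdot\omega$ would fix this particular point (the whole sequence is a single set of $\Ufrak$, so the union is still coded, and a union of a chain of $\Sigma_k$-elementary substructures of $\Ufrak$ remains $\Sigma_k$-elementary), but only modulo the definability problem above. Two smaller remarks: the solution of a $\Sigma_k$-transfinite recursion is by definition a set function, so no appeal to $\Sigma_k$-Collection is needed to place it inside $\Ufrak$ --- Collection is needed instead to make ``is a solution'' $\Sigma_k$ and to run the grid construction, which is where the hypothesis $\Sigma^1_k$-Class Collection of theorem \ref{thm3:sep-sktr-by-levels} actually earns its keep; and the appeal to ``exactly as in the proof of theorem \ref{thm3:sktr-goes-to-sol}'' does not transfer as stated, since that argument shows $\Lfrak(M,G)$ itself is closed under such solutions, not that they appear inside a prescribed initial segment of your tower.
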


\begin{proof}[Proof of theorem \ref{thm3:sep-sktr-by-levels}]
Unroll $(M,\Xcal)$ to $\Ufrak \models \ZFCmi(k)$ plus $\Sigma_k$-Transfinite Recursion $\le \gamma$ plus $V = L(M,G)$ for some $G \in \Xcal$ a global well-order.

Recall lemma \ref{lem2:sol-refl}, which asserted that our unrolled model satisfies $\Sigma_k$-reflection along the $L_\alpha(M,G)$-hierarchy. Using the instance of this for the universal $\Sigma_k$-formula gives that there are club many $\delta$ so that $L_\delta(M,G) \prec_{\Sigma_n} L(M,G)$. Therefore, we are done if we can can show that there are club many ordinals $\alpha$ so that $L_\alpha(M,G)$ is closed under solutions to $\Sigma_k$-transfinite recursions of height $\le \gamma$, where $\gamma$ is the ordinal isomorphic to $\Gamma$. This is because if $\upsilon$ is in both of these clubs then $L_\upsilon(M,G)$ will satisfy $\ZFCmi(k)$ plus $\Sigma_k$-Transfinite Recursion $\le \gamma$. So if $\Ycal \subseteq \Xcal$ is the hyperclass consisting of classes which appear in $L_\upsilon(M,G)$, then $\Ycal$ is coded in $\Xcal$, since $L_\upsilon(M,G)$ is a set in the unrolling.

Work inside the unrolling $\Ufrak$. Fix an arbitrary ordinal $\alpha_0$. We will use an instance of $\Sigma_k$-Transfinite Recursion $\le \gamma \cdot \omega$ to find $\alpha > \alpha_0$ as in the above paragraph. It is obvious that the class of such $\alpha$ is closed, so this will suffice to show that it is club. We build $\alpha$ by means of a $\Sigma_k$-transfinite recursion of height $\gamma \cdot \omega$. The outer layer of this recursion, which has $\omega$ many steps, builds a sequence $\seq{\alpha_n : n \in \omega}$ so that $L_{\alpha_{n+1}}(M,G)$ has solutions for $\Sigma_k$-transfinite recursions of height $\gamma$ with parameters from $L_{\alpha_n}(M,G)$. The inner layer, building $\alpha_{n+1}$ from $\alpha_n$ has height $\gamma$, so that the whole recursion has height $\gamma \cdot \omega$.

For the inner recursion, we build a grid of ordinals with width $\omega \times L_{\alpha_n}(M,G)$ and height $\gamma$. Each column of this grid is indexed by $(\phi,a)$ where $\phi(x,y,a)$ is some formula and $a \in L_{\alpha_n}(M,G)$.\footnote{Since we have a column for each $(n,a)$, even when $n$ is not (the G\"odel code of) a $\Sigma_k$-formula of appropriate arity, for those bad $n$ just have the ordinals in the column all be $\alpha_n$.}
We then use the universal $\Sigma_k$-formula to simultaneously build these columns upward. Namely, in the $(\phi,a)$-th column at row $i$, the ordinal $\xi^{i}_{(\phi,a)}$ we put in is an ordinal $\xi \ge \sup_{j < i} \xi^j_{(\phi,a)}$ so that $L_\xi(M,G)$ has the length $i$ partial solution to the recursion given by $\phi(x,y,a)$. This a single step because recognizing such a $\xi$ is a $\Sigma_k$ property. So we fill out the grid in $\gamma$ many steps and then set $\alpha_{n+1} = \sup \xi^i_{(\phi,a)}$ to be the supremum of the ordinals in the grid. 

Then at $\alpha = \sup_n \alpha_n$ we have caught our tail and have that $L_\alpha(M,G)$ is closed under solutions to $\Sigma_k$-transfinite recursions of height $\le \gamma$. And since we could get such $\alpha > \alpha_0$ for arbitrary $\alpha_0$, there is a club of such $\alpha$.

Finally, note that both of the class clubs we are looking at are $\Sigma_k$-definable. To say that $L_{\delta}(M,G)$ reflects the universal $\Sigma_k$-formula is a $\Sigma_k$ property. And to say that $L_\alpha(M,G)$ is closed under solutions to $\gamma$-length $\Sigma_k$-transfinite recursions is $\Sigma_k$, since it is a $\Sigma_k$ to check whether something is a solution. So $\ZFCm(k)$ suffices to prove that these class clubs have nonempty intersection. 
\end{proof}

This completes the proof of theorem \ref{thm3:main-thm}.

Finally, let us see by a similar argument that $\PnCA{k+1}$ and $\SkTR k$ can be separated by consistency strength.

\begin{theorem}[Over $\GBCm$] \label{thm3:pnca-impl-sktr}
Let $k$ be a natural number. Suppose $\PnCA{k+1}$ holds. Then there is a coded $V$-submodel of $\GBCm + \SkTR k$.
\end{theorem}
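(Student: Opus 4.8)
The plan is to follow the same template used to separate levels of $\ETR$ and of $\SkTR k$, but now feeding in the extra strength of $\PnCA{k+1}$. First I would pass to the unrolling: starting from $(M,\Xcal) \models \GBCm + \PnCA{k+1}$, build $\Ufrak$, which by the results of section \ref{sec:cutting-off} (the cutting-off direction) and of chapter 2 satisfies $\wZFCmr(k+1)$ (or $\wZFCmi(k+1)$ with Powerset), hence in particular $\Sigma_{k+1}$-Separation. The point of working in the unrolling is that $\Sigma_{k+1}$-Separation there is enough to run a standard reflection/closure argument along the $L(M,G)$-hierarchy, exactly as in the proof of theorem \ref{thm3:sep-sktr-by-levels}, only this time we want closure under solutions to $\Sigma_k$-transfinite recursions of \emph{every} (set) length, not just height $\le \gamma$.

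The key steps, in order: (1) Fix $G \in \Xcal$ a $\GBCm$-amenable global well-order of $M$ (it exists by lemma \ref{lem2:gbc-amen-gwo}) and move to the $(M,G)$-constructible unrolling $\Lfrak(M,G)$; by corollary \ref{cor2:get-a-plus} the coded $V$-submodel $(M,\Lcal(M,G))$ already satisfies $\GBCm + \PnCAp{k+1}$, so without loss we may assume $\Xcal = \Lcal(M,G)$ and $\Ufrak \models V = L(M,G)$ together with $\Sigma_{k+1}$-Separation. (2) Inside $\Ufrak$, use $\Sigma_{k+1}$-Separation (hence $\Sigma_{k+1}$-reflection along the $L_\alpha(M,G)$-hierarchy, via lemma \ref{lem2:sol-refl} applied at level $k+1$) to show: for each ordinal $\delta$ there is a proper class club of $\alpha > \delta$ such that $L_\alpha(M,G) \prec_{\Sigma_k} L(M,G)$ \emph{and} $L_\alpha(M,G)$ is closed under solutions to $\Sigma_k$-transfinite recursions of arbitrary set length with parameters in $L_\alpha(M,G)$. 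The closure-under-recursions clause is obtained by the same catching-one's-tail construction as in theorem \ref{thm3:sep-sktr-by-levels}: build $\seq{\alpha_n : n \in \omega}$ where $L_{\alpha_{n+1}}(M,G)$ contains, for every recursion instance $\phi(x,y,a)$ with $a \in L_{\alpha_n}(M,G)$ and every well-order $D \in L_{\alpha_n}(M,G)$, a solution to that recursion, and set $\alpha = \sup_n \alpha_n$. Recognizing a solution is a $\Sigma_k$ property, so building each $\alpha_{n+1}$ is an instance of $\Sigma_k$-(or at worst $\Sigma_{k+1}$-)Separation/Collection available in $\Ufrak$; crucially, because $\Sigma_{k+1}$-Separation subsumes Collection-like steps at the $\Sigma_k$ level, we never need the height bound that forced the restriction to $\le \gamma$ in the earlier theorem. (3) Pick $\upsilon$ in the intersection of the two clubs; then $L_\upsilon(M,G)$ is a set in $\Ufrak$ modelling $\wZFCmr(k)$ (indeed $\wZFCmi(k)$ with Powerset) plus the \emph{full} $\Sigma_k$-Transfinite Recursion principle. (4) Cut off $L_\upsilon(M,G)$ back down: the hyperclass $\Ycal \subseteq \Xcal$ of classes appearing in $L_\upsilon(M,G)$ is coded in $\Xcal$ (since $L_\upsilon(M,G)$ is a set of the unrolling), and by proposition \ref{prop3:cut-off-sktr-gamma} (in the unbounded-length form, i.e.\ the ``consequently'' clause) $(M,\Ycal) \models \GBCm + \SkTR k$.

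The main obstacle I expect is step (2), specifically verifying that $\Sigma_{k+1}$-Separation in $\Ufrak$ genuinely suffices to carry out the $\omega$-step outer recursion building $\seq{\alpha_n}$ \emph{without} any fragment of Collection beyond what $\wZFCmr(k+1)$ supplies — in the earlier theorem we were handed $\Sigma_k$-Class Collection in the ground model precisely so that $\Lfrak$ had $\Sigma_k$-Collection, but here we must extract the analogous bookkeeping from $\Sigma_{k+1}$-Separation alone. The resolution is that forming the sequence $\seq{\alpha_n : n\in\omega}$ is a recursion of length $\omega$ whose step relation (``$\alpha_{n+1}$ is least so that $L_{\alpha_{n+1}}(M,G)$ contains all the required solutions'') is $\Sigma_{k+1}$-definable, and $\Sigma_{k+1}$-Separation over $\Ufrak$ is exactly strong enough to collect an $\omega$-indexed $\Sigma_{k+1}$-definable sequence of ordinals and take its supremum; one must also double-check, as in lemma \ref{lem2:star-trans}, that relativizing $\phi(x,y,a)$ to $L(M,G)$ keeps it $\Sigma_k$, so that recognizing solutions stays at the right level of the L\'evy hierarchy. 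A secondary, purely bookkeeping, point is checking that $\Ycal$ really is a $V$-submodel (same $M$), which is immediate since $M = V_\kappa$ is a fixed set of $\Ufrak$ lying inside every $L_\upsilon(M,G)$ with $\upsilon$ large, exactly as in the remark following theorem \ref{thm3:sep-sktr-by-levels}.
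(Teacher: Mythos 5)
Your overall architecture is the same as the paper's: fix a global well-order $G$, pass to the $(M,G)$-constructible unrolling $W=\Lfrak(M,G)$, find a level $L_\alpha(M,G)$ which is $\Sigma_k$-elementary in $W$ and closed under solutions to $\Sigma_k$-transfinite recursions by catching your tail along an $\omega$-sequence $\seq{\alpha_n : n \in \omega}$, and then cut off that level to get the coded $V$-submodel $\Ycal$. But there is a genuine gap in how you justify the central step. You frame the main obstacle as having to run the construction ``from $\Sigma_{k+1}$-Separation alone,'' i.e.\ from $\wZFCmr(k+1)$, and you resolve it by asserting that $\Sigma_{k+1}$-Separation is ``exactly strong enough to collect an $\omega$-indexed $\Sigma_{k+1}$-definable sequence of ordinals and take its supremum.'' That is false: forming $\sup_n \alpha_n$ for a definable $\omega$-sequence of ordinals is precisely an instance of Replacement/Collection, and Separation does not yield it (this is the characteristic failure of Zermelo-style theories; compare the $V_{\kappa+\omega}$ discussion in chapter 2). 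The same issue infects the prior step: your recursion clause ``$\alpha_{n+1}$ is least so that $L_{\alpha_{n+1}}(M,G)$ contains all the required solutions'' presupposes that solutions to $\Sigma_k$-recursions of arbitrary set length exist somewhere in $W$ at all. In theorem \ref{thm3:sep-sktr-by-levels} their existence (up to the relevant height) was part of the hypothesis; here it must be proved, and the proof goes through Collection, not Separation: $\Sigma_{k+1}$-Collection is what gives solutions to transfinite recursions of $\Sigma_k$ properties.

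The repair is already sitting in your own step (1), and it is what the paper does. Since the ground model satisfies $\PnCA{k+1}$, the constructible unrolling is not merely a model of $\wZFCmr(k+1)$: by proposition \ref{prop2:coll-in-l} (equivalently, pass first to $\Lcal(M,G)\models\GBCm+\PnCAp{k+1}$ via corollary \ref{cor2:get-a-plus} and unroll) the model $W=\Lfrak(M,G)$ satisfies $\Sigma_{k+1}$-\emph{Collection} as well as $\Sigma_{k+1}$-Separation. With that, the argument runs exactly as you outline but with correct bookkeeping: solutions to $\Sigma_k$-recursions exist in $W$ by $\Sigma_{k+1}$-Collection; since being such a solution is $\Sigma_k$-expressible, each $\alpha_{n+1}$ (taken inside the $\Sigma_k$-reflection club) exists by an instance of $\Sigma_k$-Collection; and $\alpha=\sup_n\alpha_n$ exists by a further instance of Collection. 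Then $L_\alpha(M,G)\models\Sigma_k$-Transfinite Recursion and cutting off gives $(M,\Ycal)\models\GBCm+\SkTR{k}$. So your route is the right one, but as written the key steps rest on a principle (Separation in place of Collection) that does not hold, and the statement that ``here we must extract the analogous bookkeeping from $\Sigma_{k+1}$-Separation alone'' should be replaced by the observation that $\Pi^1_{k+1}$-Comprehension in the ground hands you $\Sigma_{k+1}$-Collection in the constructible unrolling.
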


\begin{corollary}
$\GBC + \PnCA{k+1}$ proves $\Con(\GBC + \SkTR k)$ and $\GBCm + \PnCA{k+1}$ proves $\Con(\GBCm + \SkTR k)$.
\end{corollary}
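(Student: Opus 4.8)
The plan is to follow the same strategy as the previously-proven results separating levels of $\ETR$ (theorem~\ref{thm3:sep-etr-gamma}) and levels of $\SkTR k$ (theorem~\ref{thm3:sep-sktr-by-levels}), namely to show directly that $\PnCA{k+1}$ suffices to build, inside an arbitrary model, a coded $V$-submodel satisfying $\GBCm + \SkTR k$. First I would pass to the unrolling. Given $(M,\Xcal) \models \GBCm + \PnCA{k+1}$, fix a global well-order $G \in \Xcal$ and consider the $(M,G)$-constructible unrolling $\Lfrak(M,G)$; by the results of section~\ref{sec2:so-l} (in particular corollary~\ref{cor2:get-a-plus} and the $\Sigma_k$-reflection lemma~\ref{lem2:sol-refl}, which here applies at level $k+1$) this is a model of $\wZFCmr(k+1)$ plus $V = L(M,G)$, and it satisfies $\Sigma_{k+1}$-reflection along the $L_\alpha(M,G)$-hierarchy. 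The goal is then to find a single ordinal $\alpha$ in the unrolling so that $L_\alpha(M,G)$ satisfies $\wZFCmr(k) + \Sigma_k$-Transfinite Recursion (full, not just $\le \gamma$); cutting off at this $L_\alpha(M,G)$ gives the desired coded $V$-submodel of $(M,\Xcal)$ by proposition~\ref{prop3:cut-off-sktr-gamma}.

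The key point is that, unlike in theorem~\ref{thm3:sep-sktr-by-levels} where one only had $\SkTR k_{\Gamma\cdot\omega}$ available and hence could only reach recursions of height $\le \Gamma$, here we have full $\PnCA{k+1}$, which in the unrolling gives $\Sigma_{k+1}$-Separation. So I would argue that the class of $\alpha$ with $L_\alpha(M,G)$ closed under solutions to $\Sigma_k$-transfinite recursions of \emph{arbitrary} (set) length is club: recognizing whether a set is a solution to a $\Sigma_k$-recursion of a given length is $\Sigma_k$, and using $\Sigma_{k+1}$-Collection (available from $\wZFCmr(k+1)$) one can, starting from any $\alpha_0$, iterate $\omega$ many times — at stage $n$ passing to $\alpha_{n+1}$ so that $L_{\alpha_{n+1}}(M,G)$ contains solutions to all $\Sigma_k$-recursions (of every length coded below $\alpha_n$) with parameters from $L_{\alpha_n}(M,G)$ — and take $\alpha = \sup_n \alpha_n$. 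This closure construction is itself a $\Sigma_{k+1}$ (indeed $\Sigma_k$) recursion of length $\omega$, which $\wZFCmr(k+1)$ certainly carries out. Intersecting with the club of $\delta$ with $L_\delta(M,G) \prec_{\Sigma_k} L(M,G)$ coming from $\Sigma_k$-reflection (lemma~\ref{lem2:sol-refl}), both clubs being $\Sigma_k$-definable, yields $\upsilon$ with $L_\upsilon(M,G) \models \wZFCmr(k) + \Sigma_k$-Transfinite Recursion. Then $\Ycal \subseteq \Xcal$, the classes appearing in $L_\upsilon(M,G)$, is coded in $\Xcal$ (since $L_\upsilon(M,G)$ is a set in the unrolling) and $(M,\Ycal) \models \GBCm + \SkTR k$.

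The corollary is then immediate: since $\Ycal$ is definable over $(M,\Xcal)$, the structure $(M,\Ycal)$ witnesses $\Con(\GBCm + \SkTR k)$ inside $(M,\Xcal)$, so $\GBCm + \PnCA{k+1} \proves \Con(\GBCm + \SkTR k)$; with Powerset in the ground universe one gets $\wZFCmi(k+1)$ and an inaccessible $\kappa$ in the unrolling, $L_\upsilon(M,G) \models \wZFCmi(k)$, and hence $\GBC + \PnCA{k+1} \proves \Con(\GBC + \SkTR k)$. For the $\Lcal_\in$-formulation of the consistency statement one notes that $\SkTR k$, unlike $\SkTR k_\Gamma$ for undefinable $\Gamma$, is already a theory in $\Lcal_\in$, so no parameter issues arise.

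The main obstacle I anticipate is bookkeeping the two layers of the closure recursion carefully enough to see it only needs the arithmetic available from $\wZFCmr(k+1)$ (i.e.\ that the ``grid'' of ordinals indexed by formulas, parameters, and lengths can be built by a single $\Sigma_k$-recursion of length $\omega$, its suprema taken via $\Sigma_{k+1}$-Collection), together with verifying that both of the relevant class clubs are genuinely $\Sigma_k$-definable so their intersection is nonempty over $\wZFCm(k)$ — exactly the points that were delicate in the proof of theorem~\ref{thm3:sep-sktr-by-levels}, but now easier because we are not constrained to recursions of bounded height and so do not need the ordinal arithmetic $n\cdot\Gamma \le \Gamma\cdot n$ of corollary~\ref{cor3:unroll-sktr-gamma}.
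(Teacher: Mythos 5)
Your proposal is correct and is essentially the paper's own argument: the paper proves exactly this via theorem \ref{thm3:pnca-impl-sktr}, passing to the $(M,G)$-constructible unrolling, using the $\Sigma_{k+1}$-Separation and $\Sigma_{k+1}$-Collection available there to build an $\omega$-chain of ordinals from the $\Sigma_k$-elementarity club which are closed under solutions to $\Sigma_k$-transfinite recursions, and cutting off at the supremum to obtain a coded $V$-submodel of $\GBCm + \SkTR k$, from which the consistency statement is immediate. One small correction: $\Sigma_{k+1}$-Collection is not part of $\wZFCmr(k+1)$ (the $\mathsf{w}$ theories have no Collection at all); it holds in $\Lfrak(M,G)$ because the ground model satisfies $\PnCA{k+1}$, via proposition \ref{prop2:coll-in-l} (i.e.\ lemma \ref{lem2:sol-refl}), which is the fact you are in effect using.
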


\begin{proof}[Proof of theorem \ref{thm3:pnca-impl-sktr}]
Work over $(M,\Xcal) \models \GBCm + \PnCA{k+1}$. Fix a global well-order $G$ and consider the $(M,G)$-constructible unrolling $W = \Lfrak(M,G)$. (See section \ref{sec2:so-l} for further details of this construction.) Recall that $W$ satisfies $\Sigma_{k+1}$-Separation and $\Sigma_{k+1}$-Collection. Work in $W$.

Let $C \subseteq W$ be the definable club class of ordinals $\alpha$ so that $L_\alpha(M,G) \prec_{\Sigma_k} W$. The existence of such $C$ follows from a reflection argument using the $L(M,G)$-hierarchy. Let $\alpha_0$ be least $> \Ord^M$ in this club $C$. Given $\alpha_n$, pick $\alpha_{n+1} > \alpha_n$ from $C$ which is closed under solutions to $\Sigma_k$-transfinite recursions with parameters and lengths in $L_{\alpha_n}(M,G)$. Such solutions exist because $\Sigma_{k+1}$-Collection implies the existence of solutions to transfinite recursions of $\Sigma_k$ properties. And being the solution to such a recursion is a $\Sigma_k$-expressible property, so $\alpha_{n+1}$ exists by an instance of $\Sigma_k$-Collection. Set $\alpha = \sup_n \alpha_n$, which exists by yet another instance of Collection. Then $L_\alpha(M,G) \models \Sigma_k$-Transfinite Recursion. Let $\Ycal$ be the cutting off of $L_\alpha(M,G)$, i.e.\ the definable class over $W$ consisting of all subsets of $M$ which are in $L_\alpha(M,G)$.  Then $(M,\Ycal) \models \GBCm + \SkTR k$, as desired.
\end{proof}

\clearpage

\chapter{Least models}
\chaptermark{Least models}

\epigraph{\singlespacing Es liegt n\"amlich nahe, das Axiom [der Beschr\"anktheit] in vermeintlich pr\"aziserer Form so zu
fassen, da\ss{} unter allen m\"oglichen Realisierungen des Axiomensystems---wobei isomorphe als nicht verschieden zu betrachten w\"aren---der ``Durchschnitt'', d.h. der kleinste gemeinsame Teilbereich, gew\"ahlt werden soll. Sofern man dieser Fassung nicht \"uberhaupt einen scharfen Sinn abstreiten will, so ist es jedenfalls m\"oglich, da\ss{} die dem Umfang nach verschiedenen m\"oglichen Realisierungen des Axiomensystems nicht einen kleinsten gemeinsamen Teilbereich aufweisen, in dem gleichfalls s\"amtliche Axiome befriedigt w\"urden.}{Abraham Fraenkel}

One desideratum for early axiomatizers of set theory was categoricity, similar to the categoricity results about $\nats$ and $\reals$. Fraenkel \cite{fraenkel1922} and \cite[pp.\ 355--356]{fraenkel1928} wanted a ``Beschr\"anktheitsaxiom'' which would state, essentially, that the only objects that exist are those which are guaranteed to exist by the other axioms. We know now that there can be no such axiom.\footnote{This must be qualified. The L\"owenheim--Skolem theorem implies that there can be no axiomatization of set theory {\em in first-order logic} which admits a unique model. But in different logics we can have categoricity. For instance, it follows from work of Zermelo \cite{zermelo1930} that second-order $\ZFC$---i.e., $\ZFC$ but with Separation and Collection formulated as single axioms in second-order logic---plus ``there are no inaccessible cardinals'' has a unique model, namely $V_\kappa$ where $\kappa$ is the least inaccessible. (It must be noted, however, that this theory has a very ad hoc feel.)

But the set theories considered in this dissertation are all formulated in first-order logic, ruling out any Beschr\"anktheitsaxiom. It would go too far astray to give a defense here of why we would want to restrict to first-order logic, but let me mention \cite{vaananen2001}. See also the epigraph to chapter 2.}
But we can transmute this question of axioms into a model theoretic question. At first approximation, what we would like to know is: What are the objects that must be in every model of $T$? Do they form a model of $T$? A positive answer would give a partial realization of Fraenkel's desire. While we cannot write down an axiom (or a set of axioms) which uniquely picks out this structure, we would know that if we restrict to the bare minimum possible we still get a model of $T$.

As stated, this admits a trivial answer. By the nonstandardness phenomenon, the only objects in every model of set theory are those appearing in $V_n$ for some {\em standard} $n$. These form $V_\omega$, which of course lacks any infinite sets. So in this naive form, the question is not interesting. But we can refine it to a more interesting question by restricting which models we look at. A natural restriction is to only look at transitive models. They hold a special place in set theoretic practice and many set theorists believe we have a determinate notion of well-foundedness and can thereby pick out the transitive models. So the question becomes: is the intersection of all the transitive models of $T$ itself a model of $T$? Equivalently, is there a least transitive model of $T$?

Before moving to the main topic---models of second-order set theories---let me quickly review what is known for models of first-order set theory. Shepherdson \cite{shepherdson1953} and, independently, Cohen \cite{cohen1963} proved that there is a least transitive model of $\ZFC$.\footnote{Of course, their proofs require a consistency assumption, namely that there is some transitive model of $\ZFC$ at all.}
This model is $L_\alpha$ where $\alpha$ is the least ordinal so that there is a transitive model of $\ZFC$ of height $\alpha$. Their argument, which uses that $\ZFC$ is absolute to $L$, generalizes to stronger theories.\footnote{Or more precisely, Shepherdson's argument generalizes to stronger theories. The essence of Shepherdson's argument is the same as the standard contemporary argument that there is a least transitive model of $\ZFC$. But Cohen uses a different argument which goes through what he calls ``strongly constructible'' sets, a strengthening of constructability, which I do not see how to generalize to get results about stronger theories.}
In particular, it generalizes to theories extending $\ZFC$ by asserting the existence of ``small'' large cardinals. Formally, say that a first-order set theory $T$ is {\em absolute to $L$} if $M \models T$ implies that $L^M \models T$. Then if there is a transitive model of $T$ which is absolute to $L$ there is a least transitive model of $T$. So there is a least transitive model of, for example, $\ZFC$ plus there is a proper class of Mahlo cardinals.

But this phenomenon does not extend too far up the large cardinal hierarchy. It fails for large cardinals which give elementary embeddings of the universe into an inner model.

\begin{proposition}
Let $T \supseteq \ZFC$ be a theory which proves there is a measurable cardinal. Then there is not a least transitive model of $T$.
\end{proposition}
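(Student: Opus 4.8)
The plan is to show that if $T \supseteq \ZFC$ proves there is a measurable cardinal, then no transitive model of $T$ can be contained in every transitive model of $T$; in particular there is no least one. The key observation is that a transitive model $M \models T$ has a measurable cardinal $\kappa$ witnessed by some measure $\mu \in M$, and we can then take the ultrapower of $M$ by $\mu$ to produce a new transitive model which is ``thinner'' in a way that obstructs leastness. Concretely, I would argue: suppose toward a contradiction that $N$ is the least transitive model of $T$. Since $N \models T$, $N$ believes there is a measurable cardinal, say witnessed by $\kappa^N$ and a normal measure $U \in N$. Form the internal ultrapower $\Ult(N,U)$ and let $j : N \to N'$ be the ultrapower embedding, where $N'$ is the transitive collapse (available because $N$ is transitive and well-founded, and the ultrapower by a countably complete measure is well-founded). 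Then $N' \models T$ as well, by \L o\'s's theorem and elementarity of $j$, and $N'$ is transitive.

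The crux is then to show $N' \not\supseteq N$, contradicting leastness of $N$. Here I would use the standard fact that $j(\kappa^N) > \kappa^N$ and that $\kappa^N$ is not measurable in $N'$ — more usefully, that $U \notin N'$, since $U$ witnesses the measurability of $\kappa^N$ but $N' = \Ult(N,U)$ cannot contain its own generating ultrafilter (the usual argument: if $U \in N'$ then $N'$ would compute $\Ult(N',U)$ and one derives a descending sequence of ordinals, or alternatively one invokes that $j\restriction\kappa^N+1$ shows $\kappa^N$ is inaccessible in $N'$ but $V_{\kappa^N+2}^N \ne V_{\kappa^N+2}^{N'}$). Since $U \in N$ but $U \notin N'$, we have $N \not\subseteq N'$. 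But $N'$ is a transitive model of $T$, so $N$, being least, must satisfy $N \subseteq N'$ — contradiction. Hence there is no least transitive model of $T$.

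There is a subtlety to address, which I expect to be the main obstacle: the ultrapower $\Ult(N,U)$ taken \emph{inside} $N$ need not be well-founded from the external point of view unless $U$ is genuinely countably complete in $V$, not merely ``$\kappa^N$-complete'' as $N$ believes. This is fine precisely because $N$ is transitive: $N$'s $\omega$ is the true $\omega$, so $N$-internal countable sequences are genuine countable sequences, and $N$-$\kappa^N$-completeness of $U$ implies (true) countable completeness, whence the external ultrapower is well-founded and collapses to a transitive $N'$. This is where transitivity is essential and why the analogous statement fails for, say, $L$-absolute large cardinals — there is no such embedding. I would phrase this carefully, noting that the ultrapower construction and \L o\'s's theorem go through in $N$ (as $N \models \ZFC$), producing $j : N \to N'$ definable over $N$, and that well-foundedness of the result is an external fact following from transitivity of $N$.

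Alternatively — and perhaps cleaner to write — one can avoid discussing well-foundedness of the full ultrapower by instead arguing via heights: if $N$ were least, then for \emph{every} transitive $M \models T$ we would have $N \subseteq M$, hence $\Ord \cap N \le \Ord \cap M$, so $\Ord \cap N$ is the least ordinal height of a transitive model of $T$; but $N' = \Ult(N,U)$ has $\Ord \cap N' \ge \Ord \cap N$ (as $j$ fixes ordinals below $\kappa^N$ and is the identity on a tail... actually $j$ moves ordinals, so $\Ord \cap N'$ could be larger), and in any case $N'$ is a transitive model of $T$ not containing $U$, contradicting $N \subseteq N'$. I would present the direct argument as the main line and mention this height remark only if it streamlines the exposition.
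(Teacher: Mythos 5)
Your proposal is correct and is essentially the paper's own argument: the paper likewise takes the ultrapower of the supposed least transitive model $N$ by a measure in $N$, observes that this yields a transitive (inner) model of $T$, and derives a contradiction with leastness since the ultrapower cannot equal (or contain the measure of) $N$. Your extra care about external well-foundedness and the fact that $U \notin \Ult(N,U)$ just fills in details the paper leaves implicit.
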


\begin{proof}
Suppose otherwise that $N$ is the least transitive model of $T$. Let $M \subseteq N$ be the inner model obtained from taking an ultrapower of $M$ using a measure on a measurable cardinal in $N$. By leastness, $M = N$, a contradiction.
\end{proof}

On the other hand, we can recover something of this phenomenon for measurable cardinals and beyond. Results from inner model theory show that if an ordinal $\kappa$ is measurable in some model then there is a least model in which $\kappa$ is measurable. And this has been extended higher up the large cardinal hierarchy, although it remains open in many cases, most notably for $\kappa$ supercompact.

The lesson to be had is that for strong enough first-order set theories, we do not have least transitive models. However, if we restrict the models we look at in some further (non-first-order expressible) way, then we do get least models. In this chapter we will see that there is a similar phenomenon for second-order set theories, except the reason and the `fix' to get leastness are different.

\begin{definition}
Let $T$ be a second-order set theory. The {\em least transitive model of $T$}---if it exists---is the unique transitive $(M,\Xcal) \models T$ so that $(M,\Xcal)$ is a submodel of any transitive model of $T$. The {\em least $\beta$-model of $T$}---if it exists---is the unique transitive $\beta$-model $(M,\Xcal) \models T$ which is a submodel of any transitive $\beta$-model of $T$.\footnote{In chapter 1, we did not require $\beta$-models to be transitive. But every $\beta$-model is isomorphic to a transitive model so the extra requirement here is harmless. If one prefers to drop it, then one would need to tweak the definition so that the least $\beta$-model of $T$ embeds into every $\beta$-model of $T$, rather than being a literal submodel.}
\end{definition}

The main theorem of this chapter answers which second-order set theories have least transitive models for a broad class of theories. In short, strong theories do not have least transitive models while weaker theories do.

\begin{theorem} \label{thm4:least-trans-models} \ 
\begin{itemize}  
\item There is not a least transitive model of $\KM$ nor of $\KMCC$.
\item For $k \ge 1$ there is not a least transitive model of $\GBC + \PnCA k$ nor of $\GBC + \PnCAp k$. 
\item There is a least transitive model of $\GBC + \ETR_\Gamma$, for $\omega^\omega \le \Gamma \le \Ord$.
\item (Shepherdson \cite{shepherdson1953}) There is a least transitive model of $\GBC$.
\end{itemize}
\end{theorem}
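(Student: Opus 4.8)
The four clauses exhibit two opposite phenomena, and I would treat them by correspondingly different methods. For the $\GBC$ and $\GBC+\ETR_\Gamma$ clauses I would adapt Shepherdson's argument: at the least possible ordinal height the classes can be ``constructibly closed off'' into a smallest realization, which then embeds into every transitive model. For the $\KM$, $\KMCC$, $\GBC+\PnCA k$ and $\GBC+\PnCAp k$ clauses I would instead prove a non-amalgamation phenomenon on the \emph{lower} side---no model of $\ZFC$ admits a least realization of the theory---so that a fortiori there is no least transitive model. (The existence clauses carry the usual consistency hypothesis that a transitive model of the theory exists; the nonexistence clauses do not---if there are no transitive models at all, there is vacuously no least one.)

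For the two existence clauses: let $\alpha$ be least such that some transitive model of the theory $T$ in question has height $\alpha$; by Löwenheim--Skolem and Mostowski collapse $\alpha$ is a countable ordinal. First I would check $L_\alpha$ is $T$-realizable: for $T=\GBC$ immediately, since $L_\alpha\models\ZFC$ and $(L_\alpha,\Def(L_\alpha))\models\GBC$ (it contains the canonical $L$-order); for $T=\GBC+\ETR_\Gamma$, starting from any transitive $(M_0,\Ycal_0)\models T$ of height $\alpha$ and applying Theorem~\ref{thm3:etr-gamma-inner-model} to the inner model $L^{M_0}=L_\alpha$, which is legitimate because $\Gamma$, being $\Lcal_\in$-definable, restricts to a $\GBC$-amenable subclass of $L_\alpha$ with $\Gamma^{L_\alpha}\le\Gamma^{M_0}$ (modulo the absoluteness caveat in that theorem's footnote). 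Then I would build the \emph{least} $T$-realization $\Lcal$ for $L_\alpha$: for $\GBC$ it is $\Def(L_\alpha)$, which Elementary Comprehension puts inside every $\GBC$-realization; for $\GBC+\ETR_\Gamma$ set $\Lcal_0=\Def(L_\alpha)$, $\Lcal_{n+1}=\Def(L_\alpha;\Tr_\Gamma(A):A\in\Lcal_n)$ and $\Lcal=\bigcup_n\Lcal_n$, an increasing union of $\GBC$-realizations and hence a $\GBC$-realization, closed under $A\mapsto\Tr_\Gamma(A)$, so by the Fujimoto-style characterization (the corollary after Theorem~\ref{thm3:etr-iff-itr}, which uses exactly $\Gamma\ge\omega^\omega$) it satisfies $\ETR_\Gamma$; minimality holds since every $\ETR_\Gamma$-realization contains $\Def(L_\alpha)$ and is closed under $\Tr_\Gamma(\cdot)$. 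Finally, to see $(L_\alpha,\Lcal)$ is a submodel of an arbitrary transitive $(N,\Ycal)\models T$: minimality of $\alpha$ forces $\Ord^N\ge\alpha$, so $L_\alpha\subseteq L^N\subseteq N$; and $N$, being transitive, is $\omega$-standard, hence computes the genuine (iterated) truth predicates of the set $L_\alpha$ relative to classes of $\Ycal$ (here one also notes $\Gamma^{L_\alpha}\le\Gamma^N$ since $L_\alpha\subseteq N$), so an induction on $n$ gives $\Lcal_n\subseteq\Ycal$ and thus $\Lcal\subseteq\Ycal$.

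For the two nonexistence clauses: if $(M,\Xcal)$ were the least transitive model of $T$ (now $T$ one of $\KM$, $\KMCC$, $\GBC+\PnCA k$, $\GBC+\PnCAp k$, $k\ge1$), then its height is the least $T$-height---so $M$ is countable---and for every $T$-realization $\Ycal$ for $M$ the structure $(M,\Ycal)$ is a transitive $T$-model, whence $\Xcal\subseteq\Ycal$; so it suffices to show \emph{no countable $M\models\ZFC$ has a least $T$-realization}. Fix such a $T$-realizable $M$. Forcing with $\Add(\Ord,1)$ is tame and preserves each of $\KM$, $\KMCC$, $\Pi^1_k$-Comprehension and $\Sigma^1_k$-Class Collection (Chapter~1), so a Cohen-generic global well-order $G$ of $\Ord^M$ over $(M,\Xcal)$ yields a $T$-realization $\Xcal[G]$, and by Corollary~\ref{cor2:get-a-plus} the second-order constructible classes $\Lcal(M,G)$ built inside $\Xcal[G]$ again form a $T$-realization for $M$. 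Now take $G_0,G_1$ mutually Cohen-generic over $(M,\Xcal)$. If $M$ has no $\Lcal_\in$-definable global well-order, then, by the mutual-genericity computation (the class analogue of $L[G_0]\cap L[G_1]=L$), $\Lcal(M,G_0)\cap\Lcal(M,G_1)$ is the $M$-constructible classes with no choice parameter, which is \emph{not} a $T$-realization for $M$ since it contains no global well-order of $M$; hence $\Lcal(M,G_0)$ and $\Lcal(M,G_1)$ are $T$-realizations for $M$ with no common $T$-realization below them, so none is least.

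The remaining case is the main obstacle: when $M$ \emph{does} carry an $\Lcal_\in$-definable global well-order $G^*$ (e.g.\ $M=L_\kappa$), the class $\Lcal(M,G^*)$ is already canonical and the intersection degenerates. Here the candidate for a least $T$-realization is $\Xcal_0=\powerset(M)\cap L_{\lambda_0}(M,G^*)$, where $\lambda_0$ is least such that cutting off $L_{\lambda_0}(M,G^*)$ produces a $T$-model, and I would defeat it with a Barwise-compactness construction: over a suitable admissible set, build an \emph{ill-founded} model $N$ of the first-order theory unrolling $T$ (i.e.\ $\wZFCmi(k)+\Sigma_0$-Transfinite Recursion, or $\ZFCmi(k)$, as appropriate) with $V_\kappa^N=M$ and $N\models V=L$, but whose well-founded ordinals are bounded strictly below $\lambda_0$; cutting off $N$ then gives a $T$-realization for $M$ that omits some subset of $M$ lying in $L_{\lambda_0}(M,G^*)$, so $\Xcal_0$ is not least. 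The delicate point I expect to be genuinely hard is arranging the omission---controlling precisely which subsets of $M$ survive into an ill-founded model of a strong set theory---and this is presumably also why the corresponding question for full $\GBC+\ETR$ (where no single definable recursion length is available to close off with) remains open. The $\KMCC$ and $\PnCAp k$ variants need no new idea: a least realization of the stronger theory would contain the stronger constructible realization and is defeated by the same $N$, taken now to model the correspondingly stronger unrolled theory.
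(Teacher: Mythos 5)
Your treatment of the two positive clauses is essentially the paper's: take $\alpha$ least, realize $L_\alpha$, close $\Def(L_\alpha)$ under the genuine $\Gamma$-iterated truth predicates to obtain the least realization, and embed it into an arbitrary transitive $(N,\Ycal)\models T$ by cases on whether $\Ord^N$ exceeds $\alpha$. The one blurry spot is the equal-height case: there $L_\alpha=L^N$ is a proper class of $N$, so you cannot appeal to $N$ ``computing the truth predicates of the set $L_\alpha$''; instead you need $\ETR_\Gamma$ in $(N,\Ycal)$ applied to the class structure $(L^N,A)$, i.e.\ the route through theorem \ref{thm3:etr-gamma-inner-model}, which is how the paper argues. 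That is a repairable wording issue, not a gap.

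The nonexistence half, however, has genuine gaps, and your route differs from the paper's in a way that creates them. (i) In the case that $M$ has no definable global well-order, the claim that $\Lcal(M,G_0)\cap\Lcal(M,G_1)$ ``contains no global well-order of $M$'' is unjustified: the constructible-from-$M$ classes go far beyond $\Def(M)$ (they contain, e.g., iterated truth predicates over $M$), so the hypothesis that no well-order of $M$ is definable over $M$ does not rule out one appearing in some $L_\Gamma(M)$; the identification of the intersection itself also needs a product-style mutual-genericity argument for $\Add(\Ord,1)$ that you do not supply. (ii) In the definable-well-order case you both leave the key construction unproved (as you acknowledge) and aim it at the single candidate $\Xcal_0=\powerset(M)\cap L_{\lambda_0}(M,G^*)$; but a putative least realization $\Xcal$ satisfies only $\Xcal\subseteq\Xcal_0$, so producing a realization omitting \emph{some} class of $\Xcal_0$ need not omit anything of $\Xcal$, and $\Xcal_0$ is not itself contained in every realization, so defeating it proves nothing. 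The paper's master lemma \ref{lem4:master-lemma} avoids both issues uniformly, with no case split: since $T\supseteq\GBCm+\PnCA 1$ proves $\Hyp(V)$ exists, for an \emph{arbitrary} countable $(M,\Xcal)\models T$ one takes $A=\Hyp(M)$ as computed in the unrolling (possibly ill-founded), passes to its admissible cover, and applies Barwise compactness (the ill-founded refinement of Friedman's theorem, theorem \ref{thm4:cis-friedman}) to get $N$ satisfying the unrolled theory with $V_\kappa^N=M$ and ill-founded exactly at $A$; cutting off gives $\Ycal$, and $\Xcal\subseteq\Ycal$ is impossible because the membership code in $\Xcal$ for $\Ord^A$ would then top $\Ord^A$ inside $N$. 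So the ``omission'' you flag as the hard part is not a carefully controlled subset of $M$ but a code for $\Ord^{\Hyp(M)}$, and it is forced by ill-foundedness at $\Hyp(M)$ rather than by fine control over which classes survive; this is the missing idea in your proposal.
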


There is some redundancy in the statement of this theorem. In chapter 2 we saw that any model of $\KM$ contains a $V$-submodel of $\KMCC$, so there is a least transitive model of $\KM$ if and only if there is a least transitive model of $\KMCC$. (And similar remarks apply for $\GBC + \PnCA k$ versus $\GBC + \PnCAp k$.)

The argument for the negative part of theorem \ref{thm4:least-trans-models}, that strong second-order set theories do not have least transitive models, actually shows something stronger. Namely, it shows that a given fixed first-order part does not have a least $\KM$-realization (or $(\GBC + \PnCA k)$-realization or\ldots). 

\begin{definition}
Let $M$ be a model of first-order set theory and $T$ be some second-order set theory. The {\em least $T$-realization for $M$}---if it exists---is the $T$-realization $\Xcal$ for $M$ so that for any $T$-realization $\Ycal$ for $M$ we have $\Xcal \subseteq \Ycal$. 
\end{definition}

\begin{theorem} \label{thm4:no-least-rlzn}
No countable $M \models \ZFC$ has a least $\KM$-realization. Moreover, if $k \ge 1$ then no countable $M \models \ZFC$ has a least $(\GBC + \PnCA k)$-realization.
\end{theorem}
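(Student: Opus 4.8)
The plan is to prove the contrapositive-flavored statement: given any $\KM$-realization $\Xcal$ for a countable $M \models \ZFC$, I will produce a strictly smaller $\KM$-realization $\Ycal \subsetneq \Xcal$ for $M$. Since $\subseteq$ on $\KM$-realizations has no infinite descending sequences only if there is a least one, producing a proper refinement of an arbitrary $\Xcal$ suffices to rule out a least element. (Strictly, I want: for every $\Xcal$ there is $\Ycal \subsetneq \Xcal$ that is still a $\KM$-realization; then no $\Xcal$ can be least.) The engine is the unrolling/cutting-off machinery of Chapter 2 together with the second-order constructible universe of Section~\ref{sec2:so-l}.

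The key steps, in order. First, fix a $\KM$-realization $\Xcal$ for $M$; pass to $\Ycal_0 = \Lcal(M,G)$ the $(M,G)$-constructible classes for some $G \in \Xcal$ a global well-order, so by Corollary~\ref{cor2:get-a-plus} we may assume $(M,\Xcal) \models \KMCC$ without increasing the realization. Second, unroll to get $\Ufrak \models \ZFCmi$, a transitive-if-$\beta$ (otherwise merely well-founded-relation-correct on sets) model of $\ZFCm$ with largest cardinal $\kappa$ inaccessible, where $\kappa$ corresponds to $\Ord^M$; here I use Proposition~\ref{prop2:unroll-kappa} and the theory-calculation theorem. Third—and this is the crux—inside $\Ufrak$, which satisfies $\ZFCmi$ (hence $V_\kappa = H_\kappa \models \ZFC$ and $\Ufrak$ has plenty of ``sets'' of rank above $\kappa$), build a proper inner model or a proper rank-initial-segment reflection that still sees all of $V_\kappa$ and still satisfies $\ZFCmi$. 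The natural candidate: since $M$ is countable, $\Ufrak$ is countable, so $\Ufrak$ has a countable elementary submodel; more usefully, take the least $\alpha$ with $V_\alpha^{\Ufrak} \prec_{\Sigma_{100}} \Ufrak$ large enough, or better, use that $\Ufrak \models \exists$ a transitive model of $\ZFCmi$ of smaller height — no, rather: apply the construction of Section~\ref{sec2:so-l} internally, i.e. form $\Lfrak^{\Ufrak}$ or $L_\delta$ for suitable $\delta < \Ord^{\Ufrak}$ with $\kappa < \delta$ and $L_\delta \models \ZFCmi$. Because $\Ufrak$ thinks $\kappa$ is inaccessible, $V_{\kappa+1}^{\Ufrak} \ne V_{\kappa+1}^{L_\delta}$ can fail, so I must instead exploit that unrolling a strictly smaller realization produces a strictly shorter $\Ufrak$: the right move is to find, inside $\Ufrak$, a countable (in $\Ufrak$) ordinal $\delta$ with $L_\delta \prec_{\Sigma_\omega} \Ufrak$ via condensation, then cut $L_\delta$ off at its version of $\kappa$ to obtain a second-order model with the same first-order part $M$ but fewer classes. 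The reason the first-order part is preserved: $\kappa^{L_\delta}$ unrolls/cuts-off back to $M$ since $M$ is absolute enough — one needs $V_{\kappa}^{L_\delta} = M$, which follows because $L_\delta$ correctly computes $H_\kappa$ below the largest cardinal and $M = H_\kappa^{\Ufrak}$ is already constructible-over-$(M,G)$.

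The main obstacle I anticipate is exactly this last absoluteness bookkeeping: ensuring that the smaller unrolled model $L_\delta$ (or its internal elementary submodel) cuts off to the \emph{same} $M$, not a proper submodel of $M$, and that the resulting class collection $\Ycal$ is \emph{strictly} contained in $\Xcal$. Strictness should come from a Tarski-undefinability / diagonalization argument: the smaller model cannot contain a truth predicate for itself relativized to all its classes, whereas the larger $\KMCC$ model, being able to reflect and iterate $\Def$ arbitrarily far (it has $\Sigma^1_k$-truth predicates for all $k$, and more via unrolling), does contain classes — e.g. an $(\Ord^M \cdot \omega)$-iterated truth predicate or a well-order coding the ordinal height of the smaller unrolling — that the smaller realization provably lacks. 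Concretely, if $\Ycal$ arose by cutting off $L_\delta$ with $\delta < \Ord^{\Ufrak}$, then a membership code for $\delta$ itself, which exists in $\Xcal$'s unrolling, corresponds to a class (a well-order of $\Ord^M$-type $\delta$'s shadow) in $\Xcal \setminus \Ycal$. I would also need to double-check the cutting-off construction applies to $L_\delta$ — i.e. that $L_\delta \models \ZFCmi$, with $\kappa$ still the largest cardinal in it — which requires choosing $\delta$ so that no new cardinals above $\kappa$ are collapsed into being $\le \kappa$ and no $\ZFC$-cardinal structure below $\kappa$ changes; taking $\delta$ a limit of ``$\ZFCmi$-correct'' ordinals via the internal $L$-reflection of Lemma~\ref{lem2:sol-refl}-style arguments handles this. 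For the $\GBC + \PnCA k$ case the identical argument runs with $\ZFCmi(k) + \Sigma_0$-Transfinite Recursion in place of $\ZFCmi$ throughout, using the corresponding bi-interpretability from Theorem~\ref{thm2:bi-int}.
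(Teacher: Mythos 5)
There is a genuine gap, and it lies in the very first sentence of your plan: you set out to show that every $\KM$-realization $\Xcal$ for $M$ has a \emph{strictly smaller} $\KM$-realization $\Ycal \subsetneq \Xcal$. That statement is false, so no amount of bookkeeping in the unrolling can rescue the strategy. Later in this same chapter it is shown that minimal (but non-least) $\KM$-realizations exist: if $M$ is a countable $\beta$-$\KM$-realizable model with a definable global well-order, its least $\beta$-$\KM$-realization $\Xcal$ is minimal among \emph{all} $\KM$-realizations, because any $\KM$-realization $\Ycal \subseteq \Xcal$ is a $V$-submodel of a $\beta$-model, hence itself a $\beta$-realization, hence equal to $\Xcal$ by leastness among $\beta$-realizations. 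Concretely, in that situation the unrolling is $L_\alpha(M)$ for the least $\alpha$ with $L_\alpha(M) \models \ZFCmi$ and largest cardinal $\Ord^M$, so the ordinal $\delta < \alpha$ your construction needs (an internal $L_\delta$ still satisfying $\ZFCmi$ with the same largest cardinal) simply does not exist. To refute leastness of $\Xcal$ you do not need a realization below $\Xcal$; you need some realization $\Ycal$ with $\Xcal \not\subseteq \Ycal$, i.e.\ an incomparable one, and that is what must be produced.

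The paper's proof does exactly this, via a master lemma: work in the unrolling $W$ of $(M,\Xcal)$ (after passing to $\KMCC$ as you do), form $A = \Hyp(M)^W$, take the admissible cover of $A$ (since $A$ may be ill-founded), and use a Barwise-compactness strengthening of H.\ Friedman's theorem to build a new model $N$ of the unrolled theory with the same $H_\kappa = M$ but ill-founded at $A$, so that $\Ord^A$ is not ``topped'' in $N$. Cutting $N$ off gives a realization $\Ycal$ for $M$, and if $\Xcal \subseteq \Ycal$ held, the membership code in $\Xcal$ for $\Ord^A$ would force $\Ord^A$ to be topped in $N$, a contradiction; hence $\Xcal \not\subseteq \Ycal$ and $\Xcal$ is not least. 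Note also two secondary issues in your write-up: taking an elementary submodel or condensation inside $\Ufrak$ threatens to shrink the first-order part to a countable-in-$\Ufrak$ set rather than preserve $M$, and your strictness argument at the end would at best give $\Ycal \neq \Xcal$, which is the wrong comparison for leastness unless you have already secured $\Ycal \subseteq \Xcal$. The $\GBC + \PnCA k$ case in the paper does go through the same machinery as you suggest, but again producing an incomparable realization (using that $\GBCm + \PnCA 1$ already proves $\Hyp(V)$ exists), not a smaller one.
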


The proof of this uses little about the theory of $M$ itself. All the important work takes place above $M$ in the unrolling. As such, a version of this theorem goes through for $\KMm$ and $\GBCm + \PnCA k$. It also yields that least models cannot be recovered by moving to stronger theories.

\begin{theorem} 
No computably axiomatizable extension of $\KM$ (in $\Lcal_\in$) has a least transitive model. More generally, no computably axiomatizable extension of $\GBCm + \PnCA k$, for $k \ge 1$, has a least transitive model.\footnote{Indeed, this is true for more than just computably axiomatizable extensions. What we get is that if $T \supseteq \GBCm + \PnCA 1$ is an element of every transitive model of $T$ then $T$ cannot have a least transitive model. In particular, no arithmetical $T$ or even hyperarithmetical $T \supseteq \GBCm + \PnCA 1$ can have a least transitive model.}
\end{theorem}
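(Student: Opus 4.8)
The plan is to reduce to the general statement of the footnote and prove that via the unrolling/cutting-off machinery of Chapter~2 together with a Barwise-compactness argument over a small admissible set, exactly parallel in structure to the proof of Theorem~\ref{thm2:beta-tau-2} and to the (unstated) proof of Theorem~\ref{thm4:no-least-rlzn}. So assume $T$ is an $\Lcal_\in$-theory with $T\supseteq\GBCm+\PnCA 1$ which is an element of every transitive model of $T$; the computably axiomatizable, arithmetical, and hyperarithmetical cases all fall under this, since such a $T$, coded as a subset of $\omega$, lies in $L_{\omega+1}$ or at worst $L_{\omegaoneck}$, which is an element of the first-order part of every transitive model of $\GBCm+\PnCA 1$, and $\KM$ and $\GBC+\PnCA k$ extend $\GBCm+\PnCA 1$. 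Suppose toward a contradiction that $T$ has a least transitive model $(M,\Xcal)$.

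First I would note $M$ is countable: Löwenheim--Skolem and the Mostowski collapse yield a countable transitive $(N,\Ycal)\models T$, and leastness forces $(M,\Xcal)\subseteq(N,\Ycal)$, hence $M\subseteq N$. Since $(M,\Xcal)$ is itself a transitive model of $T$, the hypothesis gives $T\in M$ (as $T\subseteq\omega$ is a first-order object); set $\alpha=\Ord^M$. The whole argument then reduces to producing a single $T$-realization $\Ycal$ of $M$ with $\Xcal\not\subseteq\Ycal$: for then $(M,\Ycal)$ is a transitive model of $T$ into which $(M,\Xcal)$ fails to embed as a submodel, flatly contradicting leastness.

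To build such a $\Ycal$ I would pass to the unrolling. Since $T\supseteq\GBCm+\PnCA 1$ and $\PnCA 1$ proves $\ETR$, we may unroll $(M,\Xcal)$ to a first-order model $\Ufrak$; by the results of Sections~\ref{sec:unrolling} and~\ref{sec2:so-l}, $\Ufrak\models T^{\dagger}$, where $T^{\dagger}$ is $\wZFCmr(1)$ plus $\Sigma_0$-Transfinite Recursion together with the $*$-translations of the axioms of $T$, and since $\phi\mapsto\phi^{*}$ is computable and $T\in M$ we also have $T^{\dagger}\in M$. In $\Ufrak$ the largest cardinal $\kappa$ equals $\alpha$, its $H_\kappa$ is a canonical copy of $M$, the subsets of $H_\kappa$ inside $\Ufrak$ correspond exactly to the members of $\Xcal$, and, since every membership code is a subclass of $M$ of size $\le\kappa$, $\Ufrak\models V=H_{\kappa^+}$. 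Let $A=\Hyp(M)$ be the least admissible set with $M\in A$; it is countable, $T^{\dagger},\alpha\in A$, and $\wfp(\Ufrak)$ is admissible and contains $M$, hence contains $A$, so $\Ufrak$ satisfies the $\in$-diagram of $A$. Form the $\Lcal_A$-theory $\Theta$ whose axioms are all of $T^{\dagger}$, the sentence $\kappa=\check\alpha$, the axiom $V=H_{\kappa^+}$, and the $\in$-diagram of $A$. Then $\Theta$ is $\Sigma_1$ over $A$ and is consistent (witnessed by $\Ufrak$), so Barwise Compactness applied in $A$ gives a model $C\models\Theta$ with $C\in A$. Being an element of $A$ yet satisfying the $\in$-diagram of $A$, $C$ is necessarily ill-founded, but it end-extends $A$, has $V_\kappa^C\cong M$ (the $\in$-diagram pins down every $V_\beta^M$, $\beta<\alpha$), and its collection $\powerset(M)^C$ of subsets of $M$ is an element of $A$. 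Cutting off $C$ along $\kappa$, by Theorem~\ref{thm2:cutting-offs} and because $C\models T^{\dagger}$, produces a transitive model $(M,\Ycal)\models T$ with $\Ycal=\powerset(M)^C\in A$.

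The remaining step, which I expect to be the genuine obstacle, is to see that $\Xcal\not\subseteq\Ycal$, and here I would argue exactly as in the proof of Theorem~\ref{thm4:no-least-rlzn}: one must rig the Barwise theory $\Theta$ so that $C$, equivalently $\Ycal$, is forced to omit a distinguished class of $\Xcal$. Two facts drive this. First, $\Xcal$, being a $\KM$-realization of $M$ (consistency strength well above $\ZFC$, incorporating $\Sigma^1_1$-truth predicates quantifying over all of $\Xcal$), is not an element of $A=\Hyp(M)$ — this requires a careful check. Second, one adjoins to $\Theta$ a constant symbol together with axioms forcing its interpretation to be a suitably complex member of $\Xcal$ — in the spirit of the non-amalgamation and coding arguments of Chapter~1, for instance a sufficiently generic subclass of $\kappa$, or a class from which $A$-inexpressible information can be decoded — whose presence in any $\KM$-realization would let that realization reconstruct a class that no realization lying inside $A$ can contain. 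Since $\Ycal\in A$, this distinguished class witnesses $\Xcal\not\subseteq\Ycal$, the contradiction with leastness. Verifying that the translated axioms of $T$ never obstruct this — they cannot, since $\Theta$ is consistent and Barwise Compactness respects it — and making the decoding dichotomy precise is the technical core; the cases $T\supseteq\GBC+\PnCA k$ for $k>1$ and $T\supseteq\KM$ are handled identically, using $\wZFCmr(k)$, resp. $\wZFCmi$, plus $\Sigma_0$-Transfinite Recursion in place of $\wZFCmr(1)$ plus $\Sigma_0$-Transfinite Recursion.
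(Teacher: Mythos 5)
Your global frame---reduce to showing that any countable transitive $(M,\Xcal)\models T$ with $T\in M$ admits a $T$-realization $\Ycal$ with $\Xcal\not\subseteq\Ycal$, using the unrolling and a Barwise-style compactness argument---is the same as the paper's (this is master lemma \ref{lem4:master-lemma} plus the fact that $\GBCm+\PnCA 1$ proves $\Hyp(V)$ exists). But the construction you propose breaks at its central step. First, ``Barwise compactness applied in $A$ gives $C\models\Theta$ with $C\in A$'' is not available: $\Theta$ contains the infinitary $\in$-diagram of $A=\Hyp(M)$ itself, which from $A$'s point of view is a proper class with constants of unbounded rank, so the argument cannot be internalized to $A$, and the external Barwise compactness theorem produces a countable model with no guarantee whatsoever of membership in $A$ (this is exactly why the proof of theorem \ref{thm2:beta-tau-2} has to pass to a condensed copy $L_\gamma$ before applying compactness internally). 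Second, and fatally, the configuration you are aiming for is impossible: if $C\in A$ then, $A$ being transitive, $\Ycal=\powerset(M)^C\subseteq\Hyp(M)$; but for $T\supseteq\GBCm+\PnCA 1$ every $T$-realization of $M$ contains all of $\powerset(M)\cap\Hyp(M)$, since each such class is $\Delta^1_1$-definable over $M$ and $T$ proves $\Delta^1_1$-Comprehension, so a $T$-realization inside $\Hyp(M)$ would have to equal $\powerset(M)\cap\Hyp(M)$, which (because $T$ proves $\Hyp(V)$ exists) unrolls to give $\Hyp(M)\in\Hyp(M)$. This is precisely the Enayat--Barwise argument reproduced in this section; so the $\Ycal$ you intend to build cannot exist, independently of how you try to get it.

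The step you yourself flag as ``the genuine obstacle''---rigging $\Theta$ with an extra constant or a generic class so that $\Ycal$ omits something from $\Xcal$---is in fact the whole content of the proof, and the paper fills it by a different and concrete mechanism. One works not with the true $\Hyp(M)$ but with $A'=\Hyp(M)^W$ as computed in the unrolling $W$ of $(M,\Xcal)$, which may be ill-founded even though $M$ is transitive (a transitive model need not be a $\beta$-model); this is why the admissible cover of $A'$ is needed. Theorem \ref{thm4:cis-friedman} applied over that cover yields $N$ satisfying the unrolled theory with $M=H_\kappa^N$ and, crucially, ill-founded exactly at $A'$: an $\omega$-sequence of $N$-ordinals is coinitial in $\Ord^N\setminus\Ord^{A'}$, so $\Ord^{A'}$ is topless in $N$. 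Cutting off gives $\Ycal$, and the omitted class is explicit: the membership code $\Upsilon\in\Xcal$ representing $\Ord^{A'}$. If $\Upsilon\in\Ycal$, then $(M,\Ycal)$ agrees it codes an ordinal $\upsilon$ of $N$, and comparing initial segments of $\Upsilon$ (all of which lie in both realizations and represent exactly the ordinals of $A'$) shows $\upsilon$ would top $\Ord^{A'}$ in $N$, contradicting toplessness. No genericity, extra constants, or decoding of $A$-inexpressible information is involved; the exclusion of $\Upsilon$ falls out of where the ill-foundedness is placed. Without this device, or a genuine substitute for it, your proposal has no mechanism forcing $\Xcal\not\subseteq\Ycal$.
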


Before discussing $\beta$-models, where we get positive results even for strong theories, let me highlight the conspicuous absence of $\GBC + \ETR$ in theorem \ref{thm4:least-trans-models}. 

\begin{question}
Is there a least transitive model of $\GBC + \ETR$?
\end{question}

While this question remains open, something can be said about the structure of $(\GBc + \ETR)$-realizations for a model $M$.

\begin{theorem}
Let $M \models \ZFC$ be $(\GBc + \ETR)$-realizable. Then $M$ has a basis of minimal $(\GBc + \ETR)$-realizations, where amalgamable $(\GBc + \ETR)$-realizations\footnote{Two $T$-realizations $\Xcal$ and $\Ycal$ for $M$ are {\em amalgamable} if there is a $\GBcm$-realization $\Zcal$ for $M$ so that $\Xcal$ and $\Ycal$ are both subsets of $\Zcal$.}
sit above the same basis element. That is, there is a set $\{\Bcal\}$ of $(\GBc + \ETR)$-realizations for $M$ satisfying the following.
\begin{enumerate}
\item Elements of the basis are pairwise non-amalgamable;
\item If $\Ycal$ is any $(\GBc + \ETR)$-realization for $M$ then there is a unique basis element $\Bcal$ so that $\Ycal \supseteq \Bcal$; and
\item If $\Xcal$ and $\Ycal$ are amalgamable $(\GBc + \ETR)$-realizations for $M$ then they sit above the same $\Bcal$.
\end{enumerate}
\end{theorem}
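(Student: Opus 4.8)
The plan is to build the basis by closing up under iterated truth predicates, exploiting the characterization of $\ETR$ from Fujimoto's theorem (theorem \ref{thm3:etr-iff-itr}): over $\GBC$, $\ETR$ is equivalent to the assertion that $\Tr_\Gamma(A)$ exists for all class well-orders $\Gamma$ and all classes $A$. The key structural fact I would establish first is a \emph{minimal closure} lemma: given $M \models \ZFC$ and any $\GBc$-amenable class $A$ over $M$ (for instance a global well-order $G$, which exists when $M$ is countable), the collection
\[
\Bcal(A) = \{ X \subseteq M : X \in \Def^{(M,\cdot)}(M; \Tr_\Gamma(A)) \text{ for some class well-order } \Gamma \text{ over } M \}
\]
is a $(\GBc + \ETR)$-realization for $M$, provided $M$ admits \emph{some} $(\GBc + \ETR)$-realization containing $A$. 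The verification that $(M,\Bcal(A)) \models \GBc$ mirrors the arguments in theorem \ref{thm1:gm-omnibus}.(5) and theorem \ref{thm3:etr-gamma-inner-model}: $\Bcal(A)$ is an increasing union (directed along the well-orders $\Gamma$, using comparability of class well-orders which holds here since the ambient realization satisfies $\ETR$) of the $\GBc$-realizations $\Def^{(M,\cdot)}(M;\Tr_\Gamma(A))$; and $(M,\Bcal(A)) \models \ETR$ because for any $B \in \Bcal(A)$, $B$ is definable from $\Tr_\Upsilon(A)$ for some $\Upsilon$, hence $\Tr_\Gamma(B)$ is definable from $\Tr_{\Upsilon \cdot \Gamma}(A)$ (iterated truth predicates stack), so $\Tr_\Gamma(B) \in \Bcal(A)$, and we conclude by Fujimoto's theorem.

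Next I would set the basis to be the set of all realizations of the form $\Bcal(G)$ where $G$ ranges over those $\GBc$-amenable global well-orders of $M$ that are $(\GBc+\ETR)$-amenable to $M$. (Every $(\GBc + \ETR)$-realization $\Ycal$ for $M$ contains such a $G$, since $\Ycal$ contains a global well-order and $\Ycal$ itself witnesses $\ETR$-amenability.) Property (2) is then immediate: given $\Ycal$, pick $G \in \Ycal$ a global well-order; then $\Tr_\Gamma(G) \in \Ycal$ for every $\Gamma \in \Ycal$ by $\ETR$, and $\Ycal$ correctly computes all its class well-orders, so $\Bcal(G) \subseteq \Ycal$. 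Uniqueness of the basis element below $\Ycal$ follows once we have property (3). For property (3), suppose $\Xcal, \Ycal$ are amalgamable, living inside a common $\GBcm$-realization $\Zcal$. Pick global well-orders $G_\Xcal \in \Xcal$ and $G_\Ycal \in \Ycal$. The point is that $\Bcal(G_\Xcal)$ and $\Bcal(G_\Ycal)$ must coincide: working inside $\Zcal$ (which sees both), one shows each $\Tr_\Gamma(G_\Xcal)$ is definable from $\Tr_{\Gamma'}(G_\Ycal)$ for a suitable $\Gamma'$ and vice versa, using that $G_\Xcal$ and $G_\Ycal$ are mutually definable-modulo-truth-predicates once both are available together — concretely, $G_\Xcal \in \Xcal \subseteq \Zcal$, and since $\Zcal$ need not satisfy $\ETR$ I would instead argue that whichever basis element lies below $\Zcal$'s own minimal $\ETR$-closure (if $\Zcal$ extends to an $\ETR$-realization) traps both; but more directly, $\Bcal(G_\Xcal) \subseteq \Ycal$ and $\Bcal(G_\Ycal) \subseteq \Xcal$ by the argument for (2) applied inside $\Zcal$ — wait, this needs care since $\Zcal \not\models \ETR$. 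The cleaner route: show $\Bcal(G_\Xcal) = \Bcal(G_\Ycal)$ by proving both equal the unique \emph{minimal} $(\GBc+\ETR)$-realization containing no well-order information beyond $\Ord^M$-coding, i.e.\ $\Bcal(G)$ does not depend on the choice of $G$ at all. This last claim is the crux.

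The main obstacle, then, is precisely the well-order-independence: proving that $\Bcal(G) = \Bcal(G')$ for any two $\GBc$-amenable global well-orders $G, G'$ of $M$ (when both are $\ETR$-amenable), equivalently that $\Bcal(G)$ is canonically determined by $M$ alone. The idea is that a global well-order carries no "extra" information beyond $\Ord^M$: given $G$, the truth predicate $\Tr(G)$ already defines $M$'s satisfaction relation, and any other $\GBc$-amenable global well-order $G'$ is built by a definable procedure (Cohen forcing $\Add(\Ord,1)$ to add a global well-order, as in theorem \ref{thm1:gbc-cons-zfc}) that can be run inside $\Def^{(M,\cdot)}(M;\Tr_\omega(G))$ — but this is not quite right since $G'$ might be a highly non-canonical generic. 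Instead I expect the correct formulation is that property (3) does \emph{not} require $\Bcal(G) = \Bcal(G')$ for all $G, G'$, but only that amalgamable $\Xcal, \Ycal$ contain global well-orders $G_\Xcal, G_\Ycal$ lying in a common $\GBcm$-realization, where one can then show $\Tr_\Gamma(G_\Xcal)$ and $\Tr_\Gamma(G_\Ycal)$ generate the same classes because each global well-order is definable from the other together with a single iterated truth predicate (both being global well-orders of the \emph{same} $M$, they are related by a class bijection which is itself $\GBc$-amenable, hence captured at some finite level of iterated truth over either one). Pinning down this mutual-definability estimate — getting the precise ordinal bound on how high one must iterate — and checking it survives in the weak ambient theory $\GBcm$ of $\Zcal$ is where the real work lies; I would model it closely on the coding arguments in theorem \ref{thm3:sep-etr-gamma} and lemma \ref{lem2:gbc-amen-gwo}.
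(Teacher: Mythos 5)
There is a genuine gap, and it comes in two parts. First, you have built your basis out of sets of the form $\Bcal(G)$ for $G$ a global well-order, so every one of your basis elements contains a global well-order and hence satisfies Global Choice. But the theorem is about $\GBc$-realizations: $\GBc$ is $\GBC$ \emph{without} Global Choice, $M$ is not assumed countable, and a $(\GBc+\ETR)$-realization $\Ycal$ need not contain any global well-order at all (indeed, when $M$ has no definable global well-order, the minimal realizations one is after are exactly of this kind). For such a $\Ycal$ no element of your proposed basis can sit below it, so property (2) already fails; your parenthetical ``which exists when $M$ is countable'' is importing hypotheses the theorem does not have. Second, even in the $\GBC$ setting your property (3) reduces to the claim that $\Bcal(G_\Xcal)=\Bcal(G_\Ycal)$ for any two global well-orders found in amalgamable realizations, and you correctly identify this as the crux but do not prove it. As stated it is doubtful: there is no reason two such well-orders should be mutually definable from one another together with some iterated truth predicate (think of two ``mutually generic'' Cohen-generic global well-orders living in a common realization: each of your closures contains its own generator, and nothing generated from $G$ alone should reveal $G'$), so the proposed basis elements would genuinely differ while the ambient realizations are amalgamable.

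The paper's proof avoids both problems by never anchoring the closure to a chosen parameter. Fixing $\Ycal$, it sets $\Bcal_0=\Def(M)$ and then, at each stage, throws in $\Def\bigl(M;(\Tr_\Gamma(A))^{(M,\Ycal)}\bigr)$ only for those $\Gamma$ and $A$ that already lie in $\Bcal_n$ (with ``$\Gamma$ is a well-order'' judged in $(M,\Ycal)$), finally taking $\Bcal=\bigcup_n\Bcal_n$. Because the construction is bootstrapped from the definable classes, all the well-orders and parameters used are common to any two amalgamable realizations, and $V$-submodels with the same first-order part agree both on well-foundedness and on which class is $\Tr_\Gamma(A)$ (uniqueness of iterated truth predicates within a fixed second-order model); so amalgamable $\Xcal$ and $\Ycal$ produce literally the same $\Bcal$, which gives (3) and the uniqueness in (2) at once, with no independence-of-generator lemma needed. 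If you want to salvage your approach, you would have to replace $\Bcal(G)$ by this parameter-free tower; the closure-under-$\Tr_\Gamma$ idea itself is the right instinct, but the choice of a global well-order as the seed is what breaks the argument.
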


And we get the same result for $\GBC + \ETR$ if $M$ has a definable global well-order.

Let us now turn to $\beta$-models. Weak theories have least transitive models while strong theories do not. For first-order set theories, we could get least models for strong theories by requiring extra from our models, namely by nailing down which ordinals are to be the large cardinals of the model. For second-order set theories we recover leastness by requiring that the model be correct about well-foundedness.

\begin{theorem} \label{thm4:least-beta-models} \ 
\begin{itemize}
\item (Folklore) There is a least $\beta$-model of $\KM$.
\item For $k \ge 1$ there is a least $\beta$-model of $\GBC + \PnCA k$.
\item There is a least $\beta$-model of $\GBC + \ETR$.
\item For any $\Gamma \ge \omega^\omega$ there is a least $\beta$-model of $\GBC + \ETR_\Gamma$.
\item (Folklore) There is a least $\beta$-model of $\GBC$.
\end{itemize}
\end{theorem}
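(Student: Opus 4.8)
\textbf{Proof proposal for Theorem \ref{thm4:least-beta-models}.} The plan is to handle the five claims with a uniform strategy, deriving the existence of a least $\beta$-model of each theory $T$ from the fact that $T$ is absolute to the second-order constructible universe $\Lcal$ of chapter 2, together with the observation that in a $\beta$-model the second-order $L$-hierarchy computed internally agrees with the real one. First I would recall the classical template: for $\GBC$ (and likewise for first-order $\ZFC$ via Shepherdson) one shows that if there is a $\beta$-model at all, then $L_\alpha$ equipped with its definable classes is a $\beta$-model, where $\alpha$ is least such that $(L_\alpha,\Def(L_\alpha)) \models \GBC$; and this structure embeds into every $\beta$-model, because every $\beta$-model $(M,\Xcal) \models \GBC$ is correct about well-foundedness, hence its $L^M$ is genuinely well-founded and is $L_{\Ord^M}$, with $\Ord^M \ge \alpha$ by minimality of $\alpha$, so $(L_\alpha,\Def(L_\alpha))$ sits inside it as an $\Ord$-initial segment with the definable classes. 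The folklore cases ($\GBC$ and $\KM$) are then just this argument combined, for $\KM$, with the fact (chapter 2, theorem \ref{thm2:inner-model-rlzble} / corollary \ref{cor2:get-a-plus}) that any model of $\KM$ contains a $V$-submodel of $\KMCC$, and with absoluteness of $\KM$ to $\Lcal$.

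The substance is to push this through the intermediate theories. For $T \in \{\GBC + \PnCA k, \GBC + \ETR, \GBC + \ETR_\Gamma\}$ with $\Gamma$ definable and $\ge \omega^\omega$, I would argue as follows. Suppose some $\beta$-model of $T$ exists; let $(N,\Ycal)$ be one. Because $(N,\Ycal)$ is a $\beta$-model it is correct about well-foundedness, so by proposition \ref{prop2:beta-unroll-trans} its unrolling $\Ufrak^{(N,\Ycal)}$ is well-founded, hence isomorphic to a transitive model of the appropriate first-order theory from definition \ref{def2:so-many-theories}; and the $(N,G)$-constructible unrolling $\Lfrak(N,G)$ of chapter 2, section \ref{sec2:so-l}, is then (isomorphic to) a genuine $L_\xi$ for some ordinal $\xi$, where $G \in \Ycal$ is a $\GBC$-amenable global well-order supplied by lemma \ref{lem2:gbc-amen-gwo}. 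Cutting $\Lfrak(N,G)$ off at its largest cardinal $\kappa = \Ord^N$ yields a $\beta$-model $(L_\kappa, \Lcal(N,G))$ of $T$ (using the theorems of section \ref{sec2:so-l} that $\Lfrak(N,G)$ satisfies the right fragment of $\wZFCmi(k)$ or $\ETR$, together with the cutting-off theorem \ref{thm2:cutting-offs}), and it is well-founded because $\Lfrak(N,G)$ was. Now let $\beta$ be the \emph{least} ordinal such that there is a transitive $\beta$-model of $T$ of the form $(L_\beta, \Lcal)$ with $\Lcal$ the constructible classes over $L_\beta$ of the appropriate height. The key claim is that $(L_\beta,\Lcal)$ is a submodel of every $\beta$-model $(M,\Xcal) \models T$: again $(M,\Xcal)$ is correct about well-foundedness, so running the unroll-then-cut-off construction inside it produces a genuine transitive constructible $\beta$-model of $T$ whose ordinal height is $\le \Ord^M$; by minimality this height is $\ge \beta$, hence $L_\beta$ is an $\Ord$-initial segment of $M$ and the constructible classes over $L_\beta$ lie in the constructible-classes hyperclass of $(M,\Xcal)$, which by the results of section \ref{sec2:so-l} is contained in $\Xcal$ up to the relevant level. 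One must check that the resulting constructible classes over $L_\beta$ genuinely form a $T$-realization and not merely a $\GBC$-realization; for $\PnCA k$ this is proposition \ref{prop2:sep-in-lfrak} plus proposition \ref{prop2:coll-in-l}, and for $\ETR$ and $\ETR_\Gamma$ it is theorem \ref{thm3:etr-iff-l-hier} together with theorems \ref{cor3:etr-inner-model} and \ref{thm3:etr-gamma-inner-model}, which tell us exactly that being $(\GBC+\ETR)$- or $(\GBC+\ETR_\Gamma)$-realizable passes to the constructible classes over an inner model.

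The main obstacle I anticipate is the $\ETR_\Gamma$ case when $\Gamma$ is a specific, possibly non-absolute, class well-order: the statement ``$\ETR_\Gamma$'' is only an $\Lcal_\in$-theory when $\Gamma$ is given by a sufficiently absolute definition (the caveat flagged after theorem \ref{thm3:etr-gamma-inner-model} and in the discussion of $\ETR_\Gamma$ in chapter 1), so I would restrict the claim to $\Gamma$ defined by a parameter-free first-order formula that is absolute between the candidate least model and all larger $\beta$-models — e.g.\ $\Gamma = \omega^\omega$ or $\Gamma = \Ord$ — and verify that for such $\Gamma$ the ordinal $\Gamma^{L_\beta}$ computed in the candidate model really is an initial segment of $\Gamma^M$, so that $\ETR_\Gamma$ in the small model is not accidentally stronger. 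A secondary technical point is ensuring that the ``constructible classes over $L_\beta$'' is literally the same hyperclass however it is computed — inside $(L_\beta,\Lcal)$ itself versus inside a larger $\beta$-model — which follows from condensation and the fact that $L$-hierarchies cohere, but should be stated carefully. Once these absoluteness bookkeeping issues are dispatched, the five items all fall out of the single template above, with the folklore endpoints being the degenerate cases where the unrolling is trivial ($\GBC$) or where one additionally invokes the $\KMCC$-submodel and full-Comprehension absoluteness ($\KM$).
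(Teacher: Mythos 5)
Your template does line up with the paper's argument for the strong theories and for $\GBC$: in a $\beta$-model the unrolling is a transitive model of $\ZFCmi(k)$, one drops to the least constructible level $L_\alpha$ of that first-order theory, and cutting off at its largest cardinal gives the least $\beta$-model. Two local corrections there: cutting off $\Lfrak(N,G)$ returns the original first-order part $N$, not $L_\kappa$, since that structure is constructibility \emph{relative to} the predicate $(N,G)$ and is not a pure $L_\xi$; to get a constructible first-order part you must pass to the parameter-free $L$ of the full unrolling, exactly as the paper does, and note (as the paper does) that $L^W$ may satisfy Powerset, so one takes an internal $L_\alpha \models \ZFCmi(k)$ rather than $L^W$ itself. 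Also $\Lcal(N,G)$ depends on the auxiliary global well-order $G$ from lemma \ref{lem2:gbc-amen-gwo}, which is generic-like rather than definable; for a leastness claim you need to work over an $L_\alpha$ with its definable well-order.

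The genuine gap is in the $\ETR$ and $\ETR_\Gamma$ items, which are the new content. Your minimization ``let $\beta$ be least such that there is a transitive $\beta$-model of the form $(L_\beta,\Lcal)$ with $\Lcal$ the constructible classes of the appropriate height'' is not available for these theories: unlike $\ZFCmi(k)$, the theory $\GBC+\ETR$ does not pin down a single ordinal height of class-constructibility, so ``the appropriate height'' is undefined, and minimizing the ordinal $\beta$ of the first-order part says nothing about the classes --- two $\beta$-models can share the first-order part $L_\beta$ and carry different realizations, and leastness requires showing that one particular realization is contained in \emph{every} $\beta$-realization of that same $L_\beta$. The results you cite (corollary \ref{cor3:etr-inner-model}, theorem \ref{thm3:etr-gamma-inner-model}) only produce \emph{some} realization inside a given model; they do not give containment in all of them. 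The paper closes exactly this hole with a separate argument (theorem \ref{thm4:least-beta-etr-rlzn}): build the candidate in $\omega$ stages, $\Xcal_0 = \Def(L_\alpha)$ and $\Xcal_{n+1}$ consisting of everything definable from $\Tr_\Gamma(A)$ for $\Gamma, A \in \Xcal_n$, and use that in $\beta$-models well-foundedness, and hence the classes $\Tr_\Gamma(A)$ (equivalently the levels $L_\Gamma(A)$, by theorem \ref{thm3:etr-iff-l-hier}), are absolute and unique, so each stage is contained in every realization; taller $\beta$-models are then handled by observing that they can run this entire construction internally as a set-length recursion, so the candidate is even an element of them. Your proposal needs this stage-by-stage closure-and-absoluteness argument (or an equivalent precise characterization of the closure) spelled out; as written, the leastness claim for $\ETR$ and $\ETR_\Gamma$ is asserted rather than proved. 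Your restriction of the $\ETR_\Gamma$ case to $\Gamma$ given by a parameter-free definition absolute to $L$ does match the paper's hypothesis in theorem \ref{thm4:least-beta-etr-gamma}, which gives a counterexample when absoluteness fails.
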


Of course, the above results that there is a least model of such and such theory have consistency requirements, namely that the theory has any model (of appropriate type) at all! For the sake of readability I have suppressed mentioning the exact consistency assumptions here. Every result in this chapter follows from $\ZFC$ $+$ ``there is an inaccessible cardinal''. For the reader who wants something more precise, see the statements of the theorems in the body of this chapter.

This chapter is organized as follows. First I collect some observations about $\beta$-models and well-founded classes that will be used later in the chapter. I then review Barwise's notion of the admissible cover, which will be used to prove the results for models of strong theories. This sets us up to finally get to the results about the existence and non-existence of different kinds of least models. I have organized this chapter by the strength of the theories, as the methods used vary. I first give the results about strong theories. Next comes results for theories of medium strength. Finally, I give results about weak theories, or rather collect some results from the literature and from chapter 1. The chapter concludes with a coda on the analogy between second-order set theory and second-order arithmetic.

The section on models of strong theories will make essential use of the results from chapter 2. As such, the reader is strongly encouraged to look at that chapter before reading section \ref{sec4:strong}.

\section{\texorpdfstring{$\beta$-models and well-founded classes}{Beta-models and well-founded classes}}

In this section I collect several observations about $\beta$-models and well-founded classes. They will be used many places in this chapter, usually without explicit citation.

\begin{observation} \label{obs4:beta-submodel}
Suppose $(M,\Xcal)$ is a $\beta$-model of $\GBcm$ and $(M,\Ycal) \models \GBcm$ is a $V$-submodel of $(M,\Xcal)$. Then $(M,\Ycal)$ is a $\beta$-model.
\end{observation}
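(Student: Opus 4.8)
This is a very short observation with an almost immediate proof. The key point is that well-foundedness of a relation is downward absolute: if $R \in \Ycal \subseteq \Xcal$ and $(M,\Ycal)$ thinks $R$ is well-founded, I want to conclude $R$ really is well-founded. The obstacle, such as it is, is just keeping track of which direction of absoluteness is being used and why the $\beta$-model hypothesis on the larger model suffices.

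\begin{proof}
Let $R \in \Ycal$ be a relation which $(M,\Ycal)$ believes is well-founded. Since $\Ycal \subseteq \Xcal$, we have $R \in \Xcal$. I claim $(M,\Xcal)$ also believes $R$ is well-founded. Indeed, suppose not; then there is a class $f \in \Xcal$ which $(M,\Xcal)$ sees is an infinite descending sequence through $R$, i.e.\ $f : \omega^M \to \dom R$ with $f(n+1) \mathbin R f(n)$ for all $n$. But this is a first-order property of $f$ and $R$ (with the set parameter $\omega^M$), so if such an $f$ existed in $\Xcal$, then... actually we do not need $f \in \Ycal$. Rather, we argue as follows: $(M,\Xcal)$ is a $\beta$-model, so if $(M,\Xcal)$ believed $R$ were well-founded, then $R$ really would be well-founded. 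So it suffices to show $(M,\Xcal) \models$ ``$R$ is well-founded.''

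To see this, observe that ``$R$ is ill-founded'' is a $\Sigma^1_1$ assertion, witnessed by an infinite descending sequence, which is a single class. Since $(M,\Ycal) \models$ ``$R$ is well-founded'', there is in particular no descending sequence through $R$ lying in $\Ycal$. This alone does not immediately rule out such a sequence in $\Xcal$. So instead I use $\ETR_\omega$ ideas: over $\GBcm$, ``$R$ is well-founded'' is equivalent to the existence of a ranking function $\rho$ from $\dom R$ into $\Ord$, but such a ranking function need not exist without recursion. So the cleanest route is the following. Suppose toward a contradiction $(M,\Xcal)$ thinks $R$ is ill-founded; since $(M,\Xcal)$ is a $\beta$-model it is correct about this, so $R$ is genuinely ill-founded, witnessed by an actual (external) infinite descending sequence $d_0, d_1, d_2, \ldots$ through $R$ with all $d_i \in M$. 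But then, by downward absoluteness of well-foundedness for well-founded models, $(M,\Ycal)$ — being a $V$-submodel of the $\beta$-model $(M,\Xcal)$, hence itself well-founded — would also recognize this sequence as a witness: this genuine descending sequence, if it belongs to $\Ycal$, contradicts $(M,\Ycal) \models$ ``$R$ is well-founded''. The subtle point is whether the witnessing sequence lies in $\Ycal$; but the contrapositive we actually want is easier. We have $R \in \Ycal$ with $(M,\Ycal) \models$ ``$R$ is well-founded''; apply the statement of the observation in its equivalent contrapositive form: if $R$ were genuinely ill-founded then, since $\omega^M$ is well-founded (as $(M,\Xcal)$ is transitive, indeed a $\beta$-model, so $M$ is an $\omega$-model), any genuine descending sequence is an honest $\omega$-sequence $\langle d_i \rangle$; now $(M,\Ycal) \models \GBcm$ so $(M,\Ycal)$ is correct about which internal $\omega$-sequences are descending, and since $(M,\Ycal)$ has no descending sequence through $R$ in $\Ycal$, this forces $R$ to be genuinely well-founded only if $\Ycal$ contained the sequence. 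The genuinely clean argument: since $(M,\Xcal)$ is a $\beta$-model, $(M,\Xcal)$ is correct about well-foundedness, so $R$ ill-founded implies $(M,\Xcal) \models$ ``$R$ is ill-founded''; conversely $(M,\Ycal) \models $ ``$R$ well-founded'' and $R \in \Ycal$. We show $(M,\Xcal) \models$ ``$R$ well-founded'': were it not, then $R$ would be genuinely ill-founded (as $(M,\Xcal)$ is $\beta$), so there is a genuine infinite $\in$-descending... no. I conclude by the standard fact: $(M,\Ycal)$ and $(M,\Xcal)$ have the same first-order part $M$, which is an $\omega$-model; ``$R$ is well-founded'' for a class relation $R$ is $\Pi^1_1$ over $(M, -)$, and it is downward absolute to $\Ycal$-submodels trivially and upward absolute for $\omega$-models by a standard rank-analysis argument using $\ETR_\omega$... which $\GBcm$ need not have. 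Therefore: since $(M,\Ycal) \models$ ``$R$ is well-founded'', and since the failure of this would be witnessed by an actual descending $\omega$-sequence (using that $M$ is an $\omega$-model) which would then necessarily lie in $\Xcal \supseteq \Ycal$ and witness ill-foundedness in $(M,\Xcal)$ contrary to $(M,\Xcal)$ being a $\beta$-model being correct — we get that $R$ is genuinely well-founded. Hence $(M,\Ycal)$ is correct about well-foundedness, i.e.\ it is a $\beta$-model.
\end{proof}

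I should flag that the above proof is embarrassingly convoluted as written; the clean version is: $R \in \Ycal$, $(M,\Ycal) \models$ ``$R$ wf''; if $R$ were genuinely ill-founded, the witness is an honest $\omega$-sequence (as $M$ is an $\omega$-model) and it lies in $\Ycal$ iff $(M,\Ycal)$ sees ill-foundedness — but the witness comes from $(M,\Xcal)$ being $\beta$, and by downward absoluteness... The main obstacle is precisely pinning down that the ill-foundedness witness, which exists in $V$, is actually a class of $\Ycal$ — and the resolution is that we do not need it in $\Ycal$: we only need that $(M,\Ycal)$'s belief "$R$ is well-founded" transfers to $(M,\Xcal)$'s belief "$R$ is well-founded" (by downward absoluteness of well-foundedness of $\omega$-sequences, since both share the $\omega$-model $M$), whence $(M,\Xcal)$ being $\beta$ gives genuine well-foundedness.
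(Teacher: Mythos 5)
Your final ``clean version'' is correct and is essentially the paper's own argument: a model's belief that $R$ is ill-founded is witnessed by a \emph{set} (a descending $\omega$-sequence is set-sized), so $(M,\Ycal)$ and $(M,\Xcal)$, sharing the first-order part $M$, must agree about the well-foundedness of any $R \in \Ycal$, and then $\beta$-ness of $(M,\Xcal)$ yields that $R$ really is well-founded. Do cut the exploratory detours in the body---in particular the unjustified claim that an external descending sequence ``necessarily lies in $\Xcal$'' (it need not; what matters is only that $(M,\Xcal)$'s belief in ill-foundedness would have a set witness visible to $(M,\Ycal)$)---since the two-step transfer argument you end with is the whole proof.
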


\begin{proof}
Suppose towards a contradiction that $(M,\Ycal)$ is not a $\beta$-model. Then there is $R \in \Ycal$ so that $(M,\Ycal)$ thinks that $R$ is ill-founded but in $V$ we can see that $R$ is well-founded. Because $(M,\Ycal)$ thinks $R$ is ill-founded there is a set $r \in M$ which witnesses this; $R$ being ill-founded means there is some infinite descending sequence in $R$, but this sequence is countable and hence must be a set. But then $(M,\Xcal)$ thinks $R$ is ill-founded since it also sees that $r$ witnesses the ill-foundedness of $R$. But then, because $(M,\Xcal)$ is a $\beta$-model, $R$ must really be ill-founded, a contradiction.
\end{proof}

Note that we did not need near the full strength of $\GBcm$ for this argument to go through. All we need is that the theory of the models be strong enough to verify that a class being ill-founded is witnessed by a set. 

We can strengthen this observation.

\begin{observation} \label{obs4:beta-v-submodel}
Suppose $(N,\Ycal) \models \GBcm$ is an $\Ord$-submodel of a $\beta$-model $(M,\Xcal) \models \GBcm$. Then $(N,\Ycal)$ is a $\beta$-model.
\end{observation}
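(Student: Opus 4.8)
The plan is to reduce Observation \ref{obs4:beta-v-submodel} to the already-proved Observation \ref{obs4:beta-submodel} by inserting an intermediate model whose classes live over $M$ but restrict down to $\Ycal$. First I would fix $(N,\Ycal)$ an $\Ord$-submodel of the $\beta$-model $(M,\Xcal)$, so that in particular $N \subseteq M$, $\Ycal \subseteq \Xcal$, $\Ord^N = \Ord^M$, and the membership relation of $(N,\Ycal)$ is the restriction of that of $(M,\Xcal)$. Since $(M,\Xcal)$ is transitive (every $\beta$-model is, up to isomorphism), $N$ is transitive as well, and $\Ord^N = \Ord^M$ together with transitivity forces $N = M$: any $x \in M$ has some rank $\alpha \in \Ord^M = \Ord^N$, and a rank-$\alpha$ set is determined by its elements of rank $<\alpha$, so an easy $\in$-induction using transitivity of both models shows $M \subseteq N$. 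Hence $(N,\Ycal)$ is actually a $V$-submodel of $(M,\Xcal)$, and now Observation \ref{obs4:beta-submodel} applies directly to conclude $(N,\Ycal)$ is a $\beta$-model.

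A cleaner alternative, which avoids arguing $N = M$ and works even if one does not want to assume transitivity, is to repeat the proof of Observation \ref{obs4:beta-submodel} verbatim. Suppose toward a contradiction that $(N,\Ycal)$ is not a $\beta$-model. Then there is $R \in \Ycal \subseteq \Xcal$ which $(N,\Ycal)$ thinks is ill-founded but which is really well-founded in $V$. The key point is that $\GBcm$ proves that ill-foundedness of a class is witnessed by a \emph{set}: an infinite descending $R$-sequence is a countable object, hence a set, and both $(N,\Ycal)$ and $(M,\Xcal)$ can see it. So there is $r \in N \subseteq M$ witnessing that $R$ is ill-founded, and since $r$ and $R$ both belong to $(M,\Xcal)$, that model also thinks $R$ is ill-founded. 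But $(M,\Xcal)$ is a $\beta$-model, so $R$ is genuinely ill-founded, contradicting our choice of $R$. Therefore $(N,\Ycal)$ is a $\beta$-model.

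I expect the main (very minor) obstacle to be just making explicit why $\Ord^N = \Ord^M$ plus $\Ord$-submodelhood guarantees that the witness $r$ found in $N$ is the \emph{same} object as interpreted in $M$ — this is immediate from the definition of submodel, since the membership relation of $(N,\Ycal)$ is literally the restriction of $\in^{(M,\Xcal)}$, so ``$r$ is an infinite descending sequence through $R$'' is absolute between the two structures. As the remark following Observation \ref{obs4:beta-submodel} notes, one does not need the full strength of $\GBcm$ here: all that is used is that the base theory proves ill-foundedness of a class relation is witnessed by a set. I would state the proof in the second form above, since it is self-contained and does not route through the identification $N = M$.
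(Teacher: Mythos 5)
Both of your routes run into trouble, and the second misses exactly the point of this observation. The reduction via ``$N=M$'' is simply false: two transitive models with the same ordinals need not be equal. The paper's own example of an $\Ord$-submodel is $(L_\kappa,\Def(L_\kappa))$ inside $(V_\kappa,\powerset(V_\kappa))$, where typically $L_\kappa\ne V_\kappa$; your $\in$-induction breaks because a set of $M$ of rank $\alpha$ all of whose elements lie in $N$ need not itself belong to $N$ (any real of $M\setminus L^M$ witnesses this). So the observation does not reduce to the $V$-submodel case — its whole content is that $N$ may be a proper inner model of $M$.

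Your second argument repeats the $V$-submodel proof, but that proof only transfers witnesses between models with the \emph{same} first-order part, and moreover you are working in the wrong direction. By the paper's definition, the failure of $\beta$-ness is a relation $R\in\Ycal$ which $(N,\Ycal)$ thinks is well-founded but which is really ill-founded; the case you treat (thinks ill-founded, really well-founded) is the automatic direction, since a set-sized descending sequence in $N$ really is one. In the substantive direction the witness does not come from $N$: given $R\in\Ycal$ that is really ill-founded, $\beta$-ness of $(M,\Xcal)$ yields a descending sequence $r$ as a set of $M$, and since $N$ may be a proper inner model, $r$ need not be an element of $N$, so you cannot yet conclude that $(N,\Ycal)$ sees the ill-foundedness. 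The paper's proof handles precisely this: it takes $\alpha$ the rank of $r$ in $M$, forms the set $r'=R\cap V_{\alpha+1}^N\in N$ (the ill-foundedness of $R$ is concentrated below rank $\alpha$, and $R$ is a class of $N$), and then uses absoluteness of well-foundedness for \emph{set} relations between the transitive $\ZFCm$-models $N$ and $M$ to conclude that $N$, hence $(N,\Ycal)$, thinks $R$ is ill-founded. The obstacle you flag (absoluteness of ``$r$ is a descending sequence through $R$'' between the two structures) is the trivial part; the genuine issue is that the witness need not be a member of $N$ at all, and some device like the restriction $r'$ is required.
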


\begin{proof}
Take arbitrary ill-founded $R \in \Ycal$. Then, because $(M,\Xcal)$ is a $\beta$-model, there is a set $r \in M$ which witnesses the ill-foundedness of $R$. This $r$ might not be in $N$, but it has some rank $\alpha$ in $M$. Now consider $r' = R \cap V_{\alpha+1}^N \in N$. This $r'$ is well-founded if and only if $R$ is, since the ill-foundedness in $R$ occurs by rank $\alpha$. But $M$ thinks $r'$ is ill-founded and well-foundedness is absolute between transitive models of $\ZFCm$, so $N$ thinks $r'$ is ill-founded. So $N$ correctly thinks that $R$ is ill-founded and, as $R$ was arbitrary, $N$ is correct about well-foundedness.
\end{proof}

The following observation generalizes observation \ref{obs4:beta-submodel} in a different direction to tell us to what extent non-$\beta$-models with the same first-order part must agree on what is well-founded. In short, they must agree as much as possible.

\begin{observation} \label{obs4:agree-on-wfddness}
Suppose $\Xcal$ and $\Ycal$ are $\GBcm$-realizations for $M$. If $R \in \Xcal \cap \Ycal$ is a class relation then $(M,\Xcal)$ and $(M,\Ycal)$ agree on whether $R$ is well-founded. 
\end{observation}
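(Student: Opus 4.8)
The statement asserts that if $R \in \Xcal \cap \Ycal$ then $(M,\Xcal)$ and $(M,\Ycal)$ agree on whether $R$ is well-founded. The key observation is that for models of $\GBcm$, the failure of well-foundedness of a class relation is always \emph{witnessed by a set}, and both models, having the same first-order part $M$, will see the same sets. So the plan is to reduce "$(M,\Xcal) \models R$ is ill-founded" to a first-order statement about $M$ (with $R$ as a parameter), which then automatically transfers to $(M,\Ycal)$.

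Here are the steps in order. First I would recall that $\GBcm$ includes Class Separation (proven in the excerpt from Class Replacement), so in either model the class $\dom R$ exists and, for any set $a \in M$, $R \cap (a \times a)$ is a set. Second, I would argue the main claim: $(M,\Xcal) \models$ "$R$ is ill-founded" if and only if there exists a set $r \in M$ such that $M \models$ "$r$ is a nonempty subset of $\dom R$ with no $R$-minimal element". The forward direction uses that an ill-founded class relation has an infinite descending $R$-sequence, and such a sequence is a countable object, hence (as a function with domain $\omega$) a set in $M$; its range $r$ is then a set subset of $\dom R$ with no $R$-minimal element, definable via Class Separation applied to $R$ and the sequence. (Here I should be slightly careful: "ill-founded" is naturally phrased via an infinite descending sequence, but over $\GBcm$ one can also phrase it as "there is a nonempty subset with no minimal element" — these are equivalent given enough of the first-order theory, in particular dependent choice for sets, which follows from $\ZFCm$.) The reverse direction is immediate: such a set $r$ directly witnesses ill-foundedness, and $r \in M$ lies in both models' first-order part. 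Third, I would observe that the right-hand condition — "there exists $r \in M$ with the stated first-order property, using $R$ as a parameter" — is a statement about $(M, R)$ that does not mention the second-order part at all. Hence it holds relative to $(M,\Xcal)$ if and only if it holds relative to $(M,\Ycal)$, since both have the same underlying $M$ and both contain $R$. Combining the biconditional from step two with this parameter-only characterization yields that $(M,\Xcal) \models$ "$R$ is ill-founded" iff $(M,\Ycal) \models$ "$R$ is ill-founded", which is exactly the claim (negating both sides gives agreement on well-foundedness).

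The main obstacle — really the only subtle point — is the equivalence, over the weak base theory $\GBcm$, between the two formulations of ill-foundedness: "admits an infinite $R$-descending sequence" versus "admits a nonempty subset with no $R$-minimal element". The first is the one that makes witnesses be sets (a sequence is countable); the second is the one that is most obviously first-order expressible with $R$ as a parameter. Passing from the second to the first requires a choice principle to thread together a descending sequence, and $\ZFCm \subseteq \GBcm$ supplies this (one can well-order any set and recursively pick descending elements, or invoke that $\ZFCm$ proves $\mathrm{DC}$ for sets). Once this is nailed down, everything else is bookkeeping. I would also remark, as the excerpt's surrounding text suggests, that the argument uses much less than the full strength of $\GBcm$: all that is needed is that the theory of the models proves "a class relation is ill-founded iff a set witnesses it", which is the common thread through observations \ref{obs4:beta-submodel}, \ref{obs4:beta-v-submodel}, and the present one.
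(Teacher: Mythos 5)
Your argument is correct and is exactly the paper's (one-line) proof spelled out in detail: the paper simply says the two models agree "because they have the same first-order part," the point being that ill-foundedness of a class relation is witnessed by a set (a descending $\omega$-sequence, or equivalently a nonempty set with no $R$-minimal element), so the question is first-order in the parameter $R$ and independent of the second-order part. Your extra care about the equivalence of the two formulations of ill-foundedness via $\mathrm{DC}$ for sets is fine but not strictly needed, since either first-order formulation alone suffices for the transfer.
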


\begin{proof}
Because the two models have the same first-order part.
\end{proof}

The following observation also appeared in the proof of corollary \ref{cor3:etr-inner-model}, but for the sake of clarity I reproduce it with a proof here.

\begin{observation}
Suppose $(N,\Ycal)$ is an $\Ord$-submodel of $(M,\Xcal)$, where both are models of $\GBCm$. Suppose $(N,\Ycal) \models R$ is well-founded. Then $(M,\Xcal) \models R$ is well-founded.
\end{observation}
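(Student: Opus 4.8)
The statement to prove is: if $(N,\Ycal)$ is an $\Ord$-submodel of $(M,\Xcal)$, both models of $\GBCm$, and $(N,\Ycal) \models R$ is well-founded, then $(M,\Xcal) \models R$ is well-founded. The plan is to argue by contraposition, transporting a witness to ill-foundedness in $(M,\Xcal)$ down to a witness in $(N,\Ycal)$, using that $R \in \Ycal \subseteq \Xcal$ (so $R$ is literally the same class in both models) together with the fact that $\Ord^N = \Ord^M$.

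First I would unpack what it means for $(M,\Xcal)$ to think $R$ is ill-founded. By definition, this means there is a class $S \in \Xcal$ which $(M,\Xcal)$ sees as an $R$-descending sequence indexed by $\Ord$—that is, a function $S : \Ord^M \to \dom R$ with $(S(\alpha{+}1), S(\alpha)) \in R$ for all $\alpha$, with no $<_R$-minimal element in its range; more carefully, ``ill-founded'' in a model of $\GBCm$ means the model has a class witnessing a failure of the minimal-element property, and since $\GBCm$ proves Global Choice one can extract from any nonempty class with no $<_R$-minimal element an $\Ord$-indexed strictly descending sequence. The key point, exactly as in the proof of observation \ref{obs4:beta-v-submodel} and corollary \ref{cor3:etr-inner-model}, is that such a witness lives in the classes, not necessarily in the sets, because $\Ord^M$-length sequences need not be set-sized; this is precisely why transitivity of $M$ does not automatically rescue well-foundedness at the class level.

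Next comes the main step: I would show the witnessing sequence $S \in \Xcal$ can be replaced by one in $\Ycal$. Here I would use the $\Ord$-submodel hypothesis. Since $R \in \Ycal$, and since $\dom R \subseteq N$ (classes of $(N,\Ycal)$ are subsets of $N$), any $<_R$-chain consists of elements of $N$. Given the bijection $F \in \Ycal$ between $\Ord^N$ and $\dom R$ available by Global Choice in $(N,\Ycal)$, I would argue that $(N,\Ycal) \models R$ well-founded means: for every ordinal $\alpha \in \Ord^N = \Ord^M$, the set-sized restriction $R \cap (F''\alpha \times F''\alpha)$—which is a set in $N$ by Class Replacement and hence a set in $M$—is well-founded as judged by $N$, and since $N$ and $M$ are transitive models of $\ZFCm$ with the same ordinals they agree on well-foundedness of sets. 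Pulling this together: if $(M,\Xcal)$ had an $\Ord^M$-length $R$-descending class sequence, its range would be a nonempty subclass of $\dom R$ with no $<_R$-minimal element; restricting to a sufficiently long initial segment indexed by $F''\alpha$ would give a set (in $M$, hence in $N$) with no $<_R$-minimal element, contradicting that $(N,\Ycal) \models R$ is well-founded, because $N$ can see that set and it witnesses set-level ill-foundedness there too.

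\textbf{The main obstacle.} The delicate point is handling the distinction between class-level ill-foundedness and set-level ill-foundedness cleanly, and making sure the restriction of the class witness to an initial segment indexed by $F''\alpha$ actually captures a genuine failure of well-foundedness—i.e. that the descending chain ``bottoms out'' within a bounded piece. Unlike in observation \ref{obs4:beta-v-submodel}, we do not have a $\beta$-model here, so I cannot appeal to an external witness; instead I must run the whole argument internally to $(M,\Xcal)$ and $(N,\Ycal)$. The resolution is the standard one: in $\GBCm$, ``$R$ is ill-founded'' is equivalent to the existence of a nonempty subclass with no $<_R$-minimal element (not to the existence of an actual $\omega$-sequence, which would require the model to be an $\omega$-model), and any such nonempty subclass, intersected with $F''\alpha$ for $\alpha$ large enough that the intersection is still nonempty, yields a set with no $<_R$-minimal element—and that set, being in $N$ (by Class Replacement and transitivity), is seen by $(N,\Ycal)$ to witness ill-foundedness of $R$. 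This contradicts the hypothesis, completing the contrapositive. I expect the write-up to be short; the only care needed is in stating the internal characterization of ill-foundedness and invoking Class Replacement to land the relevant set inside $N$.
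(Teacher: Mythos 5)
Your overall route is the paper's: argue the contrapositive, push the ill-foundedness of $R$ in $(M,\Xcal)$ down to a set-sized restriction $R \rest F''\alpha$ using a bijection $F \in \Ycal$ given by Global Choice, land that restriction in $N$ by Class Replacement (plus class Separation), and finish by absoluteness of well-foundedness for set relations between the transitive models $N \subseteq M$ of $\ZFCm$ with the same ordinals. However, the step you single out as the main obstacle is resolved incorrectly. It is simply not true that if $A$ is a nonempty class with no $<_R$-minimal element then $A \cap F''\alpha$ has no $<_R$-minimal element once it is nonempty (or ``for sufficiently large $\alpha$''): an element of the truncated piece can be minimal there because all of its $R$-predecessors in $A$ happen to be enumerated by $F$ only after stage $\alpha$. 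The same problem infects the earlier phrasing ``restricting to a sufficiently long initial segment indexed by $F''\alpha$'': truncating along the enumeration $F$ of $\dom R$ does not preserve the absence of minimal elements, so as written the key transfer step fails.

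The repair is exactly what the paper's terse justification amounts to: transfer a \emph{descending-sequence} witness rather than a no-minimal-element witness. In $\GBCm$, from a nonempty class $A$ with no $<_R$-minimal element one extracts an internal $\omega$-descending sequence $s$ through $R$ by a recursion of length $\omega$ choosing at each step the global-well-order-least admissible element; the approximations are sets, so no fragment of $\ETR$ is needed, and $s$ is a set by Class Replacement. (Equivalently, if you insist on your $\Ord$-length descending class sequence $S$, take $\ran(S \rest \lambda)$ for a limit $\lambda$ --- truncate along the index of $S$, not along $F$.) Since $\ran(s)$ is a set, its $F$-preimage is bounded, so all entries of $s$ lie in $F''\alpha$ for some $\alpha$, and then $s$ itself witnesses that the set relation $R \rest F''\alpha$ is ill-founded; from there your Class Replacement and absoluteness steps go through exactly as in the paper. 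Incidentally, your reason for avoiding $\omega$-sequences --- that they ``would require the model to be an $\omega$-model'' --- is a red herring: the witness only needs to be internal, and internally the two characterizations of ill-foundedness are equivalent over $\GBCm$.
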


\begin{proof}
Suppose otherwise. Then there is an ordinal $\alpha \in M$ so that $M \models R \rest F''\alpha$ is ill-founded, where $F \in \Ycal$ is a bijection between $\Ord$ and $\dom R$, which exists by Global Choice. But by Replacement $R \rest F''\alpha$ must be in $N$. And $N$ is a transitive submodel of $M$ and they are both models of $\ZFCm$, so they must agree on what sets are well-founded. So $N \models R \rest F''\alpha$ is ill-founded, so $(N,\Ycal) \models R$ is ill-founded, a contradiction.
\end{proof}

\section{Admissible covers and nonstandard compactness arguments} \label{sec4:admissible}

To get theorems \ref{thm4:least-trans-models} and \ref{thm4:no-least-rlzn} will require more than the elementary tools of admissible set theory. I will review the necessary material in this section.

Speaking roughly, the strategy to prove that strong second-order set theories do not have least transitive models will be to work with their unrollings. Given an unrolling $W$ of a model $(M,\Xcal)$, we want to produce a new model $N$ so that $N$ and $W$ agree up to their largest cardinal, but we introduced ill-foundedness in $N$ above that. Then, cutting off $N$ we get $(M,\Ycal)$. But the ill-foundedness will mean that $\Ycal$ cannot be contained in $\Xcal$, so $(M,\Xcal)$ cannot be the least transitive model of our theory.

It has previously been studied by H.\ Friedman where we can introduce ill-foundedness in a model of set theory. He proved the following very general theorem.

First though, let me set up some terminology. If $N$ is a model of set theory with $A$ as a transitive submodel\footnote{$A$ is a {\em transitive submodel} of $N$ if $A \subseteq N$ and for all $a \in A$ and all $b \in N$ if $N \models b \in a$ then $b \in A$.}
then say that $N$ is {\em ill-founded at $A$} if there is an omega sequence of ordinals in $N \setminus A$ which is co-initial in $\Ord^N \setminus \Ord^A$.\footnote{This is equivalent to asking that $A$ be {\em topless} in $N$, meaning that there is no smallest ordinal in $N$ above the ordinals of $A$.}
If $A$ is an admissible set then $\Lcal_A$ denotes the associated admissible fragment of $\Lcal_{\Ord,\omega}$, i.e.\ the infinitary language consisting of the formulae in $A$. If $A$ is countable then the Barwise compactness theorem applies to $\Lcal_A$-theories.

\begin{theorem}[{\cite[theorem 2.2]{friedman1973}}] \label{thm4:trans-friedman}
Let $A$ be a countable admissible set and let $T \subseteq A$ be an $\Lcal_A$ theory which is $\Sigma_1$-definable in $A$. 
If there is a model of $T$ which contains $A$ then there is an ill-founded model $N$ of $T$ so that the following hold.
\begin{itemize}
\item $N$ contains $A$ as a transitive submodel and $N$ is ill-founded at $A$. In particular: 
\item $\wfp(N) \supseteq A$;\footnote{$\wfp(N)$ is the well-founded part of $N$, that is the subset of $N$ consisting of elements $a$ so that the membership relation of $N$ below $a$ is well-founded.} and 
\item $\Ord^{\wfp(N)} = \Ord^A$.
\end{itemize}
\end{theorem}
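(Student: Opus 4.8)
The statement to prove is H.\ Friedman's theorem that, given a countable admissible set $A$ and a $\Sigma_1$-definable $\Lcal_A$-theory $T \subseteq A$ with a model containing $A$, there is an ill-founded model $N \models T$ with $A$ as a transitive submodel, ill-founded at $A$, so that $\wfp(N) \supseteq A$ and $\Ord^{\wfp(N)} = \Ord^A$. The plan is to run a Barwise compactness argument over the admissible fragment $\Lcal_A$. First I would expand the language: add a constant $\bar a$ for each $a \in A$ and a fresh constant $c$ (which will name an ordinal sitting just above $\Ord^A$ in the ill-founded model). Form the $\Lcal_A$-theory $T^*$ consisting of: (i) all axioms of $T$; (ii) the infinitary $\in$-diagram of $A$, i.e.\ for each $a \in A$ the sentence $\forall x\,(x \in \bar a \iff \bigvee_{b \in a} x = \bar b)$, which forces any model of $T^*$ to contain (an isomorphic copy of) $A$ as a transitive submodel; and (iii) the scheme of sentences ``$c$ is an ordinal'' together with ``$\bar\alpha < c$'' for every ordinal $\alpha \in A$. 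Note $T^*$ is still $\Sigma_1$-definable over $A$ because the $\in$-diagram and the ordinal-constraints on $c$ are each $\Sigma_1$-definable (uniformly in $A$), and $T$ was assumed $\Sigma_1$.

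Next I would verify $T^*$ is $\Lcal_A$-satisfiable by checking that every subset of $T^*$ lying in $A$ has a model, then invoke the Barwise compactness theorem. Given $T_0 \in A$ a subset of $T^*$, only finitely many (indeed, a set in $A$ of) the constraints ``$\bar\alpha < c$'' appear, so by admissibility their supremum $\alpha_0 \in \Ord^A$ exists; interpret $c$ as $\alpha_0 + 1$ (or any ordinal of $A$ above $\alpha_0$) inside the given model of $T$ that contains $A$, and interpret each $\bar a$ as $a$. This satisfies $T_0$. Barwise compactness then yields a model $N \models T^*$. By clause (ii), $A$ embeds as a transitive submodel of $N$ (identify $\bar a$ with $a$); by clause (iii), $c^N$ is an ordinal of $N$ above every ordinal of $A$.

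Finally I would extract the well-founded-part conclusions. Since $A$ is a transitive submodel and $A$ is genuinely well-founded, $A \subseteq \wfp(N)$. For the reverse containment at the level of ordinals: if some ordinal $\beta \in \Ord^{\wfp(N)} \setminus \Ord^A$ existed, then (working in $N$, which models enough set theory to do this, via the $\in$-diagram one checks $N$ is an end-extension of $A$ and $A$ is topless) one derives a contradiction — the key point being that Barwise compactness, applied to a theory that is $\Sigma_1$ over the \emph{countable} admissible $A$, cannot pin down any new well-founded ordinal: any ordinal of $N$ in $\wfp(N)$ would have to be represented by an element of $A$ by a standard ``the well-founded part of a model of Barwise-compactness-produced theory over $A$ has height $\Ord^A$'' argument (this is where one uses that $c$ was forced above all of $\Ord^A$, so $\Ord^A$ is topless in $N$, hence cofinally approached from below in $\Ord^N \setminus \Ord^A$ by an $\omega$-sequence witnessing ill-foundedness at $A$). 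Thus $\Ord^{\wfp(N)} = \Ord^A$, and since $\wfp(N)$ is transitive in $N$ and agrees with $A$ on ordinals while containing $A$, one concludes $\wfp(N) \supseteq A$ with $N$ ill-founded exactly at the top of $A$. The main obstacle I anticipate is the last step: carefully arguing that no new well-founded ordinal is added, which requires the standard but slightly delicate observation that the model produced by Barwise compactness over $A$ has well-founded part of ordinal height exactly $\Ord^A$ — this uses the $\Sigma_1$-definability hypothesis essentially, since it is what makes $\Lcal_A$-compactness (rather than mere $\Lcal_{\Ord,\omega}$-compactness) applicable and keeps the construction ``local'' to $A$.
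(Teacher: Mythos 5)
Your setup and the Barwise compactness application are fine as far as they go: the theory $T^*$ with the infinitary $\in$-diagram, constants $\bar a$, and a single constant $c$ with ``$\bar\alpha < c$'' for all $\alpha \in \Ord^A$ is $\Sigma_1$ over $A$, and your verification that every subtheory in $A$ is satisfiable (the mentioned $\alpha$'s form a set in $A$, hence are bounded below $\Ord^A$) is correct. The gap is in the last step, and it is not a delicate technicality but the heart of the theorem: nothing in $T^*$ rules out that the model $N$ produced by compactness is \emph{well-founded}. If $T$ happens to have a well-founded model properly taller than $A$ (e.g.\ $T = \KP$ and $A = L_\alpha$ with a larger admissible $L_\beta$ around), then interpreting $c$ as an actual ordinal above $\Ord^A$ gives a model of $T^*$ that is not ill-founded at $A$ and has $\Ord^{\wfp(N)} > \Ord^A$. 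Your claim that ``Barwise compactness over a countable admissible $A$ cannot pin down any new well-founded ordinal'' is not a theorem you can appeal to --- it is essentially the statement being proved --- and forcing $c$ above $\Ord^A$ does not make $\Ord^A$ topless: $c^N$ could perfectly well be the \emph{least} ordinal of $N$ above $\Ord^A$, i.e.\ a top. Note also that the obvious repair --- adding $\omega$ many constants $d_n$ with ``$d_{n+1} < d_n$'' and ``$\bar\alpha < d_n$'' --- does not go through by the same compactness argument: since $\omega \in A$, a subtheory in $A$ may already contain the entire descending chain of axioms, and your only witness for satisfiability is the given (possibly well-founded) model of $T$, which cannot realize it.

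This is exactly why Friedman's argument, and the paper's proof of its ill-founded generalization (theorem \ref{thm4:cis-friedman}), does not stop at a single application of compactness but runs an $\omega$-stage Henkin-style completion: one builds a complete $\Lcal_A(c_n : n \in \omega)$-theory $T'$ closed under conjunctions and under an $\omega$-rule (if $\phi(c_n) \in T'$ for every $n$ then ``$\forall x\,\phi(x)$'' $\in T'$), arranges ``$a \in c_0$'' for all $a \in A$, and only commits to ``$c_m \in c_n$'' at stages where every ``$a \in c_n$'' has been forced --- conditions $(4$--$6)$ there --- using Barwise compactness repeatedly to keep each stage consistent. The $\omega$-rule is what converts ``every $c_n$ lies above $\Ord^A$'' into toplessness of $\Ord^A$ (the $\beta < \gamma$ argument in that proof), and the staged choice of the $c_{m_n}$ is what produces an $\omega$-sequence of ordinals coinitial in $\Ord^N \setminus \Ord^A$, from which $\wfp(N) \supseteq A$ and $\Ord^{\wfp(N)} = \Ord^A$ follow. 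Your proposal needs this machinery (or an equivalent omitting-types argument) and cannot be completed as written.
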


Figure \ref{fig4:trans-friedman} illustrates the theorem. 

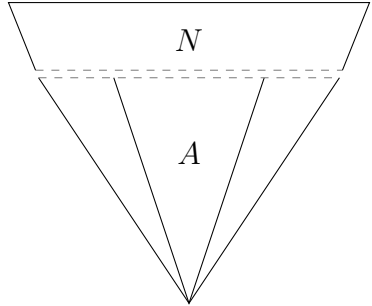
\begin{figure}
\begin{center}
\begin{tikzpicture}
\draw (1,3) -- (0,0) -- (-1,3);
\draw (-2,3) -- (0,0) -- (2,3);
\draw[dashed,color=gray] (-2,3) -- (2,3);
\draw[dashed,color=gray] (-2.04,3.1) -- (2.04,3.1);
\draw (-2.04,3.1) -- (-2.4,4) -- (2.4,4) -- (2.04,3.1);
\draw (0,2) node {$A$};
\draw (0,3.5) node {$N$};
\end{tikzpicture}
\end{center}

\caption{An illustration of Friedman's theorem. The dashed line represents that $N$ is ill-founded at $A$.}
\label{fig4:trans-friedman}
\end{figure}

Observe that in general we cannot get $\wfp(N) = A$. For a counterexample, suppose that $\alpha$ is a countable ordinal so that $L_\alpha \models \ZFCm$ $+$ ``every set is countable''. Then by Friedman's theorem there is $N \supsetend L_\alpha$ satisfying $\KP + V=L$ $+$ ``every set is countable'' so that $N$ is ill-founded at $L_\alpha$. Consider an ordinal $\gamma \in N$ which is not in $\wfp(N)$. Then, $N$ thinks $\gamma$ is countable so there is $G \subseteq \omega^2$ so that $(\omega,G) \cong (\gamma,\in)$. In particular, $G$ is in $\wfp(N)$. I claim that this $G$ cannot be in $L_\alpha$. To see this, note that $L_\alpha$ satisfies enough Replacement that it can compute $\wfp(G)$ from $G$. But $\wfp(G) \cong \alpha$. So if $G \in L_\alpha$ then by Mostowski's collapse lemma we must have $\alpha \in L_\alpha$, a contradiction.

The reader may find it useful to see a simple application of this theorem before moving on.

\begin{proposition}
Consider Zermelo set theory $\Zsf$, the first-order set theory axiomatized by Extensionality, Pairing, Union, Infinity, Powerset, and the axiom schema of Separation. There are transitive models of $\Zsf$ which are wrong about well-foundedness. That is, there is $M \models \Zsf$ transitive with $R \in M$ so that $M \models R$ is well-founded but externally we can see that $R$ is ill-founded. 
\end{proposition}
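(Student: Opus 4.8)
The plan is to apply Friedman's theorem (theorem \ref{thm4:trans-friedman}) to fatten up a transitive structure that is already ``wrong about well\nobreakdash-foundedness'' but fails to satisfy Powerset. The key classical input is from hyperarithmetic theory: there is a recursive relation $R$ on $\omega$ which is genuinely ill\nobreakdash-founded (it has an infinite descending sequence in $V$) but has no hyperarithmetic infinite descending sequence---for instance one may take $R$ to be the Kleene--Brouwer order on the nodes of a recursive ill\nobreakdash-founded tree $S \subseteq {}^{<\omega}\omega$ with no hyperarithmetic branch. Since $L_{\omegaoneck}$ contains exactly the hyperarithmetic subsets of $\omega$, we get $R \in L_{\omegaoneck}$ but $L_{\omegaoneck}$ contains no infinite $R$\nobreakdash-descending sequence; indeed $L_{\omegaoneck}$ is transitive and, reading ``$R$ is well\nobreakdash-founded'' as ``every nonempty subset of $\operatorname{field}(R)$ has an $R$\nobreakdash-minimal element,'' it satisfies that $R$ is well\nobreakdash-founded---it simply is not a model of Powerset, so it is not the $\Zsf$\nobreakdash-model we want.

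First I would set $A = L_{\omegaoneck}$, a countable admissible set, and let $T$ be the $\Lcal_A$\nobreakdash-theory $\ZF + (V = L)$ together with the infinitary $\in$\nobreakdash-diagram of $A$, i.e.\ the sentences $\forall x\,\big(x \in \check a \iff \bigvee_{b \in a} x = \check b\big)$ for $a \in A$. This $T$ is $\Sigma_1$\nobreakdash-definable over $A$, and the real constructible universe $L$ is a model of $T$ containing $A$ as a transitive submodel. So theorem \ref{thm4:trans-friedman} yields a model $N \models T$ containing $A$ as a transitive submodel and ill\nobreakdash-founded at $A$, with $\Ord^{\wfp(N)} = \Ord^A = \omegaoneck$. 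Because $N \models V = L$, it follows that $\wfp(N) = L_{\omegaoneck} = A$.

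Next comes the cutting off: set $M = V_{\omega + \omega}^N$. Since $\omega + \omega < \omegaoneck = \Ord^{\wfp(N)}$, every element of $M$ has well\nobreakdash-founded membership below it, so $M$ is a genuine transitive set; and since $N \models \ZF$ proves ``$V_{\omega+\omega} \models \Zsf$,'' we get $M \models \Zsf$ (correctly, as $M \subseteq \wfp(N)$). Moreover $R \subseteq \omega\times\omega \subseteq V_\omega$, so $R \in M$. The crux is to check that $M \models$ ``$R$ is well\nobreakdash-founded'' even though $R$ is ill\nobreakdash-founded in $V$. If $M$ thought $R$ ill\nobreakdash-founded there would be a nonempty $R$\nobreakdash-descending\nobreakdash-closed $x \subseteq \omega$ with $x \in M \subseteq N$; then inside $N$, using only $\Sigma_0$\nobreakdash-recursion of length $\omega$ (repeatedly pick the least element, which is available since $N \models \ZF \supseteq \KP$), one builds an infinite $R$\nobreakdash-descending sequence $f \colon \omega \to \omega$ with $f \in N$. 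But $f \subseteq \omega\times\omega$ is well\nobreakdash-founded, so $f \in \wfp(N) = A = L_{\omegaoneck}$, contradicting that $A$ contains no such sequence. Hence $M$ is a transitive model of $\Zsf$ with $R \in M$ well\nobreakdash-founded in $M$ but ill\nobreakdash-founded externally, which is what we want.

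The main obstacle---really the only nontrivial point---is the verification just sketched, which rests on two features we have engineered: that the cut\nobreakdash-off model $M$ literally contains the ill\nobreakdash-founded $R$ at low rank (guaranteed by taking $R$ to be a relation on $\omega$), and that $N$, being a model of $\ZF$, can extract a descending sequence from any ill\nobreakdash-founded\nobreakdash-looking subset of $\omega$ using recursion of length only $\omega$, so that such a sequence would be forced to live in $\wfp(N)$, which we have arranged to equal $A$. Everything else---existence of the suitable recursive $R$ (a classical fact of hyperarithmetic theory), applicability of Friedman's theorem, the identity $\wfp(N) = L_{\omegaoneck}$ coming from $V = L$, and $V_{\omega+\omega}^N \models \Zsf$---is routine.
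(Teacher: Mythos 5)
Your overall architecture (Friedman's theorem over $A = L_{\omegaoneck}$, then cut off at $V_{\omega+\omega}^N$) matches the paper's, but the step you yourself flag as the crux contains a genuine gap: the claim ``because $N \models V=L$, it follows that $\wfp(N) = L_{\omegaoneck} = A$.'' Theorem \ref{thm4:trans-friedman} only gives $\wfp(N) \supseteq A$ and $\Ord^{\wfp(N)} = \Ord^A$; it does not give $\wfp(N) = A$, and the paper remarks immediately after the theorem that this cannot be arranged in general, with a counterexample whose theory already includes $V = L$ and whose extra well-founded elements are exactly reals (codes of nonstandard ordinals). So ``$N \models V=L$'' does not buy you the identification: a real of $N$ can have nonstandard $L$-rank in $N$ and lie in $\wfp(N)$ without lying in $L_{\omegaoneck}$. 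The only constraint the truncation lemma gives is that every real $b \in N$ has $\omega_1^{\mathrm{CK},b} \le \omegaoneck$, and by the Gandy basis theorem your recursive ill-founded tree has branches $b$ with exactly $\omega_1^{\mathrm{CK},b} = \omegaoneck$; a routine Barwise compactness argument (add a constant for such a branch to your theory and keep the toplessness machinery) produces a model $N$ with all the properties you extracted from Friedman's theorem which nevertheless contains a genuine descending sequence through $R$. For such an $N$, your $M = V_{\omega+\omega}^N$ contains that witness and correctly judges $R$ ill-founded, so the argument as written does not go through: you would need to prove, not just assert, that the model produced has no reals beyond $L_{\omegaoneck}$, and nothing in the stated theorem provides that.

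Two further points. First, taking $T \supseteq \ZF$ and citing the proper class $L$ as the model witnessing the hypothesis of theorem \ref{thm4:trans-friedman} is not licensed: Barwise compactness needs (Barwise-)consistency, which for a theory containing full $\ZF$ cannot be established in $\ZFC$ alone, whereas the proposition needs no hypothesis beyond $\ZFC$ --- the paper's $N$ satisfies only $\KP$ plus ``$V_{\omega+\omega}$ exists,'' witnessed by countable set models obtainable outright. Second, the paper's proof shows how to avoid your trap entirely: instead of fixing a pseudo-well-founded $R$ in advance and trying to control the reals of $N$, it lets the ill-foundedness of $N$ manufacture the witness --- take a nonstandard ordinal $\gamma \in N$ that $N$ thinks is countable, and let $G \subseteq \omega^2$ in $N$ code it. Then $G \in V_{\omega+\omega}^N$, the model believes $G$ is well-founded because it is isomorphic (in $N$) to an ordinal, and $G$ is externally ill-founded because $\gamma \notin \wfp(N)$; no identification of $\wfp(N)$ with $A$ is ever needed.
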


Since we are concerned with transitive models, they will automatically satisfy Foundation. As well, we can easily arrange so that $M$ also satisfies Choice. So this cute little proposition can be seen as yet more evidence for the importance of Collection in the axioms for set theory. If we include no fragment of Collection, then we do not get the incredibly useful fact that well-foundedness is absolute for transitive models.

\begin{proof}
Let $A = L_{\omegaoneck}$, where $\omegaoneck$ is the Church--Kleene ordinal, i.e.\ the least admissible ordinal $> \omega$. It is easy to see that there are countable models of $\KP + \text{``}V_{\omega+\omega}$ exists'' which contain $A$. So by Friedman's theorem there is $N \models \KP + \text{``}V_{\omega+\omega}$ exists'' which is ill-founded at $A$. Take $\gamma \in N$ a countable ordinal which is not in the well-founded part. Then, there is $G \subseteq \omega^2$ in $N$ so that $(\omega,G) \cong (\gamma,\in)$.

Now let $M = V_{\omega+\omega}^N$. It is well-known that $V_{\omega+\omega} \models \Zsf$. Moreover, $\KP$ is enough to verify this so that indeed $M \models \Zsf$. Observe also that $M$ is transitive, because all of its elements have rank $< \omega + \omega < \omegaoneck$. But $M$ is not correct about well-foundedness, because $M$ thinks $G$ is well-founded.
\end{proof}

While Friedman's theorem is fantastic, it is not quite general enough for my purposes. To illustrate the difficulty, start with a countable $\beta$-model $(M,\Xcal) \models \KMCC$. Consider its unrolling $W$. Then, $\Hyp(M) \in W$ (see proposition \ref{prop4:hyp-v-in-kmp} below). So by Friedman's theorem we can find $N \models \ZFCmi$ which is ill-founded at $\Hyp(M)$ and so that $M$ is a rank-initial segment of $N$. Then, $N$ thinks that $\Hyp(M)$ exists. However, $\Hyp(M)^N$ cannot be in the well-founded part of $N$ by construction. So Friedman's theorem cannot be used to produce a model of $\ZFCmi$ which is ill-founded at $\Hyp(M)^N$. 

The problem is that Friedman's theorem only applies to well-founded models of $\KP$, whereas I need to be able to handle ill-founded models. Fortunately, Barwise developed machinery for compactness arguments over an ill-founded domain.

The important notion here is that of the admissible cover of a model of set theory, which I review here. For further details see the appendix to \cite{barwise1975}. Briefly, the admissible cover of $U \models \KP$ is a certain admissible structure with $U$ as its urelements. The admissible cover of $U$ allows us to then apply the tools of admissible set theory to $U$, even though $U$ itself is ill-founded.

First we must discuss set theory with urelements. An {\em urelement} is an object that is neither a set nor a class but can be an element of sets.\footnote{The reader (or maybe just me) may find it amusing to think of urelements as the opposite of classes. A class can have elements but cannot be an element whereas an urelement cannot have elements but can be an element. A set, of course, can both have elements and be an element.}
In contemporary set theory we usually formulate things just in terms of pure sets, i.e.\ those sets with no urelements in their transitive closures. This is harmless, as any structures of interest can be simulated in the pure sets. For instance, we do not need the natural numbers as urelements since we can instead work with the finite ordinals. But for this context we will want models with urelements. These structures, of course, can be simulated with pure sets. So despite the use of set theories with urelements, all of the below can be formalized in ordinary $\ZFC$.

Formally, the theory we will work with is $\KPU$, {\bf K}ripke--{\bf P}latek set theory with {\bf u}relements. I give an axiomatization here, both for the benefit of the reader unfamiliar with $\KP$---that is, $\KPU$ sans urelements---and to highlight the fact that $\KPU$ does not prove there is a set of all urelements.

\begin{definition}
The theory $\KPU$ is a first-order set theory formulated with urelements. That is, $\KPU$ is a two-sorted theory whose objects are {\em sets} and {\em urelements}. In addition to basic axioms asserting that sets and urelements are distinct, nothing is a member of an urelement, etc., $\KPU$ is axiomatized by the following axioms: Extensionality for sets, Pairing, Union, $\Delta_0$-Separation, $\Delta_0$-Collection, and Foundation. Here, Foundation is the schema whose instances are of the form
\[
\exists x\ \phi(x) \impl (\exists x\ \phi(x) \land \forall y \in x\ \neg \phi(y))
\]
for each $\Lcal_\in$-formula $\phi$.\footnote{The models of $\KPU$ we will consider will all be well-founded, so they will automatically satisfy the strongest form of Foundation.}
\end{definition}

I will write $(M,U)$ for the model of $\KPU$ with sets $M$ and urelements $U$, suppressing writing the set-set and urelement-set membership relations. Often I will give a single name to this structure, usually in the fraktur font, such $\Cfrak$ or $\Rfrak$. We will also consider structures with additional functions and relations, which I will denote by $(M,U;R_0, R_1, \ldots)$. These structures will satisfy the schemata of $\Delta_0$-Separation and $\Delta_0$-Collection in the expanded language.

For an example of a model of $\KPU$, let us consider a model with the reals as its urelements. That is, consider $(\reals, +, \times, <)$ the set of real numbers with its arithmetic operations. Let $M$ consist of all hereditarily finite sets above the reals. Formally, let $M_0 = \emptyset$ and $M_{n+1}$ be the set of all finite subsets of $\reals \cup M_n$. Then $M = \bigcup_{n \in \omega} M_n$. The reader can easily check that $\Rfrak = (M,\reals;+,\times,<)$ is a model of $\KPU$. But notice that $M$ does not contain $\reals$, as all sets in $M$ are finite. So $\Rfrak$ gives an example of a model of $\KPU$ without a set of all urelements. It also gives a model of $\KPU$ which does not satisfy the axiom of Infinity.

Let us consider another example of a model of $\KPU$, this one more relevant to the present discussion. Let $U$ be an $\omega$-nonstandard first-order model of set theory with membership relation $E$. We will treat $U$ as the urelements for a model of $\KPU$. Similar to the previous example we can define the hereditarily finite sets above $U$. If $M$ is the collection of such then $\Ufrak = (M,U;E)$ is a model of $\KPU$. 

We would like to use a structure like this to mimic internal talk in $U$ with talk of honest-to-$V$ well-founded sets. But this $\Ufrak$ does not have enough sets to do so. For $u \in U$ let $u_E = \{ v \in U : u \mathbin E v \}$ be the set of what $U$ thinks are the elements of $u$. Because $M$ only consists of finite sets the only $u \in U$ for which $u_E \in M$ are those which really are finite. In particular, $(\omega^U)_E \not \in M$ and any $\alpha \in \omega^U \setminus \omega$ will have $\alpha_E \not \in M$. So this $\Ufrak$ cannot directly talk about all the `sets' in $U$. To do so, we need more.

\begin{definition}
Let $U$ be a (possibly ill-founded) model of first-order set theory with membership relation $E$. Then $\Mfrak = (M,U;E,F) \models \KPU$, a model of $\KPU$ with $U$ as urelements, {\em covers} $U$ if $F$ is a function from $U$ to $M$ so that $F(u) = u_E$. 
\end{definition}

Any $\Mfrak$ which covers $U$ can mimic $\Lcal_U$ talk. One useful fact about $\Lcal_A$ for admissible $A$ is that every element of $A$ is definable by a single $\Lcal_A$-formula. Namely, $x = a$ is defined by the formula
\[
\forall y\ y \in x \iff \bigvee_{b \in a} y = b
\]
where ``$y = b$'' is an abbreviation for the formula defining $b$. Because $A$ is well-founded this recursive definition unwraps into a single $\Lcal_A$-formula. A similar idea allows us to define elements of $U$ by $\Lcal_\Mfrak$-formulae: define $x = a$ for $a \in U$ by 
\[
\forall y\text{ an urelement } y \mathbin E x \iff \bigvee_{b \in F(a)} y = b.
\]
More, $F$ lets us translate bounded quantification for $U$ to bounded quantification for $\Mfrak$: replace $\exists x \mathbin E y$ with $\exists x \in F(y)$ and similarly for universal quantification. So corresponding to each (of what $U$ thinks is a) $\Delta_0$ $\Lcal_U$-formula is a $\Delta_0$ $\Lcal_\Mfrak$-formula, and similarly for $\Sigma_1$ or $\Pi_1$ formulae.

Barwise proved that there is a smallest admissible structure which covers $U$. This structure, the {\em admissible cover of $U$}, is the intersection of all admissible structures which cover $U$ and enjoys many nice properties. I summarize them here.

\begin{theorem}[Barwise {\cite[appendix]{barwise1975}}]
Let $U \models \KP$ be a possibly ill-founded model of set theory with membership relation $E$ and let $\Cfrak = (C,U;E,F)$ be the admissible cover of $U$. 
\begin{itemize}
\item If $U$ is countable then $\Cfrak$ is countable.
\item The pure sets of $\Cfrak$ are isomorphic to the well-founded part of $U$. 
\item For any $A \subseteq U$ we have $A \in \Cfrak$ if and only if there is $a \in U$ so that $A = a_E$. 
\item The infinitary $\in$-diagram of $U$,\footnote{That is, the collection of all sentences of the form $\forall x\ x \in a \iff \bigwedge_{b \in a} x = b$.}
considered as a set of $\Lcal_\Cfrak$-sentences, is $\Sigma_1$-definable over $\Cfrak$. 
\end{itemize}
\end{theorem}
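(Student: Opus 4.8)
This is Barwise's admissible cover theorem \cite[appendix]{barwise1975}, so in a dissertation the natural move is simply to cite it; but here is how I would prove it. The plan is to construct $\Cfrak = \mathrm{Cov}(U)$ explicitly, as a relativized constructible hierarchy over the urelements $U$ cut off at the least ``admissible'' stage, then to check that $\Cfrak$ embeds into every admissible cover of $U$ (so that it is literally their intersection), and finally to verify the four itemized properties. The one point where some care is needed, and where $U \models \KP$ does real work, is keeping $\Cfrak$ small enough that it adds no new subsets of $U$.

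For the construction, work in $\ZFC$, treat the elements of $U$ as urelements, and carry along the relation $E$ and the ``elements-of'' function $F\colon u \mapsto u_E$ as primitive symbols. Define $L^F_0(U) = \emptyset$, let $L^F_{\alpha+1}(U)$ be the rudimentary ($\Delta_0$-subsets-of-sets, pairing, union, together with applications of the primitive $F$) closure of $L^F_\alpha(U) \cup U$, and take unions at limits. The crucial structural feature is that this only ever creates \emph{sets}: because rudimentary closure carves subsets out of sets already present, the ``class'' of all urelements is never collected, $\Cfrak$ will not have a set of all urelements, and $F(u) = u_E$ enters at level $1$ with no circularity (it is just an application of $F$, not a separation over the proper class $U$). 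A Löwenheim--Skolem argument shows that the ordinals $\alpha$ with $L^F_\alpha(U) \prec_{\Sigma_1} \bigcup_\beta L^F_\beta(U)$ are closed unbounded, and by the usual analysis such $\alpha$---limits of a $\Sigma_1$-elementary chain---give $L^F_\alpha(U) \models \KPU$; let $\kappa$ be the least such and set $\Cfrak = L^F_\kappa(U)$. No circularity is needed to see $\kappa$ exists. If $\Mfrak$ is any admissible structure with urelements $U$ that covers $U$, then $\Mfrak$ has $F$ and, being admissible, can run the $L^F(\cdot)$-recursion along its own ordinals by $\Sigma$-recursion, so it contains a copy of $L^F_\alpha(U)$ for every ordinal below its height; since the construction first ``becomes admissible'' at $\kappa$, the height of $\Mfrak$ is $\ge \kappa$ and $\Mfrak \supseteq \Cfrak$. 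Hence $\Cfrak$ is contained in every admissible cover, so it is their intersection and is itself an admissible cover.

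Now the four clauses. The $\in$-diagram clause is easiest: the diagram sentence for $a \in U$ is $\forall x\,(x \mathbin E \underline{a} \iff \bigvee_{b \in F(a)} x = \underline b)$, an infinitary $\Lcal_{\Cfrak}$-formula built uniformly and primitive-recursively from the set $F(a) \in \Cfrak$, so the collection of all such sentences is $\Delta_1$---hence $\Sigma_1$---definable over $\Cfrak$ by quantifying ``$\exists a \in U$''. Countability: when $U$ is countable each $L^F_\alpha(U)$ with $\alpha$ countable is countable (rudimentary closure of a countable set is countable), and the least admissible stage over a countable structure is a countable ordinal, by the relativized version of the argument that $\omegaoneck$ is countable; so $\Cfrak$ is countable. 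For the subsets-of-$U$ clause, ``$\Leftarrow$'' is immediate since $a_E = F(a) \in L^F_1(U)$; ``$\Rightarrow$'' is the crux and is where $U \models \KP$ matters: one shows by induction on levels that any $A \subseteq U$ appearing in $\Cfrak$ is obtained from the basic sets $F(u)$ by finitely many applications of $\Delta_0$-Separation, pairing, union and intersection, each of which is mirrored by the corresponding operation inside $U$ ($\Delta_0$-Separation by $\Delta_0$-Separation of $U$, and so on), whence $A = a_E$ for a suitable $a \in U$. Finally, the pure-sets clause: the pure part $P$ of $\Cfrak$ is an admissible set, and an induction shows its ordinals are exactly the ordertypes of the well-founded $E$-orderings coded in $U$, i.e.\ $\Ord^{\wfp(U)}$; using the standard truncation lemma (the well-founded part of a model of $\KP$ is admissible) together with the fact that $\Cfrak$ can build the Mostowski collapse of $\wfp(U)$ by $\Sigma$-recursion (it has $F$), one concludes $P$ is precisely the transitive collapse of $\wfp(U)$---the point being that the ill-founded part of $U$ contributes no ordinal height to $\Cfrak$ because it is all ``captured'' by $F$ at low levels.

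The main obstacle is the forward direction of the subsets-of-$U$ clause---the minimality, ``no new subsets of $U$'' property---which is exactly what makes the admissible cover a faithful surrogate for an ill-founded $U$. The delicate parts there are twofold: first, getting the construction rule right (rudimentary closure rather than ``all definable subclasses,'' so that $U$ never becomes a set and $F$ introduces no circularity), and second, the bookkeeping that translates each operation performed inside $\Cfrak$ whose output happens to be a subset of $U$ into the matching $\KP$-provable operation inside $U$. Once that is in hand, the remaining clauses and the ``smallest admissible cover'' characterization follow from routine admissible set theory.
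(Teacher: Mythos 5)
The paper does not prove this theorem at all --- it is quoted verbatim from the appendix of Barwise's \emph{Admissible Sets and Structures} with a citation --- so there is no in-paper argument to match; the comparison has to be with Barwise's actual proof, and measured against that your sketch has a genuine gap exactly at the point you yourself call the crux. Your construction (cut the $L^F$-hierarchy over the urelement structure $(U;E,F)$ at the least stage $\kappa$ that models $\KPU$) produces \emph{some} least admissible covering structure, and the minimality argument via $\Sigma$-recursion inside an arbitrary cover is fine modulo standard lemmas. But the second and third bullets are precisely the assertion that this least cover is \emph{thin}: its ordinal height is exactly $\Ord^{\wfp(U)}$ (so the pure part is $\wfp(U)$ and nothing more) and its only subsets of $U$ are the sets $a_E$. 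Your proof of the third bullet --- ``any $A \subseteq U$ appearing in $\Cfrak$ is obtained from the $F(u)$'s by finitely many applications of $\Delta_0$-Separation, pairing, union and intersection, each mirrored inside $U$'' --- is not an accurate description of how sets arise in the transfinite hierarchy: separations may use parameters that are themselves sets of mixed type (sets of subsets of $U$, pure sets of large rank, sets produced as $\Delta_0$-Collection bounds), and mirroring those inside $U$ requires the much stronger inductive statement that \emph{every} set of $\Cfrak$ is coded by an element of $U$. Likewise the second bullet is asserted (``an induction shows its ordinals are exactly the ordertypes of well-founded $E$-orderings coded in $U$'') rather than proved; with your definition of $\kappa$ it amounts to showing the hierarchy already satisfies $\Delta_0$-Collection by stage $\Ord^{\wfp(U)}$, and nothing in your sketch delivers that.

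This is also where the hypothesis $U \models \KP$ must do real work, and your argument never visibly uses it: if $U$ satisfied only a weak fragment, the least admissible stage of your hierarchy would still exist, but the bulleted properties would fail, so any correct proof must invoke $\Sigma$-Collection inside $U$ in an essential way. Barwise's route does exactly this: he interprets $\KPU$ inside $U$ (so that every set of the cover is, by construction, coded by an element of $U$, which gives the third bullet and keeps the pure part down to $\wfp(U)$), and then admissibility of the cover comes from the Truncation Lemma applied to this interpreted structure, with minimality checked afterwards. Your top-down ``least admissible stage'' construction skips the coding-inside-$U$ step, and that step is the substance of the theorem; without it the claims about the pure part and about subsets of $U$ are exactly as hard as the theorem you set out to prove. (Your treatments of countability and of the $\Sigma_1$-definability of the infinitary $\in$-diagram are fine.)
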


With this notion in hand we are now ready to generalize Friedman's theorem to the ill-founded. We want, when starting with a possibly ill-founded model $A$ to produce the picture in figure \ref{fig4:cis-friedman}, a variation of the picture in figure \ref{fig4:trans-friedman} for Friedman's theorem. That is, given a theory $T$ satisfying an appropriate consistency assumption, we want $N \supseteq A$ a model of $T$ which is ill-founded at $A$.

\begin{figure}
\begin{center}
\begin{tikzpicture}
\draw (-.5,1.5) -- (0,0) -- (.5,1.5);
\draw  (.54,1.6) --  (1,3);
\draw (-.54,1.6) -- (-1,3);
\draw (-1,1.5) -- (0,0) -- (1,1.5);
\draw  (1.07,1.6) --  (2,3);
\draw (-1.07,1.6) -- (-2,3);
\draw[dashed,color=gray] (-1,1.5) -- (1,1.5);
\draw[dashed,color=gray] (-1.07,1.6) -- (1.07,1.6);
\draw[dashed,color=gray] (-2,3) -- (2,3);
\draw[dashed,color=gray] (-2.04,3.1) -- (2.04,3.1);
\draw (-2.04,3.1) -- (-2.4,4) -- (2.4,4) -- (2.04,3.1);
\draw (0,2) node {$A$};
\draw (0,3.5) node {$N$};
\draw (1.2,0) -- (1.3,0) -- (1.3,1.5) -- (1.2,1.5);
\draw (2.1,.75) node {$\wfp(N)$};
\end{tikzpicture}
\end{center}
\caption{A picture of the desired generalization of Friedman's theorem to the ill-founded realm.} \label{fig4:cis-friedman}
\end{figure}
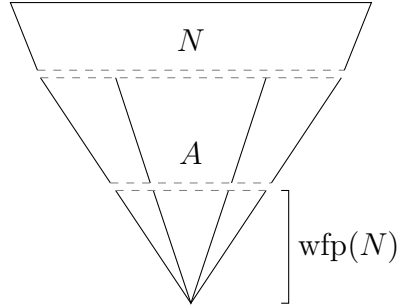

\begin{theorem} \label{thm4:cis-friedman}
Let $(A,E^A) \models \KP$ be countable and $\Cfrak = \Cov_A$. Suppose that $T$ is an $L_\Cfrak$ theory which is $\Sigma_1$-definable over $\Cfrak$.
If there is a model of $T$ which contains $A$ then there is $(N,E^N) \models T$ so that:
\begin{itemize}
\item $A$ is a transitive submodel of $N$;
\item $\Ord^A$ is a proper initial segment of $\Ord^N$;
\item There is an $\omega$-sequence coinitial in $\Ord^N \setminus \Ord^A$.
\end{itemize}
\end{theorem}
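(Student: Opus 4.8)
The plan is to adapt H.\ Friedman's argument behind Theorem~\ref{thm4:trans-friedman} to the possibly ill-founded setting by replacing the countable transitive admissible set with the admissible cover $\Cfrak = \Cov_A$ and replacing Barwise compactness for $\Lcal_A$ by Barwise compactness for $\Lcal_\Cfrak$. Since $A$ is countable, $\Cfrak$ is countable, so the compactness theorem is available. I will write down an $\Lcal_\Cfrak$-theory $T^\ast$ whose models are exactly the structures $N$ that contain $A$ as a transitive submodel, satisfy $T$, and come equipped with an $\omega$-sequence of ordinals lying above $\Ord^A$; then I verify the hypotheses of Barwise compactness for $T^\ast$ and take a countable model.

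Concretely, I would add new constant symbols $c_n$ for $n \in \omega$ and let $T^\ast$ consist of: (i) all sentences of $T$; (ii) the infinitary $\in$-diagram of $A$, which by Barwise's theorem on the cover is $\Sigma_1$-definable over $\Cfrak$ and whose models all have $A$ as a transitive submodel; (iii) for each $n$, the sentence asserting ``$c_n$ is an ordinal''; (iv) for each $n$ and each $a \in A$ with $\Cfrak \models$ ``$a$ is an ordinal of $A$'' (a $\Delta_0$ condition over $\Cfrak$, using the covering function $F$), the atomic sentence ``$a \in c_n$''; and (v) for each $n$, the sentence ``$c_{n+1} \in c_n$''. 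Each of the families (i)--(v) is $\Sigma_1$-definable over $\Cfrak$ --- (ii) by Barwise, the others directly --- and $\Sigma_1$-definability over an admissible structure is closed under finite unions, so $T^\ast$ is $\Sigma_1$-definable over $\Cfrak$.

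For the finite-consistency half of Barwise compactness, I would fix $T_0 \in \Cfrak$ with $T_0 \subseteq T^\ast$ and produce a model. The set $X$ of ordinals $a \in A$ with ``$a \in c_n$'' occurring in $T_0$, and the set $I$ of indices $n$ with $c_n$ occurring in $T_0$, are both sets in $\Cfrak$ by $\Delta_0$-separation. By Barwise's characterization of the subsets of $A$ lying in $\Cfrak$, there is $x \in A$ with $x_E = X$, and since $A \models \KP$ the ordinal $\beta = \bigcup x$ of $A$ bounds $X$. Taking the hypothesized model $M^\ast \models T$ with $A$ a transitive submodel of $M^\ast$, one interprets the finitely many relevant $c_n$ as a finite strictly descending run of ordinals of $M^\ast$ starting just above $\beta$ (these exist because $A$, hence $M^\ast$, is closed under ordinal successor), thereby satisfying every axiom in $T_0$. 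Barwise compactness then yields $N \models T^\ast$, which may be taken countable; by (ii) $A$ is a transitive submodel of $N$, by (iii)--(iv) the $c_n$ are ordinals of $N$ above $\Ord^A$ so that $\Ord^A$ is a proper initial segment of $\Ord^N$, and by (v) $\langle c_n : n \in \omega\rangle$ is a descending $\omega$-sequence above $\Ord^A$; trimming it to a coinitial subsequence --- or passing to $\{x \in N : \rk^N x <^N c_n \text{ for all } n\}$ and checking this is $A$ in the cases of interest --- gives the required coinitial $\omega$-sequence in $\Ord^N \setminus \Ord^A$.

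The main obstacle is the bookkeeping around the descending-chain axioms (v): I must argue that every $\Cfrak$-finite $T_0$ contains only finitely many of them --- which is precisely where the ill-foundedness of $A$ helps, since then $\omega \notin \Cfrak$ and no set in $\Cfrak$ can enumerate infinitely many of the $c_n$ --- and I must pin down that $\langle c_n\rangle$ can be arranged to be coinitial in $\Ord^N\setminus\Ord^A$ rather than merely descending above it. Both points are exactly the subtleties that already arise in Friedman's original proof, and the remainder of the argument is a routine transcription of it through the admissible cover.
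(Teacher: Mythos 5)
There is a genuine gap, and it sits exactly at the heart of the theorem: the third bullet. A plain Barwise compactness argument with the axiom list you describe can at best produce a model $N \models T$ containing $A$ as a transitive submodel together with a descending sequence $\seq{c_n^N}$ of ordinals above $\Ord^A$; nothing in that construction prevents $N$ from having ordinals strictly above $\Ord^A$ but below every $c_n^N$ (for instance a least new ordinal), so coinitiality in $\Ord^N \setminus \Ord^A$ simply does not follow. This is not ``routine bookkeeping already in Friedman'': it is the reason the actual proof (Friedman's, transcribed through the admissible cover in the paper) abandons a one-shot compactness application and instead runs a Henkin-style completion of the theory in $\omega$ stages, arranging completeness for $\Lcal_\Cfrak$-sentences, the $\forall$-witnessing rule (so the resulting model is a term model on the new constants), and the stage-$3n{+}3$ dichotomy: either some ``$a \notin c_{m_n}$'' can be consistently added, or else one adds a fresh constant below $c_{m_n}$ and above all of $A$. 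It is this interleaving, not the axioms themselves, that forces every ordinal of $N$ above $A$ to lie above one of the designated constants. Your fallback of cutting $N$ at $\{x : \rk x < c_n \text{ for all } n\}$ also does not help: that cut need not equal $A$, and truncating at a cut does not preserve an arbitrary $\Sigma_1$-definable $\Lcal_\Cfrak$-theory $T$.

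The finite-satisfiability step is also not right as argued. You claim each $T_0 \in \Cfrak$ contains only finitely many chain axioms because ill-foundedness of $A$ gives $\omega \notin \Cfrak$; but the theorem does not assume $A$ ill-founded (it must subsume the well-founded case), and even for ill-founded $A$ the pure sets of $\Cfrak$ are exactly $\wfp(A)$, which contains $\omega$ in the typical applications (e.g.\ $A = \Hyp(M)$ for transitive $M$ in the master lemma). When $\omega \in \Cfrak$, the entire set $\{\text{``}c_{n+1} \in c_n\text{''} : n \in \omega\}$ can appear inside a single $T_0 \in \Cfrak$, and then interpreting $T_0$ requires an ill-founded model, which your proposed interpretation by a finite descending run of ordinals of $M^\ast$ cannot supply and whose existence is not established. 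So both the compactness set-up and the passage to coinitiality need the omitting-types-style construction that constitutes the paper's proof; as written, the proposal proves at most ``ill-founded somewhere above $A$,'' not ``ill-founded at $A$.''
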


\begin{proof}
Friedman's proof can be adapted to this context, using the technology of the admissible cover. 

Extend $T$, if necessary, to include the infinitary $\in$-diagram of $A$. This extension is consistent as there is a model of $T$ containing $A$. The goal is now to construct a further extension $T'$ in a language with countably many new constants $c_n$ so that the following conditions hold:
\begin{enumerate}
\item Each $\phi \in T'$ is consistent;
\item For $\phi \in \Lcal_\Cfrak$, either $\phi \in T'$ or $\text{``}\neg\phi\text{''} \in T'$;
\item For $\bigwedge \Phi \in \Lcal_\Cfrak$ if $\Phi \subseteq T'$ then $\bigwedge \Phi \in T'$;
\item For $\text{``}\forall x \phi(x)\text{''} \in \Lcal_\Cfrak$ if for each $c_n$ we have $\phi(c_n) \in T'$ then $\text{``}\forall x \phi(x)\text{''} \in T'$;
\item For each $a \in A$, $\text{``}a \in c_0\text{''} \in T'$; and
\item If $\text{``}a \in c_n\text{''} \in T'$ for each $a \in A$ then there is $m > n$ so that $\text{``}c_m \in c_n\text{''} \in T'$ and $\text{``}a \in c_m\text{''} \in T'$ for each $a \in A$.
\end{enumerate}
Conditions $(1\text{--}3)$ ensure there is a model of $T'$. Condition $(5)$ forces any model of $T'$ to contain new ordinals. Conditions $(4\text{--}6)$ force that the model of $T'$ is ill-founded above $A$. To see this note, that if $\beta < \gamma$ are ordinals above $\Ord^A$ but below $\inf\{\rank(c_n) : c_n \text{ above } A\}$ then for every $c_n$ we get $\text{``}\rank(c_n) \ge \beta \impl \rank(c_n) > \gamma\text{''} \in T'$. So by condition $(4)$ we can conclude $\beta > \gamma$, a contradiction.

$T'$ is constructed from $T$ in $\omega$ many stages. We continually add new formulae to ensure properties $(1\text{--}6)$ hold at the end. Fix an enumeration $\seq{\phi_n}$ of the $\Lcal_\Cfrak(c_n : n \in \omega)$-sentences\footnote{To be clear, by $\Lcal_\Cfrak(c_n : n \in \omega)$ I mean the infinitary language consisting of formulae in $\Cfrak$ in the language of $\Cfrak$ with additional symbols $c_n$ for $n \in \omega$.}
so that $c_n$ first appears after $\phi_n$ and before the first appearance of $c_{n+1}$. 
\begin{itemize}
\item Define $T'_0 = T \cup \{ a \in c_0 : a \in A \}$. This theory is consistent by Barwise compactness. This ensures property $(5)$. Set $m_0 = 1$ and $m_{-1} = 0$.

\item For $n \ge 0$, define $T'_{3n+1}$ to be $T'_{3n} \cup \{\psi\}$, where $\psi$ is chosen from $\phi_n$ and $\neg \phi_n$ so as to be consistent with $T'_{3n}$. This step will ensure properties $(1)$ and $(2)$.

\item For $n \ge 0$, define $T'_{3n+2}$ as follows, according to which of three cases we fall into.
\begin{itemize}
\item If $\phi_n$ is of the form $\bigwedge \Phi$ and $\neg \phi_n$ is in $T'_{3n+1}$, then take $T'_{3n+2} = T'_{3n+1} \cup \{\neg \phi\}$ where $\phi \in \Phi$ and this is consistent. This step ensures property $(3)$.

\item If $\phi_n$ is of the form $\forall x \psi(x)$ and $\neg \phi_n$ is in $T'_{3n+1}$, then take $T'_{3n+2} = T'_{3n+1} \cup \{ \neg \psi(c_m) \}$, where $m$ is the index of the least unused $c_m$. This step ensures property $(4)$.

\item Otherwise, just take $T'_{3n+2} = T'_{3n+1}$.
\end{itemize}

\item For $n \ge 0$, define $T'_{3n+3}$ as follows, according to which of two cases we fall into.
\begin{itemize}
\item If there is $a \in A$ so that $T'_{3n+2} \cup \{ a \not \in c_{m_n} \}$ is consistent, take this to be $T'_{3n+3}$. Set $m_{n+1} = m_n$.

\item Otherwise, the theory $T'_{3n+2} \cup \{ a \in c_{m_n} : a \in M \}$ is consistent. By Barwise compactness, so is the the theory $T'_{3n+2} \cup \{ a \in c_{m_n} : a \in A \} \cup \{ c_{m_n} \in c_{m_{n-1}} \}$. Take this to be $T'_{3n+3}$ and set $m_{n+1}$ to be the index of the least unused $c_m$.
\end{itemize}

\item Set $T' = \bigcup_n T'_n$.

\end{itemize}

By the construction for $T'_{3n+3}$, for every $n \ge -1$ we have that $c_{m_{n+1}} \in c_{m_{n}}$ and $a \in c_{m_n}$, for all $a \in A$ are in $T'$. This gives property $(6)$. 
\end{proof}

\section{Strong theories} \label{sec4:strong}

In this section we will get results about least models of strong theories, those of strength $\GBC + \PCA$ and above. The main results, that strong theories do not have least transitive models, will be derived from the following master lemma.

\begin{masterlemma} \label{lem4:master-lemma}
Let $T \supseteq \GBCm + \ETR$ be a second-order set theory which proves the existence of $\Hyp(V)$. Suppose $(M,\Xcal) \models T$ is countable and $T$ is in $M$. Then there is $\Ycal \subseteq \powerset(M)$ so that $(M,\Ycal) \models T$ but $\Xcal \not \subseteq \Ycal$. 
\end{masterlemma}

Before proving this master lemma I must clarify what it means for a second-order set theory to prove that $\Hyp(V)$ exists. Recall that for a set $a$ that $\Hyp(a)$ is the smallest admissible set $h$ with $a \in h$. Always, $\Hyp(a) = L_\alpha(a)$ where $\alpha$ is the least ordinal $\xi$ so that $L_\xi(a) \models \KP$. Of course, if $A$ is a proper class then there can be no class, admissible or otherwise, with $A$ as an element. So it does not make literal sense to talk of $\Hyp(V)$ inside a model of second-order set theory. But recall from chapter 2 that models of $\ETR$ can reach higher than $\Ord$, coding `sets' of high rank by class-sized relations. (Indeed, this is why the master lemma asks that $T \supseteq \GBCm + \ETR$.) In particular, there are codes for `sets' which look like $L_\Gamma(A)$, for $\Gamma$ a class well-order and $A$ a class.
In $\GBCm + \ETR$, we can talk about the theory of a coded transitive `set', so it makes sense to ask whether $L_\Gamma(A)$ satisfies $\KP$. If there is a class well-order $\Gamma$ so that $L_\Gamma(A) \models \KP$ then we say that $\Hyp(A)$ exists. Given such a $\Gamma$ there is a least initial segment $\Gamma_0$ of $\Gamma$ so that $L_{\Gamma_0}(A) \models \KP$. This is $\Hyp(A)$. 

From the perspective of the unrolling, if $(M,\Xcal) \models \Hyp(V)$ exists then the unrolling has a set which (it thinks) is $\Hyp(M)$. 

\begin{proof}[Proof of master lemma \ref{lem4:master-lemma}]
Because $T \supseteq \GBCm + \ETR$ we can unroll $(M,\Xcal)$ into $W$. Taking isomorphic copies if necessary we may assume without loss that $M = H_\kappa^W$ where $\kappa$ is the largest cardinal in $W$. And because $T$ proves the existence of $\Hyp(V)$ we get that $\Hyp(M)^W \in W$. In general, $A = \Hyp(M)^W$ may be ill-founded, for example if $M$ is ill-founded. Let $\Cfrak$ be the admissible cover of $A$. Now consider the $\Lcal_\Cfrak$ theory $S$ axiomatized by the following.
\begin{itemize}
\item Every theorem $T$ proves about the unrolling;\footnote{If you think of the special case where $T$ is $\KMCC$, then this theory is $\ZFCmi$. In general, this theory is in $M$ because $T \in M$ and it is computable from $T$.}
\item $M = H_\kappa^W$ is an $H_\alpha$-initial segment of the universe. That is, this statement asserts that if $x$ is hereditarily of cardinality $<\kappa$ then $x \in M$; and\footnote{This can be expressed as a single $\Lcal_\Cfrak$-sentence because $M \in A$.}
\item $\kappa$ is the largest cardinal.
\end{itemize}
This $S$ can be expressed as a conjunction of a countable set (in $A$) of $\Lcal_{\omega,\omega}$-formulae with two $\Lcal_{\Cfrak}$-formulae. So it is a single $\Lcal_\Cfrak$-sentence and hence is $\Sigma_1$-definable over $\Cfrak$. This puts us in a position to apply the generalization of Friedman's theorem, since $A$ is countable. That is, there is $N \models S$ which is ill-founded at $A$. Put differently, there is a descending sequence of ordinals in $N$ co-initial in $\Ord^N \setminus \Ord^{A}$. Let $\Ycal = \{Y \in N : N \models Y \subseteq M\}$.\footnote{If one wants to be picky, since we officially only work with models whose second-order part consists of subsets of the first-order part, we actually take an isomorphic copy so that elements of $\Ycal$ are subsets of $M$.}
Then $(M,\Ycal) \models T$, by construction.

\begin{figure}
\begin{center}
\begin{tikzpicture}

\draw (0,0) -- (-1.5,3) -- (1.5,3) -- (0,0);
\draw (0,2) node {$M$};

\draw (-1.5,3) -- (-2,4) -- (2,4) -- (1.5,3);
\draw (0,4.2) node {$W$};

\draw (-1.5,3) -- (-1.55,3.5) -- (1.55,3.5) -- (1.5,3);
\draw (0,3.2) node {$A$};

\filldraw (0,3.5) circle (1.5pt);
\draw (.3,3.7) node {\tiny $\Ord^A$};

\draw (4,2.5) node[scale=2.5] {$\Longrightarrow$};

\draw (8,0) -- (6.5,3) -- (9.5,3) -- (8,0);
\draw (8,2) node {$M$};

\draw (6.5,3) -- (6.45,3.5);
\draw (9.5,3) -- (9.55,3.5);
\draw (8,3.2) node {$A$};

\draw (6.5,3) --  (6,3.5);
\draw (9.5,3) -- (10,3.5);
\draw[dashed] (6,3.5) -- (10,3.5);
\draw[dashed] (5.9,3.6) -- (10.1,3.6);
\draw (5.9,3.6) -- (5.5,4) -- (10.5,4) -- (10.1,3.6);
\draw (8,4.25) node {$N$};
\end{tikzpicture}
\end{center}
\caption{Friedman's theorem gives $N \models S$ with $V_\kappa^N = M = V_\kappa^W$ and $N$ ill-founded at $A$.}
\end{figure}
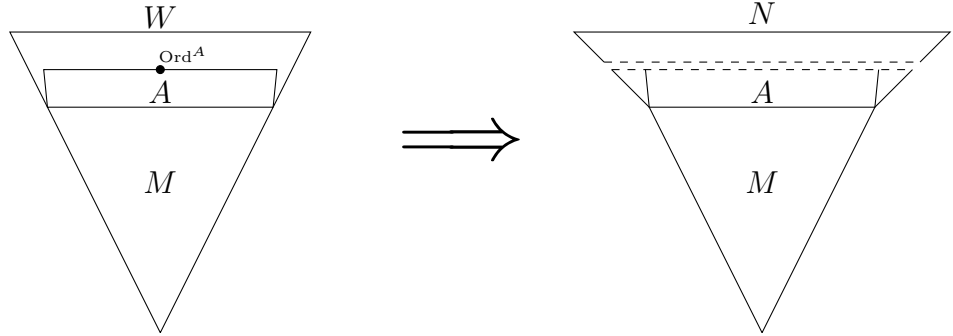

To motivate the following, suppose for a moment that $(M,\Xcal)$ is a $\beta$-model. Then $\Ord^A$ really is an ordinal. Thus, $\Ycal$ has no element with ordertype $\Ord^A$ because otherwise $\Ord^A$ would be in the well-founded part of $N$, contrary to the construction. So $\Xcal \not \subseteq \Ycal$, as desired.

But in general $(M,\Xcal)$ need not be a $\beta$-model, and it may even be that $\Hyp(V)^{(M,\Xcal)}$ is ill-founded. So the above argument cannot work. Nevertheless, it provides the right idea. Fix a membership code $\Upsilon \in \Xcal$ which represents $\Ord^A$ in the unrolling.
Suppose towards a contradiction that $\Xcal \subseteq \Ycal$.

In particular this implies that $\Upsilon \in \Ycal$. Because $\Xcal$ and $\Ycal$ must agree about which of their common classes are well-founded, $(M,\Ycal)$ thinks that $\Upsilon$ is a membership code for an ordinal. Let $\upsilon \in N$ be that ordinal. 
We also have that every initial segment of $\Upsilon$ is in $\Ycal$.\footnote{Recall from chapter 2 that a membership code for an ordinal $\gamma$ is a class well-order of ordertype $\gamma+1$, so it makes sense to talk about initial segments of $\Upsilon$.}
In $W$ we have that every initial segment of $\Upsilon$ is isomorphic to an ordinal in $A$. Because of the assumption that $\Xcal \subseteq \Ycal$ the same isomorphisms exist in $N$. So for every ordinal $\gamma \in A$ we have $N \models \gamma < \upsilon$.

If $N \models \gamma < \upsilon$ then $\gamma$ is isomorphic to an initial segment of $\Upsilon$. But $W$ sees that initial segments of $\Upsilon$ represent ordinals in $A$. So $N$ must see the same and thus $\gamma \in A$. The upshot of all this is that $\Ord^A$ is topped in $N$, namely by $\upsilon$. This contradicts the construction of $N$. So our assumption that $\Xcal \subseteq \Ycal$ must be false, completing the argument. We have found a $T$-realization for $M$ which does not contain $\Xcal$.
\end{proof}

\begin{remark} \label{rmk4:omega-nonstd}
The attentive reader may worry about what happens if $M$ is $\omega$-nonstandard. She is right to worry! There is a subtlety that must be addressed. Namely, if $M$ is $\omega$-nonstandard then no infinite first-order theory $T$ can be in $M$; otherwise, the standard cut would be definable as the supremum of the ranks of elements of $T$. So as written the master lemma does not apply to $\omega$-nonstandard models. 

Nevertheless, there is a version which does apply to $\omega$-nonstandard models. While it does not make sense to ask for $T$ to be an element of $M$ it is sensible to ask that $T$ is coded in $M$, that is whether there is $t \in V_\omega^M \setminus \omega$ so that $t \cap V_\omega = T$. In particular, this always happens if $T$ is computable; run the Turing machine which enumerates $T$ in the nonstandard model of arithmetic coming from $M$ and cut it off at some nonstandard level to get $t$. 

In the $\omega$-nonstandard case replace the assumption that $T \in M$ with the assumption that $T$ is coded in $M$. Then the above proof works. In particular, if (strong enough) $T$ is computable then no countable $T$-realizable $M$ has a least $T$-realization.
\end{remark}

It remains to see that the master lemma yields the nonexistence of least transitive models of strong second-order set theories. Because these theories are all computably axiomatizable it is immediate that they appear as elements of any transitive model. As such, the only thing we need to show is that these theories prove the existence of $\Hyp(V)$. As a warm-up let us prove that $\KMCC$ proves $\Hyp(V)$ exists. This follows from a stronger statement.

\begin{proposition} \label{prop4:hyp-v-in-kmp}
Let $\phi(x, \bar y)$ be a first-order formula in the language of set theory. Then $\ZFCm$ proves that for every $\bar b$ there is an ordinal $\alpha$ so that for all $a \in L_\alpha(\bar b)$, we have $L_\alpha(\bar b) \models \phi(a,\bar b)$ if and only if $\phi(a,\bar b)^{L(\bar b)}$. 
\end{proposition}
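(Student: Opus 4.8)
The statement asserts a reflection-type principle: inside $\ZFCm$, the relativization of any first-order formula to $L(\bar b)$ is reflected, for each parameter tuple $\bar b$, to some initial segment $L_\alpha(\bar b)$. The plan is to run the Montague--L\'evy reflection argument, but carried out \emph{inside} the constructible hierarchy relative to $\bar b$ rather than inside $V$. The key point is that $\ZFCm$ suffices to build the hierarchy $\langle L_\alpha(\bar b) : \alpha \in \Ord\rangle$ as a definable class function (this uses only $\Sigma_0$-recursion along $\Ord$, which $\ZFCm$ proves via Collection), and that this hierarchy is continuous at limits and exhausts $L(\bar b)$.

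First I would fix $\bar b$ and work with the class $L(\bar b) = \bigcup_{\alpha} L_\alpha(\bar b)$, noting that membership in $L(\bar b)$ is $\Sigma_1$ and that the sequence $\alpha \mapsto L_\alpha(\bar b)$ is definable and continuous. Next, for the given formula $\phi(x,\bar y)$ I would pass to a finite list of subformulae $\phi = \psi_0, \psi_1, \ldots, \psi_n$ closed under subformulae, and for each $\psi_i$ of the form $\exists z\, \psi_j(z, \ldots)$ define, relative to $L(\bar b)$, a Skolem-type function $h_i$ sending a tuple $\bar c$ from $L(\bar b)$ to the least ordinal $\eta$ (in the canonical well-order of $L(\bar b)$, or just least rank) such that some witness $z \in L_\eta(\bar b)$ with $\psi_j(z,\bar c)^{L(\bar b)}$ exists, if any witness exists at all. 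Each $h_i$ is a definable class function on $L(\bar b)$; here I would invoke Collection in $\ZFCm$ to see that $h_i$ restricted to $L_\beta(\bar b)$ has bounded range for each $\beta$, i.e.\ that $\sup h_i{}''L_\beta(\bar b)$ is an ordinal.

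Then I would build $\alpha$ by the usual $\omega$-length internal recursion: start with any $\alpha_0$ with $\bar b \in L_{\alpha_0}(\bar b)$, and given $\alpha_m$ let $\alpha_{m+1}$ be the supremum of $\alpha_m + 1$ and all $h_i{}''L_{\alpha_m}(\bar b)$ for $i \le n$; put $\alpha = \sup_m \alpha_m$. The sequence $\langle \alpha_m : m \in \omega\rangle$ is a set by Collection applied to a definable function on $\omega$, so $\alpha$ exists. By the Tarski--Vaught test, relativized to $L(\bar b)$: $L_\alpha(\bar b) \prec_{\{\psi_0,\ldots,\psi_n\}} L(\bar b)$, because $\alpha$ is closed under all the witness functions $h_i$ and $L_\alpha(\bar b) = \bigcup_m L_{\alpha_m}(\bar b)$ is an increasing union. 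In particular, for all $a \in L_\alpha(\bar b)$ we get $L_\alpha(\bar b) \models \phi(a,\bar b)$ iff $\phi(a,\bar b)^{L(\bar b)}$, as desired.

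The main obstacle is bookkeeping rather than conceptual: one must be careful that the construction of the $L(\bar b)$-hierarchy and of the relativized Skolem functions genuinely goes through in $\ZFCm$ (no Powerset), which is exactly why Collection rather than Replacement is needed to guarantee the $\alpha_m$ form a set and to bound the ranges of the $h_i$. A secondary subtlety is handling the relativization $\phi \mapsto \phi^{L(\bar b)}$ uniformly: since ``$x \in L(\bar b)$'' is $\Sigma_1$, the bounded-quantifier manipulations in the Tarski--Vaught argument stay within the definable-class framework available in $\ZFCm$, so no appeal to a truth predicate is needed for this fixed finite fragment. Once this is in place, the corollary that $\ZFCm$ (indeed weaker fragments, by taking $\phi$ to range over the finitely many axioms needed to verify $\KP$) proves $\Hyp(V)$ exists in the coded sense follows by applying the proposition to the conjunction of the $\KP$-axioms and observing that the resulting $L_\alpha(\bar b)$ is admissible.
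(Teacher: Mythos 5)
Your proposal is correct and follows essentially the same route as the paper: the standard Montague--L\'evy reflection argument run along the $L_\alpha(\bar b)$-hierarchy, closing under existential witnesses in an $\omega$-length iteration (with Collection invoked exactly where the paper invokes it, to bound the witness ordinals and to see the sequence $\seq{\alpha_m}$ is a set) and concluding via Tarski--Vaught at the supremum. The paper's proof is just a more compressed statement of this same argument.
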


\begin{proof}
This is the standard argument for reflection along the $L$-hierarchy. Namely, pick an ordinal $\alpha_0$. Given $\alpha_n$ let $\alpha_{n+1}$ be the least ordinal so that $L_{\alpha_{n+1}}(\bar b)$ is closed under witnesses for existential subformulae of $\phi$ with parameters from $L_{\alpha_n}(\bar b)$. Note that we use Collection to find $\alpha_{n+1}$, since we need to collect witnesses into a single set. Then, if $\alpha = \sup_n \alpha_n$ we have caught our tail and $L_\alpha(\bar b) \models \phi(a,\bar b)$ if and only if $\phi(a,\bar b)$ is true in $L(\bar b)$. 
\end{proof}

\begin{corollary}
$\KMCC$ proves that $\Hyp(V)$ exists.
\end{corollary}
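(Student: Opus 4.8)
The plan is to derive the corollary directly from the preceding proposition, exactly as the proof of the warm-up case is set up to suggest. Working in a model of $\KMCC$, I would first pass to the unrolling $W$, which satisfies $\ZFCmi$ by the theorem calculating unrolling theories; in particular $W \models \ZFCm$ (indeed $\ZFC$), so proposition \ref{prop4:hyp-v-in-kmp} is available inside $W$. The key point is that inside $W$ we want an ordinal $\alpha$ so that $L_\alpha \models \KP$, since then $\Hyp(V)^W = L_{\alpha_0}$ for the least such $\alpha_0$, and this $L_\alpha$ (being a coded transitive `set' in the unrolling) corresponds back in $(M,\Xcal)$ to a code $L_\Gamma$ with $L_\Gamma \models \KP$ for some class well-order $\Gamma$.

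The main step is therefore: $\ZFCm$ (hence $W$) proves there is an ordinal $\alpha$ with $L_\alpha \models \KP$. For this I would apply proposition \ref{prop4:hyp-v-in-kmp} (with no extra parameter $\bar b$, so we are reflecting along the plain $L$-hierarchy) simultaneously to the finitely many axiom schemata instances needed — or rather, iterate the reflection construction: since $\KP$ is a computable theory, build a continuous increasing $\omega$-sequence $\seq{\alpha_n}$ of ordinals where $\alpha_{n+1}$ is chosen so that $L_{\alpha_{n+1}}$ reflects the $n$-th axiom of $\KP$ (using Collection in $W$ to find each $\alpha_{n+1}$, just as in the proof of proposition \ref{prop4:hyp-v-in-kmp}), and set $\alpha = \sup_n \alpha_n$, which exists by an instance of Collection in $W$. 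Then $L_\alpha$ satisfies every axiom of $\KP$. Because $L \models \ZF$ and $\KP$'s axioms (Extensionality, Pairing, Union, $\Delta_0$-Separation, $\Delta_0$-Collection, Foundation) are all downward-absolute to $L_\alpha$ once $\alpha$ is a limit of such reflection points, $L_\alpha \models \KP$.

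Translating back: the membership code for this $L_\alpha$ of $W$ is, in $(M,\Xcal)$, a class of the form $L_\Gamma$ for some class well-order $\Gamma \in \Xcal$ (membership codes in the unrolling correspond to classes, via the unrolling/cutting-off correspondence of chapter 2), and $(M,\Xcal) \models$ ``$L_\Gamma \models \KP$'' since this is a first-order statement about a coded transitive `set' and the unrolling is elementary for such statements. By the definition given just before the proof of the master lemma, this is precisely what it means for $(M,\Xcal)$ to prove $\Hyp(V)$ exists. Since $(M,\Xcal)$ was an arbitrary model of $\KMCC$, the corollary follows.

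I expect the only genuine subtlety — not really an obstacle — to be bookkeeping about the two notions of ``$\Hyp(V)$ exists'': the one phrased inside the second-order model via class well-orders $\Gamma$ with $L_\Gamma \models \KP$, and the one phrased in the unrolling via an honest ordinal $\alpha$ with $L_\alpha \models \KP$. These are matched by the chapter 2 correspondence between membership codes and classes, so the argument is really just: reflection in $W$ $\Rightarrow$ $\Hyp$ exists in $W$ $\Rightarrow$ $\Hyp(V)$ exists in $(M,\Xcal)$. No delicate estimates are needed; the Collection-based tail-catching argument of proposition \ref{prop4:hyp-v-in-kmp} does all the work, which is exactly why the corollary is presented as an immediate consequence.
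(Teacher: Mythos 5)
There is a genuine gap, and it is exactly at the point you flagged as harmless bookkeeping: the parameter. You apply proposition \ref{prop4:hyp-v-in-kmp} ``with no extra parameter $\bar b$'' and reflect along the plain $L$-hierarchy, producing an ordinal $\alpha$ with $L_\alpha \models \KP$. But $\Hyp(V)$ is by definition the least admissible `set' containing $V$; in the terminology set up before the master lemma, ``$\Hyp(V)$ exists'' means there is a class well-order $\Gamma$ with $L_\Gamma(V) \models \KP$, i.e.\ the constructible hierarchy built \emph{over $V$ as a parameter}, and in the unrolling this corresponds to an ordinal $\alpha$ with $L_\alpha(V_\kappa) \models \KP$, where $\kappa$ is the largest cardinal and $V_\kappa$ is the $V$ of the original model. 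A plain $L_\alpha$ satisfying $\KP$ does not contain $V_\kappa$ as an element --- indeed the least such $\alpha$ is $\omegaoneck$, so your construction literally produces $\Hyp(\emptyset)$, a countable set lying inside $M$, which witnesses nothing about $\Hyp(V)$. The same omission recurs in your translation step, where you assert that a class code $L_\Gamma$ (rather than $L_\Gamma(V)$) satisfying $\KP$ ``is precisely what it means'' for $\Hyp(V)$ to exist; it is not. The fix is immediate and is what the paper does: work in the unrolling, which satisfies $\ZFCmi$, and apply the reflection of proposition \ref{prop4:hyp-v-in-kmp} with $\bar b = V_\kappa$ (this is exactly why the proposition carries parameters), using that the axioms of $\KP$ have bounded complexity to get a single reflection point $\alpha > \kappa$ with $L_\alpha(V_\kappa) \models \KP$; the least such $\alpha$ gives $\Hyp(V_\kappa) = \Hyp(V)$, which is then coded in the second-order model.

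Two smaller remarks. Your parenthetical claim that the unrolling satisfies $\ZFC$ is false --- it satisfies $\ZFCmi$, and Powerset must fail above the largest cardinal --- but this is not load-bearing, since only $\ZFCm$ with Collection is needed. And your axiom-by-axiom iteration with a supremum at the end is an unnecessary detour (and, taken literally as an internal recursion over ``the $n$-th axiom of $\KP$,'' needs the same bounded-complexity/partial-truth-predicate observation to be uniform); one application of the reflection proposition to the finitely many bounded-complexity formulas expressing $\KP$ suffices, which is why the corollary is immediate in the paper.
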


\begin{proof}
Work in the unrolling, which satisfies $\ZFCmi$. Then, since the axioms of $\KP$ are of bounded complexity, there is $\alpha > \kappa$, where $\kappa$ is the largest cardinal, so that $L_\alpha(V_\kappa) \models \KP$. The least such $\alpha$ gives $\Hyp(V_\kappa)$. But $V_\kappa$ is the $V$ of the original $\KMCC$ model. So there is a membership code for $\Hyp(V)$.
\end{proof}

But we need much less than $\KMCC$ to get the existence of $\Hyp(V)$. 

\begin{lemma}
The theory $\GBCm + \PnCA 1$ proves the existence of $\Hyp(V)$. Consequently any $T \supseteq \GBCm + \PnCA 1$ proves the existence of $\Hyp(V)$.
\end{lemma}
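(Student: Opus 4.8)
The plan is to work inside the $(M,G)$-constructible unrolling $\Lfrak(M,G)$ of $(M,\Xcal)$, which exists since $\GBCm + \PnCA 1$ extends $\GBCm + \ETR$; here $G \in \Xcal$ is a $\GBC$-amenable global well-order of $M$, provided by lemma \ref{lem2:gbc-amen-gwo}. By proposition \ref{prop2:sep-in-lfrak} (with $k = 1$), $\Lfrak(M,G)$ satisfies $\Sigma_1$-Separation, and by the results of section \ref{sec:unrolling} it satisfies the basic axioms of set theory together with the assertion that $\kappa = \Ord^M$ is a cardinal (inaccessible if $(M,\Xcal)$ satisfies Powerset, merely regular otherwise). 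So $\Lfrak(M,G)$ is a transitive-like model of a fragment of $\ZFCm$ with enough Separation to run a truth-predicate construction along the $L$-hierarchy. The goal is to produce, inside $\Lfrak(M,G)$, a membership code for some $L_\gamma(M)$ with $L_\gamma(M) \models \KP$, where $M$ here means $V_\kappa = H_\kappa$ as computed in $\Lfrak(M,G)$; this membership code then witnesses, back in $(M,\Xcal)$, that $\Hyp(V)$ exists.

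The key steps, carried out inside $\Lfrak(M,G)$: First, fix the parameter $b = V_\kappa$ and consider building the sequence of truth predicates $\langle \Tr(L_{\alpha}(b)) : \alpha \rangle$ along the $L(b)$-hierarchy. Each such truth predicate exists by $\Sigma_1$-Separation applied to $L_\alpha(b)$ (indeed by $\Sigma_0$-Separation plus a bounded recursion within a single level), and we need to find a closure ordinal. Here is where some care is needed: the usual reflection argument (as in proposition \ref{prop4:hyp-v-in-kmp}) collects witnesses for existential subformulae and uses Collection to find each $\alpha_{n+1}$; but $\Lfrak(M,G)$ need not satisfy $\Sigma_1$-Collection, only $\Sigma_1$-Separation. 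The fix is that we do not need full reflection — we only need to find $\gamma$ so that $L_\gamma(b)$ is closed under the few bounded operations required by the axioms of $\KP$ ($\Delta_0$-Separation and $\Delta_0$-Collection with parameters in $L_\gamma(b)$). Each of these is a $\Sigma_1$-expressible closure demand, and since the $L(b)$-hierarchy is definable and $\Sigma_1$-Separation suffices to recognize when a level is closed, one can iterate $\omega$ many times to produce the ascending sequence $\gamma_0 < \gamma_1 < \cdots$ and then take $\gamma = \sup_n \gamma_n$. The supremum exists because the sequence $\langle \gamma_n : n \in \omega\rangle$ is $\Sigma_1$-definable (from a truth predicate for a sufficiently high $L$-level, which $\Sigma_1$-Separation provides) and is a set-length sequence of ordinals, so Replacement-for-$\Sigma_1$-definable-functions, which follows from $\Sigma_1$-Separation in this constructible setting, yields its range as a set.

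Once $\gamma$ is found with $L_\gamma(b) \models \KP$, translate back: $\Lfrak(M,G)$ contains a membership code $E$ with $E \cong L_\gamma(b)^{\Lfrak(M,G)}$, and since $L_\gamma(b)$ has ordertype of ordinals a class well-order $\Gamma$ over $(M,\Xcal)$ with $L_\Gamma(M) \models \KP$ as evaluated in the model. By the definition of ``$\Hyp(V)$ exists'' given in section \ref{sec4:strong}, this is exactly what is required. Finally, observe that the argument used only $\Sigma_1$-Separation in the unrolling, hence only $\PnCA 1$ over the base $\GBCm + \ETR$; and since $\PnCA 1$ proves $\ETR_\Ord$ and more, in fact $\GBCm + \PnCA 1$ alone suffices. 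The consequently clause is immediate since any $T \supseteq \GBCm + \PnCA 1$ proves whatever $\GBCm + \PnCA 1$ proves.

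The main obstacle I expect is the one flagged above: carefully justifying that one can locate the $\KP$-closure ordinal $\gamma$ using only $\Sigma_1$-Separation rather than $\Sigma_1$-Collection in $\Lfrak(M,G)$ — in other words, checking that the fine-structural/condensation-style fact that $L_\gamma(b) \models \KP$ for a definable $\gamma$ goes through from the weaker Separation-only hypothesis. This is essentially the observation that in the constructible hierarchy, $\Sigma_1$-definable functions with set domain have set range (a consequence of $\Sigma_1$-Separation over $L(b)$), so the $\omega$-supremum argument does not genuinely need Collection. Once that is nailed down, everything else is routine translation between membership codes and the unrolling, for which the machinery of chapter 2 is already in place.
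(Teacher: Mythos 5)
There is a genuine gap, and it sits exactly where you flagged it. Your proposal rests on the premise that $\Lfrak(M,G)$ ``need not satisfy $\Sigma_1$-Collection, only $\Sigma_1$-Separation,'' and then tries to bridge the resulting hole with the claim that in the constructible setting $\Sigma_1$-Separation already yields enough Replacement ($\Sigma_1$-definable set-domain functions have set range). The premise is false relative to the machinery already in the paper: proposition \ref{prop2:coll-in-l} (via the reflection argument of lemma \ref{lem2:sol-refl}, which uses the ``compliant'' membership codes precisely to avoid any appeal to Class Collection in the ground model) shows that when the ground universe satisfies $\Pi^1_1$-Comprehension, the constructible unrolling satisfies $\Sigma_1$-Collection. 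Indeed this is the whole point of passing to $\Lfrak(M,G)$ rather than the full unrolling $\Ufrak$: one gets $\ZFCm(1)$, i.e.\ both $\Sigma_1$-Separation and $\Sigma_1$-Collection, and the paper's proof of this lemma then runs the reflection along the $L(M)$-hierarchy using $\Sigma_1$-Collection twice---once to find each $\alpha_{n+1}$ and once to form $\sup_n \alpha_n$.

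Your substitute argument does not close the hole. Recognizing (by a $\Sigma_1$ or even $\Delta_0$ condition) that a given level $L_\gamma(b)$ is closed under the $\KP$ demands is not the issue; the issue is proving that such a level \emph{exists} above a given $\alpha_n$, which amounts to bounding a set-indexed family of existential witnesses---that is exactly an instance of $\Sigma_1$-Collection, not of Separation. Likewise, collecting the sequence $\seq{\gamma_n : n \in \omega}$ and taking its supremum is again Collection/Replacement for a definable $\omega$-sequence; pointing to a truth predicate obtained by $\Sigma_1$-Separation does not produce the bound. And the general claim that $\Sigma_1$-Separation plus $V=L(b)$ implies this form of Replacement is not established anywhere (it is not an instance of anything proved in chapter 2, and it is far from obvious---Separation-without-Collection set theories are exactly the setting where such bounding arguments fail). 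So as written the central step of your proof is unproven. The repair is simply to use what the constructible unrolling actually gives you: quote proposition \ref{prop2:sep-in-lfrak} together with proposition \ref{prop2:coll-in-l} to get $W = \Lfrak(M,G) \models \ZFCm(1)$, and then your reflection argument goes through verbatim with the two applications of $\Sigma_1$-Collection made explicit, which is the paper's proof.
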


See also \cite[theorem 64]{antos-barton-friedman2018} where the same result appears. (They state the result in terms of $\GBC + \PnCA 1$ instead of $\GBCm + \PnCA 1$, but nowhere does their proof use Powerset in the first-order part.)

\begin{proof}
Work with $(M,\Xcal) \models \GBCm + \PnCA 1$ and consider $W = \Lfrak(M,G)$  the $(M,G)$-constructible unrolling of $(M,\Xcal)$, for some $G \in \Xcal$. Then, by results of section \ref{sec2:so-l} we have that $W = \Lfrak \models \ZFCm(1) + V=L(M,G)$.\footnote{Recall that $\ZFCm(1)$ is axiomatized by the axioms of $\ZFCm$ but with Separation and Collection restricted to $\Sigma_1$-formulae.}
Moreover, there is $\kappa \in W$ so that $M = (H_\kappa)^W$. We want to find $\alpha \in W$ so that $L_\alpha(M) \models \KP$. This reduces down to proving an instance of reflection along the $L(M)$-hierarchy, which I give here so the reader can see it can be carried out in the weak theory in which we are currently working.

Every $L_\alpha(M)$ satisfies $\Sigma_0$-Separation, so the work is in getting $\Sigma_0$-Collection. We will see that there are unboundedly many $\alpha$ so that $L_\alpha(M) \models \Sigma_0$-Collection. 
Let $\upsilon$ be the formula giving truth for $\Sigma_0$-formulae. Then $\upsilon$ is $\Sigma_1$. It is convenient here to assume (without loss) that $\upsilon$ has four free variables, so that $\upsilon(\phi,x,y,p)$ asserts that $\phi(x,y,p)$ holds for a $\Sigma_0$-formula $\phi$. To show that $L_\alpha(M) \models \Sigma_0$-Collection it suffices to prove the instance of $\Sigma_0$-Collection for $\upsilon$.

Fix arbitrary $\alpha_0$. By $\Sigma_1$-Collection find $\alpha_1$ the least ordinal $> \alpha_0$ so that if $x,p \in L_{\alpha_0}(M)$ and $\phi$ is a formula then there is $y \in L_{\alpha_1}(M)$ so that $\upsilon(\phi,x,y,p)$. Now repeat the process: given $\alpha_n$ let $\alpha_{n+1}$ be the least ordinal $> \alpha_n$ so that if $x,p \in L_{\alpha_n}(M)$ and $\phi$ is a formula then there is $y \in L_{\alpha_{n+1}}(M)$ so that $\upsilon(\phi,x,y,p)$. Finally, set $\alpha = \sup \alpha_n$, again using an instance of $\Sigma_1$-Collection. Then $L_\alpha(M) \models \Sigma_0$-Collection. Since $\alpha_0$ was arbitrary, this proves there are unboundedly many such $\alpha$.

Now take $\alpha > \kappa$ least so that $L_\alpha(M) \models \KP$. Then $L_\alpha(M) = \Hyp(M)$, so $(M,\Xcal) \models \Hyp(V)$ exists, as desired.
\end{proof}

As a corollary we get the negative part of theorem \ref{thm4:least-trans-models}.

\begin{corollary}
There is not a least transitive model of $\KM$, nor of $\KMm$. For $k \ge 1$ there is not a least transitive model of $\GBC + \PnCA k$, nor of $\GBCm + \PnCA k$. Moreover, the same holds for any computably axiomatizable extensions of these theories.
\end{corollary}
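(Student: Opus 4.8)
The plan is to deduce this corollary directly from the Master Lemma (lemma \ref{lem4:master-lemma}) together with the two lemmas just proved showing that the relevant theories prove the existence of $\Hyp(V)$. The key observation is that the negative result about least \emph{transitive} models is really a corollary of the stronger fact that a \emph{fixed} countable first-order part admits no least realization, which in turn is what the Master Lemma delivers. So the first step is to verify the hypotheses of the Master Lemma apply to each theory in question: each of $\KM$, $\KMm$, $\GBC + \PnCA k$, $\GBCm + \PnCA k$ (for $k \ge 1$) extends $\GBCm + \ETR$ (since $\PnCA 1$ proves $\ETR$, as noted in the excerpt via the remark that $\Pi^1_{k+1}$-Comprehension proves $\Sigma^1_k$-Transfinite Recursion, and full Comprehension gives $\ETR$ outright), and each proves the existence of $\Hyp(V)$ by the lemma that $\GBCm + \PnCA 1$ proves $\Hyp(V)$ exists. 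Moreover each is computably axiomatizable, and the same holds for any computably axiomatizable extension $T$ of any of them.

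Next I would assemble the argument. Suppose toward a contradiction that $T$ (one of these theories, or a computably axiomatizable extension) has a least transitive model $(M,\Xcal)$. Since $(M,\Xcal)$ is transitive it is in particular an $\omega$-model, and since $T$ is computably axiomatizable, $T \in V_{\omega+1}^M \subseteq M$ (a transitive model of even a weak set theory contains all hereditarily finite sets, hence contains the computably enumerable set $T$ as an actual element — here one uses that $M$ is $\omega$-standard so the internal and external $\omega$ agree and the internal enumeration of $T$ terminates with exactly $T$). Now apply the Master Lemma: there is $\Ycal \subseteq \powerset(M)$ with $(M,\Ycal) \models T$ but $\Xcal \not\subseteq \Ycal$. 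Replacing $\Ycal$ by an isomorphic copy if necessary, $(M,\Ycal)$ is again transitive. But then $(M,\Ycal)$ is a transitive model of $T$ with $(M,\Xcal) \not\subseteq (M,\Ycal)$, contradicting the leastness of $(M,\Xcal)$. This gives the corollary. The ``moreover'' clause about computably axiomatizable extensions requires nothing extra, since the argument only used that $T$ extends $\GBCm + \ETR$, proves $\Hyp(V)$ exists, and is computably axiomatizable — all preserved under passing to computably axiomatizable extensions (the first two because extensions prove more, the third by hypothesis).

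I do not expect a genuine obstacle here, since all the real work has already been done in establishing the Master Lemma and the two $\Hyp(V)$ lemmas; the only point requiring a little care is the one flagged in remark \ref{rmk4:omega-nonstd}, namely the passage ``$T$ computably axiomatizable and $(M,\Xcal)$ transitive implies $T \in M$.'' For transitive (hence $\omega$-standard) $M$ this is routine: $T$ as a subset of $V_\omega$ is the range of a Turing machine that $M$ can run, and since $M$ has the correct $\omega$ the machine's internal output is exactly $T$, which $M$ then collects by Replacement. (For the $\omega$-nonstandard analogues one would instead invoke the coded-in-$M$ version of the Master Lemma from remark \ref{rmk4:omega-nonstd}, but that is not needed for the transitive statement at hand.) The cleanest presentation is therefore: (i) note computable axiomatizability gives membership in any transitive model; (ii) note each listed theory, and any computably axiomatizable extension, satisfies the Master Lemma's hypotheses; (iii) invoke the Master Lemma and contradict leastness.
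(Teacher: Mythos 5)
Your proposal is correct and is essentially the paper's own argument: assume a least transitive model, observe that a computably axiomatizable $T$ is an element of any transitive ($\omega$-standard) model, and apply the master lemma to get a $T$-realization of the same first-order part not containing the given one, contradicting leastness; your direct application of the master lemma to $T$ itself even skips the paper's cosmetic detour through $\KMCC$. The one hypothesis you leave unverified is countability, which the master lemma requires: this is immediate, since if any transitive model of $T$ exists then a countable one does, and the least transitive model, being contained in it, is itself countable — the paper dispatches this with the phrase ``it must be some countable $(M,\Xcal)$.''
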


\begin{proof}
I will state the proof in terms of $\KM$. The same argument goes through, {\em mutatis mutandis}, for the other theories.

Suppose for sake of a contradiction that there is a least transitive model of $\KM$. Then it must be some countable $(M,\Xcal) \models \KMCC$. By the master lemma there is a $\KMCC$-realization $\Ycal$ for $M$ so that $\Xcal \not \subseteq \Ycal$. So $(M,\Xcal)$ is not actually least, a contradiction.
\end{proof}

This result holds for more than just computably axiomatizable extensions. What is needed about the extension $T$ is that $T$ is an element of any transitive model of $T$. In particular, the first-order part of $T$ can be complete. If $\Hyp(V)$ exists then the truth predicate for the first-order part must exist, since $\Hyp(V) \models \KP$ and $\KP$ proves the existence of truth predicates for set-sized structures and so any model of a second-order set theory $T \subseteq \GBcm$ which proves the existence of $\Hyp(V)$ must contain its (first-order) theory as an element, since it can be obtained by restricting the truth predicate to sentences.

We also get that countable models do not have least realizations for strong theories.

\begin{corollary}
Let $M \models \ZFCm$ be countable and $T$ an extension of $\GBCm + \PnCA 1$ with $T \in M$.\footnote{Or, if $M$ is $\omega$-nonstandard, with $T$ coded in $M$. Cf.\ remark \ref{rmk4:omega-nonstd}.} 
Then $M$ does not have a least $T$-realization.
\end{corollary}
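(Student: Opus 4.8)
The plan is to derive this immediately from the Master Lemma (lemma \ref{lem4:master-lemma}), which has been proved just above. The only gap to fill is that the hypotheses of the corollary are slightly weaker than what the Master Lemma literally asks for, so I first check those hypotheses are met. We are given countable $M \models \ZFCm$ together with a second-order set theory $T$ extending $\GBCm + \PnCA 1$ with $T \in M$ (or, in the $\omega$-nonstandard case, $T$ coded in $M$, per remark \ref{rmk4:omega-nonstd}). Since $M$ has a $T$-realization $\Xcal$ by assumption, $(M,\Xcal) \models T$, and $(M,\Xcal)$ is countable because $M$ is. Now $T \supseteq \GBCm + \PnCA 1 \supseteq \GBCm + \ETR$ (recall $\PnCA 1$ proves $\SkTR 0 = \ETR$, indeed $\ETR$ follows already from $\Pi^1_1$-Comprehension over $\GBCm$), and by the lemma just established, $\GBCm + \PnCA 1$ proves the existence of $\Hyp(V)$, hence so does $T$. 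So every hypothesis of the Master Lemma is satisfied.

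First I would invoke the Master Lemma with this $(M,\Xcal)$ and $T$: it yields $\Ycal \subseteq \powerset(M)$ with $(M,\Ycal) \models T$ but $\Xcal \not\subseteq \Ycal$. Then I would argue by contradiction for the corollary's conclusion. Suppose $M$ had a least $T$-realization $\Zcal$; that is, $\Zcal$ is a $T$-realization for $M$ and $\Zcal \subseteq \Wcal$ for every $T$-realization $\Wcal$ for $M$. Applying the Master Lemma to the model $(M,\Zcal) \models T$ (which again is countable, and $T \in M$ or is coded in $M$ as before), we obtain a $T$-realization $\Ycal'$ for $M$ with $\Zcal \not\subseteq \Ycal'$. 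But $\Ycal'$ is a $T$-realization for $M$, so by leastness of $\Zcal$ we must have $\Zcal \subseteq \Ycal'$, a contradiction. Hence $M$ has no least $T$-realization.

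The argument is short and there is no real obstacle left: all the substantive work — the admissible-cover generalization of Friedman's theorem (theorem \ref{thm4:cis-friedman}), the unrolling construction, and the verification that $\GBCm + \PnCA 1$ proves $\Hyp(V)$ exists — has already been carried out. The only point requiring a word of care is the $\omega$-nonstandard case, where ``$T \in M$'' cannot hold for infinite first-order $T$; there one reads the hypothesis as ``$T$ is coded in $M$'' and the Master Lemma's proof goes through verbatim in that form, as remark \ref{rmk4:omega-nonstd} records. I would simply flag this parenthetically (as the corollary statement already does via its footnote) and note that the contradiction argument above is insensitive to which version one uses, since the witness $\Zcal$ produced by leastness is itself a $T$-realization for the same $M$, so the Master Lemma applies to $(M,\Zcal)$ under exactly the same coded-in-$M$ hypothesis on $T$.
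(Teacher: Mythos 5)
Your proposal is correct and follows essentially the same route as the paper: assume a least $T$-realization exists, apply the master lemma to it to obtain a $T$-realization $\Ycal$ not containing it, and contradict leastness, with the hypotheses (countability, $T \in M$ or coded in $M$, and $T \supseteq \GBCm + \ETR$ proving $\Hyp(V)$ exists via the preceding lemma) checked just as the paper intends. The extra care you take verifying the master lemma's hypotheses and the $\omega$-nonstandard reading is fine but adds nothing beyond the paper's three-line argument.
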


\begin{proof}
Suppose towards a contradiction that $\Xcal$ is the least $T$-realization for $M$. By the master lemma there is $\Ycal \not \subseteq \Xcal$ a $T$-realization for $M$. But this contradicts the leastness of $\Xcal$.
\end{proof}

In personal communication, Ali Enayat pointed out to me an alternative argument that countable models do not have least $\KM$-realizations. I reproduce his argument, which goes by way of an old theorem by Barwise, here.

\begin{theorem}
Let $M \models \ZFC$ be a countable transitive model. Then $M$ does not have a least $\KM$-realization.
\end{theorem}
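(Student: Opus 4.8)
The plan is to invoke an old theorem of Barwise about extending countable transitive models, together with the bi-interpretability and unrolling machinery developed in Chapter 2. Let $M \models \ZFC$ be a countable transitive model. The key point is that a $\KM$-realization $\Xcal$ for $M$ corresponds, via the unrolling construction, to a model $W$ of $\wZFCmi + \Sigma_0$-Transfinite Recursion (equivalently, by cutting off, a model of $\ZFC$ with a largest cardinal $\kappa$ which is inaccessible, whenever we have $\KMCC$; for $\KM$ proper we get the weaker first-order theory but this suffices) whose $H_\kappa$ is $M$. So to show $M$ has no least $\KM$-realization it is enough to produce two unrollings $W_0, W_1$ over $M$ — i.e. two models of the relevant first-order theory, each with largest cardinal $\kappa$ and with $H_\kappa = M$ — which are incomparable in the sense that neither's collection of subsets of $M$ contains the other's. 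Equivalently, by the correspondence $\Xcal \mapsto \Ufrak^{(M,\Xcal)}$, it suffices to exhibit $\KM$-realizations $\Xcal_0, \Xcal_1$ of $M$ with $\Xcal_0 \not\subseteq \Xcal_1$ and $\Xcal_1 \not\subseteq \Xcal_0$; if there were a least realization $\Xcal$, it would sit below both, but we will arrange that $\Xcal_0 \cap \Xcal_1$ fails to be a $\KM$-realization (e.g. it omits some class needed for Global Choice or Comprehension), or more directly we will arrange two realizations neither containing the other and hence contradict minimality immediately.

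Concretely, I would proceed as follows. First, since $M$ is countable and transitive, $\Hyp(M)$ exists as a genuine (countable, transitive) admissible set, and there is a transitive model of $\KMCC$ with first-order part $M$ provided $M$ is $\KM$-realizable at all — if $M$ is not $\KM$-realizable there is nothing to prove. Fix such a realization, unroll to get a transitive $W \models \ZFCmi$ with $\kappa = \Ord^M$ the largest cardinal of $W$ and $M = V_\kappa^W$. Now apply Barwise's theorem (the countable-admissible end-extension result cited in the excerpt, theorem \ref{thm4:trans-friedman} in the transitive case, or the Barwise compactness argument over $\Lcal_{\Hyp(M)}$): there are two end-extensions $N_0, N_1 \supsetend_{\mathrm{rk}} W$ — more precisely, two models of the $\Lcal_{\Hyp(M)}$-theory asserting $\ZFCmi$ + "$\kappa$ is the largest cardinal" + "$V_\kappa = M$ (the infinitary $\in$-diagram of $M$)" + suitable diagonalizing sentences — which are pairwise incomparable, for instance by forcing $N_i$ to believe in the consistency of different completions, or by making each $N_i$ contain a code for a well-order of $M$ of length $> \kappa$ not present in the other. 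Each $N_i$, being a model of $\ZFCmi$ with largest cardinal $\kappa$ and $V_\kappa = M$, cuts off to a $\KM$-realization $\Xcal_i$ for $M$. The incomparability of $N_0, N_1$ transfers to $\Xcal_0, \Xcal_1$: a class in $\Xcal_i \setminus \Xcal_{1-i}$ arises from a subset of $M$ present in $N_i$ but not $N_{1-i}$.

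The main obstacle will be arranging the incomparability cleanly, since the bare Barwise compactness theorem gives existence of one end-extension, not two mutually incomparable ones — this is exactly the diagonalization step, and it is the heart of Barwise's original argument as well as the master lemma \ref{lem4:master-lemma} proof. The cleanest route is probably to mimic the master lemma directly: for any given $\KM$-realization $\Xcal$ of $M$ (the putative least one), run the proof of the master lemma to produce $\Ycal$ a $\KM$-realization with $\Xcal \not\subseteq \Ycal$; since every $\KM$-realization proves $\Hyp(V)$ exists and $\KM$ (being computably axiomatized) is an element of the transitive model $M$, the master lemma applies verbatim, and the conclusion $\Xcal \not\subseteq \Ycal$ directly contradicts minimality of $\Xcal$. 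In that case the Barwise-style argument I am attributing to Enayat is really just the master lemma specialized to transitive $M$, so the "proof" is one line: apply master lemma \ref{lem4:master-lemma}. I would present it that way, noting that this recovers the earlier corollary but via Barwise's classical end-extension theorem rather than through the admissible-cover generalization, which is the point of recording Enayat's observation separately.

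\begin{proof}
Suppose toward a contradiction that $\Xcal$ is the least $\KM$-realization for $M$; then in particular $(M,\Xcal) \models \KMCC$ by the fact (Chapter 2) that every $\KM$ model has a $V$-submodel of $\KMCC$, and leastness forces $\Xcal$ itself to satisfy $\KMCC$. Since $M$ is transitive it contains the computable theory $\KMCC$ as an element, and by the lemma above $\KMCC$ proves $\Hyp(V)$ exists. Thus master lemma \ref{lem4:master-lemma} applies to $(M,\Xcal)$: unrolling to $W$ with $M = H_\kappa^W$ and forming $A = \Hyp(M)^W = \Hyp(M)$, which is a countable transitive admissible set, Barwise's theorem (theorem \ref{thm4:trans-friedman}) yields an end-extension $N \supsetend A$ modeling $\ZFCmi$ together with "$M = H_\kappa$ is a rank-initial segment" and "$\kappa$ is the largest cardinal", with $N$ ill-founded at $A$. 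Cutting off $N$ gives $\Ycal = \{\, Y \in N : N \models Y \subseteq M \,\}$, a $\KMCC$-realization for $M$. The argument of the master lemma shows $\Xcal \not\subseteq \Ycal$: a membership code $\Upsilon \in \Xcal$ representing $\Ord^A$ would, if it lay in $\Ycal$, force $\Ord^A$ to be topped in $N$, contradicting that $N$ is ill-founded at $A$. Hence $\Xcal \not\subseteq \Ycal$, contradicting the minimality of $\Xcal$. Therefore $M$ has no least $\KM$-realization.
\end{proof}
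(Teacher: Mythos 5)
Your proof is correct, but it is not the route the paper takes for this particular theorem. The paper presents this statement precisely as Enayat's \emph{alternative} argument: rather than going through master lemma \ref{lem4:master-lemma}, it invokes a different theorem of Barwise (theorem IV.1.1 of \emph{Admissible Sets and Structures}, about subsets of a countable structure $U$ that appear as $b_E$ in \emph{every} model of a $\Sigma_1$-over-$\Hyp(U)$ theory necessarily lying in $\Hyp(U)$), deduces the lemma that the intersection of all $\KM$-realizations for $M$ is exactly $\Hyp(M) \cap \powerset(M)$, and then derives the contradiction $\Hyp(M) \in \Hyp(M)$ from the fact that $\KM$ proves $\Hyp(V)$ exists. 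Your route---apply the master lemma to the putative least realization---is legitimate and is in fact how the paper's earlier corollary already establishes the same non-leastness (you do not even need the $\KMCC$ reduction: $\KM$ itself proves $\Hyp(V)$ exists, so the master lemma applies directly with $T = \KM$). What each approach buys: yours is a one-line corollary of machinery already in place, but that machinery rests on the admissible cover and the ill-founded compactness theorem \ref{thm4:cis-friedman}; Enayat's argument avoids the admissible cover entirely (for countable transitive $M$, $\Hyp(M)$ is an honest admissible set) and yields the extra structural information about the intersection of all realizations, which the paper flags as of independent interest. One inaccuracy in your write-up: your claim that $A = \Hyp(M)^W = \Hyp(M)$ is a transitive admissible set, with a citation of the transitive Friedman theorem \ref{thm4:trans-friedman}, is only justified when $(M,\Xcal)$ is a $\beta$-model; a transitive model of $\KM$ need not be a $\beta$-model, its unrolling $W$ can then be ill-founded above $\Ord^M$, and $\Hyp(M)^W$ may be ill-founded---this is exactly why the master lemma's proof passes through the admissible cover. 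Since you ultimately invoke the master lemma as a black box this does not break your proof, but it does mean your parenthetical remark that Enayat's argument ``is really just the master lemma specialized to transitive $M$'' is not accurate: it is a genuinely different argument resting on a different theorem of Barwise.
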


From this it immediately follows that $\KM$ does not have a least transitive model.

\begin{proof}[Proof (Enayat):]
If $M$ is not $\KM$-realizable, then the conclusion is trivial. So work in the case where $M$ is $\KM$-realizable. Recall the following theorem.

\begin{theorem*}[Barwise, theorem IV.1.1 of \cite{barwise1975}]
Let $U$ be a countable structure. Let $A = \Hyp(U)$. Let $T$ be an $\Lcal_A$-theory which is $\Sigma_1$-definable over $A$ and which has a model of the form $\Bfrak = (B,U;E, \ldots)$, where $E$ is a binary relation.\footnote{Recall from section \ref{sec4:admissible} that $(B,U;R_0, R_1, \ldots)$ has sets $B$, urelements $U$, and additional relations $R_0, R_1, \ldots$.}
Suppose $S \subseteq U$ has the property that for every such model there is $b \in B$ so that $S = b_E = \{ x \in \Bfrak : x \mathbin E b\}$. Then $S \in \Hyp(U)$. 
\end{theorem*}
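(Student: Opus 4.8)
The plan is to convert the semantic hypothesis --- that the external set $S$ is internally represented in every model of $T$ with urelement part $U$ --- into a definability statement over $A=\Hyp(U)$, and then to cash this in against the admissibility of $A$. The target is to show that \emph{both} $S$ and its complement $U\setminus S$ are $\Sigma_1$-definable over $A$ (allowing parameters from $A$). Once this is done, $S$ is $\Delta_1$ over $A$; since $S\subseteq U$ and the urelement domain $U$ is an element of $A$, the $\Delta_1$-separation available in any admissible set (a consequence of $\KP$) yields $S\in A=\Hyp(U)$, which is exactly the conclusion. So the entire problem reduces to producing the two $\Sigma_1$ definitions.

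The engine for producing $\Sigma_1$ definitions is the Barwise completeness/compactness theorem for the admissible fragment $\Lcal_A$: because $A$ is countable admissible and $T$ is $\Sigma_1$-definable over $A$, the provability relation ``$T\vdash\varphi$'' in the $\Lcal_A$ proof system is itself $\Sigma_1$ over $A$ and agrees with truth in all models of $T$. Since $U$ is a genuine (well-founded) structure and $A=\Hyp(U)$, I would adjoin the infinitary $\in$-diagram of $A$ --- a $\Sigma_1$-over-$A$ set of $\Lcal_A$-sentences --- to force $A$ to sit as a transitive part of every model, together with a single new constant $c$ meant to name a representing set, carrying the $\Sigma_0$ axiom ``$c$ is a set all of whose members are urelements.'' The payoff of this setup is that for each urelement $u$ the sentences ``$\bar u\in c$'' and ``$\bar u\notin c$'' are finitary $\Lcal_A$-sentences, so membership questions about $S$ become questions about what the expanded theory proves or is consistent with.

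The heart of the argument is a per-urelement characterization. The hypothesis that every model of $T$ contains a representing set $b$ with $b_E=S$ is precisely what ties $S$ to the syntax of $T$, and I would aim to establish, for each $u$, an equivalence of the shape
\[
u\in S \iff \text{the $\Lcal_A$-theory }T^{+}\cup\{\bar u\notin c\}\text{ is inconsistent},
\]
with $u\notin S$ characterized symmetrically via $T^{+}\cup\{\bar u\in c\}$, where $T^{+}$ augments $T$ by the $\in$-diagram of $A$ and by a clause forcing $c$ to represent. By completeness each side is $\Sigma_1$ over $A$, which delivers the two required definitions. The intuition is that a representing set disagrees with $S$ at no urelement, so any clause contradicting $S$ at $u$ can only be refuted; the difficulty, addressed below, is giving content to ``$c$ represents'' without circularly presupposing $S$.

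The main obstacle --- and the genuine technical heart --- is that $T$ does \emph{not} implicitly define $c$: a model of $T$ may contain many sets of urelements, and $T$, which never mentions $S$, singles out none of them. Hence one cannot simply write down ``$c$ represents'' as an $\Sigma_1$-over-$A$ axiom without reintroducing $S$, and the off-the-shelf omitting-types theorem does not apply, since the type coding $S$ need not be $\Sigma_1$ over $A$ when $S\notin A$. The resolution I would pursue uses the minimality of $A=\Hyp(U)$ essentially: assuming $S\notin A$, I would run a Henkin-style consistency-property construction (in the sense of Barwise's model-existence theorem for $\Lcal_A$) that builds a model of $T$ \emph{omitting} every representing set, diagonalizing against each candidate $b$ by forcing a disagreement with $S$ at some urelement. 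The claim to be proved is that this construction forms a genuine $\Sigma$-consistency property over $A$ whose \emph{only} obstruction is that the needed disagreement data is already forced into $A$, i.e.\ that $S\in\Hyp(U)$; if $S\notin A$ the construction succeeds and produces a model with no representing set, contradicting the hypothesis. Making this diagonalization precise --- verifying the consistency property is $\Sigma$ over $A$ and isolating $S\in\Hyp(U)$ as the sole obstruction --- is where the real work lies, and it is exactly what turns the uniform-representation hypothesis into the desired definability.
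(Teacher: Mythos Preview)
The paper does not prove this statement. It is quoted verbatim from Barwise's book as a black box inside Enayat's alternative argument that countable transitive $M$ have no least $\KM$-realization; the paper simply invokes it and moves on. So there is no in-paper proof to compare your proposal against.

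That said, your outline is essentially Barwise's own argument. The reduction to showing $S$ is $\Delta_1$ over $A=\Hyp(U)$, the use of $\Lcal_A$-completeness to make provability $\Sigma_1$, and---crucially---the recognition that a naive implicit-definability approach fails and must be replaced by an omitting-types/consistency-property construction are all correct. The standard proof runs exactly as your last paragraph sketches: one attempts to build a model of $T$ omitting the type $\Phi(x)=\{\bar u\mathbin E x: u\in S\}\cup\{\neg(\bar u\mathbin E x): u\notin S\}$; if this succeeds it contradicts the hypothesis, and the only obstruction is that some $\Lcal_A$-formula $\psi(x)$ consistent with $T$ generates $\Phi$, whence $u\in S\iff T\vdash \psi(x)\to \bar u\mathbin E x$ and $u\notin S\iff T\vdash \psi(x)\to \neg(\bar u\mathbin E x)$ give the two $\Sigma_1$ definitions. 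Your proposal is sound; it just is not something the paper itself carries out.
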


Suppose we have a model of the form $(B,M;E)$. Consider the theory $T$ asserting that $M$ forms the first-order part of and $B$ forms the second-order part of a model of $\KM$ with membership relation $E$. This is a computable $\Lcal_{\omega,\omega}$-theory, so in particular it is $\Sigma_1$-definable over $\Hyp(M)$. Because $M$ is $\KM$-realizable there is a model of $T$ of form $\Bfrak = (B,M;E)$. Now suppose that $S \subseteq M$ is in every $\KM$-realization for $M$. Then if $\Bfrak = (B,M;E) \models T$ we can find $b \in B$ so that $S = b_E$. So by Barwise's theorem we get that $S \in \Hyp(M)$. 

This yields the following lemma, which is of independent interest.

\begin{lemma}
Let $M \models \ZFC$ be countable, transitive, and $\KM$-realizable. Then the intersection of all the $\KM$-realizations for $M$ is $\Hyp(M) \cap \powerset(M)$.
\end{lemma}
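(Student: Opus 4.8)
The plan is to establish two inclusions. For the easy direction, $\Hyp(M) \cap \powerset(M) \subseteq \Xcal$ for every $\KM$-realization $\Xcal$ for $M$: here I would observe that $\Hyp(M) \models \KP$, and by proposition \ref{prop3:truth-iff-coded} (or directly, since $\KM$ proves the existence of truth predicates relative to any class and hence iterated truth predicates of any class length, by theorem \ref{thm3:etr-iff-itr}) any $\KM$-realization $\Xcal$ is closed under the operations needed to build $L_\Gamma(A)$ for any class well-order $\Gamma$ and $A \in \Xcal$. In particular, working in the unrolling $W$ of $(M,\Xcal)$, the set $\Hyp(M)^W$ exists and equals the genuine $\Hyp(M)$ because $W$ is correct about $\KP$ being satisfied at the least admissible level (the relevant well-orders are initial segments of $\Ord$, which $W$ computes correctly as it contains $M$ transitively). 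Since $\Hyp(M)$ really is a transitive set with $M$ as an element, every $A \in \Hyp(M)$ with $A \subseteq M$ is coded by a membership code in $W$, hence $A \in \Xcal$. Taking the intersection over all $\KM$-realizations $\Xcal$, we get $\Hyp(M) \cap \powerset(M) \subseteq \bigcap_\Xcal \Xcal$.

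For the reverse inclusion, $\bigcap_\Xcal \Xcal \subseteq \Hyp(M) \cap \powerset(M)$: this is exactly what Barwise's theorem IV.1.1 delivers, as laid out in the excerpt. Given $S \subseteq M$ lying in every $\KM$-realization for $M$, I would apply Barwise's theorem with $U = M$, $A = \Hyp(M)$, and $T$ the computable $\Lcal_{\omega,\omega}$-theory asserting that $(B, M; E)$ is a model of $\KM$ whose first-order part is $M$ with the true membership relation on $M$ and whose second-order membership is $E$. Since $M$ is $\KM$-realizable there is a model of $T$ of the required form $(B, M; E)$, and the structure of $T$ as a computable theory makes it $\Sigma_1$-definable over $\Hyp(M)$. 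For any such model $(B, M; E) \models T$, the realization $\{ x_E : x \in B \}$ is a $\KM$-realization for $M$ (up to the usual identification), so by hypothesis $S$ belongs to it, meaning there is $b \in B$ with $S = b_E$. Barwise's theorem then concludes $S \in \Hyp(M)$, and since $S \subseteq M$ we get $S \in \Hyp(M) \cap \powerset(M)$.

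One technical point I would need to be careful about: Barwise's theorem as stated requires $S \subseteq U$ to be realized (as some $b_E$) in \emph{every} model of $T$ of the form $(B, U; E)$, not merely in every transitive $\KM$-realization. So I must check that every model of $T$ of the prescribed shape — including ill-founded ones — yields a collection $\{x_E : x \in B\}$ that counts as a $\KM$-realization for $M$ in the relevant sense. Since $T$ pins down that the first-order part is literally $M$ with its true membership relation (a first-order-expressible constraint once $M$ is fixed as urelements, using the $\in$-diagram of $M$ — which is an $\Lcal_{\Hyp(M)}$ set of sentences), the first-order part is forced to be $(M, \in)$ honestly, and the axioms of $\KM$ interpreted via $E$ on $B$ give an honest $\KM$-realization in the sense of definition \ref{def1:t-amenable} and the surrounding conventions (modulo replacing $B$ by an isomorphic copy of subsets of $M$, using Class Extensionality). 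This is the main place where one must be slightly delicate, but it is routine bookkeeping of the kind already exercised in section \ref{sec4:admissible}.

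I expect the genuine obstacle to be purely expository rather than mathematical: verifying that the ``easy'' inclusion really does go through when $M$ is $\omega$-nonstandard — but the statement of the lemma restricts to transitive $M$, which is automatically $\omega$-standard, so $\Hyp(M)$ is a genuine transitive set and the coding of its elements by membership codes in $W$ is unproblematic. With transitivity in hand, the two inclusions combine to give $\bigcap_\Xcal \Xcal = \Hyp(M) \cap \powerset(M)$, completing the proof of the lemma.
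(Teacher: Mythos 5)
Your second inclusion (every class lying in all $\KM$-realizations is in $\Hyp(M)$) is essentially the paper's argument: the same computable theory $T$ about structures $(B,M;E)$, the same appeal to Barwise's theorem IV.1.1, and your "technical point" about ill-founded models $(B,M;E)$ still yielding honest $\KM$-realizations is exactly the bookkeeping the paper passes over silently. That half is fine.

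The gap is in the direction you call easy, $\Hyp(M) \cap \powerset(M) \subseteq \Xcal$ for every $\KM$-realization $\Xcal$. Your argument rests on the assertion that $\Hyp(M)^W$ "equals the genuine $\Hyp(M)$ because $W$ is correct about $\KP$ being satisfied at the least admissible level (the relevant well-orders are initial segments of $\Ord$)." This is where the real content of the lemma lives, and the justification offered is wrong: the ordinals of $\Hyp(M)$ above $\Ord^M$ are not initial segments of $\Ord^M$ but are coded by class well-orders $\Gamma \subseteq M$ of ordertype exceeding $\Ord^M$, and transitivity of $M$ only guarantees that $W$, the unrolling of $(M,\Xcal)$, is well-founded up to $\Ord^M$. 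Nothing you have said rules out that $W$ is ill-founded strictly below the height of the true $\Hyp(M)$ (equivalently, that $\Xcal$ simply fails to contain the long well-orders needed to reach it), in which case $\Hyp(M)^W$ would be a proper-looking but ill-founded surrogate and a class $A \in \Hyp(M)\cap\powerset(M)$ of high $L(M)$-rank need not be in $\Xcal$. What must be proved is that the unrolling of an \emph{arbitrary} $\KM$-realization is well-founded past every ordinal $\gamma < \Ord^{\Hyp(M)}$. The paper does this in two steps: H.~Friedman's theorem gives that $\Ord^{\wfp(W)}$ reaches at least the next admissible above $\Ord^M$, and then, since $\Hyp(M)$ may be taller than that, it uses Barwise's corollary IV.3.4 that every element of $\Hyp(M)$ is $\Delta^1_1$-definable over $M$; hence each well-order $\Gamma \subseteq M$ in $\Hyp(M)$ coding an ordinal $\gamma < \Ord^{\Hyp(M)}$ lies in $\Ycal$ by $\Delta^1_1$-Comprehension (which $\KM$ proves), is recognized as a well-order because it really is one and $M$ is transitive, and so contributes the ordinal $\gamma$ to $\wfp(W)$. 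Some argument of this kind---in particular the $\Delta^1_1$-definability input, or something replacing it---is needed to make your inclusion go through; as written the step is asserted rather than proved.
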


\begin{proof}
Let $\Xcal$ be the intersection of all the $\KM$-realizations of $M$. We have seen that $\Xcal \subseteq \Hyp(M) \cap \powerset(M)$. For the other direction, take $A \in \Hyp(M) \cap \powerset(M)$. Then there is $\gamma < \Ord^{\Hyp(M)}$ so that $A \in L_\gamma(M)$. To conclude that $A \in \Xcal$ it is enough to know that every $\KM$-realization for $M$ unrolls to a structure which is well-founded up to $\gamma$. To see that, take $(M,\Ycal) \models \KM$. By throwing out classes if necessary, assume without loss that $(M,\Ycal) \models \KMCC$. Let $W \models \ZFCmi$ be the unrolling of $(M,\Ycal)$. It follows from a result of H.\ Friedman \cite[theorem 3.1]{friedman1973} that $W$, being well-founded up to at least $\Ord^M$, must have that $\Ord^{\wfp(W)}$ at least as large as the next admissible ordinal above $\Ord^M$. But it could be that $\Hyp(M)$ is taller than the next admissible ordinal above $\Ord^M$, so we need a small argument.

Take $\gamma \in \Hyp(M)$ an ordinal. Then there is $\Gamma \subseteq M$ in $\Hyp(M)$ which is isomorphic to $(\gamma,\in)$. Because $\Gamma \in \Hyp(M)$ it is $\Delta^1_1$-definable over $M$---see \cite[corollory IV.3.4]{barwise1975}. So $\Gamma \in \Ycal$, because $(M,\Ycal)$ satisfies $\Delta^1_1$-Comprehension. And since $\Gamma$ is seen to be well-founded from the external universe, it must be that $(M,\Ycal)$ agrees that $\Gamma$ is a well-order. So the unrolling of $(M,\Ycal)$, namely $W$, must contain $\gamma$. So $\Ord^{\wfp(W)} > \gamma$, completing the argument.
\end{proof}

Now suppose that $\Xcal = \powerset(M) \cap \Hyp(M)$ is $\KM$-realization for $M$. Then, because $\Xcal$ unrolls to $\Hyp(M)$ and $\KM$ proves that $\Hyp(V)$ exists we get that $\Hyp(M) \in \Hyp(M)$, a contradiction. So $M$ cannot have a least $\KM$-realization.
\end{proof}

The full strength of $\KM$ is not needed here. We used two facts about $\KM$: first, that $\KM$ proves that $\Hyp(V)$ exists; and second, that $\KM$ proves $\Delta^1_1$-Comprehension. So the same argument goes through for weaker theories theories, in particular any theory extending $\GBCm + \PCA$. 

Let me turn now to $\beta$-models, in which context we do get least models.

\begin{theorem}[Folklore]
There is a least $\beta$-model of $\KM$, if there is any $\beta$-model of $\KM$.
\end{theorem}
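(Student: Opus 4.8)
The plan is to exhibit the least $\beta$-model of $\KM$ as an explicit object, namely the intersection of all $\beta$-models, and then show this intersection is itself a $\beta$-model of $\KM$. First I would fix a $\beta$-model $(M,\Xcal)\models\KM$; since every $\beta$-model is isomorphic to a transitive one, I may assume it is transitive. The key observation is that among all transitive models $(N,\Ycal)\models\KM$ that are $\beta$-models, there is one of least ordinal height: the class of ordinals $\Ord^N$ for $(N,\Ycal)$ ranging over $\beta$-models of $\KM$ is a nonempty class of ordinals, so it has a least element $\tau$. I would then argue that all $\beta$-models of height exactly $\tau$ have the same first-order part. This is because the first-order part of such a model, being transitive of height $\tau$, is determined as $V_\tau$ (or the appropriate $H$-cut, but with Powerset it is $V_\tau$) — more precisely, a transitive $\beta$-model of $\KM$ with ordinal height $\tau$ has first-order part $V_\tau$, since $\KM$ proves the first-order part satisfies $\ZFC$ and the model is transitive, so its sets are exactly $V_\tau$.

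Once the first-order part $V_\tau$ is pinned down, the plan is to let $\Xcal_0 = \bigcap\{\Ycal : (V_\tau,\Ycal)\models\KM \text{ is a }\beta\text{-model}\}$ and show $(V_\tau,\Xcal_0)\models\KM$ and is a $\beta$-model, and that it sits below every $\beta$-model of $\KM$. The $\beta$-model property is automatic: a submodel (in the $V$-submodel sense) of a $\beta$-model is a $\beta$-model by observation \ref{obs4:beta-submodel}, and $\Xcal_0$ is contained in some particular $\Ycal$. For the $\KM$ axioms, Extensionality, Global Choice, and Class Replacement are handled exactly as in the proof of theorem \ref{thm1:gm-omnibus}(1): Global Choice holds because every $\Ycal$ contains the same canonical global well-order (in a $\beta$-model of $\KM$ the constructible-from-a-well-order machinery gives a definable choice of global well-order, or one simply notes any $\Ycal$ witnesses it and reflection pins it down), and Replacement and Extensionality pass to intersections freely. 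The real content is Comprehension: I need that $\Xcal_0$ is closed under $\Pi^1_k$-definability over $(V_\tau,\Xcal_0)$ for every $k$. The hard part will be showing that for a second-order formula $\phi$ with parameters in $\Xcal_0$, the class $\{x : (V_\tau,\Xcal_0)\models\phi(x)\}$ lies in every $\beta$-model $\Ycal$ — the difficulty being that satisfaction in $(V_\tau,\Xcal_0)$ is not obviously the same as satisfaction in $(V_\tau,\Ycal)$ since $\Xcal_0\subsetneq\Ycal$.

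To overcome this I would pass to the unrolling and use the bi-interpretability from chapter 2. A $\beta$-model $(V_\tau,\Ycal)\models\KM$ unrolls (after possibly thinning to a $\KMCC$-realization, which exists by corollary \ref{cor2:get-a-plus}) to a transitive model $W_\Ycal\models\wZFCmi$, with $V_\tau = H_{\kappa}^{W_\Ycal}$ for $\kappa=\Ord^{W_\Ycal}$-the-largest-cardinal; and transitivity of $W_\Ycal$ follows from proposition \ref{prop2:beta-unroll-trans} since $(V_\tau,\Ycal)$ is a $\beta$-model. The strategy is then to show there is a least such transitive $W$, namely $L_\eta$ for the least $\eta$ with $L_\eta\models\wZFCmi$ having $V_\tau$ as its cut — this is the familiar Shepherdson--Cohen-style argument that $\KM$-type theories are absolute to $L$, so the constructible sets of any $W_\Ycal$ already form a model, giving a least transitive model of $\wZFCmi$ of the relevant form. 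Cutting that least $W = L_\eta$ back off yields a $\beta$-model $(V_\tau,\Lcal)\models\KM$ which is a $V$-submodel of every $(V_\tau,\Ycal)$ (since $L_\eta$ embeds, indeed sits inside, each $W_\Ycal$ by constructibility and leastness of $\eta$, and cutting off is monotone). Hence $\Lcal\subseteq\Ycal$ for all $\beta$-models $\Ycal$, so $\Lcal = \Xcal_0$ and in particular $\Xcal_0$ is a $\beta$-model of $\KM$. Finally, any $\beta$-model $(N,\Zcal)\models\KM$ whatsoever has $\Ord^N\ge\tau$; if $\Ord^N=\tau$ then $N=V_\tau$ and $\Zcal\supseteq\Xcal_0$ by the above, while if $\Ord^N>\tau$ then $V_\tau\in N$ (as a rank-initial segment) and one checks $(V_\tau,\Xcal_0)$ is a submodel of $(N,\Zcal)$ using observation \ref{obs4:beta-v-submodel} to see $V_\tau$-restrictions of classes in $\Zcal$ behave correctly; thus $(V_\tau,\Xcal_0)$ is a submodel of every $\beta$-model of $\KM$. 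The main obstacle is verifying the leastness of the unrolled $L_\eta$ and that cutting off is compatible with submodel inclusion across models of different heights; everything else is bookkeeping with results already in hand.
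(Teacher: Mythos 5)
There is a genuine gap, and it sits in the step you use to set up the whole argument: the claim that a transitive $\beta$-model of $\KM$ of ordinal height $\tau$ has first-order part $V_\tau$, and hence that all $\beta$-models of minimal height share a common first-order part. Neither is true. Transitivity only makes the first-order part a transitive \emph{subset} of $V_\tau$; being a $\beta$-model is correctness about well-foundedness, not about power sets. In fact the least $\beta$-model of $\KM$ is the cutting off of $L_\alpha$ for the least $\alpha$ with $L_\alpha \models \ZFCmi$, so its first-order part is $L_\kappa$ for $\kappa$ the largest cardinal of $L_\alpha$ --- a countable set, nothing like the true $V_\tau$. Nor is the first-order part at the minimal height unique: adding a Cohen real $x$ over the countable $L_\alpha$ gives $L_\alpha[x] \models \ZFCmi$ with the same largest cardinal $\kappa$, and cutting it off yields a $\beta$-model of $\KM$ of the same height whose first-order part $L_\kappa[x]$ differs from $L_\kappa$. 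Consequently the object you propose to intersect, $\bigcap\{\Ycal : (V_\tau,\Ycal)\models\KM \text{ is a } \beta\text{-model}\}$, is not the least $\beta$-model, and your requirement that the least constructible unrolling have ``$V_\tau$ as its cut'' cannot be met (the cut of $L_\alpha$ is $L_\kappa$). Leastness must be taken in the submodel sense of the definition, where the least model's first-order part is in general a proper subset of the first-order part of an arbitrary $\beta$-model; one should not try to fix a common first-order part at all.

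The second half of your proposal is essentially the paper's argument, and that is where the real content lies: reduce to $\KMCC$ (so the unrolling satisfies full $\ZFCmi$, not merely $\wZFCmi$), note that containments of $\beta$-models correspond to containments of their transitive unrollings, find the least transitive model of $\ZFCmi$, and cut it off. But even there one correction is needed: $\ZFCmi$ is \emph{not} absolute to $L$, contrary to your appeal to a ``Shepherdson--Cohen-style'' absoluteness --- $L^W$ may satisfy Powerset (for instance if $W$ is obtained by collapsing all cardinals above an inaccessible over a model of $V=L$), so $L^W$ need not model $\ZFCmi$. The repair, which the paper makes explicitly, is that every transitive $W \models \ZFCmi$ nevertheless contains some $L_\beta \models \ZFCmi$; hence taking $\alpha$ least with $L_\alpha \models \ZFCmi$ gives the least transitive model of $\ZFCmi$, and its cutting off is the least $\beta$-model of $\KM$, sitting as a submodel (not a $V$-submodel) of every $\beta$-model of $\KM$.
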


\begin{proof}
First, note that it is equivalent to ask for a least $\beta$-model of $\KMCC$, by theorem \ref{thm2:inner-model-rlzble} from chapter 2. Next, observe that if $(M,\Xcal) \subseteq (N,\Ycal)$ are $\beta$-models of $\KMCC$, then their unrollings are transitive and the unrolling of $(M,\Xcal)$ must be contained in the unrolling of $(N,\Ycal)$. So we just have to see that there is a least transitive model of $\ZFCmi$. Because there is a $\beta$-model of $\KM$, there is some transitive model of $\ZFCmi$.

Take $M \models \ZFCmi$ transitive with largest cardinal $\kappa$. Then $L^M \models \ZFCm + \kappa$ is inaccessible. However, it could be that $L^M$ satisfies Powerset. (Imagine if $M$ were obtained by class forcing over a model of $\ZFC + V=L$ $+$ ``there is an inaccessible cardinal'' to collapse all cardinals above the first inaccessible.) Nevertheless, there is some ordinal $\alpha \in M$ so that $L_\alpha \models \ZFCmi$. So while $\ZFCmi$ is not absolute to $L$, every transitive model of $\ZFCmi$ contains an $L_\alpha$ which is a model of $\ZFCmi$.

Therefore, if $\alpha$ is least so that $L_\alpha \models \ZFCmi$ then $L_\alpha$ is the least transitive model of $\ZFCmi$, as desired.
\end{proof}

This proof generalizes to a fixed first-order part. First we need a definition.

\begin{definition} \label{def4:beta-rlzn}
Let $T$ be a second-order set theory and $M \models \ZFCm$. Then $\Xcal \subseteq \powerset(M)$ is a {\em $\beta$-$T$-realization for $M$} if $(M,\Xcal) \models T$ is a $\beta$-model. If such $\Xcal$ exists, then $M$ is {\em $\beta$-$T$-realizable}. The {\em least $\beta$-$T$-realization for $M$}, if it exists, is the unique $\beta$-$T$-realization for $M$ which is contained in every $\beta$-$T$-realization.
\end{definition}

\begin{corollary} \label{cor4:least-beta-km-rlzn}
Let $M$ be a $\beta$-$\KM$-realizable model of set theory with a definable global well-order. Then $M$ has a least $\beta$-$\KM$-realization.
\end{corollary}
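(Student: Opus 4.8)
The plan is to run the relativized version of the folklore argument that $\KM$ has a least $\beta$-model, keeping the first-order part $M$ fixed and exploiting the definable global well-order of $M$ to pin down a canonical construction. First I would reduce from $\KM$ to $\KMCC$: by theorem \ref{thm2:inner-model-rlzble} applied with $N = M$, every $\beta$-$\KM$-realization $\Xcal$ of $M$ contains a $\KMCC$-realization $\Xcal' \subseteq \Xcal$ of $M$ as a $V$-submodel, and by observation \ref{obs4:beta-submodel} $\Xcal'$ is again a $\beta$-model. So if there is a least $\beta$-$\KMCC$-realization $\Ycal_0$ of $M$, then $\Ycal_0 \subseteq \Xcal' \subseteq \Xcal$ for every $\beta$-$\KM$-realization $\Xcal$, and since $\KMCC$ extends $\KM$ the realization $\Ycal_0$ is itself a $\beta$-$\KM$-realization, hence the least one. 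It therefore suffices to produce a least $\beta$-$\KMCC$-realization of $M$.

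Next I would pass to unrollings. By proposition \ref{prop2:beta-unroll-trans} together with the bi-interpretability of $\KMCC$ with $\ZFCmi$, the $\beta$-$\KMCC$-realizations $\Ycal$ of $M$ correspond exactly to the transitive models $W \models \ZFCmi$ with $V_\kappa^W = M$ and $\kappa = \Ord^M$ the (inaccessible) largest cardinal of $W$, and $\Ycal \subseteq \Ycal'$ iff the corresponding unrollings satisfy $\powerset(M) \cap W \subseteq \powerset(M) \cap W'$. Since $M$ has a definable global well-order $G$, $G$ is amenable to $M$ and lies in $\Def(M)$, hence in every $\KMCC$-realization, so inside each such $W$ the set $G$ well-orders $V_\kappa^W = M$ in order type $\kappa$. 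Working inside a fixed such $W$, I would form the $(M,G)$-constructible hierarchy $L_\alpha(M,G)$ of section \ref{sec2:so-l}, let $\gamma_0$ be the least ordinal with $L_{\gamma_0}(M,G) \models \ZFCmi$ (that is, $\kappa$ inaccessible and the largest cardinal), and set $\Ycal_0 = \powerset(M) \cap L_{\gamma_0}(M,G)$. By theorem \ref{thm2:cutting-offs} the cutting-off of $L_{\gamma_0}(M,G)$ is a model of $\KMCC$, so $(M,\Ycal_0) \models \KMCC$, and $(M,\Ycal_0)$ is a $\beta$-model because $L_{\gamma_0}(M,G) \subseteq W$ is transitive, hence well-founded, so proposition \ref{prop2:beta-unroll-trans} applies to its unrolling.

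For leastness I would take an arbitrary $\beta$-$\KMCC$-realization $\Ycal$ of $M$ and unroll it to a transitive $W' \models \ZFCmi$ with $V_\kappa^{W'} = M$. Because the $(M,G)$-constructible hierarchy is defined by a recursion using only $\Delta_0$-truth over earlier stages, it is absolute between transitive models of a weak base theory, so $L_\alpha(M,G)^W = L_\alpha(M,G)^{W'}$ wherever both are defined, and satisfaction of the transitive $\omega$-standard set $L_\alpha(M,G)$ is absolute. I would then argue, in exact analogy with the folklore proof but relativized to $L(M,G)$ over $M$, that every transitive $W'' \models \ZFCmi$ with $V_\kappa^{W''} = M$ contains some $L_\gamma(M,G) \models \ZFCmi$ --- even though $L(M,G)^{W''}$ may itself satisfy Powerset --- so that the least such $\gamma$ is the same ordinal $\gamma_0$ in every such model. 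This gives $L_{\gamma_0}(M,G)^{W'} \subseteq W'$ and hence $\Ycal_0 = \powerset(M) \cap L_{\gamma_0}(M,G)^{W'} \subseteq \powerset(M) \cap W' = \Ycal$, so $\Ycal_0$ is least (and, being absolutely computed, independent of the auxiliary $W$ chosen to define it).

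The main obstacle is the claim in the last paragraph that the conjunct ``$\kappa$ is the largest cardinal'' of $\ZFCmi$ actually becomes true at a bounded stage of the $L(M,G)$-hierarchy inside each such $W''$: since $L(M,G)$ computes cardinal successors of $\kappa$ internally, one must control where the collapsing bijections above $\kappa$ first appear and show the least $\gamma$ realizing the full $\ZFCmi$ theory lands below (or exactly at) the relevant $\kappa^+$-like stage; this is the precise relativization of the standard fact that $\ZFCmi$ is not absolute to $L$ yet every transitive model of it contains an $L_\alpha$ satisfying it, and it will require a short reflection-plus-condensation argument rather than a bald citation. The hypothesis that $M$ has a definable global well-order is used essentially here, to make $G \in \Def(M)$ and hence $\Ycal_0$ canonical: without it, distinct $\beta$-$\KMCC$-realizations of $M$ could contain incomparable global well-orders of $M$, and --- as in theorem \ref{thm1:gm-omnibus}(3) --- no least realization could exist.
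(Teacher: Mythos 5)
Your proposal is correct and takes essentially the same route as the paper: the paper likewise fixes $\alpha$ least with $L_\alpha(M) \models \ZFCmi$ (your $L(M,G)$ coincides with $L(M)$ since $G$ is definable over $M$, which is exactly where the definable global well-order supplies Choice) and cuts off, citing the folklore argument of the preceding theorem for the step you flag as the main obstacle. That step goes just as you anticipate --- inside any transitive unrolling $W$ with $V_\kappa^W = M$, either $(\kappa^+)^{L(M)^W}$ exists and $L_{(\kappa^+)^{L(M)}}(M) = H_{\kappa^+}^{L(M)} \models \ZFCmi$, or else $L(M)^W$ already has $\kappa$ as its largest cardinal and is itself, externally, a model of $\ZFCmi$ --- so your write-up is not missing anything beyond the level of detail the paper itself elides.
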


\begin{proof}
Because $M$ is $\beta$-$\KM$-realizable, there is a model $W$ of $\ZFCmi$ with largest cardinal $\kappa$ so that $M = V_\kappa^W$. Consider $\alpha$ least so that $L_\alpha(M) \models \ZFCmi$ and $M = V_\kappa^{L_\alpha(M)}$. Such exists by an argument as in the proof of the previous theorem, using the fact that $M$ has a definable global well-order to get that $L_\alpha(M)$ satisfies Choice. Then the cutting off $(M,\Xcal)$ for $L_\alpha(M)$ gives the least $\beta$-$\KM$-realization for $M$.
\end{proof}

Essentially the same argument, using tools from chapter 2, gives that there is a least $\beta$-model of $\GBC + \PnCA k$.

\begin{theorem}
Let $k \ge 1$. There is a least $\beta$-model of $\GBC + \PnCA k$, if there is any $\beta$-model of $\GBC + \PnCA k$.
\end{theorem}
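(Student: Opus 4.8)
The plan is to follow, essentially verbatim, the proof of the preceding theorem that there is a least $\beta$-model of $\KM$. First I would reduce to finding a least $\beta$-model of $\GBC+\PnCAp k$: given any $\beta$-model $(N,\Ycal)$ of $\GBC+\PnCA k$, corollary \ref{cor2:get-a-plus} produces a $V$-submodel $(N,\Ycal')$ satisfying $\GBC+\PnCAp k$, and by observation \ref{obs4:beta-submodel} this is again a $\beta$-model; so a least $\beta$-model of $\GBC+\PnCAp k$ is a submodel of $(N,\Ycal')\subseteq (N,\Ycal)$ and is itself a $\beta$-model of $\GBC+\PnCA k$, hence least among those. Next I would pass through the bi-interpretability with $\ZFCmi(k)$ (theorem \ref{thm2:bi-int}). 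By proposition \ref{prop2:beta-unroll-trans}, a $\beta$-model of $\GBC+\PnCAp k$ unrolls to a transitive model of $\ZFCmi(k)$; conversely, by theorem \ref{thm2:cutting-offs} and proposition \ref{prop2:beta-unroll-trans} the cutting off of a transitive model of $\ZFCmi(k)$ is a $\beta$-model of $\GBC+\PnCAp k$. Moreover unrolling is monotone under $\subseteq$: if $(M_1,\Xcal_1)\subseteq(M_2,\Xcal_2)$ are such $\beta$-models, then since membership codes and the relation $\vin$ are first-order and, by lemma \ref{lem2:max-prtl-isom}, isomorphism of membership codes is witnessed by the first-order-definable maximum initial partial isomorphism, the two models agree on which membership codes are isomorphic; so the unrolling of $(M_1,\Xcal_1)$ is a transitive submodel of the unrolling of $(M_2,\Xcal_2)$, and the cutting-off operation carries this nesting back. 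Thus it suffices to produce a least transitive model of $\ZFCmi(k)$ and cut it off.

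For the least transitive model of $\ZFCmi(k)$, I would prove the analogue of the key claim used for $\ZFCmi$: every transitive model $W\models\ZFCmi(k)$ contains an initial segment $L_\alpha$ with $L_\alpha\models\ZFCmi(k)$. Granting this, if $\alpha_0$ is least with $L_{\alpha_0}\models\ZFCmi(k)$ then for any transitive $W\models\ZFCmi(k)$ we get $\alpha_0\le\Ord^W$, whence $L_{\alpha_0}=L_{\alpha_0}^W$ is a transitive submodel of $W$; so $L_{\alpha_0}$ is the least transitive model, and its cutting off is the least $\beta$-model of $\GBC+\PnCAp k$. The claim itself is the familiar reflection-along-the-$L$-hierarchy argument, as in the $\ZFCmi$ case, but now one needs only $\Sigma_k$-Collection to run it: work inside $W$ (or inside an appropriate $L$-level of $W$, such as $H_{\kappa^+}^{L^W}$ when $L^W$ has cardinals above the largest cardinal $\kappa$ of $W$), build an $\omega$-chain $\alpha_0<\alpha_1<\cdots$ so that $L_{\alpha_{n+1}}$ is closed under witnesses for the existential subformulae of $\Sigma_k$-formulae with parameters from $L_{\alpha_n}$, each step using an instance of $\Sigma_k$-Collection, and set $\alpha=\sup_n\alpha_n$; then $L_\alpha$ satisfies $\Sigma_k$-Separation and $\Sigma_k$-Collection, and for suitable $\alpha$ also agrees with the surrounding model that $\kappa$ is the largest cardinal and is inaccessible. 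As in cor4:least-beta-km-rlzn, the same construction relativized to a parameter $M$ (building $L_\alpha(M)$) yields: every $\beta$-$(\GBC+\PnCA k)$-realizable $M$ with a definable global well-order has a least $\beta$-$(\GBC+\PnCA k)$-realization.

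The step I expect to require the most care is the complexity bookkeeping in the reflection ensuring that $\Sigma_k$-Collection — all that $\ZFCmi(k)$ provides — suffices to preserve the "$\kappa$ is the largest cardinal and is inaccessible" clause when passing to $L_\alpha$. The point to exploit is that this clause is a sentence of fixed, bounded complexity, and that its two halves behave well: regularity of $\kappa$ and the closure of $V_\kappa$ under powerset are downward absolute to $L$ and to any transitive submodel (no reflection needed), so reflection is needed only for "$\kappa$ is the largest cardinal" (i.e.\ that every larger ordinal below $\alpha$ is collapsed) and for the $\Sigma_k$ Separation/Collection schemata, all of which can be arranged for $\alpha$ in a club using $\Sigma_k$-Collection. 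A secondary point to verify carefully, exactly as is implicit in the $\KM$ argument, is that the largest cardinal of the least transitive model $L_{\alpha_0}$ is no larger than that of any other transitive model of $\ZFCmi(k)$, so that the cutting-offs genuinely nest; this follows from the fact that the $L_\alpha$ produced inside a transitive model $W$ of $\ZFCmi(k)$ can be taken to have the same largest cardinal as $W$.
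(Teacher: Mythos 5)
Your proposal is correct and takes essentially the same route as the paper: reduce, via corollary \ref{cor2:get-a-plus} and the unrolling/cutting-off correspondence for $\beta$-models, to producing a least transitive model of $\ZFCmi(k)$, which is the least $L_\alpha \models \ZFCmi(k)$ because every transitive model of $\ZFCmi(k)$ contains such an $L_\alpha$ by passing to $L$ and reflecting. The paper's own proof is in fact terser than yours, leaving the $\Sigma_k$-reflection bookkeeping and the nesting of the cutting-offs implicit, so your added detail only fills in what the paper takes for granted---though the largest-cardinal comparison you flag at the end needs one more beat than you give it (for instance: if the least $\alpha_0$ lies strictly below the $\alpha$ found inside $W$, then a Skolem-hull-and-condensation argument carried out inside $L_\alpha$ shows $L_{\alpha_0}$ is countable there, so its largest cardinal is below $\kappa$).
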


\begin{proof}
Again, this reduces to showing that there is a least transitive model of $\ZFCmi(k)$, using that $\beta$-models of $\GBC + \PnCAp k$ unroll to transitive models of $\ZFCmi(k)$. 

If $M \models \ZFCmi(k)$ has largest cardinal $\kappa$ then $L^M \models \ZFCmi(k) + \kappa$ is inaccessible. So there is $\alpha \in M$ so that $L_\alpha \models \ZFCmi(k)$ has largest cardinal $\kappa$. Thus, if $\alpha$ is least so that $L_\alpha \models \ZFCmi(k)$ then $L_\alpha$ is the least transitive model of $\ZFCmi(k)$.
\end{proof}

\begin{corollary} \label{cor4:least-beta-pnca-rlzn}
Fix $k \ge 1$ and let $M$ be a $\beta$-$(\GBC + \PnCA k)$-realizable model of set theory with a definable global well-order. Then $M$ has a least $\beta$-$(\GBC + \PnCA k)$-realization.
\end{corollary}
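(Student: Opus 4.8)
The plan is to mimic the proof of the preceding theorem (that there is a least $\beta$-model of $\GBC + \PnCA k$) but relativized to a fixed first-order part $M$, exactly as Corollary \ref{cor4:least-beta-km-rlzn} did for $\KM$. First I would observe that, by Corollary \ref{cor2:get-a-plus}, having a $\beta$-$(\GBC + \PnCA k)$-realization is equivalent to having a $\beta$-$(\GBC + \PnCAp k)$-realization, so without loss we may work with the plus-version; this is convenient because $\GBC + \PnCAp k$ is bi-interpretable with $\ZFCmi(k)$ and $\beta$-models correspond under unrolling to \emph{transitive} models (Proposition \ref{prop2:beta-unroll-trans}). Also, since $(M,\Xcal) \subseteq (N,\Ycal)$ for $\beta$-models of $\GBC + \PnCAp k$ implies the unrolling of $(M,\Xcal)$ is contained in the unrolling of $(N,\Ycal)$ (both being transitive), the least-$\beta$-realization question for $M$ reduces to finding a least transitive model $W$ of $\ZFCmi(k)$ with largest cardinal $\kappa$ and $M = V_\kappa^W$.

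The key step is the $L$-argument. Given that $M$ is $\beta$-$(\GBC+\PnCAp k)$-realizable, fix a witnessing $\beta$-model whose unrolling is a transitive $W \models \ZFCmi(k)$ with largest cardinal $\kappa$ and $M = V_\kappa^W = H_\kappa^W$. Now work with $L(M)$ inside $W$: because $M$ has a definable global well-order, $L_\alpha(M)$ satisfies Choice for every $\alpha$, and the usual reflection-along-the-$L(M)$-hierarchy argument (as in the proof of the preceding theorem, and as in Lemma \ref{lem2:sol-refl} / Proposition \ref{prop4:hyp-v-in-kmp}) shows there are unboundedly many $\alpha \in \Ord^W$ with $L_\alpha(M) \models \ZFCmi(k)$; note here that $\kappa$ stays inaccessible in $L_\alpha(M)$ since inaccessibility of $\kappa$ in $W$ reflects down to $L$, and $M = V_\kappa^{L_\alpha(M)}$ since $V_\kappa^W$ and $V_\kappa^{L_\alpha(M)}$ agree (every subset of a set in $V_\kappa$ that lies in $W$ and is constructible from $M$ is already in $M$, and more to the point $\kappa$'s inaccessibility in $L_\alpha(M)$ forces $V_\kappa^{L_\alpha(M)} = H_\kappa^{L_\alpha(M)} = M$). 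Then I would let $\alpha$ be least such that $L_\alpha(M) \models \ZFCmi(k)$ with $M = V_\kappa^{L_\alpha(M)}$, and define $\Xcal$ to be the cutting-off of $L_\alpha(M)$, i.e.\ $\Xcal = \powerset(M)^{L_\alpha(M)}$. By Theorem \ref{thm2:cutting-offs}, $(M,\Xcal) \models \GBC + \PnCAp k$, and it is a $\beta$-model since $L_\alpha(M)$ is transitive (Proposition \ref{prop2:beta-unroll-trans}).

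Finally I would verify leastness: if $\Ycal$ is any $\beta$-$(\GBC+\PnCAp k)$-realization for $M$, unroll $(M,\Ycal)$ to a transitive $W' \models \ZFCmi(k)$ with $M = V_\kappa^{W'}$. Since $W'$ is transitive, $L^{W'}$ computes $L(M)$ correctly, and $W'$ must contain some $L_\beta(M) \models \ZFCmi(k)$ with $M = V_\kappa^{L_\beta(M)}$, whence $\beta \ge \alpha$ by leastness, so $L_\alpha(M) \in W'$ and $L_\alpha(M)$ (being an initial segment of the constructible hierarchy over $M$) is correctly computed by $W'$; therefore $\Xcal = \powerset(M)^{L_\alpha(M)} \subseteq \powerset(M)^{W'} = \Ycal$. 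The main obstacle I anticipate is the bookkeeping around the clause ``$M = V_\kappa^{L_\alpha(M)}$'': one must be careful that the reflection argument can be arranged to preserve not just ``$\kappa$ inaccessible'' but the identification of $M$ with $H_\kappa$ of the reflecting level, and that this is a property of the form needed for the least-ordinal argument to go through — but this is handled exactly as in the proof of Corollary \ref{cor4:least-beta-km-rlzn}, using that $M$'s definable global well-order propagates down to $L_\alpha(M)$.
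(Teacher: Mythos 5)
Your proposal is correct and follows essentially the same route as the paper, which proves this corollary exactly by repeating the argument of Corollary \ref{cor4:least-beta-km-rlzn} with $\ZFCmi(k)$ in place of $\ZFCmi$: unroll a $\beta$-realization to a transitive $W$ with $M = V_\kappa^W$, take the least $\alpha$ with $L_\alpha(M) \models \ZFCmi(k)$ and $M = V_\kappa^{L_\alpha(M)}$ (using the definable global well-order for Choice in $L_\alpha(M)$), and cut off. Your additional bookkeeping (passing to the plus-theory via Corollary \ref{cor2:get-a-plus}, the reflection details, and absoluteness of the $L(M)$-hierarchy for the leastness check) just makes explicit what the paper leaves implicit.
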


\begin{proof}
Essentially the same as the proof of corollary \ref{cor4:least-beta-km-rlzn}.
\end{proof}

This is not the end of the story. Does corollary \ref{cor4:least-beta-km-rlzn} give an exact characterization of when $M$ has a least $\beta$-$\KM$-realization? (And in light of corollary \ref{cor4:least-beta-pnca-rlzn} the same question can be asked about $\GBC + \PnCA k$, for $k \ge 1$, instead of $\KM$.)

\begin{question}
Suppose $M \models \ZFC$ is $\beta$-$\KM$-realizable but does not have a definable global well-order. Can we conclude that $M$ does not have a least $\beta$-$\KM$-realization?
\end{question}

It is Global Choice that is the possible culprit here. If we drop that from the axioms then we do always get least realizations.

\begin{proposition}
Let $\KM^{\neg \axiom{GC}}$ denote $\KM$ with Choice for sets but without Global Choice. Suppose that $M \models \ZFC$ is $\beta$-$\KM^{\neg \axiom{GC}}$-realizable. Then $M$ has a least $\beta$-$\KM^{\neg \axiom{GC}}$-realization. 
\end{proposition}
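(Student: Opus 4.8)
The plan is to adapt the arguments used for $\KM$ (and fragments of $\PnCA k$) but exploit the fact that, without Global Choice, there is no obstacle coming from the coding of a global well-order, so the natural candidate $\Xcal$ will simply be the classes of $M$ that are "constructed without choice over $M$" — concretely, the classes coded in the smallest admissible-like model built over $M$. First I would set up the correct analogue of the unrolling/cutting-off correspondence for $\KM^{\neg \axiom{GC}}$: dropping Global Choice on the second-order side corresponds to dropping Choice (but keeping Choice-for-sets, i.e.\ the well-ordering theorem restricted to the small cardinal's hereditarily-small sets) in the unrolled first-order model. Since $M \models \ZFC$, the first-order part $M = V_\kappa^W$ of the unrolling still satisfies full $\ZFC$, so the only place Choice could fail in $W$ is in the upper part of $W$, above $\kappa$. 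Thus, just as in the proof that there is a least $\beta$-model of $\KM$, the task reduces to: show that for a fixed transitive $M \models \ZFC$ which is $\beta$-$\KM^{\neg\axiom{GC}}$-realizable, there is a least transitive model of "$\ZFCm$-without-Choice-above-$\kappa$ plus $\kappa$ inaccessible plus $M = V_\kappa$".

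Next I would identify that least transitive model explicitly as an $L(M)$-style structure: let $\alpha$ be least so that $L_\alpha(M) \models \KP + \text{``}\kappa = \Ord^M \text{ is inaccessible and is the largest cardinal''}$ (such $\alpha$ exists below the height of any $\beta$-realizing model $W$, by the same reflection-along-the-$L(M)$-hierarchy argument used in the proof that $\GBCm + \PnCA 1$ proves $\Hyp(V)$ exists, and using that $M$ itself satisfies full $\ZFC$ so that $L_\alpha(M)$ gets $\Sigma_0$-Collection unboundedly). The key point that makes things work \emph{without} Global Choice is that $L_\alpha(M)$ need not be a model of Choice — we only need it to model $\KM^{\neg\axiom{GC}}$ after cutting off, and cutting off $L_\alpha(M)$ at $\kappa$ yields $(M,\Xcal_0)$ with $\Xcal_0 = L_\alpha(M) \cap \powerset(M)$, which satisfies all axioms of $\KM^{\neg\axiom{GC}}$: Comprehension because $L_\alpha(M)$ has enough Separation levels (this is where I would lean on proposition~\ref{prop2:sep-in-unroll} and its $L$-analogue, noting $\KM$-level Comprehension follows from $\Sigma_n$-Separation in the unrolling for all $n$, which holds since $L_\alpha(M) \models \KP$ gives $\Sigma_0$-Separation and, more carefully, one needs $L_\alpha(M)$ to reflect enough — so I would actually take $\alpha$ least with $L_\alpha(M) \models \text{``}\ZFCm$ minus Choice''$+ \kappa$ inaccessible, which exists inside any $\beta$-realizing $W$ because $W$ itself is such a model above $\kappa$). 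Choice-for-sets holds in the cut-off model because $M \models \ZFC$. And it is a $\beta$-model because $L_\alpha(M)$ is transitive, hence the unrolling is well-founded, hence by proposition~\ref{prop2:beta-unroll-trans} (or observation~\ref{obs4:beta-submodel}/\ref{obs4:beta-v-submodel}) the cut-off is a $\beta$-model.

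Finally I would verify leastness: given any $\beta$-$\KM^{\neg\axiom{GC}}$-realization $\Ycal$ for $M$, unroll $(M,\Ycal)$ to a transitive model $W_\Ycal$ of "$\ZFCm$-minus-Choice $+$ $\kappa$ inaccessible $+$ $M = V_\kappa$"; transitivity of $W_\Ycal$ follows from $\Ycal$ being a $\beta$-realization. Then $L(M)^{W_\Ycal}$ is an inner model of $W_\Ycal$ satisfying the same theory (constructibility is absolute enough; this uses that $M$ has a well-order of its hereditarily-small sets since $M\models\ZFC$, so $L(M)$ genuinely models $\ZFCm$-minus-Powerset with Choice-for-sets), and there is $\alpha' \le \Ord^{W_\Ycal}$ with $L_{\alpha'}(M) \models$ the theory; by minimality of $\alpha$ we get $\alpha \le \alpha'$ and $L_\alpha(M) \subseteq W_\Ycal$, hence $\Xcal_0 = L_\alpha(M)\cap\powerset(M) \subseteq W_\Ycal \cap \powerset(M) = \Ycal$. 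So $(M,\Xcal_0)$ is the least $\beta$-$\KM^{\neg\axiom{GC}}$-realization. The main obstacle I anticipate is pinning down exactly which first-order theory $L_\alpha(M)$ must satisfy so that cutting off gives full Comprehension on the second-order side while not accidentally requiring Global Choice — i.e.\ getting the precise "$\KM^{\neg\axiom{GC}} \leftrightarrow$ (first-order theory without Choice)" bi-interpretability statement right, and making sure the reflection argument producing $\alpha$ genuinely works over a base without Choice; everything else is a routine transcription of the $\KM$ and $\GBC+\PnCA k$ proofs with the Global Choice hypothesis deleted.
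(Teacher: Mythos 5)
Your proposal is correct and follows essentially the same route as the paper: the paper's (sketched) proof also takes $\alpha$ least with $L_\alpha(M)$ a model of the choiceless first-order counterpart ($\ZFm_{\mathrm I}$, with largest cardinal $\Ord^M$) and declares its cutting off to be the least $\beta$-$\KM^{\neg \axiom{GC}}$-realization, the whole point being, as you say, that without Global Choice one no longer needs $L_\alpha(M)$ to satisfy Choice and hence no definable global well-order on $M$. The theory you were trying to pin down is exactly this $\ZFm_{\mathrm I}$, and your leastness argument via unrolling a given $\beta$-realization and using absoluteness of the $L(M)$-hierarchy is the intended one.
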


\begin{proof}[Proof sketch]
Similar to the proof of corollary \ref{cor4:least-beta-km-rlzn}. The least $\beta$-$\KM^{\neg \axiom{GC}}$-realization for $M$ is the cut off model obtained from $L_\alpha(M)$ where $\alpha$ is least so that $L_\alpha(M) \models \ZFmi$ and the largest cardinal of $L_\alpha(M)$ is $\Ord^M$.
\end{proof}

A similar fact holds for $\GBc + \PnCA k$, i.e.\ $\GBC + \PnCA k$ but where we drop Global Choice.

To finish off this section, let us see that there are minimal but non-least $\KM$-realizations. That is, there are $M \models \ZFC$ which have a $\KM$-realization $\Xcal$ so that there is no $\KM$-realization $\Ycal$ for $M$ which is strictly contained inside $\Xcal$. But by theorem \ref{thm4:no-least-rlzn} $\Xcal$ cannot be least.

\begin{observation}
Suppose there is a $\beta$-model of $\KM$. Then, there are $M \models \ZFC$ which have minimal but non-least $\KM$-realizations.
\end{observation}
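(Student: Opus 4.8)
The plan is to exhibit a specific countable $M \models \ZFC$ whose least $\beta$-$\KM$-realization is simultaneously a minimal $\KM$-realization among \emph{all} realizations of $M$, yet, by theorem \ref{thm4:no-least-rlzn}, not a least one. So the observation will follow by combining the positive result about least $\beta$-realizations with the negative result about least realizations, using the fact that $V$-submodels of $\beta$-models are $\beta$-models.

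First I would fix the first-order part. Since there is a $\beta$-model of $\KM$, unrolling it gives a (well-founded, hence up to isomorphism transitive) model of $\ZFCmi$; passing to a countable elementary submodel and collapsing, there is a countable transitive model of $\ZFCmi$. As in the proof that there is a least $\beta$-model of $\KM$, every transitive model of $\ZFCmi$ contains some $L_\alpha \models \ZFCmi$, so there is a least ordinal $\alpha_0$ with $L_{\alpha_0} \models \ZFCmi$, and $\alpha_0$ is countable. Let $\kappa$ be the largest cardinal of $L_{\alpha_0}$, which $L_{\alpha_0}$ believes is inaccessible, and set $M = V_\kappa^{L_{\alpha_0}} = L_\kappa$. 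Then $M \models \ZFC$, $M$ is countable (as $\kappa < \alpha_0$ and $\alpha_0$ is countable), and $M$ carries the canonical definable well-order of $L$, hence a definable global well-order. Cutting off $L_{\alpha_0}$ (theorem \ref{thm2:cutting-offs}) gives $(M,\Xcal_0) \models \KMCC$, and since the unrolling of this cut-off model is isomorphic to the transitive set $L_{\alpha_0}$, proposition \ref{prop2:beta-unroll-trans} tells us $(M,\Xcal_0)$ is a $\beta$-model. So $M$ is $\beta$-$\KM$-realizable, and by corollary \ref{cor4:least-beta-km-rlzn} it has a least $\beta$-$\KM$-realization $\Xcal$.

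Next I would verify minimality. Suppose $\Ycal$ is any $\KM$-realization for $M$ with $\Ycal \subseteq \Xcal$. Then $(M,\Ycal)$ is a $V$-submodel of the $\beta$-model $(M,\Xcal)$, so by observation \ref{obs4:beta-submodel} it is itself a $\beta$-model. Thus $\Ycal$ is a $\beta$-$\KM$-realization for $M$, and leastness of $\Xcal$ among those forces $\Xcal \subseteq \Ycal$, i.e. $\Ycal = \Xcal$. Hence $\Xcal$ is a minimal $\KM$-realization for $M$. On the other hand, $M$ is a countable model of $\ZFC$, so by theorem \ref{thm4:no-least-rlzn} it has no least $\KM$-realization; in particular $\Xcal$ is minimal but not least, which is exactly what we want.

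There is no serious obstacle here; the only points requiring care are the routine verifications that the constructed $M$ really is countable, models $\ZFC$, and is $\beta$-$\KM$-realizable with a definable global well-order, so that corollary \ref{cor4:least-beta-km-rlzn} applies. The conceptual content is just the juxtaposition of that corollary, observation \ref{obs4:beta-submodel}, and theorem \ref{thm4:no-least-rlzn}.
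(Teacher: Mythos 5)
Your proof is correct and follows essentially the same route as the paper's: take a countable $\beta$-$\KM$-realizable $M$ with a definable global well-order, apply corollary \ref{cor4:least-beta-km-rlzn} to get its least $\beta$-$\KM$-realization, use observation \ref{obs4:beta-submodel} to see this realization is minimal, and invoke theorem \ref{thm4:no-least-rlzn} to see it is not least. The only difference is that you spell out an explicit such $M$ (namely $L_\kappa$ obtained from the least $L_{\alpha_0}\models\ZFCmi$), which the paper leaves implicit.
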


\begin{proof}
Let countable $M \models \ZFC$ have a definable global well-order and be $\beta$-$\KM$-realizable. By corollary \ref{cor4:least-beta-km-rlzn} it has a least $\beta$-$\KM$-realization, call it $\Xcal$. Because any $\KM$-realization $\Ycal \subseteq \Xcal$ must also give a $\beta$-model and thus $\Ycal = \Xcal$, this $\Xcal$ is a minimal $\KM$-realization. But as we saw earlier in this section, $M$ does not have a least $\KM$-realization.
\end{proof}

Left open is the question of when, if ever, other countable $M \models \ZFC$ have minimal but non-least $\KM$-realizations. 

\begin{question}
Is there countable $M \models \ZFC$ which does not have a minimal $\KM$-realization? 
\end{question}

The same question can be asked for $\GBC + \PnCA k$ instead of $\KM$.

\section{Medium theories} \label{sec4:medium}

I turn now to the theories of medium strength, namely $\GBC + \ETR$ and its variants. The main results of this section are that $\GBC + \ETR$ has a least $\beta$-model and that for nice enough choice of $\Gamma$ that $\GBC + \ETR_\Gamma$ has a least transitive and a least $\beta$-model. (See the discussion around the relevant theorems in this section for what ``nice enough'' means.) The major question left open is whether there is a least transitive model of $\ETR$.

As a starting-off point, let us see that some models have least $\beta$-$(\GBC + \ETR)$-realizations.

\begin{theorem} \label{thm4:least-beta-etr-rlzn}
Let $M \models \ZFC$ be a transitive model with a definable global well-order. Then if $M$ has a $\beta$-$(\GBC + \ETR)$-realization it has a least $\beta$-$(\GBC + \ETR)$-realization $\Xcal$. Moreover, $\Xcal$ is also the least $(\GBC+\ETR)$-realization of $M$.
\end{theorem}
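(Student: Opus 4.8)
The plan is to build the least $\beta$-$(\GBC + \ETR)$-realization for $M$ as an increasing union of $\GBC$-realizations obtained by closing under iterated truth predicates, mirroring the construction from the proof of theorem \ref{thm3:etr-gamma-inner-model} but running it through all class well-orders rather than a fixed $\Gamma$. Start with $\Xcal_0 = \Def(M;G)$, where $G$ is the definable global well-order; this is a $\GBC$-realization for $M$. Given $\Xcal_n$, let $\Xcal_{n+1}$ be the collection of all subsets of $M$ definable (with set parameters) from a class of the form $\Tr_\Gamma(A)$ for some $A \in \Xcal_n$ and some well-order $\Gamma \in \Xcal_n$; here $\Tr_\Gamma(A)$ is the $\Gamma$-iterated truth predicate, which exists and is a genuine (well-founded) class because we are working with transitive $M$ and the $\Gamma$ in question are actual well-orders. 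Set $\Xcal = \bigcup_{n \in \omega} \Xcal_n$. As in the proof of corollary \ref{cor3:etr-inner-model}, $(M,\Xcal) \models \GBC$ because it is an increasing union of $\GBC$-realizations, and $(M,\Xcal) \models \ETR$ by theorem \ref{thm3:etr-iff-itr}: any $A, \Gamma \in \Xcal$ lie in some $\Xcal_n$, so $\Tr_\Gamma(A) \in \Xcal_{n+1} \subseteq \Xcal$. Since $M$ is transitive and every class in $\Xcal$ is built from genuine iterated truth predicates, which are well-founded, $(M,\Xcal)$ is a $\beta$-model.

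The heart of the argument is minimality. I would show by induction on $n$ that $\Xcal_n \subseteq \Ycal$ for every $(\GBC + \ETR)$-realization $\Ycal$ for $M$ (not merely every $\beta$-realization), which simultaneously proves both halves of the ``moreover''. The base case is immediate since any $\GBC$-realization contains $\Def(M;G)$: it contains $G$ because $G$ is definable and Elementary Comprehension holds, and then it is closed under first-order definability. For the inductive step, suppose $\Xcal_n \subseteq \Ycal$ and take $A \in \Xcal_n$ and $\Gamma \in \Xcal_n$ a well-order; then $A, \Gamma \in \Ycal$, and since $(M,\Ycal) \models \ETR$, theorem \ref{thm3:etr-iff-itr} gives $\Tr_\Gamma(A)^{(M,\Ycal)} \in \Ycal$. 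The key point is that this class is the \emph{same} class as the one used to build $\Xcal_{n+1}$: being the $\Gamma$-iterated truth predicate relative to $A$ is a first-order property in the parameters $\Gamma$ and $A$, so any two $\GBcm$-models containing $\Gamma$, $A$, and a witnessing class agree that it is $\Tr_\Gamma(A)$, and by the uniqueness observation following the definition of iterated truth predicates (and using that $M$ is transitive, hence $\omega$-standard, so there are no nonstandard-formula subtleties) there is exactly one such class. Hence every class definable from $\Tr_\Gamma(A)$ with set parameters is in $\Ycal$, so $\Xcal_{n+1} \subseteq \Ycal$. Taking the union, $\Xcal \subseteq \Ycal$.

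The main obstacle I anticipate is the bookkeeping around \emph{which} $\Gamma$ are legitimately available at each stage, and making sure the recursion in fact closes off after $\omega$ steps rather than needing to be iterated into the transfinite. The first concern is handled by the observation that $\Xcal_n$ is itself a $\GBC$-realization, so its well-orders are honest well-orders (as $M$ is transitive) and $\ETR$ in $\Ycal$ applies to exactly these; moreover any two of our models agree on which common classes are well-founded by observation \ref{obs4:agree-on-wfddness}. The second concern dissolves because $\ETR$ only requires $\Tr_\Gamma(A)$ for $A$ and $\Gamma$ already present, and a single pass of closure under ``add $\Tr_\Gamma(A)$ for all available $A,\Gamma$'' followed by closure under first-order definability, iterated $\omega$ times, suffices — this is precisely the content of theorem \ref{thm3:etr-iff-itr} combined with the fact that the union of an $\omega$-chain of $\GBC$-realizations is a $\GBC$-realization (theorem \ref{thm1:gm-omnibus}.(5), since the chain is linearly ordered by $\subseteq$). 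One should also note explicitly that $\Xcal$ contains a global well-order, namely $G$ itself, so Global Choice is no issue, and that since $\Xcal \subseteq \Ycal$ for \emph{all} $(\GBC+\ETR)$-realizations while $\Xcal$ is itself one, $\Xcal$ is simultaneously the least $(\GBC + \ETR)$-realization and the least $\beta$-$(\GBC+\ETR)$-realization.
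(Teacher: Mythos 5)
Your construction is the same one the paper uses (close under iterated truth predicates in $\omega$ stages, then prove $\Xcal_n\subseteq\Ycal$ by induction using uniqueness of $\Tr_\Gamma(A)$), but there is a genuine gap in how you certify that the recursion makes sense and that the result is a $\beta$-model. Twice you lean on the claim that, because $M$ is transitive, the class well-orders appearing in $\Xcal_n$ are ``honest'' well-orders, and you conclude that $(M,\Xcal)$ is a $\beta$-model because its classes are ``built from genuine iterated truth predicates, which are well-founded.'' Neither inference is valid: transitive models of second-order set theory can be wrong about the well-foundedness of \emph{class} relations (the paper stresses exactly this point in chapter 1 -- witnesses to ill-foundedness are external $\omega$-sequences that need not be sets of $M$ nor classes of the realization), and a class definable from a well-founded class can perfectly well be a relation the model wrongly believes to be well-founded; already $\Def(M)$ could contain such a ``fake'' well-order. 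Observation \ref{obs4:agree-on-wfddness} only gives that realizations with the same first-order part agree with \emph{each other}, not that they are correct. This matters at two places: if a fake well-order $\Gamma$ were used at stage $n+1$, the external object ``$\Tr_\Gamma(A)$'' is not even well-defined (and $\Tr_\Gamma(A)^{(M,\Ycal)}$ would not exist in a $\beta$-realization $\Ycal$, breaking your containment induction); and in verifying $\ETR$ for $(M,\Xcal)$ you must handle every $\Gamma$ that $(M,\Xcal)$ \emph{believes} is a well-order, which your argument only covers if you already know $(M,\Xcal)$ is a $\beta$-model -- the very thing you asserted without proof.

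The paper's proof closes this hole by anchoring the whole construction to the fixed $\beta$-realization $\Ycal$ from the start: $\Xcal_{n+1}$ is defined using those $\Gamma\in\Xcal_n\subseteq\Ycal$ which are well-orders, and since $(M,\Ycal)$ is a $\beta$-model this is the same as asking $(M,\Ycal)\models\Gamma$ is a well-order, so all $\Gamma$ used are genuinely well-founded and $\Tr_\Gamma(A)$ is the unique externally-determined class. Then $\Xcal\subseteq\Ycal$ holds by construction, $(M,\Xcal)$ is a $\beta$-model by observation \ref{obs4:beta-submodel} (a $V$-submodel of a $\beta$-model is a $\beta$-model), and only \emph{after} that does one check $\ETR$ and run the leastness induction against an arbitrary realization $\Zcal$, where the honesty of $\Gamma$ (inherited from $\Ycal$) plus $\omega$-standardness of $M$ gives $(\Tr_\Gamma(A))^{(M,\Zcal)}=\Tr_\Gamma(A)$. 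Your argument can be repaired by reorganizing it in this way -- define the stages relative to $\Ycal$ (or prove $\Xcal_n\subseteq\Ycal$ simultaneously with the construction) and derive correctness about well-orders and $\beta$-ness from that containment rather than from transitivity of $M$.
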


\begin{proof}
Fix $\Ycal$ a $\beta$-$(\GBC + \ETR)$-realization for $M$. The strategy is to define $\Xcal$, which will be the least $\beta$-$(\GBC + \ETR)$-realization for $M$ contained inside $\Ycal$. We will then see that in fact $\Xcal$ is contained inside any $(\GBC+\ETR)$-realization for $M$. In particular, if we started with a different $\beta$-realization we would define the same $\Xcal$. 

We define $\Xcal$ in $\omega$ many steps. First, let $\Xcal_0 = \Def(M)$. Then $(M,\Xcal_0) \models \GBC$ because $M$ has a definable global well-order. Now given $\Xcal_n \subseteq \Ycal$ define $\Xcal_{n+1}$ to consist of all classes in $\Ycal$ which are definable from $\Tr_\Gamma(A)$ for some $\Gamma,A \in \Xcal_n$. Formally,
\[
\Xcal_{n+1} = \bigcup \left\{
\Def\left(M;\Tr_\Gamma(A)\right) : A,\Gamma \in \Xcal_n \mand \Gamma \text{ is a well-order} 
\right\}.
\]
Because $(M,\Ycal)$ is a $\beta$-model, it is correct about which $\Gamma$'s are well-orders. So we could equivalently ask in the definition of $\Xcal_{n+1}$ that $(M,\Ycal) \models \Gamma$ is a well-order.
Then $\Xcal_{n+1} \subseteq \Ycal$ is a $\GBC$-realization for $M$. Finally, set $\Xcal = \bigcup_n \Xcal_n$. It is clear that $\Xcal \subseteq \Ycal$. 

Let us check that $(M,\Xcal) \models \GBC + \ETR$. It satisfies $\GBC$ because $\Xcal$ is the union of an increasing chain of $\GBC$-realizations for $M$. To see that it satisfies Elementary Transfinite Recursion, pick $A,\Gamma \in \Xcal$ where $(M,\Xcal) \models \Gamma$ is a well-order. Observe that $\Ycal$ agrees with $\Xcal$ that $\Gamma$ is a well-order. Since $A,\Gamma \in \Xcal_n$ for some $n$ this means that $\Tr_\Gamma(A) \in \Xcal_{n+1} \subseteq \Xcal$. 

Finally, let us see that $\Xcal$ is contained in any $(\GBC+\ETR)$-realization $\Zcal$, which will establish that $\Xcal$ is both the least $\beta$-$(\GBC+\ETR)$-realization for $M$ and the least $(\GBC+\ETR)$-realization for $M$. Clearly, $\Xcal_0 = \Def(M)$ is contained inside $\Zcal$. We continue upward inductively. Having already seen that $\Xcal_n \subseteq \Zcal$, consider $\Gamma,A \in \Xcal_n$. Then all three of $\Ycal$, $\Xcal_n$, and $\Zcal$ must agree on which classes are well-founded and which classes are iterated truth predicates. And since $\Gamma$ really is well-founded, externally we can see that there is only one option for what class is the $\Gamma$-iterated truth predicate relative to $A$. So since $(M,\Zcal) \models \ETR$ we get that $(\Tr_\Gamma(A))^{(M,\Zcal)} = \Tr_\Gamma(A) \in \Xcal_{n+1} \cap \Zcal$. And since $\Zcal$ is closed under first-order definability, any class definable from $\Tr_\Gamma(A)$ must be in $\Zcal$. So $\Xcal_{n+1} \subseteq \Zcal$. This holds for all $n$, so $\Xcal \subseteq \Zcal$, as desired.
\end{proof}

The proof did not use Powerset. So we get a version for theories without Powerset. And the only place we used that $M$ has a definable global well-order was to get Global Choice in $\Xcal$. So we also get a version for theories without Global Choice. The following corollary encapsulates both results.

\begin{corollary}
Let $M \models \ZFCm$ be a transitive model. Suppose $M$ is $\beta$-($\GBcm + \ETR)$-realizable. Then $M$ has a least $\beta$-$(\GBcm + \ETR)$-realization. If $M$ moreover has a definable global well-order then $M$ has a least $\beta$-$(\GBCm + \ETR)$-realization. \qed
\end{corollary}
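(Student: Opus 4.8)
The plan is to observe that the corollary is a direct consequence of theorem \ref{thm4:least-beta-etr-rlzn} together with a careful bookkeeping of exactly which hypotheses were used in that theorem's proof. The strategy is to reinspect the construction of $\Xcal$ as $\bigcup_n \Xcal_n$ and check, line by line, which steps depend on Powerset and which depend on the existence of a definable global well-order. Once this is done, the two assertions of the corollary follow by running the same argument in the two weakened settings.

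First I would handle the version without Powerset. In the proof of theorem \ref{thm4:least-beta-etr-rlzn}, Powerset is never invoked: the base case $\Xcal_0 = \Def(M)$ and the recursive step building $\Xcal_{n+1}$ from iterated truth predicates $\Tr_\Gamma(A)$ use only Elementary Comprehension, Class Replacement, and the existence of iterated truth predicates (which is provable in $\GBCm + \ETR$ by proposition \ref{prop3:etromega-truth-preds} and theorem \ref{thm3:etr-iff-itr}, none of which requires Powerset). The verification that $(M,\Xcal)$ is a $\beta$-model uses only that $(M,\Ycal)$ is a $\beta$-model and that $M$ is correct about well-foundedness for set relations — again no Powerset. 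The argument that $\Xcal \subseteq \Zcal$ for every $(\GBcm + \ETR)$-realization $\Zcal$ uses only the uniqueness of iterated truth predicates relative to a fixed second-order model (observation after definition of $\Tr_\Gamma(A)$) plus the fact that externally well-founded $\Gamma$ has a unique $\Gamma$-iterated truth predicate. So the whole proof goes through verbatim over $\GBcm + \ETR$, yielding a least $\beta$-$(\GBcm + \ETR)$-realization for any $\beta$-$(\GBcm+\ETR)$-realizable transitive $M$.

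Second I would note that the hypothesis of a definable global well-order was used in exactly one place: to conclude $(M,\Xcal_0) \models \GBC$, i.e. that $\Def(M)$ contains a global well-order. If we drop that hypothesis, then $\Def(M)$ still satisfies all the axioms of $\GBcm$ (by the observation in chapter 1 that $(M,\Def(M)) \models \GBcm$), and each $\Xcal_{n+1}$ remains a $\GBcm$-realization, so $\Xcal$ is a $\GBcm$-realization satisfying $\ETR$. If $M$ does have a definable global well-order, the original argument gives $\GBCm + \ETR$ in $\Xcal$. In both cases the minimality argument is unaffected, since it only compares $\Xcal$ with an arbitrary realization $\Zcal$ of the relevant theory and uses closure under first-order definability, which both $\GBcm$- and $\GBCm$-realizations satisfy. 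I do not anticipate a genuine obstacle here — the only subtlety worth stating explicitly is that the least $\beta$-realization and the least realization coincide, which is exactly the ``moreover'' clause of theorem \ref{thm4:least-beta-etr-rlzn} and carries over for the same reason (the comparison argument never used that the ambient $\Zcal$ was a $\beta$-model, only that $(M,\Ycal)$ was). Thus the proof is essentially: ``inspect the proof of theorem \ref{thm4:least-beta-etr-rlzn}; it used Powerset nowhere and used the definable global well-order only to upgrade $\GBcm$ to $\GBCm$; hence both statements follow.''
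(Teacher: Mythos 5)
Your proposal is correct and matches the paper's own argument: the paper likewise dispatches this corollary by observing that the proof of theorem \ref{thm4:least-beta-etr-rlzn} never invokes Powerset and uses the definable global well-order only to secure Global Choice in $\Xcal$. Your more detailed step-by-step audit of the construction is a sound elaboration of exactly that observation, so nothing further is needed.
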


Observe that although $\Xcal_n$ is always a coded $V$-submodel of $\Ycal$, in general $\Xcal$ need not be coded in $\Ycal$. In particular, this will happen when $\Ycal = \Xcal$. 

We get a version of this result for non-$\beta$-models. In this broader context we cannot ensure that different $\Ycal$'s will define the same $\Xcal$. But any $V$-submodel of $(M,\Ycal)$ will have to agree with $(M,\Ycal)$ as to what is a well-order and whether a class is $\Tr_\Gamma(A)$. So a similar argument yields a local leastness result.

\begin{theorem} \label{thm4:etr-rlzn-basis}
Let $M \models \ZFC$ be $(\GBc + \ETR)$-realizable. Then $M$ has a basis of minimal $(\GBc + \ETR)$-realizations, where amalgamable $(\GBc + \ETR)$-realizations\footnote{Two $T$-realizations $\Xcal$ and $\Ycal$ for $M$ are {\em amalgamable} if there is a $\GBcm$-realization $\Zcal$ for $M$ so that $\Xcal$ and $\Ycal$ are both subsets of $\Zcal$.}
sit above the same basis element. That is, there is a set $\{\Bcal_i : i \in I\}$ of $(\GBc + \ETR)$-realizations for $M$ satisfying the following.
\begin{enumerate}
\item Elements of the basis are pairwise non-amalgamable;
\item If $\Ycal$ is any $(\GBc + \ETR)$-realization for $M$ then there is a unique basis element $\Bcal$ so that $\Ycal \supseteq \Bcal$; and
\item If $\Xcal$ and $\Ycal$ are amalgamable $(\GBc + \ETR)$-realizations for $M$ then they sit above the same $\Bcal$.
\end{enumerate}
\end{theorem}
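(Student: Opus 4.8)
The plan is to mimic the construction from the proof of theorem~\ref{thm4:least-beta-etr-rlzn}, but now without the hypothesis that $M$ is a $\beta$-model or has a definable global well-order. Fix a $(\GBc + \ETR)$-realization $\Ycal$ for $M$. The key point is that although different realizations $\Ycal$ may be incomparable, any two $V$-submodels sitting inside a common $\GBcm$-realization must agree on which of their common classes are well-orders (by observation~\ref{obs4:agree-on-wfddness}) and, for a well-order $\Gamma$ and class $A$, on what class is $\Tr_\Gamma(A)$ (by uniqueness of iterated truth predicates, together with the fact that once $\Gamma$ and $A$ are held fixed the $\Gamma$-iterated truth predicate relative to $A$ is determined). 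So starting from $\Ycal$ we build $\Bcal = \Bcal(\Ycal)$ by a union of an $\omega$-chain: let $\Bcal_0$ consist of the classes in $\Ycal$ definable over $M$ with set parameters together with some fixed set-choice function witnessing $\GBc$ (every $\GBc$-realization contains choice functions for sets, so we may take $\Bcal_0 = \Def(M;c)$ for such a $c \in \Ycal$, or simply $\Def(M)$ if we are careful and note $\Def(M) \subseteq \Ycal$ already gives $\GBc$ since $\ZFC$ proves set-choice); then given $\Bcal_n \subseteq \Ycal$ set
\[
\Bcal_{n+1} = \bigcup \left\{ \Def(M; \Tr_\Gamma(A)) : A, \Gamma \in \Bcal_n \mand (M,\Ycal) \models \text{``}\Gamma \text{ is a well-order''} \right\},
\]
where $\Tr_\Gamma(A)$ is computed inside $(M,\Ycal)$; and finally $\Bcal = \bigcup_n \Bcal_n$. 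As in the earlier theorem, each $\Bcal_n$ is a $\GBc$-realization for $M$ contained in $\Ycal$, so $\Bcal$ is a $\GBc$-realization; and $\Bcal$ satisfies $\ETR$ because for $A, \Gamma \in \Bcal$ with $\Gamma$ a well-order in $(M,\Bcal)$ we have $A,\Gamma \in \Bcal_n$ for some $n$, hence $\Tr_\Gamma(A) \in \Bcal_{n+1}$, and by theorem~\ref{thm3:etr-iff-itr} (refined to arbitrary $\Gamma$) this yields $\ETR$.

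Next I would verify the three properties. For property $(2)$: given any $(\GBc + \ETR)$-realization $\Ycal$, I claim $\Bcal(\Ycal) \subseteq \Zcal$ for \emph{every} $(\GBc+\ETR)$-realization $\Zcal$ amalgamable with $\Ycal$, and in particular $\Bcal(\Ycal) \subseteq \Ycal$. This is the inductive argument: $\Bcal_0 \subseteq \Zcal$ since $\Zcal$ is closed under first-order definability and contains a set-choice function; and if $\Bcal_n \subseteq \Zcal$, then for $A, \Gamma \in \Bcal_n$ with $\Gamma$ a well-order according to $\Ycal$, the common $\GBcm$-realization $\Wcal \supseteq \Ycal, \Zcal$ witnesses that $\Ycal$ and $\Zcal$ agree that $\Gamma$ is a well-order, and agree on what $\Tr_\Gamma(A)$ is; since $(M,\Zcal) \models \ETR$, this class $(\Tr_\Gamma(A))^{(M,\Ycal)} = (\Tr_\Gamma(A))^{(M,\Zcal)}$ lies in $\Zcal$, and hence $\Def(M;\Tr_\Gamma(A)) \subseteq \Zcal$, giving $\Bcal_{n+1} \subseteq \Zcal$. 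It follows that if $\Ycal$ and $\Ycal'$ are amalgamable then $\Bcal(\Ycal) = \Bcal(\Ycal')$: each is contained in the other by the claim applied twice (note $\Bcal(\Ycal)$ is itself a $(\GBc+\ETR)$-realization amalgamable with $\Ycal'$, and the construction is monotone/idempotent in the sense that $\Bcal(\Bcal(\Ycal)) = \Bcal(\Ycal)$). This simultaneously gives property $(3)$. The basis is then defined as $\{ \Bcal_i : i \in I \} = \{ \Bcal(\Ycal) : \Ycal \text{ is a } (\GBc+\ETR)\text{-realization for } M \}$, a set since it is a subset of $\powerset(\powerset(M))$; and property $(2)$'s uniqueness clause holds because if $\Bcal, \Bcal' $ are two basis elements with $\Ycal \supseteq \Bcal$ and $\Ycal \supseteq \Bcal'$ then $\Bcal$ and $\Bcal'$ are both of the form $\Bcal(\Zcal)$ for realizations $\Zcal$ amalgamable with $\Ycal$ (indeed $\Zcal \subseteq \Ycal$ or at least amalgamable via $\Ycal$), so $\Bcal = \Bcal(\Ycal) = \Bcal'$.

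For property $(1)$, that basis elements are pairwise non-amalgamable: suppose $\Bcal_i$ and $\Bcal_j$ are amalgamable, witnessed by $\Wcal$. Then $\Wcal$ is a $\GBcm$-realization containing both, and $\Bcal_i = \Bcal(\Ycal_i)$, $\Bcal_j = \Bcal(\Ycal_j)$ for some realizations $\Ycal_i \supseteq \Bcal_i$, $\Ycal_j \supseteq \Bcal_j$. The subtlety is that $\Wcal$ need not itself satisfy $\ETR$, so I cannot directly invoke the amalgamability clause. Instead I would argue: $\Bcal_i$ itself is a $(\GBc+\ETR)$-realization amalgamable with $\Bcal_j$ (via $\Wcal$), so by property $(3)$ already established, $\Bcal(\Bcal_i) = \Bcal(\Bcal_j)$; but $\Bcal(\Bcal_i) = \Bcal_i$ and $\Bcal(\Bcal_j) = \Bcal_j$ by idempotence, hence $\Bcal_i = \Bcal_j$. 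Finally, minimality of each $\Bcal_i$: if $\Zcal \subseteq \Bcal_i$ is a $(\GBc+\ETR)$-realization, then $\Zcal$ is amalgamable with $\Bcal_i$ (via $\Bcal_i$ itself), so $\Bcal(\Zcal) = \Bcal(\Bcal_i) = \Bcal_i$, and since $\Bcal(\Zcal) \subseteq \Zcal$ we get $\Bcal_i \subseteq \Zcal$, hence $\Zcal = \Bcal_i$.

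The main obstacle I anticipate is the bookkeeping around idempotence and the precise sense in which the construction $\Ycal \mapsto \Bcal(\Ycal)$ depends only on the amalgamability class of $\Ycal$ rather than $\Ycal$ itself: one must check that the well-orders and iterated truth predicates appearing at stage $n+1$ genuinely depend only on data already in $\Bcal_n$ (which is common to all amalgamable realizations) and on the \emph{external} fact of which relations in $\Bcal_n$ are truly well-founded (which is absolute), not on idiosyncratic features of $\Ycal$. Since $\Bcal_n \subseteq \Ycal \cap \Ycal'$ whenever $\Ycal, \Ycal'$ are amalgamable and agree on well-foundedness of their common classes, and iterated truth predicates are unique within any fixed second-order model, this goes through, but it requires stating carefully that for $\Gamma, A \in \Bcal_n$ the class $\Tr_\Gamma(A)$ — when it exists in one amalgamable realization — is the same set in all of them. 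The remaining verifications (that each $\Bcal_n$ satisfies Replacement, Comprehension, set-Choice) are exactly as in theorem~\ref{thm4:least-beta-etr-rlzn} and need no repetition.
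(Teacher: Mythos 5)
Your proposal is correct and follows essentially the same route as the paper: the same tower $\Bcal_0 = \Def(M)$, $\Bcal_{n+1}$ generated by the $\Gamma$-iterated truth predicates computed in $(M,\Ycal)$, with the key lemma that amalgamable realizations (sharing the first-order part and a common $\GBcm$-realization) agree on which classes are well-orders and on what class is $\Tr_\Gamma(A)$, so the resulting $\Bcal$ depends only on the amalgamability class. Your explicit derivation of minimality, uniqueness, and pairwise non-amalgamability via idempotence of $\Ycal \mapsto \Bcal(\Ycal)$ is just a careful spelling-out of what the paper leaves implicit, and it goes through (only note that what matters is the models' shared judgment of well-foundedness via observation \ref{obs4:agree-on-wfddness}, not external well-foundedness, which your inductive argument in fact uses).
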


See figure \ref{fig4:etr-rlzn-basis} for a picture of the $(\GBc + \ETR)$-realizations for $M$.

In case $M$ has a definable global well-order we get a basis of minimal $(\GBC + \ETR)$-realizations, since every $\GBc$-realization for $M$ must contain the definable global well-order and thereby satisfy Global Choice.

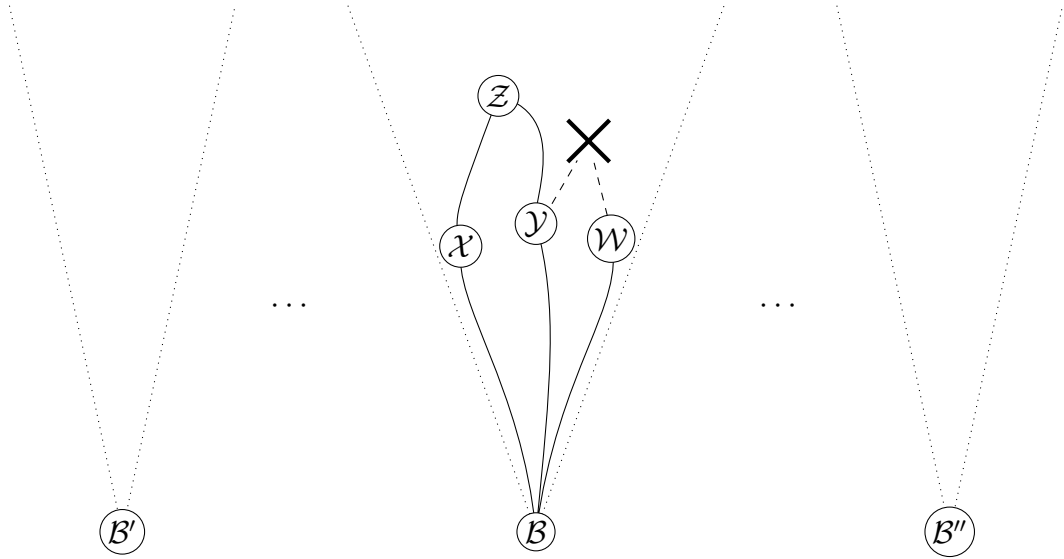
\begin{figure}
\begin{center}
\begin{tikzpicture}[n/.style={circle,draw,fill=white,minimum size=.5cm,inner sep=1pt}, 
n1/.style={cross out,draw,ultra thick,fill=white,minimum size=.5cm,inner sep=1pt},
n2/.style={circle,fill=white,minimum size=.6cm,inner sep=1pt}]
\draw[dotted] (-2.5,7) -- (0,0) -- (2.5,7);
\draw (0,0) .. controls (-.2,2) and (-1.1,3) .. (-1,3.8)
      (0,0) .. controls (.2,2) and (.3,3) ..  (0,4.1)
      (0,0) .. controls (.3,2.2) and (1.2,3) ..  (1,3.9)
     (-1,3.8) .. controls (-1.1,4.2) .. (-.5,5.8)
      (0,3.9) .. controls (-.2,4.1) and (.6,5.5) .. (-.5,5.8);
\draw[dashed] (0,4) -- (.7,5.2)
      (1,4) -- (.7,5.2);
\draw  (0,0) node[n] {$\Bcal$}
      (-1,3.8) node[n] {$\Xcal$}
       (0,4.1) node[n] {$\Ycal$}
       (1,3.9) node[n] {$\Wcal$}
     (-.5,5.8) node[n] {$\Zcal$}
      (.7,5.2) node[n2] {}
      (.7,5.2) node[n1] {};

\draw[dotted] (-7,7) -- (-5.5,0) -- (-4,7);
\draw (-5.5,0) node[n] {$\Bcal'$};
\draw (-3.25,3) node {$\cdots$};

\draw[dotted] (4,7) -- (5.5,0) -- (7,7);
\draw (5.5,0) node[n] {$\Bcal''$};
\draw (3.25,3) node {$\cdots$};
\end{tikzpicture}
\end{center}
\caption{The $(\GBc+\ETR)$-realizations for $M$ form a disjoint collection of cones, each one with a basis element at the bottom.}
\label{fig4:etr-rlzn-basis}
\end{figure}

\begin{proof}
Fix $\Ycal$ a $(\GBC + \ETR)$-realization for $M$. We define the basis element $\Bcal$ below $\Ycal$ similar to how we defined $\Xcal$ in the proof of theorem \ref{thm4:least-beta-etr-rlzn}. Start with $\Bcal_0 = \Def(M)$. Clearly $\Bcal_0 \subseteq \Ycal$ is a $\GBc$-realization for $M$. Given $\Bcal_n \subseteq \Ycal$ a $\GBc$-realization for $M$ we set
\[
\Bcal_{n+1} = \bigcup \left\{
\Def\left(M;\left(\Tr_\Gamma(A)\right)^{(M,\Ycal)}\right) : A,\Gamma \in \Bcal_n \mand (M,\Ycal) \models \Gamma \text{ is a well-order} 
\right\}.
\]
Some illuminating remarks are in order. First, because $\Bcal_n \subseteq \Ycal$ and $(M,\Ycal) \models \ETR$ we get that for $A,\Gamma \in \Bcal_n$ there is a unique class in $\Ycal$ which $(M,\Ycal)$ thinks is $\Tr_\Gamma(A)$. So $\Bcal_{n+1}$ is well-defined. And since $\Ycal$ is closed under first-order definability this moreover shows that $\Bcal_{n+1} \subseteq \Ycal$.

Next, let me emphasize that $\Def$ here is the {\em external} $\Def$ operator. This only makes a difference in case $M$ is an $\omega$-model. In this case, none of the $\Bcal_n$ will be coded in $\Ycal$. Nevertheless, we still get that $\Bcal_n \subseteq \Ycal$, so that $\Bcal_{n+1}$ is well-defined.

Third, let us check that $(M,\Bcal_{n+1}) \models \GBc$. I will be more detailed than in the proof of theorem \ref{thm4:least-beta-etr-rlzn} to reassure the reader who is worried things may go wrong in an $\omega$-nonstandard model. Both Class Extensionality and Class Replacement are immediate. To see Elementary Comprehension we want to see that $\Bcal_{n+1}$ is closed under first-order definability. It suffices to check the case where we define a class from two class parameters, so consider $X,Y \in \Bcal_{n+1}$. Then, by construction, there are $\Gamma,\Delta,A,B \in \Bcal_n$ so that $X$ is definable from $(\Tr_\Gamma(A))^{(M,\Ycal)}$ and $Y$ is definable from $(\Tr_\Delta(B))^{(M,\Ycal)}$. Then any class definable from $X$ and $Y$ must be definable from $(\Tr_{\max\{\Gamma,\Delta\}}(A \oplus B))^{(M,\Ycal)} \in \Bcal_n$, where $A \oplus B = A \times \{0\} \cup B \times \{1\}$. So any class definable from $X$ and $Y$ is in $\Bcal_{n+1}$.

Finally, set $\Bcal = \bigcup_n \Bcal_n$. It is immediate that $\Bcal \subseteq \Ycal$. Let us see that $(M,\Bcal) \models \GBc + \ETR$. Fix $\Gamma,A \in \Bcal$ so that $(M,\Bcal) \models \Gamma$ is a well-order. Then $\Gamma,A \in \Bcal_n$ for some $n$. Because $(M,\Bcal_n)$ is a $V$-submodel of $(M,\Bcal)$ which in turn is a $V$-submodel of $(M,\Ycal)$ they all agree as to whether $\Gamma$ is a well-order and whether a class is $\Tr_\Gamma(A)$. So
\[
(\Tr_\Gamma(A))^{(M,\Bcal)} = (\Tr_\Gamma(A))^{(M,\Ycal)} \in \Bcal_{n+1} \subseteq \Bcal.
\]
So $(M,\Bcal) \models \ETR$.

The proof will be finished once we see that any $\Xcal$ which is amalgamable with $\Ycal$ defines the same $\Bcal$. To see this, take $\Zcal$ a $\GBc$-realization for $M$ which contains both $\Xcal$ and $\Ycal$. Then $(M,\Ycal)$ and $(M,\Zcal)$ must agree whether a class is a well-order and whether a class is $\Tr_\Gamma(A)$ and the same holds for $(M,\Xcal)$ and $(M,\Zcal)$, so in fact all three agree. Clearly $\Bcal_0$ is the same whether defined used $\Ycal$ or $\Xcal$. And inductively upward they must agree on $\Bcal_n$ because they agree as to whether a class is $\Tr_\Gamma(A)$. So no matter whether we start with $\Xcal$ or $\Ycal$ we define the same basis element $\Bcal$.
\end{proof}

It follows from theorem \ref{thm4:least-beta-etr-rlzn} that if $M$ has a $\beta$-$(\GBC+\ETR)$-realization then there is only one basis element in the poset of $(\GBC+\ETR)$-realizations for $M$. Does this hold in general?

\begin{question}
Is there a $(\GBc + \ETR)$-realizable model $M$ so that the basis for the $(\GBc+\ETR)$-realizations for $M$ has more than one element? Or, asked in the negative, is it true that every $(\GBc + \ETR)$-realizable model has a least $(\GBc + \ETR)$-realization?
\end{question}

Let me detour to discuss fragments of $\ETR$. Essentially the same argument as in theorem \ref{thm4:least-beta-etr-rlzn} gives that $\GBC + \ETR_\Gamma$ will have least $\beta$-realizations. For example, to show that a $\beta$-$(\GBC + \ETR_\Gamma)$-realizable model $M$ with a definable global well-order has a least $\beta$-$(\GBC + \ETR_\Gamma)$-realization we define $\Xcal$ in a similar manner. Set $\Xcal_0 = \Def(M;\Gamma)$. The definition for $\Xcal_{n+1}$ is then
\[
\Xcal_{n+1} = \bigcup \left\{ \Def\left(M; \Tr_\Gamma(A)\right) : A \in \Xcal_n \right\}
\]
where $\Ycal$ is some fixed-in-advance $\beta$-$(\GBC + \ETR_\Gamma)$-realization for $M$. Observe that $\Gamma$ here really is well-founded, because a $\beta$-model thinks it is well-founded, so $\Tr_\Gamma(A)$ is externally seen to be unique. Then $\Xcal = \bigcup_n \Xcal_n$ will be the least $\beta$-$(\GBC + \ETR_\Gamma)$-realization for $M$. This gives us the following results, analogous to the above results about $\ETR$.

\begin{theorem} \label{thm4:beta-etr-gamma}
Suppose $(M,\Ycal) \models \GBC + \ETR_\Gamma$ is an $\beta$-model with a definable global well-order and $\omega^\omega \le \Gamma \in \Ycal$. Then $M$ has a least $\beta$-$(\GBC + \ETR_\Gamma)$-realization $\Xcal$. Moreover, $\Xcal$ is also the least $(\GBC+\ETR_\Gamma)$-realization for $M$. \qed
\end{theorem}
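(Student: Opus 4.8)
The plan is to mimic the proof of theorem \ref{thm4:least-beta-etr-rlzn} exactly, with $\ETR$ replaced by $\ETR_\Gamma$ throughout. Fix a $\beta$-$(\GBC + \ETR_\Gamma)$-realization $\Ycal$ for $M$. The key observation making the argument go through is that since $(M,\Ycal)$ is a $\beta$-model and $\Gamma \in \Ycal$, the well-order $\Gamma$ really is well-founded externally, so by the uniqueness of iterated truth predicates within a fixed second-order model---and more importantly the fact that a genuinely well-founded $\Gamma$ admits at most one $\Gamma$-iterated truth predicate relative to any parameter $A$, as seen externally---any two $(\GBC+\ETR_\Gamma)$-realizations containing $A$ must agree on the value of $\Tr_\Gamma(A)$.

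First I would build $\Xcal$ as an increasing $\omega$-chain. Set $\Xcal_0 = \Def(M;\Gamma)$; since $M$ has a definable global well-order and $\Gamma \in \Ycal$, this is a $\GBC$-realization for $M$ contained in $\Ycal$ (note $\Gamma \in \Def(M;\Gamma)$, so that $\ETR_\Gamma$ even makes sense over the submodel). Given $\Xcal_n \subseteq \Ycal$, set
\[
\Xcal_{n+1} = \bigcup\left\{ \Def\left(M;\Tr_\Gamma(A)\right) : A \in \Xcal_n \right\},
\]
which is well-defined because for each $A \in \Xcal_n \subseteq \Ycal$ there is a unique $\Tr_\Gamma(A) \in \Ycal$ (using $(M,\Ycal) \models \ETR_\Gamma$ and $\beta$-correctness), and $\Xcal_{n+1} \subseteq \Ycal$ since $\Ycal$ is closed under first-order definability. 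Each $\Xcal_{n+1}$ is a $\GBC$-realization for $M$ by the usual argument: Extensionality and Replacement are free, Global Choice holds since $\Def(M)$ sits inside, and Elementary Comprehension is closure under first-order definability, which follows by coding pairs of parameters $\Tr_\Gamma(A), \Tr_\Gamma(B)$ into the single class $\Tr_\Gamma(A\oplus B)$. Then put $\Xcal = \bigcup_n \Xcal_n$.

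Next I would verify $(M,\Xcal) \models \GBC + \ETR_\Gamma$: it satisfies $\GBC$ as an increasing union of $\GBC$-realizations, and it satisfies $\ETR_\Gamma$ because for $A,\Gamma \in \Xcal$ we have $A \in \Xcal_n$ for some $n$, whence $\Tr_\Gamma(A) \in \Xcal_{n+1} \subseteq \Xcal$, and by the corollary relating $\ETR_\Gamma$ to the existence of $\Tr_\Gamma(A)$ for all $A$ (valid since $\Gamma \ge \omega^\omega$), this suffices. Finally, leastness: given any $(\GBC+\ETR_\Gamma)$-realization $\Zcal$ for $M$, I claim $\Xcal_n \subseteq \Zcal$ by induction on $n$. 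The base case is $\Def(M;\Gamma) \subseteq \Zcal$ (as $\Gamma \in \Zcal$ or is definable, and $\Zcal$ is closed under first-order definability). For the step: given $\Xcal_n \subseteq \Zcal$ and $A \in \Xcal_n$, both $\Zcal$ and $\Ycal$ contain $A$ and $\Gamma$, both satisfy $\ETR_\Gamma$, and since $\Gamma$ is genuinely well-founded there is exactly one possible value for $\Tr_\Gamma(A)$; hence $(\Tr_\Gamma(A))^{(M,\Zcal)} = (\Tr_\Gamma(A))^{(M,\Ycal)} = \Tr_\Gamma(A) \in \Zcal$, and closure of $\Zcal$ under first-order definability gives $\Xcal_{n+1}\subseteq \Zcal$. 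Thus $\Xcal \subseteq \Zcal$, so $\Xcal$ is both the least $\beta$-realization and the least realization of $\GBC + \ETR_\Gamma$ for $M$. The one point requiring slight care---the main obstacle, though a mild one---is making sure the external uniqueness of $\Tr_\Gamma(A)$ really does hold: this uses essentially that $\Gamma$ is externally well-founded (guaranteed by $\Ycal$ being a $\beta$-model) and that $M$ is an $\omega$-model (guaranteed by transitivity), so that the recursive Tarskian clauses pin down $\Tr_\Gamma(A)$ uniquely with no room for nonstandard ambiguity; this is exactly the point exploited in the proof of theorem \ref{thm4:least-beta-etr-rlzn} and the discussion following theorem \ref{thm4:etr-rlzn-basis}.
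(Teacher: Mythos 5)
Your proposal is correct and follows essentially the same route as the paper: the paper proves this theorem by exactly the modification you describe, setting $\Xcal_0 = \Def(M;\Gamma)$, $\Xcal_{n+1} = \bigcup\{\Def(M;\Tr_\Gamma(A)) : A \in \Xcal_n\}$, and using that a $\beta$-model is correct about $\Gamma$ being well-founded, so $\Tr_\Gamma(A)$ is externally unique and must appear in every $(\GBC+\ETR_\Gamma)$-realization. Your attention to the hypothesis $\Gamma \ge \omega^\omega$ (so that $\ETR_\Gamma$ is equivalent to the existence of $\Gamma$-iterated truth predicates) matches the paper's stated reason for that hypothesis.
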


\begin{remark}
The purpose of requiring $\Gamma \ge \omega^\omega$ is that this ensures $\ETR_\Gamma$ is equivalent to the existence of $\Gamma$-iterated truth predicates relative to any class. The same applies to later results about $\ETR_\Gamma$, but I will suppress making this comment every time.
\end{remark}

Since we are concerned only with a fixed $\Gamma$, we can get a least $(\GBC + \ETR_\Gamma)$-realization even if our $M \models \ZFC$ has realizations which are wrong about well-foundedness. All that matters is whether they are correct about $\Gamma$ being well-founded.

\begin{theorem} \label{thm4:least-etr-rlzns}
Consider an $\omega$-model $(M,\Ycal) \models \GBC$ and $\Gamma \in \Ycal$ so that $\Gamma \ge \omega^\omega$ really is well-founded, as seen externally.
Suppose $M$ has a definable global well-order and $(M,\Ycal) \models \ETR_\Gamma$. Then $M$ has a least $(\GBC + \ETR_\Gamma)$-realization. \qed
\end{theorem}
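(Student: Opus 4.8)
The plan is to follow the template of the proofs of theorems \ref{thm4:least-beta-etr-rlzn} and \ref{thm4:beta-etr-gamma}, constructing the candidate least realization $\Xcal$ as an increasing $\omega$-chain of $\GBC$-realizations sitting inside the fixed realization $\Ycal$, and then checking that $\Xcal$ is contained in every $(\GBC + \ETR_\Gamma)$-realization for $M$. Concretely, set $\Xcal_0 = \Def(M;\Gamma)$, which is a $\GBC$-realization for $M$ because $M$ has a definable global well-order, and put
\[
\Xcal_{n+1} = \bigcup \left\{ \Def\left(M; \Tr_\Gamma(A)\right) : A \in \Xcal_n \right\},
\]
where $\Tr_\Gamma(A)$ is computed inside $(M,\Ycal)$; finally let $\Xcal = \bigcup_n \Xcal_n$. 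The first block of work is routine bookkeeping. Each $\Xcal_n \subseteq \Ycal$: this uses that $(M,\Ycal) \models \ETR_\Gamma$ and $\Gamma \ge \omega^\omega$, so that $\Tr_\Gamma(A) \in \Ycal$ for $A \in \Ycal$, and that being $\Tr_\Gamma(A)$ is a first-order property in the parameters $\Gamma, A$, hence determines a unique class in $\Ycal$. Each $(M,\Xcal_n)$ satisfies $\GBC$: Class Replacement and Global Choice are inherited (from $\Ycal$, and from $\Xcal_n$ containing the definable global well-order), while closure under first-order definability is checked by the standard trick of coding the two parameters $\Tr_\Gamma(A)$ and $\Tr_\Gamma(B)$ into $\Tr_\Gamma(A \oplus B)$. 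Thus $(M,\Xcal) \models \GBC$ as an increasing union, and $(M,\Xcal) \models \ETR_\Gamma$ because for any $A \in \Xcal$ we have $A \in \Xcal_n$ for some $n$, whence $\Tr_\Gamma(A) \in \Xcal_{n+1} \subseteq \Xcal$; the corollary to Fujimoto's theorem (the equivalence over $\GBC$, for $\Gamma \ge \omega^\omega$, of $\ETR_\Gamma$ with the existence of $\Gamma$-iterated truth predicates relative to all classes) then upgrades this to full $\ETR_\Gamma$.

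The leastness argument is where the two hypotheses---that $M$ is an $\omega$-model and that $\Gamma$ really is well-founded---are used. Let $\Zcal$ be an arbitrary $(\GBC + \ETR_\Gamma)$-realization for $M$; by the very meaning of this phrase, $\Gamma \in \Zcal$, so $\Xcal_0 = \Def(M;\Gamma) \subseteq \Zcal$ since $\Zcal$ is closed under first-order definability. Inductively, suppose $\Xcal_n \subseteq \Zcal$ and fix $A \in \Xcal_n$. Because $(M,\Zcal) \models \ETR_\Gamma$ and $A, \Gamma \in \Zcal$, the class $(\Tr_\Gamma(A))^{(M,\Zcal)}$ exists in $\Zcal$. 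The crucial point is that this class equals $(\Tr_\Gamma(A))^{(M,\Ycal)}$: since $\Gamma$ is externally well-founded and $M$ is $\omega$-standard, one can build the $\Gamma$-iterated truth predicate relative to $A$ by an external transfinite recursion of length $\Gamma$ over $V$---at each stage using that $M$ has a genuine satisfaction relation for its standard formulae, which are all the formulae there are---and since being $\Tr_\Gamma(A)$ is first-order expressible in $\Gamma, A$, every class over $M$ satisfying that definition, in any realization whatsoever, must coincide with this externally built object. Hence $\Def(M;(\Tr_\Gamma(A))^{(M,\Ycal)}) \subseteq \Zcal$, so $\Xcal_{n+1} \subseteq \Zcal$; iterating gives $\Xcal \subseteq \Zcal$. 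In particular $\Xcal$ does not depend on the choice of the ambient $\Ycal$, which is consistent with its being genuinely least.

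The main obstacle is precisely this cross-realization uniqueness of $\Tr_\Gamma(A)$. In the $\beta$-model versions (theorems \ref{thm4:least-beta-etr-rlzn} and \ref{thm4:beta-etr-gamma}) one could appeal to $\beta$-models being correct about well-foundedness; here the substitute is the conjunction ``$M$ is an $\omega$-model'' and ``$\Gamma$ is externally well-founded'', which together force there to be exactly one class over $M$ meeting the first-order definition of the $\Gamma$-iterated truth predicate relative to $A$---namely the one produced by the external recursion---so that any two realizations that each contain such a class contain the same one. Once this is established, the remainder is a transcription of the earlier arguments (including the verification that the $\Xcal_n$ are $\GBC$-realizations even when $M$ is $\omega$-standard, where the $\Xcal_n$ need not be coded in $\Ycal$ but remain honest subcollections of $\Ycal$), with the definable global well-order supplying Global Choice throughout.
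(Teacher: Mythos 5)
Your proposal is correct and follows essentially the same route the paper intends: the theorem is stated with a \qed precisely because it is the construction from theorems \ref{thm4:least-beta-etr-rlzn} and \ref{thm4:beta-etr-gamma} ($\Xcal_0 = \Def(M;\Gamma)$, $\Xcal_{n+1}$ generated by the $\Gamma$-iterated truth predicates, $\Xcal = \bigcup_n \Xcal_n$), with the $\beta$-model hypothesis replaced by ``$M$ is an $\omega$-model and $\Gamma$ is externally well-founded,'' which is exactly what makes $\Tr_\Gamma(A)$ externally unique and hence the same class in every realization. Your explicit external-recursion argument for this cross-realization uniqueness is just an unpacking of the paper's remark that $\Tr_\Gamma(A)$ ``is externally seen to be unique,'' so nothing further is needed.
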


If $M$ is ill-founded then this works for $\Gamma$ of length in the well-founded part of $M$.\footnote{This explains why I restricted the statement of the theorem to $\omega$-models. If $M$ is $\omega$-nonstandard and $(M,\Ycal) \models \Gamma \ge \omega^\omega$ then $\Gamma$ must seen from outside to be ill-founded. Nevertheless, the conclusion of the theorem is still true if $\Gamma$ is in the well-founded part of $\omega$-nonstandard $M$, because in such a case $\Gamma$ is standard finite and $\ETR_n$ for standard finite $n$ is equivalent to Elementary Comprehension. So in this case it is just asking for $M$ to have a least $\GBC$-realization, which indeed does happen if $M$ has a definable global well-order.}
If $M$ is transitive then we can go up to $\Ord^M$, and even longer. For transitive $M$ we always get that $\Ord^M + \Ord^M$, $\Ord^M \cdot \Ord^M$, and so on are well-founded.

And like before, for non-$\beta$-models we get a local leastness result, even if $(M,\Ycal)$ is wrong about $\Gamma$ being well-founded.

\begin{corollary}
Suppose $(M,\Ycal) \models \GBC + \ETR_\Gamma$ has a definable global well-order and $(M,\Ycal) \models \omega^\omega \le \Gamma$. Then $M$ has an $(\GBC + \ETR_\Gamma)$-realization which is least below $\Ycal$. \qed
\end{corollary}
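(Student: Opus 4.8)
The plan is to adapt the proof of Theorem \ref{thm4:least-beta-etr-rlzn} directly, noting that for the local statement we no longer need $(M,\Ycal)$ to be a $\beta$-model or even correct about whether $\Gamma$ is a well-order. Fix the given $(M,\Ycal) \models \GBC + \ETR_\Gamma$ with a definable global well-order and with $(M,\Ycal) \models \omega^\omega \le \Gamma$. First I would build $\Xcal \subseteq \Ycal$ in $\omega$ many steps: set $\Xcal_0 = \Def(M;\Gamma)$, which is a $\GBC$-realization because $M$ has a definable global well-order, and then set
\[
\Xcal_{n+1} = \bigcup \left\{ \Def\left(M; \left(\Tr_\Gamma(A)\right)^{(M,\Ycal)}\right) : A \in \Xcal_n \right\},
\]
using the external $\Def$ operator. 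Since $\Xcal_n \subseteq \Ycal$ and $(M,\Ycal) \models \ETR_\Gamma$, and since $\Gamma \ge \omega^\omega$ implies by the corollary to Fujimoto's theorem that $\ETR_\Gamma$ is equivalent to the existence of $\Gamma$-iterated truth predicates relative to any class, there is for each $A \in \Xcal_n$ a unique class in $\Ycal$ which $(M,\Ycal)$ thinks is $\Tr_\Gamma(A)$; so $\Xcal_{n+1}$ is well-defined and, by closure of $\Ycal$ under first-order definability, $\Xcal_{n+1} \subseteq \Ycal$. One checks $(M,\Xcal_{n+1}) \models \GBC$ exactly as in the earlier proofs (Extensionality and Replacement are free; Elementary Comprehension follows by coding finitely many parameters $(\Tr_\Gamma(A_i))^{(M,\Ycal)}$ into a single $(\Tr_\Gamma(\bigoplus A_i))^{(M,\Ycal)}$). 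Then $\Xcal = \bigcup_n \Xcal_n$ is a $\GBC$-realization as an increasing union, and it satisfies $\ETR_\Gamma$: given $A \in \Xcal$ with $(M,\Xcal) \models \Gamma$ is a well-order, fix $n$ with $A \in \Xcal_n$; since $(M,\Xcal_n)$, $(M,\Xcal)$, $(M,\Ycal)$ all have the same first-order part they agree (by Observation \ref{obs4:agree-on-wfddness}) on which common classes are well-founded and on which class is $\Tr_\Gamma(A)$, so $(\Tr_\Gamma(A))^{(M,\Xcal)} = (\Tr_\Gamma(A))^{(M,\Ycal)} \in \Xcal_{n+1} \subseteq \Xcal$, and $\ETR_\Gamma$ follows by the equivalence.

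The heart of the argument is the leastness-below-$\Ycal$ claim: if $\Zcal \subseteq \Ycal$ is any $(\GBC + \ETR_\Gamma)$-realization for $M$ then $\Xcal \subseteq \Zcal$. I would prove $\Xcal_n \subseteq \Zcal$ by induction on $n$. The base case $\Xcal_0 = \Def(M;\Gamma) \subseteq \Zcal$ holds because $\Gamma \in \Zcal$ and $\Zcal$ is closed under first-order definability. For the inductive step, suppose $\Xcal_n \subseteq \Zcal$ and take $A \in \Xcal_n$. Since $\Zcal \subseteq \Ycal$, these two realizations share the first-order part of $M$, hence agree on whether $\Gamma$ is a well-order and — crucially — on which common class is the $\Gamma$-iterated truth predicate relative to $A$ (iterated truth predicates are unique within a fixed second-order model, and being one is a first-order property in the parameters, so it cannot differ between $\Ycal$ and $\Zcal$ for a class lying in both). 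Because $(M,\Zcal) \models \ETR_\Gamma$, the class $(\Tr_\Gamma(A))^{(M,\Zcal)}$ exists in $\Zcal$, and it must equal $(\Tr_\Gamma(A))^{(M,\Ycal)}$; then everything first-order definable from it lies in $\Zcal$, giving $\Xcal_{n+1} \subseteq \Zcal$. Taking the union over $n$ yields $\Xcal \subseteq \Zcal$.

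The one point requiring care — the main obstacle, such as it is — is the uniqueness argument across $\Ycal$ and $\Zcal$ when $M$ may be $\omega$-nonstandard. In that case $(M,\Ycal)$ and $(M,\Zcal)$ need not agree on which classes are well-orders in general, but they do agree about the specific parameter $\Gamma$ and about whichever classes lie in their common part, and the recursive clauses defining ``$T$ is the $\Gamma$-iterated truth predicate relative to $A$'' are first-order in $\Gamma, A$, so any two classes in $\Ycal \cap \Zcal$ satisfying the definition with the same $\Gamma, A$ must coincide (this is Observation \ref{obs3:truth-is-unique}/the iterated-truth uniqueness observation applied inside $(M,\Ycal)$, using that $\Zcal \subseteq \Ycal$). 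I would also remark, paralleling the earlier discussion, that $\Xcal_n$ need not be coded in $\Ycal$ when $M$ is an $\omega$-model — in particular when $\Ycal = \Xcal$ — but this does not affect the construction since only $\Xcal_n \subseteq \Ycal$ is used. This completes the proof that $\Xcal$ is a $(\GBC + \ETR_\Gamma)$-realization for $M$ that is least below $\Ycal$.
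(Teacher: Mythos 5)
Your proof is correct and is exactly the argument the paper intends: the corollary is stated with only a \qed because it follows by the same iteration $\Xcal_0 = \Def(M;\Gamma)$, $\Xcal_{n+1}$ generated by the internally computed $(\Tr_\Gamma(A))^{(M,\Ycal)}$, as in theorems \ref{thm4:least-beta-etr-rlzn}, \ref{thm4:beta-etr-gamma}, and \ref{thm4:etr-rlzn-basis}. Your handling of the only delicate point---that realizations $\Zcal \subseteq \Ycal$ must compute the same $\Tr_\Gamma(A)$ by first-order expressibility and uniqueness of iterated truth predicates within $(M,\Ycal)$, even when $M$ is $\omega$-nonstandard or wrong about $\Gamma$ being well-founded---matches the paper's discussion.
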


We are now ready to see that $\GBC + \ETR$ has a least $\beta$-model.

\begin{theorem} \label{thm4:least-beta-etr}
There is a least $\beta$-model of $\GBC + \ETR$, if there is any $\beta$-model of $\GBC + \ETR$.
\end{theorem}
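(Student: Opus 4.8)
The plan is to name the least $\beta$-model explicitly as $(L_\gamma,\Xcal_0)$, where $\gamma$ is chosen as small as possible and $\Xcal_0$ is the least $(\GBC+\ETR)$-realization of $L_\gamma$ provided by theorem \ref{thm4:least-beta-etr-rlzn}. We may of course assume throughout that $\beta$-models are transitive.

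First I would pin down the first-order part. Starting from any $\beta$-model $(N,\Ycal)\models\GBC+\ETR$ (which exists by hypothesis), the class $L^N=L_{\Ord^N}$ is definable and an $\Ord$-submodel of $(N,\Ycal)$, hence an inner model in the sense of chapter $2$; by corollary \ref{cor3:etr-inner-model} (whose proof yields a realization contained in $\Ycal$) there is $\Zcal\subseteq\Ycal$ with $(L^N,\Zcal)\models\GBC+\ETR$, and by observation \ref{obs4:beta-v-submodel} this is again a $\beta$-model. So the class of ordinals $\delta$ such that $L_\delta$ is the first-order part of some $\beta$-model of $\GBC+\ETR$ is nonempty; let $\gamma$ be its least element. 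Since $L_\gamma\models V=L$ it carries a definable global well-order, so theorem \ref{thm4:least-beta-etr-rlzn} applies and gives the least $\beta$-$(\GBC+\ETR)$-realization $\Xcal_0$ of $L_\gamma$, which is moreover the least $(\GBC+\ETR)$-realization of $L_\gamma$ outright. Thus $(L_\gamma,\Xcal_0)$ is a transitive $\beta$-model of $\GBC+\ETR$.

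Next I would show $(L_\gamma,\Xcal_0)$ is a submodel of an arbitrary transitive $\beta$-model $(N,\Ycal)\models\GBC+\ETR$. Applying the previous paragraph to $(N,\Ycal)$ shows $\gamma\le\Ord^N$, so $L_\gamma\subseteq N$, with $L_\gamma=L^N$ in case $\gamma=\Ord^N$ and $L_\gamma\in N$ a set otherwise. Since $\Xcal_0$ is the least $(\GBC+\ETR)$-realization of $L_\gamma$, it suffices to produce one $(\GBC+\ETR)$-realization $\Zcal$ of $L_\gamma$ with $\Zcal\subseteq\Ycal$; then $\Xcal_0\subseteq\Zcal\subseteq\Ycal$, and $(L_\gamma,\Xcal_0)\subseteq(N,\Ycal)$ follows (first-order parts agree, both being the true membership). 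When $\gamma=\Ord^N$, take $\Zcal=\Lcal(L^N,G)\subseteq\Ycal$ from corollary \ref{cor3:etr-inner-model}, using the definable global well-order of $L^N$. When $\gamma<\Ord^N$, take $\Zcal=(\Xcal_0)^N$, i.e. the least $(\GBC+\ETR)$-realization of the set $L_\gamma$ as computed inside $N$ via the explicit construction of theorem \ref{thm4:least-beta-etr-rlzn}: $N$ can carry out this recursion since the iterated truth predicates $\Tr_\Gamma(A)$ of $L_\gamma$ needed along well-orders of length $\le\gamma$ are built by recursions of length $<\Ord^N$, and since each step (application of $\Def$, construction of an iterated truth predicate) is absolute for the transitive $\ZFC$-model $N$, we get $(\Xcal_0)^N=\Xcal_0\in N$; finally every element of $\Xcal_0$ is a set of $N$, hence coextensive with a class in $\Ycal$ by Elementary Comprehension, so $\Xcal_0\subseteq\Ycal$.

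The main obstacle is the bookkeeping in the last step: verifying that the internal construction in $N$ reproduces the genuine $\Xcal_0$, and that all the iterated truth predicates of $L_\gamma$ one needs are available in $N$ and land in $\Ycal$. This rests on $N$ being transitive, hence $\omega$-standard, so that internal and external $\Def$ coincide (cf.\ proposition \ref{prop3:truth-impl-coded}) and iterated truth predicates are unique and absolutely defined, together with the fact that $\gamma$, being the height of a model of $\ZFC$, is closed under ordinal arithmetic so that the approximations $\Xcal_n$ really do form an increasing chain of $\GBC$-realizations as in theorem \ref{thm4:least-beta-etr-rlzn}. Once this is in place, $(L_\gamma,\Xcal_0)$ is a $\beta$-model of $\GBC+\ETR$ that embeds as a submodel into every $\beta$-model of $\GBC+\ETR$, so it is the least one.
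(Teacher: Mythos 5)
Your proposal is correct and is essentially the paper's own argument: take $\alpha$ the least height of a $\beta$-model, pass to $L$ (via the chapter 3 inner-model result plus observation \ref{obs4:beta-v-submodel}) to see $L_\alpha$ is $\beta$-$(\GBC+\ETR)$-realizable, let $\Xcal_0$ be its least ($\beta$-)realization from theorem \ref{thm4:least-beta-etr-rlzn}, and then split on whether the ambient $\beta$-model has height $\alpha$ (use a realization of $L_\alpha$ inside $\Ycal$ and leastness) or greater height (the model reconstructs $\Xcal_0$ internally by an absolute set-sized recursion, so $\Xcal_0 \in N$ and $\Xcal_0 \subseteq \Ycal$). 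The only quibble is your parenthetical "well-orders of length $\le\gamma$": the class well-orders $\Gamma \in \Xcal_n$ over $L_\gamma$ may have ordertype well beyond $\gamma$, but they are still set-sized from $N$'s point of view, so the recursions building $\Tr_\Gamma(A)$ remain ordinary set recursions in $N$ and the argument is unaffected.
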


\begin{proof}
The least $\beta$-model of $\GBC + \ETR$ will be the least $\beta$-$(\GBC + \ETR)$-realizable $L_\alpha$ along with its least $\beta$-$(\GBC + \ETR)$-realization. First though we have to know that if $M$ is $\beta$-$(\GBC + \ETR)$-realizable then so is $L^M$. We saw in chapter 3 that $M$ being $(\GBC + \ETR)$-realizable implies that $L^M$ is also $(\GBC + \ETR)$-realizable. More specifically, if $(M,\Xcal) \models \GBC + \ETR$ then there is $\Ycal \subseteq \Xcal \cap \powerset(L^M)$ so that $(L^M,\Ycal) \models \GBC + \ETR$. If $(M,\Xcal)$ is a $\beta$-model then so is $(L^M, \Ycal)$, by observation \ref{obs4:beta-v-submodel}. Therefore, if there is a $\beta$-model of $\GBC + \ETR$ then $L_\alpha$ is $\beta$-$(\GBC + \ETR)$-realizable where $\alpha$ is the least height of a $\beta$-model of $\GBC + \ETR$. 

Now let $\Xcal$ be the least $\beta$-$(\GBC + \ETR)$-realization for $L_\alpha$, which exists by theorem \ref{thm4:least-beta-etr-rlzn}. We want to see that $(L_\alpha,\Xcal)$ is contained inside every $\beta$-model of $\GBC + \ETR$. Fix $(N,\Ycal) \models \GBC + ETR$ a $\beta$-model. If $\Ord^N = \alpha$, then theorem \ref{thm4:least-beta-etr-rlzn} yields that $\Xcal \subseteq \Ycal$. If $\Ord^N > \alpha$ then $L_\alpha \in N$ and thus $N$ can construct $\Xcal$ as in theorem \ref{thm4:least-beta-etr-rlzn} by ordinary transfinite recursion on sets. That is, $N$ starts with $\Xcal_0 = \Def(L_\alpha)$. Then given $\Xcal_n$ add in all the classes definable from $\Tr_\Gamma(A)$ for $\Gamma,A \in \Xcal_n$ to get $\Xcal_{n+1}$. This makes sense, because each $(L_\alpha,\Xcal_n)$ is a $\beta$-model, as can be seen externally from $V$, and thus in $N$ as $N$ is correct about well-foundedness. Then $\Xcal = \bigcup_n \Xcal_n$ must be in $N$, as otherwise would imply that $N$ does not satisfy an instance of Replacement. Thus, $(L_\alpha,\Xcal) \subseteq (N,\Ycal)$
\end{proof}

Essentially the same argument gives least $\beta$-models for $\GBC + \ETR_\Gamma$. But first a subtlety needs to be cleared up. When dealing with a fixed model with a fixed class well-order $\Gamma$ it was sensical to ask whether it satisfies $\ETR_\Gamma$. However, this will not work if we do not have a fixed model in mind. How are we even to express $\ETR_\Gamma$ as an $\Lcal_\in$-theory?


What we can do is ask that $\Gamma$ be given some definition which evaluated in a model of $\GBC$ always gives a well-order. For instance, $\Gamma$ could be $\Ord$ or $\omega_1$. Then, although different models may disagree on what $\Gamma$ is, $\ETR_\Gamma$ can be expressed as an $\Lcal_\in$ theory. To distinguish this case from when $\Gamma$ is a literal class in a model and $\ETR_\Gamma$ is expressed as an $\Lcal_\in(\Gamma)$-theory I will talk of $\Gamma$ being given by a definition. For example, if I say that $\Gamma$ is given by a first-order definition I mean that there is a certain first-order $\Lcal_\in$-formula $\phi(x,y)$ so that $\GBC$ proves that the class defined by $\phi(x,y)$ is a well-order.
 If $(M,\Xcal) \models \GBC$ then I will write $\Gamma^{(M,\Xcal)}$ for the well-order in $\Xcal$ given by applying the definition of $\Gamma$ inside $(M,\Xcal)$. In case $\Gamma$ is given by a first-order definition I will simply write $\Gamma^M$, as the evaluation depends only upon $M$.

\begin{theorem} \label{thm4:least-beta-etr-gamma}
Let $\Gamma \ge \omega^\omega$ be given by a first-order definition without parameters.\footnote{That is, $\GBC$ proves that $\Gamma \ge \omega^\omega$.}
Assume that $\Gamma$ is necessarily absolute to $L$, meaning that if any $(M,\Xcal) \models \GBC$ then $\Gamma^{L^M} = \Gamma^M$.
If there is a $\beta$-model of $\GBC + \ETR_\Gamma$ then there is a least $\beta$-model of $\GBC + \ETR_\Gamma$.
\end{theorem}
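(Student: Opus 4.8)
The plan is to follow the template of Theorem \ref{thm4:least-beta-etr}, replacing "least $\beta$-$(\GBC+\ETR)$-realization" by "least $\beta$-$(\GBC+\ETR_\Gamma)$-realization" and using Theorem \ref{thm4:beta-etr-gamma} in place of Theorem \ref{thm4:least-beta-etr-rlzn}. First I would observe that being $\beta$-$(\GBC+\ETR_\Gamma)$-realizable goes down to $L$: if $(M,\Xcal) \models \GBC + \ETR_\Gamma$ is a $\beta$-model, then by Theorem \ref{thm3:etr-gamma-inner-model} (applied with $N = L^M$, noting that $\Gamma^M$ is a $\GBC$-amenable subclass of $L^M$ since $\Gamma$ is necessarily absolute to $L$, so that $\Gamma^{L^M} = \Gamma^M$) there is $\Ycal \subseteq \Xcal \cap \powerset(L^M)$ with $(L^M,\Ycal) \models \GBC + \ETR_{\Gamma^M}$; by observation \ref{obs4:beta-v-submodel} this is again a $\beta$-model. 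Here I need the hypothesis that $\Gamma$ be sufficiently nice over $L^M$ --- precisely the absoluteness hypothesis guards against pathologies like "$\Gamma \subseteq L$ codes $0^\sharp$" mentioned after Theorem \ref{thm3:etr-gamma-inner-model}. Also note $L^M$ has a definable global well-order, so $(L^M,\Ycal)$ really does satisfy full Global Choice.

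Next, let $\alpha$ be the least height of a $\beta$-model of $\GBC + \ETR_\Gamma$; by the previous paragraph $L_\alpha$ is $\beta$-$(\GBC + \ETR_\Gamma)$-realizable (with $\Gamma$ interpreted inside $L_\alpha$, which makes sense as $\Gamma$ is given by a parameter-free first-order definition). Since $L_\alpha$ has a definable global well-order and $\Gamma^{L_\alpha} \ge \omega^\omega$ is genuinely well-founded (being a class well-order of a transitive model, and $L_\alpha$ transitive is correct about well-foundedness of its set relations; more carefully, because $L_\alpha$ is a $\beta$-model of $\GBC$ it is correct about whether $\Gamma^{L_\alpha}$ is well-founded), Theorem \ref{thm4:beta-etr-gamma} applies and yields a least $\beta$-$(\GBC + \ETR_\Gamma)$-realization $\Xcal$ for $L_\alpha$. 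I claim $(L_\alpha,\Xcal)$ is the least $\beta$-model of $\GBC + \ETR_\Gamma$.

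To verify leastness, fix any $\beta$-model $(N,\Ycal) \models \GBC + \ETR_\Gamma$. By minimality of $\alpha$ we have $\Ord^N \ge \alpha$. If $\Ord^N = \alpha$ then $N$ is transitive of height $\alpha$, hence $N = L_\alpha$ is impossible in general --- but $L^N$ is $\beta$-$(\GBC + \ETR_{\Gamma})$-realizable with $\Ord^{L^N} = \alpha$, so $L^N = L_\alpha$, and Theorem \ref{thm4:beta-etr-gamma} applied to $L_\alpha$ gives that $\Xcal$ is contained in the realization we extracted inside $L^N \subseteq N$, hence $(L_\alpha,\Xcal) \subseteq (N,\Ycal)$. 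If $\Ord^N > \alpha$, then $L_\alpha \in N$, and inside $N$ one can run the construction of $\Xcal$ from the proof of Theorem \ref{thm4:beta-etr-gamma}/\ref{thm4:least-beta-etr-rlzn} by ordinary transfinite recursion on sets: start with $\Xcal_0 = \Def(L_\alpha;\Gamma^{L_\alpha})$, and given $\Xcal_n$ put $\Xcal_{n+1} = \bigcup\{\Def(L_\alpha;\Tr_{\Gamma^{L_\alpha}}(A)) : A \in \Xcal_n\}$, using that each $(L_\alpha,\Xcal_n)$ is externally a $\beta$-model and hence $N$, being correct about well-foundedness, agrees; since $\Gamma^{L_\alpha}$ is genuinely well-founded, $\Tr_{\Gamma^{L_\alpha}}(A)$ is the externally unique iterated truth predicate, so $N$'s computation agrees with the external one. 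Then $\Xcal = \bigcup_n \Xcal_n \in N$ by Replacement in $N$, so $(L_\alpha,\Xcal) \subseteq (N,\Ycal)$.

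The main obstacle I anticipate is making the downward-to-$L$ argument precise, i.e.\ confirming that the absoluteness hypothesis on $\Gamma$ is exactly what is needed to invoke Theorem \ref{thm3:etr-gamma-inner-model} with $\Gamma^M$ serving as a $\GBC$-amenable well-order of the inner model $L^M$ --- one must check $\Gamma^{L^M} = \Gamma^M$ really is $\GBC$-amenable to $L^M$ and that $\Gamma^M \in \Xcal$ (which holds since $\Gamma$ is given by a $\GBC$-provable definition and $(M,\Xcal) \models \GBC$, so $\Gamma^M \in \Xcal$ by Elementary Comprehension). The rest is bookkeeping parallel to Theorem \ref{thm4:least-beta-etr}.
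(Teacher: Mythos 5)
Your proposal is correct and follows essentially the same route as the paper: take $\alpha$ least, pass to $L_\alpha$ via theorem \ref{thm3:etr-gamma-inner-model} (using the absoluteness hypothesis), apply theorem \ref{thm4:beta-etr-gamma} to get the least $\beta$-$(\GBC+\ETR_\Gamma)$-realization $\Xcal$ for $L_\alpha$, and then verify leastness by the same two-case split on $\Ord^N = \alpha$ versus $\Ord^N > \alpha$, with the internal transfinite-recursion construction of $\Xcal$ inside $N$ in the latter case. The only differences are expository (you spell out the $L^N = L_\alpha$ step and the uniqueness of the iterated truth predicates a bit more explicitly), not mathematical.
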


I will explain after the proof why we need the absoluteness condition on $\Gamma$. For now, observe that the condition can be ensured for many ordertypes of interest, e.g.\ $\omega^\omega$, $\Ord$, $\Ord + \Ord$, and so on. One can give a definition for a well-order of ordertype, e.g., $\Ord + \Ord$ which is not absolute to $L$. For example: ``If $0^\sharp$ exists then $\Gamma$ is $\Ord$ followed by $\Ord \times \{0^\sharp\}$ (with the obvious order) and if $0^\sharp$ does not exist then $\Gamma$ is the even ordinals followed by the odd ordinals.'' But there is a perfectly good definition of $\Gamma$ with ordertype $\Ord + \Ord$ which is absolute to $L$. 

\begin{proof}
Let $\alpha$ be least such that there is a $\beta$-$(\GBC + \ETR_\Gamma)$-realizable $M$ of height $\alpha$. By theorem \ref{thm3:etr-gamma-inner-model} we can conclude $L_\alpha$ is $(\GBC + \ETR_\Gamma)$-realizable. This uses that $\Gamma$ is absolute to $L$, as the result in chapter 3 was for $\Gamma$ being a fixed well-order, rather than being given by a definition. Absoluteness to $L$ ensures that $M$ and $L_\alpha$ have the same $\Gamma$. By theorem \ref{thm4:beta-etr-gamma} we have $\Xcal$ the least $\beta$-$(\GBC + \ETR_\Gamma)$-realization for $L_\alpha$. This uses that $\Gamma$ is defined by a first-order formula, so we get the same well-order regardless of what collection of classes we put on $L_\alpha$. 

It remains only to see that $(L_\alpha,\Xcal)$ is the least $\beta$-model of $\GBC + \ETR_\Gamma$. Take $(N,\Ycal)$ a $\beta$-model of $\GBC + \ETR_\Gamma$. There are two cases. First, consider the case $\Ord^N = \alpha$. Then $\Gamma^N = \Gamma^{L_\alpha}$. We saw in theorem \ref{thm3:etr-gamma-inner-model} that there is $\bar \Ycal \subseteq \Ycal$ so that $(L_\alpha,\bar \Ycal) \subseteq (N,\Ycal)$. So, by the leastness of $\Xcal$ we get $(L_\alpha,\Xcal) \subseteq (L_\alpha,\bar \Ycal) \subseteq (N,\Ycal)$. Second, consider the case $\Ord^N > \alpha$. Then $L_\alpha \in N$ so $\Gamma^{L_\alpha} \in N$ and $N$ can build $\Xcal$ by ordinary transfinite recursion. So $\Xcal \in N$ and thus $(L_\alpha,\Xcal) \subseteq (N,\Ycal)$. 
\end{proof}

Let me now give an example to explain why we want to require $\Gamma$ to be absolute to $L$. Consider $\Gamma$ defined by: ``If $V = L$ then $\Gamma = \Ord$ and if $V \ne L$ then $\Gamma = \omega^\omega$.'' Let us see that $\ETR_\Gamma$ does not have a least $\beta$-model. Take $\alpha$ least so that there is a $\beta$-model of $\GBC + \ETR_{\omega^\omega}$ of height $\alpha$. Then $\alpha$ must be countable. Let $x$ and $y$ be mutually generic Cohen-reals over $L_\alpha$. Then $L_\alpha[x]$ and $L_\alpha[y]$ have least $\beta$-$\ETR_{\Gamma}$-realizations, call them $\Xcal$ and $\Ycal$ respectively. But there is no $\beta$-model of $\ETR_\Gamma$ which is contained in both $(L_\alpha[x],\Xcal)$ and $(L_\alpha[y],\Ycal)$. By leastness of $\alpha$, such a model would have to have height $\alpha$. But $L_\alpha[x] \cap L_\alpha[y] = L_\alpha$ by mutual genericity, so the first-order part of such a model would have to be $L_\alpha$. However $L_\alpha$ is not $\beta$-$(\GBC + \ETR_\Gamma)$-realizable because $\Gamma^{L_\alpha} = \Ord^{L_\alpha}$ and $\GBC + \ETR_\Ord$ proves there is a set-sized $\beta$-model of $\ETR_{\omega^\omega}$.\footnote{This is because $\GBC + \ETR_\Ord$ proves there is a coded $V$-submodel of $\ETR_{\omega^\omega}$ and this can be reflected down to get a set-sized $\beta$-model of $\ETR_{\omega^\omega}$.}
So if $L_\alpha$ were $\beta$-$(\GBC + \ETR_\Gamma)$-realizable that would contradict the leastness of $\alpha$.

Requiring $\Gamma$ to be absolute to $L$ rules out definitions like this one.

This argument only needs that the models are correct about their $\Gamma$ being well-founded. So if $\Gamma$ is given by a definition which interpreted in any transitive model gives a relation which really is a well-order, then we can get a least transitive model of $\GBC + \ETR_\Gamma$.

\begin{theorem} \label{thm4:least-etr-gamma}
Let $\Gamma \ge \omega^\omega$ be given by a first-order definition without parameters. Assume the following.
\begin{enumerate}
\item $\Gamma$ is absolute to $L$, meaning that if $(M,\Xcal) \models \GBC$ then $\Gamma^{L^M} = \Gamma^M$; and
\item If $(M,\Xcal) \models \GBC$ is transitive then $\Gamma^M$ really is well-founded, as seen externally.
\end{enumerate}
Then, if there is a $\beta$-model of $\GBC + \ETR_\Gamma$ there is a least transitive model of $\GBC + \ETR_\Gamma$. \qed
\end{theorem}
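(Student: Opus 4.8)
The plan is to mimic the proof of Theorem \ref{thm4:least-beta-etr} but with transitive models replacing $\beta$-models throughout, using the two new hypotheses on $\Gamma$ precisely where the $\beta$-property was invoked before. First I would let $\alpha$ be the least ordinal such that there is a $\beta$-model of $\GBC + \ETR_\Gamma$ of height $\alpha$; such $\alpha$ exists by the consistency assumption. By hypothesis (2), any transitive model of $\GBC$ is correct about whether its $\Gamma$ is well-founded, so in particular any transitive model of $\GBC + \ETR_\Gamma$ at height $\alpha$ is automatically a $\beta$-model as far as $\Gamma$-founded recursions are concerned; combined with hypothesis (1) and Theorem \ref{thm3:etr-gamma-inner-model}, this lets me conclude that $L_\alpha$ is $(\GBC + \ETR_\Gamma)$-realizable, the absoluteness of $\Gamma$ to $L$ guaranteeing that $L_\alpha$ and the ambient model assign $\Gamma$ the same value (a genuine well-order, by (2)).

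Next I would apply Theorem \ref{thm4:least-etr-rlzns} (or \ref{thm4:beta-etr-gamma}) to the transitive model $L_\alpha$: since $L_\alpha$ has a definable global well-order and $\Gamma^{L_\alpha}$ really is well-founded as seen externally, $L_\alpha$ has a least $(\GBC + \ETR_\Gamma)$-realization $\Xcal$, and moreover $\Xcal$ is defined by the familiar $\omega$-step construction $\Xcal_0 = \Def(L_\alpha;\Gamma)$, $\Xcal_{n+1} = \bigcup\{\Def(L_\alpha;\Tr_\Gamma(A)) : A \in \Xcal_n\}$, with $\Xcal = \bigcup_n \Xcal_n$. Here the external well-foundedness of $\Gamma^{L_\alpha}$ is exactly what makes each $\Tr_\Gamma(A)$ unique and hence the construction canonical. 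Because $\Gamma$ is given by a first-order definition without parameters, $\Gamma^{(L_\alpha,\Ycal)} = \Gamma^{L_\alpha}$ regardless of which class collection $\Ycal$ we put on $L_\alpha$, so this construction is unambiguous.

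Finally I would check that $(L_\alpha,\Xcal)$ embeds into every transitive model $(N,\Ycal) \models \GBC + \ETR_\Gamma$. If $\Ord^N = \alpha$ then $N$ is in particular $\beta$ for $\Gamma$-founded relations, $\Gamma^N = \Gamma^{L_\alpha}$ by (1), Theorem \ref{thm3:etr-gamma-inner-model} gives $\bar\Ycal \subseteq \Ycal$ with $(L_\alpha,\bar\Ycal) \subseteq (N,\Ycal)$, and leastness of $\Xcal$ yields $(L_\alpha,\Xcal) \subseteq (N,\Ycal)$. If $\Ord^N > \alpha$ then $L_\alpha \in N$, so $\Gamma^{L_\alpha} \in N$ and $N$ can run the displayed recursion internally as an ordinary transfinite recursion on sets—each $(L_\alpha,\Xcal_n)$ is correct about $\Gamma$-foundedness as seen in $V$, hence in the transitive model $N$—producing $\Xcal \in N$ by Replacement, so again $(L_\alpha,\Xcal) \subseteq (N,\Ycal)$. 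The main obstacle, and the place where the hypotheses genuinely earn their keep, is the step extracting realizability of $L_\alpha$ from a mere $\beta$-model at height $\alpha$ and ensuring the whole argument is not derailed by a pathological non-$L$-absolute $\Gamma$ of the kind ruled out just before the theorem; once hypotheses (1) and (2) are in hand this reduces to bookkeeping already carried out for the $\beta$-model case.
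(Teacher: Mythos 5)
There is a genuine gap, and it is in your very first step: you take $\alpha$ to be the least height of a \emph{$\beta$-model} of $\GBC + \ETR_\Gamma$, but the theorem asks for a least \emph{transitive} model, and nothing in your argument rules out transitive models of height strictly below your $\alpha$. Your leastness check only treats the cases $\Ord^N = \alpha$ and $\Ord^N > \alpha$; if some transitive $(N,\Ycal) \models \GBC + \ETR_\Gamma$ had $\Ord^N < \alpha$, then $(L_\alpha,\Xcal)$ could not be a submodel of it and your candidate would fail to be least. Hypothesis (2) makes every transitive model correct about $\Gamma$, but it does not make it a $\beta$-model --- it may still be wrong about the well-foundedness of other class relations --- so you cannot silently identify the least transitive height with the least $\beta$-model height; whether least transitive models are automatically $\beta$-models is precisely the sort of question the paper leaves open (compare the questions about $\tau$ versus $\beta$ for $\GBC$ and for $\GBC+\ETR$).

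The repair is small and is what the intended argument does: let $\alpha$ instead be the least height of a \emph{transitive} model of $\GBC + \ETR_\Gamma$ (such exists, since the assumed $\beta$-model is transitive). Theorem \ref{thm3:etr-gamma-inner-model} together with hypothesis (1) still gives that $L_\alpha$ is $(\GBC+\ETR_\Gamma)$-realizable --- that step never needed $\beta$-ness --- and hypothesis (2), applied to that transitive realization of $L_\alpha$, gives that $\Gamma^{L_\alpha}$ really is well-founded, so Theorem \ref{thm4:least-etr-rlzns} yields the least $(\GBC+\ETR_\Gamma)$-realization $\Xcal$ of $L_\alpha$, least among \emph{all} realizations and not just the $\beta$ ones. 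With this choice of $\alpha$ your two cases are exhaustive and your verification goes through essentially verbatim: in the case $\Ord^N = \alpha$, hypothesis (2) supplies the correctness about $\Gamma$ that the $\beta$-property supplied in Theorem \ref{thm4:least-beta-etr-gamma}, and in the case $\Ord^N > \alpha$ the transitivity of $N$ plus the genuine well-foundedness of $\Gamma^{L_\alpha} \in N$ make the internal recursion computing the predicates $\Tr_\Gamma(A)$ absolute, so $N$ builds the true $\Xcal$ and contains it by Replacement.
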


As particular cases of interest, this works when $\Gamma = \Ord$ or $\Gamma$ is a given by a definition for a (set-sized) ordinal.

Left open is how high up this can be pushed. Can it be pushed all the way up to $\ETR$?

\begin{question}[Open]
Is there a least transitive model of $\GBC + \ETR$?
\end{question}

Let me note this question is not immediately settled by master lemma \ref{lem4:master-lemma}.

\begin{proposition}
The theory $\GBC + \ETR$ does not prove that $\Hyp(V)$ exists, assuming that it is consistent.
\end{proposition}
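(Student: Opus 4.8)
The plan is to show that if $\GBC + \ETR$ proved ``$\Hyp(V)$ exists'' then it would prove its own consistency, which is impossible by Gödel's second incompleteness theorem since $\GBC + \ETR$ interprets arithmetic and is, by hypothesis, consistent. So suppose toward a contradiction that $\GBC + \ETR \vdash$ ``$\Hyp(V)$ exists'', and fix an arbitrary model $(M,\Xcal) \models \GBC + \ETR$; I will argue $(M,\Xcal) \models \Con(\GBC + \ETR)$, which since $(M,\Xcal)$ was arbitrary yields $\GBC + \ETR \vdash \Con(\GBC + \ETR)$.

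Because $(M,\Xcal) \models \ETR$, theorem \ref{thm3:etr-iff-l-hier} lets us form the membership code $\Hyp(M) = L_{\Gamma_0}(M)$, where $\Gamma_0$ is least among class well-orders with $L_{\Gamma_0}(M) \models \KP$; this exists by the assumption that ``$\Hyp(V)$ exists'' holds. Let $\Ycal_0 \subseteq \Xcal$ be the collection of subsets of $M$ appearing in $\Hyp(M)$; as in chapter 2 this is a genuine subcollection of $\Xcal$, since every subset of $M$ appearing in some $L_\delta(M)$ is a class. I claim $(M,\Ycal_0) \models \GBC + \ETR$. It satisfies $\GBC$: closure under first-order definability holds because $\Hyp(M) \models \KP$ proves every set structure has a satisfaction class and has $\Delta_0$-Separation, so the subset of $M$ defined over $(M,\in,\vec A)$ for $\vec A \in \Ycal_0$ again lies in $\Hyp(M)$; Global Choice holds because $\Hyp(M) \models V = L(M)$, so the canonical well-order of $M$ lies in $L_1(M) \subseteq \Ycal_0$; Class Replacement and Extensionality are immediate. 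For $\ETR$, by Fujimoto's theorem \ref{thm3:etr-iff-itr} it suffices to see $\Tr_\Gamma(A) \in \Ycal_0$ whenever $\Gamma,A \in \Ycal_0$ and $(M,\Ycal_0) \models \Gamma$ is a well-order. Since $(M,\Ycal_0)$, $(M,\Xcal)$, and $\Hyp(M)$ share the first-order part $M$ they agree on whether $\Gamma$ is well-founded, so $\Hyp(M) \models \Gamma$ is a well-order and hence, as $\Hyp(M) \models \KP$, inside $\Hyp(M)$ we get $\Gamma \cong \delta$ for an ordinal $\delta < \Ord^{\Hyp(M)} = \Gamma_0$. An admissible ordinal is closed under ordinal addition and multiplication, so $\omega \cdot \delta < \Gamma_0$; thus the $\Sigma$-recursion of length $\omega \cdot \delta$ building the $\Gamma$-iterated truth predicate relative to $A$ for $(M,\in)$ has a solution inside $\Hyp(M)$ by the $\Sigma$-recursion theorem of $\KP$, and that solution is a subset of $M$, hence in $\Ycal_0$; by uniqueness of iterated truth predicates it is $\Tr_\Gamma(A)^{(M,\Ycal_0)}$.

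Now pass to the unrolling $\Ufrak$ of $(M,\Xcal)$, which by chapter 2 models $\wZFCmi(0) + \Sigma_0$-Transfinite Recursion and contains $\Hyp(M)$ as a genuine set. Then $\Ycal_0 = \Hyp(M) \cap \powerset(M)$ is a set in $\Ufrak$ by $\Sigma_0$-Separation, so the two-sorted structure $(M,\Ycal_0)$ is a set structure in $\Ufrak$, with a satisfaction class supplied by $\Sigma_0$-Transfinite Recursion. Running the argument of the previous paragraph internally to $\Ufrak$ — which $\Ufrak$ can do, since it possesses $\Hyp(M)$ as a set and can verify it is admissible and equals $L_{\Gamma_0}(M)$ — shows $\Ufrak \models$ ``$(M,\Ycal_0) \models \GBC + \ETR$''; here it is essential that $\GBC + \ETR$ is finitely axiomatizable (the corollary that $\GBC + \ETR$ is finitely axiomatizable) so that a single satisfaction class suffices. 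A set structure equipped with a verified satisfaction class witnessing all the axioms of $\GBC + \ETR$ gives, by the usual induction along a putative proof of $\bot$, that $\Ufrak \models \Con(\GBC + \ETR)$; by the bi-interpretability of $(M,\Xcal)$ with $\Ufrak$ (theorem \ref{thm2:bi-int}), $(M,\Xcal) \models \Con(\GBC + \ETR)$, completing the contradiction.

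The main obstacle is the bookkeeping in the last paragraph: verifying that the weak unrolled theory $\wZFCmi(0) + \Sigma_0$-Transfinite Recursion, which has neither full Separation nor any Collection, can internally check that the coded set structure $(M,\Ycal_0)$ satisfies each of the finitely many axioms of $\GBC + \ETR$ and then formalize the soundness argument. This rests on the facts that $\KP$ — which $\Hyp(M)$ satisfies — proves set structures have satisfaction classes, proves $\Delta_0$-Separation, and proves the $\Sigma$-recursion theorem, together with $\Sigma_0$-Transfinite Recursion in $\Ufrak$ to run the (possibly nonstandard-length) induction along the proof of $\bot$.
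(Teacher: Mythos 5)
Your middle paragraph is essentially the paper's own construction (take $\Ycal_0 = \Hyp(M)\cap\powerset(M)$ and argue it is a $(\GBC+\ETR)$-realization), but two of your verifications have genuine gaps. First, Global Choice: you assert that a well-order of $M$ lies in $L_1(M)$ because $\Hyp(M)\models V=L(M)$. The canonical well-order of an $L$-hierarchy built over a parameter only well-orders what is constructed above that parameter; it does not well-order $M$ itself, and in general $\Hyp(M)$ need contain no global well-order of $M$ at all. This is precisely why the paper's second-order constructibility construction always carries a global well-order $G$ as part of the parameter, and why the paper's proof of this proposition begins by shrinking to an inner model with a definable global well-order. (Your G\"odel-style strategy cannot copy that move verbatim, since you fixed an arbitrary model and need the conclusion there; it is repairable because $\Con$ statements are arithmetic and pass back from an inner model, but as written the step fails.) Second, and more seriously, your $\ETR$ verification rests on ``$\Hyp(M)\models\KP$, hence inside $\Hyp(M)$ we get $\Gamma\cong\delta$ for an ordinal $\delta<\Ord^{\Hyp(M)}$.'' $\KP$ does not prove that an internally well-founded linear order is isomorphic to an ordinal: admissible sets admit pseudo-well-orders (the Harrison order is believed to be a well-order in $L_{\omegaoneck}$ yet has no rank function there). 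Since $(M,\Xcal)$ need not be a $\beta$-model, a class $\Gamma\in\Ycal_0$ that $(M,\Ycal_0)$ believes is a well-order may be genuinely ill-founded and admit no internal collapse in $\Hyp(M)$, so your route--collapse $\Gamma$ to an ordinal of $\Hyp(M)$, then run $\Sigma$-recursion along that ordinal--breaks down exactly where it is needed. The paper makes no such claim; it obtains $\ETR$ in the cut-off model directly from $\Sigma_0$-Transfinite Recursion inside the admissible set, without collapsing $\Gamma$.

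Beyond these gaps, your overall route is much heavier than necessary. The paper's proof stops once it has $(M,\Ycal_0)\models\GBC+\ETR$: by construction this model does not satisfy ``$\Hyp(V)$ exists,'' and exhibiting one such model is all that non-provability requires. Your detour through G\"odel's second incompleteness theorem forces you to run the construction in \emph{every} model (so you cannot discard inconvenient cases), to internalize satisfaction and a soundness induction along possibly nonstandard proofs inside the weak theory $\wZFCmi(0)$ plus $\Sigma_0$-Transfinite Recursion, and to invoke finite axiomatizability and bi-interpretability--exactly the bookkeeping you flag as the main obstacle--while buying nothing over the direct argument.
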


\begin{proof}
Suppose $(M,\Xcal) \models \GBC + \ETR + \Hyp(V)$ exists. (If there is no such model, then we are already done.) By shrinking down to an inner model if necessary, assume without loss that $M$ has a definable global well-order. Unroll $(M,\Xcal)$ to $W$. From the results in chapter 2, we get that $W$ satisfies $\Sigma_0$-Transfinite Recursion. Now let $\Ycal = \{ A \in W : W \models A \in \Hyp(M) \mand A \subseteq M \}$ consist of the classes of $M$ which are in $\Hyp(M)^W$. Then $\Ycal$ is closed under first-order definability, so it satisfies Elementary Comprehension. It satisfies Class Replacement because it is a $V$-submodel of a model of $\GBC$. Class Extensionality is obvious and Global Choice holds because $M$ has a definable global well-order. Altogether, we have seen that $(M,\Ycal) \models \GBC$. But it also must satisfy $\ETR$, because $\Hyp(M)^W \models \KP$ and $\KP$ proves $\Sigma_0$-Transfinite Recursion. But $(M,\Ycal) \not \models \Hyp(V)$ exists, by construction. so $\GBC + \ETR$ does not prove that $\Hyp(V)$ exists.
\end{proof}

Let me mention a couple related questions. The first question was also asked at the end of chapter 2.

\begin{question}
Let $\tau(\GBC + \ETR)$ be the least height of a transitive model of $\GBC + \ETR$ and let $\beta(\GBC + \ETR)$ be the least height of a $\beta$-model of $\GBC + \ETR$. Can we conclude that $\tau(\GBC + \ETR) < \beta(\GBC + \ETR)$?
\end{question}

If the answer is no, then there is a least transitive model of $\GBC + \ETR$, which would be the same as the least $\beta$-model of $\GBC + \ETR$. 

\begin{question}
Can there be $M \models \ZFC$ which is $(\GBC + \ETR)$-realizable with a class well-order $\Gamma \in \Def(M)$ so that there are two different $(\GBC+\ETR)$-realizations $\Xcal$ and $\Ycal$ for $M$ so that $\Tr_\Gamma^{(M,\Xcal)} \ne \Tr_\Gamma^{(M,\Ycal)}$? What if we restrict to countable $M$?
\end{question}

It follows from theorem \ref{thm4:least-beta-etr-rlzn} that this cannot happen if $M$ has a $\beta$-$(\GBC+\ETR)$-realization.

\section{Weak theories}

The results in this section are either old or appeared in chapter 1. I state them here to round out the presentation in this chapter.

Let us start with a classical result.

\begin{theorem}[Shepherdson \cite{shepherdson1953}]
There is a least transitive model of $\GBC$, if there is any transitive model of $\GBC$.
\end{theorem}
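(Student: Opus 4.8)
The plan is to adapt the classical Shepherdson--Cohen argument that there is a least transitive model of $\ZFC$, using that $\GBC$ is absolute to second-order $L$ in the appropriate sense. First I would recall the observation from the ``basic constructions'' section: if $M \models \ZFC$ and $G$ is a $\GBc$-amenable global well-order of $M$, then $(M, \Def(M;G)) \models \GBC$. In particular, if $M$ has a \emph{definable} global well-order, then $(M, \Def(M)) \models \GBC$. So the key point will be that the first-order part of any transitive model of $\GBC$ contains a transitive model of $\ZFC$ with a definable global well-order, namely an appropriate $L_\alpha$.

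The main steps, in order: (1) Suppose $(N,\Ycal) \models \GBC$ is transitive. Then $N \models \ZFC$, so $L^N = L_{\Ord^N}$ is an inner model of $N$ satisfying $\ZFC + V = L$, and in particular $L^N$ has a definable global well-order (the canonical $L$-order $<_L$). (2) Since $L^N \models \ZFC + V=L$, by the Shepherdson--Cohen argument applied inside $N$ — or rather by the standard fact that $\ZFC + V=L$ is absolute to $L$ — there is an ordinal $\alpha \le \Ord^N$ with $L_\alpha \models \ZFC$; indeed the \emph{least} ordinal $\alpha$ such that $L_\alpha \models \ZFC$ is at most $\Ord^N$, because $N$'s own $\ZFC$-model $L^N$ witnesses the existence of such, and by condensation/reflection one finds a countable (from $N$'s perspective, hence genuinely smaller) such level; the key structural fact is that every transitive model of $\ZFC$ contains some $L_\alpha$ which is itself a model of $\ZFC$. (3) Let $\alpha_0$ be the least ordinal such that $L_{\alpha_0} \models \ZFC$; this exists under the hypothesis that there is a transitive model of $\GBC$ (hence of $\ZFC$), and by step (2) $\alpha_0 \le \Ord^N$ for every transitive $(N,\Ycal) \models \GBC$. (4) Set $\Xcal = \Def(L_{\alpha_0})$. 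Since $L_{\alpha_0} \models \ZFC + V=L$ has a definable global well-order, $(L_{\alpha_0}, \Xcal) \models \GBC$ by the observation from the basic constructions section. (5) Finally, $(L_{\alpha_0}, \Xcal)$ is a submodel of every transitive model of $\GBC$: given transitive $(N,\Ycal) \models \GBC$, we have $\alpha_0 \le \Ord^N$ and $L_{\alpha_0} = L_{\alpha_0}^N \subseteq N$ with the true $\in$; moreover every class in $\Def(L_{\alpha_0})$ is first-order definable over $L_{\alpha_0}$ hence, since $L_{\alpha_0} \in N$ (when $\alpha_0 < \Ord^N$, using that $N$ can carry out the definitions) or trivially (when $\alpha_0 = \Ord^N$, using Elementary Comprehension in $(N,\Ycal)$), belongs to $\Ycal$.

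The main obstacle is step (2)/(5): making precise that $L_{\alpha_0}$, with its definable classes, sits inside \emph{every} transitive $\GBC$-model's class structure, not just its first-order part. When $\alpha_0 = \Ord^N$ this is immediate from Elementary Comprehension, since the definable subsets of $L_{\alpha_0} = N$ with parameters are classes of $(N,\Ycal)$ — but one must check that the definable \emph{global well-order} $<_L$ of $L_{\alpha_0}$ is genuinely in $\Ycal$, which again follows from Elementary Comprehension as $<_L$ is a first-order definable class over $L_{\alpha_0}$. When $\alpha_0 < \Ord^N$, we need $N$ to recognize $L_{\alpha_0}$ as a set and compute its definable subsets internally; this is routine since $L_{\alpha_0}$ is a set in $N$, $N \models \ZFC$ has a truth predicate for the set-sized structure $L_{\alpha_0}$, and hence $\Def(L_{\alpha_0})^N = \Def(L_{\alpha_0})$ (using absoluteness of the construction and that $N$ is transitive so computes $L_{\alpha_0}$ correctly), giving $\Def(L_{\alpha_0}) \in N$ and thus each of its elements is in $\Ycal$ by Class Replacement/Separation. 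I would cite Shepherdson for the original argument and keep the proof brief, as it is essentially the classical first-order argument with the trivial observation that one may adjoin the definable classes.
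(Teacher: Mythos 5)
Your proposal is correct and takes essentially the same route as the paper, whose entire proof is the one-liner that $(L_{\alpha_0},\Def(L_{\alpha_0}))$ works for $\alpha_0$ the least ordinal with $L_{\alpha_0}\models\ZFC$; you have simply filled in the verification that this structure models $\GBC$ and embeds into every transitive model of $\GBC$. One small caveat: your parenthetical that condensation/reflection produces a level countable from $N$'s perspective is false in general (the minimal transitive model of $\ZFC$ contains no set-sized transitive model of $\ZFC$), but this is harmless since all you actually use is $\alpha_0\le\Ord^N$, and you treat the case $\alpha_0=\Ord^N$ separately via Elementary Comprehension (relativizing to the definable class $L$, since $L_{\alpha_0}=L^N$ need not be all of $N$ there).
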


\begin{proof}
Let $L_\alpha$ be the least transitive model of $\ZFC$. Then $(L_\alpha, \Def(L_\alpha))$ is the least transitive model of $\GBC$.
\end{proof}

A slight modification also gives a least $\beta$-model.

\begin{corollary}
There is a least $\beta$-model of $\GBC$, if there is any $\beta$-model of $\GBC$.
\end{corollary}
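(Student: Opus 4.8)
The plan is to mirror the proof of the preceding theorem (that there is a least transitive model of $\GBC$) but with ``transitive'' replaced by ``$\beta$-''. First I would recall that Shepherdson's argument amounts to two facts: that $(L_\alpha, \Def(L_\alpha)) \models \GBC$ whenever $L_\alpha \models \ZFC$, and that if $M \models \ZFC$ is transitive then $L^M = L_{\Ord^M}$, so the least transitive model of $\GBC$ has first-order part $L_\alpha$ where $\alpha$ is the least height of a transitive model of $\ZFC$. For the $\beta$-model version, the key additional observation is that $(L_\alpha, \Def(L_\alpha))$, being transitive, is automatically a $\beta$-model: its class relations are literal set-like relations on a transitive set, and since $L_\alpha \models \ZFC$ it is correct about well-foundedness of its sets, hence of the classes in $\Def(L_\alpha)$ too. (Indeed $\Def(L_\alpha) \subseteq L_{\alpha+1}$, so every class relation in it is coded by an element of $L_{\alpha+1}$, and $L_{\alpha+1} \models \ZFCm$ is correct about well-foundedness.) So a transitive model of $\GBC$ of the form $(L_\alpha,\Def(L_\alpha))$ is a $\beta$-model.

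Next I would argue that if there is any $\beta$-model of $\GBC$ then there is one of the form $(L_\alpha, \Def(L_\alpha))$. Given a $\beta$-model $(M,\Xcal) \models \GBC$, we may take it to be transitive (every $\beta$-model is isomorphic to a transitive one). Then $(L^M, \Def(L^M))$ is an $\Ord$-submodel of $(M,\Xcal)$ — more precisely a submodel sharing the ordinals, since $L^M \subseteq M$ and $\Def(L^M) \subseteq \Xcal$ by Elementary Comprehension relativized to $L$ — and it satisfies $\GBC$. By observation \ref{obs4:beta-v-submodel}, an $\Ord$-submodel of a $\beta$-model is a $\beta$-model, so $(L^M, \Def(L^M))$ is a $\beta$-$\GBC$-realizable structure with first-order part $L_\alpha$ for $\alpha = \Ord^M$. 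Hence the least $\alpha$ such that $L_\alpha$ is the first-order part of a $\beta$-model of $\GBC$ exists and is witnessed by $(L_\alpha, \Def(L_\alpha))$; equivalently $\alpha$ is the least height of a transitive (= $\beta$-, by the previous paragraph) model of $\ZFC$.

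Finally I would check leastness. Let $(N,\Ycal) \models \GBC$ be any $\beta$-model; again assume it transitive. Then $N$ is a transitive model of $\ZFC$, so $\Ord^N \ge \alpha$, and $L_\alpha \subseteq L^N \subseteq N$ with $L_\alpha$ transitive. It remains to see $\Def(L_\alpha) \subseteq \Ycal$. If $\Ord^N = \alpha$ then $L^N = L_\alpha$ and $\Def(L_\alpha) = \Def(L^N) \subseteq \Ycal$ by Elementary Comprehension in $(N,\Ycal)$. If $\Ord^N > \alpha$ then $L_\alpha \in N$ as a set, and $\Def(L_\alpha)$ is definable inside $N$ (via the set-sized satisfaction predicate, which $N$ has since it models enough $\ZFC$); by Replacement $\Def(L_\alpha)$ is a set in $N$, and each of its elements is a subset of $L_\alpha$, hence a class of the submodel — so $\Def(L_\alpha) \subseteq \Ycal$. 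Thus $(L_\alpha, \Def(L_\alpha)) \subseteq (N,\Ycal)$, and $(L_\alpha,\Def(L_\alpha))$ is the least $\beta$-model of $\GBC$. The only mild subtlety to be careful about is the case distinction on $\Ord^N$ versus $\alpha$ and making sure that in the $\Ord^N = \alpha$ case we really have $\Def(L_\alpha)$ and not something smaller — but this is immediate since $\Def$ is the minimal closure under first-order definability and $\GBC$ includes exactly Elementary Comprehension. No genuine obstacle is expected; this is essentially a bookkeeping variant of Shepherdson's theorem combined with observation \ref{obs4:beta-v-submodel}.
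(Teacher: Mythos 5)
Your argument has a genuine gap at its central step: the claim that $(L_\alpha,\Def(L_\alpha))$ is \emph{automatically} a $\beta$-model whenever $L_\alpha \models \ZFC$ is transitive. Transitivity does not give correctness about well-foundedness of \emph{class} relations -- this is exactly the folklore failure highlighted in chapter 1 (and realized by the constructions of this chapter): a definable class relation $R$ over $L_\alpha$ can be ill-founded in $V$ while no descending $\omega$-sequence through $R$ lies in $L_\alpha$, so $(L_\alpha,\Def(L_\alpha))$ may deem $R$ well-founded. Your supporting assertion that $L_{\alpha+1} \models \ZFCm$ is false ($L_{\alpha+1} = \Def(L_\alpha)$ fails even Pairing, and in any case a descending sequence witnessing ill-foundedness of $R$ need not be definable over $L_\alpha$, so it need not appear in $L_{\alpha+1}$ at all). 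Note also that if this step could be proved so easily, it would settle the question posed immediately after this corollary in the text -- whether the least transitive model of $\GBC$ is a $\beta$-model -- which is left open; your choice of $\alpha$ as the least height of a transitive model of $\ZFC$ silently presupposes a positive answer to that question.

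The repair is to change which $\alpha$ you use, which is the route the paper takes: let $\alpha$ be the least height of a $\beta$-model of $\GBC$ (not of a transitive model of $\ZFC$). Given any $\beta$-model $(M,\Xcal) \models \GBC$, the structure $(L^M,\Def(L^M))$ is a model of $\GBC$ and is an $\Ord$-submodel of $(M,\Xcal)$, hence a $\beta$-model by observation \ref{obs4:beta-v-submodel}; applying this to a $\beta$-model of minimal height shows $(L_\alpha,\Def(L_\alpha))$ is itself a $\beta$-model. Your final leastness paragraph (the case split on $\Ord^N = \alpha$ versus $\Ord^N > \alpha$, using Elementary Comprehension in the first case and the set-sized satisfaction predicate plus Replacement in the second) is fine and goes through verbatim once $\alpha$ is chosen this way; it is only the identification of $\alpha$ with the least height of a transitive model of $\ZFC$, and the purported automatic $\beta$-ness, that must be dropped.
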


\begin{proof}
First, observe that if $(M,\Xcal)$ is a $\beta$-model of $\GBC$ then $(L^M,\Def(L^M))$ is also a $\beta$-model of $\GBC$. This is because $(L_\alpha, \Def(L_\alpha))$ is always a model of $\GBC$ and because $\Ord$-submodels of $\beta$-models are $\beta$-models. 

Now let $\alpha$ be least such that there is a $\beta$-model of $\GBC$ of height $\alpha$. Then $(L_\alpha,\Def(L_\alpha))$ is the least $\beta$-model of $\GBC$.
\end{proof}

As with the case for $\ETR$, it is not clear to me that these are actually two different results.

\begin{question}
Is the height of the least transitive model of $\GBC$ less than the height of the least $\beta$-model of $\GBC$? Phrased differently, is the least transitive model of $\GBC$ a $\beta$-model?
\end{question}

We can also characterize when a countable model of $\ZFC$ has a least $\GBC$-realization.

\begin{theorem}
Let $M \models \ZFC$ be countable. Then the following are equivalent.
\begin{enumerate}
\item $M$ has a least $\GBC$-realization.
\item $M$ has a definable global well-order.
\item $M \models \exists x\ V = \HOD(\{x\})$.
\end{enumerate}
\end{theorem}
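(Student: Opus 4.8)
The plan is to prove the three equivalences by a cycle, taking $(2 \iff 3)$ as essentially routine and concentrating the work on $(1 \iff 2)$. For $(2 \iff 3)$: a definable global well-order of $M$ is, by absorbing the defining formula's parameter into a single set $x$, the same as a definable well-order of $V$ definable from $x$ alone, which is exactly the statement that $V = \HOD(\{x\})$; conversely if $V = \HOD(\{x\})$ then the canonical $\HOD(\{x\})$-well-order is a definable global well-order. (If one wants a genuinely parameter-free definable global well-order one should note the subtlety that the witness $x$ must itself be definable; but the statement $\exists x\ V = \HOD(\{x\})$ matches the ``definable from a set parameter'' reading used throughout, so I would phrase clause $(2)$ as allowing a set parameter, consistent with the usage after Corollary~\ref{cor1:gbc-cons-zfc}.)

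For $(2 \Rightarrow 1)$: if $M$ has a definable global well-order then $\Def(M)$ already contains it, so $(M,\Def(M)) \models \GBC$ rather than merely $\GBc$. Since every $\GBC$-realization $\Xcal$ for $M$ is closed under first-order definability and hence contains $\Def(M)$, the realization $\Def(M)$ is the least element of $\rlzn\GBC M$. This direction is immediate from the observations in Section~\ref{sec1:preserving-axioms} and the discussion of $\Def(M)$ in Section~\ref{sec:preserving-axioms}; no countability is needed here.

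For $(1 \Rightarrow 2)$, which is the main content and where countability is used: I would prove the contrapositive. Suppose $M$ has no definable global well-order, so $(M,\Def(M)) \models \GBc$ but not $\GBC$. By Theorem~\ref{thm1:gbc-cons-zfc} and the surrounding discussion, since $M$ is countable I can force over $(M,\Def(M))$ with $\Add(\Ord,1)$ to add a Cohen-generic subclass $C \subseteq \Ord^M$; then $\Def(M;C)$ is a $\GBC$-realization for $M$. The key point, exactly as in the proof of Theorem~\ref{thm1:gm-omnibus}$(3)$, is that I can instead take two mutually generic subclasses $C, D \subseteq \Ord^M$ (again using countability of $M$), obtaining two $\GBC$-realizations $\Def(M;C)$ and $\Def(M;D)$ whose intersection, by mutual genericity and the forcing theorem, is $\Def(M)$ — which is not a $\GBC$-realization since $M$ has no definable global well-order. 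Hence no $\GBC$-realization of $M$ can be below both, so $\rlzn\GBC M$ has no least element. The main obstacle — really the only nontrivial step — is verifying that mutual genericity forces $\Def(M;C) \cap \Def(M;D) = \Def(M)$, i.e.\ that any class definable from $C$ and also definable from $D$ is already definable without parameters; this is the standard mutual-genericity argument (a class definable from $C$ has a name decided by a condition in the $C$-generic, and if it is also definable from $D$ then by genericity of $D$ over the ground model extended by $C$, or symmetrically, the name's interpretation cannot depend on the generic), and it is precisely the argument already invoked for Theorem~\ref{thm1:gm-omnibus}$(3)$, so I would simply cite that. Assembling these pieces completes the cycle $(1) \Rightarrow (2) \Rightarrow (3) \Rightarrow (2) \Rightarrow (1)$ and hence the equivalence.
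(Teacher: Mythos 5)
Your proposal is correct and follows essentially the same route as the paper: the paper proves $(1 \iff 2)$ by exactly your mutually-generic-Cohen-classes argument (this is theorem \ref{thm1:gm-omnibus}.$(3)$, whose proof takes $C,D$ mutually generic over $(M,\Def(M))$ and notes their realizations intersect in $\Def(M)$), and it dismisses $(2 \iff 3)$ as the well-known fact, with the same remark about allowing a set parameter that you make. No gaps to report.
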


\begin{proof}
$(1 \iff 2)$ was theorem \ref{thm1:gm-omnibus}.$(3)$ from chapter 1. $(2 \iff 3)$ is a well-known fact.\footnote{The reason for using $\HOD(\{x\})$ instead of $\HOD$ is that we want to possibly allow parameters for the definition of the global well-order.}
\end{proof}

\section{Coda: the analogy to second-order arithmetic}

Several times throughout the course of this dissertation we have touched upon the analogy between second-order set theory and second-order arithmetic. Now that we are done with the major results I would like to flesh this analogy out more fully. Let us begin by seeing how the theories line up.

\begin{figure}[h]
\begin{center}
\begin{tabular}{c c}
Arithmetic & Set theory \\
\hline
$\Zsf_2$ & $\KM$ \\
$\PCA_0$ & $\PCA$ \\
$\ATR_0$ & $\ETR$ \\
$\ACA_0$ & $\GBC$ \\
$\WKL_0$ & \\
$\RCA_0$ & 
\end{tabular}
\end{center}
\caption{Arithmetic versus set theory.}
\end{figure}

It's well-known that $\PA$ is bi-interpretable with finite set theory $\ZFC^{\neg\infty}$, i.e.\ $\ZFC$ with Infinity replaced with its negation.\footnote{Note that, however, this is sensitive to how Foundation is formulated, as formulations equivalent over infinitary set theory are not equivalent over finite set theory. So finite set theory should be understood as formulated with the right version of Foundation. See \cite{kaye-wong2007} for a discussion.}
This bi-interpretability carries over for second-order set theories so that e.g. $\ACA_0$ and $\GBC^{\neg\infty}$ are bi-interpretable. So the analogy here is really between the finite and the transfinite. 

Let me briefly address the gap in the table. First, $\WKL_0$. Enayat and Hamkins \cite{enayat-hamkins2016} proved that (in $\ZFC$) there is a definable $\Ord$-tree whose levels are all set-sized with no definable branch. Consequently, $\GBC$ does not prove the analog of K\H{o}nig's lemma for $\Ord$-trees instead of $\omega$-trees. So it is not clear how tree properties on $\omega$---such as weak K\H{o}nig's lemma or K\H{o}nig's lemma---could be generalized to this context. For $\RCA_0$, different ways of thinking of computability suggests different generalizations to set theory. First, consider the view that computable $= \Delta_1$. With this view in mind, the set theoretic counterpart to $\RCA_0$ would be based upon $\Delta^0_1$-Comprehension. Second, take the view that computable means verifiable and refutable by only looking at a bounded segment of the universe. By this view, the set theoretic counterpart to $\RCA_0$ would be based upon $\Delta^0_2$-Comprehension, as the $\Delta_2$ properties are precisely those which are verifiable and refutable by looking at a rank-initial segment of the universe. Which of these two counterparts is the `correct' one would depend upon the mathematics one can do with them. If, say, $\Delta^0_2$-Comprehension allows us to do a lot of interesting mathematics but $\Delta^0_1$-Comprehension does not, then we would have reason to prefer one over the other. But we cannot make such a call before actually doing that mathematics. For now, it is not clear what the set theoretic counterpart to $\RCA_0$ should be.

That aside, let us return to where there are no gaps. Some results about models of arithmetic have direct generalizations to results about models of set theory. For example, it is well-known that every model of arithmetic has a least $\ACA_0$-realization. The set theoretic counterpart to this fact is that every model of $\ZFC$ with a definable global well-order has a least $\GBC$-realization. We also get counterparts to results about strong theories. That $\KM$ has a least $\beta$-model but no least transitive model is the transfinite analog of the folklore fact that $\Zsf_2$ has a least $\beta$-model and H.\ Friedman's theorem \cite{friedman1973} that $\Zsf_2$ has no least $\omega$-model.

But disanalogies appear at the level of $\GBC + \ETR$ versus $\ATR_0$. We have seen that $\GBC + \ETR$ has a least $\beta$-model. On the other hand, $\ATR_0$ has neither a least $\omega$-model nor a least $\beta$-model---see \cite{simpson:book}. Let us explore this disanalogy further. The main culprit here is the property ``$X$ is a well-order''. In set theory, this is a first-order assertion. Whether $X$ is a well-order is determined by a countable piece of information, so is witnessed by the (non)existence of certain sets, not proper classes. On the other hand, in arithmetic it is a second-order assertion to say that $X$ is a well-order, since infinite sequences are second-order objects in the arithmetic context. Indeed, ``$X$ is a well-order'' is $\Pi^1_1$-universal in arithmetic, so which classes are well-ordered is very much caught up in the second-order part of the model.

Another disanalogy concerns the existence of $\Hyp(V)$. It is well-known that $\Hyp(V_\omega) = L_\omegaoneck$, where $\omegaoneck$ is the least non-computable ordinal. So $\ATR_0$ suffices to prove that $\Hyp(V)$ is coded\footnote{Remember that in arithmetic, $V$ is $V_\omega$!}
because $\omegaoneck$ is arithmetical and so we can do a transfinite recursion along it to produce the $L$-hierarchy. Indeed, the intersection of all the $\beta$-models of $\ATR_0$ is the collection of hyperarithmetical sets, those reals appearing in $L_\omegaoneck$---see \cite{simpson:book}. (And the same is true if we consider $\omega$-models instead of $\beta$-models.)
But in the set theoretic context, we have seen that $\GBC + \ETR$ does not suffice to prove that $\Hyp(V)$ is coded.

More disanalogies are known. To pick one last example, in arithmetic Clopen Determinacy and Open Determinacy are both equivalent to $\ATR_0$, a result originating in Steel's dissertation \cite{steel:diss}. In set theory, Clopen Determinacy (for class games) is equivalent (over $\GBC$) to $\ETR$ \cite{gitman-hamkins2016}. But Open Determinacy is strictly stronger \cite{hachtman2016}.

These suggest that the emerging field of reverse mathematics of second-order set theory should reveal a landscape with some differences from that of reverse mathematics of arithmetic. As further evidence pointing toward this, in the set theoretic context a fragment of $\ETR$ captures natural mathematical principles. Gitman, Hamkins, Holy, Schlicht, and myself showed \cite{GHHSW2017} that $\ETR_\Ord$ is equivalent (over $\GBC$) to the class forcing theorem and several other  natural statements. On the other hand, the same does not happen for fragments of $\ATR_0$. 

More speculatively, work in second-order set theory might shed some light, even if only privatively, on the project of reverse mathematics in second-order arithmetic. Each of the Big Five subsystems of second-order arithmetic roughly corresponds to a philosophical position. (Cf.\ chapter I of \cite{simpson:book}.) For example, $\ATR_0$ corresponds to predicative reductionism, where ``predicative'' here means ``predicative given $\omega$''. We could instead ask about predicativism given $V$, or given $H_{\omega_1}$, or some other object. This project has already been taken up, e.g.\ by Sato \cite{sato2014}. The disanalogies between $\ETR$ and $\ATR_0$ suggest that starting from the finite realm has a large impact on predicativism and related projects. If we take the transfinite as our starting point then we end up with a very different theory. On the other hand, where we do seen an analogy between second-order arithmetic and second-order set theory it suggests that there the role of the finite versus the transfinite is not so vital.

In short, work in second-order set theory may help clarify what in reverse mathematics (of arithmetic) relies essentially upon the first-order domain consisting of finite objects.

\clearpage

\backmatter

\singlespacing

\printbibliography[heading=bibintoc,title={Bibliography}]

\end{document}